\newtheorem{theorem}{Theorem}[section]
\newtheorem*{theorem*}{Theorem}
\newtheorem*{conjecture*}{Conjecture}
\newtheorem*{remark*}{Remark}
\newtheorem{corollary}[theorem]{Corollary}
\newtheorem{definition}[theorem]{Definition}
\newtheorem{lemma}[theorem]{Lemma}
\newtheorem{proposition}[theorem]{Proposition}
\newtheorem{conjecture}[theorem]{Conjecture}
\newtheorem{remark}[theorem]{Remark}
\newtheorem{hypothesis}[theorem]{Hypothesis}
\theoremstyle{plain}
\theoremstyle{remark}
\numberwithin{equation}{section}
\begin{document}
	\title{Special $L$-values and Selmer groups
	of Siegel modular forms of genus $2$}	
	\author{Xiaoyu ZHANG}
	\address{LAGA, Institut Galil\'{e}e, U. Paris 13, av. J.-B. Cl\'{e}ment, Villetaneuse 93430, France.}
	\email{zhang@math.univ-paris13.fr}
	\maketitle
	\begin{abstract}
		Let $p$ be an odd prime,
		$N$ a square-free
		odd positive integer prime to $p$,
		$\pi$ a $p$-ordinary
		cohomological irreducible
		cuspidal automorphic representation
		of $\mathrm{GSp}_4(\mathbb{A}_\mathbb{Q})$
		of principal level $N$ and Iwahori level at $p$.
		Using a $p$-integral version of
		Rallis inner product formula
		and modularity theorems
		for $\mathrm{GSp}_{4/\mathbb{Q}}$ and
		$\mathrm{U}_{4/\mathbb{Q}}$,
		we establish an identity
		between the
		$p$-part of the critical value
		at $1$ of the degree $5$
		$L$-function of $\pi$
		twisted by the non-trivial
		quadratic Dirichlet character $\xi$
		associated to the extension
		$\mathbb{Q}(\sqrt{-N})/\mathbb{Q}$
		and
		the $p$-part of the
		Selmer group of the degree $5$
		Galois representation associated
		to $\pi$ twisted by $\xi$,
		under certain
		conditions on the residual
		Galois representation.
	\end{abstract}
	
	\tableofcontents

	\section{Introduction}
	
	Let $p$ be an odd prime number.
	In this article
	we establish an identity
	between the $p$-part
	of the special value at $1$
	of the twist by an odd quadratic character
	$\xi$ of the standard $L$-function of a
	$p$-ordinary cuspidal
	automorphic forms $\varphi$ over
	$\mathrm{GSp}_4(\mathbb{A}_\mathbb{Q})$
	and the $p$-part of the
	Selmer group of
	the twist by $\xi$ of
	the $p$-adic standard Galois representation
	associated to $\varphi$,
	under certain conditions on the residual
	Galois representation.	
	The main tool is a
	$p$-integral Rallis inner product formula,
	as developed in this article.
	Denote by $M_\varphi$
	the rank $4$ symplectic motive associated to $\varphi$
	and
	$\rho_\mathrm{st}\colon
	\mathrm{GSp}_4\rightarrow
	\mathrm{SO}_5$
	the standard representation.
	Then our identity can be seen as
	supportive evidence for
	Bloch-Kato Tamagawa Number Conjecture
	for the motive
	$\rho_\mathrm{st}(M_\varphi)\otimes\xi$
	(\textit{cf}. \cite{BlochKato1990}),
	except that we consider automorphic periods
	instead of motivic periods.

	To state our result,
	we first prepare some notations.
	We fix an odd
	square-free integer $N>2$ prime to $p$.
	Let $\xi$ be the quadratic character
	associated to the quadratic imaginary extension
	$E=\mathbb{Q}(\sqrt{-N})/\mathbb{Q}$.
	Fix an embedding
	$\iota_p\colon
	\overline{\mathbb{Q}}
	\hookrightarrow
	\overline{\mathbb{Q}}_p$.
	Let $\pi=\otimes'_{v\leq\infty}\pi_v$ be an
	irreducible cuspidal
	automorphic representation of
	$\mathrm{GSp}_4(\mathbb{A}_\mathbb{Q})$
	and $\pi^\vee$ its contragredient.
	We assume that
	$\pi$ is ordinary for $\iota_p$
	and unramified outside $Np\infty$,
	that $\pi_\infty$ is in (anti)holomorphic discrete series,
	that $\pi$ is of principal level
	$N$ and  it has Iwahori level at $p$.
	For a finite place $\ell$ of $\mathbb{Q}$
	such that $\pi_\ell$ is unramified,
	$\pi_\ell$ has Satake parameter
	$S_\ell=\mathrm{diag}(\alpha_1,\alpha_2,\alpha_0/\alpha_1,
	\alpha_0/\alpha_2)\in\mathrm{GSp}_4(\mathbb{C})$.
	Then the conjugacy class of 
	$\rho_\mathrm{st}(S_\ell)$
	is the same as that of the diagonal matrix
	$\mathrm{diag}(1,\alpha_1/\alpha_2,\alpha_2/\alpha_1,
	\alpha_1\alpha_2/\alpha_0^2,\alpha_0^2/\alpha_1\alpha_2)$.
	The twisted standard $L$-factor of $\pi_\ell$
	by the character $\xi$
	is then
	(for $s\in\mathbb{C}$)
	\[
	L_\ell(s,\mathrm{St}(\pi)\otimes\xi)
	=
	(1-\xi(\ell)\ell^{-s})^{-1}
	(1-\xi(\ell)\ell^{-s}
	\frac{\alpha_1}{\alpha_2})^{-1}
	(1-\xi(\ell)\ell^{-s}
	\frac{\alpha_2}{\alpha_1})^{-1}
	(1-\xi(\ell)\ell^{-s}
	\frac{\alpha_1\alpha_2}{\alpha_0^2})^{-1}
	(1-\xi(\ell)\ell^{-s}
	\frac{\alpha_0^2}{\alpha_1\alpha_2})^{-1}.
	\]
	For any finite set $S$ of places of $\mathbb{Q}$
	containing the place $\infty$ and 
	all finite places $\ell$
	where $\pi_\ell$ is ramified,
	we define the partial $L$-function
	$L^S(s,\mathrm{St}(\pi)\otimes\xi)$
	as the product $\prod_{v\notin S}L_v(s,
	\mathrm{St}(\pi)\otimes\xi)$.
	Then it is known that $L^S(s,
	\mathrm{St}(\pi)\otimes\xi)$
	converges absolutely for
	$\mathrm{Re}(s)$ sufficiently large and
	admits a meromorphic continuation to the whole plane
	$\mathbb{C}$ with possibly simple poles
	(\cite{KudlaRallis1990}).

	Let $\mathcal{O}$ a sufficiently large extension of
	$\mathbb{Z}_p$ containing all the eigenvalues of
	$\pi$ for the Hecke operators.
	Let $\varpi$ be a uniformizer of
	$\mathcal{O}$.
	Fix then
	$p$-integral ordinary primitive
	(in terms of Fourier coefficients) 
	factorizable automorphic forms
	$\varphi\in\pi$ and
	$\varphi^\vee\in\pi^\vee$
	such that their pairing
	$\langle\varphi,\varphi^\vee\rangle\neq0$
	(\textit{cf}.
	Definitions
	\ref{nearly holomorphic automorphic forms}
	and
	\ref{antiholomorphic automorphci forms}).
	Let $\mathrm{GSO}_{6/\mathbb{Q}}$ be
	the
	similitude orthogonal group over $\mathbb{Q}$
	determined by the symmetric matrix
	$\mathrm{diag}(2,2,2,2N,2N,2N)$
	and $\mathrm{U}_{4/\mathbb{Q}}$
	be the unitary group over $E/\mathbb{Q}$
	determined by the Hermitian matrix
	$\mathrm{diag}(1,1,1,1)$.
	We then construct an algebraic
	$p$-integral
	theta section
	$\phi$ in the space of
	Schwartz-Bruhat functions
	$\mathcal{S}(\mathrm{M}_{4\times 6}
	(\mathbb{A}_\mathbb{Q}))$
	(\textit{cf}. Section 4).
	The theta series
	$\Theta_\phi$ associated to $\phi$
	sends the modular form
	$\varphi\otimes\varphi^\vee$
	by theta correspondence to a $p$-integral
	$\Theta_\phi(\varphi\otimes\varphi^\vee)$
	on $\mathrm{U}_4(\mathbb{A}_\mathbb{Q})$
	(via certain exceptional isogeny
	between $\mathrm{GSO}_6$ and $\mathrm{U}_4$,
	see Section
	\ref{isogeny from GSO(6) to U(4)}).
	Now we denote by
	$\mathfrak{c}(\pi)\in\mathcal{O}$
	the congruence number/ideal
	between
	$\Theta_\phi(\varphi)$
	and other cuspidal automorphic forms
	on $\mathrm{U}_4(\mathbb{A}_\mathbb{Q})$
	which are not theta lifts from
	$\mathrm{GSp}_4(\mathbb{A}_\mathbb{Q})$
	(\textit{cf}. (\ref{notation for congruence ideal})).
	To the automorphic representation $\pi$, 
	one can associate the symplectic
	Galois representation
	$\rho_\pi
	\colon\Gamma_{\mathbb{Q}}
	\rightarrow\mathrm{GSp}_4(\mathcal{O})$
	(\cite{Taylor1993,
		Laumon2005,
		Weissauer2005})
	and we write
	$\rho^\mathrm{st}_\pi
	=\rho_\mathrm{st}\circ\rho_\pi$.
	Then we can
	relate the congruence number
	$\mathfrak{c}(\pi)$
	to the Selmer group
	$\mathrm{Sel}(\mathbb{Q},
	\rho^\mathrm{st}_\pi
	\otimes\xi)$
	of the Galois representation
	$\rho^\mathrm{st}_\pi
	\otimes\xi$
	(in a way similar to
	\cite{HidaTilouine16}).
	Write
	$\chi(\rho^\mathrm{st}_\pi
	\otimes\xi)$
	for a generator of the
	Fitting ideal
	of the Selmer group
	$\mathrm{Sel}(\mathbb{Q},
	\rho^\mathrm{st}_\pi\otimes\xi)$
	viewed as an $\mathcal{O}$-module.
	Then our main result is
	(see Theorem \ref{Selmer group and L-value}
	for more details):
	\begin{theorem*}
		Assume the hypotheses as in 
		\cite[Theorem 7.3]{HidaTilouine16},
		i.e., ($N$-Min),
		($RFR^{(2)}$)
		and
		($BIG^{(2)}$),
		then we have the following identities,
		up to units in $\mathcal{O}$:
		\[
		\frac{L^{Np\infty}(1,
			\mathrm{St}(\pi)\otimes\xi)
		\widetilde{L}_{Np\infty}(1,
		\mathrm{St}(\pi)\otimes\xi)}
	    {P_{\pi^\vee}}
		=
		\mathfrak{c}(\pi)
		=
		\chi(\rho^\mathrm{st}_\pi
		\otimes\xi).
		\]
		where $\widetilde{L}_{Np\infty}(1,
		\mathrm{St}(\pi)\otimes\xi)$
		is the product
		of modified
		local $L$-factors at places dividing 
		$Np\infty$
		which depends on the local components
		at places $Np\infty$
		of
		$\varphi,\varphi^\vee$
		and $\phi$,
		and $P_{\pi^\vee}
		\in\mathbb{R}_{>0}$
		is a certain automorphic period of
		$\pi$.
	\end{theorem*}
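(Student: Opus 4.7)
The proof splits naturally into two independent identities. The plan is to establish
\[
\frac{L^{Np\infty}(1,\mathrm{St}(\pi)\otimes\xi)\,\widetilde{L}_{Np\infty}(1,\mathrm{St}(\pi)\otimes\xi)}{P_{\pi^\vee}} \;=\; \mathfrak{c}(\pi)
\]
(the analytic identity) via a $p$-integral Rallis inner product formula applied to the theta lift from $\mathrm{GSp}_4\times\mathrm{GSp}_4$ to $\mathrm{U}_4$, and to establish $\mathfrak{c}(\pi)=\chi(\rho^{\mathrm{st}}_\pi\otimes\xi)$ (the Galois identity) via an $R=\mathbb{T}$-type argument in the vein of \cite[Theorem 7.3]{HidaTilouine16}.

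For the analytic identity, I would compute the Petersson inner product $\langle \Theta_\phi(\varphi\otimes\varphi^\vee),\Theta_\phi(\varphi\otimes\varphi^\vee)\rangle_{\mathrm{U}_4}$ in two ways. On one hand, unfolding the theta integral by means of a $p$-integral refinement of the Rallis inner product formula (Siegel--Weil together with Eulerian unfolding) factors this inner product as the partial $L$-value $L^{Np\infty}(1,\mathrm{St}(\pi)\otimes\xi)$, times local zeta integrals at the places in $\{v\mid Np\infty\}$, times the global pairing $\langle\varphi,\varphi^\vee\rangle$. By the careful choice in Section 4 of the $p$-integral ordinary primitive vectors $\varphi,\varphi^\vee$ (with principal level $N$ and Iwahori level at $p$) and the Schwartz--Bruhat function $\phi$, these local zeta integrals evaluate, up to units in $\mathcal{O}$, to the modified factor $\widetilde{L}_{Np\infty}(1,\mathrm{St}(\pi)\otimes\xi)$, and the global pairing contributes the automorphic period $P_{\pi^\vee}$. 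On the other hand, the congruence ideal $\mathfrak{c}(\pi)$ defined in (\ref{notation for congruence ideal}) measures by construction the projection of $\Theta_\phi(\varphi)$ onto the summand of the cuspidal spectrum of $\mathrm{U}_4$ spanned by theta lifts from $\mathrm{GSp}_4$, and the Petersson square of this projection is a unit multiple of $\mathfrak{c}(\pi)$. Equating the two expressions yields the first identity.

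For the Galois identity, the strategy is to realize $\mathfrak{c}(\pi)$ as the congruence module of the Hida--Hecke algebra $\mathbb{T}$ acting on the ordinary $\mathrm{U}_4$-cohomology at the maximal ideal attached to $\pi$, and then to apply an ordinary modularity lifting theorem. Under the hypotheses ($N$-Min), ($RFR^{(2)}$) and ($BIG^{(2)}$), the results of \cite[Theorem 7.3]{HidaTilouine16} produce an isomorphism $R \iso \mathbb{T}$ of the universal nearly ordinary deformation ring $R$ of the residual standard representation $\bar\rho^{\mathrm{st}}_\pi\otimes\bar\xi$ with the Hecke algebra, and both are complete intersections over $\mathcal{O}$. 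The Wiles--Lenstra numerical criterion then identifies the congruence module of $\mathbb{T}$ with the relative dualizing module of $R$, which in turn identifies with the Fitting ideal of $\mathrm{Sel}(\mathbb{Q},\rho^{\mathrm{st}}_\pi\otimes\xi)$ via the standard identification of Selmer groups with reduced cotangent spaces of universal deformation rings; this yields $\mathfrak{c}(\pi)=\chi(\rho^{\mathrm{st}}_\pi\otimes\xi)$ up to units.

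The main obstacle lies in the analytic identity, namely in establishing the precise local computations and the $p$-integrality of the Rallis inner product formula. Concretely, one must verify that the local zeta integrals at places $\ell\mid N$ (where the principal-level condition fixes a specific new-vector), at $p$ (where the Iwahori-fixed ordinary vector combined with the ordinary projector must produce a $p$-unit against the local Euler factor), and at infinity (where the (anti)holomorphy of $\pi_\infty$ and the archimedean Schwartz component $\phi_\infty$ produce the period $P_{\pi^\vee}$) all conspire to yield exactly $\widetilde{L}_{Np\infty}/P_{\pi^\vee}$ with no extraneous $p$-adic denominators. The coherence of normalizations across the exceptional isogeny $\mathrm{GSO}_6\sim\mathrm{U}_4$ will also demand careful bookkeeping. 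The $R=\mathbb{T}$ step, by contrast, is essentially a citation once the unitary-side congruence module has been correctly translated into the deformation-theoretic framework of \cite{HidaTilouine16}.
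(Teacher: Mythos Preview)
Your overall shape is right, but there is a genuine gap on the Galois side, and a related misidentification on the analytic side.

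On the Galois side, you propose to apply an $R=\mathbb{T}$ theorem directly to the deformation ring of the residual representation $\bar\rho^{\mathrm{st}}_\pi\otimes\bar\xi$. No such theorem is available for this $5$-dimensional orthogonal representation, and that is not what the paper (or \cite{HidaTilouine16}) does. Instead one proves \emph{two} $R=\mathbb{T}$ theorems, one for $\mathrm{GSp}_4$-valued deformations of $\bar\rho_\pi$ and one for $\mathcal{G}_4$-valued deformations of $\bar\rho_{\Theta^*(\pi)}$ on the $\mathrm{U}_4$ side, together with a surjection $\vartheta\colon\mathbb{T}^{\mathrm{u}}\twoheadrightarrow\mathbb{T}^{\mathrm{s}}$ compatible with $R^{\mathrm{u}}\twoheadrightarrow R^{\mathrm{s}}$. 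The crucial algebraic input is the decomposition of adjoint representations
\[
\mathrm{ad}\big(\Theta(\rho_\pi)\big)\ \simeq\ \mathrm{ad}(\rho_\pi)\ \oplus\ \big(\rho_{\mathrm{st}}\circ\rho_\pi\otimes\xi\big),
\]
which yields a corresponding splitting of adjoint Selmer groups. Combined with the multiplicativity of congruence ideals $\mathfrak{c}(\vartheta_\pi\circ\vartheta)=\mathfrak{c}(\vartheta_\pi)\cdot\vartheta_\pi(\mathfrak{c}(\vartheta))$ (valid because both Hecke algebras are complete intersections under the stated hypotheses), this isolates $\mathrm{Fit}\big(\mathrm{Sel}(\rho^{\mathrm{st}}_\pi\otimes\xi)\big)$ as the \emph{relative} congruence ideal $\vartheta_\pi(\mathfrak{c}(\vartheta))=\mathfrak{c}(\pi)$.

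This same multiplicativity is what you are missing on the analytic side. The Petersson inner product of the (primitively normalized) theta lifts on $\mathrm{U}_4$ does \emph{not} give $\mathfrak{c}(\pi)$ directly; it gives the full $\mathrm{U}_4$ congruence ideal $\mathfrak{c}(\vartheta_\pi\circ\vartheta)$. Meanwhile the Rallis inner product formula, once the period relations $\mathfrak{c}^{\mathrm{coh}}(\pi)P_\pi=P[\pi]$ and $\widehat{P}_{\pi^\vee}P_\pi=1$ are inserted, contributes an extra factor of $\mathfrak{c}(\vartheta_\pi)=\mathfrak{c}^{\mathrm{coh}}(\pi)$ in front of $L^{Np\infty}\widetilde{L}_{Np\infty}/P_{\pi^\vee}$. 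Only after dividing out $\mathfrak{c}(\vartheta_\pi)$ via the multiplicativity identity does one arrive at $L/P_{\pi^\vee}=\mathfrak{c}(\pi)$. Your sketch collapses these two appearances of the $\mathrm{GSp}_4$ congruence number and so would not yield the correct identity as stated.
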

    
    \begin{remark*}
    	Some remarks concerning the theorem
    	are in order:
    	\begin{enumerate}
    		\item 
    		The hypotheses in the above theorem,
    		($N$-Min),
    		($RFR^{(2)}$)
    		and
    		($BIG^{(2)}$)
    		are of Taylor-Wiles type.
    		See Hypothesis
    		\ref{hypotheses for R=T theorems}
    		for details;
    		
    		\item 
    		We have explicit formulas
    		for the $L$-factors
    		$\widetilde{L}_\ell
    		(1,\mathrm{St}(\pi)\otimes\xi)$
    		for $\ell|Np$.
    		See
    		Section \ref{summary}.
    		For the automorphic period
    		$P_{\pi^\vee}$,
    		see Lemma \ref{definition of periods};
    		
    		\item 
    		The identity in the above theorem is
    		reminiscent of a conjectural
    		Bloch-Kato formula
    		we recall in the below;   		
    	\end{enumerate}
    \end{remark*}
    
    Write $\mathrm{Tam}
    (\rho^\mathrm{st}_\pi\otimes\xi)
    \in \varpi^\mathbb{Z}$
    for the Tamagawa number of
    $\rho^\mathrm{st}_\pi\otimes\xi$
    (\textit{cf}.
    \cite[II.5.3.3]{FontainePerrinRiou1994})
    and $P^\mathrm{mot}_\varphi$
    for the Deligne period of the motive
    $\rho_\mathrm{st}(M_\varphi)\otimes\xi$.
    We write
    $H_f^1(\mathbb{Q},
    \rho^\mathrm{st}_\pi\otimes\xi)$
    for the Bloch-Kato Selmer group of the motive
    $\rho_\mathrm{st}(M_\varphi)\otimes\xi$
    using its $p$-adic realization
    $\rho^\mathrm{st}_\pi\otimes\xi$.
    Then the $p$-part
    of the Bloch-Kato Tamagawa Number Conjecture states
    as follows
    \begin{conjecture*}
    	We have the following identity,
    	up to units in $\mathcal{O}$:
    	\[
    	\frac{L^{Np\infty}(1,\mathrm{St}(\pi)\otimes\xi)
    	\widetilde{L}_{Np\infty}(1,
    	\mathrm{St}(\pi)\otimes\xi)}
    	{P^\mathrm{mot}_\varphi}
    	=
    	\chi(H_f^1(\mathbb{Q},\rho_\mathrm{st}
    	\circ\rho_\pi\otimes\xi))
    	\,
    	\mathrm{Tam}
    	(\rho^\mathrm{st}_\pi\otimes\xi).
    	\]
    \end{conjecture*}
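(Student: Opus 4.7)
The plan is to deduce the conjecture from the theorem via two independent comparisons: replace the automorphic period $P_{\pi^\vee}$ by the Deligne motivic period $P^{\mathrm{mot}}_\varphi$, and convert the Hida--Tilouine Fitting ideal $\chi(\mathrm{Sel}(\mathbb{Q},\rho^{\mathrm{st}}_\pi\otimes\xi))$ supplied by the theorem into the Bloch--Kato quantity $\chi(H_f^1(\mathbb{Q},\rho^{\mathrm{st}}_\pi\otimes\xi))\cdot\mathrm{Tam}(\rho^{\mathrm{st}}_\pi\otimes\xi)$. The first comparison is archimedean and transcendental in nature, whereas the second is purely local at the primes in $Np$.

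For the period side, I would realise the motive $M_\varphi$ as a Hecke-eigen cut of the middle-degree intersection cohomology of a toroidal compactification of the Siegel threefold of level $\Gamma_0(Np)$, following Weissauer and Shin, so that it acquires compatible Betti, de Rham and $\ell$-adic realisations; $\rho^{\mathrm{st}}_\pi$ is then the \'etale realisation of $\rho_{\mathrm{st}}(M_\varphi)$. The Deligne period $P^{\mathrm{mot}}_\varphi$ unfolds as a determinant of the Betti--de Rham comparison on the critical sub-Hodge structure carved out by $\rho_{\mathrm{st}}\otimes\xi$. Through an Eichler--Shimura-type isomorphism and the (anti)holomorphy of $\pi_\infty$, the automorphic pairing $\langle\varphi,\varphi^\vee\rangle$ defining $P_{\pi^\vee}$ should translate into the same determinant up to explicit powers of $2\pi i$; the remaining identity in $\mathcal{O}/\mathcal{O}^\times$ is an arithmetic instance of Deligne's period conjecture for $\rho_{\mathrm{st}}(M_\varphi)\otimes\xi$.

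For the Selmer side I would work place-by-place. At $p$, the ordinarity hypothesis supplies a Panchishkin filtration of $\rho^{\mathrm{st}}_\pi\otimes\xi$ on which both the theorem's Selmer local condition and the Bloch--Kato crystalline condition can be computed; the resulting discrepancy is controlled by the modified Euler factor $\widetilde{L}_p$, which matches the local $p$-Tamagawa factor of Fontaine--Perrin-Riou. At each $\ell\mid N$, the Hida--Tilouine local deformation condition differs from the Bloch--Kato unramified-plus-Frobenius condition by the Galois cohomology of the inertia coinvariants, and a Tate local duality computation identifies this ratio with $\widetilde{L}_\ell^{-1}\cdot\mathrm{Tam}_\ell$. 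Aggregating these local ratios converts the theorem's Fitting ideal into $\chi(H_f^1)\cdot\mathrm{Tam}$, as desired.

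The main obstacle will be the period comparison: a $p$-integral equality $P_{\pi^\vee}\sim P^{\mathrm{mot}}_\varphi$ in $\mathcal{O}/\mathcal{O}^\times$ is essentially Deligne's conjecture for a degree-five factor of a symplectic motive, and even rationality up to $\overline{\mathbb{Q}}^\times$ requires isolating the Hodge sub-structure cut out by $\rho_{\mathrm{st}}$, which is delicate for $\mathrm{GSp}_4$; the $p$-integral refinement further demands control of the Betti and de Rham lattices that is not presently available. Modulo this single transcendental input, the Selmer-side comparisons above are standard local Iwasawa-theoretic computations and the theorem supplies the essential global ingredient.
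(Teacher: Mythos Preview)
The statement you are addressing is labelled \emph{Conjecture} in the paper, not a theorem; the paper offers no proof of it. Immediately after stating it, the author writes that the relation between the automorphic period $P_{\pi^\vee}$ and the motivic period $P^{\mathrm{mot}}_\varphi$ ``is quite mysterious and we will not touch this topic in this article,'' and notes only that the Bloch--Kato Selmer group sits inside the Greenberg Selmer group with finite index (citing Flach). The main theorem of the paper is the preceding identity involving $P_{\pi^\vee}$ and $\chi(\rho^{\mathrm{st}}_\pi\otimes\xi)$; the conjecture is presented as the Bloch--Kato Tamagawa Number statement that the theorem is meant to support, not to establish.

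Your proposal is therefore not in competition with any proof in the paper. As a reduction plan it is accurate: you correctly isolate the two discrepancies between theorem and conjecture, namely the period comparison and the Greenberg-versus-Bloch--Kato Selmer comparison. You are also candid that the period step is an instance of Deligne's period conjecture (with a $p$-integral refinement) for the degree-five motive $\rho_{\mathrm{st}}(M_\varphi)\otimes\xi$, and that this is not presently available. That honesty is appropriate, but it means your proposal is not a proof: it is a reduction of one conjecture to another open conjecture. On the Selmer side, your local computations at $p$ and at $\ell\mid N$ are plausible in outline, but you should be aware that matching the Hida--Tilouine local conditions against Bloch--Kato's at $\ell\mid N$ requires knowing the precise ($N$-Min) deformation condition used in the paper (image of inertia in $\exp(A\varepsilon)$), and the claim that the discrepancy is exactly $\widetilde{L}_\ell^{-1}\cdot\mathrm{Tam}_\ell$ would need a genuine calculation rather than an appeal to Tate duality in the abstract.

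In short: there is no gap relative to the paper, because the paper proves nothing here; but your plan does not constitute a proof either, and you have correctly identified why.
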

    Here we identify the $L$-function of $\pi$ with
    that of
    the associated motive
    $\rho_\mathrm{st}(M_\varphi)\otimes\xi$.
    The Bloch-Kato Selmer group
    $H^1_f(\mathbb{Q},
    \rho^\mathrm{st}_\pi\otimes\xi)$
    is a subgroup of the Greenberg Selmer group
    $\mathrm{Sel}(\mathbb{Q},
    \rho^\mathrm{st}_\pi\otimes\xi)^\ast$
    of finite index
    (\textit{cf}. \cite[Theorem 3]{Flach1990}).
    The relation between the automorphic period
    $P_{\pi^\vee}$ and the motivic period
    $P_\varphi^\mathrm{mot}$
    is quite mysterious
    and we will not touch this topic in this article.
    There have been many works on the
    Bloch-Kato conjectures for various motives
    (\textit{cf}.
    \cite{Brown2007,BellaicheChenevier2009,Klosin2009,
    	Berger2015,
    	CalegariGeraghtyHarris2016}).
    When one wants to show the
    non-triviality of the
    Selmer group,
    the strategy in common used in these
    works is that of Ribet in \cite{Ribet1976}:
    suppose that $N_\pi$
    is a certain motive associated to
    an automorphic representation $\pi$
    of $G(\mathbb{A})$
    for some algebraic group $G_{/\mathbb{Q}}$.
    Suppose that the associated $L$-function
    $L(1,N_\pi)\neq0$ is critical
    in the sense of Deligne.
    To construct a non-trivial element in
    the Selmer group $H^1_f(N_\pi)$
    of $N_\pi$,
    one can try to find some larger
    algebraic group $G'_{/\mathbb{Q}}$
    such that
    the $L$-group $^LG$ of $G$
    maps to $^LG'$
    (as certain Levi subgroup of a parabolic subgroup
    of $^LG'$, for example)
    such that $\pi$ lifts to an automorphic
    representation $\pi'$ on $G'(\mathbb{A})$.
    Suppose that we have an irreducible
    $\mathfrak{p}$-adic Galois representation
    associated to $\pi'$.
    If $\mathfrak{p}$
    divides the normalized $L$-value
    $L(1,N_\pi)$,
    then one can,
    in favorable situations,
    construct another automorphic representation
    $\widetilde{\pi}'$ of $G'(\mathbb{A})$
    whose Hecke eigenvalues are congruent to those of
    $\pi'$
    modulo $\mathfrak{p}$.
    Now the modulo $\mathfrak{p}$ Galois representation
    associated to $\widetilde{\pi}'$
    is reducible and non-semisimple,
    which gives rise to a non-trivial element in
    the Selmer group
    $H^1_f(N_\pi)$
    (\textit{cf}.
    Introduction of \cite{Klosin2009}).
    Previously studied cases include
    $(G,G')=(\mathrm{GL}_1\times\mathrm{GL}_1,
    \mathrm{GL}_2)$
    (Ribet),
    $(\mathrm{GL}_2,\mathrm{GSp}_4)$
    (Urban),
    $(\mathrm{GL}_2,
    \mathrm{U}_{2,2})$
    (Skinner-Urban)
    and
    $(\mathrm{Res}_{E/F}(\mathrm{GL}_1),
    U(3))$
    (Bella\"{i}che-Chenevier).
    Along this strategy,
    this work can be seen as the case
    $(G,G')=
    (\mathrm{GSp}_4,\mathrm{U}_4)$.
    There are two main parts in this article.
    One part is to construct $\pi'$
    in a $p$-integral way:
    we construct a $p$-integral
    theta series $\Theta_\phi$.
    Then given a $p$-integral cuspidal Siegel
    modular form $\varphi\in\pi$,    
    the pairing between $\varphi$
    and $\Theta_\phi$ gives a
    $p$-integral modular form on
    $\mathrm{U}_4(\mathbb{A})$.
    Another part is to relate congruence ideals to
    Selmer groups.
    Once we can interpret the theta lift
    from $\pi$ to $\pi'$
    in terms of Galois representations,
    we can use the results similar to
    those in \cite{HidaTilouine16}
    to finish the proof.

    As indicated at the end of the last paragraph,
    we need a $p$-integral version of
    the theta lift from
    $\mathrm{GSp}_4$ to
    $\mathrm{GSO}_6$
    (and then transferred to $\mathrm{U}_4$).
    This is the so-called arithmetic
    Rallis inner product formula.
    There are already many works
    on this topic
    (\textit{cf}.
    \cite{BochererDummiganSchulzePillot2012,
    	HarrisKudla1992,Prasanna2006}).
    The $p$-integrality of the theta lift of
    a single automorphic form
    is also very useful
    and there are the works of
    Kudla and Millson
    (\textit{cf}. \cite{KudlaMillson1990}
    and the works cited therein)
    from an orthogonal group to a symplectic group
    (see also \cite{Berger2014}).
    We work in the other direction,
    from a (similitude) symplectic group
    to a (similitude) orthogonal group.
    Unlike Kudla and Millson
    who work with cycles on the symmetric space
    associated to the orthogonal group
    and construct theta series using
    cohomology classes,
    we construct explicitly
    local Schwartz-Bruhat functions and show
    that the associated theta series
    has $p$-integral Fourier coefficients.
    Then the pairing between a theta series and
    a $p$-integral Siegel modular form
    can be seen as a Serre duality pairing
    between the coherent
    cohomology groups
    $H^0$ and $H^3$ of the Siegel moduli scheme.
    This strategy is inspired from
    \cite{HarrisLiSkinner2005}
    in which the authors sketched a program
    to construct $p$-adic Rallis inner product formulas.
    The construction of such a $p$-adic family of
    theta series will be very interesting
    and is a future project of the author.
    The constructions used in this article can be easily
    generalized to other reductive dual pairs
    $(\mathrm{GSp}_{2n/F},\mathrm{GSO}_{m/F})$
    over a totally real number field $F$
    at least when $\mathrm{SO}_m$ is compact at
    $\infty$. Yet relating the congruence ideal to
    the Selmer group of certain Galois representation
    in this article uses some particular
    plethysm of representations of $\mathrm{GSp}_4$
    (purely group-theoretic).
    To generalize this part to
    other reductive dual pairs
    one needs some case-by-case study.

    As the reader can see,
    throughout the article
    we use heavily the results and ideas
    on the doubling method
    developed explicitly and arithmetically
    in \cite{Liu2015} for the
    symplectic groups (and
    \cite{EischenHarrisLiSkinner16}
    for the unitary groups).

    Let us give a brief description of the article.
    Section 2 gives some preliminary notions
    on Siegel modular forms and
    congruence ideals that will be used throughout
    this article.
    In Section 3 we formulate the theta correspondence
    for the reductive dual pair
    $(\mathrm{GSp}_4,\mathrm{GSO}_6)$.
    Section 4 is the principal part of
    the paper.
    In this section, we give explicit choice of
    local Schwartz-Bruhat functions in the
    theta correspondence and the associated
    Siegel section for the
    doubling method,
    and calculate explicitly
    the non-archimedean local zeta 
    doubling integrals
    and also show the non-vanishing of
    local Fourier coefficients.
    In Section 5,
    we use known results to write
    the transfer from automorphic representations
    of $\mathrm{GSp}_4(\mathbb{A})$
    to those of $\mathrm{U}_4(\mathbb{A})$
    in terms of Langlands parameters.
    This allows us to relate the
    Galois representations associated to
    $\pi$ and $\pi'$.
    In Section 6, we construct an explicit
    morphism between these two Galois representations
    and at the end we give the main result of the article.

	\paragraph*{\textbf{Notations}}\label{notations}
	\begin{enumerate}
		\item 
		We fix an odd rational prime $p$.
		We fix the following isomorphisms and inclusions
		of fields that are
		compatible with each other:
		\[
		\iota^\infty_p\colon
		\mathbb{C}\simeq
		\overline{\mathbb{Q}}_p,
		\quad
		\iota_\infty\colon
		\overline{\mathbb{Q}}
		\hookrightarrow
		\mathbb{C},
		\quad
		\iota_p
		=\iota_p^\infty\circ\iota_\infty
		\colon
		\overline{\mathbb{Q}}
		\hookrightarrow
		\overline{\mathbb{Q}}_p.
		\]

		\item 
		We denote by
		$\mathbb{A}
		=\mathbb{R}\times\mathbb{A}_\mathrm{f}$
		the ring of ad\`{e}les of $\mathbb{Q}$.
		For any place $v$ of $\mathbb{Q}$,
		we write $|\cdot|_v$ for the $v$-adic
		valuation on $\mathbb{Q}_v$
		such that $|\ell|_\ell=\ell^{-1}$
		and $|\cdot|_\infty$ is the absolute value.

		\item 
		We fix a square-free positive integer $N$ prime to $p$.
		We then write
		$E=\mathbb{Q}(\sqrt{-N})$.
		We define a Hecke character
		$\xi
		\colon
		\mathbb{Q}^\times\backslash\mathbb{A}^\times
		\rightarrow
		\{\pm1\},
		x\mapsto
		(x,-N)$.
		Here
		$(x,-N)=\prod_v(x_v,-N)_v$
		is the product of local Hilbert symbols
		for all places $v$ of
		$\mathbb{Q}$.

		\item 
		For each place $v$ of $\mathbb{Q}$,
		we fix an additive character
		$\mathbf{e}_v\colon\mathbb{Q}_v
		\rightarrow\mathbb{C}^\times$
		as follows:
		if $v=\infty$, then
		$\mathbf{e}_\infty(x)=\mathrm{exp}(2i\pi x)$;
		if $v\nmid\infty$, then
		$\mathbf{e}_v(x)=
		\mathbf{e}(-2i\pi\{x\}_v)$
		where $\{x\}_v$ is the fractional part of $x$ in $\mathbb{Q}_v$.	
		We then define a character
		of $\mathbb{A}$, trivial on $\mathbb{Q}$,
		as the tensor product
		of all these $\mathbf{e}_v$:
		\[
		\mathbf{e}:=
		\otimes_v\mathbf{e}_v
		\colon
		\mathbb{A}\rightarrow
		\mathbb{C}^\times.
		\]

		\item 
		For any algebraic group $G$ defined over $\mathbb{Q}$, we write
		$[G]$ for the quotient
		$G(\mathbb{Q})\backslash G(\mathbb{A})$.

		\item 
		Let $R$ be any commutative ring.
		We write $\mathrm{M}_{n\times m}(R)$
		to be the set of
		$R$-valued $n\times m$-matrices.
		We denote by $1_n$ the identity matrix of
		size $n\times n$.
		The Borel subgroup of
		$\mathrm{GL}_n$
		consisting of upper triangular matrices
		is denoted by $B_n$,
		its maximal torus by
		$T_n$ and
		unipotent radical by
		$N_n$.
		The transpose of
		$B_n$ is denoted by
		$B_n^-$,
		consisting of lower triangular matrices.
		For any $g\in\mathrm{M}_{n\times m}$,
		we write
		$g^t$ for its transpose.
		If $g\in\mathrm{GL}_n$,
		we write
		$g^{-t}$ for 
		$(g^t)^{-1}$.

		\item
		We write
		$J_{2n}=\begin{pmatrix}
		0 & 1_n \\
		-1_n & 0
		\end{pmatrix}$.
		The similitude symplectic group
		$G=\mathrm{GSp}_{2n}$
		over $\mathbb{Z}$
		is identified with the subgroup of
		$\mathrm{GL}_{2n}$
		consisting of matrices
		$g$ such that
		$g^tJ_{2n}g=\nu(g)J_{2n}$.
		Write ${g=\begin{pmatrix}
		A & B \\
		C & D
		\end{pmatrix}\in\mathrm{GSp}_{2n}}$
		in $n\times n$-blocks.
		We write
		$P_G$
		to be the subgroup of $G$
		consisting of $g$ such that $C=0$,
		$L_G$ the Levi subgroup of
		$P_G$
		consisting of those $g$ such that
		$B=0$,
		$B_G$
		the subgroup of $P_G$
		consisting of $g$ such that
		$A\in B_n$,
		similarly,
		$N_G$
		consisting of $g$ such that
		$A\in N_n$,
		$T_G$
		consisting of $g$ such that
		$A\in T_n$ and $B=0$.
		Let $Z_G$ be the center of $G$.
		We write the Lie algebras of
		$G$ and $T_G$ as
		$\mathfrak{g}_G$ and $\mathfrak{t}_G$ respectively.
		We fix a $\mathbb{C}$-basis for 
		the Lie algebra
		$\mathfrak{g}_G(\mathbb{C})
		=\mathfrak{gsp}_{2n}(\mathbb{C})$ 
		as follows
		\begin{align*}
		\eta^0
		&
		=1_{2n},
		&
		\eta_{i,j}
		&
		=
		E_{i,j}-E_{j+n,i+n},
		&
		(1\leq i,j\leq n),
		\\
		\mu_{i,j}^+
		&
		=
		E_{i,j+n}+E_{j,i+n},
		&
		\mu^-_{i,j}
		&
		=
		E_{i+n,j}+E_{j+n,i},
		&
		(1\leq i<j\leq n),
		\\
		\mu^+_{i,i}
		&
		=
		E_{i,i+n},
		&
		\mu^-_{i,i}
		&
		=
		E_{i+n,i}
		&
		(1\leq i\leq n).
		\end{align*}
		We fix a maximal compact subgroup
		$K_{G,\infty}$
		of $G(\mathbb{R})$
		consisting of matrices
		$\begin{pmatrix}
		A & B \\
		-\nu B & \nu A 
		\end{pmatrix}$
		such that
		$A+iB\in\mathrm{U}_n(\mathbb{R})$
		and $\nu=\pm1$.
		We write $\Gamma(N)$
		for the subgroup of
		$\mathrm{GSp}_{2n}(\mathbb{Z})$
		consisting of matrices $g$
		such that $g\equiv1(\mathrm{mod}\,N)$
		and $\Gamma=\Gamma(N,p^m)$ the subgroup of
		$\Gamma(N)$
		consisting of matrices
		$g$ such that
		$g\,(\mathrm{mod}\,p^m)\in
		N_{\mathrm{Sp}_{2n}}(\mathbb{Z}/p^m)$
		($m>0$).
		Denote by $\widehat{\Gamma}(N)$,
		resp., $\widehat{\Gamma}$
		the completion of
		$\Gamma(N)$, resp., $\Gamma$ in
		$\mathrm{GSp}_{2n}(\mathbb{A}_\mathrm{f})$.

		\item 
		We write $\rho_G$ for the half sum of the positive
		roots of $\mathfrak{gsp}_{2n}(\mathbb{C})$
		with respect to $T_G$,
		$\rho_{L_G}$
		for the half sum of the positive roots
		of the Levi subgroup $L_G$
		with respect to $T_G$.

		\item
		We write
		$G^1:=\mathrm{Sp}_{2n}$
		for the subgroup of
		$\mathrm{GSp}_{2n}$
		consisting of $g$ such that $\nu(g)=1$.
		We write
		$B_{G^1}$,
		$N_{G^1}$,
		$T_{G^1}$
		and
		$K_{G^1,\infty}$
		for the respective intersection
		of the groups $B_G$,
		$N_G$, $T_G$,
		$K_{G,\infty}$
		with $G^1$.

		\item 
		For a symmetric matrix
		$x\in\mathrm{Sym}_{n\times n}$,
		we write
		$u(x)=\begin{pmatrix}
		1_n & x \\
		0 & 1_n
		\end{pmatrix}\in
		\mathrm{GSp}_{2n}$.
		Similarly,
		for an invertible matrix
		$y\in\mathrm{GL}_n$,
		we write
		$m(y)=\begin{pmatrix}
		y & 0 \\
		0 & y^{-t}
		\end{pmatrix}\in
		\mathrm{GSp}_{2n}$.
		For a finite dimensional $F$-vector space $V$
		and any $y\in\mathrm{GL}_F(V)$,
		we also write
		$m(y)=\begin{pmatrix}
		y & 0 \\
		0 & y^\vee
		\end{pmatrix}$
		where $y^\vee\in\mathrm{GL}_F(V^\vee)$ 
		is the dual of $y$.
		For any $g\in\mathrm{GSp}_{2n}(\mathbb{Q}_v)$,
		the Iwasawa decomposition of
		$\mathrm{GSp}_{2n}(\mathbb{Q}_v)$
		gives a decomposition
		$g=\mathrm{diag}(1,\nu(g))m(y)u(x)k$
		for some $y,x$ and
		$k$ in the standard
		maximal compact subgroup
		$\mathrm{GSp}_{2n}(\mathbb{Z}_v)$ of 
		$\mathrm{GSp}_{2n}(\mathbb{Q}_v)$.
		Then we set
		$\mathbf{m}(g)=
		\mathrm{det}(a)^{n-1}\nu(g)^{-n(n-1)/2}$.

		\item 
		Let $\mathbb{H}_n$
		denote the Siegel upper half plane,
		consisting of
		$z\in\mathrm{M}_{n\times n}
		(\mathbb{C})$
		such that
		$z^t=z$ and $\mathrm{Im}(z)$ is positive definite.
		Let $\mathrm{GSp}^+_{2n}(\mathbb{R})$
		be the subgroup of $\mathrm{GSp}_{2n}(\mathbb{R})$
		consisting of $g$ with $\nu(g)>0$.
		The group
		$\mathrm{GSp}^+_{2n}(\mathbb{R})$
		acts on $\mathbb{H}_n$
		by
		$g\cdot z=
		(Az+B)(Cz+D)^{-1}$.
		For each $z\in\mathbb{H}_n$,
		we define a $1$-cocycle on 
		$\mathrm{GSp}^+_{2n}(\mathbb{R})$
		as
		$\mu(g,z):=Cz+D$.

		\item 
		We fix some sufficiently large
		extension of $\mathbb{Q}_p$
		containing 
		$\sqrt{2}$ and
		roots of unity
		$\mu_{N^2C(\pi)}$
		where $C(\xi)$
		is certain positive number depending on
		the $p$-ordinary
		automorphic representation
		(see the beginning of
		Section \ref{ramified place p}
		for its definition).
		Its ring of integers is denoted by
		$\mathcal{O}$
		with a fixed uniformizer
		$\varpi$.
	\end{enumerate}

	\section{Modular forms on $\mathrm{GSp}_4$}
	
	In this section we review the notions of Siegel modular variety
	associated to the symplectic groups
	$\mathrm{GSp}_4$ and
	$\mathrm{GSp}_8$.
	Many of the materials in this section are
	simply bookkeeping 
	from the references given in each subsection
	and we claim no originality,
	yet any errors in this section should be
	the author's of this article.

	\subsection{Siegel modular forms}
	In this subsection we review nearly holomorphic
	modular forms on
	$G=\mathrm{GSp}_{2n}$.
	We follow closely the treatment in
	\cite{Liu2015NearlyOverconvergent,Liu2015}.

	\subsubsection{Arithmetic modular forms
		and Hecke operators}
    A PEL datum is a tuple
	$\mathcal{P}=
	(L,
	\langle\cdot,\cdot
	\rangle,
	h)$
	where
	(\cite[1A]{Lan2012}):
	\begin{enumerate}\label{PEL hypothesis}		
		\item $L$ is a free $\mathbb{Z}$-module
		of finite rank,
		and 
		$\langle\cdot,\cdot
		\rangle
		\colon
		L\times
		L
		\rightarrow
		\mathbb{Z}(1)
		:=\mathrm{Ker}(\mathrm{exp}
		\colon
		\mathbb{C}
		\rightarrow
		\mathbb{C}^\times)$
		is a non-degenerate symplectic pairing
		such that for any
		$b\in\mathbb{Z}$
		and
		$x,y\in L$, we have
		$\langle
		bx,y
		\rangle
		=
		\langle
		x,by
		\rangle$;
		
		\item $h
		\colon
		\mathbb{C}
		\rightarrow
		\mathrm{End}_{\mathbb{R}}
		(L\otimes_\mathbb{Z}\mathbb{R})$
		is a homomorphism such that
		for any
		$z\in\mathbb{C}$
		and
		$x,y\in L\otimes\mathbb{R}$,
		we have
		$\langle
		h(z)x,
		y
		\rangle
		=
		\langle
		x,
		h(\overline{z})y
		\rangle$.
		Moreover, we require that 
		the new bilinear form
		$-\langle
		\cdot,
		h(i)\cdot
		\rangle$
		on $L$
		is symmetric and positive definite.
	\end{enumerate}

    Given such a datum $\mathcal{P}$,
    suppose that $L\simeq\mathbb{Z}^n$,
	one can then define the similitude symplectic group
	$G=G_\mathcal{P}$
	over $\mathbb{Z}$
	associated to the pairing
	$\langle\cdot,\cdot\rangle$ on $L$
	as follows:
	for any $\mathbb{Z}$-algebra $R$,
	the $R$-points of $G$ is given by
	\[
	G(R)=
	\{
	(g,\nu)
	\in
	\mathrm{GL}_{R}(L\otimes R)
	\times (L\otimes R)^\times|
	\langle
	gx,gy
	\rangle
	=
	\nu
	\langle
	x,y
	\rangle,
	\forall
	x,y\in
	L\otimes R
	\}
	\]
	Sometimes we also write an element in
	$G(R)$ as simply $g'=(g,\nu)$ and refer
	to $\nu(g')$ as the similitude factor of $g'$.
	Fixing a $\mathbb{Z}$-basis of $L$
	under which $\langle\cdot,\cdot\rangle$
	is of the form
	$J_{2n}$,
	one can identify $G$
	with $\mathrm{GSp}(2n)$.

	We have an algebraic stack
	$\mathbf{A}_{G,\widehat{\Gamma}}$
	over $\mathrm{Spec}(\mathbb{Z}[1/Np])$
	parameterizing the principally polarized
	abelian schemes over $\mathbb{Q}$ 
	of dimension $n$,
	level structure $\widehat{\Gamma}$
	(\cite[Chapter I.4.11]{FaltingsChai}).
	On the other hand,
	the complexification
	$\mathbf{A}_{G,\widehat{\Gamma}}(\mathbb{C})$
	is isomorphic to the
	Shimura variety
	$G^1(\mathbb{Z})
	\backslash
	\mathbb{H}_n\times G
	(\widehat{\mathbb{Z}})/\widehat{\Gamma}$
	which parameterizes the principally 
	polarized abelian varieties
	over $\mathbb{C}$ of dimension $n$,
	level structure $\widehat{\Gamma}$
	(\cite[Chapter I.6]{FaltingsChai}).
	We write
	$\widetilde{\mathbf{A}}_{G,\widehat{\Gamma}}$
	for the toroidal compactification of
	$\mathbf{A}_{G,\widehat{\Gamma}}$
	with boundary $C=C_{\widehat{\Gamma}}$.
	Let
	$\widetilde{\mathcal{A}}$
	be the universal semi-abelian scheme over $\widetilde{\mathbf{A}}_{G,\widehat{\Gamma}}$
	with the morphism
	${\mathbf{p}
		\colon
		\widetilde{\mathcal{A}}_{G,\widehat{\Gamma}}
		\rightarrow
		\widetilde{\mathbf{A}}_{G,\widehat{\Gamma}}}$.
	Then $\widetilde{\mathcal{A}}_{G,\widehat{\Gamma}}$ restricts
	to the universal abelian scheme 
	$\mathcal{A}_{G,\widehat{\Gamma}}$
	on $\mathbf{A}_{G,\widehat{\Gamma}}$
	and we still write
	$\mathbf{p}\colon
	\mathcal{A}_{G,\widehat{\Gamma}}
	\rightarrow
	\mathbf{A}_{G,\widehat{\Gamma}}$.

	We write
	${\omega(\widetilde{\mathcal{A}}_{G,\widehat{\Gamma}}
		/\widetilde{\mathbf{A}}_{G,\widehat{\Gamma}})
		=
		\mathbf{p}_*(
		\Omega^1_{\widetilde{\mathcal{A}}_{G,\widehat{\Gamma}}/
			\widetilde{\mathbf{A}}_{G,\widehat{\Gamma}}})}$
	for the sheaf of invariant differentials,
	locally free of rank $n$.
	The sheaf
	${
	\mathcal{H}^1_\mathrm{dR}
	(\mathcal{A}_{G,\widehat{\Gamma}}
	/\mathbf{A}_{G,\widehat{\Gamma}})
	=
	R^1\mathbf{p}_*
	(\Omega^1_{\mathcal{A}_{G,\widehat{\Gamma}}
		/\mathbf{A}_{G,\widehat{\Gamma}}})
    }$
	has a canonical extension
	$\mathcal{H}^1_\mathrm{dR}(\mathcal{A}_{G,\widehat{\Gamma}}
	/\mathbf{A}_{G,\widehat{\Gamma}})^\mathrm{can}$
	to $\widetilde{\mathbf{A}}_{G,\widehat{\Gamma}}$.
	One can show that
	$\mathcal{H}^1(\mathcal{A}_{G,\widehat{\Gamma}}
	/\mathbf{A}_{G,\widehat{\Gamma}})^\mathrm{can}$
	is locally free of rank $2n$ with the following Hodge filtration
	\[
	0
	\rightarrow
	\omega(\widetilde{\mathcal{A}}_{G,\widehat{\Gamma}}
	/\widetilde{\mathbf{A}}_{G,\widehat{\Gamma}})
	\rightarrow
	\mathcal{H}^1_\mathrm{dR}
	(\mathcal{A}_{G,\widehat{\Gamma}}
	/\mathbf{A}_{G,\widehat{\Gamma}})^\mathrm{can}
	\rightarrow
	\underline{\mathrm{Lie}}
	(\widetilde{\mathcal{A}}^t_{G,\widehat{\Gamma}}
	/\widetilde{\mathbf{A}}_{G,\widehat{\Gamma}})
	\rightarrow
	0.
	\]
	Moreover, $\mathcal{H}^1_\mathrm{dR}(\mathcal{A}_{G,\widehat{\Gamma}}
	/\mathbf{A}_{G,\widehat{\Gamma}})$
	has a symplectic pairing such that
	$\omega(\widetilde{\mathcal{A}}_{G,\widehat{\Gamma}}
	/\widetilde{\mathbf{A}}_{G,\widehat{\Gamma}})$
	is maximally isotropic.
	One can also show that the Gauss-Manin connection
	on $\mathcal{H}^1_\mathrm{dR}(\mathcal{A}_{G,\widehat{\Gamma}}
	/\mathbf{A}_{G,\widehat{\Gamma}})$
	extends to an integrable connection on
	$\mathcal{H}^1_\mathrm{dR}(\mathcal{A}_{G,\widehat{\Gamma}}
	/\mathbf{A}_{G,\widehat{\Gamma}})^\mathrm{can}$
	with log poles along the boundary $C_{\widehat{\Gamma}}$
	denoted as follows
	\[
	\nabla
	\colon
	\mathcal{H}^1_\mathrm{dR}(\mathcal{A}_{G,\widehat{\Gamma}}
	/\mathbf{A}_{G,\widehat{\Gamma}})^\mathrm{can}
	\rightarrow
	\mathcal{H}^1_\mathrm{dR}(\mathcal{A}_{G,\widehat{\Gamma}}
	/\mathbf{A}_{G,\widehat{\Gamma}})^\mathrm{can}
	\otimes_{\mathcal{O}_{\widetilde{\mathbf{A}}_{G,
				\widehat{\Gamma}}}}
	\Omega^1_{\widetilde{\mathbf{A}}_{G,\widehat{\Gamma}}}
	(\mathrm{log} C_{\widehat{\Gamma}}).
	\]

	We have the following right 
	$P_G$-torsor over 
	$\widetilde{\mathbf{A}}_{G,\widehat{\Gamma}}$
	\[
	\pi
	\colon
	T_\mathcal{H}
	=
	\underline{\mathrm{Isom}}_{
		\widetilde{\mathbf{A}}_{G,\widehat{\Gamma}}}
	(L\otimes_\mathbb{Z}\mathcal{O}_{
		\widetilde{\mathbf{A}}_{G,\widehat{\Gamma}}},
	\mathcal{H}^1_\mathrm{dR}
	(\mathcal{A}_{G,\widehat{\Gamma}}
	/\mathbf{A}_{G,\widehat{\Gamma}})^\mathrm{can})
	\rightarrow
	\widetilde{\mathbf{A}}_{G,\widehat{\Gamma}}
	\]	
	For any $\mathbb{Z}$-algebra $R$,
	a global section $f$ of $\pi_*(\mathcal{O}_{T_\mathcal{H}})$ 
	over $\widetilde{\mathbf{A}}_{G,
		\widehat{\Gamma}/R}$
	can be viewed as a functor assigning to each pair
	$(A,\epsilon)$ over an $R$-algebra $S$
	an element $f(A,\epsilon)\in S$,
	where $A$ is an element in
	$\mathbf{A}_{G,\widehat{\Gamma}}(S)$ and
	$\epsilon$ is a corresponding element in 
	$T_\mathcal{H}(S)$.
	We write
	$\mathrm{Rep}_\mathbb{Q}(P_G)$
	for the category of algebraic representations of 
	$P_G$ over 
	$\mathbb{Q}$-vector spaces.	
	With this torsor one can define the functor
	\[
	\mathcal{E}
	\colon
	\mathrm{Rep}_\mathbb{Q}(P_G)
	\rightarrow
	\mathrm{QCoh}(\widetilde{\mathbf{A}}_{G,\widehat{\Gamma}}),
	V
	\mapsto
	T_\mathcal{H}\times^{P_G}V.
	\]
	The image $\mathcal{E}(V)$ is a locally free sheaf over
	$\widetilde{\mathbf{A}}_{G,\widehat{\Gamma}}$.
	In particular, the similitude factor
	$\nu$ in $G$ defines an element 
	(again denoted by) $\nu$
	in $\mathrm{Rep}_\mathbb{Q}
	(P_G)$
	and thus an invertible sheaf
	$\mathcal{E}(\nu)$ over $\widetilde{\mathbf{A}}_{G,\widehat{\Gamma}}$.
	We will write
	$\mathcal{E}(V)(k)$
	for $\mathcal{E}(V)\otimes\mathcal{E}(\nu)^{\otimes k}$.
	
	Let $\mathfrak{g}_G$ be the Lie algebra
	of $G$.
	Then we write
	$\mathrm{Rep}_\mathbb{Q}(\mathfrak{g}_G,
	P_G)$
	for the category of 
	$(\mathfrak{g}_G,P_G)$-modules.
	More precisely, an object in
	$\mathrm{Rep}_\mathbb{Q}
	(\mathfrak{g}_G,P_G)$
	is an object $W$ in 
	$\mathrm{Rep}_\mathbb{Q}(P_G)$
	with an extra action of $\mathfrak{g}_G$ on $W$
	such that
	the restriction of this action to the Lie algebra
	$\mathfrak{p}_G$ of 
	$P_G$ agrees with the action
	of $\mathfrak{p}_G$
	induced from $P_G$.
	Moreover, we require that
	for any $g\in G$, $X\in\mathfrak{g}_G$
	and $w\in W$, there is the compatibility
	$(gXg^{-1})w=(\mathrm{Ad}(g)X)w$.
	For any $(\mathfrak{g}_G,P_G)$-module $V$,
	the Gauss-Manin connection
	$\nabla$ on
	$\mathcal{H}^1_\mathrm{dR}(\mathcal{A}_{G,\widehat{\Gamma}}
	/\mathbf{A}_{G,\widehat{\Gamma}})^{\mathrm{can}}$
	induces the Gauss-Manin connection on
	the sheaf
	(\cite[Proposition 2.2.3]{Liu2015NearlyOverconvergent})
	\[
	\nabla
	\colon
	\mathcal{E}(V)
	\rightarrow
	\mathcal{E}(V)
	\otimes_{\mathcal{O}_{
			\widetilde{\mathbf{A}}_{G,\widehat{\Gamma}}}}
	\Omega_{\widetilde{\mathbf{A}}_{G,\widehat{\Gamma}}}(\log C).
	\]
	One can show that this connection induces Hecke-equivariant
	maps on global sections.
	We next use $\nabla$
	to construct a differential operator.	
	Let $(\rho,W_\rho)$ be a finite dimension 
	algebraic representation
	of $\mathrm{GL}_n$. 
	We can associate 
	to it a $(\mathfrak{g}_G,P_G)$-module 
	$V_\rho$
	as follows
	(\cite[Section 2.3]{Liu2015NearlyOverconvergent}):
	write 
	$\underline{Z}$
	for the $n\times n$-matrix
	which is symmetric
	with entries $Z_{i,j}=Z_{j,i}$
	($1\leq i,j\leq n$).
	As a $\mathbb{Q}$-vector space, we set
	$V_\rho
	=
	W_\rho[\underline{Z}]$ to be 
	the space of polynomials in
	$Z_{i,j}$ with coefficients in $W_\rho$.
	We define an action of $P_G$ 
	on $V_\rho$ as follows:
	for any
	$\begin{pmatrix}
	a & b \\
	0 & d
	\end{pmatrix}\in P_G$
	and 
	$f(\underline{Z})$, we set
	$\begin{pmatrix}
	a & b \\
	0 & d
	\end{pmatrix}f(\underline{Z})
	:=af(a^{-1}b+a^{-1}\underline{Z}d)$.
	One can also define an action of $\mathfrak{g}_G$ on $V_\rho$
	as in \cite[Section 2.2]{Liu2015}
	and verify that
	$V_\rho$ becomes a $(\mathfrak{g}_G,P_G)$-module.	
	There is a natural filtration on $V_\rho$ respecting the action of 
	$P_G$
	given by the total degree of the polynomials in $V_\rho$:
	$V_\rho=\bigcup_{r\in\mathbb{N}} V_\rho^r$
	where
	$V_\rho^r
	=
	W_\rho[\underline{Z}]_{\mathrm{deg}\leq r}$.
	One can show that
	$\mathfrak{g}_GV_\rho^r\subset V_\rho^{r+1}$
	and the connection
	$\nabla$ restricts to
	$\nabla
	\colon
	\mathcal{E}(V_\rho^r)
	\rightarrow
	\mathcal{E}(V_\rho^{r+1})
	\otimes_{\mathcal{O}_{
			\widetilde{\mathbf{A}}_{G,\widehat{\Gamma}}}}
	\Omega_{
		\widetilde{\mathbf{A}}_{G,\widehat{\Gamma}}}(\mathrm{log} C)$.
	Let $\tau_n
	\colon
	\mathrm{GL}_n
	\rightarrow
	\mathrm{GL}(n(n+1)/2)$
	be the symmetric square representation of
	$\mathrm{GL}_n$.
	Then we get the following differential operator
	(\cite[Section 2.2]{Liu2015}):
	\[
	\nabla_\rho
	\colon
	\mathcal{E}(V_\rho^r)
	\xrightarrow{\nabla}
	\mathcal{E}(V_\rho^{r+1})
	\otimes_{\mathcal{O}_{
			\widetilde{\mathbf{A}}_{G,\widehat{\Gamma}}}}
	\Omega_{
		\widetilde{\mathbf{A}}_{G,\widehat{\Gamma}}}(\log C)
	\xrightarrow{\mathrm{K}-\mathrm{S}}
	\mathcal{E}(V_{\rho\otimes\tau_n})^{r+1}(-1)
	\rightarrow
	\mathcal{E}(V_{\rho\otimes\tau_n})^{r+1}.
	\]
	Here the map
	$\mathrm{K}-\mathrm{S}$
	is the Kodaira-Spencer isomorphism
	(\cite[Propositin 6.9(5)]{Lan2012}).
	Moreover, the composition of the first two maps is
	Hecke-equivariant.

	We next define the $q$-expansion of nearly
	holomorphic forms on the geometric side.
	This will give the integral structure on
	the space of automorphic forms.	
	Let $S_n=(L^+\otimes_\mathbb{Z}L^+)
	/(v\otimes v'=v'\otimes v)$ 
	be the symmetric quotient of
	$L^+\otimes_\mathbb{Z}L^+$.
	Write $S_{n,\geq0}$
	for the subset of $S_n$ consisting of
	elements $(v,v')$
	such that
	$f(v,v')\geq0$
	for any symmetric semi-positive definite
	bilinear form
	$f$ on
	$(L^+\times L^+)\otimes_\mathbb{Z}\mathbb{R}$.
	Write
	$\{s_1,s_2,\cdots,s_{n(n+1)/2}\}$
	for a $\mathbb{Z}$-basis of $S_n$ lying inside $S_{n,\geq0}$.
	We then set the Laurent power series
	$\mathbb{Z}((S_{n,\geq0}))
	=
	\mathbb{Z}[[S_{n,\geq0}]][1/s_1\cdots s_{n(n+1)/2}]$.
	There is a natural embedding
	$S_{n,\geq0}\hookrightarrow
	\mathbb{Z}[[S_{n,\geq0}]]$
	and we denote the image of
	$\beta\in S_{n,\geq0}$
	by $q^\beta$.	
	Now we have a natural map
	$\mathbb{Z}(e_1^+,\cdots,e_n^+)
	\rightarrow
	\mathbb{Z}(e_1^-,\cdots,e_n^-)
	\otimes S_n$
	which gives a period group
	$\mathbb{Z}(e_1^+,\cdots,e_n^+)\subset
	\mathbb{Z}(e_1^-,\cdots,e_n^-)
	\otimes
	\mathbb{G}_{m/\mathbb{Z}((S_{n,\geq0}))}$,
	principally polarized by the duality
	between
	$\mathbb{Z}(e_1^+,\cdots,e_n^+)$
	and 
	$\mathbb{Z}(e_1^-,\cdots,e_n^-)$
	given by the symplectic form on $L$.
	Mumford's construction
	\cite{FaltingsChai} gives an abelian variety
	$A_{/\mathbb{Z}((S_{n,\geq0}))}$
	with a canonical polarization
	$\lambda_\mathrm{can}$
	and a canonical basis
	$\omega_\mathrm{can}
	=(\omega_{1,\mathrm{can}},\cdots,\omega_{n,\mathrm{can}})$
	of
	$\omega(A/\mathbb{Z}((S_{n,\geq0})))$.
	We can then define the level structure 
	$\psi_{N,\mathrm{can}}$
	and filtration
	$\mathrm{fil}^+_{p^m,\mathrm{can}}$
	of this abelian variety
	$A\times_{\mathbb{Z}((S_{n,\geq0}))}
	\mathbb{Z}((\frac{1}{N}S_{n,\geq0}))
	[\zeta_N,\frac{1}{Np}]$
	from the following exact sequence
	\[
	0
	\rightarrow
	\mathbb{Z}(e_1^-,\cdots,e_n^-)
	\otimes
	\prod_l\lim\limits_{\overleftarrow{n}}\mu_{l^n}
	\rightarrow
	\prod_l
	T_l(A)
	\rightarrow
	\widehat{\mathbb{Z}}(e_1^+,\cdots,e_n^+)
	\rightarrow
	0.
	\]

	Denote by $D_{i,j}\in
	\mathrm{Der}(\mathbb{Z}((S_{n,\geq0})),\mathbb{Z}((S_{n,\geq0})))$
	the element dual to
	$\omega_{i,\mathrm{can}}\omega_{j,\mathrm{can}}$
	for all $1\leq i,j\leq n$
	and
	by
	$\delta_{i,\mathrm{can}}
	=
	\nabla(D_{i,i})\omega_{i,\mathrm{can}}$.
	For any $\beta\in S_{n,\geq0}$,
	one can show that
	$D_{i,j}(q^\beta)
	=
	(2-\delta_{i,j})\beta_{i,j}q^\beta$.
	From this one sees that
	$\delta_\mathrm{can}
	=
	(\delta_{1,\mathrm{can}},\cdots,\delta_{n,\mathrm{can}})$
	and
	$\omega_{\mathrm{can}}$
	forms a basis of
	$\mathcal{H}^1_\mathrm{dR}
	(A/\mathbb{Z}((S_{n,\geq0})))$
	compatible with the Hodge filtration
	and the symplectic form.
	For any
	$f\in
	H^0(\widetilde{\mathbf{A}}_{G,\widehat{\Gamma}},
	\mathcal{E}(V_\rho^r))
	=
	M_\rho(\widehat{\Gamma},\mathbb{Z},r)$,
	its evaluation at the Mumford test object
	\[
	(
	A_{/\mathbb{Z}((\frac{1}{N}S_{n,\geq0}))[\zeta_N,\frac{1}{Np}]},
	\lambda_\mathrm{can},
	\psi_{N,\mathrm{can}},
	\mathrm{fil}^+_{p^m,\mathrm{can}},
	\omega_{\mathrm{can}},
	\delta_\mathrm{can}
	)
	\]
	gives the polynomial $q$-expansion 
	$f(q,\underline{Z})$
	of $f$,
	which by definition is an element in
	the power series ring
	${\mathbb{Z}((\frac{1}{N}S_{n,\geq0}))[\zeta_N,\frac{1}{Np}]
		\otimes_\mathbb{Z}W_\rho(\underline{Z})_{\mathrm{deg}\leq r}}$.
	For the case 
	$\rho=\rho_{\underline{k}}$, apply 
	the element
	$\mathfrak{e}$ to
	$f(q,\underline{Z})$ and we get
	an element in
	$\mathbb{Z}[[\frac{1}{N}S_{n,\geq0}]][\zeta_N,\frac{1}{Np}]$.
	In summary, combining the above two successive
	operations on $f$, we get the following
	$q$-expansion map
	($F$ is a $\mathbb{Q}(\zeta_N)$-algebra)
	\begin{equation}\label{q-expansion}
	\varepsilon_q
	\colon
	H^0(\widetilde{\mathbf{A}}_{G,\widehat{\Gamma}},
	\mathcal{E}(V_\rho^r))
	\otimes_{\mathbb{Q}(\zeta_N)}F
	\rightarrow
	\mathbb{Z}[[\frac{1}{N}S_{n,\geq0}]]
	\otimes_\mathbb{Z}F.
	\end{equation}
	The map
	$\varepsilon_q$
	depends on the representation
	sheaf $\mathcal{E}(V_\rho^r)$
	yet we omit from the notation.
	By
	\cite[Lemma V.1.4]{FaltingsChai},
	we know that
	this map is injective.	
	For any subring $R$ of $F$,
	we write
	\[
	H^0(\widetilde{\mathbf{A}}_{G,\widehat{\Gamma},R},
	\mathcal{E}(V_\rho^r))
	:=
	\varepsilon_q^{-1}
	(\mathbb{Z}[[\frac{1}{N}
	S_{n,\geq0}]]
	\otimes_\mathbb{Z}R).
	\]
	This gives an integral structure of
	$H^0(\widetilde{\mathbf{A}}_{G,\widehat{\Gamma}},
	\mathcal{E}(V_\rho^r))
	\otimes_{\mathbb{Q}(\zeta_N)}F$.

	Let $\underline{k}=(k_1,\cdots,k_n)\in
	\mathbb{Z}^n$
	be a dominant character of
	the torus $T_n$ of
	$\mathrm{GL}_n$
	with respect to $B_n^-$
	(i.e.,
	$k_1\geq k_2\geq \cdots \geq k_n$).
	We write $(\rho_{\underline{k}},W_{\underline{k}})$
	for the irreducible algebraic representation of 
	$\mathrm{GL}_n$ associated to
	the character $\underline{k}$
	(of highest weight $\underline{k}$),
	which is defined as follows:
	for any 
	$\mathbb{Z}$-algebra $R$,
	\[
	W_{\underline{k}}(R)
	=
	\{
	f\in H^0(\mathrm{GL}_{n/R},\mathcal{O}_{\mathrm{GL}_n})|
	f(bh)
	=
	\underline{k}(b)f(h),
	\forall
	b\in B_n(R)
	\}
	\]
	An element $g\in \mathrm{GL}_n(R)$ acts on
	$f\in W_{\underline{k}}(R)$
	by $(gf)(h)=f(hg)$.

	\begin{definition}
		\label{nearly holomorphic automorphic forms}
		We fix a dominant character $\underline{k}$
		of $T_n$,
		an integer $r\geq0$,
		a nearly holomorphic 
		automorphic form of weight $\underline{k}$,
		degree $r$, level $\widehat{\Gamma}$
		on $\widetilde{\mathbf{A}}_{G,\widehat{\Gamma}}$
		is a global section of the sheaf
		$\mathcal{E}(V_{\underline{k}})^r
		:=\mathcal{E}(V_{\underline{k}}^r)$.
		The holomorphic automorphic form
		of weight $\underline{k}$, level $\widehat{\Gamma}$
		on $\widetilde{\mathbf{A}}_{G,\widehat{\Gamma}}$
		is a global section of the sheaf
		$\mathcal{E}(V_{\underline{k}}^0)$.
		
		More generally, for any $\mathbb{Z}$-algebra $R$,
		we write the space of
		$R$-valued, weight $\underline{k}$, degree $r$
		and level $\widehat{\Gamma}$ 
		nearly holomorphic automorphic forms on
		$\widetilde{\mathbf{A}}_{G,\widehat{\Gamma}}$
		as
		${
		M_{\underline{k}}(\widehat{\Gamma},R,r)
		=H^0
		(\widetilde{\mathbf{A}}_{G,\widehat{\Gamma}
			/R},\mathcal{E}(V_{\underline{k}}^r)).
	    }$.		
		The subspace of cuspidal forms is
		$S_{\underline{k}}(\widehat{\Gamma},R,r)
		=
		H^0(\widetilde{\mathbf{A}}_{G,\widehat{\Gamma}/R},
		\mathcal{E}(V_{\underline{k}}^r)
		(-C_{\widehat{\Gamma}}))$.

		Similarly we set
		$M_{\underline{k}}(\widehat{\Gamma},R)
		=
		H^0(\widetilde{\mathbf{A}}_{G,\widehat{\Gamma}/R},
		\mathcal{E}(V_{\underline{k}}))$
		and
		$
		S_{\underline{k}}(\widehat{\Gamma},R)
		=
		H^0(\widetilde{\mathbf{A}}_{G,\widehat{\Gamma}/R},
		\mathcal{E}(V_{\underline{k}})
		(-C_{\widehat{\Gamma}}))
		$.
	\end{definition}

	The moduli interpretation
	(\`{a} la Katz) of an element
	$f\in M_{\underline{k}}(\widehat{\Gamma},R,r)$
	is as follows:
	$f$ assigns in a functorial way to each
	tuple $(A_{/S},\lambda,\psi,\alpha)$
	an element
	$f(A_{/S},\lambda,\psi,\alpha)
	\in V_{\underline{k}}^r(S)$.
	Here $S$ is an $R$-algebra and
	$A_{/S}$ is an abelian scheme over $S$,
	$\lambda\colon
	A\rightarrow
	A^\vee$ is a principal polarization,
	$\psi$ is a principal level $N$ structure and
	$\alpha$ is a basis of
	$H^1_\mathrm{dR}(A/S)$
	respecting the Hodge filtration.

	For any algebraic representation
	$(\rho,W_\rho)$ of $\mathrm{GL}_n$,
	one can also consider the top degree 
	coherent cohomology
	$H^d(\widetilde{\mathbf{A}}_{G,\widehat{\Gamma}},
	\mathcal{E}(V_\rho^r))$
	where $d=n(n+1)/2$.
	Let $\rho^\vee$
	denote the dual representation of $\rho$.
	We have Serre duality
	\[
	\langle\cdot,\cdot\rangle^\mathrm{Ser}
	\colon
	H^0(\widetilde{\mathbf{A}}_{G,\widehat{\Gamma}}(\mathbb{C}),
	\mathcal{E}(V_\rho^0))
	\times
	H^d(\widetilde{\mathbf{A}}_{G,\widehat{\Gamma}}(\mathbb{C}),
	\mathcal{E}(V_{\rho^\vee}^0)
	\otimes\wedge^d\Omega_{
		\widetilde{\mathbf{A}}_{G,\widehat{\Gamma}}})
	\rightarrow
	\mathbb{C}.
	\]
	For any dominant weight $\underline{k}$
	of $T_n$,
	we write
	$\underline{k}^D:=
	(-k_n,-k_{n-1},\cdots,-k_1)+
	2(\rho_G-\rho_{L_G})$
	where recall that
	$\rho_G$, resp.,
	$\rho_{L_G}$
	denotes the half sum of positive roots of
	$G$, resp., the Levi subgroup $L_G$
	with respect to $T_G$.
	Then we have an isomorphism
	${\mathcal{E}(V_{\underline{k}^\vee}^0)
	\otimes\wedge^d\Omega_{
		\widetilde{\mathbf{A}}_{G,\widehat{\Gamma}}}
	\simeq
	\mathcal{E}(V_{\underline{k}^D}^0)}$
    (\cite[p.256]{FaltingsChai}).
    Note that
    $W_{2(\rho_G-\rho_{L_G})}$
    is the $1$-dimensional representation of
    $G$ sending $g$ to $\nu(g)^d$.
    The Serre duality can be concretely expressed as
    follows:
    for any $\phi\in
    H^0(\widetilde{\mathbf{A}}_{G,\widehat{\Gamma}}(\mathbb{C}),
    \mathcal{E}(W_\rho))$
    and
    $\rho'\in
    H^d(\widetilde{\mathbf{A}}_{G,\widehat{\Gamma}}(\mathbb{C}),
    \mathcal{E}(W_{\rho^\vee})\otimes
    \wedge^d\Omega_{
    	\widetilde{\mathbf{A}}_{G,\widehat{\Gamma}}})$,
    then
    one has
    \begin{equation}
    \label{Petersson product vs. Serre duality}
    \langle
    \phi,\phi'
    \rangle^\mathrm{Ser}
    =\int_{[G]}
    \phi(g)\phi'(g)|\nu(g)|^{-d}
    dg.
    \end{equation}

	Using the Serre duality,
	we define
	\begin{definition}
		\label{antiholomorphic automorphci forms}
		For any $\mathbb{Z}$-algebra $R$
		and any dominant weight $\underline{k}$
		of $T_n$,
		we write
		\[
		\widehat{M}_{\underline{k}}(\widehat{\Gamma},R)
		:=
		H^d(\mathbf{A}_{G,\widehat{\Gamma},R},
		\mathcal{E}(V_{\underline{k}^D}^0))
		:=
		\mathrm{Hom}_R(
		H^0(\mathbf{A}_{G,\widehat{\Gamma},R},
		\mathcal{E}(V_{\underline{k}}^0)),R
		).
		\]
		\[
		\widehat{S}_{\underline{k}}(\widehat{\Gamma},R)
		:=
		H^d_!(\mathbf{A}_{G,\widehat{\Gamma},R},
		\mathcal{E}(V_{\underline{k}^D}^0))
		:=
		\mathrm{Hom}_R(
		H^0(\mathbf{A}_{G,\widehat{\Gamma},R},
		\mathcal{E}(V_{\underline{k}}^0)),R).
		\]
	\end{definition}
    These $R$-modules give the integral structures of
    the top degree (cuspidal) cohomologies
    (\cite[Section 6.3]{EischenHarrisLiSkinner16}).

	\subsubsection{Automorphic forms}
	In this subsection, we associate
	automorphic forms to modular forms defined above.
	Fix an algebraic representation
	$(\rho,W_\rho)$ of $\mathrm{GL}_n$.	
	Recall that
	${\mathbf{A}_{G,\widehat{\Gamma}}(\mathbb{C})
	=G^1(\mathbb{Z})\backslash
	\mathbb{H}_n\times
	G(\widehat{\mathbb{Z}})/
	\widehat{\Gamma}}$.
    By the moduli interpretation mentioned above,
	to each $(z,k)\in\mathbb{H}_n\times
	G(\widehat{\mathbb{Z}})$,
	we can associate an abelian variety
	$A_{z,k}:=\mathbb{C}^n/(\mathbb{Z}^n\oplus
	z\mathbb{Z}^n)$,
	with a polarization
	$\lambda_z$ given by the 
	Hermitian matrix
	$\mathrm{Im}(z)^{-1}$
	for its Riemann form
	$E_z
	\colon
	\mathbb{C}^n\times\mathbb{C}^n
	\rightarrow
	\mathbb{R},
	(w_1,w_2)
	\mapsto
	-i\mathrm{Im}(w_1^t\mathrm{Im}(z)^{-1}\overline{w_2})$
	and a level structure
	$\psi_{z,k}
	\colon
	(\mathbb{Z}/N)^n
	\simeq
	A_{z,k}[N]=
	((\frac{1}{N}\mathbb{Z})^n
	\oplus z(\frac{1}{N}\mathbb{Z})^n)/
	(\mathbb{Z}^n\oplus
	z\mathbb{Z}^n)
	$
	where $k$ acts via its projection to
	$G(\mathbb{Z}/N)$
	(\cite[Section 2.1]{Pilloni2012}).
	For any 
	$\gamma=\begin{pmatrix}
	A & B \\
	C & D
	\end{pmatrix}
	\in G^1(\mathbb{Z})$,
	note that
	$\begin{pmatrix}
	A & B \\
	C & D
	\end{pmatrix}
	\begin{pmatrix}
	z \\
	1
	\end{pmatrix}
	(Cz+D)^{-1}
	=\begin{pmatrix}
	\gamma z \\
	1
	\end{pmatrix}$,
	thus the isomorphism
	$(Cz+D)
	\colon
	\mathbb{C}^n
	\rightarrow
	\mathrm{C}^n$
	sending $z'$ to
	$z'(Cz+D)^{-1}$
	induces an isomorphism
	$\phi_\gamma
	\colon
	A_{z,k}
	\rightarrow
	A_{\gamma z,\gamma k}$
	respecting the polarization.
	Let $w_1,w_2,\cdots,w_n$
	be the coordinates of $\mathbb{C}^n$
	and then
	$dw:=\{dw_1,\cdots,dw_n\}$
	form a trivialization of
	$H^1(A_{z,k}/\mathbb{C})$.
	For any
	modular form
	$f\in H^0(\widetilde{\mathbf{A}}_{G,\widehat{\Gamma}},
	\mathcal{E}(V_\rho^r))$,
	we can define a function
	$F_f
	\colon
	\mathbb{H}_n\times G(\widehat{\mathbb{Z}})
	\rightarrow
	V_\rho^r(\mathbb{C})$
	by
	$F_f(z,k):=
	f(A_{z,k},\lambda_z,\psi_{z,k},dw)$.
	Then one verifies that
	for any $\gamma\in G^1(\mathbb{Z})$,
	$F_f(\gamma z,\gamma k)
	=\rho(\mu(\gamma,z))
	F_f(z,k)$
	and for any $k'\in \widehat{\Gamma}$,
	$F_f(z,k)
	=F_f(z,kk')$
	(\cite[Section 2.4]{Pilloni2012}
	and \cite[Section 2.5]{Liu2015NearlyOverconvergent}).

	By strong approximation for
	$\mathrm{Sp}(2n)$,
	we have
	$ G(\mathbb{A})
	= G(\mathbb{Q})
	 G^+(\mathbb{R})
	 G(\widehat{\mathbb{Z}})$,
	so we write each element in
	$ G(\mathbb{A})$
	as $g=g_0g_\infty g_\mathrm{f}$.
	Now suppose that there is a real number
	$m(\rho)\in\mathbb{R}$
	such that
	$\rho_\mathbb{C}(\lambda\cdot1_n)
	=\lambda^{2m(\rho)}\mathrm{Id}_{W_\rho(\mathbb{C})}$
	for any $\lambda\in\mathbb{C}^\times$.
	Then to each
	$f\in H^0(\widetilde{\mathbf{A}}_{G,\widehat{\Gamma}},
	\mathcal{E}(V_\rho^r))$,
	one can associate a function
	\[
	\widetilde{\Phi}(f)
	\colon
	 G(\mathbb{A})
	\rightarrow
	V_\rho^r(\mathbb{C}),
	\quad
	g\mapsto
	\nu(g_\infty)^{m(\rho)}
	\rho(\mu(g_\infty,i1_n))^{-1}
	F_f(g_\infty\cdot i1_n,g_\mathrm{f}).
	\]
	One can then verify that
	for each
	$g_0\in G(\mathbb{Q})$,
	$a\in Z_G(\mathbb{A})$,
	$k_\infty\in K_{ G,\infty}^1$
	and $k\in \widehat{\Gamma}$,
	we have 
	${
	\widetilde{\Phi}(f)(g_0ag(k_\infty,k))
	=\rho(\mu(k_\infty,i1_n))^{-1}\widetilde{\Phi}(f)(g)
	}$.
    Now for any linear form
    $w^\vee\in W_\rho^\vee$,
    we define a new function
    $\Phi(f,w^\vee)$ by
    $\Phi(f,w^\vee)(g):=
    w^\vee(\widetilde{\Phi}(f)(g)|_{\underline{Z}=0})\in\mathbb{C}$
    (recall that
    $V_\rho^r=W_\rho[\underline{Z}]_{\mathrm{deg}\leq r}$).
    So we see that
    $\Phi(f,w^\vee)
    \in
    \mathcal{A}(Z_G(\mathbb{A})
    G(\mathbb{Q})\backslash
    G(\mathbb{A})/\widehat{\Gamma})$ is an automorphic form
    of trivial center character and level $\widehat{\Gamma}$.
    Moreover we fix a Hermitian form
    $\langle\cdot,\cdot\rangle_\rho^r$ on
    $V_\rho^r$ invariant under the action of
    $K_{G,\infty}^1$
    (unique up to a scalar),
    then we define a Petersson product on
    $H^0(\widetilde{\mathbf{A}}_{G,\widehat{\Gamma}},
    \mathcal{E}(V_{\rho}^r))$
    as follows:
    for any two cuspidal modular forms $f,f'$,
    $\langle f,f'\rangle
    :=
    \int_{\widetilde{\mathbf{A}}_{G,\widehat{\Gamma}}(\mathbb{C})}
    \langle\rho(\mathrm{Im}(z)^{1/2})F_f(z,k),
    \rho(\mathrm{Im}(z)^{1/2})F_{f'}(z,k)dzdk$.
    Then one verifies that
    (\cite[p.195]{AsgariSchmidt2001})
    for any $w^\vee\neq0$,
    there is a constant $c>0$
    such that
    $\langle f,f\rangle
    =c\int_{Z_{G}(\mathbb{A})
    	G(\mathbb{Q})\backslash
    	G(\mathbb{A})}
    |\Phi(f,w^\vee)(g)|dg$
    (we will take $c=1$ in the below).

    We next define some differential operators on
    $\mathcal{A}(Z_G(\mathbb{A})
    G(\mathbb{Q})\backslash
    G(\mathbb{A})/\widehat{\Gamma})$.
    Recall that
    $h
    \colon
    \mathbb{C}
    \rightarrow
    \mathrm{End}_\mathbb{R}(L\otimes\mathbb{R})$
    sends
    $x+iy$
    to
    $\begin{pmatrix}
    x1_n & y1_n \\
    -y1_n & x1_n
    \end{pmatrix}$.
    We then let
    $\mathbb{C}^\times$ act on $G(\mathbb{R})$
    by conjugation composed with $h$,
    i.e.,
    $(x+iy)\cdot g
    =
    h(x+iy)gh(x+iy)^{-1}$.
    This action induces an action of $\mathbb{C}^\times$
    on the complexification
    $\mathfrak{g}_{G,\mathbb{C}}$ of
    the Lie algebra
    $\mathfrak{g}_G$ of $G(\mathbb{R})$.
    We then write
    $\mathfrak{g}_{G,\mathbb{C}}^{a,b}$
    for the subspace of $\mathfrak{g}_{G,\mathbb{C}}$
    on which $z\in\mathbb{C}^\times$
    acts by the multiplication by the scalar
    $z^{-a}\overline{z}^{-b}$.
    It is easy to verify that
    we have the following decomposition
    of eigenspaces
    $\mathfrak{g}_{G,\mathbb{C}}
    =
    \mathfrak{g}^{-1,1}_{G,\mathbb{C}}
    \oplus
    \mathfrak{g}^{0,0}_{G,\mathbb{C}}
    \oplus
    \mathfrak{g}^{1,-1}_{G,\mathbb{C}}
    =:
    \mathfrak{g}_{G,\mathbb{C}}^+
    \oplus
    \mathfrak{t}_{G,\mathbb{C}}
    \oplus
    \mathfrak{g}_{G,\mathbb{C}}^-$.
    We define the Cayley element
    $\mathfrak{c}_n
    =
    \frac{1}{\sqrt{2}}
    \begin{pmatrix}
    1_n & i1_n \\
    i1_n & 1_n
    \end{pmatrix}$.
    Then one verifies that
    $\mathfrak{g}_{G,\mathbb{C}}^+$,
    resp.,
    $\mathfrak{g}_{G,\mathbb{C}}^-$,
    resp.,
    $\mathfrak{t}_{G,\mathbb{C}}$,
    is generated by
    $\widehat{\mu}_{i,j}^+:=
    \mathfrak{c}_n\mu_{i,j}^+\mathfrak{c}_n^{-1}$,
    resp.,
    $\widehat{\mu}_{i,j}^-
    :=\mathfrak{c}_n\mu_{i,j}^-\mathfrak{c}_n^{-1}$,
    resp.,
    $\widehat{\eta}_{i,j}
    :=\mathfrak{c}_n\eta_{i,j}\mathfrak{c}_n^{-1}$
    and $\widehat{\eta}^0:=\eta^0$.
    Suppose that the 
    symmetric square representation
    $(\tau_n,W_{\tau_n})$
    has a basis
    $\{X_{i,j}\}_{1\leq i,j\leq n}$
    such that $X_{i,j}=X_{j,i}$
    and
    $X_{i,j}$
    corresponds to
    $dw_idw_j=2i\pi dz_{i,j}$.
    Write $\{X_{i,j}^\vee\}_{1\leq i,j\leq n}$
    the basis of 
    $W_{\tau_n}^\vee$
    dual to
    $\{X_{i,j}\}_{1\leq i,j\leq n}$.
    Then we have
    (\cite[Proposition 2.5]{Liu2015}
    and \cite[Lemma 5]{AsgariSchmidt2001}):
    \begin{proposition}
    	The map
    	$\Phi(\cdot,w^\vee)
    	\colon
    	H^0(\widetilde{\mathbf{A}}_{G,\widehat{\Gamma}},
    	\mathcal{E}(V_\rho^r))
    	\rightarrow
    	\mathcal{A}(Z_G(\mathbb{A})
    	G(\mathbb{Q})\backslash
    	G(\mathbb{A})/\widehat{\Gamma})$
    	sending
    	$f$ to
    	$\Phi(f,w^\vee)$
    	preserves the norms of both sides and
    	satisfies the following commutative diagram:
    	\[
    	\begin{tikzcd}
    	H^0(\widetilde{\mathbf{A}}_{G,\widehat{\Gamma}}
    	(\mathbb{C}),
    	\mathcal{E}(V_\rho^r))
    	\ar[r,"\Phi(\cdot{,}w^\vee)"]
    	\ar[d,"4i\pi\nabla_\rho"]
    	&
    	\mathcal{A}(Z_G(\mathbb{A})
    	G(\mathbb{Q})\backslash
    	G(\mathbb{A})/\widehat{\Gamma})
    	\ar[d,"\widehat{\mu}_{i,j}^+"]
    	\\
    	H^0(\widetilde{\mathbf{A}}_{G,\widehat{\Gamma}}
    	(\mathbb{C}),
    	\mathcal{E}(V_{\rho\otimes\tau_n}^{r+1}))
    	\ar[r,"\Phi(\cdot{,}w^\vee\otimes X_{i,j}^\vee)"]
    	&
    	\mathcal{A}(Z_G(\mathbb{A})
    	G(\mathbb{Q})\backslash
    	G(\mathbb{A})/\widehat{\Gamma})
    	\end{tikzcd}
    	\]
    	Moreover, if $f$ is cuspidal, then so is
    	$\Phi(f,w^\vee)$;
    	$f$ is holomorphic,
    	resp., anti-holomorphic,
    	if and only if
    	$\mathfrak{g}_{G,\mathbb{C}}^-\Phi(f,w^\vee)=0$
    	resp.,
    	$\mathfrak{g}_{G,\mathbb{C}}^+\Phi(f,w^\vee)=0$,
    	for all $w^\vee\in W_\rho^\vee$.
    \end{proposition}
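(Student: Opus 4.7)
The plan is to verify the three assertions (norm preservation, the commutative diagram, cuspidality/\mbox{(anti-)holomorphy}) separately, by unwinding the definition of $\Phi$ and tracing each structure through the dictionary between the geometric description on $\widetilde{\mathbf A}_{G,\widehat\Gamma}$ and the automorphic description on $G(\A)$.

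First I would check norm preservation. By the strong approximation $G(\A)=G(\Q)G^+(\R)G(\widehat\Z)$ and the definition $\widetilde\Phi(f)(g)=\nu(g_\infty)^{m(\rho)}\rho(\mu(g_\infty,i1_n))^{-1}F_f(g_\infty\cdot i1_n,g_\mathrm{f})$, the right-hand integral over $Z_G(\A)G(\Q)\bs G(\A)$ reduces to an integral over $G^1(\Z)\bs\mathbb H_n\times G(\widehat\Z)/\widehat\Gamma$. The factor $\rho(\mathrm{Im}(z)^{1/2})$ in the Petersson product corresponds exactly to the factor $\rho(\mu(g_\infty,i1_n))^{-1}\nu(g_\infty)^{m(\rho)}$ (since $\mu(g_\infty,i1_n)\overline{\mu(g_\infty,i1_n)}{}^t$ scales like $\nu(g_\infty)\mathrm{Im}(g_\infty\cdot i1_n)^{-1}$), and $K_{G,\infty}^1$-invariance of $\langle\cdot,\cdot\rangle_\rho^r$ makes the resulting integrand descend. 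Matching the invariant measure $dz\,dk$ with Haar measure on $G(\A)$ then yields the equality of norms up to a universal constant, normalized to $1$.

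The main work is the commutative diagram. The idea is to compute $\nabla_\rho$ explicitly on the Mumford-type test object $(A_{z,k},\lambda_z,\psi_{z,k},dw)$: the trivialization $dw=\{dw_i\}$ of $\omega(A_{z,k}/\C)$ together with the dual antiholomorphic basis gives the Hodge splitting of $\HH^1_\mathrm{dR}(A_{z,k}/\C)$, and the Gauss--Manin connection is expressed via the derivatives $\partial/\partial z_{i,j}$. Via the Kodaira--Spencer identification $dw_idw_j=2i\pi\,dz_{i,j}$, applying $4i\pi\nabla_\rho$ to $f$ and then pairing with $w^\vee\otimes X_{i,j}^\vee$ produces exactly $4i\pi\cdot(2i\pi)^{-1}\cdot 2\cdot\partial_{z_{i,j}}F_f=\ldots$ acting in the variable $z_{i,j}$. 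On the other hand, under the Cayley transform $\mathfrak c_n$, the element $\mu_{i,j}^+\in\mathfrak{g}_{G,\C}^+$ becomes $\widehat\mu_{i,j}^+$, and direct computation shows that right translation by $\widehat\mu_{i,j}^+$ on $\widetilde\Phi(f)$ differentiates $F_f(g_\infty\cdot i1_n,g_\mathrm f)$ along $z_{i,j}$ with precisely the same normalization once the factor $\nu(g_\infty)^{m(\rho)}\rho(\mu(g_\infty,i1_n))^{-1}$ is differentiated. I expect this bookkeeping of constants (the $4i\pi$, the $2i\pi$ from Kodaira--Spencer, and the derivatives of $\mu(g,z)$) to be the main obstacle; the references \cite{Liu2015}, \cite{AsgariSchmidt2001} carry out the analogous computation, and I would follow their normalization conventions.

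Finally, cuspidality transfers directly because $f$ being a section of $\mathcal E(V_\rho^r)(-C_{\widehat\Gamma})$ translates, via the $q$-expansion map and the uniformization near the boundary, into the vanishing of the constant terms of $\Phi(f,w^\vee)$ along all standard parabolics, which is the cuspidality condition for automorphic forms. For the last assertion: $f\in H^0(\widetilde{\mathbf A}_{G,\widehat\Gamma},\mathcal E(V_\rho^0))$ means $f$ factors through the Hodge filtration $\omega\subset\mathcal H^1_\mathrm{dR}$ and in particular is independent of the antiholomorphic directions, which in the Siegel upper half-plane realization means $\partial_{\bar z_{i,j}}F_f=0$; under the Cayley transform this is exactly the vanishing of $\widehat\mu_{i,j}^-\,\widetilde\Phi(f)=0$ for all $i,j$, i.e., $\mathfrak g_{G,\C}^-\Phi(f,w^\vee)=0$. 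The antiholomorphic case follows by the same argument applied to the Serre dual $\mathcal E(V_{\rho^D}^0)$, using the duality (\ref{Petersson product vs. Serre duality}).
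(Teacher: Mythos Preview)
The paper does not give its own proof of this proposition; it simply cites \cite[Proposition 2.5]{Liu2015} and \cite[Lemma 5]{AsgariSchmidt2001}. Your sketch follows essentially the same approach as those references (and you explicitly say you would follow their normalization conventions), so there is nothing to compare.
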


    This shows that 
    the map
    $\Phi(\cdot,w^\vee)$
    is a norm-preserving map from
    $H^0(\widetilde{\mathbf{A}}_{G,\widehat{\Gamma}},
    \mathcal{E}(V_{\rho}^r))$
    to the space
    $L^2(Z_G(\mathbb{A})
    G(\mathbb{Q})\backslash
    G(\mathbb{A}))$.    
    In particular, let 
    $(\rho_{\underline{k}},W_{\underline{k}})$
    be the irreducible representation of
	    $\mathrm{GL}_n$
    of dominant weight $\underline{k}$,
    then we define the element
    $\mathfrak{e}\in W_{\underline{k}}^\vee$
    to be
    $\mathfrak{e}(f):=f(1_n)$
    for any $f\in W_{\underline{k}}(R)$
    and any $\mathbb{Z}$-algebra $R$.
    We write
    $\mathcal{A}(Z_G(\mathbb{A})
    G(\mathbb{Q})\backslash G
    (\mathbb{A})/\widehat{\Gamma})_{\underline{k}}$
    as the $\rho_{\underline{k}}$-isotypic part
    of
    $\mathcal{A}(Z_G(\mathbb{A})
    G(\mathbb{Q})\backslash G
    (\mathbb{A})/\widehat{\Gamma})$
    as  $K_{G,\infty}$-representations.
    Then we have a norm-preserving injective map:
    \begin{equation}
    \label{correspondence between geometric and adelic forms}
    \Phi(\cdot,\mathfrak{e})
    \colon
    H^0(\widetilde{\mathbf{A}}_{G,\widehat{\Gamma}}
    (\mathbb{C}),
    \mathcal{E}(V_{\underline{k}}^r))
    \rightarrow
    \mathcal{A}(Z_G(\mathbb{A})
    G(\mathbb{Q})\backslash G
    (\mathbb{A})/\widehat{\Gamma})_{\underline{k}}.
    \end{equation}

	This map gives us the correspondence
	between the automorphic forms
	of geometric nature and those of adelic nature
	which is compatible with the differential operators
	on both sides.

	\subsubsection{Hecke operators}
	\label{Hecke operators on GSp4}
	We define adelic Hecke operators.
	We fix a prime $\ell$ of $\mathbb{Q}$.
	Then for any smooth functions
	$T\colon
	G(\mathbb{Q}_\ell)
	\rightarrow
	\mathbb{C}$
	and
	$f\colon
	G(\mathbb{A})
	\rightarrow
	\mathbb{C}$,
	we set
	\[
	(Tf)(g)
	:=
	\int_{G(\mathbb{Q}_\ell)}T(g')f(gg')dg',
	\forall
	g\in G(\mathbb{A}).
	\]
	(here we view $g'\in G(\mathbb{Q}_\ell)$
	also as an element in $G(\mathbb{A})$).
	Now let
	$T$ be the characteristic function of the double coset
	$G(\mathbb{Z}_\ell)MG(\mathbb{Z}_\ell)$
	with $M\in G(\mathbb{Q}_\ell)$
	and consider two functions
	$f_1,f_2\in
	L^2(Z_G(\mathbb{A})G(\mathbb{Q})\backslash G(\mathbb{A}))$,
	then it is easy to see
	\begin{lemma}
		We have the following identity:
		\[
		\langle Tf_1,f_2\rangle
		=
		\langle f_1,Tf_2\rangle.
		\]
		In other words,
		the operator $T$ is self-adjoint with respect to the
		Petersson product on
		$L^2(Z_G(\mathbb{A})G(\mathbb{Q})\backslash G(\mathbb{A}))$.
	\end{lemma}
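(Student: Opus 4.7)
The plan is to prove the identity by unfolding both sides and performing a change of variables on the adelic integral. Concretely, I would first write
\[
\langle Tf_1, f_2\rangle = \int_{Z_G(\mathbb{A})G(\mathbb{Q})\backslash G(\mathbb{A})} \bigl(Tf_1\bigr)(g)\, f_2(g)\, dg = \int_{Z_G(\mathbb{A})G(\mathbb{Q})\backslash G(\mathbb{A})} \int_{G(\mathbb{Q}_\ell)} T(g')\, f_1(gg')\, f_2(g)\, dg'\, dg,
\]
then apply Fubini (the double integral is absolutely convergent since $T$ has compact support in $G(\mathbb{Q}_\ell)$ and $f_1, f_2 \in L^2$) to move the integral over $G(\mathbb{Q}_\ell)$ outside.

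Next, for a fixed $g' \in G(\mathbb{Q}_\ell)$ I would perform the right translation $g \mapsto g g'^{-1}$ on the inner integral; this is legitimate because right translation by any element of $G(\mathbb{A})$ preserves both the quotient measure on $Z_G(\mathbb{A})G(\mathbb{Q})\backslash G(\mathbb{A})$ (coming from a right-invariant Haar measure on $G(\mathbb{A})$) and the cuspidal/$L^2$ structure. This yields
\[
\langle Tf_1, f_2\rangle = \int_{G(\mathbb{Q}_\ell)} T(g') \int_{Z_G(\mathbb{A})G(\mathbb{Q})\backslash G(\mathbb{A})} f_1(g)\, f_2(gg'^{-1})\, dg\, dg'.
\]

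Now I would change variable $g' \mapsto g'^{-1}$ in the outer integral, using the unimodularity of $G(\mathbb{Q}_\ell)$ (the group $\mathrm{GSp}_{2n}$ is reductive, hence unimodular over any local field). This produces $T(g'^{-1})$ in the integrand and $f_2(gg')$ in the inner one. At this stage the identity $\langle Tf_1,f_2\rangle = \langle f_1, Tf_2\rangle$ reduces to the claim $T(g') = T(g'^{-1})$, i.e.\ that the double coset $G(\mathbb{Z}_\ell) M G(\mathbb{Z}_\ell)$ is stable under inversion. I would invoke the Cartan decomposition for $\mathrm{GSp}_{2n}(\mathbb{Q}_\ell)$: any $M$ can be conjugated by $G(\mathbb{Z}_\ell)$ into a diagonal $\operatorname{diag}(\ell^{a_1},\dots,\ell^{a_n},\ell^{c-a_1},\dots,\ell^{c-a_n})$, and the Weyl group of $\mathrm{GSp}_{2n}$ (together with the automorphism induced by $J_{2n}$) contains an element whose action carries such a diagonal to its inverse.

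The only technical point to watch is the measure-theoretic one: once this symmetry of the double coset is in hand, the remaining steps are formal. I do not expect any serious obstacle here, so this is essentially a bookkeeping lemma; the result is in fact stated only for later use and holds for any bi-$G(\mathbb{Z}_\ell)$-invariant compactly supported function $T$ with $T(g) = T(g^{-1})$.
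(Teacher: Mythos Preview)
Your unfolding and change-of-variables steps are fine, and they correctly reduce the identity to showing that $T$ and $T^\vee$ (where $T^\vee(g')=T(g'^{-1})$) act the same way on $f_2$. The gap is in the last step: the assertion that the double coset $K M K$ with $K=G(\mathbb{Z}_\ell)$ is stable under inversion is \emph{false} for $\mathrm{GSp}_{2n}$ whenever the similitude factor $\nu(M)$ is not a unit. Indeed, $\nu$ is constant on each double coset modulo $\mathbb{Z}_\ell^\times$, and $\nu(M^{-1})=\nu(M)^{-1}$; so for instance $K\,\mathrm{diag}(1_n,\ell\cdot 1_n)\,K$ and its inverse $K\,\mathrm{diag}(1_n,\ell^{-1}\cdot 1_n)\,K$ are distinct. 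The Weyl group of $\mathrm{GSp}_{2n}$ (and conjugation by $J_{2n}$, which is already a Weyl element) preserves $\nu$, so it cannot carry $M$ to $M^{-1}$ as you claim.

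What saves the lemma is precisely the $Z_G(\mathbb{A})$-invariance of $f_1,f_2$, which you have not used. Writing $M$ in Cartan form $\mathrm{diag}(\ell^{a_1},\dots,\ell^{a_n},\ell^{c-a_1},\dots,\ell^{c-a_n})$ with $\nu(M)=\ell^c$, conjugation by $J_{2n}\in K$ gives $J_{2n}MJ_{2n}^{-1}=\ell^{c}\cdot M^{-1}$, hence $KM^{-1}K=\ell^{-c}\cdot KMK$ as subsets of $G(\mathbb{Q}_\ell)$, the factor $\ell^{-c}$ being central. Now
\[
(T^\vee f_2)(g)=\int_{KM^{-1}K}f_2(gg')\,dg'
=\int_{KMK}f_2(g\,\ell^{-c}g'')\,dg''
=\int_{KMK}f_2(gg'')\,dg''=(Tf_2)(g),
\]
using left-invariance of Haar measure under the central translation and the $Z_G(\mathbb{A})$-invariance of $f_2$. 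With this correction your argument goes through. (The paper itself does not supply a proof, only the phrase ``it is easy to see''; so there is no alternative argument to compare with.)
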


    For any $\mathbb{Z}$-algebra $A$
    and any compact open subgroup 
    $K\subset G(\mathbb{Q}_\ell)$,
    we write
    $\mathcal{H}(G(\mathbb{Q}_\ell),K;A)$
    for the associative algebra generated over $A$
    by characteristic functions
    $\mathbf{1}(KMK)$
    of $KMK$ with $M\in G(\mathbb{Q}_\ell)$,
    where the multiplication is given by convolution
    $(T_1T_2)(g):=
    \int_{G(\mathbb{Q}_\ell)}
    T_1(gx)T_2(x^{-1})dx$.
    Here the Haar measure on
    $G(\mathbb{Q}_\ell)$
    is the one where
    $K$ has volume $1$.
    By the Cartan decomposition for
    $G(\mathbb{Q}_\ell)$,
    the algebra
    $\mathcal{H}(G(\mathbb{Q}_\ell),
    G(\mathbb{Z}_\ell),A)$
    is generated by the elements
    $T_{\ell,i}^{(n)}
    =\mathbf{1}(G(\mathbb{Z}_\ell)
    \mathrm{diag}(\ell\cdot1_{n-i},\ell\cdot1_i,
    \ell^2\cdot1_{n-i},\ell\cdot1_i)
    G(\mathbb{Z}_\ell))$
    for $i=1,\cdots,n$,
    $T_{\ell,0}^{(n)}
    =\mathbf{1}(G(\mathbb{Z}_\ell)
    \mathrm{diag}(1_n,\ell\cdot1_n)
    G(\mathbb{Z}_\ell))$
    and
    $(T_{\ell,n}^{(n)})^{-1}$.
    Let $T_G$ be the standard torus of $G$ consisting of
    diagonal elements in $G$.
    Then we can similarly define an algebra
    $\mathcal{H}(T_G(\mathbb{Q}_\ell),
    T_G(\mathbb{Z}_\ell),A)$.
    Writing
    $X_0:=\mathbf{1}(T_G(\mathbb{Z}_\ell)
    \mathrm{diag}(1_n,\ell\cdot1_n))$,
    $X_i:=\mathbf{1}(T_G(\mathbb{Z}_\ell)
    \mathrm{diag}(1_{i-1},\ell,1_{n-1},
    \ell^{-1},1_{n-i}))$
    for $i=1,\cdots,n$,
    then as $\mathbb{C}$-algebras,
    $\mathcal{H}(T_G(\mathbb{Q}_\ell),
    T_G(\mathbb{Z}_\ell),\mathbb{C})$
    is isomorphic to
    $\mathbb{C}[X_0^\pm,\cdots,X_n^\pm]$.
    For any element
    $T\in\mathcal{H}(G(\mathbb{Q}_\ell),
    G(\mathbb{Z}_\ell),\mathbb{C})$,
    we define its Satake transform 
    $\mathbf{S}(T)$ to be
    an element in
    $\mathcal{H}(T_G(\mathbb{Q}_\ell),T_G(\mathbb{Z}_\ell),
    \mathbb{C})$
    as follows:
    $\mathbf{S}(T)(g)
    :=|\delta_{B_G}(g)|^{1/2}
    \int_{N_G}T(gn)dn$.
    Here $\delta_{B_G}(\cdot)$
    is the modular character of
    $G$ with respect to the Borel subgroup
    $B_G$ of $G$,
    on the diagonal matrices, it is given by:
    $\delta_{B_G}
    (\mathrm{diag}(t_1,\cdots,t_n,
    t_0/t_1,\cdots,t_0/t_n))
    =t_0^{-n(n+1)/2}t_1^2t_2^4\cdots t_n^{2n}$.
    Write $W_G$ for the Weyl subgroup of $G$ with respect to
    the pair $(B_G,T_G)$.
    Then we have an isomorphism of
    algebras
    $\mathbf{S}
    \colon
    \mathcal{H}(G(\mathbb{Q}_\ell),
    G(\mathbb{Z}_\ell),\mathbb{C})
    \simeq
    \mathcal{H}(T_G(\mathbb{Q}_\ell),
    T_G(\mathbb{Z}_\ell),\mathbb{C})^{W_G}$
    (\cite[Section 3]{AsgariSchmidt2001}).

	We then define some Iwahori Hecke operators.
	We write
	$I_{G,\ell}$ for the Iwahori subgroup of $G(\mathbb{Z}_\ell)$
	consisting of matrices
	$g$ which is in $B_G(\mathbb{Z}_\ell/\ell)$
	modulo $\ell$.
	Then the dilating Iwahori Hecke algebra
	$\mathcal{H}^-(G(\mathbb{Q}_\ell),I_{G,\ell},A)$
	is the subalgebra of
	$\mathcal{H}(G(\mathbb{Q}_\ell),
	I_{G,\ell},A)$
	(which is no longer commutative)
	that is generated over $A$ by the elements
	$U_{\ell,i}^{(n)}
	=\mathbf{1}(I_{G,\ell}
	\mathrm{diag}(\ell\cdot1_i,1_{n-i},\ell\cdot1_i,\ell^2\cdot1_{n-i})
	I_{G,\ell})$
	for $i=1,\cdots,n$,
	$U_{\ell,0}^{}(n)
	=\mathbf{1}(I_{G,\ell}
	\mathrm{diag}(1_n,\ell\cdot1_n)
	I_{G,\ell})$
	and $(U_{\ell,n}^{(n)})^{-1}$.
	Similarly, we define
	$\mathcal{H}^-(T_G(\mathbb{Q}),
	T_G(\mathbb{Z}_\ell),A)$
	to be the subalgebra of
	$\mathcal{H}(T_G(\mathbb{Q}_\ell),
	T_G(\mathbb{Z}_\ell),A)$
	generated by the elements
	$T_G(\mathbb{Z}_\ell)
	\mathrm{diag}(\ell\cdot1_i,1_{n-i},\ell\cdot1_i,\ell^2\cdot1_{n-i})$
	for $i=1,\cdots,n$,
	$T_G(\mathbb{Z}_\ell)
	\mathrm{diag}(1_n,\ell\cdot1_n)$
	and
	${(T_G(\mathbb{Z}_\ell)
	\mathrm{diag}(\ell\cdot1_{2n}))^{-1}}$.
	Then one verifies that the map
	$\mathbf{S}^-
	\colon
	\mathcal{H}^-(G(\mathbb{Q}_\ell),I_{G,\ell},A)
	\rightarrow
	\mathcal{H}^-(T_G(\mathbb{Q}_\ell),
	T_G(\mathbb{Z}_\ell),A)$
	sending generators to the corresponding generators
	is an isomorphism of algebras
	(\cite[Lemma 4.1.5]{Casselman1995},
	\cite[Proposition 6.4.1]{BellaicheChenevier2009}).
	When $\ell=p$ and we let $U_{p,i}^{(n)}$
	act on the automorphic forms,
	we need to be careful about normalization
	of these operators.
	More precisely, for any
	dominant weight
	$\underline{k}\in\mathbb{Z}^n$ of
	$T_n$
	and
	$f\in
	\mathcal{A}(Z_G(\mathbb{A})G(\mathbb{Q})\backslash
	G(\mathbb{A}))_{\underline{k}}$,
	we define the action as(\cite[(2.5.2)]{Liu2015}):
	\[
	(U_{p,i}^{(n)}f)(g)
	:=
	p^{\langle \underline{k}-2\rho_{G,nc},\underline{i}
	\rangle}
	\int_{G(\mathbb{Q}_p)}
	U_{p,i}^{(n)}(h)f(gh)dh
	\]
	Here $\underline{i}=(i_0,i_1,\cdots,i_n)$
	is an element in $\mathbb{Z}^{n+1}$
	such that
	$i_1=i_2=\cdots=i_i=1$,
	$i_{i+1}=\cdots=i_n=0$,
	$i_0=2$ for $i=1,\cdots,n$
	and
	$0_1=\cdots=0_n=0$,
	$0_0=1$.
	Moreover $2\rho_{G,nc}$
	is the sum of non-compact positive roots of $G$
	with respect to $B_G$.
	More generally we write
	$C_n^+$ for the subset of $\mathbb{Z}^{n+1}$
	consisting of elements
	$\underline{a}=(a_0,a_1,\cdots,a_n)$
	such that $a_1\geq a_2\geq\cdots\geq a_n\geq0$.
	Then for any $\underline{a}\in C_n^+$,
	we write
	$p^{\underline{a}}
	=\mathrm{diag}(p^{a_1},\cdots,p^{a_n},
	p^{a_0-a_1},\cdots,p^{a_0-a_n})$.
	We define
	$U_{p,\underline{a}}^{(n)}$
	to be the characteristic function
	$\mathbf{1}(I_{G,p}
	p^{\underline{a}}I_{G,p})$
	and let it act on
	$\mathcal{A}(Z_G(\mathbb{A})G(\mathbb{Q})\backslash
	G(\mathbb{A}))_{\underline{k}}$
	by the same formula as above.
	We write
	$\mathbb{U}_p(f)$
	for the subspace of
	$\mathcal{A}(Z_G(\mathbb{A})G(\mathbb{Q})\backslash
	G(\mathbb{A}))$
	generated by $U_{p,\underline{a}}^{(n)}(f)$
	for all $\underline{a}\in C_n^+$.
	Then by \cite[Proposition 2.5.2]{Liu2015},
	the eigenvalues of these operators
	$U_{p,\underline{a}}^{(n)}$
	acting on $\mathbb{U}_p(f)$
	are all $p$-adic integers
	under the fixed isomorphism
	$\iota^\infty_p\colon
	\mathbb{C}\simeq
	\overline{\mathbb{Q}}_p$.
	We define
	$U_p^{(n)}:=\prod_{i=0}^nU_{p,j}^{(n)}$.
	For each nearly-holomorphic modular form
	$f\in
	\mathcal{A}(Z_G(\mathbb{A})G(\mathbb{Q})\backslash
	G(\mathbb{A}))_{\underline{k}}$,
	we define an operator
	$e$ acting on
	$\mathbb{U}_p(f)$
	as the limit
	$e:=\lim\limits_{r\rightarrow+\infty}U_p^{r!}$
	(the limit is taken with respect to the $p$-adic topology
	on $\mathbb{U}_p(f)$
	via $\mathbb{C}\simeq\overline{\mathbb{Q}}_p$).
	Then we call
	$e(f)$ the ordinary projection of $f$.
	It follows from the definition that
	each $U_{p,\underline{a}}^{(n)}$
	acts on $e(f)$
	by $p$-adic units.
	Moreover, by
	\cite[Proposition 2.5.3]{Liu2015},
	$e(f)$ is in fact a holomorphic modular form.

	\begin{definition}
		Let $R$ be a $\mathbb{Z}$-algebra 
		contained in $\mathbb{C}$.
		For a prime $\ell$ of $\mathbb{Q}$ such that
		$K_\ell=G(\mathbb{Z}_\ell)$,
		we write
		$\mathbb{T}_{\underline{k},\ell}(K_\ell,R)$
		for the
		$R$-subalgebra generated by
		the image of the spherical Hecke algebras
		$\mathcal{H}(G(\mathbb{Q}_\ell),
		G(\mathbb{Z}_\ell),R)$
		in
		$\mathrm{End}_\mathbb{C}
		(H^0(\widetilde{\mathbf{A}}_{G,K},\mathcal{E}
		(V_{\underline{k}})))$.
		Similarly, we write
		$\mathbb{T}^\mathrm{Iw}_{
			\underline{k},p}(I_{G,p},R)$
		for the $R$-subalgebra generated by the image of
		the $\mathbb{U}_p$-operators
		$U_{p,\underline{a}}^{(n)}$
		in the above endomorphism algebra.
		Finally we write
		\[
		\mathbb{T}_{\underline{k}}(\widehat{\Gamma},R)
		=
		(\bigotimes_{\ell\nmid Np}\,'
		\mathbb{T}_{\underline{k},\ell}(K_\ell,R))
		\bigotimes_R
		\mathbb{T}^\mathrm{Iw}_{\underline{k}}
		(I_{G,p},R).
		\]
	\end{definition}

	\subsection{Periods and congruence ideals}
	In this subsection we define 
	congruence ideals and some periods
	of automorphic forms on $[G]$.
	We follow closely the ideas of
	\cite[Section 6]{EischenHarrisLiSkinner16}.
	
	\begin{definition}
		Let $\pi$ be an irreducible cuspidal automorphic representation of
		$[G]$ with factorization
		$\pi=\pi_\infty\otimes\pi_\mathrm{f}$.
		We say that
		$\pi$ is holomorphic of type
		$(\underline{k},\widehat{\Gamma})$ if
		\[
		H^0(\mathfrak{g}_G^{1,-1}\oplus
		\mathfrak{t}_G,K_{G,\infty};
		\pi_\infty\otimes
		W_{\underline{k}})\neq0
		\text{ and }
		\pi_\mathrm{f}^{\widehat{\Gamma}}\neq0.
		\]
		
		Similarly, we say that
		$\pi$ is antiholomorphic of type
		$(\underline{k},\widehat{\Gamma})$
		if
		\[
		H^d(\mathfrak{g}_G^{1,-1}\oplus\mathfrak{t}_G,
		K_{G,\infty};
		\pi_\infty\otimes
		W_{\underline{k}^D})\neq0
		\text{ and }
		\pi_\mathrm{f}^{\widehat{\Gamma}}\neq0.
		\]
	\end{definition}
	
	Let $\pi$ be a holomorphic automorphic representation
	of $G(\mathbb{A})$ of type $(\underline{\kappa},
	\widehat{\Gamma})$.
	Then the action of $\mathbb{T}_{\underline{k}}
	(\widehat{\Gamma},\mathbb{C})$
	on $\pi^{\widehat{\Gamma}}$
	is given by a morphism, denoted by
	$\lambda_\pi\colon
	\mathbb{T}_{\underline{k}}(
	\widehat{\Gamma},\mathbb{C})
	\rightarrow
	\mathbb{C}$.
	For any subalgebra $R\subset\mathbb{C}$
	containing all the values of
	$\lambda_\pi$ in $\mathbb{C}$,
	we write
	\[
	S_{\underline{k}}(\widehat{\Gamma},R)[\lambda_\pi]
	\]
	for the $\lambda_\pi$-isotypic component of 
	$S_{\underline{k}}(\widehat{\Gamma},R)$.
	This is equivalent to saying that
	it is the localization of
	$S_{\underline{k}}(\widehat{\Gamma},R)$ at the prime ideal
	of $\mathbb{T}_{\underline{k}}
	(\widehat{\Gamma},R)$
	given by the kernel of
	$\lambda_\pi$.
	Then one has an embedding of the $1$-dimensional
	vector space as
	$\mathbb{T}_{\underline{k}}
	(\widehat{\Gamma},R)(\mathbb{C})$-modules
	\[
	\iota_\pi
	\colon
	H^0(\mathfrak{g}_G^{1,-1}\oplus
	\mathfrak{t}_G,
	K_{G,\infty},\pi^{\widehat{\Gamma}}\otimes_\mathbb{C}
	W_{\underline{\kappa}})
	\simeq
	\pi_\mathrm{f}^{\widehat{\Gamma}}
	\hookrightarrow
	S_{\underline{k}}(\widehat{\Gamma},\mathbb{C})(\pi).
	\]  
	We write $\mathfrak{m}_\pi$ for the maximal
	ideal of $\mathbb{T}_{\underline{k}}
	(\widehat{\Gamma},R)$
	given by the kernel of
	the composition
	$\mathbb{T}_{\underline{k}}
	(\widehat{\Gamma},R)
	\xrightarrow{\lambda_\pi}
	R
	\rightarrow
	R/\mathfrak{m}_R$
	where
	$\mathfrak{m}_R$ is the maximal ideal of $R$.
	We set
	\[
	S_{\underline{k}}^\mathrm{ord}(\widehat{\Gamma},R)[\pi]
	:=
	S_{\underline{k}}^\mathrm{ord}(
	\widehat{\Gamma},R)_{\mathfrak{m}_\pi}
	\bigcap
	S_{\underline{k}}^\mathrm{ord}(\widehat{\Gamma},
	R[1/p])
	[\lambda_\pi].
	\]
	
	\begin{lemma}
		Suppose as above $R\subset\mathbb{C}
		\simeq\overline{\mathbb{Q}}_p$ is 
		$p$-adically complete.
		We have
		\[
		S_{\underline{k}}^{\mathrm{ord}}(\widehat{\Gamma},
		R[1/p])
		[\lambda_\pi]
		=
		eS_{\underline{k}}(\widehat{\Gamma},
		R[1/p])
		[\lambda_\pi].
		\]
		
		Moreover, $\iota_\pi$ induces an isomorphism
		\[
		\iota_\pi
		\colon
		\pi_p^\mathrm{ord}\otimes\pi_S^{\widehat{\Gamma}}
		\simeq
		\pi_S^{\widehat{\Gamma}}
		\xrightarrow{\sim}
		S_{\underline{k}}^\mathrm{ord}(\widehat{\Gamma},\mathbb{C}).
		\]
		
		There is also an integral version:
		$\iota_\pi$ identifies
		$S_{\underline{k}}^\mathrm{ord}(\widehat{\Gamma},R)[\pi]$
		with an $R$-lattice in
		$\pi_p^\mathrm{ord}\otimes\pi_S^{\widehat{\Gamma}}$.
		It also identifies
		$S_{\underline{k}}^\mathrm{ord}(\widehat{\Gamma},R)_{
			\mathfrak{m}_\pi}$
		with an $R$-lattice in
		$\oplus_{\pi'}\big(
		(\pi')_p^\mathrm{ord}\otimes(\pi')_S^{\widehat{\Gamma}}
		\big)$
		where
		$\pi'$ runs through
		all ordinary holomorphic automorphic representations
		of $[G]$ of type
		$(\underline{k},\widehat{\Gamma})$
		such that
		$\lambda_{\pi'}
		\equiv
		\lambda_\pi(\mathrm{mod}\,  \mathfrak{m}_R)$.
	\end{lemma}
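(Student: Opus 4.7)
The plan is to deduce the three claims in order by combining the $p$-ordinarity of $\pi$, multiplicity one for $G(\mathbb{A})$ applied to the $\lambda_\pi$-isotypic component, and the integral structure on $S_{\underline{k}}(\widehat{\Gamma},R)$ furnished by the $q$-expansion map $\varepsilon_q$ of (\ref{q-expansion}). The first equality is essentially formal: since $\pi$ is $p$-ordinary of Iwahori level at $p$, each eigenvalue $\lambda_\pi(U_{p,\underline{a}}^{(n)})$ is a $p$-adic unit, so the operator $e=\lim_{r\to\infty}U_p^{r!}$ restricts to the identity on $S_{\underline{k}}(\widehat{\Gamma},R[1/p])[\lambda_\pi]$. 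Combined with the construction of $S_{\underline{k}}^\mathrm{ord}(\widehat{\Gamma},R[1/p])$ as the image of $e$, this gives the first identity.

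For the isomorphism over $\mathbb{C}$, I would apply the comparison map $\Phi(\cdot,\mathfrak{e})$ of (\ref{correspondence between geometric and adelic forms}) to identify $S_{\underline{k}}(\widehat{\Gamma},\mathbb{C})[\lambda_\pi]$ with the $\lambda_\pi$-eigenspace inside the holomorphic cuspidal automorphic forms on $[G]$ of weight $\underline{k}$ and level $\widehat{\Gamma}$. The holomorphic type assumption makes the archimedean factor $H^0(\mathfrak{g}_G^{1,-1}\oplus\mathfrak{t}_G,K_{G,\infty};\pi_\infty\otimes W_{\underline{\kappa}})$ one-dimensional, and the $\lambda_\pi$-isotypic finite component decomposes (by multiplicity one applied to the $\widehat{\Gamma}$-invariants) as $\pi_p^{I_{G,p}}\otimes\pi_S^{\widehat{\Gamma}}$. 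Applying the ordinary projector $e$ then selects the one-dimensional ordinary line $\pi_p^\mathrm{ord}\subset\pi_p^{I_{G,p}}$; absorbing this one-dimensional factor into $\pi_S^{\widehat{\Gamma}}$ accounts for the two forms in which the target of the second assertion is written.

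For the integral refinement, $\iota_\pi$ is defined over $R$ because every Hecke eigenvalue of $\pi$ lies in $R$ by hypothesis, and the injectivity of $\varepsilon_q$ ensures that $S_{\underline{k}}^\mathrm{ord}(\widehat{\Gamma},R)[\pi]$ is a torsion-free $R$-submodule whose $R[1/p]$-span is the full $\pi$-isotypic component by the previous paragraph; hence it is an $R$-lattice in $\pi_p^\mathrm{ord}\otimes\pi_S^{\widehat{\Gamma}}$. For the $\mathfrak{m}_\pi$-localization, the spectral decomposition of $S_{\underline{k}}^\mathrm{ord}(\widehat{\Gamma},R[1/p])_{\mathfrak{m}_\pi}$ into Hecke eigensystems runs precisely over the ordinary holomorphic $\pi'$ of type $(\underline{k},\widehat{\Gamma})$ with $\lambda_{\pi'}\equiv\lambda_\pi\pmod{\mathfrak{m}_R}$, and assembling the previous case for each summand yields the direct-sum description. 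The main obstacle I anticipate is that distinct such $\pi'$ have congruent Hecke eigenvalues mod $\mathfrak{m}_R$, so the idempotents separating them in $\mathbb{T}_{\underline{k}}(\widehat{\Gamma},R[1/p])\otimes_R\mathbb{C}$ do not lift to the integral Hecke algebra; to conclude that the localization really embeds as a lattice inside the direct sum one must invoke that $\mathbb{T}_{\underline{k}}(\widehat{\Gamma},R)_{\mathfrak{m}_\pi}$ is a finite $R$-algebra acting faithfully on the finitely generated $R$-module $S_{\underline{k}}^\mathrm{ord}(\widehat{\Gamma},R)_{\mathfrak{m}_\pi}$, and apply Nakayama together with the complex isomorphism to promote the characteristic zero decomposition to the integral lattice statement.
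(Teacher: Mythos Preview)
The paper states this lemma without proof: it appears in the preliminary Section~2, which the author describes as ``simply bookkeeping from the references,'' and the lemma is lifted essentially verbatim from \cite[Section 6]{EischenHarrisLiSkinner16} (adapted from unitary to symplectic groups). There is therefore no proof in the paper to compare against; your sketch supplies the standard argument that the cited reference has in mind, and it is correct in outline. The first equality is indeed formal from ordinarity, the complex isomorphism follows from multiplicity one for $\mathrm{GSp}_4$ (Arthur) together with the one-dimensionality of $\pi_p^{\mathrm{ord}}$, and the integral statements are the routine passage from $\mathbb{C}$ to $R$ via the $q$-expansion lattice.

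One small point worth tightening: in the last paragraph you invoke Nakayama to pass from the complex decomposition to the integral lattice statement, but Nakayama is not quite the right tool here. What is actually needed is that $S_{\underline{k}}^{\mathrm{ord}}(\widehat{\Gamma},R)_{\mathfrak{m}_\pi}$ is finite free over $R$ (which follows from the $q$-expansion principle and $p$-adic completeness of $R$), so that tensoring with $R[1/p]$ is injective; then the spectral decomposition of $S_{\underline{k}}^{\mathrm{ord}}(\widehat{\Gamma},R[1/p])_{\mathfrak{m}_\pi}$ into the $\pi'$-pieces, combined with this injectivity, already gives the lattice statement directly. The idempotents not lifting integrally is precisely why one only gets a lattice in the direct sum rather than a direct sum of lattices, and this is what the statement claims.
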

	
	Similarly, we can define
	the following $\mathbb{T}_{\widehat{\Gamma},\underline{k},R}$-modules
	\[
	\widehat{S}_{\underline{k}}(\widehat{\Gamma},R)
	:=
	\mathrm{Hom}_R(S_{\underline{k}}(\widehat{\Gamma},R),R),
	\quad
	\widehat{S}_{\underline{k}}^\mathrm{ord}(\widehat{\Gamma},R)
	:=
	\mathrm{Hom}_R(S_{\underline{k}}^\mathrm{ord}(\widehat{\Gamma},R),R).
	\]
	We use Serre duality to identify
	$\widehat{S}_{\underline{k}}(\widehat{\Gamma},R)$
	with
	\[
	H^d_{\underline{k}^D}(\widehat{\Gamma},R)
	:=
	\big\{
	\varphi\in
	H^d(\mathbf{A}_{G,\widehat{\Gamma}}(\mathbb{C}),
	\mathcal{E}(W_{\underline{k}^D}))|
	\langle S_{\underline{k}}(\widehat{\Gamma},R),\varphi
	\rangle_{\underline{k}}^\mathrm{Ser}\subset R
	\big\}.
	\]
	Using the Serre duality, we define 
	$S_{\underline{k}}^{\mathrm{ord},\perp}(\widehat{\Gamma},R)$
	to be the subspace of
	$H^d_{\underline{k}^D}(\widehat{\Gamma},R)$
	annihilating $S_{\underline{k}}^\mathrm{ord}(\widehat{\Gamma},R)$.
	Similarly, one can identify
	$\widehat{S}_{\underline{k}}^\mathrm{ord}
	(\widehat{\Gamma},R)$
	with
	\[
	H_{\underline{k}^D}^{d,\mathrm{ord}}
	(\widehat{\Gamma},R)
	=
	\big\{
	\varphi\in
	H^d(\mathbf{A}_{G,\widehat{\Gamma}},\mathcal{E}(W_{\underline{k}^D}))/
	S_{\underline{k}}^{\mathrm{ord},\perp}(\widehat{\Gamma},R)|
	\langle
	S_{\underline{k}}^\mathrm{ord}(\widehat{\Gamma},R),\varphi
	\rangle_{\underline{k}}^\mathrm{Ser}
	\subset R
	\big\}.
	\]

	Since $\pi$ is holomorphic of type
	$(\underline{k},\widehat{\Gamma})$,
	its contragredient $\pi^\vee$ is anti-holomorphic 
	of type $(\underline{k},\widehat{\Gamma})$.
	We have the following injection
	\[
	\widehat{\iota}_{\pi^\vee}
	\colon
	H^d(\mathfrak{g}_G^{1,-1}\oplus
	\mathfrak{t}_G,
	K_{G,\infty},\pi^{\vee,\widehat{\Gamma}}
	\otimes W_{\underline{k}^D})
	\hookrightarrow
	(\pi^{\vee})^{\widehat{\Gamma}}
	\xrightarrow{\sim}
	H^d_{\underline{k}^D}(\widehat{\Gamma},\mathbb{C})
	\simeq
	H^d(\mathbf{A}_{G,\widehat{\Gamma}}(\mathbb{C}),
	\mathcal{E}(W_{\underline{k}^D})).
	\]
	Similarly, we set
	\[
	H^{d,\mathrm{ord}}_{\underline{k}^D}(\widehat{\Gamma},R)
	[\pi^\vee]
	:=
	H^{d,\mathrm{ord}}_{\underline{k}^D}
	(\widehat{\Gamma},R)_{\mathfrak{m}_{\pi^\vee}}
	\bigcap
	H^{d,\mathrm{ord}}_{\underline{k}^D}(\widehat{\Gamma},
	R[1/p])
	[\lambda_{\pi^\vee}].
	\]
	
	The dual version of the above lemma is
	\begin{lemma}
		Let $R,E,\mathfrak{m}_R$ be as above.
		The map $\widehat{\iota}_{\pi^\vee}$
		induces an isomorphism
		\[
		\widehat{\iota}_{\pi^\vee}
		\colon
		\pi_S^{\widehat{\Gamma}}
		\xrightarrow{\sim}
		H^{d,\mathrm{ord}}_{\underline{k}^D}
		(\widehat{\Gamma},\mathbb{C})[\pi^\vee].
		\]
		Moreover,
		$\widehat{\iota}_{\pi^\vee}$ identifies
		$H^{d,\mathrm{ord}}_{\underline{k}^D}(\widehat{\Gamma},R)
		[\pi]$
		with an $R$-lattice in
		$\pi^{\widehat{\Gamma}}_S$.
		It also identifies
		$H^{d,\mathrm{ord}}(\widehat{\Gamma},R)_{
			\mathfrak{m}_{\pi^\vee}}$
		with an $R$-lattice in
		$\otimes_{\pi'}(\pi')^{\widehat{\Gamma}}_S$
		where
		$\pi'$ runs through
		all ordinary holomorphic automorphic representations
		of $[G]$ of type
		$(\underline{k},\widehat{\Gamma})$
		such that
		$\lambda_{\pi'}
		\equiv
		\lambda_\pi(\mathrm{mod}\,\mathfrak{m}_R)$.
	\end{lemma}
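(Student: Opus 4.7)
The plan is to deduce the lemma from its predecessor by dualizing through the Serre duality pairing $\langle\cdot,\cdot\rangle^{\mathrm{Ser}}_{\underline{k}}$. At the archimedean and complex level, the target $H^{d,\mathrm{ord}}_{\underline{k}^D}(\widehat{\Gamma},\mathbb{C})[\pi^\vee]$ is essentially defined to be the $\mathbb{C}$-dual of $S_{\underline{k}}^{\mathrm{ord}}(\widehat{\Gamma},\mathbb{C})[\pi]$, since the $\lambda_\pi$-isotypic component of the holomorphic side pairs non-degenerately with the $\lambda_{\pi^\vee}$-isotypic component of the top-degree side by Serre duality combined with the identification $(\pi^\vee)_\infty \otimes W_{\underline{k}^D}$ at infinity. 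Dualizing the previous lemma's complex isomorphism $\iota_\pi \colon \pi_p^{\mathrm{ord}}\otimes\pi_S^{\widehat{\Gamma}} \xrightarrow{\sim} S_{\underline{k}}^{\mathrm{ord}}(\widehat{\Gamma},\mathbb{C})$ thus gives $\widehat{\iota}_{\pi^\vee} \colon \pi_S^{\widehat{\Gamma}} \xrightarrow{\sim} H^{d,\mathrm{ord}}_{\underline{k}^D}(\widehat{\Gamma},\mathbb{C})[\pi^\vee]$ after using that $\pi_p^{\mathrm{ord}}$ and the corresponding ordinary line at $p$ on the contragredient side are both one-dimensional (by ordinariness), and that $(\pi^\vee)_S^{\widehat{\Gamma}}$ is canonically dual to $\pi_S^{\widehat{\Gamma}}$ via the invariant pairing.

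For the integral structure, I would argue as follows. By definition, $H^{d,\mathrm{ord}}_{\underline{k}^D}(\widehat{\Gamma},R)$ consists of those classes in $H^d(\mathbf{A}_{G,\widehat{\Gamma}},\mathcal{E}(W_{\underline{k}^D}))/S_{\underline{k}}^{\mathrm{ord},\perp}(\widehat{\Gamma},R)$ whose Serre pairing with $S_{\underline{k}}^{\mathrm{ord}}(\widehat{\Gamma},R)$ lies in $R$. Thus $H^{d,\mathrm{ord}}_{\underline{k}^D}(\widehat{\Gamma},R)$ is tautologically the $R$-dual $\mathrm{Hom}_R(S_{\underline{k}}^{\mathrm{ord}}(\widehat{\Gamma},R),R)$. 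Localizing both sides at the maximal ideals $\mathfrak{m}_\pi$ and $\mathfrak{m}_{\pi^\vee}$ (which correspond under the main involution of the Hecke algebra, so that the two localizations are interchanged by the pairing), the $R$-lattice identification from the previous lemma
\[
S_{\underline{k}}^{\mathrm{ord}}(\widehat{\Gamma},R)_{\mathfrak{m}_\pi} \;\hookrightarrow\; \bigoplus_{\pi'}\bigl((\pi')_p^{\mathrm{ord}}\otimes (\pi')_S^{\widehat{\Gamma}}\bigr)
\]
dualizes to
\[
H^{d,\mathrm{ord}}_{\underline{k}^D}(\widehat{\Gamma},R)_{\mathfrak{m}_{\pi^\vee}} \;\hookrightarrow\; \bigoplus_{\pi'}(\pi')^{\widehat{\Gamma}}_S,
\]
and intersecting with the $\lambda_{\pi^\vee}$-component over $R[1/p]$ extracts the $[\pi]$-part. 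This is precisely $\widehat{\iota}_{\pi^\vee}$ composed with the direct sum decomposition.

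The main obstacle will be the verification that the Serre pairing $\langle\cdot,\cdot\rangle^{\mathrm{Ser}}_{\underline{k}}$ restricts to a perfect pairing of $R$-modules on the ordinary $\mathfrak{m}_\pi$ (respectively $\mathfrak{m}_{\pi^\vee}$) localizations; this requires that the ordinary projector $e$ is self-adjoint with respect to the Serre pairing up to the involution interchanging $\mathfrak{m}_\pi$ and $\mathfrak{m}_{\pi^\vee}$, and that the Hecke algebra action is compatible with the pairing in the sense that $\langle Tf,\varphi\rangle^{\mathrm{Ser}}_{\underline{k}} = \langle f, T^\iota \varphi\rangle^{\mathrm{Ser}}_{\underline{k}}$ where $T\mapsto T^\iota$ is the involution. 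The archimedean compatibility in formula (\ref{Petersson product vs. Serre duality}) reduces this adjunction on the Hecke side to the analogous adjunction for the $L^2$-Petersson product, where it is standard. A secondary subtlety is that $S^{\mathrm{ord},\perp}_{\underline{k}}$ must be controlled integrally; this follows from the fact that $e$ acts on the localization $\mathfrak{m}_\pi$ as an idempotent which lifts integrally (ordinary eigenvalues are $p$-adic units by Proposition \cite{Liu2015}[2.5.2]), so that the splitting $H^d_{\underline{k}^D}(\widehat{\Gamma},R) \simeq H^{d,\mathrm{ord}}_{\underline{k}^D}(\widehat{\Gamma},R) \oplus \ker(e)$ respects the $R$-structure. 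With these compatibilities in hand, the dualization argument outlined above proceeds formally.
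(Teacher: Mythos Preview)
Your approach is correct and matches the paper's intent: the paper states this lemma as ``the dual version of the above lemma'' without giving a separate proof, so the argument is precisely to dualize the previous lemma through the Serre pairing, using the definitional identification $\widehat{S}_{\underline{k}}^{\mathrm{ord}}(\widehat{\Gamma},R)=\Hom_R(S_{\underline{k}}^{\mathrm{ord}}(\widehat{\Gamma},R),R)\simeq H^{d,\mathrm{ord}}_{\underline{k}^D}(\widehat{\Gamma},R)$ that the paper sets up just before. The compatibilities you flag (Hecke-adjointness under $\langle\cdot,\cdot\rangle^{\mathrm{Ser}}$ and integrality of the idempotent $e$) are exactly those the paper takes as established from the self-adjointness lemma for double-coset operators and from \cite[Proposition~2.5.2]{Liu2015}.
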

	
	One can also show that the Serre duality induces perfect
	$\mathbb{T}_{\widehat{\Gamma},\underline{k},R}^\mathrm{ord}$-equivariant
	pairings
	\[
	S^{\mathrm{ord}}_{\underline{k}}(\widehat{\Gamma},R)[\pi]
	\bigotimes_R
	H^{d,\mathrm{ord}}_{\underline{k}^D}(\widehat{\Gamma},R)
	[\pi^\vee]
	\rightarrow
	R,
	\quad
	S^{\mathrm{ord}}_{\underline{k}}(\widehat{\Gamma},R)_{\mathfrak{m}_\pi}
	\bigotimes_R
	H^{d,\mathrm{ord}}_{\underline{k}^D}
	(\widehat{\Gamma},R)_{\mathfrak{m}_{\pi^\vee}}
	\rightarrow
	R.
	\]

	\subsubsection{Congruence ideals}
	In this subsection
	we write
	$\mathbb{T}^\mathrm{ord}$,
	resp.,
	$\mathbb{T}^\mathrm{ord}_{\mathfrak{m}_\pi}$
	for $e\mathbb{T}_{\underline{k}}
	(\widehat{\Gamma},R)$,
	resp.,
	$e\mathbb{T}_{\underline{k}}
	(\widehat{\Gamma},R)_{\mathfrak{m}_\pi}$.
	We make the following Gorenstein hypothesis
	\begin{hypothesis}\label{Gorenstein hypothesis}
		The $R$-algebra $\mathbb{T}^\mathrm{ord}_{
			\mathfrak{m}_\pi}$
		is Gorenstein
		and the $\mathbb{T}^\mathrm{ord}_{
			\mathfrak{m}_\pi}$-module
		$S_{\underline{k}}^\mathrm{ord}
		(\widehat{\Gamma},R)_{
			\mathfrak{m}_\pi}$ 
		is a free module.
	\end{hypothesis}

	Note that $\mathbb{T}^\mathrm{ord}_{
		\mathfrak{m}_\pi}$
	is a reduced $R$-algebra,
	we have a decomposition of $R$-algebras
	\[
	\mathbb{T}^\mathrm{ord}_{\mathfrak{m}_\pi}[1/p]
	:=\mathbb{T}^\mathrm{ord}_{
		\mathfrak{m}_\pi}\otimes_RR[1/p]=
	R[1/p]\oplus X
	\]
	where the projection to
	$R[1/p]$ is induced by the character
	$\lambda_\pi\colon
	\mathbb{T}^\mathrm{ord}_{
		\mathfrak{m}_\pi}\rightarrow R$.
	We write
	$1_\pi$ for the idempotent element in
	$\mathbb{T}_{\mathfrak{m}_\pi}[1/p]$
	corresponding to the projection
	$\mathbb{T}^\mathrm{ord}_{
		\mathfrak{m}_\pi}[1/p]
	\rightarrow R[1/p]$.
	Let $M$ be a finite $\mathbb{T}^\mathrm{ord}$-module
	flat over $R$
	and put
	$M[1/p]:=M\otimes_RR[1/p]$.
	Define the following 
	$\mathbb{T}^\mathrm{ord}$-modules:
	\[
	M^\pi:=1_\pi M_{\mathfrak{m}_\pi},
	\quad
	M_\pi:=M\bigcap 1_\pi M[1/p],
	\quad
	M^X:=(1-1_\pi)M_{\mathfrak{m}_\pi},
	\quad
	M_X:=M\bigcap M^X.
	\]
	
	One can show that there are isomorphisms of
	$R$-modules
	\[
	\frac{M_{\mathfrak{m}_\pi}}{M_\pi+M_X}
	\simeq
	\frac{M^\pi}{M_\pi}
	\simeq
	\frac{M^X}{M_X}
	\simeq
	\frac{M^\pi+M^X}{M_{\mathfrak{m}_\pi}}.
	\]
	
	\begin{definition}
		The cohomological congruence ideal 
		$\mathfrak{c}^\mathrm{coh}(M,\pi)$ of
		$M$ is the annihilator in $R$ of the
		$R$-module
		$M^\pi/M_\pi$.
	\end{definition}
    \begin{lemma}
    	\label{idetification of various congruence ideals}
    	Assuming
    	Hypothesis \ref{Gorenstein hypothesis},
    	one has
    	\[
    	\mathfrak{c}^\mathrm{coh}
    	(\mathbb{T}^\mathrm{ord},\pi)
    	=
    	\mathfrak{c}^\mathrm{coh}
    	(S_{\underline{k}}^\mathrm{ord}
    	(\widehat{\Gamma},R),\pi)
    	=
    	\mathfrak{c}^\mathrm{coh}
    	(H_{\underline{k}^D}^{d,\mathrm{ord}}
    	(\widehat{\Gamma},R),\pi).
    	\]
    \end{lemma}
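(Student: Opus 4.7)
The plan is to reduce both equalities to the following general assertion: for any finite $\mathbb{T}^\mathrm{ord}$-module $M$, flat over $R$, such that the localization $M_{\mathfrak{m}_\pi}$ is free of finite rank $n$ over $\mathbb{T}^\mathrm{ord}_{\mathfrak{m}_\pi}$, one has $\mathfrak{c}^\mathrm{coh}(M,\pi) = \mathfrak{c}^\mathrm{coh}(\mathbb{T}^\mathrm{ord},\pi)$. To prove this reduction, I would first observe that the formation of $M^\pi$ and $M_\pi$ depends only on the localization $M_{\mathfrak{m}_\pi}$ (because the idempotent $1_\pi$ already lives in $\mathbb{T}^\mathrm{ord}_{\mathfrak{m}_\pi}[1/p]$) and commutes with finite direct sums. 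Hence if $M_{\mathfrak{m}_\pi} \simeq (\mathbb{T}^\mathrm{ord}_{\mathfrak{m}_\pi})^{\oplus n}$ then, as $R$-modules,
\[
M^\pi/M_\pi \;\simeq\; \bigl((\mathbb{T}^\mathrm{ord})^\pi/(\mathbb{T}^\mathrm{ord})_\pi\bigr)^{\oplus n}.
\]
Since the $R$-annihilator of a finite direct sum of copies of an $R$-module equals the annihilator of a single summand, this gives the desired equality $\mathfrak{c}^\mathrm{coh}(M,\pi) = \mathfrak{c}^\mathrm{coh}(\mathbb{T}^\mathrm{ord},\pi)$.

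The first equality of the lemma then follows immediately from Hypothesis~\ref{Gorenstein hypothesis}, which asserts exactly that $S_{\underline{k}}^\mathrm{ord}(\widehat{\Gamma},R)_{\mathfrak{m}_\pi}$ is a free $\mathbb{T}^\mathrm{ord}_{\mathfrak{m}_\pi}$-module of some finite rank $n$.

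For the second equality, I would invoke the perfect $\mathbb{T}^\mathrm{ord}$-equivariant Serre duality pairing displayed earlier in this subsection to identify $H_{\underline{k}^D}^{d,\mathrm{ord}}(\widehat{\Gamma},R)$ with $\mathrm{Hom}_R(S_{\underline{k}}^\mathrm{ord}(\widehat{\Gamma},R),R)$ as a $\mathbb{T}^\mathrm{ord}$-module. Localizing at $\mathfrak{m}_\pi$, writing $S_{\underline{k}}^\mathrm{ord}(\widehat{\Gamma},R)_{\mathfrak{m}_\pi} \simeq (\mathbb{T}^\mathrm{ord}_{\mathfrak{m}_\pi})^{\oplus n}$, and applying the Gorenstein part of Hypothesis~\ref{Gorenstein hypothesis}---which provides a $\mathbb{T}^\mathrm{ord}_{\mathfrak{m}_\pi}$-linear isomorphism $\mathrm{Hom}_R(\mathbb{T}^\mathrm{ord}_{\mathfrak{m}_\pi},R) \simeq \mathbb{T}^\mathrm{ord}_{\mathfrak{m}_\pi}$---one concludes that $H_{\underline{k}^D}^{d,\mathrm{ord}}(\widehat{\Gamma},R)_{\mathfrak{m}_\pi}$ is again free of rank $n$ over $\mathbb{T}^\mathrm{ord}_{\mathfrak{m}_\pi}$. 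Applying the general assertion a second time then yields the second equality.

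The main point of care will be verifying that the Gorenstein condition in Hypothesis~\ref{Gorenstein hypothesis}, phrased intrinsically for the $R$-algebra $\mathbb{T}^\mathrm{ord}_{\mathfrak{m}_\pi}$, really yields the $\mathbb{T}^\mathrm{ord}_{\mathfrak{m}_\pi}$-linear self-duality $\mathrm{Hom}_R(\mathbb{T}^\mathrm{ord}_{\mathfrak{m}_\pi},R) \simeq \mathbb{T}^\mathrm{ord}_{\mathfrak{m}_\pi}$; for a finite flat $R$-algebra this is the standard characterization of Gorenstein, but one must also track the $\mathbb{T}^\mathrm{ord}$-equivariance of the Serre duality identification at the integral level, which is built into the perfect pairings stated just before the definition of $\mathfrak{c}^\mathrm{coh}$. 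Once these structural identifications are in place, the rest of the argument is a straightforward bookkeeping of direct sums and $R$-annihilators.
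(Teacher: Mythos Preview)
Your proposal is correct and follows the standard argument; the paper itself does not give a proof but simply refers to \cite[Section 2]{TilouineUrban2018}, where precisely this kind of reasoning (freeness over the localized Hecke algebra together with Gorenstein self-duality) is carried out. Your write-up is therefore a faithful unpacking of the cited reference.
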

    \begin{proof}
    	\textit{cf}.
    	\cite[Section 2]{TilouineUrban2018}.
    \end{proof}
    \begin{definition}
    	We write
    	$\mathfrak{c}^\mathrm{coh}(\pi)$
    	for the ideals in the above lemma.
    \end{definition}
    \begin{remark}
    	The cohomological congruence ideal
    	defined here is the same as the
    	one given in
    	\cite[after Lemma 6.6.3]{EischenHarrisLiSkinner16}
    	for the case of $\mathrm{GSp}_{2n}$.
    	More precisely,
    	write
    	$S_{\underline{k}}^\mathrm{ord}
    	(\widehat{\Gamma},R)[\pi]^\perp
    	\subset
    	S_{\underline{k}}^\mathrm{ord}
    	(\widehat{\Gamma},R)_{\mathfrak{m}_\pi}$
    	for the orthogonal complement of the subspace
    	$S_{\underline{k}}^\mathrm{ord}(\widehat{\Gamma},R)
    	[\pi]
    	\subset
    	S_{\underline{k}}^\mathrm{ord}(\widehat{\Gamma},R)_{
    		\mathfrak{m}_\pi}$
    	under the pairing of Petersson product.
    	Then the ideal
    	$\mathfrak{c}^\mathrm{coh}(\pi)$
    	is equal to
    	the ideal of $R$
    	annihilating the $R$-module
    	$S_{\underline{k}}^\mathrm{ord}(\widehat{\Gamma},R)_{
    		\mathfrak{m}_\pi}/
    	(
    	S_{\underline{k}}^\mathrm{ord}(\widehat{\Gamma},R)
    	[\pi]
    	+
    	S_{\underline{k}}^\mathrm{ord}(\widehat{\Gamma},R)
    	[\pi]^\perp)$
    \end{remark}

    The cohomological congruence ideals
    can be generalized in the following way
    (\textit{cf}.
    \cite[Section 8.2]{HidaTilouine16}).
    Let $S,S'$ be non-zero 
    reduced algebra over a normal noetherian
    domain $R$ which are flat of finite type as
    $R$-modules.
    Let $f\colon S\rightarrow S'$ be a
    surjective homomorphism of
    $R$-algebras.
    Then we can decompose the total quotient ring
    $\mathrm{Q}(S)$ into a direct product of $R$-algebras
    $\mathrm{Q}(S)
    =
    \mathrm{Q}(S')\oplus X$.
    Then the congruence module of the morphism
    $f$ is defined as
    $ C_0(f):=
    S/\mathrm{Ker}(S\rightarrow X)\otimes_{S,f}S'$
    and the congruence ideal in $S'$
    of $f$ is defined as
    \begin{equation}
    \label{notation for congruence ideal}
    \mathfrak{c}(f)
    =
    \mathrm{Ann}_{S'}(C_0(f)).
    \end{equation}
    The meaning and transfer property
    of the congruence ideal
    $\mathfrak{c}(f)$ are given in
    \cite[Proposition 8.3,
    Lemma 8.5]{HidaTilouine16}.
    Then for the case
    $\lambda_\pi
    \colon\mathbb{T}_{\mathfrak{m}_\pi}
    \rightarrow
    R$,
    under Hypothesis
    \ref{Gorenstein hypothesis},
    we have
    (\textit{cf}.
    \cite[Section 2.1]{TilouineUrban2018}):
    \[
    \mathfrak{c}^\mathrm{coh}(\pi)
    =\mathfrak{c}(\lambda_\pi).
    \]

	\subsubsection{Periods of automorphic representations}
	\label{periods}    
	\begin{lemma}\label{definition of periods}
		Let
		$\pi$ be an irreducible holomorphic
		automorphic representation of $[G]$
		of type 
		$(\underline{k},\widehat{\Gamma})$.
		The following $R$-modules contained inside $\mathbb{C}$
		given by the values of the Petersson
		products
		\[
		L[\pi]=
		\langle
		S_{\underline{k}}^\mathrm{ord}(\widehat{\Gamma},R)[\pi],
		S_{\underline{k}}^\mathrm{ord}(\widehat{\Gamma},R)[\pi]
		\rangle,
		\quad
		L_\pi
		=
		\langle
		S_{\underline{k}}^\mathrm{ord}
		(\widehat{\Gamma},R)[\pi],
		S_{\underline{k}}^\mathrm{ord}
		(\widehat{\Gamma},R)_{\mathfrak{m}_\pi}
		\rangle
		\]
		are both of rank $1$ over $R$.
	\end{lemma}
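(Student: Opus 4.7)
The plan is to reduce the lemma to two ingredients: (i) the $R$-module $S^\mathrm{ord}_{\underline{k}}(\widehat{\Gamma},R)[\pi]$ is free of rank one; (ii) cuspidal automorphic representations with distinct Hecke eigensystems are Petersson-orthogonal. For (i), by the integral version of the preceding lemma, $S^\mathrm{ord}_{\underline{k}}(\widehat{\Gamma},R)[\pi]$ is identified with an $R$-lattice in $\pi_p^\mathrm{ord}\otimes\pi_S^{\widehat{\Gamma}}$, which is one-dimensional over $\mathbb{C}$: the ordinary line at $p$ is one-dimensional by hypothesis, and newform multiplicity one at the tame places (available because $N$ is square-free and $\widehat{\Gamma}$ is the principal level $N$) gives $\dim\pi_S^{\widehat{\Gamma}}=1$. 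Since $R$ is a complete DVR, a rank-one lattice is free; fix a generator $\varphi$.

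For $L[\pi]$: every element of $S^\mathrm{ord}_{\underline{k}}(\widehat{\Gamma},R)[\pi]$ is of the form $a\varphi$ with $a\in R$, so every Petersson pairing between two such elements is a scalar multiple of $\langle\varphi,\varphi\rangle$. The resulting scalars generate a cyclic $R$-submodule of $\mathbb{C}$, and $\langle\varphi,\varphi\rangle>0$ by the positive definiteness of the Petersson product. Hence $L[\pi]$ is a nonzero cyclic $R$-module, of rank one.

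For $L_\pi$: take $\psi\in S^\mathrm{ord}_{\underline{k}}(\widehat{\Gamma},R)_{\mathfrak{m}_\pi}$. Inside the localization after inverting $p$, use the idempotent $1_\pi\in\mathbb{T}^\mathrm{ord}_{\mathfrak{m}_\pi}[1/p]$ coming from the $R[1/p]$-algebra splitting $\mathbb{T}^\mathrm{ord}_{\mathfrak{m}_\pi}[1/p]=R[1/p]\oplus X$ to decompose $\psi=1_\pi\psi+(1-1_\pi)\psi$. The component $1_\pi\psi$ lies in the $\pi$-isotypic subspace, which by (i) equals $R[1/p]\cdot\varphi$, so $1_\pi\psi=c\varphi$ for some $c\in R[1/p]$. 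The complementary component $(1-1_\pi)\psi$ belongs to the $X$-factor, which decomposes over $\mathbb{C}$ into Hecke-eigenspaces for characters $\lambda_{\pi'}\neq\lambda_\pi$; by the self-adjointness of Hecke operators established in the Hecke operators subsection, each such eigenform is Petersson-orthogonal to $\varphi$. Consequently $\langle\varphi,\psi\rangle$ is a scalar multiple of $\langle\varphi,\varphi\rangle$ by an element of $R[1/p]$, so $L_\pi\subset R[1/p]\cdot\langle\varphi,\varphi\rangle$. This shows $L_\pi$ has $R$-rank at most one, and since it contains the nonzero $L[\pi]$, its rank is exactly one.

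The only genuinely nontrivial step is the Petersson-orthogonality for the $X$-component. It relies on the self-adjoint action of the Hecke algebra together with the spectral decomposition over $\mathbb{C}$ into distinct Hecke eigensystems; this decomposition is not integral over $R$, which is exactly why the scalar $c$ is only forced into $R[1/p]$ rather than $R$. Everything else is a routine manipulation of rank-one lattices inside a one-dimensional complex line.
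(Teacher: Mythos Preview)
Your argument has a genuine gap at step (i). You assert $\dim_\mathbb{C}\pi_S^{\widehat{\Gamma}}=1$ via ``newform multiplicity one at the tame places'', but this does not hold for the principal congruence level used here. At a prime $\ell\mid N$ the group $\widehat{\Gamma}_\ell$ is the full principal congruence subgroup $\Gamma(\ell)_\ell$, not the paramodular or Iwahori subgroup to which the Roberts--Schmidt newform theory for $\mathrm{GSp}_4$ applies. Concretely, if $\pi_\ell$ is the Steinberg representation (as is forced later by the ($N$-Min) hypothesis), then $\pi_\ell^{\Gamma(\ell)}$ realizes the Steinberg representation of the finite group $\mathrm{GSp}_4(\mathbb{F}_\ell)$ and has dimension $\ell^4$. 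Hence $S^\mathrm{ord}_{\underline{k}}(\widehat{\Gamma},R)[\pi]$ is in general a lattice of rank strictly larger than $1$, and your identification of $L[\pi]$ with the cyclic module $R\cdot\langle\varphi,\varphi\rangle$ collapses: the values of a positive-definite Hermitian form on a higher-rank lattice need not lie in a single $R$-line without further input.

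The paper's route (following Harris) bypasses the rank of the lattice entirely by applying Schur's lemma to the irreducible $G(\mathbb{A}_\mathrm{f})$-module $\pi_\mathrm{f}$: the space of $G(\mathbb{A}_\mathrm{f})$-invariant pairings $\pi_\mathrm{f}\times\pi_\mathrm{f}^\vee\to\mathbb{C}$ is one-dimensional. Since $\pi_\mathrm{f}$ is defined over the Hecke field, one may fix an invariant pairing $B$ that is $R$-valued on the lattice, and the Petersson product is then $c\cdot B$ for a single scalar $c\in\mathbb{C}^\times$; all its values on the lattice therefore lie in $c\cdot R$ regardless of the lattice's rank. Your orthogonality argument for $L_\pi$ via the idempotent $1_\pi$ and Hecke self-adjointness is fine once $L[\pi]$ is known to be rank one, but the input (i) must be replaced by this Schur-type uniqueness rather than an unjustified rank-one claim.
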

	\begin{proof}
		The proof is the same as
		\cite[Proposition 2.4.9]{Harris2013}
		which is a simple application of
		Schur's lemma.
	\end{proof}

	We fix two positive real numbers
	as generators of these two $R$-modules
	$L[\pi]$ and $L_\pi$:
	\[
	P[\pi]=P_G[\pi],
	\quad
	P_\pi=P_{G,\pi}.
	\]

	We have a dual version of Lemma \ref{definition of periods}:
	\begin{lemma}
		\label{anti-periods}
		Assume Hypothesis \ref{Gorenstein hypothesis},
		then the following $R$-modules
		\[
		\widehat{L}[\pi^\vee]
		=
		\langle
		H^{d,\mathrm{ord}}_{
			\underline{k}^D}(\widehat{\Gamma},R)[\pi^\vee],
		H^{d,\mathrm{ord}}_{
			\underline{k}^D}(\widehat{\Gamma},R)[\pi^\vee]
		\rangle,
		\quad
		\widehat{L}_{\pi^\vee}
		=
		\langle
		H^{d,\mathrm{ord}}_{
			\underline{k}^D}(\widehat{\Gamma},R)
		[\pi^\vee],
		H^{d,\mathrm{ord}}_{
			\underline{k}^D}
		(\widehat{\Gamma},R)_{\mathfrak{m}_{\pi^\vee}}
		\rangle
		\]
		are both of rank $1$ over $R$.
	\end{lemma}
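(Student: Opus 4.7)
The plan is to mimic the proof of Lemma \ref{definition of periods}, transporting the Schur's lemma argument to the top-degree cohomology via Serre duality. The two statements are dual to each other, and both ultimately reduce to the fact that the relevant isotypic component is one-dimensional over $\mathbb{C}$.

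First I would handle $\widehat{L}[\pi^\vee]$. By the dual lemma preceding Lemma \ref{anti-periods}, the map $\widehat{\iota}_{\pi^\vee}$ identifies $H^{d,\mathrm{ord}}_{\underline{k}^D}(\widehat{\Gamma},R)[\pi^\vee]$ with an $R$-lattice inside $\pi_S^{\widehat{\Gamma}}$. The latter is a one-dimensional $\mathbb{C}$-vector space: the multiplicity-one hypothesis for the finite-prime spherical part at places away from $Np$, the chosen ordinary line in $\pi_p^{\mathrm{ord}}$, and the $K_{G,\infty}$-type at infinity (which is one-dimensional for a discrete series of the assumed type) together pin down a single line. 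By Schur's lemma applied to the $G(\mathbb{A})$-equivariant Petersson product, the pairing restricted to this one-dimensional $\mathbb{C}$-subspace takes values in a one-dimensional subspace of $\mathbb{C}$, so $\widehat{L}[\pi^\vee]$ is an $R$-submodule of a rank-$1$ $\mathrm{Frac}(R)$-vector space; non-triviality follows from the non-vanishing of Petersson norms on non-zero cuspidal vectors.

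Next I would treat $\widehat{L}_{\pi^\vee}$. Here I would invoke Hypothesis \ref{Gorenstein hypothesis}, which via the perfect Serre-duality pairing displayed before Lemma \ref{definition of periods} transfers to the statement that $H^{d,\mathrm{ord}}_{\underline{k}^D}(\widehat{\Gamma},R)_{\mathfrak{m}_{\pi^\vee}}$ is free over the Gorenstein algebra $\mathbb{T}^{\mathrm{ord}}_{\mathfrak{m}_{\pi^\vee}}$. By the self-adjointness of Hecke operators established in the lemma preceding Definition 2.6, the pairing of an element of $H^{d,\mathrm{ord}}_{\underline{k}^D}(\widehat{\Gamma},R)[\pi^\vee]$ (on which the Hecke algebra acts by the character $\lambda_{\pi^\vee}$) with any element of $H^{d,\mathrm{ord}}_{\underline{k}^D}(\widehat{\Gamma},R)_{\mathfrak{m}_{\pi^\vee}}$ factors through the $\lambda_{\pi^\vee}$-projection $1_{\pi^\vee}H^{d,\mathrm{ord}}_{\underline{k}^D}(\widehat{\Gamma},R[1/p])_{\mathfrak{m}_{\pi^\vee}}$. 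This projection is a rank-$1$ $R[1/p]$-module by the first part of the argument, yielding $\widehat{L}_{\pi^\vee} \subset \widehat{L}[\pi^\vee]\otimes_RR[1/p]$, hence rank at most $1$ over $R$; non-triviality is inherited from $\widehat{L}[\pi^\vee] \subset \widehat{L}_{\pi^\vee}$.

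The step demanding the most care is the Hecke-equivariance argument in the second part, since the localization $H^{d,\mathrm{ord}}_{\underline{k}^D}(\widehat{\Gamma},R)_{\mathfrak{m}_{\pi^\vee}}$ can contain contributions from multiple automorphic representations $\pi'$ with $\lambda_{\pi'}\equiv \lambda_{\pi^\vee} \pmod{\mathfrak{m}_R}$; one must verify that the idempotent $1_{\pi^\vee}$ exists in $\mathbb{T}^{\mathrm{ord}}_{\mathfrak{m}_{\pi^\vee}}[1/p]$ (which it does, since this algebra is reduced and decomposes after inverting $p$) so that the projection argument is legitimate. Everything else is a straightforward transcription of the proof of Lemma \ref{definition of periods} through Serre duality.
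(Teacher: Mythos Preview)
Your proposal is correct and follows the same approach the paper intends: the paper gives no separate proof for Lemma~\ref{anti-periods}, presenting it as ``a dual version of Lemma~\ref{definition of periods}'' whose proof is ``a simple application of Schur's lemma'' as in \cite[Proposition~2.4.9]{Harris2013}. Your write-up spells out exactly that duality argument---Schur's lemma on the one-dimensional isotypic component for $\widehat{L}[\pi^\vee]$, then Hecke-equivariant projection through the idempotent $1_{\pi^\vee}$ for $\widehat{L}_{\pi^\vee}$---with the Gorenstein hypothesis supplying the needed freeness on the $H^d$ side.
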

	
	We fix the following positive real numbers
	as generators
	of the $R$-module
	$\widehat{L}[\pi^\vee]$,
	resp.,
	$\widehat{L}_{\pi^\vee}$.
	\[
	\widehat{P}[\pi^\vee]
	=
	\widehat{P}_G[\pi^\vee],
	\quad
	\widehat{P}_{\pi^\vee}
	=
	\widehat{P}_{G,\pi^\vee}.
	\]	
	These numbers
	$P[\pi],P_\pi,
	\widehat{P}[\pi^\vee],
	\widehat{P}_{\pi^\vee}$
	are called the periods of
	$\pi$ and $\pi^\vee$.
	Then we have the following relation of
	congruence numbers and periods:
	\begin{lemma}
		we have the following identities,
		up to units in $R$:
		\begin{equation}\label{period relation}
		\mathfrak{c}^\mathrm{coh}(\pi)P_\pi
		=
		P[\pi],
		\quad
		\mathfrak{c}^\mathrm{coh}(\pi)
		\widehat{P}_{\pi^\vee}
		=
		\widehat{P}[\pi^\vee],
		\quad
		\widehat{P}_{\pi^\vee}
		P_\pi=1.
		\end{equation}
	\end{lemma}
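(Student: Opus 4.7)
The plan is to exploit the rank-one structure of the $\pi$-isotypic modules established in the preceding two lemmas, combined with the orthogonal idempotent decomposition arising from the projector $1_\pi$, and then to match the Petersson pairings on the holomorphic and antiholomorphic sides via the Serre duality (\ref{Petersson product vs. Serre duality}) under Hypothesis \ref{Gorenstein hypothesis}. The first two identities are parallel module-theoretic computations, while the third, being a duality identity between the two sides, is the delicate one.

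For the first identity, fix a generator $\varphi$ of the rank-one $R$-module $M_\pi := S^{\mathrm{ord}}_{\underline{k}}(\widehat{\Gamma},R)[\pi]$ and set $M := S^{\mathrm{ord}}_{\underline{k}}(\widehat{\Gamma},R)_{\mathfrak{m}_\pi}$. Since the Petersson product is Hecke self-adjoint and the idempotent $1_\pi \in \mathbb{T}^{\mathrm{ord}}_{\mathfrak{m}_\pi}[1/p]$ is cut out by Hecke operators, the decomposition $M[1/p] = 1_\pi M[1/p] \oplus (1-1_\pi) M[1/p]$ is orthogonal for the pairing; hence $\langle \varphi, (1-1_\pi) M\rangle = 0$, so that $L_\pi = \langle \varphi, M\rangle = \langle \varphi, M^\pi\rangle$. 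Under Hypothesis \ref{Gorenstein hypothesis} the $R$-module $M^\pi/M_\pi$ is cyclic and annihilated exactly by $\mathfrak{c}^{\mathrm{coh}}(\pi)$ (Lemma \ref{idetification of various congruence ideals}), so $M^\pi = \mathfrak{c}^{\mathrm{coh}}(\pi)^{-1} R\varphi$ up to $R$-units. Therefore $L_\pi = \mathfrak{c}^{\mathrm{coh}}(\pi)^{-1} L[\pi]$, i.e.\ $P[\pi] = \mathfrak{c}^{\mathrm{coh}}(\pi) P_\pi$ up to units. The second identity is proved verbatim on the top-degree cohomology side, using Hecke self-adjointness of the pairing on $H^{d,\mathrm{ord}}_{\underline{k}^D}$ and invoking Lemma \ref{idetification of various congruence ideals} to equate the two congruence ideals.

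For the third identity $\widehat{P}_{\pi^\vee} P_\pi = 1$, the key input is that the Serre duality (\ref{Petersson product vs. Serre duality}) provides a $\mathbb{T}^{\mathrm{ord}}_{\mathfrak{m}_\pi}$-equivariant, $R$-bilinear pairing $S^{\mathrm{ord}}_{\mathfrak{m}_\pi} \otimes_R H^{d,\mathrm{ord}}_{\mathfrak{m}_{\pi^\vee}} \to R$ which, under Hypothesis \ref{Gorenstein hypothesis} (which renders $\mathbb{T}^{\mathrm{ord}}_{\mathfrak{m}_\pi}$ self-dual as an $R$-module through a trace form), is perfect. Restricting to the $\pi$-isotypic parts yields a perfect pairing $S^{\mathrm{ord}}[\pi] \otimes_R H^{d,\mathrm{ord}}[\pi^\vee] \to R$ of rank-one $R$-modules. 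Matching the Petersson pairings that define $L_\pi$ and $\widehat{L}_{\pi^\vee}$ with restrictions of this single Serre pairing, using at the archimedean place the identification of a holomorphic form with its antiholomorphic counterpart via complex conjugation, one deduces $L_\pi \cdot \widehat{L}_{\pi^\vee} = R$ as submodules of $\mathbb{C}$, whence $\widehat{P}_{\pi^\vee} P_\pi$ is a unit in $R$.

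The main obstacle lies precisely in this last identification: one must verify that transferring between the Hermitian Petersson pairing on $S^{\mathrm{ord}}$ (respectively $H^{d,\mathrm{ord}}$) and the $R$-bilinear Serre pairing between the two introduces only scalars that are $R$-units. Concretely, this requires controlling the archimedean normalization factors implicit in (\ref{Petersson product vs. Serre duality}) and checking that the $p$-integral structure on $V_{\underline{k}}^r$ and the $K_{G,\infty}$-invariant Hermitian form fixed earlier are compatible with the Serre pairing up to $R^\times$; Hecke-equivariance then propagates the compatibility to all isotypic components.
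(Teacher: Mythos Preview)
Your approach is essentially the same as the paper's, which is extremely terse (it merely cites \cite[Lemma 6.6.3]{EischenHarrisLiSkinner16} for the first identity, invokes Serre duality for the second, and for the third appeals to an isomorphism $c_G$ together with (\ref{Petersson product vs. Serre duality})). Your proposal spells out the module-theoretic computation that underlies the citation, and your identification of the archimedean obstacle in the third identity is exactly what the paper's unexplained ``isomorphism $c_G$'' (the complex-conjugation isomorphism between holomorphic and antiholomorphic cohomology, borrowed from \cite{EischenHarrisLiSkinner16}) is meant to handle.

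One imprecision: you assert that $M_\pi = S_{\underline{k}}^{\mathrm{ord}}(\widehat{\Gamma},R)[\pi]$ is a rank-one $R$-module, but the preceding lemma only says that the \emph{values} of the pairing, namely $L[\pi]$ and $L_\pi$, are rank one (by Schur's lemma). The module $M_\pi$ itself is free of some rank $r$ over $R$ under Hypothesis \ref{Gorenstein hypothesis} (since $M$ is free over $\mathbb{T}^{\mathrm{ord}}_{\mathfrak{m}_\pi}$). This does not break your argument: freeness over $\mathbb{T}^{\mathrm{ord}}_{\mathfrak{m}_\pi}$ still gives $M^\pi = \mathfrak{c}^{\mathrm{coh}}(\pi)^{-1} M_\pi$ inside $M[1/p]$ (component-wise), and the orthogonality step $\langle M_\pi, M\rangle = \langle M_\pi, M^\pi\rangle$ via the self-adjoint idempotent $1_\pi$ goes through unchanged, yielding $L_\pi = \mathfrak{c}^{\mathrm{coh}}(\pi)^{-1} L[\pi]$ as $R$-submodules of $\mathbb{C}$. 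Just drop the rank-one claim on $M_\pi$ and work with an $R$-basis instead of a single generator.
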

	\begin{proof}
		The first identity follows the definitions
		(\textit{cf}.
		\cite[Lemma 6.6.3]{EischenHarrisLiSkinner16})
		The second one comes from Serre duality.
		The last one comes from the isomorphism
		$c_G$ as well as the relation between 
		Serre duality and
		Petersson products as given by
		(\ref{Petersson product vs. Serre duality}).
	\end{proof}
    \begin{remark}
    	In applications to Rallis inner product formula,
    	we will consider antiholomorphic automorphic
    	representations of $[G]$
    	instead of holomorphic ones.
    	The above results on congruence ideals and
    	periods are still valid in this case
    	up to the exchange of roles of
    	$\pi$ and its contragredient $\pi^\vee$.
    \end{remark}

    \subsubsection{Congruence ideals and Petersson products
    on compact groups}
    \label{congruence ideals on compact groups}
    In this subsection we digress to review the relation
    between congruence ideals and Petersson products.
    Let $G$ be an algebraic group over $\mathbb{Q}$
    compact at $\infty$.
    Then there is an equivalence between the theory
    of algebraic modular forms
    on $G$ and the theory of classical modular forms
    on $G$.
    We will use the former theory in this subsection.
    
    Fix a compact open subgroup $U$ of 
    $G(\mathbb{A}_\mathrm{f})$,
    then $G(\mathbb{Q})\backslash
    G(\mathbb{A}_\mathrm{f})/U$ is a finite set,
    which we denote by
    $\bigcup_{i\in I}G(\mathbb{Q})t_iU$.
    For any ring $R$,
    Let $S(U,R)$ be the set of $R$-valued
    modular forms on
    $G(\mathbb{A}_\mathrm{f})$ of level $U$,
    which can be identified with the set of functions
    $\{f\colon I\rightarrow R\}$.
    Suppose $R$ is a subring of $\mathbb{C}$,
    we define the Petersson inner product on
    $S(U,R)$ as
    \[
    \langle\cdot,\cdot\rangle
    \colon
    S(U,R)\times S(U,R)
    \rightarrow
    R,
    \quad
    (f,g)
    \mapsto
    \langle
    f,g
    \rangle
    =\sum_if(t_i)\overline{g(t_i^{-1})}.
    \]
    This is a perfect pairing,
    invariant under the action of $G(\mathbb{A}_\mathrm{f})$.
    
    Suppose now that
    $R$ is a finite extension of $\mathbb{Z}_p$
    with uniformizer $\varpi$
    (still viewed as a subring of $\mathbb{C}$).
    For any automorphic representation $\pi$ of
    $G(\mathbb{A})$,
    let $f,f'$ be two $R$-valued modular eigenforms on
    $G(\mathbb{A}_\mathrm{f})$
    in the representation $\pi$
    such that 
    $c(f)=f$ and $c(f')=-f'$.
    Assume that both $f$ and $f'$
    are primitive.
    Let $\mathcal{C}$ be an irreducible component of
    the ordinary Hida-Hecke algebra $\mathbb{T}$
    passing through $\pi$,
    $\lambda_\pi
    \colon\mathbb{T}
    \rightarrow
    \mathcal{C}
    \xrightarrow{\mathfrak{s}_\pi}
    R$.
    Then as in
    \cite[Sections 5.2 and 5.3]{HidaArithmetic},
    we can show that
    \begin{lemma}
    	\label{congruence ideals and Petersson products}
    	The Petersson product
    	$\langle
    	f,f'
    	\rangle$ generates the 
    	cohomological congruence ideal
    	$\mathfrak{c}(\lambda_\pi)
    	\subset R$.
    \end{lemma}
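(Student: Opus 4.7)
The plan is to adapt Hida's strategy for quaternion algebras (Sections 5.2--5.3 of \emph{Arithmetic of automorphic forms}) to the compact group $G$ considered here. The key input is the Gorenstein hypothesis (Hypothesis \ref{Gorenstein hypothesis}) on the ordinary Hecke algebra $\mathbb{T}^{\mathrm{ord}}_{\mathfrak{m}_\pi}$, which permits an identification of the localized space of modular forms with the Hecke algebra as a module over itself. Under this identification, the Petersson pairing reduces to a multiple of the canonical trace pairing on a Gorenstein local ring, from which the congruence ideal can be read off.

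First, I would verify that the Hecke operators act self-adjointly on $S(U,R)$ for the Petersson pairing, possibly after twisting by the involution $c$. The hypothesis $c(f)=f$ and $c(f')=-f'$ places $f$ and $f'$ in complementary $\pm$-eigenspaces of $c$, and the pairing restricts to a perfect Hecke-bilinear pairing between the corresponding subspaces $S(U,R)^{\pm}$. Localizing at $\mathfrak{m}_\pi$ and applying the Gorenstein hypothesis, both $M^{\pm} := S(U,R)^{\pm}_{\mathfrak{m}_\pi}$ become free of rank one over $\mathbb{T}^{\mathrm{ord}}_{\mathfrak{m}_\pi}$, and the pairing corresponds (up to a unit in $R$) to the canonical pairing $(a,b)\mapsto \phi(ab)$, where $\phi$ is a generator of the Gorenstein dualizing module $\mathrm{Hom}_R(\mathbb{T}^{\mathrm{ord}}_{\mathfrak{m}_\pi},R)$.

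Next, the primitivity of $f$ and $f'$ (that is, integrality together with non-vanishing modulo $\varpi$) translates, via the isomorphism $M^{\pm} \simeq \mathbb{T}^{\mathrm{ord}}_{\mathfrak{m}_\pi}$, into the statement that their images are generators, up to units in $R$, of the annihilator ideal $\mathfrak{a} := \mathrm{Ann}_{\mathbb{T}^{\mathrm{ord}}_{\mathfrak{m}_\pi}}(\ker \lambda_\pi)$. Using the direct-sum decomposition $\mathbb{T}^{\mathrm{ord}}_{\mathfrak{m}_\pi}[1/p] = R[1/p] \oplus X$ given by the idempotent $1_\pi$, one identifies $\mathfrak{a} = \mathbb{T}^{\mathrm{ord}}_{\mathfrak{m}_\pi} \cap R[1/p] \cdot 1_\pi$, so by the definition (\ref{notation for congruence ideal}) one has $\lambda_\pi(\mathfrak{a}) = \mathfrak{c}(\lambda_\pi)$ in $R$.

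Finally, the Petersson product $\langle f, f'\rangle$ becomes a unit multiple of $\phi(\alpha \beta)$ for generators $\alpha, \beta \in \mathfrak{a}$. Combining the computation of $\mathfrak{a}^2$ with the Gorenstein compatibility $\phi|_{\mathfrak{a}} = \lambda_\pi|_{\mathfrak{a}}$ up to units yields the desired identity $(\langle f, f'\rangle) = \mathfrak{c}(\lambda_\pi)$ as ideals of $R$. The main obstacle is precisely this last compatibility between the Gorenstein trace $\phi$ and the character $\lambda_\pi$ on the annihilator $\mathfrak{a}$; it is a standard but delicate consequence of Gorenstein duality together with the structure of $\mathbb{T}^{\mathrm{ord}}_{\mathfrak{m}_\pi}$ at $\mathfrak{m}_\pi$, and is the content of Hida's key lemma in the referenced sections, which transports to the present compact setting with only formal modifications.
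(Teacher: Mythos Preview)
Your proposal is correct and follows exactly the approach the paper itself indicates: the paper does not give an independent proof but simply refers to Hida's argument in \cite[Sections 5.2--5.3]{HidaArithmetic}, and what you outline is precisely that argument transported to the compact-at-infinity setting. One small phrasing issue: writing ``$\phi|_{\mathfrak a}=\lambda_\pi|_{\mathfrak a}$ up to units'' is slightly misleading, since $\phi(\mathfrak a)=R$ (by perfectness of the induced pairing $\mathbb{T}/\ker\lambda_\pi \times \mathfrak a \to R$) while $\lambda_\pi(\mathfrak a)=\mathfrak c(\lambda_\pi)$; the correct statement is that $\phi(\beta)\in R^\times$ for a generator $\beta$ of $\mathfrak a$, so that $\phi(\alpha\beta)=\lambda_\pi(\alpha)\phi(\beta)$ generates $\mathfrak c(\lambda_\pi)$.
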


	\section{Siegel Eisenstein series and theta correspondence}
	\subsection{Vector spaces and groups}
	
	\subsubsection{Symplectic groups}

	Let $V=\mathbb{Q}^4$ be a $\mathbb{Q}$-vector space
	of dimension $4$, equipped with a non-degenerate symplectic form
	$\langle\cdot,\cdot\rangle$.
	Let $L\subset V$ be a lattice of $V$.
	Fix a $\mathbb{Z}$-basis of $L$
	as follows:
	\[
	\mathcal{B}=\mathcal{B}_L=\mathcal{B}_V
	=\{
	e_1^+,e_2^+,e_1^-,e_2^-
	\}
	\]
	such that
	$\langle e_i^+,e_j^-\rangle=\delta_{i,j}$
	for $i,j=1,2$.
	We define some submodules, resp., subspaces
	of $L$, resp., $V$
	as follows:
	\[
	L^+=\mathbb{Z}e_1^++\mathbb{Z}e_2^+,
	\quad
	L^-=\mathbb{Z}e_1^-+\mathbb{Z}e_2^-,
	\quad
	V^\pm=L^\pm\otimes\mathbb{Q}.
	\]
	We then define
	$L_1=L_2=L$,
	resp.,
	$L_3=L_4=L_1\oplus L_2$
	with symplectic form
	$\langle\cdot,\cdot\rangle_{L_1}
	=\langle\cdot,\cdot\rangle$,
	resp.,
	$\langle\cdot,\cdot\rangle_{L_2}=
	-\langle\cdot,\cdot\rangle$,
	$\langle\cdot,\cdot\rangle_{L_3}
	=\langle\cdot,\cdot\rangle_{L_4}
	=\langle\cdot,\cdot\rangle_{L_1}+
	\langle\cdot,\cdot\rangle_{L_2}$.
	Then we set $V_i=L_i\otimes\mathbb{Q}$
	for $i=1,\cdots,4$.
	We fix the following basis for $V_4$:
	\[
	\mathcal{B}_{V_4}
	=
	\Big\{
	(e_1^+,0),
	(e_2^+,0),
	(0,e_1^+),
	(0,e_2^+),
	(e_1^-,0),
	(e_2^-,0),
	-(0,e_1^-),
	-(0,e_2^-)
	\Big\}
	\]
	
	The isometry groups
	$G_1,G_2,G_4$
	for the symplectic spaces
	$V_1,V_2,V_4$
	can be identified under the given basis
	with the following groups:
	\[
	G_1
	=
	G_2
	=
	G
	=
	\Big\{
	g\in\mathrm{GL}_4|
	g^tJ_4
	g
	=\nu(g)J_4
	\Big\}
	\]
	\[
	G_4
	=
	\Big\{
	g\in\mathrm{GL}_8|
	g^tJ_8
	g
	=\nu(g)J_8
	\Big\}
	\]
	Finally we put
	$G_3
	=
	\{
	\mathrm{diag}(g_1,g_2)
	\in\mathrm{diag}(G_1,G_2)
	\subset\mathrm{GL}_8|
	\nu(g_1)=\nu(g_2)
	\}$.
	For each $G_i$
	($1\leq i\leq 4$), we define the subgroup
	\[
	G_i^1=
	\{
	g\in G_i|
	\nu(g)=1
	\}.
	\]
	
    We define an injective morphism
	$G_1^1\times G_2^1\hookrightarrow G_4^1$
	as follows:
	\[
	(
	\begin{pmatrix}
	a_1 & b_1 \\
	c_1 & d_1
	\end{pmatrix},
	\begin{pmatrix}
	a_2 & b_2 \\
	c_2 & d_2
	\end{pmatrix}
	)
	\mapsto
	\begin{pmatrix}
	a_1 & 0 & b_1 & 0 \\
	0 & a_2 & 0 & -b_2 \\
	c_1 & 0 & d_1 & 0 \\
	0 & -c_2 & 0 & d_2
	\end{pmatrix}.
	\]

    We will also use
    another maximal polarization of
    $V_4$ later on which is given as follows:
	\[
	V_4^d
	=
	\big\{
	(v,v)\in V_4|
	v\in V_1
	\big\},
	\quad
	V_{4,d}
	=
	\big\{
	(v,-v)\in V_4|
	v\in V_1
	\big\}.
	\]
	Write $P_{V'}\subset G_4$
	for the parabolic subgroup stabilizing a
	subspace $V'$ of $V_4$.
	Then 
	using the relation between
	$P_{V_4^d}$ 
	and $P_{V_4^+}$,
	one verifies easily that the action of
	$G_1^1\times G_2^1$ on
	the flag variety 
	$(P_{V_4^d}\cap G_4^1)\backslash G_4^1$
	satisfies the conditions in
	\cite[p.2]{GelbartPSRallis87}.
	This will be used in the doubling method
	in the following.

	\subsubsection{Orthogonal groups}
	
	Let $N\in\mathbb{N}$ be a positive integer prime to $p$.
	Let $N_1\in\mathbb{N}$ be a positive integer prime to
	$Np$ such that the diagonal matrices
	$\eta_U=\mathrm{diag}(N^2/2,N^2/2,N^2/2,N^2/2,
	N/N_1,N_1)$
	and
	$\eta_U'=\mathrm{diag}(2N,2N,2N,2,2,2)$
	are $\mathbb{Q}$-equivalent.
	Let $U=\mathbb{Q}^6$ be a $\mathbb{Q}$-vector 
	space of dimension $6$
	equipped with a non-degenerate symmetric bilinear form
	$\langle\cdot,\cdot\rangle_U$
	such that
	under a basis
	$\mathcal{B}_U
	=(u_1,\cdots,u_6)$
	of $U$
	the symmetric form
	is the matrix $\eta_U$.
	We then write
	$\mathrm{O}(U)$, resp.
	$\mathrm{GSO}(U)$, for the 
	(resp. special similitude)
	orthogonal group
	over $\mathbb{Q}$ defined by
	$\langle\cdot,\cdot\rangle_U$.
	Under this basis
	$\mathcal{B}_U$,
	then can by identified with
	\[
	\mathrm{O}(U)
	=
	\Big\{
	h\in\mathrm{GL}_6|
	h^t\eta_Uh=\eta_U
	\Big\},
	\]
	\[
	\mathrm{GSO}(U)
	=
	\Big\{
	h\in\mathrm{GL}_6|
	h^t\eta_Uh=\nu(h)\eta_U,
	\mathrm{det}(h)=\nu(h)^3
	\Big\}.
	\]

	\subsubsection{Tensor products}
	
	We then define the tensor products
	$W=V\otimes_\mathbb{Q}U$
	and
	$W_i=V_i\otimes_\mathbb{Q}U$
	for $1\leq i\leq4$.
	We give these vector spaces
	symplectic structures as follows: take
	$W$ as an example, 
	for elements $v\otimes u,v'\otimes u'\in W$,
	we define a symplectic form
	$\langle\cdot,\cdot\rangle_W$ on $W$ by
	\[
	\langle v\otimes u,v'\otimes u'\rangle_W
	=\langle v,v'\rangle_V\times\langle u,u'\rangle_U.
	\]
	The maximal polarizations on $V_i$
	induce maximal polarizations on $W_i$ in the obvious way:
	\[
	W_i=(V_i^+\otimes U)\oplus(V_i^-\otimes U),
	\quad
	\forall 1\leq i\leq4,
	\]
	\[
	W_4=(V_4^d\otimes U)\oplus(V_{4,d}\otimes U).
	\]
	
	Similarly, we write
	$\mathrm{Sp}(W_i)$,
	resp.
	$\mathrm{GSp}(W_i)$
	for the 
	(resp. similitude)
	algebraic symplectic groups over $\mathbb{Q}$
	defined by the symplectic forms on
	$W_i$.
	Note that $\mathrm{Sp}(V_4)$ and $\mathrm{O}(U)$
	act on $W_4$ by the embedding
	$\mathrm{Sp}(V_4)\times
	\mathrm{O}(U)
	\hookrightarrow
	\mathrm{Sp}(W_4)$.
	Similarly we have a natural map
	$\mathrm{GSp}(V_4)\times\mathrm{GSO}(U)
	\rightarrow
	\mathrm{GSp}(W_4)$.
	
	\begin{remark}\label{matrix identification}
		For the purpose of computations,
		we will use the following obvious matrix identifications
		of the above defined various vector spaces:
		\begin{enumerate}
			\item 
			we identify $V_4$ with 
			$\mathrm{M}_{8\times 1}(\mathbb{Q})$:
			the $k$-th basis element $e_k\in\mathcal{B}_{V_4}$
			corresponding to the column matrix
			with $1$ at the $k$-th coordinate and
			$0$ elsewhere
			and extend the map to the whole $V_4$
			by $\mathbb{Q}$-linearity.
			Suppose that
			$v,v'\in V_4$ correspond to 
			$X,X'\in\mathrm{M}_{8\times 1}
			(\mathbb{Q})$,
			then the symplectic form becomes
			$\langle v,v'\rangle_{V_4}
			=X^tJ_8X'$.
			We identify
			$V_1,V_2,V_4^\pm$ with
			$\mathrm{M}_{4\times 1}
			(\mathbb{Q})$
			in the same way;
			
			\item 
			we identify $U$ with
			$\mathrm{M}_{1\times 6}
			(\mathbb{Q})$:
			the $j$-th basis element $u_j\in\mathcal{B}_U$
			corresponding to the row matrix
			with $1$ at the $j$-th coordinate and 
			$0$ elsewhere.
			Suppose that $u,u'\in U$
			correspond to
			$Y,Y'\in\mathrm{M}_{1\times 6}
			(\mathbb{Q})$,
			then the symmetric form becomes
			$\langle u,u'\rangle_U
			=
			Y\eta_U(Y')^t
			=
			\mathrm{tr}
			(Y^t\eta_UY')$;
			
			\item 
			we then identify
			$W_4=V_4\otimes U$ with
			$\mathrm{M}_{8\times 6}
			(\mathbb{Q})$:
			the $(k,j)$-th basis element
			$e_k\otimes u_j$ in the basis
			$\mathcal{B}_{W_4}=\mathcal{B}_{V_4}\times
			\mathcal{B}_U$
			corresponding to the matrix
			with $1$ at the $(k,j)$-th coordinate and
			$0$ elsewhere.
			Suppose that
			$w,w'\in W_4$ correspond to
			$Z,Z'\in\mathrm{M}_{8\times 6}
			(\mathbb{Q})$, then
			the symplectic form on $W_4$ becomes
			$\langle w,w'\rangle_{W_4}
			=
			\mathrm{tr}
			(
			Z^tJ_8
			Z'\eta_U
			)$.
		\end{enumerate}		
	\end{remark}

    \begin{remark}\label{Haar measures}
    	We make the following convention on the Haar measures
    	on the groups $G(\mathbb{Q}_v)$
    	and the vector spaces $V(\mathbb{Q}_v)$ and the like
    	as follows:
    	(1) the measure on $V_4(\mathbb{Q}_\ell)$
    	is the usual measure $\mu$ with
    	$\mu(V_4(\mathbb{Z}_\ell))=1$,
    	i.e., the product measure
    	of standard measures on $\mathbb{Q}_p$
    	with volume of $\mathbb{Z}_\ell$ being $1$.
    	Same choices for
    	$V_1(\mathbb{Q}_\ell),
    	V_2(\mathbb{Q}_\ell)$;
    	(2) the measure on $V_4(\mathbb{R})$
    	is the measure induced from
    	the standard Lebesgue measure
    	on $\mathrm{M}_{8\times 1}
    	(\mathbb{R})$;
    	(3) the measure $\mu_U$
    	on $U(\mathbb{Q}_\ell)$
    	is the measure $\mu$ such that
    	$\mu(U(\mathbb{Z}_\ell))=|2^{-4}N^9|_\ell$;
    	(4) the measure $\mu_U$
    	on $U(\mathbb{R})$
    	is the measure $2^{-4}N^9\mu^\mathrm{Leb}$
    	where $\mu^\mathrm{Leb}$ is the standard
    	Lebesgue measure on
    	$U(\mathbb{R})$
    	(or equivalently the measure induced from the 
    	Lebesgue measure on 
    	$\mathrm{M}_{1\times 6}
    	(\mathbb{R})$);
    	(5) the measure $\mu_{W_4}(\mathbb{Q}_v)$
    	is the product measure of measures on
    	$V_4(\mathbb{Q}_v)$ and $U(\mathbb{Q}_v)$;
    	(6) the measure on
    	$\mathrm{Sp}(V_i)(\mathbb{Q}_\ell)$ 
    	($i=1,2,4$)
    	is the one with
    	volume of $\mathrm{Sp}(V_i)(\mathbb{Z}_\ell)$
    	being $1$.
    	Similar choice for
    	$\mathrm{O}(U)(\mathbb{Q}_\ell)$;
    	(7) the measure on 
    	$\mathrm{Sp}(V_i)(\mathbb{R})$ is taken as follows:
    	$\mathrm{Sp}(V_i)(\mathbb{R})$
    	acts on the Siegel upper half plane
    	$\mathbb{H}_{d(i)}$
    	with $d(i)=(\mathrm{dim}V_i)/2$
    	by linear transformation and the
    	subgroup fixing the point
    	$\sqrt{-1}\in\mathbb{H}_{d(i)}$
    	is a maximal compact subgroup
    	$K_{\mathrm{Sp}(V_i)(\mathbb{R})}$
    	of $\mathrm{Sp}(V_i)(\mathbb{R})$.
    	We take the (unique) Haar measure on
    	$K_{\mathrm{Sp}(V_i)(\mathbb{R})}$
    	to be the one with total volume being $1$.
    	We take the Haar measure on
    	$\mathbb{H}_{d(i)}$ to be
    	$\mathrm{det}(y)^{-d(i)-1}\prod_{1\leq i\leq j\leq d(i)}
    	dx_{i,j}dy_{i,j}$.
    	Then the Haar measure on 
    	$\mathrm{Sp}(V_i)(\mathbb{R})$
    	is the product of these two measures;
    	(8) the measures on the adelic points of
    	the above mentioned objects are
    	the product measures of the local ones.
    \end{remark}

	\subsection{Weil representations}\label{Weil representations}
	For any maximal polarization
	$V_4=V'\oplus V''$,
	which gives rise to a polarization of $W_4$ as
	$W_4=(V'\otimes U)\oplus(V''\otimes U)$,
	we will define the local and global
	Weil representations 
	in this subsection
	(\cite{Kudla96,MoeglinVignerasWaldspurger}).
	The materials in this subsection are already
	well-known facts and we will therefore
	be very brief.
	
	\subsubsection{Local Weil representations}
	
	We fix a place
	$v$ of $\mathbb{Q}$
	and write $F=\mathbb{Q}_v$
	for the local field at the place $v$.
	Let $V$ be a symplectic vector space over $F$
	with the symplectic form
	$\langle\cdot,\cdot\rangle=
	\langle\cdot,\cdot\rangle_V$.
	Then the Heisenberg group $H(V)$ associated to
	$V$ is the set
	$V\times F$ with the group law
	\[
	(v,t)\cdot
	(v',t')=(v+v',t+t'+\langle v,v'\rangle/2).
	\]
	
	The symplectic group $\mathrm{Sp}(V)$
	defined by the symplectic form 
	$\langle\cdot,\cdot\rangle$
	acts on the Heisenberg group by
	\[
	g(v,t):=(gv,t),
	\quad
	\forall g\in\mathrm{Sp}(V),(v,t)\in H(V).
	\]
	
	Let $\mathbf{e}_F=\mathbf{e}_v
	\colon F\rightarrow
	\mathbb{C}^\times$ be the exponential character.
	We can then define a pairing on $W$ by
	\[
	[\cdot,\cdot]\colon
	W\times W\rightarrow\mathbb{C}^\times,
	\quad
	(v,v')\mapsto
	[v,v']:=\mathbf{e}_F(\langle v,v'\rangle).
	\]
	For any subspace $V'$ of $V$, we write
	$(V')^\perp$ for the subspace
	of $V$ consisting of vectors
	$v$ such that $[v,V']=1$.

	Let $X\subset V$ be a maximal isotropic subspace
	of $V$
	(thus $X^\perp=X$) and we define
	$H(X)$ to be the Heisenberg group associated to
	$X$ (with the symplectic form induced from $V$,
	i.e. the trivial symplectic form).
	We write $\mathcal{S}_X$ to be the set of
	$\mathbb{C}$-valued functions 
	$f\colon H(V)\rightarrow\mathbb{C}$
	such that
	$f(h_Xh)=\mathbf{e}_X(h_X)f(h)$
	for any $h_X\in H(X)$ and $h\in H(V)$
	and moreover there exists some open subgroup
	$X'$ of $X$ such that
	$f(h(x',0))=f(h)$
	for any $x'\in X'$.
	Here
	$\mathbf{e}_X((x,t)):=\mathbf{e}_F(t)$.
	For any subspace $V'$ of $V$, we write
	$\mathcal{S}(V')$ for the space of
	$\mathbb{C}$-valued Schwartz-Bruhat functions
	on $V'$.
	There is an action of $H(V)$ on
	$\mathcal{S}_X$ given by
	\[
	\rho(h)f(h')
	=(\rho(h)f)(h')
	:=f(h^{-1}h'),
	\quad
	\forall h\in H(V),f(\cdot)\in\mathcal{S}_X.
	\]
	For any maximal isotropic subspace $Y$ of $V$ such that
	$V=X+Y=X\oplus Y$, we have an
	isomorphism of vector spaces
	\begin{equation}\label{Schwartz-Bruhat-1}
	\mathcal{S}_X
	\xrightarrow{\sim}
	\mathcal{S}(Y),
	\quad
	f\mapsto
	\Big(
	y\mapsto f((y,0))
	\Big),
	\end{equation}
	whose inverse is given by
	\begin{equation}\label{Schwartz-Bruhat-2}
	\mathcal{S}(Y)
	\xrightarrow{\sim}
	\mathcal{S}_X,
	f'\mapsto
	\Big(
	(x+y,t)\mapsto \mathbf{e}_F(t-\langle x,y\rangle/2)f'(y)
	\Big).
	\end{equation}
	
	These isomorphisms induce an action of
	$H(V)$ on $\mathcal{S}(Y)$ from that on
	$\mathcal{S}_X$. Explicitly, it is given by
	\[
	\rho((x+y,t))f'(y')
	:=\mathbf{e}_F(t-\langle x,y'+y\rangle/2)f'(y+y').
	\]
	Note that this action depends on the decomposition
	$V=X+Y$.
	
	Now for any element $g\in\mathrm{Sp}(V)$,
	there is an obvious isomorphism of vector spaces
	\[
	A(g)\colon
	\mathcal{S}_X
	\rightarrow
	\mathcal{S}_{gX},
	\quad
	f\mapsto
	(h\mapsto f(g^{-1}h))
	\]
	One verifies easily that
	\[
	\rho(h)A(g)f(h')
	=A(g)\rho(g^{-1}h)f(h'),
	\forall g\in\mathrm{Sp}(V),f\in \mathcal{S}_X.
	\]
	
	For any two maximal isotropic subspaces
	$X_1,X_2$ of $V$, one can define an intertwining operator
	\begin{equation}\label{intertwining operator}
	I_{X_1,X_2}\colon
	\mathcal{S}_{X_1}\xrightarrow{\sim}
	\mathcal{S}_{X_2},
	\quad
	f\mapsto
	\Big(
	h\mapsto \int_{X_2/X_{12}}f((x_2,0)h)dx_2
	\Big),
	\end{equation}
	where $X_{12}=X_1\bigcap X_2$.
	One verifies that
	$I_{X_1,X_2}(\rho(h)f)=\rho(h)(I_{X_1,X_2}f)$
	for $h\in H(W)$.
	Similarly, for maximal isotropic decompositions
	$V=X_1+Y_1=X_2+Y_2$, one can use the isomorphisms
	$\mathcal{S}_{X_i}\simeq\mathcal{S}(Y_i)$
	for $i=1,2$ to define intertwining operators
	\[
	I(Y_1,Y_2)
	\colon
	\mathcal{S}(Y_1)
	\xrightarrow{\sim}
	\mathcal{S}(Y_2).
	\]
	From this one gets the following commutative diagram
	\[
	\begin{tikzcd}
	\mathcal{S}_X \arrow[r,"A(g)"] \arrow[d,"I_{X,g^{-1}X}"'] 
	&
	\mathcal{S}_{gX} \arrow[d,"I_{gX,X}"]
	\\
	\mathcal{S}_{g^{-1}X} \arrow[r,"A(g)"]
	&
	\mathcal{S}_X
	\end{tikzcd}
	\]
	and we denote their composition by
	\[
	\Omega_X(g)\colon
	\mathcal{S}_X
	\xrightarrow{\sim}
	\mathcal{S}_X.
	\]
	
	Using the theorem of Stone and
	von Neumann (\cite[Theorem 1.1]{Kudla96}),
	one verifies that this is a projective representation of
	$\mathrm{Sp}(V)$ on $\mathcal{S}_X$,
	i.e. the map 
	$\Omega_X
	\colon
	\mathrm{Sp}(V)
	\rightarrow
	\mathrm{PGL}_\mathbb{C}(\mathcal{S}_X)$
	is a group homomorphism.
	Using the isomorphism between
	$\mathcal{S}_X$ and $\mathcal{S}(Y)$,
	one gets a projective representation of
	$\omega_Y$ of $\mathrm{Sp}(V)$
	on $\mathcal{S}(Y)$.
	The projective representation
	$\omega_Y$ induces a genuine representation
	$\widetilde{\omega}_Y$
	of the metaplectic group
	$\widetilde{\mathrm{Sp}}(V)$
	on $\mathcal{S}(Y)$.
	This 
	irreducible smooth representation
	is the Schr\"{o}dinger model
	of the Weil representation of $\widetilde{\mathrm{Sp}}(V)$
	(\cite[p.29]{MoeglinVignerasWaldspurger}).
	Here the metaplectic group can be defined as
	\[
	\widetilde{\mathrm{Sp}}(V)
	=
	\{
	(g,A_g)\in
	\mathrm{Sp}(V)\times\mathrm{GL}_\mathbb{C}
	(\mathcal{S}(Y))|
	A_g\rho(h)A_g^{-1}=\rho(gh),
	\forall h\in H(V)
	\}
	\]
	Note that this representation depends on the maximal 
	polarization of $V=X+Y$.
	Moreover, the metaplectic group
	$\widetilde{\mathrm{Sp}}(V)$ can also be viewed as
	the central extension of
	$\mathrm{Sp}(V)$
	by $\mathbb{C}^\times$.
	The map
	$\mathrm{Sp}(V)\rightarrow
	\widetilde{\mathrm{Sp}}(V),
	g\mapsto
	(g,\omega_Y(g))$
	is a section of the natural projection
	$\mathrm{pr}\colon
	\widetilde{\mathrm{Sp}}(V)
	\rightarrow
	\mathrm{Sp}(V)$.
	One can then define a $2$-cocycle
	on $\mathrm{Sp}(V)$ valued in $\mathbb{C}^\times$
	by
	\begin{equation}
	\label{2-cocycle}
	c_Y(g',g):=\omega_Y(g')\omega_Y(g)
	\omega_Y(g'g^{-1}).
	\end{equation}
	One can use this cocycle to define a group
	$\mathrm{Sp}(V)\times_{c_Y}\mathbb{C}^\times$
	by setting
	$(g,z)\cdot(g',z'):=
	(gg',zz'c_{Y}(g,g'))$.
	There is a close relation between the metaplectic group
	$\widetilde{\mathrm{Sp}}(V)$
	and the twofold cover
	$\mathrm{Mp}(V):=
	\mathrm{Sp}(V)\times_{r_Y}\mu_2$
	of $\mathrm{Sp}(V)$.
	Here $r_Y$ is Rao's $2$-cocycle
	on $\mathrm{Sp}(V)$ valued in
	$\mu_2\subset\mathbb{C}^\times$
	(\cite[Chapter I, Theorem 4.5]{Kudla96}).
	To relate these two cocycles,
	we define a map
	$\beta\colon
	\mathrm{Sp}(V)
	\rightarrow\mathbb{C}^\times$
	as in \cite[Chapter I, Theorem 4.5]{Kudla96},
	then one can show
	$c_Y(g,g')=\beta(gg')\beta(g)^{-1}\beta(g')^{-1}r_Y(g,g')$.
	Using this identity, one can define
	a map
	\[
	\mathrm{Sp}(V)\times_{r_Y}\mathbb{C}^\times
	\rightarrow
	\widetilde{\mathrm{Sp}}(V),
	\quad
	(g,z)
	\mapsto
	(g,z\beta(g)\omega_Y(g)),
	\]
	which is an isomorphism.
	Moreover, it is easy to see
	$\mathrm{Sp}(V)\times_{r_Y}\mathbb{C}^\times
	=\mathrm{Mp}(V)\times_{\mu_2}\mathbb{C}^\times$
	where the RHS is the contracted product.
	Thus $\widetilde{\mathrm{Sp}}(V)$
	is isomorphic to
	$\mathrm{Mp}(V)\times_{\mu_2}\mathbb{C}^\times$
	via the above isomorphism.

	Now we restrict ourselves to the special case
	$V=W_4=V_4\otimes U$ 
	with the polarization
	$W_4=W_4^++W_4^-$.
	Recall that we have
	a homomorphism
	$\iota\colon
	\mathrm{Sp}(V_4)\times
	\mathrm{O}(U)
	\hookrightarrow
	\mathrm{Sp}(W_4)$
	with restrictions
	$\iota_{V_4}:=\iota|_{\mathrm{Sp}(V_4)\times1}$
	and
	$\iota_U:=\iota|_{1\times\mathrm{O}(U)}$.
	By \cite[Chapter II, Corollary 3.3]{Kudla96},
	the morphism $\iota_{V_4}$
	lifts uniquely to a morphism
	of metaplectic groups
	$\widetilde{\iota}_{V_4}
	\colon
	\widetilde{\mathrm{Sp}}(V_4)
	\rightarrow
	\widetilde{\mathrm{Sp}}(W_4)$
	whose restriction to the center
	$\mathbb{C}^\times$
	sends $z$ to $z^{\mathrm{dim}(U)}=z^6$.
	More precisely, the map is given by
	\[
	\mathrm{Sp}(V_4)\times_{r_{V_4^-}}\mathbb{C}^\times
	\rightarrow
	\mathrm{Sp}(W_4)\times_{r_{W_4^-}}\mathbb{C}^\times,
	\quad
	(g,z)\mapsto
	(\iota_{V_4}(g),z^6\mu_U(g)).
	\]
	Here $\mu_U(g)$ is defined as in
	\cite[Chapter II, Proposition 3.2]{Kudla96}
	such that
	$r_{W_4^-}(\iota_{V_4}(g),\iota_{V_4}(g'))
	=
	r_{V_4^-}(g,g')
	\mu_U(gg')\mu_U(g)^{-1}\mu_U(g')^{-1}$.
	One can show that this map is a homomorphism
	and thus gives the homomorphism
	$\widetilde{\iota}_{V_4}$.
	Moreover, since
	$\mathrm{dim}(U)=6$ is even,
	the restriction of $\widetilde{\iota}_{V_4}$
	to $\mathrm{Mp}(V_4)
	=\mathrm{Sp}(V_4)\times_{r_{V_4^-}}\mu_2$
	factors through its quotient 
	$\mathrm{Sp}(V_4)$.
	Combining these maps, we get the following
	homomorphism
	\[
	\mathrm{Sp}(V_4)
	\rightarrow
	\widetilde{\mathrm{Sp}}(W_4),
	\quad
	g\mapsto
	\Big(
	\iota_{V_4}(g),\beta(\iota_{V_4}(g))\mu_U(g)\omega_{W_4^-}
	(\iota_{V_4}(g))
	\Big)
	\]

	On the other hand, one can show
	(\cite[p.39]{Kudla96}) that
	$r_{W_4^-}$ is trivial when restricted to
	$\mathrm{O}(U)$, thus
	the morphism
	$\iota_U$
	lifts to a morphism
	\[
	\mathrm{O}(U)
	\rightarrow
	\widetilde{\mathrm{Sp}}(W_4),
	\quad
	h\mapsto
	\Big(
	\iota_U(h),\omega_{W_4^-}(\iota_U(h))
	\Big).
	\]
	The lift is not unique
	yet we fix this one in this article.
	In summary, we get a representation
	$\omega_{W_4^-}$
	of 
	the reductive dual pair
	$\mathrm{Sp}(V_4)\times\mathrm{O}(U)$
	on $\mathcal{S}(W_4^-)$
	through the morphism
	$\mathrm{Sp}(V_4)\times\mathrm{O}(U)
	\rightarrow
	\widetilde{\mathrm{Sp}}(W_4)$.
	We next make explicit
	this representation for
	future computations.
	From now on we will only consider the representation
	$\omega_{W_4^-}
	|_{\mathrm{Sp}(V_4)\times\mathrm{O}(U)}$
	and for any 
	$g\in\mathrm{Sp}(V_4)$,
	resp.
	$h\in\mathrm{Sp}(U)$,
	we write
	$\omega_{W_4^-}(g)$, resp. $\omega_{W_4^-}(h)$
	instead of
	$\omega_{W_4^-}(\iota_{V_4}(g))$,
	resp. $\omega_{W_4^-}(\iota_U(h))$
	in the following.

	Write elements in $V_4$ in the form $(x,y)\in V_4^++V_4^-$.
	We view $g\in\mathrm{Sp}(V_4)$
	as a matrix of morphisms
	\[
	g=
	\begin{pmatrix}
	a & b \\
	c & d
	\end{pmatrix},
	\quad
	g\cdot(x,y)=(ax+by,cx+dy);
	\quad
	\]
	\[
	\text{where }
	a\in\mathrm{Hom}(V_4^+,V_4^+),
	\,
	b\in\mathrm{Hom}(V_4^-,V_4^+),
	\,
	c\in\mathrm{Hom}(V_4^+,V_4^-),
	\,
	d\in\mathrm{Hom}(V_4^-,V_4^-).
	\]
	
	It is easy to see that
	the stabilizer in $\mathrm{Sp}(V_4)$
	of the decomposition $V_4^++V_4^-$ is the set of matrices
	$m(a)$
	with $a\in\mathrm{Aut}(V_4^+)$
	and $a^\vee\in\mathrm{Aut}(V_4^-)$
	determined by
	$\langle ax,a^\vee y\rangle=\langle x,y\rangle$
	for any $x\in V_4^+,y\in V_4^-$.
	Any $g\in\mathrm{Sp}(V_4)$
	such that $g|_{V_4^+}=\mathrm{Id}_{V_4^+}$ is of the form
	$u(b)$
	with 
	$\langle bx,x'\rangle=\langle bx',x\rangle$
	for any $x,x'\in V_4^-$.
	Note that $\mathrm{Sp}(V_4)$ is generated by
	elements of the form
	$m(a)$,
	$u(b)$
	and $J_8$ 
	the longest Weyl element of $\mathrm{Sp}(V_4)$
	exchanging the fixed basis of $V_4^+$ and $V_4^-$.
	Under the basis $\mathcal{B}_{V_4}$,
	one can show that
	$a^\vee=a^{-t}$,
	$b$ is a symmetric matrix in
	and
	the longest Weyl element is $J_8$.
	For any element
	$w=(w_1,w_2,\cdots,w_8)\in W_4\simeq U^8$,
	we define the Gram matrix as
	\[
	(w,w):=(\langle w_i,w_j\rangle_U).
	\]
	For any $f\in\mathcal{S}(W_4^-)$
	and $y\in W_4^-$
	(\cite[Chapter II, Proposition 4.3]{Kudla96})
	one has
	\begin{equation}
	\label{general expression for Weil representation}
	\omega_{W_4^-}(
	\begin{pmatrix}
	a & b \\
	c & d
	\end{pmatrix})
	f(y)
	=
	\gamma^{\mathrm{Weil}}
	\int_{\mathrm{Ker}(c)\backslash W_4^+}
	\mathbf{e}(\frac{1}{2}\mathrm{tr}(ay,by)
	-\mathrm{tr}(by,cx)
	+\frac{1}{2}\mathrm{tr}(cx,dx))
	f(ay+cx)dx.
	\end{equation}
	Here$\gamma^\mathrm{Weil}$ is the Weil index,
	which is a complex number of absolute value $1$.
	We will discuss Weil index in more detail in Lemma 
	\ref{transfer of Siegel sections}.
	In particular, we have
	\begin{align}
	\label{expression for Weil representation}
	\omega_{W_4^-}
	(m(a))f(y)
	&
	=
	\xi(\mathrm{det}\, a)|\mathrm{det}\, a|_v^3f((a^ty),
	\\
	\omega_{W_4^-}
	(u(b)
	)f(y)
	&
	=
	\mathbf{e}(\mathrm{tr}((by,y))/2)
	f(y),
	\\
	\omega_{W_4^-}(J_8)f(y)
	&
	=
	\gamma^\mathrm{Weil}
	\int_{W_4^-}f(x)\mathbf{e}_F(\langle J_8x,y\rangle)dx,
	\\
	\omega_{W_4^-}(h)f(y)
	&
	=
	f(h^{-1}y),
	\forall h\in\mathrm{O}(U),
	\end{align}
	
	\begin{remark}\label{archimedean Weil representation}
		For
		$v=\infty$, since 
		$\xi$ is an odd character,
		we have
		$\xi(\mathrm{det}\, a)|\mathrm{det}\, a|^3_\infty
		=\mathrm{det}\, a^3$.
	\end{remark}

	The intertwining map
	$I_{W_4^-,W_{4,d}}$ between
	$\mathcal{S}_{W_4^-}$
	and $\mathcal{S}_{W_{4,d}}$
	induces an intertwining map
	between
	$\mathcal{S}(W_4^-)$
	and $\mathcal{S}(W_{4,d})$
	for the Weil representations of
	$\widetilde{\mathrm{Sp}}(W_4)$.
	More explicitly
	(\cite[p.182]{JianshuLi1992}),
	\[
	\delta
	\colon
	\mathcal{S}(W_4^-)
	\rightarrow
	\mathcal{S}(W_{4,d})
	\]
	\[
	\phi\mapsto
	\bigg(
	((x,y),-(x,y))\mapsto
	\int_{W_1^-}
	\phi(u+y,u-y)
	\mathbf{e}(
	2\langle
	u,x
	\rangle_{W_4})du
	\bigg).
	\]
	Here in the integral we identify
	$W_4^d$ with $W_1$
	by sending
	$((x',y'),(x',y'))$
	to $(x',y')$.
	For any function $f\in\mathcal{S}(W_1^-)$,
	the Fourier inversion theorem gives
	\[
	\int_{W_1^+}
	\mathbf{e}(\langle y,x'\rangle)dy
	\int_{W_1^-}
	f(x)\mathbf{e}(\langle x,y\rangle)dx
	=f(x').
	\]
	
	From this we have the inverse of $\delta$ as follows
	\[
	\delta^{-1}\colon
	\mathcal{S}(W_{4,d})
	\rightarrow
	\mathcal{S}(W_4^-)
	\]
	\[
	\phi'\mapsto
	\bigg(
	((y,y')\mapsto
	\int_{W_1^+}
	\phi'(x/2,(y-y')/2)
	\mathbf{e}(\langle x,(y+y')/2\rangle)dx
	\bigg).
	\]

	\subsubsection{Weil representations for similitude groups}
	We recall briefly how to extend the Weil
	representation $\omega_{W_4^-}$
	of $\mathrm{Sp}(V_4)\times\mathrm{O}(U)$
	to the similitude group
	$\mathrm{GSp}^+(V_4)\times\mathrm{GSO}(U)$.
	Here
	$\mathrm{GSp}^+(V_4)$
	is the subgroup of
	$\mathrm{GSp}(V_4)$ given by
	\[
	\mathrm{GSp}^+(V_4)
	=
	\{
	g\in\mathrm{GSp}(V_4)|
	\nu(g)\in\nu(\mathrm{GSO}(U))
	\}.
	\]
	The canonical reference is \cite{Roberts96}.
	See also 
	\cite[Section 2, Similitude theta correspondences]{GanTakeda11}.
	
    Suppose
	$F=\mathbb{Q}_v$ non-archimedean.
	We consider the subgroup
	$R_0=\{(g,h)\in R|
	\nu(g)\nu(h)=1\}$
	of the group
	$R=\mathrm{GSp}^+(V_4)\times\mathrm{GSO}(U)$.
	We define
	\[
	\omega_{W_4^-}(g,h)f(y)
	=
	|\nu(h)|^{-6}\omega_{W_4^-}(g_1,h)f(y)
	\]
	where
	$g_1=g\mathrm{diag}(\nu(g)^{-1},1)
	\in\mathrm{Sp}(V_4)$.
	It is easy to see that the scalar element
	$(\lambda,\lambda^{-1})\in R_0$ acts as
	$\omega_{W_4^-}(\lambda,\lambda^{-1})f
	=\xi(\lambda)^{-4}f$.
	Now we consider the compactly induced representation
	of $R$,
	$\mathrm{Ind}_{R_0}^R(\omega_{W_4^-})$.
	We will still denote this representation by
	$\omega_{W_4^-}$
	(\cite[Sections 2,3]{Roberts96}).

	\subsubsection{Global Weil representations}
	
	Let $W_4$ be as above, a vector space over
	$\mathbb{Q}$.
	We define the global Weil representation
	$\omega_{W_4^-}$
	of
	$\mathrm{GSp}^+(V_4,\mathbb{A})
	\times
	\mathrm{GSO}(U,\mathbb{A})$
	on $\mathcal{S}(W_4^-(\mathbb{A}))$
	as the restricted tensor product
	\[
	\omega_{W_4^-}
	:=\bigotimes_v{}'
	\omega_{W_4^-\otimes\mathbb{Q}_v}.
	\]

	\subsection{Theta lift}

	\subsubsection{Local theta lift}
	Recall $F=\mathbb{Q}_v$.
	The local theta lift is usually defined on the level of
	automorphic representations.
	Let $\pi^+$ be an irreducible admissible
	representation of $\mathrm{GSp}^+(V_4)$.
	We define
	$N(\pi^+)
	:=
	\bigcap_{\lambda}\mathrm{Ker}(\lambda)$,
	where $\lambda$ runs through the space
	$\mathrm{Hom}_{\mathrm{GSp}(V_4)}
	(\omega_{W_4^-},\pi^+)$.
	Then we set
	$S(\pi^+):=\omega_{W_4^-}/N(\pi^+)$.
	The space $S(\pi^+)$
	is a representation of
	$\mathrm{GSp}^+(V_4)\times\mathrm{GSO}(U)$.
	By \cite[Chapitre 2, Lemme III.4]{MoeglinVignerasWaldspurger},
	there is a smooth representation
	$\widetilde{\Theta}(\pi^+)$
	of
	$\mathrm{GSO}(U)$ unique up to isomorphisms
	such that
	$S(\pi^+)\simeq\pi^+\otimes\widetilde{\Theta}(\pi^+)$.
	
	Then the Howe duality conjecture says
	\begin{conjecture}
		For any irreducible admissible representation $\pi^+$of 
		$\mathrm{GSp}^+(V_4)$,
		either $\widetilde{\Theta}(\pi^+)$ vanishes
		or
		it is an admissible representation of
		$\mathrm{GSO}(U)$ of finite length.
		In the latter case,
		there exists a unique
		$\mathrm{GSO}(U)$-invariant submodule
		$\widetilde{\Theta}'(\pi^+)$
		of $\widetilde{\Theta}(\pi^+)$
		such that
		\[
		\Theta(\pi^+):=
		\widetilde{\Theta}(\pi^+)/\widetilde{\Theta}'(\pi^+)
		\]
		is an irreducible representation of $\mathrm{GSO}(U)$.
		If $\widetilde{\Theta}(\pi^+)=0$,
		we put
		$\Theta(\pi^+)=0$.
		Moreover, for two irreducible admissible representations
		$\pi_1^+$ and $\pi_2^+$ of $\mathrm{GSO}(U)$,
		if $\Theta(\pi_1^+)$ and
		$\Theta(\pi_2^+)$ are both
		non-zero and isomorphic, then
		$\pi_1^+$ is isomorphic to $\pi_2^+$(
		i.e. the map
		$\Theta\colon
		\pi^+\mapsto\Theta(\pi^+)$
		is injective on the isomorphic classes of
		irreducible admissible representations
		of $\mathrm{GSp}^+(V_4)$
		with non-zero images by $\widetilde{\Theta}$).
	\end{conjecture}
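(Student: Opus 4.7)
The plan is to reduce this similitude version of Howe duality to the isometry case $(\mathrm{Sp}(V_4), \mathrm{O}(U))$, which is a known theorem, following the strategy of Roberts already cited. The starting point is the compactly induced structure $\omega_{W_4^-} = \mathrm{Ind}_{R_0}^R \omega_{W_4^-}|_{R_0}$, where $R_0 = \{(g,h) \in R : \nu(g)\nu(h) = 1\}$. Given an irreducible admissible $\pi^+$ of $\mathrm{GSp}^+(V_4)$, its restriction to $\mathrm{Sp}(V_4)$ decomposes into finitely many irreducible pieces by Clifford theory, since $\mathrm{Sp}(V_4)$ together with the center has finite index in $\mathrm{GSp}^+(V_4)$. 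Frobenius reciprocity then relates $\mathrm{Hom}_R(\omega_{W_4^-}, \pi^+ \otimes \sigma)$ to the corresponding isometry Hom space, translating each clause of the conjecture into an analogous statement about the isometry dual pair.

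The core input is Howe duality for the isometry pair $(\mathrm{Sp}(V_4), \mathrm{O}(U))$ with $\dim V_4 = 8$ and $\dim U = 6$. For non-archimedean $v$ with odd residue characteristic this is Waldspurger's theorem; for $v = 2$ one invokes the later work of Gan and Takeda; for $v = \infty$ this is Howe's classical theorem on real reductive dual pairs. From this input, the admissibility and finite length of $\widetilde{\Theta}(\pi^+)$ follow via Kudla's filtration of the Jacquet module of the Weil representation, and the existence and uniqueness of the irreducible quotient $\Theta(\pi^+)$ follow from the isometry Howe duality applied componentwise to the restriction of $\widetilde{\Theta}(\pi^+)$ to $\mathrm{O}(U)$.

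The injectivity assertion is then deduced by tracking central characters: if $\Theta(\pi_1^+) \simeq \Theta(\pi_2^+) \neq 0$, the isometric injectivity forces $\pi_1^+|_{\mathrm{Sp}(V_4)} \simeq \pi_2^+|_{\mathrm{Sp}(V_4)}$, and the scalar identity $\omega_{W_4^-}(\lambda, \lambda^{-1}) f = \xi(\lambda)^{-4} f$ pins down the similitude central character uniquely, recovering $\pi_1^+ \simeq \pi_2^+$ as representations of $\mathrm{GSp}^+(V_4)$. The main obstacle is the isometric Howe duality itself, which is a genuinely deep theorem with case-by-case proofs across residue characteristics; by contrast, the passage from isometry groups to similitude groups is essentially formal once that input is granted, and the bookkeeping is exactly that carried out by Roberts.
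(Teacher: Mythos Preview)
The paper does not give a proof of this statement; it is presented as a conjecture, followed immediately by the remark that ``in our situation, this conjecture can be proved using the results of \cite{Morimoto14,GanTakeda11b}. We will discuss this in more detail in Section~\ref{section: Langlands functoriality}.'' In that section the paper does not argue abstractly but simply invokes the explicit determination of the local theta correspondence for the specific pair $(\mathrm{GSp}_4,\mathrm{GSO}_6)$ carried out by Morimoto (non-split case) and Gan--Takeda (split case); Howe duality for this particular pair is a corollary of those classification results.

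Your route is different and more structural: you reduce to the isometry pair $(\mathrm{Sp}(V_4),\mathrm{O}(U))$ via Roberts' machinery and then appeal to the isometry Howe duality in full generality (Waldspurger for odd residue characteristic, Gan--Takeda for $p=2$, Howe for $v=\infty$). This is a valid argument and has the advantage of being uniform and not tied to the specific pair at hand; the paper's route, by contrast, gets the Langlands parameters of $\Theta(\pi^+)$ for free, which is what is actually used downstream. Two small points you should tighten: the finite decomposition of $\pi^+|_{\mathrm{Sp}(V_4)}$ holds because $F^\times/(F^\times)^2$ is finite for local $F$, and this is the real reason rather than ``finite index'' as stated; and the conjecture targets $\mathrm{GSO}(U)$ rather than $\mathrm{GO}(U)$, so your reduction also needs the (routine, and treated in the same references) passage between $\mathrm{O}$- and $\mathrm{SO}$-theta lifts.
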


    In our situation, this conjecture can be proved using
    the results of
    \cite{Morimoto14,GanTakeda11b}.
    We will discuss this in more detail in
    Section \ref{section: Langlands functoriality}.

	\subsubsection{Global theta lift}
	To shorten notations, we shall write
	$(V_4^+)_U=V_4^+\otimes U$ and the like.
	For any
	$\phi\in \mathcal{S}(W_4^-(\mathbb{A}))$, 
	we can define the associated theta series:
	\[
	\Theta_\phi(g,h)=\sum_{w\in W_4^-(\mathbb{Q})}
	\big(
	\omega_v(g,h)\phi
	\big)
	(w),
	\forall
	(g,h)\in\mathrm{GSp}^+(V_4,\mathbb{A})
	\times\mathrm{GSO}(U,\mathbb{A})
	\]
	
	For any cuspidal automorphic form 
	$f\in\mathcal{A}_\mathrm{cusp}(\mathrm{GSp}^+(V_4,\mathbb{A}))$,
	the theta lift $\Theta_\phi(f)$ of $f$ to
	$\mathrm{GO}(U,\mathbb{A})$ is defined as
	\[
	\Theta_\phi(f)(h)
	=\int_{[\mathrm{GSp}^+(V_4)]_h}
	f(g)\Theta_\phi(g,h)dg
	\]
	where
	$[\mathrm{GSp}^+(V_4)]_h$
	is the subset of
	$[\mathrm{GSp}(V_4)]$
	consisting of elements
	$g$ such that
	$\nu(g)=\nu(h)$.
	
	Similarly, for any automorphic form
	$f'\in
	\mathcal{A}_\mathrm{cusp}(\mathrm{GSO}(U)(\mathbb{A}))$,
	its theta lift
	$\overline{\Theta}_\phi(f')$
	to
	$\mathrm{GSp}^+(V_4,\mathbb{A})$
	is defined as
	\[
	\overline{\Theta}_\phi(f')(g)
	=\int_{[\mathrm{GO}(U)]_g}
	f'(h)\overline{\Theta_\phi(g,h)}dh,
	\]
	where
	$[\mathrm{GO}(U)]_g$
	is the subset of
	$[\mathrm{GO}(U)]$
	consisting of elements $h$
	such that
	$\nu(h)=\nu(g)$.
	
	We can extend $\overline{\Theta}_\phi(f')$
	to the whole
	$\mathrm{GSp}(V_4,\mathbb{A})$
	as follows
	\[
	\overline{\Theta}_\phi(f')(g)
	=
	\begin{cases}
	\overline{\Theta}_\phi(f')(g) 
	& g\in\mathrm{GSp}(V_4,\mathbb{Q})\mathrm{GSp}^+(V_4,\mathbb{A}); \\
	0
	& \text{otherwise}.
	\end{cases}
	\]
	
	The maps $(\Theta_\phi,\overline{\Theta}_\phi)$
	are the theta correspondence between
	$\mathrm{GSp}^+(V_4)$ and $\mathrm{GSO}(U)$.
	
	Similarly, one can define theta correspondences
	between
	$\mathrm{GSp}^+(V_i)$
	and
	$\mathrm{GSO}(U)$
	for $i=1,2$.
	
	On the level of automorphic representations,
	the theta lift is defined as follows:
	let $\pi^+$ be an irreducible admissible automorphic
	representation of $[\mathrm{GSp}^+(V_4)]$.
	Then the theta lift $\Theta(\pi^+)$ of
	$\pi$ to
	$[\mathrm{GSO}(U)]$
	is the automorphic representation of
	$[\mathrm{GSO}(U)]$
	on the space
	\[
	\Theta(\pi^+)
	:=
	\{
	\Theta_\phi(f)|
	\forall \phi\in\mathcal{S}(W_4^-(\mathbb{A})),
	f\in\pi
	\}
	\]
	Similarly one can define the theta lift of automorphic representations
	in the other direction.

	\subsection{Siegel-Eisenstein series}
	We study Siegel-Eisenstein series and
	define local zeta integrals and
	Fourier coefficients.

	\subsubsection{Siegel-Eisenstein series}
	In this subsection, we will work with the group 
	$G=G_4=\mathrm{GSp}(V_4)$ and
	its parabolic subgroup
	$P^d=P_{V_4^d}$ stabilizing the subspace $V_4^d$ of $V_4$,
	$M^d=M_{P^d}$
	the Levi subgroup of $P^d$
	preserving the polarization
	$V_4^d\oplus V_{4,d}$,
	$N^d=N_{P^d}$
	the unipotent radical of $P_{V_4^d}$
	and the maximal torus
	$T^d=T_{P^d}$.
	The modulus character of $P^d$ is defined as
	\[
	\delta_{P^d}
	\colon
	P^d(\mathbb{A})
	\rightarrow
	\mathbb{C}^\times,
	\mathrm{diag}(1,\nu)m(A)u(B)
	\mapsto
	|\mathrm{det}\, A|^5
	|\nu|^{-10}.
	\]
	For any complex number $s\in\mathbb{C}$,
	we define a character $\xi_s$
	of $P^d(\mathbb{A})$ as follows
	\[
	\xi_s
	\colon
	P^d(\mathbb{A})
	\rightarrow
	\mathbb{C}^\times,
	\quad
	p
	=
	\mathrm{diag}(1,\nu)m(A)u(B)
	\mapsto
	\xi(\mathrm{det}\, A)
	\delta_{P^d}(p)^{s/5}
	=
	\xi(\mathrm{det}\, A)|\mathrm{det}\, A|^s|\nu|^{-2s}.
	\]
	We then define the normalized induction
	as follows
	\[
	\mathrm{Ind}^{G(\mathbb{A})}_{P^d(\mathbb{A})}(\xi_s)
	=
	\big\{
	f\colon
	G(\mathbb{A})
	\rightarrow
	\mathbb{C},
	\text{smooth}|
	f(pg)
	=
	\delta_{P^d}(p)^{1/2}\xi_s(p)f(g),
	\forall
	p\in P^d(\mathbb{A}),
	g\in G(\mathbb{A})
	\big\}.
	\]
	
	We also have the corresponding local version of inductions
	for each place $v$ of $\mathbb{Q}$,
	denoted by
	$\mathrm{Ind}^{G(\mathbb{Q}_v)}_{P^d(\mathbb{Q}_v)}(\xi_{s,v})$.
	Then one has the restricted tensor product
	\[
	\mathrm{Ind}^{G(\mathbb{A})}_{P^d(\mathbb{A})}(\xi_s)
	=
	\bigotimes_v{}'
	\mathrm{Ind}^{G(\mathbb{Q}_v)}_{P^d(\mathbb{Q}_v)}(\xi_{s,v})
	\]
	
	For any section
	$f(s)\in
	\mathrm{Ind}^{G(\mathbb{A})}_{P^d(\mathbb{A})}(\xi_s)$, we define
	the Siegel Eisenstein series associated to $f(s)$ as
	\[
	E(g,f(s))
	=
	\sum_{\gamma\in P^d(\mathbb{Q})\backslash G(\mathbb{Q})}
	f(s)(\gamma g).
	\]
	
	We can do the same thing for another parabolic subgroup
	$P^+=P_{V_4^+}$ of $G$
	which stabilizes the subspace $V_4^+$ of $V_4$,
	the Levi subgroup
	$M^+$
	of $P^+$
	preserving the polarization $V_4^+\oplus V_4^-$ of $V_4$
	and the unipotent radical 
	$N^+=N_{P^+}$ of $P^+$
	and the maximal torus
	$T^+=T_{P^+}$ of $P^+$.
	We can thus define the normalized induction
	$\mathrm{Ind}_{P^+(\mathbb{A})}^{G(\mathbb{A})}(\xi_s)$
	and Siegel Eisenstein series
	$E(g,f_{P^+}(s))$
	associated to a section
	$f_{P^+}(s)\in
	\mathrm{Ind}_{P^+(\mathbb{A})}^{G(\mathbb{A})}(\xi_s)$.

	Under the basis
	$\mathcal{B}_{V_4}$,
	we define
	$\mathcal{S}\in
	G_4(\mathbb{Q})$
	and its inverse as follows
	\[
	\mathcal{S}
	=
	\begin{pmatrix}
	1_2 & 0 & 0 & 1_2/2 \\
	-1_2 & 0 & 0 & 1_2/2 \\
	0 & 1_2 & 1_2/2 & 0 \\
	0 & 1_2 & -1_2/2 & 0
	\end{pmatrix},
	\mathcal{S}^{-1}
	=
	\begin{pmatrix}
	1_2/2 & -1_2/2 & 0 & 0 \\
	0 & 0 & 1_2/2 & 1_2/2 \\
	0 & 0 & 1_2 & -1_2 \\
	1_2 & 1_2 & 0 & 0
	\end{pmatrix}.
	\]

	Then $\mathcal{S}$ sends
	$V_4^+$ to $V_4^d$
	(so that
	$\mathcal{S}^{-1}P^d\mathcal{S}=P^+$,
	$\mathcal{S}W_4^+=W_4^d$
	and
	$\mathcal{S}W_4^-=W_{4,d}$).
	Thus we see that for any section
	$f_{P^+}(s)$
	in
	$\mathrm{Ind}_{P^+(\mathbb{A})}^{G(\mathbb{A})}(\xi_s)$,
	the function
	\[
	f^d_{P^+}(s)
	\colon
	G(\mathbb{A})
	\rightarrow
	\mathbb{C},
	\quad
	g\mapsto
	f_{P^+}(s)(\mathcal{S}^{-1}g)
	\]
	is a section in
	$\mathrm{Ind}_{P^d(\mathbb{A})}^{G(\mathbb{A})}(\xi_s)$.
	Moreover, it is easy to see that
	$E(g,f^d_{P^+}(s))
	=
	E(g,f_{P^+}(s))$
	for any
	$g\in G(\mathbb{A})$.

	Now fix
	$\phi_i\in\mathcal{S}(W_i^-(\mathbb{A}))$
	for $i=1,2$.
	Write
	$\phi^+=\phi_1\otimes\phi_2$ for the tensor product
	which is an element in
	$\mathcal{S}(W_4^-(\mathbb{A}))$.
	Recall that
	$G^+=\{
	g\in G|\nu(g)\in\nu(\mathrm{O}(U))
	\}$
	and we put
	$P^{++}:=P^+\bigcap G^+$.
	We then define a map
	\[
	f_{\phi^+}
	\colon
	G^+(\mathbb{A})
	\rightarrow
	\mathbb{C},
	\quad
	g
	\mapsto
	\big(
	\omega_{W_4^-}(g)\phi^+
	\big)
	(0).
	\]	
	One can verify that
	$f_{\phi^+}\in
	\mathrm{Ind}_{P^{++}(\mathbb{A})}^{G^+(\mathbb{A})}(\xi_{1/2})$.
	We can extend $f_{\phi^+}$ to a section
	$f_{\phi^+}$ in
	$\mathrm{Ind}_{P^+(\mathbb{A})}^{G(\mathbb{A})}(\xi_{1/2})$
	in a unique way as follows:
	note that
	$G=P^+G^+$,
	for any $g=pg^+\in P^+G^+$,
	we define
	$f_{\phi^+}(g):=\xi_{1/2}(p)f_{\phi^+}(g^+)$.
	One can verify that this is well-defined
	and thus we get a section
	$f_{\phi^+}\in
	\mathrm{Ind}_{P^+(\mathbb{A})}^{G(\mathbb{A})}(\xi_{1/2})$
	from $\phi^+$.
	This procedure applies to any other maximal parabolic subgroup
	other than $P^+$
	and we will extend the sections from
	$G^+$ to $G$ always in this way without further
	comment.

	We define
	$\phi^d=\delta(\phi^+)
	\in
	\mathcal{S}(W_{4,d}(\mathbb{A}))$.
	Then the function
	\[
	f_{\phi^d}
	\colon
	G^+(\mathbb{A})
	\rightarrow
	\mathbb{C},
	\quad
	g
	\mapsto
	\omega_{W_{4,d}}(g)\phi^d(0)
	\]
	is in
	$\mathrm{Ind}_{P^d(\mathbb{A})}^{G(\mathbb{A})}(\xi_{1/2})$.

	One has the following:
	\begin{lemma}\label{transfer of Siegel sections}
		The two sections
		$f_{\phi^d}$ and $f_{\phi^+}$ are related by
		\[
		f_{\phi^d}(g)
		=
		f_{\phi^+}(\mathcal{S}^{-1}g),
		\quad
		\forall
		g\in G(\mathbb{A}).
		\]
	\end{lemma}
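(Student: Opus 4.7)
The plan is to reduce the identity to a local statement at each place, then verify it by unpacking the definitions of the Weil representation and the intertwining map $\delta$.

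By its construction as a partial Fourier transform, $\delta$ intertwines the two Schr\"odinger models $\mathcal{S}(W_4^-)$ and $\mathcal{S}(W_{4,d})$ of the Weil representation on $W_4$, corresponding to the polarizations $W_4 = W_4^+ \oplus W_4^-$ and $W_4 = W_4^d \oplus W_{4,d}$ respectively. Therefore, for every $g$ in $\mathrm{GSp}^+(V_4,\mathbb{A}) \times \mathrm{GSO}(U,\mathbb{A})$ acting through the metaplectic group,
\[
\omega_{W_{4,d}}(g) \circ \delta = \delta \circ \omega_{W_4^-}(g).
\]
Applying this to $\phi^+$ and evaluating at $0 \in W_{4,d}$ gives
\[
f_{\phi^d}(g) = \bigl(\omega_{W_{4,d}}(g)\phi^d\bigr)(0) = \delta\bigl(\omega_{W_4^-}(g)\phi^+\bigr)(0).
\]
Setting $\psi = \omega_{W_4^-}(g)\phi^+$, the desired identity reduces to the place-by-place statement
\[
\delta(\psi)(0) = \bigl(\omega_{W_4^-}(\mathcal{S}^{-1})\psi\bigr)(0) \quad \text{for all } \psi \in \mathcal{S}(W_4^-(\mathbb{Q}_v)).
\]

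The geometric content of this last identity is that $\mathcal{S}^{-1} \in \mathrm{Sp}(V_4)(\mathbb{Q})$, via the embedding $\mathrm{Sp}(V_4) \hookrightarrow \mathrm{Sp}(W_4)$, implements exactly the change of polarization sending $V_4^d \mapsto V_4^+$ and $V_{4,d} \mapsto V_4^-$. Consequently $\omega_{W_4^-}(\mathcal{S}^{-1})$ is a Weyl-type Fourier transform operator, and its value at $0$ should reproduce the integral $\delta(\psi)(0) = \int_{W_1^-} \psi(u,u)\,du$ that defines $\delta$ at the origin. To verify this rigorously I would decompose $\mathcal{S}^{-1}$ into a product of generators of the form $m(a)$, $u(b)$, and the long Weyl element $J_8$ using its explicit block form, then compute step by step using the Weil representation formulas displayed in Section \ref{Weil representations}. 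The main technical obstacle is the careful bookkeeping of the Weil indices $\gamma^{\mathrm{Weil}}$ that appear at each Fourier-type step; these should trivialize in our setting, using that $\dim U = 6$ is even (the same parity condition that already allowed the earlier descent of $\widetilde{\iota}_{V_4}$ and $\widetilde{\iota}_U$ to the linear symplectic group) together with the fact that $\delta$ was normalized without any constant prefactor.
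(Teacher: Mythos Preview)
Your reduction is essentially the same as the paper's: both unwind $f_{\phi^d}(g)=\omega_{W_{4,d}}(g)\phi^d(0)$ and compare it to $\omega_{W_4^-}(\mathcal{S}^{-1}g)\phi^+(0)$ via the intertwining operator $\delta=I(W_4^-,W_{4,d})$. The gap is in your last paragraph, where you assert that the Weil indices ``should trivialize in our setting, using that $\dim U=6$ is even''. This is false \emph{locally}. When one keeps track of the normalizing constants $\beta$ and $\mu_U$ that turn the projective Weil representation into a genuine representation of $\mathrm{Sp}(V_4)$, the discrepancy between the two sides at a place $v$ is the product
\[
\beta_{W_{4,d}}(\iota_{V_4}(\mathcal{S}^{-1}))\cdot\mu_{U,W_{4,d}}(\mathcal{S}^{-1})
=(-1,-1)_v\cdot(\det(U),-1)_v,
\]
computed via the Bruhat decomposition of $\mathcal{S}^{-1}$ (here $j(\mathcal{S}^{-1})=2$, so $j(\iota_{V_4}(\mathcal{S}^{-1}))=12$ and $x(\iota_{V_4}(\mathcal{S}^{-1}))\equiv 1$). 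These Hilbert symbols are certainly not $1$ at every place; for instance $(-1,-1)_\infty=-1$. So your proposed ``local statement at each place'' simply fails.

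What rescues the lemma is a genuinely global input: the product formula $\prod_v(a,b)_v=1$ for Hilbert symbols forces the product of these local discrepancies over all $v$ to be $1$. The paper carries this out by expressing the composition $I(g^{-1}\mathcal{S}W_4^-,W_4^-)\circ I(\mathcal{S}W_4^-,g^{-1}\mathcal{S}W_4^-)\circ I(W_4^-,\mathcal{S}W_4^-)$ in terms of the Maslov index $\tau$ and the Weil character $\gamma^{\mathrm{Weil}}$, then using the $2$-cocycle relation for $c_{W_{4,d}}$ to isolate the constant $\beta_{W_{4,d}}(\iota_{V_4}(\mathcal{S}^{-1}))\mu_{U,W_{4,d}}(\mathcal{S}^{-1})$ and finally invoking the product formula. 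You should replace your local-triviality claim with this global argument.
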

	\begin{proof}
		This follows easily from the definition of
		the Weil representation $\omega_{W_4^-}$ of
		$\mathrm{Sp}(V_4)\times\mathrm{O}(U)$.
		
		Indeed, recall that in the morphism
		$\mathrm{Sp}(V_4)\rightarrow
		\widetilde{\mathrm{Sp}}(W_4)$,
		an element $g$ is sent to the element
		$\Big(
		\iota_{V_4}(g),\beta_{W_4^-}
		(\iota_{V_4}(g))\mu_{U,W_4^-}(g)\omega_{W_4^-}
		(\iota_{V_4}(g))
		\Big)$
		and by definition the Weil representation
		of $\mathrm{Sp}(V_4)$
		on the space $\mathcal{S}(W_4^-)$ is
		$\omega_{W_4^-}(g)
		=
		\beta_{W_4^-}
		(\iota_{V_4}(g))\mu_{U,W_4^-}(g)
		\omega_{W_4^-}
		(\iota_{V_4}(g))$.
		Here we add the subscript
		$_{W_4^-}$ to indicate the dependence of the functions
		$\beta$ and $\mu$ on the isotropic subspace
		$W_4^-$.
		Similarly, we can define the Weil representation
		$\omega_{W_{4,d}}$ of
		$\mathrm{Sp}(V_4)$
		on the space $\mathcal{S}(W_{4,d})$,
		which is given by
		$\omega_{W_{4,d}}(g)
		=
		\beta_{W_{4,d}}(\iota_{V_4}(g))
		\mu_{U,W_{4,d}}(g)
		\omega_{W_{4,d}}(\iota_{V_4}(g))$.
		
		Now by definition,
		\begin{align*}
		f_{\phi^d}(g)
		&
		=
		\omega_{W_{4,d}}(g)\phi^d(0)
		\\
		&
		=
		\beta_{W_{4,d}}(\iota_{V_4}(g))
		\mu_{U,W_{4,d}}(g)
		\omega_{W_{4,d}}(\iota_{V_4}(g))\phi^d(0)
		\\
		&
		=
		\beta_{W_{4,d}}(\iota_{V_4}(g))
		\mu_{U,W_{4,d}}(g)
		A(g)\circ
		I(W_{4,d},g^{-1}W_{4,d})
		\circ
		I(W_4^-,W_{4,d})\phi^+(0)
		\\
		&
		=
		\beta_{W_4^-}
		(\iota_{V_4}(\mathcal{S}^{-1}g))\mu_{U,W_4^-}(\mathcal{S}^{-1}g)
		A(\mathcal{S})
		\circ
		A(\mathcal{S}^{-1}g)
		\circ
		I(W_4^-,(\mathcal{S}^{-1}g)^{-1}W_4^-)\phi^+(0)
		\\
		&
		=
		\beta_{W_4^-}
		(\iota_{V_4}(\mathcal{S}^{-1}g))
		\mu_{U,W_4^-}(\mathcal{S}^{-1}g)
		A(\mathcal{S})
		\omega_{W_4^-}(\mathcal{S}^{-1}g)\phi^+(0)
		\\
		&
		=
		\beta_{W_4^-}
		(\iota_{V_4}(\mathcal{S}^{-1}g))\mu_{U,W_4^-}(\mathcal{S}^{-1}g)
		\omega_{W_4^-}(\mathcal{S}^{-1}g)\phi^+(0)
		\\
		&
		=
		f_{\phi^+}(\mathcal{S}^{-1}g).
		\end{align*}
		From the third line to the fourth line
		we used the fact that
		
		\begin{multline}\label{intertwining operator relation}
		\beta_{W_4^-}
		(\iota_{V_4}(\mathcal{S}^{-1}g))\mu_{U,W_4^-}(\mathcal{S}^{-1}g)
		\mathrm{Id}_{\mathcal{S}(W_4^-)} \\
		=
		\beta_{W_{4,d}}(\iota_{V_4}(g))
		\mu_{U,W_{4,d}}(g)
		I(g^{-1}\mathcal{S}W_4^-,W_4^-)\circ
		I(\mathcal{S}W_4^-,g^{-1}\mathcal{S}W_4^-)\circ
		I(W_4^-,\mathcal{S}W_4^-)
		\end{multline}

		To show this last identity, 		
		we need some preliminary results
		on the composition of intertwining operators
		defined in \ref{intertwining operator}.
		Use again the notations in Section.\ref{Weil representations},
		putting $V=W_4$
		over a local field $F=\mathbb{Q}_v$
		and $X_i$ maximal isotropic subspaces of
		$V$.
		The composition of intertwining operators
		$I_{X_2,X_3}\circ I_{X_1,X_2}$ is not exactly
		$I_{X_1,_3}$, but differs from the latter by
		a scalar in $\mathbb{C}^\times$.
		This scalar is given by the Maslov index and Weil character
		defined below.
		We write
		$\widehat{W}(F)$ to be the Grothendieck group
		of isometry classes of quadratic forms over $F$
		and define the Witt group $W(F)$ as
		$W(F)=\widehat{W}(F)/\mathbb{Z}H$ where
		$H$ is the standard split
		hyperbolic plane (of dimension $2$).
		Let $(Q,q)$ be a quadratic space over $F$ 
		with quadratic form $q$.
		Consider the pairing
		$\mathbf{e}_F\circ q\colon
		Q\times Q\rightarrow\mathbb{C}^\times$.
		Let $d\mu_q$ be a measure on $V$ self-dual with respect to
		the pairing $\mathbf{e}_F\circ q$.
		Choose any
		Schwartz-Bruhat function
		$h\in\mathcal{S}(Q)$ such that
		its Fourier transform is a positive measure and
		$h(0)=1$.
		Then the Weil character $\gamma^\mathrm{Weil}(q)$
		is defined to be
		(\cite[\textsection 14, Th\'{e}or\`{e}me 2]{Weil1964},
		\cite[Proposition 1.2.13]{WenWeiLi})
		\[
		\gamma^\mathrm{Weil}(q)
		=
		\lim\limits_{s\rightarrow0}
		\int_Q
		h(sx)\mathbf{e}_F(q(x,x)/2)d\mu_q(x).
		\]
		This gives the Weil character of the Witt group
		\[
		\gamma^\mathrm{Weil}
		\colon
		W(F)\rightarrow
		\mathbb{C}^\times,
		\quad
		(Q,q)\mapsto
		\gamma^\mathrm{Weil}(q).
		\]
		For any maximal isotropic subspaces
		$X_1,X_2,X_3$ of $V$,
		we define
		$K:=X_1\oplus X_2\oplus X_3$ and give it the following
		quadratic form $q_K$: for any $v,w\in K$,
		\[
		q_K(v,w)
		=q_K((v_1,v_2,v_3),(w_1,w_2,w_3))
		:=
		\frac{1}{2}
		(
		\langle v_1,w_2-w_3\rangle
		+
		\langle v_2,w_3-w_1\rangle
		+
		\langle v_3,w_1-w_2\rangle
		).
		\]
		Then the Maslov index 
		$\tau(X_1,X_2,X_3)$ of $X_1,X_2,X_3$
		is the equivalence class of the quadratic space
		$(K,q_K)$ in $W(F)$
		(\cite[Proposition 2.3.3]{WenWeiLi}).
		Now one can show that
		(\cite[Theorem 3.5.1]{WenWeiLi})
		\[
		I_{X_3,X_1}\circ I_{X_2,X_3}\circ I_{X_1,X_2}
		=
		\gamma^\mathrm{Weil}(-\tau(X_1,X_2,X_3))
		\cdot
		\mathrm{Id}_{\mathcal{S}_{X_1}}
		\colon
		\mathcal{S}_{X_1}\rightarrow
		\mathcal{S}_{X_1}.
		\]
		Using the fact that
		$I_{X_1,X_2}\circ I_{X_2,X_1}=\mathrm{Id}$
		(\cite[Corollary 3.4.3]{WenWeiLi}),
		one sees that
		\[
		I_{X_2,X_3}\circ I_{X_1,X_2}
		=
		\gamma^\mathrm{Weil}
		(-\tau(X_1,X_2,X_3))
		\cdot
		I_{X_1,X_3}
		\colon
		\mathcal{S}_{X_1}
		\rightarrow
		\mathcal{S}_{X_3}.
		\]
		Fix a maximal isotropic subspace
		$Y$ of $V$.
		One can show that for any
		$g_1,g_2\in\mathrm{Sp}(V)$,
		$c_Y(g_1,g_2)=
		\gamma^\mathrm{Weil}
		(-\tau(Y,g_1Y,g_1g_2Y))$
		(\textit{cf}. (\ref{2-cocycle})
		and \cite[Chapter I, Theorem 3.1]{Kudla96}).
		
		Now let's return to our problem and set
		$X_1=W_4^+$,
		$X_2=W_4^d$
		and
		$X_3=g^{-1}W_4^d$.
		One gets
		\[
		I_{W_4^d,g^{-1}W_4^d}\circ I_{W_4^+,W_4^d}
		=
		\gamma^\mathrm{Weil}
		(-\tau(W_4^+,W_4^d,g^{-1}W_4^d))
		\cdot
		I_{W_4^+,g^{-1}W_4^d}.
		\]
		Let's write
		$\gamma(g)
		=\gamma^\mathrm{Weil}
		(-\tau(W_4^+,W_4^d,g^{-1}W_4^d))$.
		
		By the basic properties of
		the Maslov index $\tau$
		(\cite[\textsection\textsection2.1 and 4.3.1]{WenWeiLi}),
		one has
		\begin{align*}
		\gamma^\mathrm{Weil}
		(-\tau(W_{4,d},\mathcal{S}^{-1}W_{4,d},g^{-1}W_{4,d}))
		&
		=
		\gamma^\mathrm{Weil}
		(\tau(g^{-1}W_{4,d},\mathcal{S}^{-1}W_{4,d},W_{4,d}))
		\\
		&
		=
		\gamma^\mathrm{Weil}
		(\tau(W_{4,d},g\mathcal{S}^{-1}W_{4,d},gW_{4,d}))
		\\
		&
		=
		c_{W_{4,d}}(g\mathcal{S}^{-1},\mathcal{S}).
		\end{align*}
		On the other hand,
		$c_{W_{4,d}}$
		is a $2$-cocycle, therefore
		\[
		c_{W_{4,d}}(g\mathcal{S}^{-1},\mathcal{S})
		c_{W_{4,d}}(g,\mathcal{S}^{-1})
		=
		c_{W_{4,d}}(g,1)
		c_{W_{4,d}}(\mathcal{S}^{-1},\mathcal{S}).
		\]
		By definition and
		\cite[Corollary 3.4.3]{WenWeiLi},
		one has
		$c_{W_{4,d}}(g,1)=1=
		c_{W_{4,d}}(\mathcal{S}^{-1},\mathcal{S})$.
		
		Moreover, by definition one sees that
		\[
		\beta_{W_4^-}(\iota_{V_4}(\mathcal{S}^{-1}g))
		=
		\beta_{W_{4,d}}(\iota_{V_4}(g\mathcal{S}^{-1})),
		\quad
		\mu_{U,W_4^-}(\mathcal{S}^{-1}g)
		=
		\mu_{U,W_{4,d}}(g\mathcal{S}^{-1}).
		\]

		Therefore (\ref{intertwining operator relation})
		is equivalent to
		\[
		\beta_{W_{4,d}}
		(\iota_{V_4}(g\mathcal{S}^{-1}))
		\mu_{U,W_{4,d}}(g\mathcal{S}^{-1})
		=
		\beta_{W_{4,d}}(\iota_{V_4}(g))
		\mu_{U,W_{4,d}}(g)
		c_{W_{4,d}}(g,\mathcal{S}^{-1}).
		\]
		
		By the morphism
		$\mathrm{Sp}(V_4)
		\rightarrow
		\widetilde{\mathrm{Sp}}(W_4)$,
		this is equivalent to
		\[
		\beta_{W_{4,d}}(\iota_{V_4}(\mathcal{S}^{-1}))
		\mu_{U,W_{4,d}}(\iota_{V_4}(\mathcal{S}^{-1}))
		=1.
		\]
		
		We can show this in the following way.
		First we compute the LHS of the above formula
		for each local case
		$F=\mathbb{Q}_v$.
		Using the Bruhat decomposition for parabolic subgroup
		$P_{V_{4,d}}$ of $\mathrm{Sp}(V_4)$,
		we see that
		(using the notation as in
		\cite[p.19]{Kudla96})
		$j(\mathcal{S}^{-1})=2$
		and
		$j(\iota_{V_4}(\mathcal{S}^{-1}))=2\times6=12$.
		In the same way,
		$x(\iota_{V_4}(\mathcal{S}^{-1}))
		=
		x(\mathcal{S}^{-1})^6
		=1\,
		\mathrm{mod} (F^\times)^2$.
		Thus we see
		\[
		\beta_{W_{4,d}}(\iota_{V_4}(\mathcal{S}^{-1}))
		=\gamma(\eta)^{-12}
		=\gamma(-1,\eta)^2
		=(-1,-1)_v
		\]
		where
		$(\cdot,\cdot)_v$
		is the Hilbert symbol in
		$F=\mathbb{Q}_v$.
		
		On the other hand,
		by \cite[p.35]{Kudla96},
		\[
		\mu_{U,W_{4,d}}(\mathcal{S}^{-1})
		=(\mathrm{det}(U),x(\iota_{V_4}(\mathcal{S}^{-1})))_v
		\gamma(\mathrm{det}(U),\eta)^{-2}
		=(\mathrm{det}(U),-1)_v.
		\]
		
		Taking the product of the above two expressions
		and using the product formula for
		Hilbert symbols,
		one gets
		the desired identity
		for $g\in\mathrm{Sp}(V_4)(\mathbb{A})$.
		For
		$g\in\mathrm{GSp}(V_4)(\mathbb{A})\backslash
		\mathrm{Sp}(V_4)(\mathbb{A})$,
		one can use the definition of the extension of
		$f_{\phi^?}$
		from
		$\mathrm{Sp}(V_4)(\mathbb{A})$
		to
		$\mathrm{GSp}(V_4)(\mathbb{A})$
		to finish the proof.

	\end{proof}

	For each place $v$ of
	$\mathbb{Q}$,
	we define local sections
	$\widetilde{f}_{\phi^d,v}\in
	\mathrm{Ind}^{G(\mathbb{Q}_v)}_{P^d(\mathbb{Q}_v)}
	(\xi_{1/2})$
	as
	$\widetilde{f}_{\phi^d,v}(g)
	:=f_{\phi^+,v}(\mathcal{S}^{-1}g\mathcal{S})$.
	Then we put
	$\widetilde{f}_{\phi^d}:=
	\otimes'_v\widetilde{f}_{\phi^d,v}
	\in
	\mathrm{Ind}_{P^d(\mathbb{A})}^{G(\mathbb{A})}(\xi_{1/2})$.
	From the above lemma, we get
	\begin{corollary}
		Let $f_{\phi^?}
		\in\mathrm{Ind}_{P^?(\mathbb{A})}^{G(\mathbb{A})}(\xi_{1/2})$
		be the section defined by
		$\phi^?$ as above
		for $?=d,+$,
		then we have
		the following identity
		\[
		E(g,f_{\phi^d})
		=
		E(g,\widetilde{f}_{\phi^d})
		=
		E(g,f_{\phi^+})
		\]	
	\end{corollary}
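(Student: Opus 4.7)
The plan is to reduce both equalities to changes of variables in the Eisenstein sum, using two inputs already at our disposal: the pointwise identity $f_{\phi^d}(g)=f_{\phi^+}(\mathcal{S}^{-1}g)$ furnished by the transfer-of-sections lemma, and the conjugation relation $\mathcal{S}^{-1}P^d\mathcal{S}=P^+$ (observed immediately after the definition of $\mathcal{S}$, since $\mathcal{S}$ sends $V_4^+$ to $V_4^d$). Together these give a bijection of coset spaces $P^d(\mathbb{Q})\backslash G(\mathbb{Q})\xrightarrow{\sim}P^+(\mathbb{Q})\backslash G(\mathbb{Q})$ via $P^d\gamma\mapsto P^+\mathcal{S}^{-1}\gamma$ (well-defined because $\mathcal{S}^{-1}P^d=P^+\mathcal{S}^{-1}$), and the argument will boil down to transporting this bijection through the summations.

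For the rightmost equality $E(g,f_{\phi^d})=E(g,f_{\phi^+})$, I would start with
\[
E(g,f_{\phi^d})=\sum_{\gamma\in P^d(\mathbb{Q})\backslash G(\mathbb{Q})}f_{\phi^d}(\gamma g),
\]
substitute $f_{\phi^d}(\gamma g)=f_{\phi^+}(\mathcal{S}^{-1}\gamma g)$ by the lemma, and reindex by $\gamma':=\mathcal{S}^{-1}\gamma$. The bijection of cosets noted above makes this substitution valid, and the reindexed sum is exactly $\sum_{\gamma'\in P^+(\mathbb{Q})\backslash G(\mathbb{Q})}f_{\phi^+}(\gamma' g)=E(g,f_{\phi^+})$.

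For the middle equality $E(g,f_{\phi^d})=E(g,\widetilde{f}_{\phi^d})$, I would first note that the lemma rewrites the definition of $\widetilde{f}_{\phi^d}$ as $\widetilde{f}_{\phi^d}(g)=f_{\phi^+}(\mathcal{S}^{-1}g\mathcal{S})=f_{\phi^d}(g\mathcal{S})$, so summing over $P^d(\mathbb{Q})\backslash G(\mathbb{Q})$ gives $E(g,\widetilde{f}_{\phi^d})=E(g\mathcal{S},f_{\phi^d})$. Equivalently, reindexing by $\gamma\mapsto \mathcal{S}^{-1}\gamma\mathcal{S}$ (which, as in the previous paragraph, realises a bijection $P^d\backslash G\to P^+\backslash G$) rewrites the sum as $E(\mathcal{S}^{-1}g\mathcal{S},f_{\phi^+})$. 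The remaining step is to identify this translated Eisenstein series with the untranslated one; this should follow by combining the left $G(\mathbb{Q})$-automorphy of $E$ (since $\mathcal{S}\in G(\mathbb{Q})$) with the specific parabolic-induction structure of the Siegel section, so that the right shift by $\mathcal{S}$ is absorbed into a relabeling of the $P^d$-coset sum.

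The main obstacle is precisely this final absorption: left automorphy kills only left translations by $G(\mathbb{Q})$-elements, whereas the naive reindexing produces a right translation, so a careful bookkeeping is needed to show that the intertwining between $\mathrm{Ind}_{P^d}^{G}(\xi_{1/2})$ viewed via the sections $f_{\phi^d}$ and via $\widetilde{f}_{\phi^d}$ yields the same global Eisenstein series. I expect this to follow cleanly once one recognises that both sections descend from the single Siegel section $f_{\phi^+}\in\mathrm{Ind}_{P^+}^G(\xi_{1/2})$ constructed from the Weil representation; any two such descents produce the same Eisenstein series, and the formal change of variables merely makes this intrinsic identity explicit.
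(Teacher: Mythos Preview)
Your argument for $E(g,f_{\phi^d})=E(g,f_{\phi^+})$ is correct and is exactly the reindexing the paper records (just before the lemma) as ``easy to see'' for any section $f_{P^+}(s)$ and its transport $f^d_{P^+}(s)(g)=f_{P^+}(s)(\mathcal{S}^{-1}g)$; the lemma then identifies $f_{\phi^d}$ with $(f_{\phi^+})^d$, so this half is done.

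For $E(g,f_{\phi^d})=E(g,\widetilde f_{\phi^d})$ your reduction is also correct: since globally $\widetilde f_{\phi^d}(g)=f_{\phi^+}(\mathcal{S}^{-1}g\mathcal{S})=f_{\phi^d}(g\mathcal{S})$, one gets $E(g,\widetilde f_{\phi^d})=E(g\mathcal{S},f_{\phi^d})$, and the question is whether the automorphic form $E(\cdot,f_{\phi^d})$ is invariant under \emph{right} translation by the rational element $\mathcal{S}\in G(\mathbb{Q})\hookrightarrow G(\mathbb{A})$. You are right that left $G(\mathbb{Q})$-automorphy does not give this (it only yields $E(g\mathcal{S},f_{\phi^d})=E(\mathcal{S}^{-1}g\mathcal{S},f_{\phi^d})$, which is no closer), and your closing paragraph does not close the gap: the assertion that ``any two such descents produce the same global Eisenstein series'' is a restatement of what has to be shown, not a proof. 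Two sections in $\mathrm{Ind}_{P^d}^G(\xi_{1/2})$ that differ by a nontrivial right translate generically yield distinct Eisenstein series, so some further input specific to this situation is required.

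The paper itself offers no argument beyond ``from the above lemma, we get'', so there is no detailed proof to compare yours against; the honest summary is that the middle equality is not a formal consequence of the lemma together with left automorphy, and neither you nor the paper supplies the missing justification. Your diagnosis of the obstacle is accurate; what is lacking is an actual mechanism (for instance, an explicit computation with the Weil representation showing that $\omega_{W_4^-}(\mathcal{S})$ fixes the relevant value, or an alternative unfolding that bypasses the right translation) to resolve it.
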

	\begin{remark}
		In the following we will use
		$f_{\phi^+}$ to compute the Fourier coefficients of
		the Eisenstein series
		$E(\cdot,f_{\phi^+})$
		while we will use
		$\widetilde{f}_{\phi^d}$
		to compute the local zeta integrals
		given in the next subsection.
	\end{remark}

	\subsubsection{Zeta integrals}
	Let $\pi=\otimes_v'\pi_v$ 
	be an irreducible automorphic representation
	on $G_1(\mathbb{A})$,
	$\pi^\vee$ its contragredient,
	which is isomorphic to the complex conjugate
	$\overline{\pi}$.
	We choose an irreducible
	$G_1^1(\mathbb{A})$-constituent $\pi^1$ of $\pi$
	that occurs in the space of automorphic forms on
	$G_1^1(\mathbb{A})$.
	Similarly for
	$\pi^{\vee,1}:=(\pi^\vee)^1$.
	We assume that
	$\pi^1$ contains the spherical vectors for
	$\widehat{\Gamma}$.
	We will see that
	the standard $L$-function
	does not depend on the choice $\pi^1$ 
	in the decomposition of $\pi|_{G_1^1(\mathbb{A})}$.
	Let $S_\pi$ be the places of $\mathbb{Q}$
	dividing $N$.	
	We fix non-zero unramified vectors
	$\varphi_{v,0}\in\pi_v$
	and
	$\varphi_{v,0}^\vee
	\in\pi_v^\vee$
	for each $v\notin S_\pi$.
	Assume moreover that there is factorization
	\[
	\pi^1\simeq
	\pi^1_\infty\otimes\pi^1_\mathrm{f}
	\simeq
	\pi^1_\infty
	\otimes
	(\pi^1)_\mathrm{f}^{S_\pi,p}
	\otimes
	\pi^1_p
	\otimes
	\pi^1_{S_\pi}.
	\]
	Similarly for $\pi^{\vee,1}$:
	\[
	\pi^{\vee,1}
	\simeq
	\pi^{\vee,1}_\infty
	\otimes
	\pi^{\vee,1}_\mathrm{f}
	\simeq
	\pi^{\vee,1}_\infty
	\otimes
	(\pi^{\vee,1})_\mathrm{f}^{S_{\pi,p}}
	\otimes
	\pi^{\vee,1}_p
	\otimes
	\pi^{\vee,1}_{S_\pi}.
	\]
	
	Fix factorizable vectors
	$\varphi=\otimes_v\varphi_v
	\in(\pi^1)^{\widehat{\Gamma}}$
	and
	$\varphi^\vee=\otimes_v\varphi_v^\vee\in
	(\pi^{\vee,1})^{\widehat{\Gamma}}$.
	We think of $\varphi$, resp. $\varphi^\vee$,
	as an automorphic form on $G_1(\mathbb{A})$,
	resp. $G_2(\mathbb{A})$
	as follows:
	for any
	$g=g_1g_2\in
	G_1(\mathbb{Q})G_1^1(\mathbb{A})$,
	we set
	$\varphi(g)$
	to be $\varphi(g_2)$,
	while for
	$g\in G_1(\mathbb{A})\backslash
	G_1(\mathbb{Q})G_1^1(\mathbb{A})$,
	we set
	$\varphi(g)=0$.
	Similarly for
	$\varphi^\vee$.
	Moreover, we assume that
	$\varphi_v=\varphi_{v,0}$
	and
	$\varphi_v^\vee=\varphi_{v,0}^\vee$
	for any $v\notin S_\pi$.
	
	As for the places $v|p\infty$, we will specify
	$\varphi_v$ and
	$\varphi_v^\vee$ later on.

	Let $Z_1$ be the center of $G_1$.
	We fix a global Haar measure $dg$ on
	$G_1(\mathbb{A})$
	invariant under left translation of
	$G_1(\mathbb{Q})$.
	Then we define
	\[
	\langle
	\varphi,
	\varphi^\vee
	\rangle
	:=
	\int_{Z_1(\mathbb{A})G_1(\mathbb{Q})\backslash G_1(\mathbb{A})}
	\varphi(g)\varphi^\vee(g)dg.
	\]
	This pairing between $\pi$ and $\pi_\vee$
	decomposes into a product of local pairings
	$\langle\cdot,\cdot,\rangle_v
	\colon\pi_v\otimes\pi_v^\vee
	\rightarrow
	\mathbb{C}$ that are
	$G_1(\mathbb{Q}_v)$-invariant and
	$\langle
	\varphi_v,
	\varphi_v^\vee
	\rangle_v=1$.

	Now for any section
	$f(s)\in\mathrm{Ind}_{P^d(\mathbb{A})}^{G(\mathbb{A})}
	(\xi_s)$,
	we define the global zeta integral for
	$f(s)$, $\varphi$ and $\varphi^\vee$ as follows
	\[
	Z(\varphi,\varphi^\vee,f(s))
	:=
	\int_{Z_3(\mathbb{A})G_3(\mathbb{Q})\backslash G_3(\mathbb{A})}
	E((g_1,g_2),f(s))
	\xi^{-1}_s(\mathrm{det}\, g_2)
	\varphi(g_1)\varphi^\vee(g_2)
	dg_1dg_2.
	\]
	
	One can show
	\begin{lemma}
		We have
		\begin{equation}\label{global zeta integral}
		Z(\varphi,\varphi^\vee,f(s))
		=
		\int_{G_1^1(\mathbb{A})}
		f(s)((g,1))
		\langle
		\pi(g)\varphi,
		\varphi^\vee
		\rangle
		dg.
		\end{equation}
	\end{lemma}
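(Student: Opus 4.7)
The plan is to apply the classical Piatetski-Shapiro--Rallis unfolding for the doubling method. First, substituting the definition $E(g,f(s)) = \sum_{\gamma \in P^d(\mathbb{Q})\backslash G(\mathbb{Q})} f(s)(\gamma g)$ into the zeta integral and interchanging sum and integral (justified for $\mathrm{Re}(s)$ large, then by meromorphic continuation) yields
\[
Z(\varphi,\varphi^\vee,f(s)) = \int_{[Z_3\backslash G_3]} \sum_{\gamma \in P^d(\mathbb{Q})\backslash G(\mathbb{Q})} f(s)(\gamma (g_1,g_2))\, \xi_s^{-1}(\det g_2)\varphi(g_1)\varphi^\vee(g_2)\, dg_1 dg_2,
\]
and the task reduces to regrouping $\gamma$ according to orbits of the right action of $G_3(\mathbb{Q})$ on $P^d(\mathbb{Q})\backslash G(\mathbb{Q})$.

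The key geometric input is the orbit analysis. By the remark made earlier in the paper, the right action of $G_1^1 \times G_2^1$ on $(P_{V_4^d}\cap G_4^1)\backslash G_4^1$ satisfies the hypotheses of \cite{GelbartPSRallis87}: there is a unique open orbit, whose stabilizer is the diagonally embedded copy of $G_1^1$ (under the sign-twisted identification of $G_2^1$ with $G_1^1$ induced by the polarization $V_4 = V_4^d \oplus V_{4,d}$ versus $V_4 = V_1 \oplus V_2$). Let $\gamma_0$ be a representative of this open orbit; the non-open orbits correspond to $G_3$-stable subvarieties whose stabilizers contain nontrivial unipotent subgroups, and so their contributions vanish after integration against $\varphi$ and $\varphi^\vee$ by the cuspidality of $\pi$ and $\pi^\vee$ (exactly as in \cite{GelbartPSRallis87}). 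Only the open orbit survives, so
\[
Z(\varphi,\varphi^\vee,f(s)) = \int_{\Delta G_1^1(\mathbb{Q})\backslash G_3^1(\mathbb{A})} f(s)(\gamma_0(g_1,g_2))\,\varphi(g_1)\varphi^\vee(g_2)\, dg_1 dg_2,
\]
where the similitude character and center have been absorbed using the homogeneity properties of $f(s)$, $\xi_s$ and the conventions extending $\varphi$, $\varphi^\vee$ to the full similitude groups.

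In the final step I collapse one copy of $[G_1^1]$ using the diagonal $G_1^1 \hookrightarrow G_3^1$. Writing the $G_3^1(\mathbb{A})$ integration variable as $(g_1, g_1 h)$ via the diagonal identification, the measure factors and the integral becomes
\[
Z(\varphi,\varphi^\vee,f(s)) = \int_{G_1^1(\mathbb{A})} f(s)\bigl(\gamma_0(1,h)\bigr) \left( \int_{[G_1^1]} \varphi(g_1)\varphi^\vee(g_1 h)\, dg_1\right) dh.
\]
The inner integral is, by definition, $\langle \pi(h)\varphi, \varphi^\vee\rangle$. It remains to observe that by a suitable choice of $\gamma_0$ together with left $P^d(\mathbb{Q})$-invariance of $f(s)$, one may rewrite $f(s)(\gamma_0(1,h))$ as $f(s)((h,1))$ (cf. the matrix identity one verifies directly from the formulas for $\mathcal{S}$ in the previous subsection), yielding the asserted identity.

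The main obstacle will be the careful bookkeeping in the orbit analysis: pinning down the precise representative $\gamma_0$ of the open orbit so that $f(s)(\gamma_0(1,h))$ simplifies to $f(s)((h,1))$ without spurious factors, and verifying rigorously that the non-open orbit contributions vanish by cuspidality (which amounts to identifying the unipotent radicals arising in the stabilizers, as was done for classical groups in \cite{GelbartPSRallis87}). Tracking the similitude character $\xi_s^{-1}(\det g_2)$ and the quotient by $Z_3$ correctly through the collapse is also a point requiring care, though it is essentially forced by the definitions of the induced representation and our conventions.
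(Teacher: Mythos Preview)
Your overall strategy is correct and is exactly the Piatetski-Shapiro--Rallis unfolding used in the paper. Two points of execution differ from the paper and deserve correction.

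First, the open-orbit representative is simply $\gamma_0=1$: the diagonal $G_1^d=\{(g,g)\}$ lies in $P^d$, so $P^d\cdot 1\cdot G_3$ is already the main orbit and its stabilizer in $G_3$ is $G_1^d$. The matrix $\mathcal{S}$ plays no role here; it relates $P^+$ to $P^d$ and is irrelevant to this unfolding. So your appeal to ``the matrix identity from $\mathcal{S}$'' is a red herring.

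Second, your choice of coordinates introduces an avoidable twist. You factor $(g_1,g_2)=(g_1,g_1)(1,h)$ with $h=g_1^{-1}g_2$, obtaining $f(s)((1,h))$ and the inner integral $\int_{[G_1]}\varphi(g_1)\varphi^\vee(g_1h)\,dg_1$. But this inner integral equals $\langle \pi(h^{-1})\varphi,\varphi^\vee\rangle$, not $\langle \pi(h)\varphi,\varphi^\vee\rangle$ as you wrote (substitute $g_1\mapsto g_1h^{-1}$). You then still need $f(s)((1,h^{-1}))=f(s)((h,1))$, which follows from $(h,1)=(h,h)(1,h^{-1})$ with $(h,h)\in P^d$ and $h\in G_1^1$ (so $\det h=1$, $\nu(h)=1$, and the $P^d$-character is trivial). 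The paper avoids both of these extra steps by factoring in the other order, $(g_1,g_2)=(g_2,g_2)(g_2^{-1}g_1,1)$; the $\xi_s^{-1}(\det g_2)$ factor then cancels directly against $\xi_s(\det g_2)$ from the diagonal, and one lands immediately on $f(s)((g,1))\langle\pi(g)\varphi,\varphi^\vee\rangle$ with $g=g_2^{-1}g_1$.
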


    \begin{proof}
    	The proof is almost the same as
    	\cite[pp.702-703]{Harris93}.
    	Recall $P^d_4$ is the stabilizer in
    	$G_4$ of $V_4^d$.
    	We put $P^{d,1}_4=P^d_4\cap G^1_4$.
    	Then
    	$P^{d,1}_4\backslash G_4^1
    	\simeq P^d_4\backslash G_4$.
    	As in \cite{GelbartPSRallis87},
    	$G_3$ acts by translation on 
    	the right on the flag variety
    	$P^d_4\backslash G_4$.
    	The orbits of this action are the same as
    	the orbits of the action 
    	$G_1^1\times G_2^1$
    	on the flag variety
    	$P^{d,1}_4\backslash G_4^1$.
    	So we see that the orbit
    	$P^{d,1}\cdot1\cdot G_3$ is the main orbit,
    	as defined in \cite[p.2]{GelbartPSRallis87}
    	and the other orbits are all negligible.
    	We put also
    	$G_{3,\gamma}(\mathbb{Q})
    	:=
    	G_3(\mathbb{Q})\bigcap\gamma^{-1}P_4^{d,1}
    	(\mathbb{Q})\gamma$,
    	in particular one has
    	$G_{3,1}(\mathbb{Q})
    	=
    	G_3^1(\mathbb{Q})$. 	
    	Therefore
    	we can unfold the global zeta integral 
    	in the lemma as follows
    	\begin{align*}
    	Z(\varphi,\varphi^\vee,f(s))
    	&
    	=
    	\int_{Z_3(\mathbb{A})G_3(\mathbb{Q})\backslash G_3(\mathbb{A})}
    	\sum_{\gamma\in P_4^d(\mathbb{Q})\backslash G_4(\mathbb{Q})}
    	f(s)(\gamma(g_1,g_2))
    	\xi_s^{-1}(\mathrm{det}\, g_2)
    	\varphi(g_1)\varphi^\vee(g_2)dg_1dg_2
    	\\
    	&
    	=
    	\sum_{\gamma\in P_4^d(\mathbb{Q})
    		\backslash G_4(\mathbb{Q})/G_3(\mathbb{Q})}
    	\int_{Z_3(\mathbb{A})G_{3,\gamma}
    		(\mathbb{Q})\backslash G_3(\mathbb{A})}
    	f(s)(\gamma(g_1,g_2))
    	\xi_s^{-1}(\mathrm{det}\, g_2)
    	\varphi(g_1)\varphi^\vee(g_2)dg_1dg_2.
    	\\
    	&
    	=
    	\int_{Z_3(\mathbb{A})G_3^1(\mathbb{Q})
    		\backslash G_3(\mathbb{A})}
    	f(s)((g_2,g_2)(g_2^{-1}g_1,1))
    	\xi_s^{-1}(\mathrm{det}\, g_2)
    	\varphi(g_2g_2^{-2}g_1)
    	\varphi^\vee(g_2)
    	dg_1dg_2.
    	\end{align*}
    	In the last identity we used the fact that
    	only the main orbit
    	$P_4^{d,1}(\mathbb{Q})\cdot1\cdot G_3(\mathbb{Q})$
    	contributes to the integral.
    	Write $G_1^d$ for the image of the map
    	$G_1\rightarrow G_3$
    	sending $g$ to $(g,g)$.  	
    	Then
    	we have an isomorphism
    	\[
    	G_3
    	=
    	\big\{
    	(g_2,g_2)(g_2^{-1}g_1,1)|
    	\nu(g_1)=\nu(g_2)
    	\big\}
    	=
    	G_1^dG_1^1
    	\simeq
    	G_1\times G_1^1.
    	\]
    	Note that
    	under the above isomorphism,
    	the subgroup
    	$G_3^1$, resp.,
    	$Z_3$ of $G_3$
    	is sent to the subgroup
    	$1\times G_1^1$,
    	resp.,
    	$1\times Z_1$
    	of $G_1\times G_1^1$.
    	Since
    	$G_1^d\subset P_4^d$,
    	we get
    	\[
    	f(s)((g_2,g_2)(g,1))\xi_s^{-1}(\mathrm{det}\, g_2)
    	=f(s)((g,1))\xi_s(\mathrm{det}\, g_2)\xi_s^{-1}(\mathrm{det}\, g_2)
    	=f(s)((g,1)).
    	\]
    	
    	The above zeta integral becomes
    	\begin{align*}
    	Z(\varphi,\varphi^\vee,f(s))
    	&
    	=
    	\int_{Z_3(\mathbb{A})G_3^d(\mathbb{Q})
    		\backslash G_1^d(\mathbb{A})G_1^1(\mathbb{A})}
    	f(s)((g,1))\varphi(g_2g)\varphi^\vee(g_2)dg_2dg
    	\\
    	&
    	=
    	\int_{G_1^1(\mathbb{A})}
    	f(s)((g,1))dg
    	\int_{Z_1(\mathbb{A})G_1(\mathbb{Q})\backslash G_1(\mathbb{A})}
    	\varphi(g_2g)\varphi^\vee(g_2)dg_2
    	\\
    	&
    	=
    	\int_{G_1^1(\mathbb{A})}
    	f(s)((g,1))
    	\langle
    	\pi(g)\varphi,
    	\varphi^\vee
    	\rangle
    	dg,
    	\end{align*}
    	which is the desired integral.

    \end{proof}

	For each place $v$ of
	$\mathbb{Q}$,
	fix a section
	$f_v(s)
	\in
	\mathrm{Ind}_{P_4^d(\mathbb{Q}_v)}
	^{G_4(\mathbb{Q}_v)}(\xi_{s,v})$,
	we then define the local zeta integral
	of $f_v(s)$,
	$\varphi_v$ and $\varphi_v^\vee$
	as follows:
	\begin{equation}\label{local zeta integral}
	Z_v(\varphi_v,\varphi_v^\vee,f_v(s))
	:=
	\int_{G_1^1(\mathbb{Q}_v)}
	f_v(s)((g_v,1))
	\langle
	\pi_v(g_v)\varphi_v,
	\varphi_v^\vee
	\rangle_v
	dg_v.
	\end{equation}
	
	Then it is clear that
	
	\begin{lemma}
		For factorizable vectors
		$f(s)
		=
		\bigotimes_v{}'
		f_v(s)
		\in
		\bigotimes_v{}'
		\mathrm{Ind}_{P_4^d(\mathbb{Q}_v)}
		^{G_4(\mathbb{Q}_v)}(\xi_{s,v})$,
		$\varphi\in\pi$
		and
		$\varphi^\vee\in\pi^\vee$,
		we have
          \[
          Z(\varphi,\varphi^\vee,f(s))
          =
          \prod_v
          Z_v(\varphi_v,\varphi_v^\vee,f_v(s)).
          \]		
	\end{lemma}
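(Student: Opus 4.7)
The proof is a routine factorization exercise starting from the unfolded expression established in the preceding lemma, so I would keep it brief. The plan is to invoke
\[
Z(\varphi,\varphi^\vee,f(s))
=
\int_{G_1^1(\mathbb{A})}
f(s)((g,1))\,
\langle\pi(g)\varphi,\varphi^\vee\rangle\,dg
\]
and then show that each factor of the integrand, as well as the measure $dg$, decomposes as a restricted tensor product over places.

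First, by the assumed factorization of the Siegel section we have $f(s)((g,1))=\prod_v f_v(s)((g_v,1))$ pointwise in $g=(g_v)_v\in G_1^1(\mathbb{A})$. Next, because the global pairing $\langle\cdot,\cdot\rangle\colon\pi\otimes\pi^\vee\to\mathbb{C}$ is $G_1(\mathbb{A})$-invariant and, by Schur's lemma for restricted tensor products, decomposes uniquely as a product of local invariant pairings $\langle\cdot,\cdot\rangle_v$ normalized so that $\langle\varphi_{v,0},\varphi^\vee_{v,0}\rangle_v=1$ at the unramified places, the matrix coefficient factors as
\[
\langle\pi(g)\varphi,\varphi^\vee\rangle
=
\prod_v\langle\pi_v(g_v)\varphi_v,\varphi^\vee_v\rangle_v.
\]
The Tamagawa-type Haar measure $dg$ on $G_1^1(\mathbb{A})$ is by convention the product of the local Haar measures fixed in Remark \ref{Haar measures}. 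Combining these three factorizations formally converts the adelic integral into a product of local integrals, which are precisely the $Z_v(\varphi_v,\varphi_v^\vee,f_v(s))$ defined in (\ref{local zeta integral}).

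The one genuine point to check is that the formal product makes sense, i.e.\ that $Z_v(\varphi_v,\varphi_v^\vee,f_v(s))=1$ for almost all $v$ so that only finitely many local factors differ from $1$; equivalently, that the interchange of the adelic integration with the product over $v$ is legitimate. For this I would restrict to the half-plane where $\mathrm{Re}(s)$ is sufficiently large so that $Z(\varphi,\varphi^\vee,f(s))$ converges absolutely (this is the range in which the unfolding of the previous lemma is itself valid). At each place $v\notin S_\pi\cup\{p,\infty\}$, the data $\varphi_v,\varphi_v^\vee,f_v(s)$ are all $G_1(\mathbb{Z}_v)$-spherical and the integrand is bi-$G_1^1(\mathbb{Z}_v)$-invariant; a direct Satake-transform computation (or the classical unramified doubling calculation of Piatetski-Shapiro--Rallis, as recalled later in the paper) shows that $Z_v=1$ with our normalization. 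With absolute convergence at almost all places established, Fubini allows the interchange and yields the asserted Euler product. Finally, one extends the identity from $\mathrm{Re}(s)\gg0$ to all $s$ by meromorphic continuation of both sides.

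The main, and only real, obstacle is bookkeeping around the convergence and the normalization of the local pairings; once those are fixed consistently with the Haar-measure conventions of Remark \ref{Haar measures} and the choice $\langle\varphi_v,\varphi_v^\vee\rangle_v=1$ at unramified places, the argument is mechanical.
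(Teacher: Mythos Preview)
The paper gives no proof at all (it introduces the lemma with ``Then it is clear that''), so your outline is already more than what the paper supplies, and the strategy---starting from the unfolded integral (\ref{global zeta integral}), factoring the section, the matrix coefficient, and the Haar measure---is exactly the standard one.

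There is, however, one genuine error in your convergence argument. You claim that at unramified places $Z_v=1$, so that the product is effectively finite. This is false: with the normalizations fixed in the paper (spherical section taking value $1$ on the maximal compact, local pairing $\langle\varphi_v,\varphi_v^\vee\rangle_v=1$), the unramified local zeta integral equals the local standard $L$-factor $L_v(s+1/2,\mathrm{St}(\pi)\otimes\xi)$, as recorded later in (\ref{zeta integral unramified place}). So the Euler product is not a finite product but an honest infinite one. The fix is immediate: for $\mathrm{Re}(s)\gg0$ the product over $v\notin S_\pi$ is precisely the partial $L$-function $L^{S_\pi}(s+1/2,\mathrm{St}(\pi)\otimes\xi)$, whose absolute convergence in a right half-plane is known. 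That, together with absolute convergence of the finitely many remaining local integrals, justifies the Fubini step. Your parenthetical ``equivalently'' is also not quite right: having almost all $Z_v=1$ is strictly stronger than, not equivalent to, legitimacy of the interchange.
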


	\subsubsection{Fourier coefficients}
	Next we define the Fourier coefficients of the
	Eisenstein series:
	as above we fix a section
	$f(s)
	\in
	\mathrm{Ind}_{P_4^+(\mathbb{A})}^{G_4(\mathbb{A})}(\xi_s)$.
	
	\begin{definition}
		For each symmetric matrix
		$\beta\in\mathrm{Sym}_{4\times 4}(\mathbb{Q})$,
		the $\beta$-th Fourier coefficient
		of the Eisenstein series 
		$E(\cdot,f^d(s))
		=E(\cdot,f(s))$
		is defined as
		\begin{align*}
		E_\beta(g,f(s))
		&
		:=
		\int_{[\mathrm{Sym}_{4\times 4}]}
		E(
		u(x)g,
		f(s)
		)
		\mathbf{e}(-\mathrm{tr}\,\beta x)
		dx
		\\
		&
		=
		\int_{[\mathrm{Sym}_{4\times 4}]}
		E(
		u(x)g,
		f(s)
		)
		\mathbf{e}(-\mathrm{tr}\,
		u(\beta)^t
		u(x)
		)
		dx.
		\end{align*}
		
		If $f(s)=\otimes_v'f_v(s)$
		is factorizable,
		then the $\beta$-th local
		Fourier coefficient of
		$E(g,f(s))$ at $v$ is
		\begin{equation}
		E_{\beta,v}(g,f(s))
		=
		E_{\beta,v}(g_v,f_v(s))
		:=
		\int_{\mathrm{Sym}_{4\times 4}(\mathbb{Q}_v)}
		f_v(s)
		(J_8u(x)
		g_v
		)
		\mathbf{e}_v(-\mathrm{tr}\,beta x_v)
		d_vx_v.
		\end{equation}
	\end{definition}

	We call the coset
	$P^+_4J_8P^+_4=P_4^+J_8N^+_4$
	the big cell of $G_4$.
	Here $N_4^+$ is the unipotent radical of
	$P_4^+$.
	One can show
	\begin{lemma}
		Fix a section
		$f(s)\in\mathrm{Ind}_{P_4^+(\mathbb{A})}
		^{G_4(\mathbb{A})}$
		such that
		there is a finite place $v_0$ of $\mathbb{Q}$
		at which in the natural projection
		$G_4(\mathbb{A})
		\twoheadrightarrow
		G_4(\mathbb{Q}_{v_0})$,
		the image of the support
		of $f(s)$ in $G_4(\mathbb{A})$
		is contained in the big cell
		$P_4^+(\mathbb{Q}_{v_0})J_8P_4^+(\mathbb{Q}_{v_0})$.
		Then for any
		$g\in P_4^+(\mathbb{A})$,
		one has		
		\[
		E_\beta(g,f(s))
		=
		\int_{\mathrm{Sym}_{4\times 4}(\mathbb{A})}
		f
		(s)
		(J_8u(x)
		g
		)
		\mathbf{e}(-\mathrm{tr}\,\beta x)
		dx.
		\]
		
		If moreover
		$f(s)=\otimes_vf_v(s)$
		is factorizable,
		then
		\begin{equation}
		E_\beta(g,f(s))
		=
		\prod_vE_{\beta,v}(g_v,f_v(s)).
		\end{equation}
	\end{lemma}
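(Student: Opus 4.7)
The plan is a standard Bruhat unfolding. I would begin with the double coset decomposition
$G_4(\mathbb{Q}) = \bigsqcup_w P_4^+(\mathbb{Q}) w P_4^+(\mathbb{Q})$,
where $w$ runs over a set of Weyl representatives. The big cell corresponds to $w = J_8$, and
$P_4^+ \cap J_8^{-1} P_4^+ J_8 = M_4^+$, so the map $u(x) \mapsto J_8 u(x)$ gives a bijection
$N_4^+(\mathbb{Q}) \simeq P_4^+(\mathbb{Q})\backslash P_4^+(\mathbb{Q}) J_8 P_4^+(\mathbb{Q})$.
Thus the big cell contribution to $E(g,f(s))$ equals
$\sum_{x \in \mathrm{Sym}_{4\times 4}(\mathbb{Q})} f(s)(J_8 u(x) g)$.

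Next, I would use the support hypothesis to discard all other Bruhat cells. Since the Bruhat cells in $G_4(\mathbb{Q}_{v_0})$ are disjoint and the non-open cells are closed, for any $\gamma \in G_4(\mathbb{Q})$ lying in a non-big cell one has $\gamma_{v_0} g_{v_0} \notin P_4^+(\mathbb{Q}_{v_0}) J_8 P_4^+(\mathbb{Q}_{v_0})$, so $f(s)(\gamma g) = 0$ for every $g \in G_4(\mathbb{A})$ (or at least for $g \in P_4^+(\mathbb{A})$ as in the statement). Therefore, in the region of absolute convergence of the Eisenstein series one has
\[
E(g,f(s)) = \sum_{x \in \mathrm{Sym}_{4\times 4}(\mathbb{Q})} f(s)(J_8 u(x) g).
\]

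Plugging this into the definition of the Fourier coefficient and interchanging the sum with the integral over $[\mathrm{Sym}_{4\times 4}] = \mathrm{Sym}_{4\times 4}(\mathbb{Q}) \backslash \mathrm{Sym}_{4\times 4}(\mathbb{A})$, I would make the change of variables $x \mapsto x - y$ for each summand $y \in \mathrm{Sym}_{4\times 4}(\mathbb{Q})$. Since $\mathbf{e}$ is trivial on $\mathbb{Q}$, the character $\mathbf{e}(-\mathrm{tr}(\beta y))$ equals $1$, and the sum-plus-quotient integral collapses into an integral over $\mathrm{Sym}_{4\times 4}(\mathbb{A})$:
\[
E_\beta(g,f(s)) = \int_{\mathrm{Sym}_{4\times 4}(\mathbb{A})} f(s)(J_8 u(x) g)\,\mathbf{e}(-\mathrm{tr}(\beta x))\,dx.
\]
The factorizable statement then follows at once from the factorization $f(s) = \otimes'_v f_v(s)$ and the product decomposition of the self-dual Haar measure on $\mathrm{Sym}_{4\times 4}(\mathbb{A})$; the identification with $E_{\beta,v}(g_v, f_v(s))$ is just the definition.

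The only non-routine point is justifying the absence of negligible-orbit contributions, i.e.\ that for $g \in P_4^+(\mathbb{A})$ and $\gamma$ outside the big cell we indeed have $f(s)(\gamma u(x) g) \equiv 0$ in $x$; this is where the $v_0$-support hypothesis is crucial, and one should verify that the argument survives analytic continuation in $s$ from the region of absolute convergence (or, alternatively, that one chooses $\mathrm{Re}(s)$ large enough that all manipulations are legal and the equality of meromorphic functions then extends to all $s$). The rest is bookkeeping.
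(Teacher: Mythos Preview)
Your argument is correct and matches the paper's proof essentially line for line: both use the Bruhat decomposition, invoke the $v_0$-support hypothesis together with $g\in P_4^+(\mathbb{A})$ to kill the non-big-cell terms, obtain $E(g,f(s))=\sum_{n\in N_4^+(\mathbb{Q})}f(s)(J_8ng)$, and then unfold the Fourier integral over $[N_4^+]$ into one over $N_4^+(\mathbb{A})$. Your remarks about closedness of lower cells and analytic continuation are extra caution the paper omits, but they do no harm.
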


    \begin{proof}
    	The proof is essentially contained in
    	\cite[Section 18.9]{Shimura1997}.  
    	For any
    	$g=(g_v)_v\in P_4^+(\mathbb{A})$,
    	$f(s)(\gamma g)\neq0$ implies that
    	$\gamma g_{v_0}\in
    	P_4(\mathbb{Q}_{v_0})J_8P_4(\mathbb{Q}_{v_0})$,
    	thus
    	$\gamma\in
    	P_4^+(\mathbb{Q})J_8N_4^+(\mathbb{Q})$.
    	Therefore we have
    	\[
    	E(g,f(s))
    	=
    	\sum_{\gamma\in J_8N_4^+(\mathbb{Q})}
    	f(s)(\gamma g)
    	=
    	\sum_{n\in N_4^+(\mathbb{Q})}
    	f(s)(J_8ng).
    	\]
    	
    	From this, we get the $\beta$-th
    	Fourier coefficient of
    	$E(g,f(s))$
    	as
    	\begin{align*}
    	E_\beta(g,f(s))
    	&
    	=
    	\int_{[N_4^+]}
    	\sum_{n\in N_4(\mathbb{Q})}
    	f(s)(J_8nn'g)
    	\mathbf{e}(-\mathrm{tr}\,
    	u(\beta)^tn')dn'
    	\\
    	&
    	=
    	\int_{N_4^+(\mathbb{A})}
    	f(s)(J_8n'g)
    	\mathbf{e}(-\mathrm{tr}\,
    	u(\beta)^tn')
    	dn',
    	\end{align*}
    	which finishes the proof
    	of the first part.
    	The second part follows easily from the first part.
    \end{proof}

	\subsection{Rallis inner product formula}
	\subsubsection{Siegel-Weil formula}
	Now we can state one of the main ingredients of this article:
	the Siegel-Weil formula.
	We fix a section
	$\phi^+\in\mathcal{S}(W_4^-(\mathbb{A}))$,
	$\Theta_{\phi^+}(g,h)$ the theta series associated to
	$\phi^+$, $f_{\phi^+}
	\in\mathrm{Ind}_{P^+_4(\mathbb{A})}^{G_4(\mathbb{A})}(\xi_{1/2})$
	the Siegel-Weil section associated to $\phi^+$,
	$E(g,f_{\phi^+})$ the Eisenstein series defined by
	$f_{\phi^+}$.
	One can add a complex variable to
	$f_{\phi^+}$ as follows:
	\begin{equation}\label{add parameter s}
	f_{\phi^+}(s)(g)=\omega_{W_4^-}(g)\phi^+(0)
	|\mathbf{m}(g)|^{s-1/2}
	\end{equation}

	Then $f_{\phi^+}=f_{\phi^+}(1/2)$.
	Moreover one can verify that
	$f_{\phi^+}(s)\in
	\mathrm{Ind}_{P^+_4(\mathbb{A})}^{G_4(\mathbb{A})}
	(\xi_s)$.
	In the same manner one can define the 
	Eisenstein series
	$E(g,f_{\phi^+}(s))$
	associated to $f_{\phi^+}(s)$.
	We denote by
	$[\mathrm{GSO}(U)]_g$
	the subset of
	$[\mathrm{GSO}(U)]$
	consisting of elements
	$h$ such that
	$\nu(h)=\nu(g)$.
	One has
	(\cite[Main Theorem]{KudlaRallis88})
	\begin{theorem}\label{Siegel-Weil formula}
		Suppose that
		$\phi^+$ is $K$-finite where
		$K$ is a maximal compact subgroup
		of $G_4(\mathbb{A})$.
		The Eisenstein series $E(g,f_{\phi^+}(s))$
		is holomorphic at $s=1/2$.
		Moreover
		for any $g\in G_4^+(\mathbb{A})$,
		\[
		\int_{[\mathrm{GSO}(U)]_g}
		\Theta_{\phi^+}(g,h)dh
		=
		E(g,f_{\phi^+}).
		\]		
	\end{theorem}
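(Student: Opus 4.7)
The plan is to reduce the statement to the isometry dual pair $(\mathrm{Sp}(V_4),\mathrm{O}(U))$ and invoke the main theorem of \cite{KudlaRallis88}. Setting $n=4=\dim V_4/2$ and $m=\dim U=6$, one notes that the Kudla-Rallis parameter $s_0=(m-n-1)/2=1/2$ is exactly the value at which the identity is asserted, and the hypothesis $m>n+1$ (in fact $m=n+2$) places us within their regime.

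First I would verify the convergence of the left-hand side. By construction the symmetric form $\eta_U$ on $U$ is $\Q$-equivalent to $\eta_U'=\mathrm{diag}(2N,\ldots,2)$, which is positive definite at $\infty$; hence $\mathrm{O}(U)(\R)$ is compact and $[\mathrm{O}(U)]$ has finite volume. Since $[\mathrm{GSO}(U)]_g$ is (up to a finite component group) a torsor under $[\mathrm{O}(U)]$, the integral $\int_{[\mathrm{GSO}(U)]_g}\Theta_{\phi^+}(g,h)\,dh$ converges absolutely for any $K$-finite $\phi^+$. For the right-hand side, the key point is that $U$ is anisotropic over $\Q$ and satisfies $m>n+1$: under these hypotheses the Kudla-Rallis theory of the Schwartz induced representations shows that $E(g,f_{\phi^+}(s))$ is holomorphic at $s=s_0=1/2$, and moreover that for all $g\in\mathrm{Sp}(V_4)(\A)$ the classical Siegel-Weil identity $\int_{[\mathrm{O}(U)]}\Theta_{\phi^+}(g,h)\,dh=E(g,f_{\phi^+})$ holds.

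To upgrade this from isometries to similitudes I would proceed as follows. Given $g\in G_4^+(\A)$ with $\nu(g)=\lambda\in\nu(\mathrm{GSO}(U)(\A))$, choose $h_0\in\mathrm{GSO}(U)(\A)$ with $\nu(h_0)=\lambda^{-1}$. By construction of the similitude Weil representation as a representation compactly induced from the subgroup $R_0\subset R$, recalled in Section \ref{Weil representations} following \cite{Roberts96}, both sides of the identity transform compatibly under translation by $(1,h_0)$: one is reduced to evaluating at a point $(g_1,1)\in\mathrm{Sp}(V_4)(\A)\times\mathrm{O}(U)(\A)$ with $g_1=g\cdot\mathrm{diag}(\lambda^{-1},1)\cdot\iota_U(h_0)^{-1}$, where the isometry Siegel-Weil formula directly applies. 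A parallel manipulation on the Eisenstein side uses the compatibility of the extension of $f_{\phi^+}$ from $G_4^+(\A)$ to $G_4(\A)$ (spelled out just before \lemref{transfer of Siegel sections}) with the action of $(1,h_0)$ via the character $\xi_{1/2}$.

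The main obstacle is conceptual: we are outside Weil's original convergent range $m>2n+2=10$, so term-by-term convergence is unavailable and the identity depends on the fine structure of the archimedean and $\ell$-adic local components of $\phi^+$ entering the Siegel-Weil section. However, because $U$ is anisotropic everywhere (positive definite at $\infty$, and at the finite places $V_4\otimes U$ contains enough isotropic vectors for only the main orbit of the doubling setup, compare the orbit analysis after \eqref{expression for Weil representation}), the Eisenstein series has no pole at $s=1/2$ by the regularization arguments of Kudla-Rallis. Modulo these analytic inputs, which we take from \cite{KudlaRallis88}, the only genuinely new verification is that the induction procedure of Section \ref{Weil representations} intertwines exactly with the extension of the Siegel-Weil section, a compatibility which follows formally from the explicit formulas in \eqref{expression for Weil representation} and the relation in \lemref{transfer of Siegel sections}.
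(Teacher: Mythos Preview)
Your approach is essentially the same as the paper's: invoke Kudla--Rallis for the isometry pair $(\mathrm{Sp}(V_4),\mathrm{O}(U))$ (using that $U$ is anisotropic, being positive definite at $\infty$) to get holomorphicity at $s=1/2$ and the isometry identity, then reduce the similitude statement to the isometry one by translating in the orthogonal variable.

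One bookkeeping slip: your formula $g_1=g\cdot\mathrm{diag}(\lambda^{-1},1)\cdot\iota_U(h_0)^{-1}$ is ill-typed, since $\iota_U(h_0)\in\mathrm{Sp}(W_4)$ does not act on $G_4=\mathrm{GSp}(V_4)$. The paper's version of this step is cleaner: choose $h_g$ with $\nu(h_g)=\nu(g)$ (not $\nu(g)^{-1}$), set $g_1=g\cdot\mathrm{diag}(\nu(g)^{-1},1)\in\mathrm{Sp}(V_4)(\mathbb{A})$, and absorb the $h_g$-translation into the Schwartz function by defining $\phi^+_g:=\omega_{W_4^-}(1,h_g)\phi^+$. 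Then the definition of the similitude Weil representation gives $\int_{[\mathrm{SO}(U)]}\Theta_{\phi^+}(g,hh_g)\,dh=|\nu(g)|^{-6}\int_{[\mathrm{SO}(U)]}\Theta_{\phi^+_g}(g_1,h)\,dh$, and the isometry Siegel--Weil applied to $\phi^+_g$ at $g_1$, together with the observation $E(g_1,f_{\phi^+_g})=E(g_1,f_{\phi^+})$, finishes the argument. Your sketch contains all of these ingredients; only the packaging of the reduction needs to be fixed.
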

    \begin{proof}
    	In \cite{KudlaRallis88},
    	only the case of isometry groups
    	(i.e.,
    	$g\in G_4^1(\mathbb{A})$) is treated.
    	The holomorphicity of the Eisenstein series at
    	$s=1/2$ follows from the isometry group case.
    	The second part of the theorem also
    	follows from this special case,
    	as follows.
    	For any
    	$g\in G_4^+(\mathbb{A})$,
    	choose any
    	$h_g\in\mathrm{GSO}(U,\mathbb{A})$
    	such that
    	$\nu(h_g)=\nu(g)$.
    	We then define a new section
    	$\phi^+_g\in
    	\mathcal{S}(W_4^-(\mathbb{A}))$
    	by
    	$\phi^+_g(x)
    	:=
    	\omega_{W_4^-}(h_g)\phi^+(x)
    	=
    	\phi^+(h_g^{-1}x)$
    	for
    	$x\in W_4^-(\mathbb{A})$.    	
    	Recall that we write
    	$g_1:=g\,\mathrm{diag}(\nu(g)^{-1},1)$.
    	Then we see that
    	\begin{align*}
    	\int_{[\mathrm{SO}(U)]}
    	\Theta_{\phi^+}(g,hh_g)dh
    	&
    	=
    	\int_{[\mathrm{SO}(U)]}dh
    	\sum_{x\in W_4^-(\mathbb{Q})}
    	\omega_{W_4^-}(g,hh_g)\phi^+(x)
    	\\
    	&
    	=
    	|\nu(g)|^{-6}
    	\int_{[\mathrm{SO}(U)]}dh
    	\sum_{x\in W_4^-(\mathbb{Q})}
    	\omega_{W_4^-}(g_1,h)\phi^+_g(x)
    	\\
    	&
    	=
    	|\nu(g)|^{-6}
    	\int_{[\mathrm{SO}(U)]}
    	\Theta_{\phi^+_g}(g_1,h)dh.
    	\end{align*}
    	
    	This last integral is,
    	by Siegel-Weil formula for isometry groups,
    	equal to
    	$E(g_1,f_{\phi^+_g})$
    	since $g_1\in G_4^1(\mathbb{A})$.
    	Moreover, it is easy to see that
    	$E(g_1,f_{\phi^+_g})
    	=
    	E(g_1,f_{\phi^+})$.
    	Combining with the factor
    	$|\nu(g)|^{-6}$,
    	we see that the Siegel-Weil formula
    	holds for
    	$g\in G_4^+(\mathbb{A})$.
    \end{proof}

	\subsubsection{Doubling method}
	The definite reference for the doubling method is
	\cite{GelbartPSRallis87}.
	For the case of similitude groups, one can consult
	\cite{Li90,Harris93}.
	
	Let $f(s)
	=\otimes_vf(s)_v
	\in\mathrm{Ind}_{P^d(\mathbb{A})}^{G(\mathbb{A})}(s)$
	be a factorizable section and
	$E(g,f(s))$ be the Eisenstein series
	associated to $f(s)$.
	Let $\pi$ be a cuspidal
	automorphic representation of $G(\mathbb{A})$
	and
	$\varphi=\otimes_v\varphi_v$
	a factorizable element in $\pi$.
	Let $\pi^\vee$ be the contragredient of $\pi$ and
	$\varphi^\vee=\otimes_v\varphi^\vee_v$
	a factorizable element in $\pi^\vee$.
	Recall that  $S_\pi$ is a finite set of places 
	of $\mathbb{Q}$
	containing $\infty$ and the places where
	$\pi$ is ramified,
	or
	$f(s)_v$ is not a spherical section,
	or
	$\varphi_v$ or
	$\varphi_v^\vee$ is a not spherical vector.
	
	The doubling method says
	(\cite[Section 3]{Li90},\cite[Section 6.3]{Harris93})
	\begin{theorem}
		We have the following decomposition
		\[
		Z(\varphi,\varphi^\vee,f(s))
		=
		\prod_{v\in S_\pi}
		Z_v(\varphi_v,\varphi^\vee_v,f_v(s))
		\times
		L^{S_\pi}(s+1/2,\mathrm{St}(\pi)\otimes\xi)
		\times
		\langle
		\varphi,\varphi^\vee
		\rangle.
		\]
		Here $L^{S_\pi}(s,\mathrm{St}(\pi)\otimes\xi)$
		is the partial standard $L$-function
		of $\pi$ twisted by the character $\xi$.
	\end{theorem}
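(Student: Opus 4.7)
The plan is to combine the integral unfolding already established in (\ref{global zeta integral}) with the factorization of the local pairings and with the classical unramified computation of the spherical doubling integral. First, I would invoke (\ref{global zeta integral}) to write
\[
Z(\varphi,\varphi^\vee,f(s)) \;=\; \int_{G_1^1(\mathbb{A})} f(s)((g,1))\,\langle \pi(g)\varphi,\varphi^\vee\rangle\,dg.
\]
Because $f(s)=\otimes'_v f_v(s)$ and the global pairing factors as $\langle\cdot,\cdot\rangle = \prod_v\langle\cdot,\cdot\rangle_v$ with the normalization $\langle\varphi_{v,0},\varphi_{v,0}^\vee\rangle_v = 1$ for $v\notin S_\pi$, Fubini applied to the above integral yields
\[
Z(\varphi,\varphi^\vee,f(s)) \;=\; \prod_v Z_v(\varphi_v,\varphi_v^\vee,f_v(s)).
\]

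Next I would separate the places in $S_\pi$ from those outside. At $v\in S_\pi$, the local zeta integrals are simply kept as in the statement of the theorem. At an unramified place $v\notin S_\pi$, the representation $\pi_v$, the Siegel section $f_v(s)$, the vectors $\varphi_v=\varphi_{v,0}$ and $\varphi_v^\vee=\varphi_{v,0}^\vee$, and the quadratic character $\xi_v$ are all spherical, and the task becomes the classical spherical doubling integral computation
\[
Z_v(\varphi_{v,0},\varphi_{v,0}^\vee,f_v(s)) \;=\; L_v\!\bigl(s+1/2,\mathrm{St}(\pi_v)\otimes\xi_v\bigr).
\]
This is carried out by substituting the Iwasawa decomposition $G_1^1(\mathbb{Q}_v)=B_{G_1^1}(\mathbb{Q}_v)K_v$, exploiting $K_v$-invariance of $f_v(s)$ and of $\varphi_{v,0}$, expressing the resulting matrix coefficient in terms of the Satake parameter of $\pi_v$, and summing a geometric series whose Euler factors are those of the degree $5$ standard $L$-function twisted by $\xi_v$. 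The twist by $\xi$ on the $L$-side comes from the factor $\xi(\det A)$ in the inducing character $\xi_s$ of $P^d$. This computation is due to Piatetski-Shapiro--Rallis \cite{GelbartPSRallis87} in the isometry case; its extension to the similitude setup with a twisted Siegel section follows the arguments in \cite{Li90,Harris93}. Taking the product over all $v$ and collecting the spherical factors outside of $S_\pi$ into $L^{S_\pi}(s+1/2,\mathrm{St}(\pi)\otimes\xi)$ and the ramified factors inside $S_\pi$ into $\prod_{v\in S_\pi} Z_v(\varphi_v,\varphi_v^\vee,f_v(s))$, and noting that the leftover normalization $\prod_v\langle\varphi_v,\varphi_v^\vee\rangle_v = \langle\varphi,\varphi^\vee\rangle$ lands as the final factor, gives the claimed identity.

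The main obstacle is the unramified doubling integral computation itself: one must track carefully the Satake parameters of $\pi_v$ through the standard embedding $\rho_{\mathrm{st}}\colon\mathrm{GSp}_4\to\mathrm{SO}_5$, verify that the geometric series which arises from the Cartan decomposition sums to exactly the degree $5$ local $L$-factor $L_v(s+1/2,\mathrm{St}(\pi_v)\otimes\xi_v)$, and confirm that the normalization of $f_v(s)$ induced by $\xi_s = \xi(\det A)\,\delta_{P^d}^{s/5}$, together with the measure conventions of Remark \ref{Haar measures}, produces the correct Euler factor with no spurious residual terms. This normalization matching is a known subtlety of the doubling method for $(\mathrm{GSp}_{2n},\mathrm{GSp}_{2n})$; once it is verified, the rest of the argument is a straightforward global-to-local bookkeeping.
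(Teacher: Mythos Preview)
Your outline is correct and matches the standard argument, which is exactly what the paper invokes: the paper itself gives no proof here and simply cites \cite[Section 3]{Li90} and \cite[Section 6.3]{Harris93}. Your unfolding via (\ref{global zeta integral}), the Euler product factorization, and the appeal to the Piatetski--Shapiro--Rallis unramified computation are precisely the content of those references, and your identification of the twist by $\xi$ with the factor $\xi(\det A)$ in the inducing character $\xi_s$ is the right explanation.

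One small bookkeeping point: the global pairing $\langle\varphi,\varphi^\vee\rangle$ does \emph{not} arise as $\prod_v\langle\varphi_v,\varphi_v^\vee\rangle_v$, since the paper normalizes the local pairings so that $\langle\varphi_v,\varphi_v^\vee\rangle_v=1$ for every $v$. Rather, it appears because the global matrix coefficient $g\mapsto\langle\pi(g)\varphi,\varphi^\vee\rangle$ in the unfolded integral factors as
\[
\langle\pi(g)\varphi,\varphi^\vee\rangle
\;=\;
\langle\varphi,\varphi^\vee\rangle
\cdot
\prod_v\frac{\langle\pi_v(g_v)\varphi_v,\varphi_v^\vee\rangle_v}{\langle\varphi_v,\varphi_v^\vee\rangle_v},
\]
and the constant $\langle\varphi,\varphi^\vee\rangle$ is pulled out of the integral before the Euler factorization. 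With this correction, your argument goes through as written.
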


	\subsubsection{Rallis inner product formula}
	The combination of the Siegel-Weil formula
	and the doubling method gives the Rallis
	inner product formula,
	which relates
	Petersson inner product of an
	automorphic form to
	special $L$-value of the automorphic form.
	
	Let $\phi_1=\otimes_v\phi_{1,v}\in
	\mathcal{S}(W_1^-(\mathbb{A}))$
	and
	$\phi_2=\otimes_v\phi_{2,v}\in
	\mathcal{S}(W_2^-(\mathbb{A}))$
	be two factorizable sections.
	As above, we write
	$\phi^+=\phi_1\otimes\phi_2
	\in\mathcal{S}(W_4^-(\mathbb{A}))$
	for their tensor product,
	$f_{\phi^+}\in
	\mathrm{Ind}_{P^+(\mathbb{A})}^{G_4(\mathbb{A})}
	(\xi_{1/2})$,
	$\phi^d\in\mathcal{S}(W_{4,d}(\mathbb{A}))$,
	$f_{\phi^d}\in
	\mathrm{Ind}_{P^d(\mathbb{A})}^{G_4(\mathbb{A})}
	(\xi_{1/2})$
	as above.
	Let $\pi$ be an anti-holomorphic
	cuspidal automorphic representation of
	$G_4(\mathbb{A})$
	and
	$\varphi=\otimes_v\varphi_v\in\pi$
	a factorizable element.
	Similarly,
	$\varphi^\vee=\otimes_v\varphi_v^\vee\in\pi^\vee$.
	Then the Rallis inner product formula states
	
	\begin{theorem}
		Define the $\mathbb{C}$-bilinear
		inner product
		\[
		\langle
		\Theta_{\phi_1}(\varphi),
		\Theta_{\phi_2}(\varphi^\vee)
		\rangle
		=
		\int_{[\mathrm{O}(U)]}
		\Theta_{\phi_1}(\varphi)(h)
		\Theta_{\phi_2}(\varphi^\vee)(h)
		dh,
		\]
		then one has
		\[
		\langle
		\Theta_{\phi_1}(\varphi),
		\Theta_{\phi_2}(\varphi^\vee)
		\rangle
		=
		\prod_{v\in S_\pi}
		Z_v(\varphi_v,\varphi^\vee_v,f_v(s))
		\times
		L^{S_\pi}(s+1/2,\mathrm{St}(\pi)\otimes\xi)
		\times
		\langle
		\varphi,\varphi^\vee
		\rangle.
		\]
	\end{theorem}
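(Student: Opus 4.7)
The plan is to combine the Siegel-Weil formula (Theorem on Siegel-Weil above) with the doubling method (the preceding theorem) through a seesaw-style unfolding. The first step is to unfold the Petersson pairing on $[\mathrm{O}(U)]$:
\[
\langle \Theta_{\phi_1}(\varphi), \Theta_{\phi_2}(\varphi^\vee)\rangle
= \int_{[\mathrm{O}(U)]} \Bigl(\int_{[G_1^+]_h}\varphi(g_1)\Theta_{\phi_1}(g_1,h)\,dg_1\Bigr)
\Bigl(\int_{[G_2^+]_h}\varphi^\vee(g_2)\Theta_{\phi_2}(g_2,h)\,dg_2\Bigr) dh.
\]
The next step uses the basic identity
$\Theta_{\phi_1}(g_1,h)\,\Theta_{\phi_2}(g_2,h) = \Theta_{\phi^+}((g_1,g_2),h)$
for $(g_1,g_2)\in G_3(\mathbb{A})$ with $\nu(g_1)=\nu(g_2)=\nu(h)$, which is immediate from $\phi^+=\phi_1\otimes\phi_2$ and the compatibility of the Weil representations along the inclusion $G_1^+\times G_2^+ \hookrightarrow G_4^+$. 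Thus the triple integral collapses to
\[
\int_{Z_3(\mathbb{A})G_3(\mathbb{Q})\backslash G_3(\mathbb{A})} \varphi(g_1)\varphi^\vee(g_2)
\Bigl(\int_{[\mathrm{O}(U)]_{(g_1,g_2)}} \Theta_{\phi^+}((g_1,g_2),h)\,dh\Bigr) dg_1\,dg_2,
\]
where Fubini is justified by cuspidality of $\varphi,\varphi^\vee$ (so the outer integration is over a compact-modulo-center quotient and the inner theta integral converges since $\mathrm{O}(U)$ is anisotropic at $\infty$).

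Applying the Siegel-Weil formula, the inner integral equals $E((g_1,g_2),f_{\phi^+})$. Using the corollary following Lemma \ref{transfer of Siegel sections}, we may identify $E(g,f_{\phi^+})=E(g,f_{\phi^d})$, which places us in the setting of the doubling zeta integral with a Siegel section induced from $P^d$. Inserting the $s$-parameter via (\ref{add parameter s}) and then specializing to $s=1/2$, the above quantity becomes
\[
Z(\varphi,\varphi^\vee,f_{\phi^+}(s))\Big|_{s=1/2}.
\]
Now I invoke the doubling method theorem, which factors this global zeta integral as
$\prod_{v\in S_\pi}Z_v(\varphi_v,\varphi_v^\vee,f_v(s)) \cdot L^{S_\pi}(s+1/2,\mathrm{St}(\pi)\otimes\xi)\cdot\langle\varphi,\varphi^\vee\rangle$,
yielding the claimed identity.

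The main obstacle will be the careful bookkeeping required in the similitude setting. Unlike the isometry case treated in \cite{GelbartPSRallis87}, the integration domains $[G_i^+]_h$ depend on the similitude factor of $h$, so the unfolding must respect the fiber product structure imposed by the condition $\nu(g_1)=\nu(g_2)=\nu(h)$; one must verify that the identification $G_3 \simeq G_1^d G_1^1$ used in the proof of the zeta integral unfolding lemma carries through compatibly with the theta integral on $[\mathrm{O}(U)]_g$. A secondary technical point is ensuring that the extension of $f_{\phi^+}$ from $G^+(\mathbb{A})$ to all of $G(\mathbb{A})$ (by zero off the appropriate coset) produces the same Eisenstein series that appears in Siegel-Weil for the similitude group, and that the Haar measure normalizations in Remark \ref{Haar measures} are consistent on both sides of the factorization; the normalization constants from the measure on $U(\mathbb{A})$ should be absorbed into the local factors at $S_\pi$ rather than affecting the partial $L$-function.
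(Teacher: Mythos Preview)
Your proposal is correct and follows exactly the approach the paper has in mind: the paper does not give a detailed proof of this theorem but simply states that ``the combination of the Siegel-Weil formula and the doubling method gives the Rallis inner product formula,'' and your argument spells out precisely that standard seesaw computation. Your care with the similitude bookkeeping and the passage between $f_{\phi^+}$ and $f_{\phi^d}$ via the corollary to Lemma~\ref{transfer of Siegel sections} is more than the paper itself provides.
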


	\section{Choice of local sections}
	\label{choice of local sections}
	In this section, we will choose the local sections in
	the factorizations
	$\phi_i=\otimes_v'\phi_{i,v}
	\in
	\mathcal{S}(W_i^-(\mathbb{A}))$
	for 
	$i=1,2$.
	Let $\underline{\kappa}
	\in
	\mathrm{Hom}_\mathrm{cont}(
	T^1_2(\mathbb{Z}_p),\overline{\mathbb{Q}}_p^\times)$
	be an arithmetic point, i.e.,
	$\underline{\kappa}$ is a product of an algebraic character
	$\underline{\kappa}_\mathrm{alg}
	=\underline{k}\in\mathbb{Z}^2$
	and a finite order character
	$\underline{\kappa}_\mathrm{f}
	=
	(\kappa_1,\kappa_2)$.
	We say that $\underline{\kappa}$ is admissible
	if $k_1\geq k_2\geq3$.
	Recall that we have four PEL moduli problems
	$P_i$ ($i=1,\cdots,4$)
	and to each problem
	we have defined similitude symplectic groups
	$G=G_1$, $G_2$,
	$G_3$ and $G_4$
	and their isometry subgroups
	$G_i^1$ ($i=1,\cdots,4$).

	\subsection{Archimedean place}
	First consider the archimedean place $v=\infty$.

	\subsubsection{Choice of sections}	
	We write
	$K_{G_i,\infty}$
	the maximal compact subgroup of
	$G_i(\mathbb{R})$.
	Note that for an irreducible
	cuspidal automorphic representation
	$\pi$ of
	$G(\mathbb{A})$
	whose archimedean component
	$\pi_\infty$ is a holomorphic discrete series
	of lowest $K_{G,\infty}$-type of highest
	admissible weight
	$\underline{k}$,
	the standard $L$-function
	$L(s,\mathrm{St}(\pi)\otimes\xi)$ is critical at $s=1$
	since $\xi(-1)=-1$
	(\cite[Appendix]{BoechererSchmidt2000}).

	We define an automorphy factor
	for any
	$g=
	\begin{pmatrix}
	A & B \\
	C & D
	\end{pmatrix}\in G_4(\mathbb{R})$
	as
	follows:
	recall that
	$\mu(g,i)=Ci+D$.
	We set
	$j(g,i)=
	\mathrm{det}(\mu(g,i))$,
	and
	$\widehat{j}(g,i)
	=
	j(g,i)\nu(g)^{-2}$.
	Using this, we then define 
	a function on
	$G_4(\mathbb{R})$ as follows:
	\[
	f_\infty^+(\xi_{1/2})(g)
	:=
	\widehat{j}(g,i)^{-3}.
	\]
	It is easy to show that
	$f^+_\infty(\xi_{1/2})
	\in
	\mathrm{Ind}_{P^+(\mathbb{R})}^{G_4(\mathbb{R})}(\xi_{1/2})$
	and moreover
	$f_\infty^+(\xi_{1/2})(\lambda\cdot1_8)=1$
	for any $\lambda\in\mathbb{R}^\times$.
	We consider a section
	$\phi_\infty^+\in\mathcal{S}(W_4^-(\mathbb{R}))$
	defined as follows:
	for any 
	$(w_1^-,w_2^-)\in 
	W_1^-(\mathbb{R})\times
	W_2^-(\mathbb{R})=
	W_4^-(\mathbb{R})$,
	\[
	\phi_\infty^+((w_1^-,w_2^-))
	=
	\mathrm{det}(\eta_U)^{\mathrm{dim}\,V_4^-}
	\mathbf{e}_\infty
	\Big(
	i\langle (w_1^-,w_2^-),
	\widetilde{J}_8((w_1^-,w_2^-))\rangle_{W_4}/2
	\Big)
	\]
	where
	$\widetilde{J}_8((e_i^\pm,0))=\mp(e_i^\mp,0)$
	and
	$\widetilde{J}_8((0,e_i^\pm))=\mp(0,e_i^\mp)$
	for $i=1,2$.
	Under the basis $\mathcal{B}_{W_4}
	=\mathcal{B}_{V_4}\times\mathcal{B}_{U}$
	of $W_4$,
	$\widetilde{J}_8$ is of the form
	$J_8\otimes1_6$.
	Let
	$f_{\phi^+_\infty}\in
	\mathrm{Ind}_{P^+(\mathbb{R})}^{G_4^1(\mathbb{R})}(\xi_{1/2})$
	be the Siegel section
	associate to $\phi_\infty^+$
	via Weil representation.
	We have the following formula	
	\begin{lemma}\label{calculation of scalar section}
		For any
		$g=\begin{pmatrix}
		A & B \\
		C & D
		\end{pmatrix}\in
		G_4^1(\mathbb{R})$,
		the section $f_{\phi^+_\infty}$ satisfies
		$f_{\phi^+_\infty}(g)
		=\mathrm{det}(Ci+D)^{-3}$.
	\end{lemma}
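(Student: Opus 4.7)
The plan is to verify the identity by means of the Iwasawa decomposition $G_4^1(\mathbb{R}) = P^+(\mathbb{R}) \cdot K_{G_4,\infty}^1$, reducing the claim to two checks: both sides transform identically under left multiplication by $P^+(\mathbb{R})$, and they agree on the maximal compact $K_{G_4,\infty}^1$.

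First I verify the transformation law on $P^+(\mathbb{R})$. Both $f_{\phi^+_\infty}$ and $g \mapsto \det(Ci+D)^{-3}$ are sections of $\mathrm{Ind}_{P^+(\mathbb{R})}^{G_4^1(\mathbb{R})}(\xi_{1/2})$: the former by construction of the Siegel section from a Schwartz function, and the latter by the cocycle property $\mu(pg,i) = \mu(p,g\cdot i)\mu(g,i)$ combined with Remark 4.19 which gives $\xi_\infty(\det A)|\det A|_\infty^3 = (\det A)^3$. Concretely, for $p = m(A)u(B) \in P^+(\mathbb{R})$, formulas (4.19)--(4.20) yield $\omega_{W_4^-}(p)\phi^+_\infty(0) = (\det A)^3\,\phi^+_\infty(0)$, while $\mu(p,i) = A^{-t}$ so that $\det(\mu(p,i))^{-3} = (\det A)^3$. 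The two sides therefore carry the same $P^+(\mathbb{R})$-character and it remains to compare values on $K_{G_4,\infty}^1$.

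The main step is to show that on $K_{G_4,\infty}^1$ the Siegel section agrees with $\det(Ci+D)^{-3}$. Writing $k = \begin{pmatrix} A & B \\ -B & A \end{pmatrix}$ with $A+iB \in U(4)$, one has $\mu(k,i) = A - iB = \overline{A+iB}$, a unit complex number. The identity reduces to the eigenvector relation
\[
\omega_{W_4^-}(k)\phi^+_\infty = \det(\mu(k,i))^{-3}\,\phi^+_\infty, \qquad k \in K_{G_4,\infty}^1.
\]
The function $\phi^+_\infty$ is, up to the normalizing constant $\det(\eta_U)^{\dim V_4^-}$, the Gaussian attached to the positive definite symmetric form $-\langle \cdot, \widetilde{J}_8 \cdot \rangle_{W_4}$ on $W_4^-(\mathbb{R})$, which encodes the complex structure $h(i)$ from the PEL datum. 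It is a standard fact about the Weil representation of the reductive dual pair $(\mathrm{Sp}(V_4), \mathrm{O}(U))$ with $\mathrm{O}(U)$ compact at infinity that such a Gaussian is a lowest-weight vector for $K_{G_4,\infty}^1$, with weight character $k \mapsto \det(\mu(k,i))^{-\dim U/2} = \det(\mu(k,i))^{-3}$. I would verify this by applying the Cayley transform to write generators of $\mathfrak{g}_{G_4,\mathbb{C}}^{-}$ in terms of the basis $\mu_{i,j}^\pm$ of $\mathfrak{g}_{G_4,\mathbb{C}}$, show by direct Gaussian differentiation that they annihilate $\phi^+_\infty$, and then pin down the weight character by evaluating on the rotation $k_\theta$ via the mixed-model integral formula (4.18), which for a Gaussian integrand reduces to a complex Gaussian that can be computed exactly.

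Combining both checks via Iwasawa then gives the identity on all of $G_4^1(\mathbb{R})$; the explicit value of the normalizing constant $\det(\eta_U)^{\dim V_4^-}$ in the definition of $\phi^+_\infty$ is precisely what is required by the relation between the self-dual measure on $W_4^-(\mathbb{R})$ for the pairing $\mathbf{e}_\infty \circ \langle \cdot,\cdot\rangle_{W_4}$ and the Lebesgue-normalized measures fixed in Remark 4.11, and ensures $f_{\phi^+_\infty}(1_8) = 1$. The main obstacle is the $K_{G_4,\infty}^1$-eigencharacter computation in the second step: although classical, keeping correct track of the Weil index $\gamma^{\mathrm{Weil}}$, of the Cayley transform between compact and noncompact generators of $\mathfrak{sp}(V_4)(\mathbb{R})$, and of the precise dependence on $\dim U = 6$ in the Gaussian integral, requires careful bookkeeping.
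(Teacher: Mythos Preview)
Your approach is correct but proceeds differently from the paper. The paper uses the Bruhat decomposition rather than Iwasawa: it restricts to the dense open big cell where $\det C\neq 0$, writes $g=m(A')u(B')J_8u(D')$, peels off the $P^+$-part by the same transformation law you identify, and is left with computing $\omega_{W_4^-}(J_8u(D'))\phi^+_\infty(0)$. That value is the integral of a complex Gaussian on $W_4^-(\mathbb{R})\cong\mathrm{M}_{4\times6}(\mathbb{R})$, which factors column by column and yields $\det(i+D')^{-3}$ directly; the normalizing constant $\det(\eta_U)^{\dim V_4^-}$ is exactly what cancels the Jacobian from the measure convention, as you also observe. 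Your Iwasawa route trades this single Gaussian integral for the representation-theoretic fact that the Gaussian is a $K_{G_4,\infty}^1$-eigenvector of weight $\det(\mu(k,i))^{-\dim U/2}$. Both are valid; the paper's computation is shorter and avoids the Cayley-transform bookkeeping you flag as the main obstacle, while your approach makes the lowest-$K$-type structure explicit and would generalize more transparently to other scalar weights.
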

	
	\begin{proof}
		It suffices to prove the lemma for the open dense subset
		of $G_4^1(\mathbb{R})$
		consisting of matrices
		$g$
		with $\mathrm{det}(C)\neq0$
		(using the Bruhat decomposition for
		$G_4^1(\mathbb{R})$).
		Moreover, each such matrix $g$ 
 		is of the form
 		$m(A')u(B')J_8u(_D')$.
		First, for any 
		$p=m(A')u(B')\in P^+(\mathbb{R})$,
		$g\in G_4^1(\mathbb{R})$,
		by Remark \ref{archimedean Weil representation},
		one has the following
		$f_{\phi^+_\infty}(pg)
		=\omega_{W_4^+}(p)\omega_{W_4^+}(g)\phi_\infty^+(0)
		=\mathrm{det}(A')^3f_{\phi^+_\infty}(g)$.
		Thus it suffices to deal with matrices of the form
		$g=g_0g_2$
		where
		$g_2=u(-D')$.
		Note that
		$\omega_{W_4^+}(g_2)\phi^+_\infty(w)
		=\mathbf{e}_\infty(\langle w,g_2^{-1}w\rangle_W/2)
		\phi_\infty^+(w)$
		for any
		$w\in W_4^-(\mathbb{R})$.
		Thus we get
		(note Remark \ref{Haar measures})
		\begin{align*}
		\omega_{W_4^+}(g)\phi^+_\infty(0)
		&
		=
		\int_{W_4^-(\mathbb{R})}
		\mathbf{e}_\infty(\langle w,g_2^{-1}w\rangle_W/2)
		\phi_\infty^+(w)d\mu_{W_4}
		\\
		&
		=
		\mathrm{det}(\eta_U)^{\mathrm{dim}\,V_4^-}
		\int\mathrm{exp}
		(2i\pi
		(\langle w,g_2^{-1}w\rangle/2+
		i\langle w,g_0w\rangle/2)
		)
		dw.
		\end{align*}
		Now view elements $w$ as matrices
		in $\mathrm{M}_{4\times 6}
		(\mathbb{R})$,
		then
		$\langle w,g_2^{-1}w\rangle+i\langle w,g_0w\rangle
		=\mathrm{tr}(w^t(D'+i)w\eta_U)$.
		Write $w$ in column vectors
		$w=\begin{pmatrix}
		w_1 &
		w_2 &
		w_3 &
		w_4 &
		w_5 &
		w_6
		\end{pmatrix}$, then we have
		$\mathrm{tr}(w^t(i+D')w\eta_U)=
		\sum_{j=1}^6(\eta_U)_{j,j}w_j^t(i+D')w_j$.
		Now a simple calculation of Gaussian type integrals
		gives
		\[
		\omega_{W_4^+}(g)\phi^d_\infty(0)
		=
		\mathrm{det}(\eta_U)^{\mathrm{dim}\,V_4^-}
		\cdot
		\mathrm{det}(\eta_U)^{-\mathrm{dim}\,V_4^-}
		\mathrm{det}(i+D')^{-3}
		=\mathrm{det}(i+D')^{-3}
		\]
		which  concludes the proof.
	\end{proof}

    \begin{corollary}\label{scalar weight archimedean section}
    	We have the identity
    	\[
    	f_{\phi_\infty^+}=f_\infty^+.
    	\]
    \end{corollary}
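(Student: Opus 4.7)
The plan is to deduce the corollary formally from Lemma \ref{calculation of scalar section}, using the fact that both sides lie in the same induced representation together with a simple coset decomposition of $G_4(\mathbb{R})$.

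First, I would record that both $f_\infty^+$ and $f_{\phi_\infty^+}$ belong to $\mathrm{Ind}_{P^+(\mathbb{R})}^{G_4(\mathbb{R})}(\xi_{1/2})$: this is by construction for $f_{\phi_\infty^+}$ (via the extension procedure introduced after Lemma \ref{transfer of Siegel sections}) and was already noted for $f_\infty^+$ after its definition. In particular, both functions transform identically under left translation by $P^+(\mathbb{R})$, so it suffices to prove their equality on a set of representatives for $P^+(\mathbb{R})\backslash G_4(\mathbb{R})$.

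Next I would observe that $G_4(\mathbb{R}) = P^+(\mathbb{R})\cdot G_4^1(\mathbb{R})$. Indeed, for any $g\in G_4(\mathbb{R})$ the element $z_{\nu(g)}:=\mathrm{diag}(1_4,\nu(g)\cdot 1_4)$ has similitude factor $\nu(g)$ and lies in $P^+(\mathbb{R})$ (it preserves $V_4^+$), so $z_{\nu(g)}^{-1}g\in G_4^1(\mathbb{R})$. Hence it is enough to show $f_\infty^+(g)=f_{\phi_\infty^+}(g)$ for $g\in G_4^1(\mathbb{R})$.

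For $g\in G_4^1(\mathbb{R})$ one has $\nu(g)=1$, so $\widehat{j}(g,i)=j(g,i)=\det(Ci+D)$, and the defining formula yields $f_\infty^+(g)=\det(Ci+D)^{-3}$. Lemma \ref{calculation of scalar section} gives exactly $f_{\phi_\infty^+}(g)=\det(Ci+D)^{-3}$ on $G_4^1(\mathbb{R})$. The two sections therefore coincide on $G_4^1(\mathbb{R})$, and by the $P^+(\mathbb{R})$-equivariance they coincide on all of $G_4(\mathbb{R})$.

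The only point requiring some care — and the single nontrivial step beyond invoking Lemma \ref{calculation of scalar section} — is verifying that $f_\infty^+$ really does transform as an element of $\mathrm{Ind}(\xi_{1/2})$, i.e.\ that $\widehat{j}(pg,i)^{-3}=\delta_{P^+}(p)^{1/2}\xi_{1/2}(p)\,\widehat{j}(g,i)^{-3}$ for $p=\mathrm{diag}(1,\nu)m(A)u(B)\in P^+(\mathbb{R})$. This reduces to the cocycle relation $\mu(pg,i)=\mu(p,i)\mu(g,i)$ plus a direct computation on the generators $m(A), u(B)$ and $\mathrm{diag}(1_4,\nu\cdot 1_4)$; the sign coming from $\xi(-1)=-1$ at infinity precisely matches the sign of $\det(A)^3$ because $\mathrm{dim}\,V_4^+=4$ is even while the weight $3$ is odd — this is the only small bookkeeping one has to do. Once this is in hand, the corollary follows at once.
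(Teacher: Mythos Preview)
Your proof is correct and is precisely the argument the paper leaves implicit: the corollary is stated without proof immediately after Lemma~\ref{calculation of scalar section}, and your deduction via $G_4(\mathbb{R})=P^+(\mathbb{R})\cdot G_4^1(\mathbb{R})$ together with the $P^+$-equivariance of both sections is the intended one. The bookkeeping you flag about $\xi_\infty(\det A)\,|\det A|^3=\det(A)^3$ is exactly what the paper records in Remark~\ref{archimedean Weil representation}, so there is nothing further to verify.
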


	We write
	$\mathcal{D}_{\underline{k}}$ for
	the holomorphic discrete series
	$(\mathfrak{g}_{G^1},
	K_{G,\infty})$-module
	whose lowest
	$K_{G,\infty}$-type is of highest weight
	$\underline{k}$.
	We then write
	$\mathcal{D}_{\underline{k}}(\underline{k})$
	to be the lowest $K_{G,\infty}$-type in
	$\mathcal{D}_{\underline{k}}$.
	We denote by
	$\mathcal{D}_{\underline{k}}^\vee$ the 
	contragredient of
	$\mathcal{D}_{\underline{k}}$
	and
	$\mathcal{D}_{\underline{k}}^\vee(-\underline{k})$
	its highest $K_{G,\infty}$-type.
	Recall in Notation\ref{notations}(6),
	we fix a basis
	$\widehat{\mu}_{i,j}^+$
	($1\leq i\leq j\leq4$)
	of $\mathfrak{g}_{G_4^1}^+$.
	We set 
	$\widehat{\mu}_{i,j}^+:=\widehat{\mu}_{i,j}^+$
	for $i>j$
	and denote by
	$(\widehat{\mu}_{i,j}^+)_{i,j}$
	the $4\times4$-matrix
	and in the form of $2\times2$-blocks by
	$\begin{pmatrix}
	\widehat{\mu}_1^+ & \widehat{\mu}_0^+ \\
	(\widehat{\mu}_0^+)^t & \widehat{\mu}_2^+
	\end{pmatrix}$.
	Write then
	$\mathfrak{p}_i^{+,1}$
	for the subalgebra of
	$\mathfrak{g}_{G_4^1}^+$
	generated by the elements in
	$\widehat{\mu}_i^+$
	for $i=0,1,2$.
	Denote by $U(\mathfrak{g}_{G_4^1})
	\cdot f_{\phi_\infty^+}$
	the sub-$(\mathfrak{g}_{G_4^1},
	K_{G_4^1,\infty})$-module
	of the principle discrete series
	$I_{P_{V^d}(\mathbb{R})}
	^{G_4^1(\mathbb{R})}(1/2,1)$
	generated by $f_{\phi_\infty^+}$.
	For an $n\times n$-matrix
	$M$, we write
	$\mathrm{det}_l(M)$
	for the determinant of the
	$l\times l$-minor of $M$
	(upper-left $l\times l$-block).
	Given a dominant weight
	$\underline{k}\in\mathbb{Z}^3$,
	we define
	the following differential operator
	\[
	D^{\underline{k}}
	:=
	\mathrm{det}_1(\frac{1}{4i\pi}
	\widehat{\mu}_0^+)^{k_1-k_2}
	\mathrm{det}_2(\frac{1}{4i\pi}
	\widehat{\mu}_0^+)^{k_2-3}
	\]
	and then we put
	\[
	f_{\underline{k},\infty}^+
	:=D^{\underline{k}}
	f_\infty^+.
	\]

	We then define sections in
	$\mathcal{S}(W_i^-(\mathbb{R}))$ as follows:
	\begin{equation}
	\phi_{1,\infty}(w_1^-)
	=
	\mathbf{e}_\infty
	(
	i
	\langle
	w_1^-,g_0w_1^-
	\rangle
	),
	\quad
	\phi_{2,\infty}(w_2^-)
	=
	\mathbf{e}_\infty
	(
	i
	\langle
	w_2^-,g_0w_2^-
	\rangle
	)
	\end{equation}
	\begin{equation}
	\phi_{\underline{k},\infty}^+(w_4^-)
	:=
	D^{\underline{k}}
	\phi_\infty^+(w_4^-).
	\end{equation}	
	Here the action of the Lie algebra
	$\mathfrak{g}_{G_4^1}$
	on $\mathcal{S}(W_4^-(\mathbb{R}))$
	is induced from the Weil representation of
	$G_4^1(\mathbb{R})$
	on $\mathcal{S}(W_4^-(\mathbb{R}))$.	
	We deduce from Corollary\ref{scalar weight archimedean section}
	that the section
	$\phi^+_{\underline{k},\infty}=
	\mathcal{S}(W_4^-(\mathbb{R}))$
	gives rise to
	$f^+_{\underline{k},\infty}$.

	We next make explicit
	the action of the differential operator
	$D^{\underline{k}}$
	on the space
	$\mathcal{S}(W_4^-(\mathbb{R}))$.
	Let $\widehat{\mu}_{i,j+6}^+$
	be an entry in
	$\widehat{\mu}^+_0$
	($i,j=1,2$).
	Recall that
	$\mu_{i,j}
	=E_{i,j+6}+E_{j+2,i+4}$
	and
	$\widehat{\mu}_{i,j+6}^+
	=\mathfrak{c}
	\mu_{i,j+6}^+\mathfrak{c}^{-1}$.
	We write
	$\mu_{i,j+6}$
	in $4\times4$-blocks
	as
	$\begin{pmatrix}
	0 & B_{i,j} \\
	0 & 0
	\end{pmatrix}$.
	For the variable 
	$w_4^-=(w_{i,j})\in W_4^-(\mathbb{R})$
	(viewed as a $4\times6$-matrix),
	for each
	$f\in\mathcal{S}(W_4^-(\mathbb{R}))$,
	we define
	$\frac{\partial f}{\partial w_4^-}$
	to be the $4\times6$-matrix
	$(\frac{\partial f}{\partial w_{i,j}})$.
	We define a map
	from $\mathcal{S}(W_4^-(\mathbb{R}))$
	to
	$\mathrm{M}_{4\times 6}
	(\mathcal{S}(W_4^-(\mathbb{R})))$
	as
	$\mathbb{D}f(w_4^-)
	=(\sqrt{2\pi}w_4^-\sqrt{\eta}
	-\frac{\partial}{\sqrt{2\pi}\partial w_4^-}\sqrt{\eta}^{-1})f(w_4^-)$
	where recall that
	$\eta=\mathrm{diag}
	(N^2/2,N^2/2,N^2/2,N^2/2,
	N/N_1,N_1)$.
	We define $\mathbb{D}^t$
	to be the transpose of
	$\mathbb{D}$:
	$\mathbb{D}^tf=(\mathbb{D}f)^t$.
	\begin{lemma}
		For any 
		$f\in\mathcal{S}(W_4^-(\mathbb{R}))$
		and $i,j=1,2$,
		we have
		\[
		\widehat{\mu}_{i,j+6}^+f(w_4^-)
		=
		\frac{\sqrt{-1}}{4}
		\mathrm{tr}
		(\mathbb{D}^tB_{i,j}\mathbb{D}
		)f(w_4^-).
		\]
	\end{lemma}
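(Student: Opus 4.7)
The plan is to decompose $\widehat{\mu}^+_{i,j+6} = \mathfrak{c}_4\mu^+_{i,j+6}\mathfrak{c}_4^{-1}$ into its Levi, upper-unipotent, and lower-unipotent components (with respect to the Siegel parabolic), compute the derived Weil-representation action of each summand separately using the explicit formulas already stated in the excerpt, and then reassemble the three contributions into the claimed expression involving the creation operators $\mathbb{D}_{k,l}$ via canonical commutation relations.

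First I would carry out the Cayley conjugation explicitly. Writing $B=B_{i,j}$ and working in $4\times 4$ blocks, a direct matrix computation yields
$$\widehat{\mu}^+_{i,j+6} \;=\; \tfrac{1}{2}\begin{pmatrix} -iB & B \\ B & iB\end{pmatrix} \;=\; \tfrac{1}{2}\begin{pmatrix} -iB & 0 \\ 0 & iB\end{pmatrix} + \tfrac{1}{2}\begin{pmatrix} 0 & B \\ 0 & 0\end{pmatrix} + \tfrac{1}{2}\begin{pmatrix} 0 & 0 \\ B & 0\end{pmatrix},$$
where the three summands lie in $\mathfrak{l}_{G,\mathbb{C}}$, $\mathfrak{n}^+_{G,\mathbb{C}}$, $\mathfrak{n}^-_{G,\mathbb{C}}$ respectively, and the Levi piece has the form $\mathrm{diag}(X,-X^t)$ with $X = -iB/2$ (using $B^t=B$).

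Second I would differentiate the stated Weil-representation formulas for $\omega_{W_4^-}(m(a))$ and $\omega_{W_4^-}(u(b))$, together with Remark \ref{archimedean Weil representation}, at the identity. Passing to the normalized coordinates $\tilde y_{k,l} = (w_4^-\sqrt{\eta_U})_{k,l}$ with $\partial_{k,l} = \partial/\partial\tilde y_{k,l}$, and using $B = B^t$ to symmetrize the mixed $\tilde y\partial$ terms, the contributions of the first two summands are
$$-\tfrac{3i}{2}\mathrm{tr}(B)\,f - \tfrac{i}{2}\sum_{k,k',l}B_{k,k'}\tilde y_{k,l}\partial_{k',l}f \qquad\text{and}\qquad \tfrac{i\pi}{2}\sum_{k,k',l}B_{k,k'}\tilde y_{k,l}\tilde y_{k',l}f.$$
For the lower-unipotent piece one writes $u^-(c) = J_8u(-c)J_8^{-1}$ and sandwiches with the Fourier-transform formula for $\omega_{W_4^-}(J_8)$ to obtain
$$\tfrac{i}{8\pi}\sum_{k,k',l}B_{k,k'}\partial_{k,l}\partial_{k',l}f;$$
the overall constant can alternatively be pinned down via the representation identity $[d\omega(X_N^+),d\omega(X_N^-)] = d\omega([X_N^+,X_N^-])$ against the already-computed Levi action, which avoids any delicate archimedean Weil-constant bookkeeping.

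Finally I would recognize the sum of these three contributions as $\frac{i}{4}\mathrm{tr}(\mathbb{D}^tB\mathbb{D})f$. Expanding using the canonical commutation relation $[\partial_{k,l},\tilde y_{k',l'}] = \delta_{k,k'}\delta_{l,l'}$ gives
$$\mathbb{D}_{k,l}\mathbb{D}_{k',l} \;=\; 2\pi\tilde y_{k,l}\tilde y_{k',l} - \tilde y_{k,l}\partial_{k',l} - \tilde y_{k',l}\partial_{k,l} + \tfrac{1}{2\pi}\partial_{k,l}\partial_{k',l} - \delta_{k,k'},$$
and multiplying by $B_{k,k'}$ and summing over $k,k'$ and $l=1,\dots,6$ produces exactly the Levi, $\mathfrak{n}^+$, and $\mathfrak{n}^-$ terms after multiplication by $i/4$ — including the scalar $-6\mathrm{tr}(B)\cdot i/4 = -3i\mathrm{tr}(B)/2$ matching the Levi scalar term. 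Unfolding $\mathrm{tr}(\mathbb{D}^tB\mathbb{D}) = \sum_{k,k',l}B_{k,k'}\mathbb{D}_{k,l}\mathbb{D}_{k',l}$ then closes the argument. The main obstacle is the lower-unipotent step: the explicit Weil-representation formulas in the excerpt cover only $m(a)$, $u(b)$, and $J_8$, so extracting the correct second-order differential operator (with the right coefficient $\tfrac{i}{8\pi}$) requires either a careful Fourier-transform computation involving the factor $\det(\eta_U)^{\dim V_4^-}$ in $\phi^+_\infty$ or the commutator-bootstrap trick, and in either case meticulous tracking of the $\pi$-powers, $i$-powers, and signs introduced by passing to the $\tilde y$ variables is needed.
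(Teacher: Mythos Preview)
Your proposal is correct and follows essentially the same route as the paper: decompose $\widehat{\mu}^+_{i,j+6}$ via the Cayley conjugation into its Levi, upper-unipotent, and lower-unipotent parts, compute the derived Weil action on each, and reassemble into the creation-operator form. The only minor difference is that for the lower-unipotent piece the paper invokes the general integral formula~(\ref{general expression for Weil representation}) directly (which already covers elements with nonzero $c$-block), whereas you factor through $J_8u(-c)J_8^{-1}$ or a commutator bootstrap; either works and yields the same second-order operator.
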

    \begin{proof}
    	By definition,
    	we have
    	\[
    	\widehat{\mu}_{i,j+6}^+
    	=
    	\frac{\sqrt{-1}}{2}
    	\begin{pmatrix}
    	-B_{i,j} & 0 \\
    	0 & B_{i,j}
    	\end{pmatrix}
    	+
    	\frac{1}{2}
    	\begin{pmatrix}
    	0 & B_{i,j} \\
    	0 & 0
    	\end{pmatrix}
    	+
    	\frac{1}{2}
    	\begin{pmatrix}
    	0 & 0 \\
    	B_{i,j} & 0
    	\end{pmatrix}.
    	\]
    	Then it suffices to treat each term in the sum
    	using the expressions of 
    	Weil representation given in
    	(\ref{general expression for Weil representation}).
    \end{proof}
    Write $w_4^-=\begin{pmatrix}
    w_1^- \\
    w_2^-
    \end{pmatrix}$ with
    $w_1^-=
    \begin{pmatrix}
    w_{1;1}^- \\
    w_{1;2}^-
    \end{pmatrix}
    =(w_{1;k,l}^-)\in
    W_1^-(\mathbb{R})$
    and
    $w_2^-
    =
    \begin{pmatrix}
    w_{2;1}^- \\
    w_{2;2}^-
    \end{pmatrix}
    =(w_{2;k,l}^-)\in
    W_2^-(\mathbb{R})$
    for $k=1,2$ and $l=1,\cdots,6$.
    From the above lemma, we get the following
    \begin{corollary}
        For $i,j=1,2$, we have
    	\[
    	\widehat{\mu}_{i,j+6}^+\phi_{\infty}^+(w_4^-)
    	=
    	2i\pi\,\mathrm{tr}
    	((w_4^-)^tB_{i,j}w_4^-\eta)\phi_\infty^+(w_4^-)
    	=
    	4i\pi\,
    	\sum_{l=1}^6
    	\bigg(
    	w_{1;i,l}\sqrt{\eta_{l,l}}\phi_{1,\infty}^+(w_1^-)
    	\bigg)
    	\bigg(
    	w_{2;j,l}\sqrt{\eta_{l,l}}\phi_{2,\infty}^+(w_2^-)
    	\bigg).
    	\]
    \end{corollary}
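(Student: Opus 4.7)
The plan is to apply the preceding lemma with $f=\phi_\infty^+$ and exploit the fact that $\phi_\infty^+$ is essentially a Gaussian in $w_4^-$, so that the operator $\mathbb{D}$ acts on it by pure multiplication.

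First I would identify $w_4^-$ with a matrix $Z\in\mathrm{M}_{4\times 6}(\mathbb{R})$ and unpack the pairing $\langle w_4^-,\widetilde{J}_8 w_4^-\rangle_{W_4}$: using the matrix identification of the symplectic form from Remark \ref{matrix identification} together with $\widetilde{J}_8 = J_8\otimes 1_6$, the argument of $\mathbf{e}_\infty$ in the definition of $\phi_\infty^+$ reduces to a Gaussian exponent of the form $-\pi\,\mathrm{tr}(Z^tZ\eta_U)$. Direct differentiation then yields $\frac{\partial \phi_\infty^+}{\partial w_4^-}=-2\pi\, w_4^-\eta\,\phi_\infty^+$ in matrix form, so that the multiplication and derivative pieces of $\mathbb{D}$ reinforce each other:
\[
\mathbb{D}\phi_\infty^+ = \sqrt{2\pi}\,w_4^-\sqrt{\eta}\,\phi_\infty^+ + \sqrt{2\pi}\,w_4^-\sqrt{\eta}\,\phi_\infty^+ = 2\sqrt{2\pi}\,w_4^-\sqrt{\eta}\,\phi_\infty^+.
\]
Feeding this into the formula $\widehat{\mu}_{i,j+6}^+ f = \tfrac{\sqrt{-1}}{4}\mathrm{tr}(\mathbb{D}^tB_{i,j}\mathbb{D})f$ of the previous lemma and computing the resulting trace produces
\[
\widehat{\mu}_{i,j+6}^+\phi_\infty^+ = \tfrac{\sqrt{-1}}{4}\cdot 8\pi\,\mathrm{tr}\!\bigl(\sqrt{\eta}\,(w_4^-)^tB_{i,j}w_4^-\sqrt{\eta}\bigr)\phi_\infty^+ = 2i\pi\,\mathrm{tr}((w_4^-)^tB_{i,j}w_4^-\eta)\,\phi_\infty^+,
\]
which is the first equality.

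For the second equality I would unpack the block structure of $B_{i,j}$. By definition $\widehat{\mu}_{i,j+6}^+$ is the conjugate by the Cayley element of the elementary matrix describing the coupling between the $i$-th vector of the upper polarization and the $j$-th of the lower, so its off-diagonal $4\times 4$-block $B_{i,j}$ is a symmetric sum of two elementary matrices pairing row $i$ of the upper half of $w_4^-$ (namely $w_1^-$) with row $j$ of the lower half ($w_2^-$). Writing $w_4^-=\begin{pmatrix}w_1^-\\ w_2^-\end{pmatrix}$ and expanding, only cross terms between $w_1^-$ and $w_2^-$ survive in the trace, with the symmetrization producing a factor of $2$, giving $\mathrm{tr}((w_4^-)^tB_{i,j}w_4^-\eta)=2\sum_{l=1}^6\eta_{l,l}w_{1;i,l}w_{2;j,l}$. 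Using the factorization $\phi_\infty^+ = \phi_{1,\infty}^+(w_1^-)\phi_{2,\infty}^+(w_2^-)$ that follows from the splitting $W_4^-=W_1^-\oplus W_2^-$ and distributing one $\sqrt{\eta_{l,l}}$ into each factor yields the asserted formula.

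The main obstacle will be bookkeeping: verifying that the Gaussian exponent of $\phi_\infty^+$ has precisely the sign and coefficient that makes the multiplicative and derivative pieces of $\mathbb{D}$ add constructively rather than cancel, and tracing through the block structure of $B_{i,j}$ together with the conventions on the Cayley element $\mathfrak{c}_4$ and on $\widetilde{J}_8$ carefully enough to confirm that only cross terms between $w_1^-$ and $w_2^-$ appear in the trace, with exactly the symmetrization factor of $2$ needed to promote the $2i\pi$ of the first equality to the $4i\pi$ of the second.
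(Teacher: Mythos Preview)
Your approach is the natural one, and the paper gives no proof of this corollary at all (it is stated as an immediate consequence of the preceding lemma). There is, however, one point you should make explicit to close the argument for the first equality.

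When you compute $\mathbb{D}\phi_\infty^+ = 2\sqrt{2\pi}\,w_4^-\sqrt{\eta}\,\phi_\infty^+$ and then substitute into $\tfrac{\sqrt{-1}}{4}\mathrm{tr}(\mathbb{D}^t B_{i,j}\mathbb{D})$, you are replacing the differential operator $\mathbb{D}$ by a multiplication operator. This is correct for a \emph{single} application of $\mathbb{D}$ to the Gaussian, but $\mathrm{tr}(\mathbb{D}^t B_{i,j}\mathbb{D})$ is second order: the outer $\mathbb{D}$ acts on $(\text{linear polynomial})\cdot\phi_\infty^+$, and its derivative piece also hits the polynomial prefactor. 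Concretely one finds
\[
\mathbb{D}_{k,l}\mathbb{D}_{k',l}\phi_\infty^+ \;=\; 8\pi\, w_{k,l}w_{k',l}\eta_{l,l}\,\phi_\infty^+ \;-\; 2\delta_{k,k'}\,\phi_\infty^+,
\]
so after contracting with $(B_{i,j})_{k,k'}$ and summing over $l$ there is an extra commutator term proportional to $\mathrm{tr}(B_{i,j})$. This vanishes precisely because $B_{i,j}$ has the block shape $E'_{i,j+2}+E'_{j+2,i}$ (purely off the $2\times 2$ block diagonal), hence zero trace. Once you record this, your substitution is justified, and the rest of your argument---including the block unpacking for the second equality---is correct as written.
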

    
    Define variables $X_{k;i,l}$
    for $k,i=1,2$ and $l=1,\cdots,6$,
    $Y_{i,j}=\sum_{l=1}^6X_{1;i,l}X_{2;j,l}$,
    $Y=(Y_{i,j})_{i,j=1,2}$,
    and
    $D_Y^{\underline{k}}
    =\mathrm{det}_1(Y)^{k_1-k_2}
    \mathrm{det}_2(Y)^{k_2-3}$.
    If we set
    $X_{k;i,l}=w_{k;i,l}\sqrt{\eta_{l,l}}$,
    then we have
	$D^{\underline{k}}\phi_\infty^+(w_4^-)
	=D_Y^{\underline{k}}\phi_\infty^+(w_4^-)$.
	Develop $D_Y^{\underline{k}}$
	as a sum of monomials on $X_{k;i,l}$'s:
	$D_Y^{\underline{k}}=\sum_{I}a_IX^I$
	where
	$I=(I_{i,j})_{i,j=1,2}$ with
	$I_{i,j}=(I_{i,j;1},\cdots,I_{i,j;6})\in(\mathbb{Z}_{\geq0})^6$,
	$X^I=\prod_{l=1}^6\prod_{i,j=1}^2
	(X_{1;i,l}X_{2;j,l})^{I_{i,j;l}}$
	and $a_I\in\mathbb{Z}$.
	We define
	$X_t^I:=\prod_{l=1}^6\prod_{i,j=1}^2X_{t;i,l}^{I_{i,j;l}}$
	for $t=1,2$
	(thus $X^I=X_1^IX_2^I$).
	Write $\mathfrak{I}_{\underline{k}}$
	for the finite set of
	$I$ such that $a_I\neq0$.
	For each $I\in\mathfrak{I}_{\underline{k}}$,
	we define
	\begin{align}
	\label{Schwartz section at infinity for +}
	\phi_{t,\infty;I}(w_t^-)
	&
	=\sqrt{a_I}X_t^I\phi_{t,\infty}(w_t^-)
	\in
	\mathcal{S}(W_t^-(\mathbb{R})),
	\text{ where }\,t=1,2,
	X_{k;i,l}=w_{k;i,l}\sqrt{\eta_{l,l}};
	\\
	\phi_{\infty;I}^+(w_1^-,w_2^-)
	&
	=
	\phi_{1,\infty;I}(w_1^-)
	\phi_{2,\infty;I}(w_2^-)
	\in
	\mathcal{S}(W_4^-(\mathbb{R})).
	\end{align}

	By definition, we have
	$\sum_{I\in\mathfrak{I}_{\underline{k}}}
	\phi_{\infty;I}^+(w_4^-)
	=
	\phi_{\underline{k},\infty}^+(w_4^-)$.
	Now for each $I\in\mathfrak{I}_{\underline{k}}$,
	we let
	\[
	f_{\phi_{\infty;I}^+}
	\in
	\mathrm{Ind}^{G_4(\mathbb{R})}_{P^+(\mathbb{R})}
	(\xi_{1/2})
	\]
	be the section associated to the section
	$\phi_{\infty,I}^+$.
	Thus
	$\sum_{I\in\mathfrak{I}_{\underline{k}}}
	f_{\phi_{\infty,I}^+}=f_{\underline{k},\infty}$.

	\subsubsection{Fourier coefficients}
	We next compute the local Fourier coefficients
	$E_{\beta,\infty}(g_\infty,f_{\phi_{\infty;I}^+})$
	for each $I\in\mathfrak{I}_{\underline{k}}$.
	For any $\mathbf{z}=\mathbf{x}+i\mathbf{y}\in\mathbb{H}_4$, 
	we write
	$g_\mathbf{z}=
	u(\mathbf{x})
	m(\sqrt{\mathbf{y}})
	\cdot
	1_\mathrm{f}
	\in G(\mathbb{A})$.
	Recall we write
	$\beta=\begin{pmatrix}
	\beta_1 & \beta_0 \\
	\beta_0^t & \beta_2
	\end{pmatrix}\in\mathrm{Sym}_{4\times 4}(\mathbb{Q})$.
	Then we have
	(\cite{Shimura1982} or
	\cite[Section 3.4]{Liu2015}):
	\[
	E_{\beta,\infty}(g_\mathbf{z},f^+_\infty)
	=
	\frac{16}{\Gamma_4(3)}
	\pi^{12}
	(\mathrm{det}\, \beta)^{1/2}
	\mathrm{det}(\mathbf{y})^{3/2}
	\mathbf{e}_\infty(\mathrm{tr}\,\beta\mathbf{z})
	\]
	where
	$\Gamma_m(s)=\pi^{m(m-1)/4}\prod_{j=0}^{m-1}\Gamma(s-j/2)$.
	Note that if $\beta$ is not positive definite,
	then
	the local Fourier coefficient 
	$E_{\beta,\infty}(g_\mathbf{z},f^+_\infty)$
	vanishes.

	As for $f^+_{\underline{k},\infty}$, we have
	(\cite[Proposition 4.4.1]{Liu2015})
	\[
	E_{\beta,\infty}(g_\mathbf{z},f^+_{\underline{k},\infty})
	=
	D^{\underline{k}}(2\beta_0)
	E_{\beta,\infty}(g_\mathbf{z},f_\infty^+).
	\]
	where
	$D^{\underline{k}}(2\beta_0)
	=\mathrm{det}_1(2\beta_0)^{k_1-k_2}
	\mathrm{det}_2(2\beta_0)^{k_2-3}$.

	We need the following lemma concerning
	the existence of certain symmetric matrix associated to
	$\beta$ equivalent to $\eta$:
	\begin{lemma}
		Assume that $\beta\in\mathrm{M}_{4\times 4}
		(\mathbb{Q})$
		is non-singular, then there exist
		$a,b\in\mathbb{Q}^\times$
		such that the matrix
		$\beta':=\mathrm{diag}(2\cdot\beta,a,b)\in
		\mathrm{M}_{6\times 6}
		(\mathbb{Q})$
		is equivalent to
		$\eta$
		(i.e., there exists a matrix
		$X\in\mathrm{GL}_6(\mathbb{Q})$
		such that
		$X^t\eta'X=\beta'$)
	\end{lemma}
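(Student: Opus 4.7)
The idea is to apply the Hasse-Minkowski theorem to the two non-degenerate rank-$6$ rational quadratic forms $\eta$ and $\beta'$. Since $\eta$ is positive definite (all its diagonal entries are positive rational numbers), any $\mathbb{Q}$-equivalence $\beta' \equiv \eta$ forces $\beta$ to be positive definite and $a,b>0$. I therefore restrict to the case when $\beta$ is positive definite and seek positive rationals $a,b$; this restriction is harmless for our applications since the archimedean local Fourier coefficient computed above vanishes when $\beta$ fails to be positive definite.

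By Hasse-Minkowski, two non-degenerate quadratic forms over $\mathbb{Q}$ of the same rank are $\mathbb{Q}$-equivalent if and only if they become equivalent over every completion $\mathbb{Q}_v$. Over $\mathbb{R}$, equivalence is automatic from positive definiteness. At each finite prime $p$, equivalence is detected by the determinant modulo squares together with the Hasse-Witt invariant. A direct computation yields $\det \eta = N^{9}/16$ and $\det \beta' = 16\,\det(\beta)\cdot ab$; since $16$ is a square and $N$ is odd square-free (so $N^{9}\equiv N \pmod{(\mathbb{Q}^\times)^2}$), the determinant condition reduces to the single constraint $ab \equiv N\,\det(\beta) \pmod{(\mathbb{Q}^\times)^2}$.

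To match the Hasse invariants, I will parametrize solutions to the determinant condition by writing $b = N\,\det(\beta)\,s^{2}/a$ with $s\in\mathbb{Q}^\times$, treating $a$ as the free parameter. The multiplicativity formula $\varepsilon_p(q_1 \perp q_2) = \varepsilon_p(q_1)\,\varepsilon_p(q_2)\,(\det q_1,\det q_2)_p$ then recasts each local constraint $\varepsilon_p(\beta') = \varepsilon_p(\eta)$ as a single Hilbert-symbol equation on $a$, of the form $(a,c_p)_p = e_p$ for certain $c_p \in \mathbb{Q}_p^\times$ and $e_p \in \{\pm 1\}$; only finitely many $e_p$ are non-trivial. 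Hilbert reciprocity requires $\prod_p e_p = 1$, a consistency that is automatic because both $\eta$ and any candidate global rational form must satisfy the product formula for the Hasse-Witt invariant. Granted this compatibility, the existence of a positive rational $a$ with prescribed square-classes in $\mathbb{Q}_p^\times/(\mathbb{Q}_p^\times)^2$ at the finitely many relevant primes follows from weak approximation. The main obstacle in executing this plan is the bookkeeping for the local Hasse invariants and verifying that the resulting Hilbert-symbol system is globally consistent; this consistency is, however, forced by the mere existence of $\eta$ as a global form, after which the conclusion is a routine application of the classical arithmetic theory of quadratic forms.
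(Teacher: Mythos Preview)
Your strategy coincides with the paper's: reduce via Hasse--Minkowski to matching discriminants and Hasse invariants locally, fix $ab$ modulo squares by the discriminant condition, and solve a Hilbert-symbol system for $a$. Your explicit restriction to positive-definite $\beta$ is well taken---the lemma as literally stated is false for indefinite $\beta$ since $\eta$ is positive definite, and the paper's proof glosses over the archimedean place; your remark that this is harmless for the application is exactly right.

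The final step has a gap: weak approximation lets you prescribe the square-class of $a$ at finitely many primes, but gives no control over $(a,-d)_q$ at the remaining primes $q$ (for instance, $a$ may acquire an odd valuation at some $q$ with $-d$ a non-residue mod $q$, forcing $(a,-d)_q=-1$). Two fixes are needed. First, note that your $c_p$ is in fact the single global element $-d$, independent of $p$. Second, replace weak approximation by the existence theorem for Hilbert symbols (Serre, \emph{A Course in Arithmetic}, Ch.~III, Thm.~4): for $c\in\mathbb{Q}^\times$ not a square and signs $(e_v)_v$ almost all $+1$ with $\prod_v e_v=1$, there exists $a\in\mathbb{Q}^\times$ with $(a,c)_v=e_v$ for \emph{every} place $v$. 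This is precisely the reference the paper invokes, and your verification of $\prod_v e_v=1$ via the product formulas for $\varepsilon_v(\eta)$, $\varepsilon_v(2\beta)$, and the Hilbert symbol is correct.
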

    \begin{proof}
    	This follows easily from standard results 
    	on the theory of quadratic forms over $\mathbb{Q}$
    	and $\mathbb{Q}_v$.
    	Note that $\beta'$ is equivalent to $\eta$
    	over $\mathbb{Q}$
    	if and only if they are equivalent over each
    	$\mathbb{Q}_v$,
    	if and only if for each place $v$ of $\mathbb{Q}$,
    	their discriminants are equal
    	$\mathrm{det}(\beta')=\mathrm{det}(\eta)\in\mathbb{Q}_v^\times
    	/(\mathbb{Q}_v^\times)^2$
    	and their Hasse invariants are equal
    	$\mathcal{H}_v(\beta')=\mathcal{H}_v(\eta)$
    	(\cite[Chapter IV,Theorems 7 and 9]{Serre1973}).
        The first condition gives
        $ab=\mathrm{det}(\eta)\mathrm{det}(2\beta)
        \in\mathbb{Q}_v^\times/(\mathbb{Q}_v^\times)^2$,
    	thus
    	$\mathcal{H}_v(\beta')=\mathcal{H}_v(2\beta)
    	(\mathrm{det}(2\beta),-\mathrm{det}(\eta))_v(a,
    	-\mathrm{det}(\eta)\mathrm{det}(2\beta))_v$
    	(here $(\cdot,\cdot)_v$ is the Hilbert symbol over 
    	$\mathbb{Q}_v^\times$).
    	So it suffices to find an element
    	$a\in\mathbb{Q}^\times$
    	such that this last identity holds for all
    	$v$.
    	This follows from
    	\cite[Chapter III, Theorem 4]{Serre1973}
    	and thus we conclude the proof.
    \end{proof}
    In the following, when $\beta$ is non-singular, we
    fix one such choice of
    $a,b\in\mathbb{Q}^\times$
    and $X\in\mathrm{GL}_6(\mathbb{Q})$
    as in the lemma 
    such that $\mathrm{det}(X)=1$
    and denote them by
    $a_\beta,b_\beta$ and $X_\beta$.

	Now we turn to
	$f_{\infty;I}^+$.
	For each $\beta\in\mathrm{Sym}_{4\times 4}(\mathbb{R})$
	definite positive,
	we write
	$d\mu_\beta$
	for the induced measure on the
	compact closed subset
	$B_\beta$
	of $W_4^-(\mathbb{R})$
	consisting of elements
	$w_4^-$
	such that
	$w_4^-\eta(w_4^-)^t=\beta$.
	We then denote by
	$\mathrm{vol}(B_\beta)$
	the volume of $B_\beta$
	under $d\mu_\beta$.
	Note that
	$\mathrm{vol}(B_\beta)
	=(\mathrm{det}\,\beta)^{1/2}
	\mathrm{vol}(B_{1_4})$.
	For $\mathbf{z}=\mathbf{x}+i\mathbf{y}
	\in\mathbb{H}_4$,
	we write
	$\beta_\mathbf{y}=
	\sqrt{\mathbf{y}}\beta\sqrt{\mathbf{y}}$.
	We introduce some notations
	for the next lemma.
	To a partition
	$\nu=(\nu_1,\nu_2,\cdots,\nu_l)$
	of a positive integer $n$
	($\nu_1\geq \nu_2\geq\cdots\geq \nu_l\geq1$)
	of length $l(\nu):=l$,
	we associate an integer
	$z_\nu:=\prod_{i=1}^l\prod_{j=1}^{\nu_i}(5+2j-i)$.
	Moreover we write
	$h(\nu)$
	for the dimension 
	of the irreducible representation 
	over $\mathbb{C}$ of the
	permutation group
	$S_{2n}$ of $2n$ elements
	associated to the partition
	$\nu$.
	Then we set
	$d(n)=\gcd_\nu(\frac{(2n)!}{h(\nu)}z_\nu)$
	where $\nu$ runs through all the partitions of
	$n$.

	Then we have
	\begin{lemma}\label{local Fourier coefficient at infinity}
		For any $I\in\mathfrak{I}_{\underline{k}}$,
		the local Fourier coefficient
		\[
		E_{\beta,\infty}(g_\mathbf{z},f_{\phi_{\infty,I}^+})
		\in
		d(k_1+k_2-6)^{-1}
		a_I\mathbf{e}_\infty(\mathrm{tr}\,\beta\mathbf{z})
		(\mathrm{det}\,\mathbf{y})^{3/2}
		(\mathrm{det}2\beta\,
		\mathrm{det}\eta^{-1})^{1/2}
		\mathbb{Z}[\sqrt{\beta_\mathbf{y}},\sqrt{\eta}^{-1}].
		\]
		Here
		$\mathbb{Z}[\sqrt{\beta_\mathbf{y}}]$
		is the polynomial ring
		$\mathbb{Z}[Z]$
		on the matrix
		$Z=(Z_{i,j})_{i,j=1,\cdots,4}$
		by identifying
		$Z$ with $\sqrt{\beta_\mathbf{y}}$.
		Similarly
		$\mathbb{Z}[\sqrt{\eta}^{-1}]$
		is the polynomial ring
		$\mathbb{Z}[Z']$
		on the matrix
		$Z'=(Z'_{i,j})_{i,j=1,\cdots,6}$
		by identifying $Z'$ with
		$\sqrt{\eta}^{-1}$.
	\end{lemma}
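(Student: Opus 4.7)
The plan is to evaluate the local Fourier coefficient directly from its integral definition, using the Weil-representation formulas to turn it into a Gaussian integral with polynomial weight. First I will write
\[
E_{\beta,\infty}(g_\mathbf{z},f_{\phi^+_{\infty,I}})
=\int_{\mathrm{Sym}_{4\times 4}(\mathbb{R})}
\omega_{W_4^-}(J_8\,u(x)\,g_\mathbf{z})\phi^+_{\infty,I}(0)\,
\mathbf{e}_\infty(-\mathrm{tr}\,\beta x)\,dx
\]
and apply the explicit formulas for the Weil action: $u(x)$ acts on a function of $w\in W_4^-(\mathbb{R})$ by the character $\mathbf{e}_\infty(\mathrm{tr}(x\,w\eta w^t)/2)$, $m(\sqrt{\mathbf{y}})$ by a rescaling $w\mapsto \sqrt{\mathbf{y}}\,w$ with a Jacobian $(\det\mathbf{y})^{3/2}$ (using $\xi_\infty(\det\sqrt{\mathbf{y}})=1$ as in the discussion following Remark \ref{archimedean Weil representation}), and $J_8$ by a Fourier transform $\phi\mapsto \gamma^{\mathrm{Weil}}\int\phi(x)\mathbf{e}_\infty(\langle J_8x,\cdot\rangle)\,dx$. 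Swapping the $x$-integral and the $w$-integral converts the $x$-integration into a delta distribution concentrated on $\{w:w\eta w^t=2\beta\}=B_{2\beta}$, exactly as in the scalar-weight computation leading to the known formula for $E_{\beta,\infty}(g_\mathbf{z},f^+_\infty)$.

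Second, I will plug in $\phi^+_{\infty,I}(w)=a_I X^I(w)\phi^+_\infty(w)$, where $X^I$ is a monomial of total degree $k_1+k_2-6$ in the coordinates $w_{k;i,l}\sqrt{\eta_{l,l}}$. After the action of $m(\sqrt{\mathbf{y}})$, the substitution $w\mapsto \sqrt{\mathbf{y}}\,w$ transforms $X^I$ into a polynomial in the entries of $\sqrt{\mathbf{y}}\,w$ with coefficients in $\mathbb{Z}[\sqrt{\eta}]$ (the $\sqrt{\eta_{l,l}}$ factors are now attached to a fixed index). The resulting integral is a Gaussian-type integral of this polynomial weight against $\phi^+_\infty$, restricted to the compact sphere $B_{2\beta}$. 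By Lemma \ref{calculation of scalar section} and the same Gaussian computation that proves the scalar formula for $E_{\beta,\infty}(g_\mathbf{z},f^+_\infty)$, the scalar prefactor gives $\mathbf{e}_\infty(\mathrm{tr}\,\beta\mathbf{z})(\det\mathbf{y})^{3/2}$; the remaining weighted spherical integral is a $\mathbb{C}$-linear combination, with rational coefficients, of monomials in the entries of $\sqrt{\beta_\mathbf{y}}$ and $\sqrt{\eta}^{-1}$, and comes multiplied by the volume factor $\mathrm{vol}(B_{2\beta})=(\det 2\beta\,\det\eta^{-1})^{1/2}\mathrm{vol}(B_{1_4})$.

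Third, to extract the polynomial and control denominators, I will expand the Gaussian moment of $X^I$ by the Isserlis/Wick formula: each such moment is a sum, over perfect matchings of the $k_1+k_2-6$ relevant coordinates, of products of pairwise covariances. On $B_{2\beta}$ these pairwise covariances are exactly the entries of a matrix proportional to $\sqrt{\beta_\mathbf{y}}$ conjugated by $\sqrt{\eta}^{-1}$, so the sum lies in $\mathbb{Z}[\sqrt{\beta_\mathbf{y}},\sqrt{\eta}^{-1}]$ up to a universal rational scalar that depends only on $n=k_1+k_2-6$ (not on $I$, $\beta$, or $\mathbf{y}$). The arithmetic content is that this scalar has denominator dividing $d(n)$.

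The main obstacle, and the reason $d(k_1+k_2-6)$ enters, is this last rational factor. To bound it I will decompose the degree-$2n$ tensor power of the coordinate space under the joint action of $S_{2n}$ (via Schur--Weyl) and of the centralizer acting on $U$, which yields a block decomposition indexed by partitions $\nu$ of $n$; the dimensions of the two commuting factors are $(2n)!/h(\nu)$ and $z_\nu$ respectively, where $z_\nu=\prod_{i,j}(5+2j-i)$ is the hook-content formula for $\mathrm{GL}_6$. The projection of $X^I$ onto each isotypic block of $B_{2\beta}$-polynomials introduces a denominator exactly equal to $(2n)!\,z_\nu/h(\nu)$ up to units, and taking the gcd over $\nu$ to accommodate arbitrary $I$ gives precisely $d(n)^{-1}$ as the universal denominator. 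Combining this bound with the explicit scalar factors from the first two paragraphs then yields the claimed membership.
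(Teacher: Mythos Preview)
Your first two paragraphs are correct and match the paper's computation: the Weil-representation formulas together with the $x$-integration collapse the Fourier coefficient to
$a_I\,\mathbf{e}_\infty(\mathrm{tr}\,\beta\mathbf{z})(\det\mathbf{y})^{3/2}(\det 2\beta\,\det\eta^{-1})^{1/2}$
times an integral of $X^I(\sqrt{\beta_\mathbf{y}}\,w\,\sqrt{\eta}^{-1})$ over the Stiefel manifold $\{w\in M_{4\times 6}(\mathbb{R}):ww^t=1_4\}$.

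The gap is in your third paragraph. Once the delta function has localized the integral to $B_{2\beta}$, the Gaussian weight $\phi^+_\infty$ is \emph{constant} there; what remains is not a Gaussian integral but a polynomial moment against the invariant measure on a compact Stiefel manifold. The Isserlis/Wick formula --- a single sum over pair partitions of products of covariances --- is valid for Gaussian vectors, not for Haar-random orthogonal or Stiefel matrix entries. For the latter the moments are a \emph{double} sum over pairs $(\mathfrak{m},\mathfrak{n})$ of pair partitions, weighted by the orthogonal Weingarten function $\mathrm{Wg}^{\mathrm{O}_6}(\mathfrak{m},\mathfrak{n})$, and it is exactly these weights that carry the denominators $z_\nu$. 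Your claim that the pairwise ``covariances'' on $B_{2\beta}$ reproduce the full moment up to a single rational scalar is false for fixed small dimension.

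Your fourth paragraph has the right ingredients but the wrong duality. Ordinary Schur--Weyl for $(\mathrm{GL}_6,S_{2n})$ indexes blocks by partitions of $2n$, not of $n$, and produces neither the even-row restriction nor the factor $z_\nu=\prod_{i,j}(5+2j-i)$; what is needed is the Brauer-algebra version for $\mathrm{O}_6$, i.e.\ the Weingarten calculus. The paper implements this directly: it lifts the Stiefel integral to $\mathrm{O}_6(\mathbb{R})$ via the surjection $M\mapsto(\text{first four rows of }M)$ and then applies the orthogonal Weingarten formula of Collins--Matsumoto,
\[
\mathrm{Wg}^{\mathrm{O}_6}(\mathfrak{m},\mathfrak{n})
=\frac{2^N N!}{(2N)!}\sum_{\nu\vdash N}\frac{h(2\nu)\,\omega^\nu(\mathfrak{m}^{-1}\mathfrak{n})}{z_\nu},
\qquad N=k_1+k_2-6,
\]
using that the zonal-spherical values $\omega^\nu$ are integer multiples of $(2^N N!)^{-1}$. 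This bounds every monomial moment by $d(N)^{-1}$ times the Haar volume, which is precisely the denominator statement. Replace your Isserlis/Wick step by this Weingarten argument and the proof goes through.
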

    \begin{proof}
    	By definition, we have
    	\begin{align*}
    	E_{\beta,\infty}(g_\mathbf{z},f_{\phi_{\infty;I}^+})
    	&
    	=
    	\mathbf{e}_\infty(\mathrm{tr}\,\beta\mathbf{x})
    	(\mathrm{det}\,\mathbf{y})^{3/2}
    	\int_{W_4^-(\mathbb{R})}
    	\phi_{\infty;I}^+(\sqrt{\mathbf{y}}^tw_4^-)dw_4^-
    	\int_{\mathrm{Sym}_{4\times 4}(\mathbb{R})}
    	\mathbf{e}(\mathrm{tr}(w_4^-\eta(w_4^-)^tx/2-\beta x))dx
    	\\
    	&
    	=
    	\mathbf{e}_\infty(\mathrm{tr}\,\beta\mathbf{x})
    	(\mathrm{det}\,\mathbf{y})^{3/2}
    	\int_{w_4^-\eta(w_4^-)^t=2\beta}
    	\phi_{\infty;I}^+(\sqrt{\mathbf{y}}w_4^-)
    	d\mu_\beta(w_4^-)
    	\\
    	&
    	=a_I
    	\mathbf{e}_\infty(\mathrm{tr}\,\beta\mathbf{x})
    	(\mathrm{det}\,\mathbf{y})^{3/2}
    	\int_{w_4^-\eta(w_4^-)^t=2\beta}
    	X^I(\sqrt{\mathbf{y}}w_4^-)
    	\phi_\infty^+(\sqrt{\mathbf{y}}w_4^-)
    	d\mu_\beta(w_4^-)
    	\\
    	&
    	=
    	a_I\mathbf{e}_\infty(\mathrm{tr}\,\beta\mathbf{z})\,
    	\mathrm{det}(\mathbf{y})^{3/2}
    	(\mathrm{det}(2\beta)
    	\mathrm{det}(\eta)^{-1})^{1/2}
    	\int_{w_4^-(w_4^-)^t=1}
    	X^I(\sqrt{\beta_\mathbf{y}}w_4^-\sqrt{\eta}^{-1})
    	d\mu_1(w_4^-)
    	\end{align*}
    	Now consider the integral:
    	$\mathrm{vol}^I(Z)
    	=\int_{B_1}X^I(Zw_4^-)
    	d\mu_1(w_4^-)$.
    	We set $N=k_1+k_2-6$.
    	It suffices to show that
    	$d(N)\mathrm{vol}^I(Z)\in
    	\mathbb{Z}[Z,Z']$.
    	We define a map
    	$\psi\colon
    	\mathrm{O}_6(\mathbb{R})
    	\rightarrow
    	B_1$
    	by writing
    	$M=\begin{pmatrix}
    	M' \\
    	M''
    	\end{pmatrix}\in
    	\mathrm{O}_6(\mathbb{R})$
    	with
    	$M'\in\mathrm{M}_{4\times 6}
    	(\mathbb{R})$
    	and setting
    	$\psi(M)=M'$.
    	By the QR decomposition for rectangular matrices,
    	we see that $\psi$ is surjective.
    	Consider the measure $\mu_{\mathrm{O}_6}$,
    	resp. $\mu_{B_1}$, on
    	$\mathrm{O}_6(\mathbb{R})$,
    	resp., $B_1$,
    	induced from the Euclidean space
    	$\mathrm{M}_{6\times 6}
    	(\mathbb{R})$,
    	resp., $\mathrm{M}_{4\times 6}
    	(\mathbb{R})$
    	(inner products both given by
    	$\langle S_1,S_2\rangle:=\mathrm{tr}(S_1S_2^t)$).
    	We define the same map
    	$\psi\colon
    	\mathrm{M}_{6\times 6}
    	(\mathbb{R})
    	\rightarrow
    	\mathrm{M}_{4\times 6}
    	(\mathbb{R}),
    	w\mapsto w_4^-$.
    	Then we see that
    	$\mu_{B_1}$ is the induced measure from
    	$\mu_{\mathrm{O}_6}$
    	by $\psi$.
    	Thus we can write
    	$\mathrm{vol}^I(Z)
    	=\int_{\mathrm{O}_6(\mathbb{R})}X^I(Z\,\psi(wZ'))
    	d\mu_{\mathrm{O}_6}(w)$.
    	Develop $X^I(Z\psi(wZ'))$
    	as polynomials on 
    	$Z,Z'$:
    	$X^I(Z\psi(wZ'))=\sum_{S,T}Z^S(Z')^Tb_{S,T}^I(w)$
    	where
    	$S\in\mathrm{M}_{4\times 4}
    	(\mathbb{Z}_{\geq0})$
    	and
    	$T\in\mathrm{M}_{6\times 6}
    	(\mathbb{Z}_{\geq0})$.
    	We next show that
    	$\mathrm{vol}(b_{S,T}^I)
    	=\int_{\mathrm{O}_6(\mathbb{R})}b_{S,T}^I(w)
    	d\mu_{\mathrm{O}_6}(w)$ 
    	is an integer multiple of
    	$\frac{1}{d(N)}\int_{\mathrm{O}_6(\mathbb{R})}
    	d\mu_{\mathrm{O}_6}(w)$ for all $(S,T)$.
    	Note that each $b_{S,T}^I(w)$ lies in
    	$\mathbb{Z}[w]$ of total degree $N$.
    	So it suffices to show that for each monomial
    	$f(w)\in\mathbb{Z}[w]$ of total degree $N$
    	the integral
    	$\mathrm{vol}(f)=\int_{\mathrm{O}_6(\mathbb{R})}f(w)
    	d_{\mathrm{O}_6}(w)$ 
    	is a certain integer multiple of
    	$d(N)^{-1}\mathrm{vol}(1)$.
    	For this we may apply Weingarten formula
    	given in \cite{CollinsMatsumoto2009}.
    	We write $\mathcal{M}(2N)$
    	for the set of pair partitions of the set
    	$\{1,2,\cdots,2N\}$.
    	For a partition $\nu=(\nu_1,\cdots,\nu_l)$ of $N$,
    	let
    	$2\nu=(2\nu_1,\cdots,2\nu_l)$
    	be the partition of $2N$.
    	The Weingarten function
    	$\mathrm{Wg}^{\mathrm{O}_6}
    	(\mathfrak{m},\mathfrak{n})$
    	on $\mathcal{M}(2N)\times
    	\mathcal{M}(2N)$
    	is given by
    	$\frac{2^NN!}{(2N)!}
    	\sum_\nu h(2\nu)\frac{\omega^\nu(\mathfrak{m}^{-1}
    		\mathfrak{n})}{z_\nu}$
    	where
    	$\nu$ runs through all the partitions of
    	$N$ such that
    	$z_\nu\neq0$
    	(\cite[(3.7)]{CollinsMatsumoto2009})
    	and
    	$\omega^\nu(\mathfrak{m}^{-1}
    	\mathfrak{n})$
    	is a certain integer multiple
    	of $(2^NN!)^{-1}$
    	(\cite[(3.4)]{CollinsMatsumoto2009})
    	since the irreducible characters of the permutation groups
    	always take integer values by the Frobenius formula.
    	By \cite[Theorem 2.1]{CollinsMatsumoto2009},
    	$\mathrm{vol}(f)=
    	\mathrm{vol}(1)
    	\sum_{\mathfrak{m},
    	\mathfrak{n}\in\mathcal{M}(2N)}
        \mathrm{Wg}^{\mathrm{O}_6}
        (\mathfrak{m},\mathfrak{n})\delta(\mathfrak{m},
        \mathfrak{n},f)$
        where $\delta(\mathfrak{m},
        \mathfrak{n},f)$ is 
        certain function which is equal to $0$ or $1$
        (note that our measure
        $d\mu_{\mathrm{O}_6}$
        is a Haar measure,
        not necessarily normalized).
        Thus we conclude that
        $\mathrm{vol}(f)$
        is a certain integer multiple of
        $d(N)^{-1}\mathrm{vol}(1)$.
    \end{proof}

    \begin{corollary}
    	Supposing $\beta$ non-singular,
    	then we have
    	\[
    	E_{\beta,\infty}(1,
    	f_{\phi_{\infty,I}^+})
    	\in
    	d(k_1+k_2-6)^{-1}
    	a_I
    	\mathbf{e}_\infty(\mathrm{tr}\,i\beta)
    	\mathrm{det}\eta^{-1})^{1/2}
    	\mathbb{Z}[\frac{1}{\sqrt{N}}][X_\beta].
    	\]
    \end{corollary}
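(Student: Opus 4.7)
The strategy is to specialize the preceding Lemma to $g_{\mathbf{z}} = 1$, that is, $\mathbf{z} = i\cdot 1_{4}$, in which case $\mathbf{y} = 1_{4}$, $\sqrt{\beta_{\mathbf{y}}} = \sqrt{\beta}$, $\mathrm{det}(\mathbf{y})^{3/2} = 1$, and $\mathbf{e}_{\infty}(\mathrm{tr}\,\beta\mathbf{z}) = \mathbf{e}_{\infty}(i\,\mathrm{tr}\,\beta)$. That specialization already places $E_{\beta,\infty}(1, f_{\phi^{+}_{\infty,I}})$ in
\[
d(k_{1}+k_{2}-6)^{-1}\,a_{I}\,\mathbf{e}_{\infty}(i\,\mathrm{tr}\,\beta)\,(\mathrm{det}(2\beta)\cdot\mathrm{det}(\eta)^{-1})^{1/2}\,\mathbb{Z}[\sqrt{\beta},\,\sqrt{\eta}^{-1}],
\]
and the corollary will follow by (i) refining the coefficient ring $\mathbb{Z}[\sqrt{\beta},\sqrt{\eta}^{-1}]$ down to $\mathbb{Z}[1/\sqrt{N}][X_{\beta}]$ and (ii) absorbing the scalar factor $(\mathrm{det}(2\beta))^{1/2}$ into that same ring so as to leave the cleaner prefactor $(\mathrm{det}(\eta)^{-1})^{1/2}$.

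For (i), I will revisit the Weingarten expansion that underlies the proof of the preceding Lemma. Each Wick-style pair contraction $M_{i_{a} j_{a}}M_{i_{b} j_{b}}\mapsto \delta_{i_{a} i_{b}}\delta_{j_{a} j_{b}}$ produced by $\int_{\mathrm{O}_{6}(\mathbb{R})}$ couples the $\sqrt{\beta}$-factors in pairs into entries of $\sqrt{\beta}\,(\sqrt{\beta})^{t} = \beta$, and analogously the $\sqrt{\eta}^{-1}$-factors into entries of $(\sqrt{\eta}^{-1})^{t}\sqrt{\eta}^{-1} = \eta^{-1}$. Hence the integral in fact lies in the smaller ring $\mathbb{Z}[\beta, \eta^{-1}]$. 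Using $X_{\beta}^{t}\eta X_{\beta} = \mathrm{diag}(2\beta, a_{\beta}, b_{\beta})$, and writing $Y_{\beta}$ for the first four columns of $X_{\beta}$, one has $2\beta = Y_{\beta}^{t}\eta Y_{\beta}$, so the entries of $\beta$ are polynomial in entries of $X_{\beta}$ and of $\eta$. Combined with the observation that entries of $\eta^{-1}$ are controlled by $1/N$ and $1/N_{1}$, this places $\mathbb{Z}[\beta,\eta^{-1}]$ inside $\mathbb{Z}[1/\sqrt{N}][X_{\beta}]$ under the standing conventions on $N_{1}$.

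For (ii), taking determinants of the defining relation for $X_{\beta}$ and using $\mathrm{det}(X_{\beta}) = 1$ yields $\mathrm{det}(\eta) = 16\,\mathrm{det}(\beta)\cdot a_{\beta}b_{\beta}$, and together with the explicit value $\mathrm{det}(\eta) = N^{9}/16$ this gives $(\mathrm{det}(2\beta))^{1/2} = N^{9/2}/(4\sqrt{a_{\beta}b_{\beta}})$. Writing $N^{9/2} = N^{5}/\sqrt{N}$ shows that $N^{9/2}\in \mathbb{Z}[1/\sqrt{N}]$, while the ratio of the two prefactors $(\mathrm{det}(2\beta)\cdot\mathrm{det}(\eta)^{-1})^{1/2}/(\mathrm{det}(\eta)^{-1})^{1/2}$ is exactly $(\mathrm{det}(2\beta))^{1/2}$, which is absorbed into the target ring modulo the factor $1/\sqrt{a_{\beta}b_{\beta}}$.

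The remaining technical point, and the principal obstacle, is to ensure that $1/\sqrt{a_{\beta}b_{\beta}}$ lies in $\mathbb{Z}[1/\sqrt{N}][X_{\beta}]$, since a priori it is only a positive real number. This will be handled by exploiting the freedom left in the choice of the triple $(a_{\beta}, b_{\beta}, X_{\beta})$ in the existence Lemma: one rescales the fifth and sixth columns of $X_{\beta}$ to absorb the offending square roots, using the constraint $a_{\beta}b_{\beta} = N^{9}/(256\,\mathrm{det}\,\beta)$ to check that the resulting scalars land in $\mathbb{Z}[1/\sqrt{N}][X_{\beta}]$. Once this normalization is fixed, combining the refinements (i) and (ii) yields exactly the containment asserted by the corollary.
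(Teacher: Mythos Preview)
Your overall strategy---specialize the Lemma to $\mathbf{z}=i\cdot 1_4$ and refine the coefficient ring via the Weingarten pairing---is sound, and the observation in (i) that the orthogonal integral pairs the $\sqrt{\beta}$-factors into entries of $\beta$ and the $\sqrt{\eta}^{-1}$-factors into entries of $\eta^{-1}$ is correct and is the essential mechanism. There are, however, two genuine gaps.

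First, in (i), the inclusion $\mathbb{Z}[\beta,\eta^{-1}]\subset\mathbb{Z}[1/\sqrt{N}][X_\beta]$ does not hold as written: the entries of $\eta=\mathrm{diag}(N^2/2,\ldots,N/N_1,N_1)$ and of $\eta^{-1}$ involve $1/2$, $N_1$, and $1/N_1$, none of which lie in $\mathbb{Z}[1/\sqrt N]$ (recall $N$ is odd and $N_1$ is an auxiliary integer prime to $Np$). The relation $2\beta=Y_\beta^t\eta Y_\beta$ does not help here, since its coefficients are exactly those entries of $\eta$.

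Second, and more seriously, your handling of $1/\sqrt{a_\beta b_\beta}$ in (ii) does not work. The triple $(a_\beta,b_\beta,X_\beta)$ is fixed once and for all immediately after the existence Lemma, with $\det X_\beta=1$ and $X_\beta\in\mathrm{GL}_6(\mathbb{Q})$; you are not free to rechoose it. Rescaling two columns of $X_\beta$ by non-unit scalars destroys $\det X_\beta=1$, and absorbing $\sqrt{a_\beta},\sqrt{b_\beta}$ would in any case introduce irrational entries into $X_\beta$. More fundamentally, for generic non-singular $\beta$ the number $\sqrt{a_\beta b_\beta}=N^{9/2}/(4\sqrt{\det 2\beta})$ does not lie in $\mathbb{Q}(\sqrt N)$ at all, hence cannot lie in any ring $\mathbb{Z}[1/\sqrt N][X_\beta]$ with rational $X_\beta$. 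The paper states this Corollary without proof, and the target ring as printed appears imprecise (it should presumably allow denominators in $2$ and $N_1$, or retain the factor $(\det 2\beta)^{1/2}$); your argument does deliver the substantive content actually used later---namely $p$-integrality of the archimedean Fourier coefficient, using $p\nmid 2NN_1$---but not the literal containment asserted.
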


	\subsubsection{Local zeta integrals}
	We show in this subsection the non-vanishing of
	certain local zeta integrals.
	We know by \cite[Proposition 4.3.1]{Liu2015}:	
	\begin{proposition}
		\label{non-vanishing of zeta integral at infinity of Liu}
		The local zeta integral
		\[
		Z_\infty(\cdot,\cdot,f^+_{\underline{k},\infty})
		\colon
		\mathcal{D}_{\underline{k}}(\underline{k})
		\times
		\mathcal{D}_{\underline{k}}^\vee(-\underline{k})
		\rightarrow
		\mathbb{C}
		\]
		is a non-zero pairing.
		More precisely,
		let
		$\varphi_{\underline{k},\infty}$
		be a non-zero vector
		of highest weight in $\mathcal{D}_{\underline{k}}(\underline{k})$
		and
		$\varphi_{\underline{k},\infty}^\vee$ 
		its dual vector in
		$\mathcal{D}^\vee_{\underline{k}}(-\underline{k})$,
		then
		\[
		Z_\infty(\varphi_{\underline{k},\infty},
		\varphi_{\underline{k},\infty}^\vee,
		f^+_{\underline{k},\infty})\neq0.
		\]
	\end{proposition}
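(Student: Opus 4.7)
The plan is to reduce the archimedean zeta integral to an explicit product of Gindikin gamma functions via the Iwasawa decomposition on $G_1^1(\mathbb{R})$ together with the classical reproducing-kernel formula for matrix coefficients of a holomorphic discrete series at its highest-weight vector. The conceptual reason $Z_\infty$ cannot vanish is that, under the Godement--Garrett evaluation of doubling integrals, it computes (up to an invertible archimedean normalizing factor) the canonical pairing of the highest-weight vector with its dual, which is nondegenerate by the very construction of the contragredient discrete series.

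Concretely, I would proceed as follows. Via the Iwasawa decomposition for $G_1^1(\mathbb{R})$, write $g=u(x)m(y)k$ with $x\in\mathrm{Sym}_{2\times 2}(\mathbb{R})$, $y\in\mathrm{GL}_2(\mathbb{R})^+$, and $k\in K_{G_1,\infty}\cap G_1^1(\mathbb{R})$. First, for the highest-weight vector $\varphi_{\underline{k},\infty}$, the matrix coefficient admits the explicit closed form
\[
\langle\pi_\infty(g)\varphi_{\underline{k},\infty},\varphi_{\underline{k},\infty}^\vee\rangle=c_{\underline{k}}\cdot\rho_{\underline{k}}(\mu(g,i))^{-1}\cdot\mathbf{e}_\infty(\operatorname{tr}(g\cdot i))\cdot\det(\operatorname{Im}(g\cdot i))^{\alpha_{\underline{k}}}
\]
for a nonzero constant $c_{\underline{k}}$ and an explicit exponent $\alpha_{\underline{k}}$, essentially the Bergman reproducing kernel on the Siegel upper half-plane. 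Second, by Corollary~\ref{scalar weight archimedean section}, the restriction of $f_\infty^+((g,1))$ to $G_1^1(\mathbb{R})$ is the classical automorphy factor $\widehat{j}((g,1),i)^{-3}$, and $f_{\underline{k},\infty}^+$ arises from it by the differential operator $D^{\underline{k}}$ built from the raising elements $\widehat{\mu}_0^+$. Third, substituting these expressions into the definition of the local zeta integral, integrating over the maximal compact by Schur orthogonality for $\rho_{\underline{k}}$, over the unipotent radical by Fourier inversion of a Gaussian, and over the positive-definite cone via the standard Bochner/Hua integral, one obtains a product of Gindikin gamma functions $\Gamma_2(s+k_j-c_j)$ times an explicit nonzero rational expression in the $k_i$'s. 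In the admissible range $k_1\geq k_2\geq 3$ and at $s=1/2$, all these gamma factors are finite and nonzero, so $Z_\infty$ is nonzero.

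The main difficulty is the bookkeeping required for $D^{\underline{k}}$: the expansion $D_Y^{\underline{k}}=\sum_I a_IX^I$ has many terms, and one must show that no miraculous cancellation occurs after pairing with the matrix coefficient. The cleanest way around this is to avoid computing each contribution separately and instead identify the zeta integral with a normalized intertwining integral, using uniqueness (up to scalar) of $(P^d(\mathbb{R}),\xi_s)$-equivariant pairings on $\mathcal{D}_{\underline{k}}\otimes\mathcal{D}_{\underline{k}}^\vee$ landing in the required $K$-isotype. The nontriviality of the normalizing scalar is then verified by testing against a single convenient component, for example the unique monomial in $D_Y^{\underline{k}}$ contributing to the top $\widehat{\mu}_0^+$-weight; direct computation there yields an unambiguously nonzero Gaussian integral. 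This mirrors Liu's proof of \cite[Proposition 4.3.1]{Liu2015}, whose content is essentially this combinatorial identification together with the gamma-function computation above.
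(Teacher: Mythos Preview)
Your explicit formula for the matrix coefficient contains an error: the factor $\mathbf{e}_\infty(\mathrm{tr}(g\cdot i))$ does not belong there. Exponential factors of this type appear in Whittaker models or in Fourier expansions of holomorphic cusp forms, not in the reproducing-kernel expression for $\langle\pi_\infty(g)\varphi_{\underline{k},\infty},\varphi_{\underline{k},\infty}^\vee\rangle$. The correct Bergman-kernel formula is a rational expression built from the automorphy factors $\mu(g,i)$ and $\overline{\mu(g,i)}$ (in the scalar case, a power of $\mathrm{det}((g\cdot i + i)/2i)^{-1}$), with no oscillatory component. With that spurious term present your Gaussian step over the unipotent radical will not go through as described; once it is removed, the direct-integration strategy via Iwasawa coordinates and Gindikin Gamma functions is in principle workable, in the style of Shimura.

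Even with the correction, your route differs from what the paper (following Liu) actually does. The paper's argument is representation-theoretic and avoids explicit integration entirely: it invokes the branching decomposition $R_{0,6}|_{\mathfrak{g}_{G_1^1}\times\mathfrak{g}_{G_2^1}}\simeq\bigoplus_{a_1\ge a_2\ge 3}\mathcal{D}_{\underline{a}}\boxtimes\mathcal{D}_{\underline{a}}$ of the degenerate principal series restricted to the two blocks, and shows (via Liu's Theorem~4.3.2 and Lemma~4.3.4) that the differential operator $D^{\underline{k}}$ sends the scalar-weight section $f_\infty^+$ into the single summand $\sigma_{\underline{k}}=\mathcal{D}_{\underline{k}}(\underline{k})\boxtimes\mathcal{D}_{\underline{k}}(\underline{k})$. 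The zeta integral is then the canonical $G_1^1(\mathbb{R})$-invariant pairing of that lowest $K$-type against its dual, nonzero by construction of the contragredient. Your ``cleanest way around this'' paragraph (uniqueness of equivariant pairings, test at one convenient component) is actually much closer in spirit to Liu's argument than the Gamma-function computation you lead with; your claim that the Gamma-function computation ``mirrors Liu's proof'' is not accurate.
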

    The strategy of the proof is to show that
    (1) $f_{\underline{k},\infty}^+|_{U_{G_1,\infty}\times
    	U_{G_2,\infty}}$ has a non-zero projection to the
    space
    $\sigma_{\underline{k}}
    =\mathcal{D}_{\underline{k}}(\underline{k})
    \times
    \mathcal{D}_{\underline{k}}(\underline{k})$;
    (2) the pairing $Z_\infty$
    of
    $\mathcal{D}_{\underline{k}}(\underline{k})
    \times
    \mathcal{D}_{\underline{k}}^\vee(-\underline{k})$
    and
    $\sigma_{\underline{k}}$
    is non-zero.
    In fact, in \cite{Liu2015},
    the combination of Theorem 4.3.2
    and Lemma 4.3.4
    shows that $\sigma_{\underline{k}}$
    is in fact a direct summand of
    $R_{0,6}|_{\mathfrak{g}_{G_1}\times\mathfrak{g}_{G_2}}$
    and
    $f_{\underline{k},\infty}^+$
    actually lies in
    $\sigma_{\underline{k}}$.
    Here
    $R_{0,2k}$ is as in \cite[p.25]{Liu2015},
    which is the sub-$G_4^1(\mathbb{R})$-representation
    of the degenerate principal series
    $\mathrm{Ind}_{(P^+)^1(\mathbb{R})}^{G_4^1
    	(\mathbb{R})}(\xi_{1/2})$
    whose
    $\mathfrak{g}_{G_4^1,\mathbb{C}}$-finite
    part is irreducible and
    contains the
    $K_{G_4,\infty}$-type of scalar weight $k$
    (here we use the case $k=3$).

    \begin{corollary}
    	\label{non-vanishing of zeta integral at infinity}
    	For each $I\in\mathfrak{I}_{\underline{k}}$,
    	the local zeta integral
    	\[
    	Z_\infty(\cdot,\cdot,f_{\phi_{\infty,I}^+})
    	\colon
    	\mathcal{D}_{\underline{k}}(\underline{k})
    	\times
    	\mathcal{D}_{\underline{k}}^\vee(-\underline{k})
    	\rightarrow
    	\mathbb{C}
    	\]
    	is a non-zero pairing.
    \end{corollary}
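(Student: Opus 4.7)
The plan is to deduce the corollary from the preceding Proposition, which gives non-vanishing for the full sum $f^+_{\underline{k},\infty} = \sum_{I \in \mathfrak{I}_{\underline{k}}} f_{\phi_{\infty,I}^+}$.

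First, I would exploit the fact that each $\phi_{\infty,I}^+ = \phi_{1,\infty;I} \otimes \phi_{2,\infty;I}$ is factorizable by construction (see (\ref{Schwartz section at infinity for +})). Under the Fock-model realization of the Weil representation of $G_t(\mathbb{R}) \times \mathrm{O}(U)(\mathbb{R})$ on $\mathcal{S}(W_t^-(\mathbb{R}))$, the polynomial-Gaussian products $X_t^I \phi_{t,\infty}$ realize specific $K_{G_t,\infty}$-finite vectors in the holomorphic discrete series $\mathcal{D}_{\underline{k}}$ of $G_t(\mathbb{R})$. Correspondingly, each section $f_{\phi_{\infty,I}^+}$ lies in the sub-$(\mathfrak{g}_{G_1} \times \mathfrak{g}_{G_2}, K_{G_1,\infty} \times K_{G_2,\infty})$-module of $\mathrm{Ind}_{P^+(\mathbb{R})}^{G_4^1(\mathbb{R})}(\xi_{1/2})$ generated by repeated translates of $f^+_\infty$ under the raising operators appearing in $\widehat{\mu}_0^+$, and is hence contained in $R_{0,6}|_{\mathfrak{g}_{G_1} \times \mathfrak{g}_{G_2}}$.

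Next, I would project each $f_{\phi_{\infty,I}^+}$ onto the $\sigma_{\underline{k}}$-isotypic component of $R_{0,6}|_{\mathfrak{g}_{G_1} \times \mathfrak{g}_{G_2}}$, which is a direct summand by Liu's Lemma 4.3.4. Because the Weil representation respects the tensor-product structure $W_4^- = W_1^- \oplus W_2^-$, the image of $f_{\phi_{\infty,I}^+}$ in $\sigma_{\underline{k}} = \mathcal{D}_{\underline{k}}(\underline{k}) \otimes \mathcal{D}_{\underline{k}}(\underline{k})$ is the pure tensor of the images of $\phi_{1,\infty;I}$ and $\phi_{2,\infty;I}$ under the natural intertwining with the discrete-series model. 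For any $I$ appearing in the monomial expansion of $D_Y^{\underline{k}}$ (so $a_I \neq 0$), this pure tensor is non-zero, as $X_t^I \phi_{t,\infty}$ has non-trivial projection onto the lowest $K_{G_t,\infty}$-type.

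Finally, I would conclude via Schur's lemma. The zeta integral $Z_\infty$ defines a $G_1^1(\mathbb{R})$-invariant trilinear form on $\sigma_{\underline{k}} \otimes \mathcal{D}_{\underline{k}}(\underline{k}) \otimes \mathcal{D}_{\underline{k}}^\vee(-\underline{k})$; by irreducibility of each factor together with multiplicity one in the archimedean local theta correspondence, the space of such invariant forms is one-dimensional. Since the Proposition of Liu guarantees that this form is non-zero when evaluated on the sum, and since each summand $f_{\phi_{\infty,I}^+}$ projects non-trivially to $\sigma_{\underline{k}}$, the pairing is non-zero for each $I$ with the same choice $\varphi_{\underline{k},\infty}, \varphi^\vee_{\underline{k},\infty}$. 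The main obstacle is the second step: proving that the $\sigma_{\underline{k}}$-projection of $f_{\phi_{\infty,I}^+}$ is non-zero for every $I \in \mathfrak{I}_{\underline{k}}$, which requires a careful analysis in the Fock model of the pairing between monomials $X_t^I \phi_{t,\infty}$ and the lowest-weight vector of $\mathcal{D}_{\underline{k}}$.
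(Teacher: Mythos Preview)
Your overall architecture is right and matches the paper: reduce to showing that $f_{\phi_{\infty,I}^+}$ has non-zero projection onto the summand $\sigma_{\underline{k}}=\mathcal{D}_{\underline{k}}(\underline{k})\boxtimes\mathcal{D}_{\underline{k}}(\underline{k})$ of $R_{0,6}|_{\mathfrak{g}_{G_1^1}\times\mathfrak{g}_{G_2^1}}$, and then invoke the Proposition together with the one-dimensionality of the invariant trilinear forms. Where you and the paper diverge is precisely the step you flag as the main obstacle.

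The paper does \emph{not} establish the non-vanishing of the projection via a Fock-model analysis of $X_t^I\phi_{t,\infty}$ inside $\mathcal{D}_{\underline{k}}$. Instead it works entirely in the polynomial model of $M_3\simeq R_{0,6}$: identifying $U(\mathfrak{p}_0^{+,1})$ with $\mathbb{C}[Y]$ (where $Y=(Y_{i,j})$), it writes down an explicit Hermitian form on $\mathbb{C}[Y]_r$, namely
\[
\langle \textstyle\prod Y_{i,j}^{F_{i,j}},\prod Y_{k,l}^{G_{k,l}}\rangle_r
=\frac{1}{r!}\prod_{i,j}\delta_{F_{i,j},G_{3-i,3-j}}\,F_{i,j}!,
\]
checks that it is $(G_1^1\times G_2^1)$-invariant up to a determinant twist, so that the discrete-series decomposition is orthogonal. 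Since $f_{\underline{k},\infty}^+=D_Y^{\underline{k}}$ lies entirely in $\sigma_{\underline{k}}$, the projection of $f_{\phi_{\infty,I}^+}$ is non-zero iff $\langle f_{\phi_{\infty,I}^+},f_{\underline{k},\infty}^+\rangle\neq0$. To compute this pairing the paper refines the inner product to the larger ring $\mathbb{C}[\{X_{t;i,l}\}]\simeq\bigotimes_l\mathbb{C}[X^{(l)}]$ via the substitution $Y_{i,j}=\sum_l X_{1;i,l}X_{2;j,l}$, and then $f_{\phi_{\infty,I}^+}$ is identified with the monomial $a_I X^I$ while $f_{\underline{k},\infty}^+=\sum_J a_J X^J$; the pairing is then a direct combinatorial check.

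Your pure-tensor argument, by contrast, asserts that each $X_t^I\phi_{t,\infty}$ individually has non-zero projection onto the lowest $K_{G_t,\infty}$-type of $\mathcal{D}_{\underline{k}}$. This is not obvious and is likely false for some $I$: the monomials $X_t^I$ carry nontrivial $K_{\mathrm{O}(U),\infty}$-type, and the projection onto the $\mathcal{D}_{\underline{k}}$-isotypic piece of the $(G_t,\mathrm{O}(U))$-Howe decomposition can kill vectors with the wrong $\mathrm{O}(U)$-type. The paper sidesteps this by never projecting the individual factors; it only pairs the full section against $f_{\underline{k},\infty}^+$, which already sits in $\sigma_{\underline{k}}$. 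That is the missing idea in your sketch.
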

    \begin{proof}
    	It suffices to show that,
    	according to the decomposition
    	$R_{0,6}|_{\mathfrak{g}_{G_1^1}\times
    	\mathfrak{g}_{G_2^1}}
        \simeq
        \oplus_{a_1\geq a_2\geq3}
        \mathcal{D}_{\underline{a}}\boxtimes
        \mathcal{D}_{\underline{a}}$
        (\cite[(4.3.3)]{Liu2015}),
        the vector
        $f_{\phi_{\infty;I}^+}$
        has non-zero component inside
        $\mathcal{D}_{\underline{k}}(\underline{k})
        \boxtimes
        \mathcal{D}_{\underline{k}}(\underline{k})$.
        We identify the universal
        holomorphic discrete series
        $M_3:=U(\mathfrak{g}_{G_4^1,\mathbb{C}})
        \otimes_{U(\mathfrak{t}_{G_4^1,\mathbb{C}}
        	\oplus\mathfrak{g}_{G_4^1,
        		\mathbb{C}}^-)}\mathrm{det}^3$
        with
        $R_{0,6}$.
        On the other hand,
        $M_3\simeq U(\mathfrak{g}_{G_4^1,\mathbb{C}}^+)
        \otimes_\mathbb{C}\mathrm{det}^3$
        and when restricted to
        $\mathfrak{g}_{G_1^1,\mathbb{C}}\times
        \mathfrak{g}_{G_2^1,\mathbb{C}}$,
        $U(\mathfrak{g}_{G_4^1,\mathbb{C}}^+)$ 
        is isomorphic to
        $U(\mathfrak{g}_{G_1^1,\mathbb{C}}
        \times
        \mathfrak{g}_{G_2^1,\mathbb{C}})
        \otimes_\mathbb{C}
        U(\mathfrak{p}_0^+)$.
        We then identify
        $U(\mathfrak{p}_0^{+,1})$
        with the polynomial ring
        $\mathbb{C}[Y]$
        with $Y=(Y_{i,j})_{i,j=1,2}$
        defined as above.
        Write $\mathbb{C}[Y]_r$
        for the subspace of
        $\mathbb{C}[Y]$ consisting of polynomials of total degree
        less than or equal to $r$.
        Then the action of
        $G_1^1(\mathbb{R})\times G_2^1(\mathbb{R})$
        (as well as its Lie algebra)
        on $U(\mathfrak{p}_0^{+,1})$
        translates to an action on
        $\mathbb{C}[Y]_r$ and
        $\mathbb{C}[Y]$ as
        $((g_1,g_2)f)(Y):=f(g_1^tYg_2)$.
        We can define a Hermitian product
        $\langle\cdot,\cdot\rangle_r$
        on $\mathbb{C}[Y]_r$
        as follows:
        for any monomials
        $f(Y)=\prod_{i,j}Y_{i,j}^{F_{i,j}}$
        and $g(Y)=\prod_{k,l}Y_{k,l}^{G_{k,l}}$
        in $\mathbb{C}[Y]_r$,
        we define
        \[
        \langle f,g\rangle_r
        =\frac{1}{r!}
        \prod_{i,j}
        \delta_{F_{i,j},G_{3-i,3-j}}\cdot F_{i,j}!.
        \]
        Then one extends
        $\langle\cdot,\cdot\rangle_r$
        to the whole
        $\mathbb{C}[Y]_r$
        by sesquilinearity.
        One can verify that
        for any $g_1\in G_1^1(\mathbb{R})$,
        $g_2\in G_2^1(\mathbb{R})$,
        we have
        $\langle (g_1,g_2)f,(g_1,g_2)g\rangle_r
        =
        \mathrm{det}(g_1)\mathrm{det}(g_2)
        \langle f,g\rangle_r$.
        We then extend these products
        $\langle\cdot,\cdot\rangle_r$
        to the whole $\mathbb{C}[Y]$
        (denoted by
        $\langle\cdot,\cdot\rangle$)
        by orthogonality and sesquilinearity.
        Therefore, the decomposition of
        $R_{0,6}|_{\mathfrak{g}_{G_1^1}\times
        \mathfrak{g}_{G_2^1}}$
        is an orthogonal  decomposition with respect to this
        Hermitian product.
        To show the non-vanishing of the zeta integral,
        it suffices to show that
        $\langle f_{\phi_{\infty;I}^+},
        f_{\underline{k},\infty}^+
        \rangle\neq0$.

        For this we need a refined version of the
        inner product defined above.
        Recall that
        $Y_{i,j}=\sum_{l=1}^6X_{1;i,l}X_{2;j,l}$.
        We then write
        $\mathbb{C}[\{X_{t;i,l}\}]$
        for the polynomial ring on the variables
        $X_{t;i,l}$.
        We define an action of the group
        $G_1^1(\mathbb{R})\times
        G_2^1(\mathbb{R})$
        on this polynomial ring as in the above case:
        write the matrix of variables
        $X^{(l)}=
        \begin{pmatrix}
        X_{1;1,l}X_{2;1,l} & X_{1;1,l}X_{2;2,l} \\
        X_{1;2,l}X_{2;1,l} & X_{1;2,l}X_{2;2,l}
        \end{pmatrix}$
        and we define the action as
        $((g_1,g_2)f)(X^{(1)},\cdots,X^{(6)})
        =f(g_1^tX^{(1)}g_2,\cdots,g_1^tX^{(6)}g_2)$.
        Similarly, one defines an inner product
        on each
        $\mathbb{C}[X^{(l)}]$
        exactly as in the case of
        $\mathbb{C}[Y]$.
        Then we extend these inner products on each
        $\mathbb{C}[X^{(l)}]$
        to their tensor product
        $\mathbb{C}[\{X_{t;i,l}\}]$
        in the obvious way.
        Now the natural embedding
        of inner product spaces 
        $\mathbb{C}[Y]
        \rightarrow
        \mathbb{C}[\{X_{t;i,l}\}]
        \simeq\otimes_{l=1}^6
        \mathbb{C}[X^{(l)}]$
        sending $Y_{i,j}$
        to 
        $\sum_{l=1}^6X_{1;i,l}X_{2;j,l}$
        is equivariant for the actions of
        $G_1^1(\mathbb{R})
        \times
        G_2^1(\mathbb{R})$.
        We then identify
        $f_{\phi_{\infty;I}^+}$
        with the polynomial
        $X^I=\prod_{l=1}^6\prod_{i,j=1}^2
        (X_{1;i,l}X_{2;j,l})^{I_{i,j;l}}$
        in
        $\mathbb{C}[\{X_{t;i,l}\}]$
        (and identifying
        $f_{\underline{k},\infty}^+$
        with the sum of all the above polynomials
        with scalar coefficient
        $a_I$
        for $I$ running through
        $\mathfrak{I}_{\underline{k}}$),
        thus a simple calculation shows that
        $\langle f_{\phi_{\infty;I}^+},
        f_{\underline{k},\infty}^+\rangle
        \neq0$.        
    \end{proof}

    From the proof of the above corollary,
    we see that
    for each $I\in\mathfrak{I}_{\underline{k}}$,
    $f_{\phi_{\infty;I}^+}$
    lies in some discrete series
    $\oplus_{\underline{k}'\in
    \mathrm{wt}_I}
    \mathcal{D}_{\underline{k}'}$
    where
    $\mathrm{wt}_I$
    is a finite subset of
    dominant weights of
    $T_2$ depending on
    $I$.

	\subsection{Unramified places}
	Now we consider a finite place $\ell\nmid pN$.
	
	We define
	the following sections
	\begin{equation}
	\label{Schwartz section at unramified finite places}
	\begin{array}{c}
	\phi_{i,\ell}(y_i^-)
	=1_{W_1^-(\mathbb{Z}_\ell)}(y_i^-)
	\\
	\phi^+_\ell
	=
	\phi_{1,\ell}\otimes\phi_{2,\ell}
	=
	1_{W_4^-(\mathbb{Z}_\ell)}
	\in
	\mathcal{S}(W_4^-(\mathbb{Q}_\ell)).
	\end{array}
	\end{equation}

	This gives rise to a 
	Siegel section
	$f_{\phi^+_\ell}\in
	\mathrm{Ind}_{P^{+,1}
		(\mathbb{Q}_\ell)}^{G_4^1(\mathbb{Q}_\ell)}
	(\xi_{1/2})$
	(note that the $\ell$-th component of the 
	Hecke character $\xi$ is in fact trivial).
	It is easy to see that
	$f_{\phi^+_\ell}(G_4^1(\mathbb{Z}_\ell))=\{
	1 \}$.
	We can extend $f_{\phi^+_\ell}$
	to a section (still denoted by) $f_{\phi_\ell^+}$ in
	$\mathrm{Ind}_{P^+(\mathbb{Q}_\ell)}^{G_4(\mathbb{Q}_\ell)}(\xi_{1/2})$
	by setting
	$f_{\phi^+_\ell}(\mathrm{diag}(1_4,\nu1_4))
	=
	\xi(\nu)
	|\nu|^{-1/2}$.
	
	Now the local zeta integral
	\begin{theorem}
		We have
		\begin{equation}\label{zeta integral unramified place}
		Z_\ell(\varphi_\ell^\mathrm{ur},(\varphi_\ell^\mathrm{ur})^\vee,
		f_{\phi^+_\ell})
		=
		\langle
		\varphi_\ell^\mathrm{ur},
		(\varphi_\ell^\mathrm{ur})^\vee
		\rangle_\ell
		L_\ell(1,\mathrm{St}(\pi)\otimes\xi).
		\end{equation}
	\end{theorem}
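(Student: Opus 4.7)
The result is the classical unramified computation in the doubling method, and my plan is to verify it by reducing to the setup of \cite{Liu2015}. First, I would check that $\phi^+_\ell = \mathbf{1}_{W_4^-(\mathbb{Z}_\ell)}$ is invariant under the joint action of $G_4(\mathbb{Z}_\ell) \cap P^+(\mathbb{Q}_\ell)$ and $\mathrm{O}(U)(\mathbb{Z}_\ell)$ via the Weil representation: this is immediate from the explicit formulas in (\ref{expression for Weil representation}), since at unramified $\ell$ the lattice $L \otimes \mathbb{Z}_\ell$ is self-dual, the character $\xi_\ell$ is unramified, and the Weil index is a unit. Consequently $f_{\phi^+_\ell}$ is the normalized spherical vector in $\mathrm{Ind}^{G_4(\mathbb{Q}_\ell)}_{P^+(\mathbb{Q}_\ell)}(\xi_{1/2})$, satisfying $f_{\phi^+_\ell}|_{G_4(\mathbb{Z}_\ell)} \equiv 1$.

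Second, I would transfer to the Siegel parabolic $P^d$ via Lemma~\ref{transfer of Siegel sections}, obtaining $\widetilde{f}_{\phi^d_\ell}$, which is again the normalized spherical vector in the corresponding degenerate principal series. Substituting into (\ref{local zeta integral}) with the unramified spherical matrix coefficient of $\pi_\ell$, the integral over $G_1^1(\mathbb{Q}_\ell)$ reduces, after Iwasawa decomposition and Macdonald's formula for the zonal spherical function of $\pi_\ell$, to a geometric sum on the dominant cone of the coweight lattice indexed by the Satake parameters $(\alpha_0,\alpha_1,\alpha_2)$. This is the unramified doubling integral of Piatetski-Shapiro--Rallis for $\mathrm{GSp}_4$, and its evaluation is classical: the computation in \cite{Liu2015} for the isometry group $\mathrm{Sp}_4$ carries over essentially verbatim once the character $\xi_\ell$ is incorporated into the inducing data of the degenerate principal series, since the spherical structure and Cartan decomposition are unchanged.

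Third, I would match the resulting Euler product with the explicit degree-$5$ factor displayed in the introduction. Since $\ell \nmid N$, the local character $\xi_\ell$ is the unramified quadratic character attached to $E_\ell/\mathbb{Q}_\ell$, and tensoring the unramified standard $L$-factor for $\pi_\ell$ by $\xi_\ell$ simply replaces $\ell^{-s}$ by $\xi(\ell)\ell^{-s}$ in the five Euler factors built from $1,\ \alpha_1/\alpha_2,\ \alpha_2/\alpha_1,\ \alpha_1\alpha_2/\alpha_0^2,\ \alpha_0^2/\alpha_1\alpha_2$, which are precisely the eigenvalues of $\rho_{\mathrm{st}}(S_\ell)$. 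Specializing the doubling parameter from $s$ to $s=1/2$ then yields $L_\ell(1,\mathrm{St}(\pi)\otimes\xi)$ on the right, multiplied by the unramified pairing $\langle \varphi_\ell^\mathrm{ur},(\varphi_\ell^\mathrm{ur})^\vee\rangle_\ell = 1$.

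The main obstacle is purely bookkeeping of normalizations: the Haar measure on $G_1^1(\mathbb{Q}_\ell)$ fixed in Remark~\ref{Haar measures} (mass one on $\mathrm{Sp}_4(\mathbb{Z}_\ell)$), the convention that $f_{\phi^+_\ell}$ takes value $1$ on $G_4(\mathbb{Z}_\ell)$ rather than some other spherical normalization, and the canonical pairing $\langle\cdot,\cdot\rangle_\ell$ satisfying $\langle \varphi_\ell^\mathrm{ur},(\varphi_\ell^\mathrm{ur})^\vee\rangle_\ell = 1$. Only with these three normalizations aligned does the zeta integral identify cleanly with a single $L$-factor, with no extraneous ratio of unramified $L$-values (e.g., coming from normalizing the intertwining operator on the induced representation) or power of $\ell$ appearing. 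Once this is checked, the identity follows from the standard doubling computation.
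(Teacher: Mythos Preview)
The paper does not supply a proof of this theorem: it is stated without argument as the classical unramified local computation in the doubling method, already implicit in the global doubling identity cited earlier to \cite{Li90} and \cite{Harris93} (and ultimately to \cite{GelbartPSRallis87}). Your sketch is correct and in fact more detailed than anything the paper provides; the identification of $f_{\phi^+_\ell}$ with the normalized spherical section in the degenerate principal series and the reduction to the Piatetski--Shapiro--Rallis unramified integral is precisely how the result is established in the literature, and your attention to the measure, spherical-vector, and pairing normalizations is exactly the bookkeeping one must do to get the identity without extraneous constants.
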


	The local Fourier coefficient is given as
	(\cite[Theorem 13.6, Proposition 14.9]{Shimura1997})
	\begin{theorem}
		We have
		\begin{equation}\label{Fourier coefficient unramified place}
		E_{\beta,\ell}(1_8,f_{\phi^+_\ell})
		=
		d_\ell(s,\xi)
		L_\ell(1,\xi\lambda_\beta)
		g_{\beta,\ell}(\xi(\ell)\ell^{-3})
		\end{equation}
		where
		$\lambda_\beta(\ell)
		=
		(\frac{\mathrm{det}(2\beta)}{\ell})$
		is the quadratic symbol
		and
		$g_{\beta,\ell}(t)$
		is element in $\mathbb{Z}[t]$
		whose constant term is $1$ and 
		is of degree $\leq 8\mathrm{val}_\ell(\mathrm{det}(2\beta))$.
	\end{theorem}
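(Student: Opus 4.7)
The plan is to reduce the computation to the classical local Siegel series calculation carried out in Shimura's book, with which the stated formula is cited to agree. The first step is to make the local Siegel section $f_{\phi_\ell^+}$ completely explicit. Since $\ell \nmid Np$, the symmetric bilinear form $\eta_U$ is $\mathbb{Z}_\ell$-equivalent to the standard split form (the factors of $2$, $N$, $N_1$ are all $\ell$-adic units), so $\phi_\ell^+ = \mathbf{1}_{W_4^-(\mathbb{Z}_\ell)}$ is the normalised spherical vector. The defining recipe $f_{\phi_\ell^+}(g) = (\omega_{W_4^-}(g)\phi_\ell^+)(0)$ together with the formulas in \eqref{expression for Weil representation} then show that $f_{\phi_\ell^+}$ is the unique spherical section of $\mathrm{Ind}_{P^+(\mathbb{Q}_\ell)}^{G_4(\mathbb{Q}_\ell)}(\xi_s)|_{s=1/2}$ normalised so that $f_{\phi_\ell^+}(G_4(\mathbb{Z}_\ell)) = 1$; equivalently, after conjugating by $\mathcal{S}$, the same is true of $\widetilde{f}_{\phi_\ell^d}\in \mathrm{Ind}_{P^d(\mathbb{Q}_\ell)}^{G_4(\mathbb{Q}_\ell)}(\xi_s)|_{s=1/2}$.

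Next, I unfold the integral defining the local Fourier coefficient,
\[
E_{\beta,\ell}(1_8,f_{\phi_\ell^+}) \;=\; \int_{\mathrm{Sym}_{4\times 4}(\mathbb{Q}_\ell)} f_{\phi_\ell^+}(J_8\,u(x))\,\mathbf{e}_\ell(-\mathrm{tr}(\beta x))\,dx,
\]
which I then recognise, using step one, as precisely the local Siegel series $b_\ell(\beta,\xi,s)$ computed by Shimura at $s=1/2$ (for the group $\mathrm{Sp}_4$ twisted by the character $\xi$ coming from $E/\mathbb{Q}$). Since $\ell\nmid Np$, the character $\xi_\ell$ is unramified and equals the quadratic character determined by $-N$ modulo $\ell$. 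Shimura's computation of the local confluent hypergeometric function (Theorem 13.6 of his book, together with the determination of the normalising $d$-factor in Proposition 14.9) yields an explicit expression
\[
b_\ell(\beta,\xi,s) = d_\ell(s,\xi) \cdot L_\ell(s+\tfrac12,\xi\lambda_\beta)^{\varepsilon} \cdot g_{\beta,\ell}\bigl(\xi(\ell)\ell^{-2s-3/2}\bigr)
\]
where $\varepsilon=1$ when $\mathrm{det}(2\beta) \in \mathbb{Q}_\ell^\times$ (so that $\lambda_\beta$ is the associated quadratic symbol), and $g_{\beta,\ell}(t)\in \mathbb{Z}[t]$ is a polynomial of the asserted degree with constant term $1$. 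Specialising to $s=1/2$ gives the claimed identity with $L_\ell(1,\xi\lambda_\beta)$ and $g_{\beta,\ell}(\xi(\ell)\ell^{-3})$.

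The only real work is bookkeeping: I need to check that (i) the Haar measure on $\mathrm{Sym}_{4\times 4}(\mathbb{Q}_\ell)$ used in the definition of $E_{\beta,\ell}$ matches the self-dual measure with respect to $\mathbf{e}_\ell$ used by Shimura (in both cases $\mathrm{Sym}_{4\times 4}(\mathbb{Z}_\ell)$ has volume $1$, so this is automatic for $\ell\nmid p$); (ii) the normalisation of the induced character $\xi_s$ in this paper (given by $\xi(\det A)|\det A|^s|\nu|^{-2s}$ on $P^d$) matches Shimura's parameter, which is a simple substitution; and (iii) the rank and dimension conventions line up so that the "$n$" in Shimura's formula is the right one. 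With those translations, the identity falls out directly. The main potential obstacle is ensuring the correct shift between the $P^+$-induction (used for the Fourier coefficient computation) and the $P^d$-induction (used elsewhere in the paper); this is handled by Lemma \ref{transfer of Siegel sections} and the fact that $\mathcal{S}\in G_4(\mathbb{Z}_\ell)$ does not appear here because $f_{\phi_\ell^+}$ and $\widetilde f_{\phi_\ell^d}$ both take the value $1$ on the maximal compact subgroup, so the Fourier coefficient at $1_8$ is unaffected by the change of parabolic.
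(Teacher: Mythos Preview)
Your proposal is correct and takes the same approach as the paper, which gives no argument beyond the citation to \cite[Theorem 13.6, Proposition 14.9]{Shimura1997}; you have simply made explicit why the citation applies (the section $f_{\phi_\ell^+}$ is the normalized spherical one, so its local Fourier coefficient is exactly Shimura's local Siegel series specialized to $s=1/2$). One small caveat worth flagging: your parenthetical that ``the factors of $2$, $N$, $N_1$ are all $\ell$-adic units'' fails at $\ell=2$, which is in the unramified range here since both $N$ and $p$ are odd, so $\eta_U$ is not $\mathbb{Z}_2$-unimodular and the sphericality of $\phi_2^+$ and the measure bookkeeping (cf.\ Remark~\ref{Haar measures}) require a separate check there---but Shimura's formula still covers this case.
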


	\subsection{Generalized Gauss sums}
	We will need some results on Gauss sums
	for the following subsection on the choice of
	Schwartz-Bruhat functions over places
	dividing $Np$.
	For any integers
	$a,b,c\in\mathbb{Z}$
	with $c>0$,
	we define the generalized Gauss sum
	$G(a,b,c)$ as
	\[
	G(a,b,c)
	=
	\sum_{j=0}^{c-1}\mathrm{exp}(-2i\pi\frac{aj^2+bj}{c}).
	\]
	The generalized Gauss sum has the following properties
	\begin{enumerate}
		\item 
		if $(a,c)|b$, then
		\[
		G(a,b,c)
		=
		(a,c)G(\frac{a}{(a,c)},\frac{b}{(a,c)},\frac{c}{(a,c)});
		\]
		otherwise,
		$G(a,b,c)=0$;
		\item 
		We define a function
		$\varepsilon\colon
		1+2\mathbb{Z}
		\rightarrow
		\{1,i\}$
		with
		$\varepsilon(m)=1$ if $m\equiv1(\mathrm{mod}\,  4)$
		and $=i$ if $m\equiv3(\mathrm{mod}\,  4)$.
		Then assuming $(a,c)=1$
		and $2\nmid ac$,
		we have
		\[
		G(a,b,c)
		=
		\varepsilon(c)\sqrt{c}(\frac{-a}{c})
		\mathrm{exp}(2i\pi\frac{a_c^{-1}b^2}{c})
		\neq0
		\]
		where $(\frac{a}{c})$ is the Jacobi symbol
		and $a_c^{-1}\in\mathbb{Z}$ is any integer such that
		$4a_c^{-1}a\equiv1(\mathrm{mod}\, c)$.
		In particular, if
		$c=1$,
		$G(a,b,c)=1$.
	\end{enumerate}
	
	Using the generalized Gauss sums, 
	we can evaluate some integrals.
	Let $v=v_\ell$ be
	the $\ell$-adic valuation.
	For any $S\subset\mathbb{Q}_\ell$ open compact subgroup,
	we write
	$v(S)$ to be the integer such that
	$\ell^{v(S)}\mathbb{Z}_\ell=S$.
	For any $a,b,r\in\mathbb{Q}_\ell$,
	we set
	$a=a'\ell^{v(a)}$,
	$b=b'\ell^{v(b)}$
	and
	$r=r'\ell^{v(r)}$.
	Then we consider the following integral
	\[
	\int_{r+S}\mathbf{e}_\ell(ax^2+bx)dx
	=
	\int_{r'\ell^{v(r)}+\ell^{v(S)}\mathbb{Z}_\ell}
	\mathbf{e}_\ell(a'\ell^{v(a)}x^2+b'\ell^{v(b)}x)dx
	\]
	To relate the above integral to
	the generalized Gauss sum, we choose an integer
	$M\gg0$ such that
	$M-v(S)>0$.
	Then the above integral becomes
	\begin{align*}
	\int_{r+S}\mathbf{e}_\ell(ax^2+bx)dx
	&
	=
	\ell^{-M}
	\sum_{j=0}^{\ell^{M-v(S)}-1}
	\mathbf{e}_\ell(a(r+\ell^{v(S)}j)^2+b(r+\ell^{v(S)}j))
	\\
	&
	=
	\ell^{-M}
	\sum_{j=0}^{\ell^{M-v(S)}-1}
	\exp
	(
	-2i\pi
	(
	a\ell^{2v(S)}j^2+(2ar\ell^{v(S)}+b\ell^{v(S)})j+ar^2+br
	)
	)
	\\
	&
	=
	\ell^{-M}
	\exp(-2i\pi(ar^2+br))
	\sum_{j=0}^{\ell^{M-v(S)}-1}
	\exp
	(
	-2i\pi
	\frac{\ell^{M-v(S)}(a\ell^{2v(S)}j^2+(2ar\ell^{v(S)}+b\ell^{v(S)})j)}
	{\ell^{M-v(S)}}
	)
	\\
	&
	=
	\ell^{-M}
	\mathbf{e}_\ell
	(ar^2+br)
	G(a\ell^{M+v(S)},(2ar+b)\ell^M,\ell^{M-v(S)}).
	\end{align*}
	So we see that
	\begin{lemma}\label{vanishing of Gauss integrals}
		With the above notations,
		one has
		\begin{equation}\label{generalized Gauss sums}
		\min(v(aS^2),0)>v((2ar+b)S)
		\Leftrightarrow
		\int_{r+S}\mathbf{e}_\ell(ax^2+bx)dx
		=0.
		\end{equation}
		Here we understand $v(aS^2)$ as
		$v(a)+2v(S)$.
		In particular, if
		$r=0$ and $v(aS^2)\leq0,v(bS)$,
		then
		$\int_S\mathbf{e}_\ell(ax^2+bx)dx
		=\ell^{v(a)/2}\epsilon(\ell^{-v(a)})
		(\frac{-a\ell^{-v(a)}}{\ell})^{-v(a)}
		\mathbf{e}_\ell(-\frac{b^2}{4a})$.
	\end{lemma}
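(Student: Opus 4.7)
The plan is to start from the identity already derived in the paragraph immediately preceding the statement, namely
\[
\int_{r+S}\mathbf{e}_\ell(ax^2+bx)\,dx
= \ell^{-M}\mathbf{e}_\ell(ar^2+br)\,G\!\left(a\ell^{M+v(S)},\,(2ar+b)\ell^M,\,\ell^{M-v(S)}\right),
\]
valid for any integer $M$ with $M > v(S)$. Since the exponential prefactor $\ell^{-M}\mathbf{e}_\ell(ar^2+br)$ is a non-zero complex number, the vanishing and non-vanishing of the left-hand side is controlled entirely by the Gauss sum on the right.

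For the equivalence, I would invoke property (1) of the generalized Gauss sums listed in the subsection: $G(A,B,C) = 0$ iff $(A,C) \nmid B$. Setting $A = a\ell^{M+v(S)}$, $B = (2ar+b)\ell^M$, $C = \ell^{M-v(S)}$, a direct valuation computation gives $v((A,C)) = M - v(S) + \min(v(a)+2v(S),\,0)$ and $v(B) = v(2ar+b) + M$. The non-divisibility $v((A,C)) > v(B)$ then rearranges to $\min(v(aS^2),0) > v((2ar+b)S)$, which is exactly the stated condition. The choice of $M$ drops out because both sides depend on $M$ in the same way, so the criterion is intrinsic.

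For the explicit evaluation in the special case $r=0$ with $v(aS^2) \leq 0$ and $v(aS^2) \leq v(bS)$, the prefactor $\mathbf{e}_\ell(ar^2+br)$ is trivial. Under these hypotheses the valuation computation above shows $(A,C) = \ell^{M+v(S)+v(a)}$, so property (1) reduces the Gauss sum to $\ell^{M+v(S)+v(a)}\cdot G(a',\, b\ell^{-v(S)-v(a)},\, \ell^{-v(a)-2v(S)})$ with $a' = a\ell^{-v(a)}$ a unit. Property (2) applies to this reduced sum (with $\ell$ odd, which is the relevant case throughout the paper), yielding the factor $\varepsilon(\ell^{-v(a)-2v(S)})\sqrt{\ell^{-v(a)-2v(S)}}\bigl(\tfrac{-a'}{\ell^{-v(a)-2v(S)}}\bigr)$ times the exponential coming from completion of the square. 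Combining with the prefactor $\ell^{-M}\cdot\ell^{M+v(S)+v(a)}$ collapses the powers of $\ell$ to $\ell^{v(a)/2}$, and the Jacobi symbol simplifies via multiplicativity to $\bigl(\tfrac{-a\ell^{-v(a)}}{\ell}\bigr)^{-v(a)}$. The completed-square term becomes $\mathbf{e}_\ell(-b^2/(4a))$, which is well defined modulo $\mathbb{Z}_\ell$ precisely because of the hypothesis $v(aS^2) \leq v(bS)$.

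The main obstacle is not conceptual but notational: one must carefully track the sign conventions in $\mathbf{e}_\ell$ (which uses $\exp(-2i\pi\{\cdot\}_\ell)$) against the $\exp(+2i\pi\cdot)$ appearing in property (2) of the Gauss sums, and verify that the representative $a_c^{-1}$ of $(4a)^{-1}$ modulo $c$ used in property (2) reproduces $-b^2/(4a)$ modulo $\mathbb{Z}_\ell$ after the substitutions. Once these bookkeeping checks are handled, the result follows directly from the two properties of generalized Gauss sums already listed above the lemma.
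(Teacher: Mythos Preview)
Your proposal is correct and follows exactly the approach the paper intends: the paper itself gives no proof beyond the phrase ``So we see that'' following the integral--Gauss-sum identity, and your argument is precisely the expected elaboration via properties (1) and (2) of the generalized Gauss sums. Your additional observations that $\varepsilon(\ell^{-v(a)-2v(S)})=\varepsilon(\ell^{-v(a)})$ and $\bigl(\tfrac{-a'}{\ell}\bigr)^{-v(a)-2v(S)}=\bigl(\tfrac{-a'}{\ell}\bigr)^{-v(a)}$ (since odd squares are $\equiv 1\pmod 4$ and the Legendre symbol is $\pm1$) are exactly what is needed to match the form stated in the lemma.
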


	We can generalize the above result to higher dimensions.
	Let $a\in\mathrm{Sym}_{n\times n}(\mathbb{Q}_\ell)$ 
	be a symmetric
	matrix (not necessarily non-singular),
	$b\in\mathrm{M}_{n\times 1}
	(\mathbb{Q}_\ell)$
	a column matrix.
	For any element $r\in\mathrm{M}_{n\times 1}
	(\mathbb{Q}_\ell)$
	and any open compact subgroup
	$S\subset\mathrm{M}_{n\times 1}
	(\mathbb{Q}_\ell)$
	of the form $\ell^{v(S)}\mathrm{M}_{n\times 1}
	(\mathbb{Z}_\ell)$
	for some integer
	$v(S)\in\mathbb{Z}$
	(the standard open compact subgroups of
	$\mathrm{M}_{n\times 1}
	(\mathbb{Q}_\ell)$),
	we want to evaluate the following integral,
	which is analogous to the above one
	\[
	\int_{r+S}\mathbf{e}_\ell(x^tax+b^tx)dx.
	\]
	
	First note that $a$ is $\mathbb{Z}_\ell$-equivalent to some diagonal
	matrix, i.e., there exists some invertible matrix
	$D\in\mathrm{GL}_n(\mathbb{Z}_\ell)$
	and a diagonal matrix
	$\Lambda=\mathrm{diag}(\lambda_1,\cdots,\lambda_n)
	\in\mathrm{Sym}_{n\times n}(\mathbb{Q}_\ell)$
	such that
	$a=D^t\Lambda D$
	(\cite[Chapter 8]{Cassels82}).
	So if we write
	\[
	r'
	=
	Dr
	=
	\begin{pmatrix}
	r_1' \\
	r_2' \\
	\vdots \\
	r_n'
	\end{pmatrix},
	\quad
	S'
	=
	DS
	=
	\begin{pmatrix}
	S_1' \\
	S_2' \\
	\vdots \\
	S_n'
	\end{pmatrix},
	\quad
	b'
	=
	Db
	=
	\begin{pmatrix}
	b_1' \\
	b_2' \\
	\vdots \\
	b_n'
	\end{pmatrix},
	\]
	then the integral becomes
	\[
	\int_{r+S}\mathbf{e}_\ell(x^tax+b^tx)dx
	=
	|\mathrm{det}\, D|_\ell
	\int_{r'+S'}\mathbf{e}_\ell(x'^t\Lambda x'+b'^tx')dx'
	=
	|\mathrm{det}\, D|_\ell
	\prod_{k=1}^n
	\int_{r'_k+S'_k}\mathbf{e}_\ell(\lambda_kx^2+b'_kx)dx.
	\]
	From the above discussion, we see that if there exists some
	$k=1,\cdots,n$ such that
	$\min(v(\lambda_kS_k^{'2}),0)
	>v((2\lambda_kr'_k+b'_k)S'_k)$,
	then
	$\int_{r+S}\mathbf{e}_\ell(x^tax+b^tx)=0$.
	We define
	$v(r')=\min_{k=1}^n(v(r'_k))$,
	then
	$v(r)=v(r')$
	and similarly
	$v(S)=v(S')$,
	$v(b)=v(b')$.
	Then we have the following lemma
	\begin{lemma}\label{non-vanishing of Gauss sums-higher dim}
		Notations as above, one has
		\[
		\min(v(\lambda_kS_k^{'2}),0)
		>v((2\lambda_kr_k'+b_k')S_k')
		\text{ for some }k
		\Rightarrow
		\int_{r+S}\mathbf{e}_\ell(x^tax+b^tx)dx=0.
		\]
	\end{lemma}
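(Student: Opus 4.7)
The plan is to reduce to the one-dimensional case already handled in Lemma \ref{vanishing of Gauss integrals} via the diagonalization $a = D^t \Lambda D$ furnished by the classification of quadratic forms over $\mathbb{Z}_\ell$. The key input is that $D \in \mathrm{GL}_n(\mathbb{Z}_\ell)$, so $|\det D|_\ell = 1$ and $D$ preserves the lattice $\mathrm{M}_{n \times 1}(\mathbb{Z}_\ell)$; in particular $D$ maps standard open compact sets to standard open compact sets without distortion of measure.

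First, I would perform the linear change of variables $x' = Dx$ in the integral. The phase transforms as $x^t a x + b^t x = (x')^t \Lambda x' + (b')^t x' = \sum_{k=1}^n \bigl(\lambda_k (x'_k)^2 + b'_k x'_k\bigr)$, the domain $r + S$ is carried to $r' + S'$, and the Jacobian factor $|\det D|_\ell = 1$. Because $\Lambda$ is diagonal and $S' = \prod_{k=1}^n S'_k$ splits as a product of one-dimensional translates in the new coordinates, the integral factorizes:
\[
\int_{r+S} \mathbf{e}_\ell(x^t a x + b^t x)\, dx = \prod_{k=1}^n \int_{r'_k + S'_k} \mathbf{e}_\ell(\lambda_k x^2 + b'_k x)\, dx.
\]

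Next, I would apply Lemma \ref{vanishing of Gauss integrals} to the particular index $k$ for which the hypothesis $\min(v(\lambda_k (S'_k)^2), 0) > v((2\lambda_k r'_k + b'_k) S'_k)$ is assumed to hold. That lemma gives the vanishing of the corresponding one-dimensional factor, and hence of the whole product.

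I do not anticipate a serious obstacle: the argument is a straightforward packaging of the one-dimensional result via a unimodular change of variables. The only verification needed is that the diagonalization $a = D^t \Lambda D$ can indeed be chosen with $D \in \mathrm{GL}_n(\mathbb{Z}_\ell)$, which is precisely the statement of the local classification of symmetric bilinear forms over $\mathbb{Z}_\ell$ (recalled in \cite[Chapter 8]{Cassels82}), and that $D$ carries the standard lattice to itself so that $S'$ remains of product form — both of which are immediate.
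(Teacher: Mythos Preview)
Your proposal is correct and is essentially identical to the paper's own argument, which is contained in the discussion immediately preceding the lemma: diagonalize $a=D^t\Lambda D$ with $D\in\mathrm{GL}_n(\mathbb{Z}_\ell)$, change variables $x'=Dx$ (using $|\det D|_\ell=1$ and $DS=S$), factor the integral as $\prod_k\int_{r'_k+S'_k}\mathbf{e}_\ell(\lambda_k x^2+b'_k x)\,dx$, and apply Lemma~\ref{vanishing of Gauss integrals} to the $k$-th factor.
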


    We can again generalize the above results as follows.
    Let $a\in\mathrm{Sym}_{n\times n}(\mathbb{Q}_\ell)$
    be as above 
    $\mathbb{Z}_\ell$-equivalent to
    $\Lambda=
    \mathrm{diag}(\lambda_1,\cdots,\lambda_n)$
    while
    $b\in\mathrm{M}_{n\times m}
    (\mathbb{Q}_\ell)$
    be an $n\times m$-matrix.
    Let $\eta'=\mathrm{diag}(\eta'_1,\cdots,\eta'_m)$
    be a diagonal matrix with entries in
    $\mathbb{Q}_\ell^\times$.
    Let $r\in\mathrm{M}_{n\times m}
    (\mathbb{Q}_\ell)$
    and
    $S\in\mathrm{M}_{n\times m}
    (\mathbb{Q}_\ell)$
    be an open compact open subgroup.
    Then we consider the following integral:
    \[
    \int_{r+S}\mathbf{e}_\ell(\mathrm{tr}(x^tax\eta'+b^tx))
    dx.
    \]
    We write
    $r=(r^{(1)},\cdots,r^{(m)})$,
    $b=(b^{(1)},\cdots,b^{(m)})$
    and
    $x=(x^{(1)},\cdots,x^{(m)})$
    in column matrices.
	If we assume $S$ is of the form
	$S=(S^{(1)},\cdots,S^{(m)})$,
	then we can write
	the above integral as a product
	$\prod_{j=1}^m
	\int_{r^{(j)}+S^{(j)}}
	\mathbf{e}_\ell((x^{(j)})^tax^{(j)}\eta'_j
	+(b^{(j)})^tx^{(j)})dx^{(j)}$.
	As in the above lemma,
	we can easily give a non-vanishing criterion 
	for this integral:
	if
	$\min(v(\lambda_k(S_k^{(j)})^2),0)>
	v((2\eta'_j\lambda_kr_k^{(j)}+b_k^{(j)})S_k^{(j)})$
	for some
	$j$ and $k$,
	then the integral vanishes.

	\subsection{Ramified places dividing $N$}
	Now let's consider a finite place
	$\ell|N$.
	We write
	$N_\ell$
	for $\ell^{\mathrm{val}_\ell(N)}$.
	Recall that we have in fact
	$N_\ell=\ell$ since $N$ is square-free.
	Moreover our symmetric form is
	$\eta_U=\mathrm{diag}(N^2/2,N^2/2,N^2/2,N^2/2,N/N_1,N_1)$
	with $N_1\in\mathbb{Z}_{>0}$ prime to $Np$.

	\subsubsection{Choice of sections}
	Recall that we identify $W_1^-(\mathbb{Q}_\ell),
	W_2^-(\mathbb{Q}_\ell)$
	with the set of matrices
	$\mathrm{M}_{2\times 6}
	(\mathbb{Q}_\ell)$
	under the correspondence
	$e_k^-\otimes u_r\leftrightarrow E_{k,r}$
	for $k=1,2$ and $r=1,\cdots,6$.
	We fix the following matrix $b$
	and compact open subgroup $S$ of
	$\mathrm{M}_{4\times 6}
	(\mathbb{Q}_\ell)$:
	\[
	b=
	\frac{1}{N_\ell}
	\begin{pmatrix}
	1 & 0 & 0 & 0 & 0 & 0 \\
	0 & 1 & 0 & 0 & 0 & 0 \\
	0 & 0 & 1 & 0 & 0 & 0 \\
	0 & 0 & 0 & 1 & 0 & 0
	\end{pmatrix},
	\quad
	S=\frac{1}{N_\ell}(\mathbb{Z}_\ell^4,
	\mathbb{Z}_\ell^4,
	\mathbb{Z}_\ell^4,
	\mathbb{Z}_\ell^4,
	\mathbb{Z}_\ell^4,
	\mathbb{Z}_\ell^4).
	\]
	Moreover, we write
	$b=\begin{pmatrix}
	b_{[1]} \\ b_{[2]}
	\end{pmatrix}$
	and
	$S=\begin{pmatrix}
	S_{[1]} \\ S_{[2]}
	\end{pmatrix}$
	in $2\times6$-blocks.	
	We define elements in
	the following sections:
	\begin{equation}
	\label{Schwartz section at ramified place dividing N}
	\phi_{i,\ell}(w_i^-)
	=
	1_{S_{[i]}}(w_i^-)
	\mathbf{e}_\ell(\mathrm{tr}
	(b_{[i]}^tw_i^-))
	\in
	\mathcal{S}(W_i^-(\mathbb{Q}_\ell))
	\text{ where }i=1,2.
	\end{equation}

	As above, we set
	$\phi^+_\ell=
	\phi_{1,\ell}\otimes\phi_{2,\ell}$,
	$\phi^d_\ell=\delta(\phi^+_\ell)$,
	which give rise to
	$f_{\phi^+_\ell}
	\in
	\mathrm{Ind}_{P^+(\mathbb{Q}_\ell)}^{G_4(\mathbb{Q}_\ell)}(\xi_{1/2})$
	and
	$f_{\phi^d_\ell},
	\widetilde{f}_{\phi^d_\ell}
	\in
	\mathrm{Ind}_{P^d(\mathbb{Q}_\ell)}^{G_4(\mathbb{Q}_\ell)}(\xi_{1/2})$
	(as in the case of unramified places,
	we first define sections in
	$\mathrm{Ind}_{P^*(\mathbb{Q}_\ell)}
	^{G_4^1(\mathbb{Q}_\ell)}(\xi_{1/2})$, 
	and then extend them to
	$G_4(\mathbb{Q}_\ell)$
	for $*=+,d$).
	Then it is easy to see
	\begin{lemma}\label{support of Gauss sums}
		For any $a\in\mathrm{Sym}_{4\times 4}
		(\mathbb{Q}_\ell)$
		which is $\mathbb{Z}_\ell$-equivalent to
		the diagonal matrix
		$\Lambda=
		\mathrm{diag}(\lambda_1,\cdots,\lambda_4)$,
		the generalized Gauss sum
		$I=\int_S\mathbf{e}_\ell
		(\mathrm{tr}(x^tax\eta+b^tx))dx\neq0$
		implies
		$v(\lambda_i)\leq-v(N^2)$
		for all $i=1,\cdots,4$.
		In particular,
		$I\neq0$ implies that $\mathrm{det}(a)\neq0$.
	\end{lemma}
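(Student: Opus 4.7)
The plan is to reduce the claim to the factorized one-dimensional Gauss sum computations that were already carried out in the two preceding lemmas, then extract the valuation constraint from the non-vanishing condition at each factor. Concretely, write $a = D^t\Lambda D$ with $D\in\mathrm{GL}_4(\mathbb{Z}_\ell)$ and $\Lambda = \mathrm{diag}(\lambda_1,\ldots,\lambda_4)$. Since $D$ is $\mathbb{Z}_\ell$-unimodular it preserves the standard compact $S = \frac{1}{N_\ell}\mathrm{M}_{4\times 6}(\mathbb{Z}_\ell)$, so after the substitution $x \mapsto Dx$ (with $|\det D|_\ell = 1$) the integral becomes
\[
I \;=\; \int_{S} \mathbf{e}_\ell\bigl(\mathrm{tr}(x^t\Lambda x\,\eta + (b')^t x)\bigr)\,dx, \qquad b' := Db.
\]
Because $\Lambda$ is diagonal and $\eta = \eta_U = \mathrm{diag}(\eta'_1,\ldots,\eta'_6)$ is diagonal, this splits as a product over $(j,k)\in\{1,\ldots,6\}\times\{1,\ldots,4\}$ of one-dimensional Gauss integrals $\int_{S_k^{(j)}} \mathbf{e}_\ell(\eta'_j\lambda_k x^2 + b'^{(j)}_k x)\,dx$, exactly as in the passage preceding Lemma \ref{non-vanishing of Gauss sums-higher dim}.

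The non-vanishing of $I$ therefore forces every such factor to be non-zero. By Lemma \ref{vanishing of Gauss integrals} applied with $r=0$, the factor indexed by $(j,k)$ is non-zero only if
\[
\min\bigl(v(\eta'_j\lambda_k) + 2v(S_k^{(j)}),\, 0\bigr) \;\leq\; v(b'^{(j)}_k) + v(S_k^{(j)}).
\]
Now $v(S_k^{(j)}) = -v(N_\ell)$ for all $j,k$, and for $j\in\{1,2,3,4\}$ one has $\eta'_j = N^2/2$, so $v(\eta'_j) = 2v(N_\ell)$; hence the left-hand side reduces to $\min(v(\lambda_k),0)$. On the right-hand side, $b'^{(j)}_k = \frac{1}{N_\ell}D_{k,j}$ for $j\in\{1,2,3,4\}$, so the condition becomes
\[
\min(v(\lambda_k),0) \;\leq\; v(D_{k,j}) - 2v(N_\ell) \qquad (j=1,\ldots,4,\ k=1,\ldots,4).
\]

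The key observation is that the first four columns of $D$ form a $\mathbb{Z}_\ell$-basis of $\mathbb{Z}_\ell^4$, so for every row $k$ there exists some $j\in\{1,2,3,4\}$ with $v(D_{k,j})=0$. Taking this $j$ in the inequality above yields $\min(v(\lambda_k),0) \leq -2v(N_\ell)$, and since $-2v(N_\ell) = -v(N^2) < 0$ this is equivalent to $v(\lambda_k) \leq -v(N^2)$. This bound holds for every $k=1,\ldots,4$, which is precisely the first assertion; the columns $j=5,6$ of $b$ are zero and so contribute no constraint, consistent with the above argument. The finiteness $v(\lambda_k) < \infty$ gives $\lambda_k \neq 0$ for all $k$, whence $\det(a) = \det(D)^2 \prod_k \lambda_k \neq 0$, proving the second assertion.

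The only delicate point is keeping straight the valuations after the $\mathbb{Z}_\ell$-change of variables, in particular that applying $D\in\mathrm{GL}_4(\mathbb{Z}_\ell)$ leaves $S$ invariant but sends $b$ to a matrix whose first four columns still contain, in each row, at least one entry of valuation exactly $-v(N_\ell)$; everything else is a routine application of Lemma \ref{vanishing of Gauss integrals}.
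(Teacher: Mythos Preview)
Your proof is correct and follows essentially the same approach as the paper: diagonalize $a$ via a $\mathbb{Z}_\ell$-unimodular change of variables, factor the integral into one-dimensional Gauss integrals indexed by $(j,k)$, and use the one-dimensional non-vanishing criterion together with the fact that every row of a matrix in $\mathrm{GL}_4(\mathbb{Z}_\ell)$ contains a unit entry. The paper phrases the same argument in contrapositive form and groups the factors column-by-column (invoking Lemma~\ref{non-vanishing of Gauss sums-higher dim} rather than the one-dimensional Lemma~\ref{vanishing of Gauss integrals} directly), but the content is identical.
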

    \begin{proof}
    	Indeed, suppose that there is a matrix
    	$D\in\mathrm{GL}_4(\mathbb{Z}_\ell)$
    	such that
    	$a=D^t\Lambda D$.
    	We then write as above
    	$b'=Db=(b^{'(1)},\cdots,b^{'(6)})$,
    	$S=DS=(S^{(1)},\cdots,S^{(6)})$
    	and
    	$x=(x^{(1)},\cdots,x^{(6)})$.
    	We define
    	$I_j=
    	\int_{S^{(j)}}
    	\mathbf{e}_\ell
    	(\eta_j(x^{(j)})^t\Lambda
    	x^{(j)}+(b^{'(j)})^tx^{(j)})
    	dx^{(j)}$
    	for $j=1,\cdots,6$.
    	Then $I=\prod_{j=1}^6I_j$.
    	Suppose that
    	$v(\lambda_k)>-v(N^2)$
    	for some $k$.
    	Since $D\in\mathrm{GL}_4(\mathbb{Z}_\ell)$,
    	by the definition of $b$,
    	we see that for the $k$ above,
    	there exists some $j=1,2,3,4$
    	such that
    	$b^{(j)}_k\in\frac{1}{N_\ell}\mathbb{Z}_\ell^\times$.    	
    	Then 
    	$v(b_k^{(j)}S_k^{(j)})
    	=-v(N^2)<
    	\mathrm{min}(v(\eta_j\lambda_k
    	(S_k^{(j)})^2),0)
    	=\min(v(\lambda_k),0)$
    	thus
    	by
    	Lemma \ref{non-vanishing of Gauss sums-higher dim},
    	we see that
    	$I_j=0$ and therefore
    	$I=0$.
    \end{proof}

    We write
    $\Delta=\begin{pmatrix}
    0 & 1_2 \\
    1_2 & 0
    \end{pmatrix}
    \in G_1(\mathbb{Q})$.
    Then it is easy to see that
    \[
    \mathcal{S}^{-1}
    (g_1,1)\mathcal{S}
    =\begin{pmatrix}
    \frac{1}{2}(g_1+1) & \frac{1}{4}(g_1-1)\Delta \\
    \Delta(g_1-1) & \frac{1}{2}\Delta(g_1+1)\Delta
    \end{pmatrix}.
    \]
    Then we have
	\begin{corollary}\label{support of Siegel section}
		For an element
		$g=\begin{pmatrix}
		A & B \\
		C & D
		\end{pmatrix}
		\in G_4(\mathbb{Q}_\ell)$,
		$f_{\phi_\ell^+}(g)\neq0$
		implies that
		$\mathrm{det}(C^tD)\neq0$.
		For $g_1\in G_1^1(\mathbb{Q}_\ell)$,
		$\widetilde{f}_{\phi_\ell^d}((g_1,1))\neq0$
		implies that
		$g_1\in
		G_1^1(\mathbb{Z}_\ell)$,
		$g_1\equiv1_4(\mathrm{mod}\,N^2)$
		and
		$\mathrm{det}(g_1-1)\neq0$.
	\end{corollary}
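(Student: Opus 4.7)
The plan is to exploit the explicit form of $\phi^+_\ell$ to express $f_{\phi^+_\ell}(g)$ as a generalized Gauss sum in matrix variables, and then apply the vanishing criteria of Lemmas \ref{vanishing of Gauss integrals}, \ref{non-vanishing of Gauss sums-higher dim}, and \ref{support of Gauss sums}.

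First I would combine the defining identity $f_{\phi_\ell^+}(g)=\omega_{W_4^-}(g)\phi_\ell^+(0)$ with formula (\ref{general expression for Weil representation}) specialized at $y=0$, obtaining
\[
f_{\phi_\ell^+}(g)=\gamma^{\mathrm{Weil}}\int_{\mathrm{Ker}(c)\backslash W_4^+}\phi^+_\ell(cx)\,\mathbf{e}_\ell\!\left(\tfrac12\mathrm{tr}(cx,dx)\right)dx.
\]
Substituting $\phi^+_\ell(y)=1_S(y)\mathbf{e}_\ell(\mathrm{tr}(b^ty))$ and invoking the matrix identifications of Remark \ref{matrix identification}, this becomes a Gauss sum in a matrix variable whose quadratic phase is essentially $\mathrm{tr}(x^t\widetilde{a}x\,\eta_U)$ for a symmetric matrix $\widetilde{a}$ built from $C,D$, and whose support is cut out by $1_S(Cx)$. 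When $\det C\neq 0$ one may change variables $y=Cx$ and read off $\widetilde a$ as a symmetrization of $DC^{-1}$; Lemma \ref{support of Gauss sums} then forces $\det\widetilde{a}\neq 0$, hence $\det D\neq 0$, and in particular $\det(C^tD)\neq 0$. When $\det C=0$ the integration domain $\mathrm{Ker}(c)\backslash W_4^+$ is strictly smaller and the combined support constraint $Cx\in S$ together with the degeneracy of the quadratic form forces the integrand to oscillate nontrivially over a non-compact direction, giving vanishing; this is the contrapositive of the asserted implication.

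For the second assertion, the identity $\widetilde{f}_{\phi^d_\ell}((g_1,1))=f_{\phi^+_\ell}(\mathcal{S}^{-1}(g_1,1)\mathcal{S})$ together with the explicit formula
\[
\mathcal{S}^{-1}(g_1,1)\mathcal{S}=\begin{pmatrix}\tfrac12(g_1+1)&\tfrac14(g_1-1)\Delta\\ \Delta(g_1-1)&\tfrac12\Delta(g_1+1)\Delta\end{pmatrix}
\]
supplied in the lead-up to the corollary identifies $C=\Delta(g_1-1)$ and $D=\tfrac12\Delta(g_1+1)\Delta$. Applying the first part already yields $\det(g_1-1)\neq 0$ (and $\det(g_1+1)\neq 0$, but the latter will follow automatically from the integrality and congruence to $1$ below). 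To obtain $g_1\in G_1^1(\mathbb{Z}_\ell)$ and $g_1\equiv 1\pmod{N^2}$, I would combine the indicator constraint $\Delta(g_1-1)x\in S=\tfrac{1}{N_\ell}M_{4\times 6}(\mathbb{Z}_\ell)$, valid for all $x$ in the integration support, with the refined valuation bound from Lemma \ref{support of Gauss sums}: the latter forces each diagonal eigenvalue of the quadratic form matrix to satisfy $v(\lambda_i)\leq -v(N^2)$. Translating this for $\widetilde a\propto\Delta(g_1+1)(g_1-1)^{-1}\Delta^{-1}$ and using the specific placement $b=\tfrac{1}{N_\ell}(1_4,0)$ together with Lemma \ref{non-vanishing of Gauss sums-higher dim} applied column-by-column, one simultaneously pins down $g_1-1\in N^2 M_{4\times 4}(\mathbb{Z}_\ell)$ and $g_1\in\mathrm{GL}_4(\mathbb{Z}_\ell)$.

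The main obstacle is the last step: one must show that the combined support-and-oscillation constraints on the Gauss sum are tight enough to force the exact congruence $g_1\equiv 1\pmod{N^2}$, not merely weaker integrality. This requires a careful bookkeeping of the $N$-factors hidden inside $\eta_U=\mathrm{diag}(N^2/2,N^2/2,N^2/2,N^2/2,N/N_1,N_1)$, the position of $b$ in the first four columns, and the valuation bound $v(\lambda_i)\leq-v(N^2)$ — the three must balance out so as to produce precisely the Iwahori-type congruence condition needed for the later computations, and any looser estimate would lose the required tightness.
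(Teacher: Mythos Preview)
Your outline matches the paper's through the first assertion and the identification $C=\Delta(g_1-1)$, $D=\tfrac12\Delta(g_1+1)\Delta$: both arguments express $f_{\phi_\ell^+}(g)$ via (\ref{general expression for Weil representation}) as a matrix Gauss integral over $S$ and invoke Lemma \ref{support of Gauss sums} to force nondegeneracy of the quadratic form.

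For the congruence $g_1\equiv1_4\pmod{N_\ell^2}$, however, your plan does not close the gap, and the ``careful bookkeeping of $N$-factors in $\eta_U$'' you anticipate is a red herring. After the change of variable $y=Cx$ the indicator simply cuts out the domain $S$, so the only usable information is the eigenvalue bound of Lemma \ref{support of Gauss sums}, which for $B''=C^{-1}D=\tfrac12(g_1-1)^{-1}(g_1+1)\Delta$ reads $(B'')^{-1}\in N_\ell^2\,\mathrm{Sym}_{4\times4}(\mathbb{Z}_\ell)$. This tells you that $(g_1+1)^{-1}(g_1-1)$ lies in $N_\ell^2 M_{4\times4}(\mathbb{Z}_\ell)$, but you cannot strip off $(g_1+1)^{-1}$ without a priori control on $g_1+1$, and no column-by-column refinement of Lemma \ref{non-vanishing of Gauss sums-higher dim} will supply it. The idea the paper provides is the Cayley-type inversion
\[
g_1=\bigl(1+(2B''\Delta)^{-1}\bigr)\bigl(1-(2B''\Delta)^{-1}\bigr)^{-1},
\]
which expresses $g_1$ entirely through the single small quantity $(2B''\Delta)^{-1}=\tfrac12\Delta(B'')^{-1}\in N_\ell^2 M_{4\times4}(\mathbb{Z}_\ell)$. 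The second factor is then a convergent geometric series in $M_{4\times4}(\mathbb{Z}_\ell)$, whence $g_1\in G_1^1(\mathbb{Z}_\ell)$ and $g_1-1=2(2B''\Delta)^{-1}\bigl(1-(2B''\Delta)^{-1}\bigr)^{-1}\in N_\ell^2 M_{4\times4}(\mathbb{Z}_\ell)$ immediately.
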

    \begin{proof}
    	By (\ref{general expression for Weil representation}),
    	we have that
    	\begin{align*}
    	f_{\phi_\ell^+}(g)
    	&
    	=
    	\gamma^\mathrm{Weil}
    	\int_{\mathrm{Ker}(C)\backslash
    		W_4^+(\mathbb{Q}_\ell)}
    	\mathbf{e}_\ell
    	(\mathrm{tr}(\frac{1}{2}x^tC^tDx\eta+b^tx))
    	1_S(Cx)dx
    	\\
    	&
    	=
    	\gamma^\mathrm{Weil}
    	\int_S
    	\mathbf{e}_\ell
    	(\mathrm{tr}(\frac{1}{2}x^tC^tDx\eta+b^tx))dx.
    	\end{align*}
    	By the above lemma,
    	from $f_{\phi_\ell^+}(g)\neq0$ we have
    	$\mathrm{det}(C^tD)\neq0$.

    	We write
    	$g_1=
    	\begin{pmatrix}
    	A_1 & B_1 \\
    	C_1 & D_1
    	\end{pmatrix}$
    	in $2\times2$-blocks.
    	Then by definition,
    	$\widetilde{f}_{\phi_\ell^d}((g_1,1))
    	=f_{\phi_\ell^+}(\mathcal{S}^{-1}
    	(g_1,1)\mathcal{S})$.   	
    	So by the first part of the corollary,
    	if $\widetilde{f}_{\phi_\ell^d}((g_1,1))\neq0$,
    	then
    	$\mathrm{det}((g_1-1)(g_1+1))\neq0$.
    	Set $g=\mathcal{S}^{-1}(g_1,1)\mathcal{S}$,
    	since $\mathrm{det}(C)\neq0$,
    	we can write $g$ as a product
    	\[
    	g=u(B')m(A')J_4u(B')=
    	\begin{pmatrix}
    	-B'A' & A'-B'A'B'' \\
    	-(A')^{-t} & -(A')^{-t}B''
    	\end{pmatrix}.
    	\]
    	Note that
    	$B''=\frac{1}{2}(g_1-1)^{-1}(g_1+1)\Delta$.
    	Therefore
    	$f_{\phi_\ell^+}(g)\neq0$
    	if and only if
    	the integral
    	\[
    	\int_S\mathbf{e}_\ell
    	(\mathrm{tr}(\frac{1}{2}x^tB''x\eta+b^tx))dx\neq0.
    	\]
    	By the above lemma,
    	this implies that
    	$(B'')^{-1}\in N_\ell^2
    	\mathrm{Sym}_{4\times 4}(\mathbb{Z}_\ell)$.
    	Since
    	\[
    	g_1=(1+(2B''\Delta)^{-1})
    	(1-(2B''\Delta)^{-1})^{-1}.
    	\]
    	We deduce that
    	$g_1\in G_1(\mathbb{Z}_\ell)$,
    	$g_1\equiv1_4(\mathrm{mod}\,N_\ell^2)$
    	and
    	$\mathrm{det}(g_1-1)\neq0$.
    \end{proof}

	\subsubsection{Local zeta integral}    
    We next calculate
    the local zeta integral
    $Z_\ell(\varphi_\ell,
    \varphi_\ell^\vee,
    \widetilde{f}_{\phi_\ell^d})$.
    Recall that
    $\Gamma(N)_\ell$ is the subgroup of
    $G_1(\mathbb{Z}_\ell)$
    consisting of matrices 
    $g$ such that $g\equiv1_4(\mathrm{mod}\,N_\ell)$.
	\begin{proposition}
		\label{non-vanishing of local zeta integral at l}
		Assume that
		$\varphi$
		is invariant under right translation of
		$\Gamma(N)_\ell$.
		Then the local zeta integral at $\ell$ is a non-zero
		rational
		multiple of
		$\langle\varphi_\ell,\varphi_\ell^\vee\rangle$
		(explicitly given in
		Lemma \ref{non-vanishing of local integral at l}):
		\[
		\frac{Z_\ell(\varphi_\ell,\varphi_\ell^\vee,
			\widetilde{f}_{\phi^d_\ell})}
		{\langle\varphi_\ell,\varphi_\ell^\vee\rangle}
		\in\mathbb{Q}^\times.
		\]
	\end{proposition}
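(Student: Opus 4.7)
The plan is to exploit the restricted support of $\widetilde{f}_{\phi^d_\ell}((\cdot,1))$, established in Corollary \ref{support of Siegel section}, to reduce the zeta integral to a product of a matrix coefficient value and a purely combinatorial Gauss-type integral, which will then be evaluated directly.

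First, I will combine the two conclusions of Corollary \ref{support of Siegel section}: if $\widetilde{f}_{\phi^d_\ell}((g_1,1)) \neq 0$, then $g_1 \in G_1^1(\mathbb{Z}_\ell)$, $g_1 \equiv 1_4 \pmod{N_\ell^2}$, and $\det(g_1-1) \neq 0$. In particular, the support is contained in the principal congruence subgroup $\Gamma(N_\ell^2)_\ell$, which is itself contained in $\Gamma(N)_\ell = \Gamma(N_\ell)_\ell$ (since $N$ is square-free, $N_\ell = \ell$, and $\ell^2 \mathbb{Z}_\ell \subset \ell \mathbb{Z}_\ell$). Because $\varphi_\ell$ is $\Gamma(N)_\ell$-invariant by hypothesis and the local pairing $\langle\cdot,\cdot\rangle_\ell$ is $G_1(\mathbb{Q}_\ell)$-invariant, for any $g_\ell$ in the support we have $\pi_\ell(g_\ell)\varphi_\ell = \varphi_\ell$, so the matrix coefficient $\langle \pi_\ell(g_\ell)\varphi_\ell,\varphi_\ell^\vee\rangle_\ell$ is constant equal to $\langle\varphi_\ell,\varphi_\ell^\vee\rangle_\ell$ on the support. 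Pulling this constant out of the integral (\ref{local zeta integral}) gives
\[
Z_\ell(\varphi_\ell,\varphi_\ell^\vee,\widetilde{f}_{\phi^d_\ell}) \;=\; \langle\varphi_\ell,\varphi_\ell^\vee\rangle_\ell \cdot I_\ell, \qquad I_\ell := \int_{G_1^1(\mathbb{Q}_\ell)} \widetilde{f}_{\phi^d_\ell}((g,1))\,dg.
\]

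The remaining task is to show $I_\ell \in \mathbb{Q}^\times$. Here I will use the explicit integral representation produced in the proof of Corollary \ref{support of Siegel section}:
\[
\widetilde{f}_{\phi^d_\ell}((g_1,1)) \;=\; \gamma^{\mathrm{Weil}} \int_S \mathbf{e}_\ell\bigl(\tfrac{1}{2}\mathrm{tr}(x^tB''(g_1)x\,\eta) + \mathrm{tr}(b^tx)\bigr)\,dx,
\]
where $B''(g_1) = \tfrac{1}{2}(g_1-1)^{-1}(g_1+1)\Delta$. The map $g_1 \mapsto T(g_1) := (2B''(g_1)\Delta)^{-1}$ is the Cayley transform, a birational isomorphism between an open subset of $G_1^1$ containing the support of $\widetilde{f}_{\phi^d_\ell}((\cdot,1))$ and an open subset of $\mathrm{Sym}_{4\times 4}$, with inverse $T \mapsto (1+T)(1-T)^{-1}$; crucially, the constraint $g_1 \equiv 1 \pmod{N_\ell^2}$ translates to $T \in N_\ell^2\,\mathrm{Sym}_{4\times 4}(\mathbb{Z}_\ell)$. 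After computing the Jacobian of this change of variables and applying Fubini to interchange the $dx$- and $dT$-integrations, $I_\ell$ becomes, up to an explicit rational volume factor and the Weil constant $\gamma^{\mathrm{Weil}}$, a Gauss-type integral of the form treated in Lemma \ref{non-vanishing of Gauss sums-higher dim}. The non-vanishing and explicit rationality are then consequences of the evaluation of generalized Gauss sums recalled in Lemma \ref{vanishing of Gauss integrals}, applied to each of the six decoupled entries after diagonalizing the quadratic form; the archimedean-style subtleties do not arise because the symmetric matrix $\eta$ is diagonal with entries in $\mathbb{Q}^\times$ and the linear term $b$ has been chosen to have controlled $\ell$-valuation.

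The main obstacle I anticipate is the bookkeeping in the Cayley-transform change of variables: one must carefully track (i) the Jacobian determinant $|J|_\ell$ on $G_1^1(\mathbb{Q}_\ell)$, (ii) the Haar measure normalization fixed in Remark \ref{Haar measures}, and (iii) the Weil index $\gamma^{\mathrm{Weil}}$ associated with the diagonalization of $B''$ with respect to $\eta$, showing that all the roots of unity and square-class ambiguities collapse into a rational number (as opposed to merely an algebraic one). Once these factors are pinned down, the final value $I_\ell$ will be expressed as an explicit non-vanishing product of elementary Gauss sums, yielding the assertion; the precise numerical value will be recorded in the subsequent Lemma \ref{non-vanishing of local integral at l}.
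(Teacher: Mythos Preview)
Your overall strategy matches the paper's exactly: use the support constraint from Corollary \ref{support of Siegel section} to pull out the constant matrix coefficient, then pass via a Cayley-type change of variables to a symmetric-matrix integral, and finish with Gauss sums. Two tactical points, however, diverge from the paper and would cause trouble if carried out as written.

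First, your variable $T=(2B''\Delta)^{-1}=(g_1+1)^{-1}(g_1-1)$ is \emph{not} symmetric: for $g_1\in\mathrm{Sp}_4$ one checks $T^t=-J_4TJ_4^{-1}$, i.e.\ $T$ lands in $\mathfrak{sp}_4$, not $\mathrm{Sym}_{4\times4}$. The paper instead uses $B=(B'')^{-1}=2\Delta(g_1+1)^{-1}(g_1-1)$, which \emph{is} symmetric (it is the inverse of the symmetric matrix $B''=C^{-1}D$ arising from the Bruhat decomposition in $\mathrm{Sp}_8$), has range $N_\ell^2\,\mathrm{Sym}_{4\times4}(\mathbb{Z}_\ell)$, and Jacobian $|\det(\partial g_1/\partial B)|_\ell=1$.

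Second, interchanging the $dx$- and $dB$-integrations does not yield a Gauss integral: the $B$-integrand is $|\det B|^{-3}\xi(\det B)\,\mathbf{e}_\ell(\tfrac12\mathrm{tr}(x^tB^{-1}x\eta))$, which is neither polynomial nor quadratic in $B$. The paper keeps the original order and evaluates the inner $x$-integral first for each fixed $B$; \emph{that} is the Gauss integral to which Lemma \ref{vanishing of Gauss integrals} applies (after diagonalizing $B^{-1}$ over $\mathbb{Z}_\ell$). The product over the $24$ coordinates of $x$ produces, when combined with the prefactor $|\det B|^{-3}\xi(\det B)$, the simple expression $|\det\eta|_\ell^{-2}\,\mathbf{e}_\ell(-\mathrm{tr}(B)/\ell^2)\,(-1)^{v_\ell(\det B)\frac{\ell-1}{2}}$: the $\varepsilon(\cdot)$ factors pair up because $\dim U=6$ is even, and the Hilbert-symbol contributions cancel against $\xi(\det B)$, disposing of your Weil-index concern. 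One is then left with $\ell^{-20}|\det\eta|_\ell^{-2}\int_{\mathrm{Sym}_{4\times4}(\mathbb{Z}_\ell)}(-1)^{v_\ell(\det B)\frac{\ell-1}{2}}\,dB$, whose evaluation (via orbit-counting under $\mathbb{Z}_\ell$-equivalence and orders of finite orthogonal groups) occupies the subsequent lemmas and gives the explicit nonzero rational value in Lemma \ref{non-vanishing of local integral at l}.
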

	\begin{proof}
		By the assumption we have
		\[
		Z_\ell(\varphi_\ell,\varphi_\ell^\vee,
		\widetilde{f}_{\phi^d_\ell})
		=\int_{G_1^1(\mathbb{Q}_\ell)}
		\widetilde{f}_{\phi^d_\ell}((g,1))
		\langle
		\pi(g)
		\varphi_\ell,
		\varphi_\ell^\vee
		\rangle
		dg
		=
		\langle
		\varphi_\ell,
		\varphi_\ell^\vee
		\rangle
		\int_{\Gamma(N)_\ell}
		\widetilde{f}_{\phi^d_\ell}((g_1,1))
		dg_1.
		\]
		So it suffices to treat the integral on
		the RHS.
		As in the preceding corollary,
		we write
		$A'=-\Delta(g_1-1)^{-t}$
		and
		$B=(B'')^{-1}=2((g_1-1)^{-1}(g_1+1)\Delta)^{-1}
		\in\mathrm{Sym}_{4\times 4}
		(N_\ell\mathbb{Z}_\ell)$
		for $g_1\in\Gamma(N)_\ell$,
		i.e.
		$g_1=2(1-(2B^{-1}\Delta)^{-1})^{-1}-1$.
		Then by change of variables from
		$g_1$ to $B$ and taking into account of the
		Jacobian $|\mathrm{det}(\partial g_1/\partial B)|=1$,
		we see that
		\begin{align*}
		\widetilde{f}_{\phi_\ell^d}((g_1,1))
		&
		=|\mathrm{det}(A')|^3
		\xi(\mathrm{det}(A'))
		\int_S\mathbf{e}_\ell
		(\mathrm{tr}(\frac{1}{2}x^tB^{-1}x\eta+b^tx))dx
		\\
		&
		=|\mathrm{det}(B)|^{-3}\xi(\mathrm{det}(B))
		\int_S\mathbf{e}_\ell
		(\mathrm{tr}(\frac{1}{2}x^tB^{-1}x\eta+b^tx))dx.
		\end{align*}
		Therefore the integral becomes
		\[
		\int_{\Gamma(N)_\ell}
		\widetilde{f}_{\phi_\ell^d}((g_1,1))dg_1
		=
		\int_{\mathrm{Sym}_{4\times 4}
			(\ell\mathbb{Z}_\ell)}
		|\mathrm{det}(B)|^{-3}
		\xi(\mathrm{det}(B))dB
		\int_S
		\mathbf{e}_\ell
		(\mathrm{tr}(\frac{1}{2}x^tB^{-1}x\eta+b^tx))dx.
		\]
		
		We first treat the inner integral in the above.
		Without loss of generality we can assume that
		$B^{-1}=\mathrm{diag}(B_1^{-1},\cdots,B_4^{-1})$.
		Using the calculation preceding
		Lemma \ref{vanishing of Gauss integrals},
		we see that
		for any
		$i=1,\cdots,4$
		and
		$j=1,\cdots,6$,
		\[
		\int_{S^{(j)}_i}\mathbf{e}_\ell
		(\mathrm{tr}(\frac{1}{2}(x^{(j)}_i)^t
		B_i^{-1}x^{(j)}_i\eta_j+(b^{(j)}_i)^tx^{(j)}_i))
		dx^{(j)}_i
		\]
		\[
		=
		\ell^{-\frac{1}{2}v_\ell(B_i)+\frac{1}{2}v_\ell(\eta_j)}
		\varepsilon(\ell^{v_\ell(B_i\eta_j)})
		\big(
		\frac{-2B_i\eta_j\ell^{-v_\ell(B_i\eta_j)}}{\ell}
		\big)^{v_\ell(B_i\eta_j)}
		\mathbf{e}_\ell(-\frac{(b^{(j)}_i)^2B_i}{4})
		\]
		Note that
		\begin{align*}
		\prod_{i,j}\varepsilon(\ell^{v_\ell(B_i\eta_j)})
		&
		=\prod_{i,j}\varepsilon(\ell^{v_\ell(B_i)})
		=\prod_i\varepsilon(\ell^{v_\ell(B_i)})^2
		\\
		&
		=\prod_i(\frac{-1}{\ell})^{v_\ell(B_i)}
		=(-1)^{v_\ell(\mathrm{det}(B))\frac{\ell-1}{2}},
		\end{align*}
		\begin{align*}
		\prod_{i,j}
		\bigg(
		(\frac{B_i^\circ}{\ell})(\frac{\eta_j^\circ}{\ell})
		\bigg)^{v_\ell(B_i\eta_j)}
		&
		=\prod_{i,j}(\frac{B_i^\circ}{\ell})^{v_\ell(\eta_j)}
		(\frac{\eta_j^\circ}{\ell})^{v_\ell(B_i)}
		=\prod_{i,j}(B_i,\eta_j)_\ell
		(-1)^{v_\ell(B_i)v_\ell(\eta_j)\frac{\ell-1}{2}}
		\\
		&
		=(\mathrm{det}(B),\mathrm{det}(\eta))_\ell
		(-1)^{v_\ell(\mathrm{det}(B))
			v_\ell(\mathrm{det}(\eta))
			\frac{\ell-1}{2}}.
		\end{align*}

		So their product gives
		(for any $x\in\mathbb{Q}_\ell^\times$
		we write
		$x^\circ\in\mathbb{Z}_\ell^\times$ for
		$x\ell^{-v_\ell(x)}$)
		\begin{align*}
		\int_S\mathbf{e}_\ell(\mathrm{tr}(
		\frac{1}{2}x^tB^{-1}x\eta_U+b^tx))dx
		&
		=
		(\prod_i\ell^{-3v_\ell(B_i)})
		(\prod_j\ell^{2v_\ell(\eta_j)})
		(\prod_i\mathbf{e}_\ell(-\frac{B_i}{\ell^2}))
		\\
		&
		\quad
		\times
		(\frac{-2}{\ell})^{6\sum_iv_\ell(B_i)+
			4\sum_jv_\ell(\eta_j)}
		\prod_{i,j}
		\bigg(
		(\frac{B_i^\circ}{\ell})
		(\frac{\eta_j^\circ}
		{\ell})
		\bigg)^{v_\ell(B_i\eta_j)}
		\\
		&
		=
		|\mathrm{det}(B)|^3 
		|\mathrm{det}(\eta)|^{-2}
		\mathbf{e}_\ell(-\mathrm{tr}\,\frac{B}{\ell^2})
		(\mathrm{det}(B),\mathrm{det}(\eta))_\ell
		\\
		&
		\quad
		\times
		(-1)^{v_\ell(\mathrm{det}(B))v_\ell(\mathrm{det}(\eta))
			\frac{\ell-1}{2}}
		(-1)^{v_\ell(\mathrm{det}(B))\frac{\ell-1}{2}}
		\end{align*}
		On the other hand,
		note that by
		\cite[Chapter II, Proposition 4.3]{Kudla96},
		\begin{align*}
		\xi(\mathrm{det}(B))
		&
		=
		(\mathrm{det}(B),(-1)^{6\times5/2}\mathrm{det}(\eta))_\ell
		\\
		&
		=
		(\mathrm{det}(B),\mathrm{det}(\eta))_\ell
		(\mathrm{det}(B),-1)_\ell
		\\
		&
		=
		(\mathrm{det}(B),\mathrm{det}(\eta))_\ell
		(-1)^{v_\ell(\mathrm{det}(B))\frac{\ell-1}{2}}.
		\end{align*}
		Taking into account of the fact
		$v_\ell(\mathrm{det}(\eta))\equiv1(\mathrm{mod}\,2)$,
		we get
		\begin{align*}
		\int_{\Gamma(N)_\ell}
		\widetilde{f}_{\phi_\ell^d}((g_1,1))dg_1
		&
		=|\mathrm{det}(\eta)|_\ell^{-2}
		\int_{\mathrm{Sym}_{4\times 4}
			(N^2_\ell\mathbb{Z}_\ell)
			\cap\mathrm{GL}_4(\mathbb{Q}_\ell)}
		\mathbf{e}_\ell(-\mathrm{tr}\,\frac{B}{\ell^2})
		(-1)^{v_\ell(\mathrm{det}(B))\frac{\ell-1}{2}}dB
		\\
		&
		=\ell^{-20}|\mathrm{det}(\eta)|_\ell^{-2}
		\int_{\mathrm{Sym}_{4\times 4}
			(\mathbb{Z}_\ell)}
		(-1)^{v_\ell(\mathrm{det}(B))\frac{\ell-1}{2}}dB
		\end{align*}
		($(-1)^{v_\ell(\mathrm{det}(B))\frac{\ell-1}{2}}$
		is understood to be
		$0$ if $\mathrm{det}(B)=0$).
		We treat this last integral in the
		following series of lemmas.
		The result is given in 
		Lemma \ref{non-vanishing of local integral at l}.
	\end{proof}

    Recall that we say two matrices
    $A,B\in\mathrm{Sym}_{n\times n}
    (\mathbb{Q}_\ell)$
    are $\mathbb{Z}_\ell$-equivalent
    (written as
    $A\sim B$) if
    there is an invertible matrix
    $D\in\mathrm{GL}(n,\mathbb{Z}_\ell)$
    such that $A=DBD^t$.
    Similarly,
    we say that
    $A,B\in\mathrm{Sym}_{n\times n}
    (\mathbb{Z}/\ell^N)$
    are $\mathbb{Z}/\ell^N$-equivalent
    (written as $A\sim_{N}B$)
    if there is an invertible matrix
    $D\in\mathrm{GL}(n,\mathbb{Z}/\ell^N)$
    such that
    $A=DBD^t$.
    We fix a non-square 
    $z=z_\ell\in\mathbb{Z}_\ell^\times$.
    Moreover, 
    by \cite[Chapter 8, Theorem 3.1]{Cassels82},
    any matrix
    $A\in\mathrm{Sym}_{n\times n}
    (\mathbb{Q}_\ell)$ is $\mathbb{Z}_\ell$-equivalent
    to a diagonal matrix 
    $\Lambda$ of the form
    $\Lambda=\mathrm{diag}(\ell^{k_1}\Lambda_1,
    \ell^{k_2}\Lambda_2,\cdots,
    \ell^{k_r}\Lambda_r,0)$
    with
    $k_1<k_2<\cdots<k_r$
    and
    $\Lambda_i=1_{n_i}$
    or
    $\Lambda_i=\mathrm{diag}(1_{n_i-1},z)$.
    For any $A\in\mathrm{Sym}_{n\times n}
    (\mathbb{Z}_\ell)$,
    we also write $A$ for its image in
    by the canonical projection
    $\mathrm{Sym}_{n\times n}
    (\mathbb{Z}_\ell)
    \rightarrow
    \mathrm{Sym}_{n\times n}
    (\mathbb{Z}/\ell^N)$.

    \begin{lemma}\label{Z_l-equivalence vs. Z/l-equivalence}
    	Suppose as above
    	$\Lambda=\mathrm{diag}(\ell^{k_1}\Lambda_1,
    	\cdots,\ell^{k_r}\Lambda_r)$
    	an invertible matrix
    	with
    	$k_1\geq0$.
    	Then a matrix
    	$A\in\mathrm{Sym}_{n\times n}
    	(\mathbb{Z}_\ell)$
    	is $\mathbb{Z}_\ell$-equivalent to
    	$\Lambda$ if and only if
    	$A$ is $\mathbb{Z}/\ell^{k_r+1}$-equivalent to
    	$\Lambda$.
    \end{lemma}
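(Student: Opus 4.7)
The forward implication is immediate: a $\mathbb{Z}_\ell$-equivalence $A = D\Lambda D^t$ with $D \in \mathrm{GL}_n(\mathbb{Z}_\ell)$ reduces modulo $\ell^{k_r+1}$. The substance of the lemma is the converse, which I will prove by a Newton-type iteration adapted to the block structure of $\Lambda$.

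Given $A \sim_{k_r+1} \Lambda$, I first lift the given mod-$\ell^{k_r+1}$ equivalence to some $\widetilde D \in \mathrm{GL}_n(\mathbb{Z}_\ell)$ and replace $A$ by $\widetilde D^{-1} A \widetilde D^{-t}$, which lies in the same $\mathbb{Z}_\ell$-equivalence class as $A$.  Thus I may assume $A \equiv \Lambda \pmod{\ell^{k_r+1}}$.  I then construct $D_m \in \mathrm{GL}_n(\mathbb{Z}_\ell)$ inductively with $D_0 = 1_n$ and $A_m := D_m^{-1} A D_m^{-t} \equiv \Lambda \pmod{\ell^{N_m}}$, where $N_0 = k_r+1$ and $N_{m+1} = 2N_m - k_r \geq N_m + 1$.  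The limit $D_\infty$ of $\{D_m\}$, which exists because the successive corrections decay $\ell$-adically, then satisfies $D_\infty \Lambda D_\infty^t = A$.

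The inductive step consists of choosing $X_m \in \mathrm{M}_{n\times n}(\mathbb{Z}_\ell)$ and setting $D_{m+1} = D_m(1+X_m)$ so that the linear equation $\Lambda X_m^t + X_m \Lambda = \ell^{N_m} B_m$ is satisfied, where $A_m = \Lambda + \ell^{N_m} B_m$.  Decomposing into blocks matching $\Lambda = \mathrm{diag}(\ell^{k_1}\Lambda_1, \ldots, \ell^{k_r}\Lambda_r)$, the $(i,j)$-block equation reads $\ell^{k_i}\Lambda_i (X_m)_{ji}^t + \ell^{k_j} (X_m)_{ij}\Lambda_j = \ell^{N_m}(B_m)_{ij}$.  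For $i<j$ I set $(X_m)_{ji} = 0$ and solve $(X_m)_{ij} = \ell^{N_m-k_j}(B_m)_{ij}\Lambda_j^{-1}$; for $i=j$ the hypothesis $\ell$ odd lets me take $(X_m)_{ii} = \frac{1}{2}\ell^{N_m-k_i}(B_m)_{ii}\Lambda_i^{-1}$.  All blocks of $X_m$ then have valuation at least $N_m - k_r$, and the tautological identity $(1+X_m)\Lambda(1+X_m)^t = A_m + X_m\Lambda X_m^t$ shows that the new discrepancy $A_{m+1} - \Lambda$ is controlled by $X_m\Lambda X_m^t$, whose entries have valuation at least $2N_m - k_r$.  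This yields the recursion $N_{m+1} \geq 2N_m - k_r$ with Newton-type quadratic convergence $N_m \geq k_r + 2^m$.

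The main delicacy of the argument is that $\Lambda$ is singular modulo $\ell$ as soon as some $k_i > 0$, so the usual formulation of Hensel's lemma does not apply directly to $D\Lambda D^t = A$.  What rescues the iteration is (i) the invertibility of each individual block $\Lambda_i$ over $\mathbb{Z}_\ell$, which makes the block equations solvable, (ii) the hypothesis that $\ell$ is odd, which provides the factor $\frac{1}{2}$ needed for the diagonal blocks, and (iii) the starting approximation level $N_0 = k_r+1$ combined with the lower bound $k_1 \geq 0$, which are exactly the thresholds that make the Newton step $N_{m+1} = 2N_m - k_r$ gain a full power of $\ell$ at every stage.
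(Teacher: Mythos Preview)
Your proof is correct and takes a genuinely different route from the paper's.  The paper argues by induction on the number $r$ of blocks: for $r=1$ it writes down the explicit square root $D=\sqrt{1_n+\ell^{k_r+1}\Lambda_1^{-1}A'}$ as a convergent binomial series, and for the inductive step it peels off the first block $\ell^{k_1}\Lambda_1$ and builds a block upper-triangular $D$ using the inductive hypothesis on $\Lambda''=\mathrm{diag}(\ell^{k_2}\Lambda_2,\ldots,\ell^{k_r}\Lambda_r)$.  You instead run a single Newton--Hensel iteration on the whole matrix, linearizing $D\Lambda D^t=A$ at each stage, solving the linear problem $X_m\Lambda+\Lambda X_m^t=A_m-\Lambda$ blockwise (with $(X_m)_{ji}=0$ for $i<j$), and iterating.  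Both arguments produce a block upper-triangular transformation and rely on the same three inputs---$\Lambda_i\in\mathrm{GL}_{n_i}(\mathbb{Z}_\ell)$, $\ell$ odd (for the factor $\tfrac12$), and the threshold $N_0=k_r+1$---but your packaging avoids both the induction on $r$ and the case split $\Lambda_i=1_{n_i}$ versus $\mathrm{diag}(1_{n_i-1},z)$, at the price of not giving a closed-form $D$ in one pass.  Your sharper valuation estimate $v(X_m\Lambda X_m^t)\geq 2N_m-k_r$ (rather than the crude $2(N_m-k_r)$), obtained by tracking that $(X_m)_{ik}$ has valuation $\geq N_m-k_k$ and cancelling one factor of $\ell^{k_k}$ against $\Lambda$, is exactly what makes the recursion $N_{m+1}\geq 2N_m-k_r$ work; this is the analogue of the paper's one-shot square-root construction and is the key observation in your approach.
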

    \begin{proof}
    	It suffices to show that
    	if $A\sim \Lambda$, then for any
    	$A'\in\mathrm{Sym}_{n\times n}
    	(\mathbb{Z}_\ell)$,
    	$A+\ell^{k_r+1}A'\sim \Lambda$.
    	Then it suffices to treat the case
    	$A=\Lambda$.
    	We prove this case by induction on $r$.
    	
    	For the case $r=1$,
    	we first assume that
    	$\Lambda=1_n$.
    	Then for any $A'\in\mathrm{Sym}_{n\times n}
    	(\mathbb{Z}_\ell)$,
    	we define
    	$D(\ell^{k_r+1}A)=\sqrt{1_n+\ell^{k_r+1}A}
    	=\sum_{i=0}^\infty
    	\binom{1/2}{i}(\ell^{k_r+1}A)^i$
    	which is a matrix
    	in $\mathrm{GL}(n,\mathbb{Z}_\ell)$.
    	Then it is clear that
    	$\Lambda+\ell^{k_r+1}A'=D(\ell^{k_r+1}A)
    	\Lambda D(\ell^{k_r+1}A)^t$
    	thus $\Lambda+\ell^{k_r+1}A'\sim \Lambda$.
    	Next we assume that
    	$\Lambda=\mathrm{diag}(1_{n-1},z)$.
    	Then for any
    	$A'=\begin{pmatrix}
    	A'_1 & A'_2 \\
    	(A'_2)^t & A'_3
    	\end{pmatrix}\in\mathrm{Sym}_{n\times n}
    	(\mathbb{Z}_\ell)$
    	with $A'_1$ of size
    	$(n-1)\times(n-1)$,
    	we define a matrix
    	$D(\ell^{k_r+1}A)=\begin{pmatrix}
    	D(\ell^{k_r+1}A')_1 & D(\ell^{k_r+1}A')_2 \\
    	D(\ell^{k_r+1}A')_3 & D(\ell^{k_r+1}A')_4
    	\end{pmatrix}\in\mathrm{M}_{n\times n}
    	(\mathbb{Q}_\ell)$
    	with $D(\ell^{k_r+1}A)_1$ of size $(n-1)\times(n-1)$
    	as follows:
    	\begin{align*}
    	D(\ell^{k_r+1}A')_4
    	&
    	:=D(z^{-1}\ell^{k_r+1}A_3'),
    	\\
    	D(\ell^{k_r+1}A')_3
    	&
    	:=0,
    	\\
    	D(\ell^{k_r+1}A')_2
    	&
    	:=z^{-1}
    	D(\ell^{k_r+1}A')_4^{-1}
    	\ell^{k_r+1}A_2',
    	\\
    	D(\ell^{k_r+1}A')_1
    	&
    	:=
    	D(\ell^{k_r+1}A'_1-
    	(z+\ell A_3')^{-1}\ell^{k_r+1}A_2'
    	(\ell^{k_r+1}A_2')^t).
    	\end{align*}
    	Then one verifies that
    	$D(\ell^{k_r+1}A')$ is a matrix in
    	$\mathrm{GL}(n,\mathbb{Z}_\ell)$
    	and
    	$\Lambda+\ell^{k_r+1}A'=
    	D(\ell^{k_r+1}A')\Lambda D(\ell^{k_r+1}A')^t$,
    	therefore
    	$\Lambda+\ell^{k_r+1}A'\sim
    	\Lambda$
    	(note that the argument is independent of the size 
    	$n\times n$ of $\Lambda$).
    	
    	Suppose that we have proved the statement in the
    	beginning for $r-1>0$, i.e.,
    	for
    	$\Lambda''=\mathrm{diag}
    	(\ell^{k_2}\Lambda_2,
    	\cdots,\ell^{k_r}\Lambda_r)$
    	and any
    	$\widetilde{A}\in\mathrm{Sym}_{\widetilde{n}
    	\times\widetilde{n}}(\mathbb{Z}_\ell)$,
    	there is a matrix
    	$D_{\Lambda''}(\ell^{k_r+1}\widetilde{A})
    	\in\mathrm{GL}(n,\mathbb{Z}_\ell)$
    	such that
    	$\Lambda''+\ell^{k_r+1}\widetilde{A}
    	=D_{\Lambda''}(\ell^{k_r+1}\widetilde{A})
    	\Lambda''
    	D_{\Lambda''}(\ell^{k_r+1}\widetilde{A})^t$.
    	We write
    	$\Lambda=\mathrm{diag}(\Lambda',\Lambda'')$
    	with $\Lambda'$ of size
    	$n_1\times n_1$.
    	For any $A'=\begin{pmatrix}
    	A_1' & A_2' \\
    	(A_2')^t & A_3'
    	\end{pmatrix}\in
    	\mathrm{Sym}_{n\times n}
    	(\mathbb{Z}_\ell)$
    	with $A_1'$ of size
    	$n_1\times n_1$.
    	Then we define a matrix
    	$D(\ell^{k_r+1}A')
    	=\begin{pmatrix}
    	D(\ell^{k_r+1}A')_1 & D(\ell^{k_r+1}A')_2 \\
    	D(\ell^{k_r+1}A')_3 & D(\ell^{k_r+1}A')_4
    	\end{pmatrix}\in\mathrm{M}_{n\times n}
    	(\mathbb{Q}_\ell)$
    	with $D(\ell^{k_r+1}A')_1$ of size
    	$n_1\times n_1$
    	as follows:
    	\begin{align*}
    	D(\ell^{k_r+1}A')_4
    	&
    	:=
    	D_{\Lambda''}(\ell^{k_r+1}A_3'),
    	\\
    	D(\ell^{k_r+1}A')_3
    	&
    	:=0,
    	\\
    	D(\ell^{k_r+1}A')_2
    	&
    	:=
    	A_2'D(\ell^{k_r+1}A')_4^{-t}
    	\ell^{k_r+1}(\Lambda'')^{-1},
    	\\
    	D(\ell^{k_r+1}A')_1
    	&
    	:=
    	D_{\Lambda'}(\ell^{k_r+1}A_1'-
    	D(\ell^{k_r+1}A')_2\Lambda''
    	D(\ell^{k_r+1}A')_2^t).
    	\end{align*}
       	Then one verifies that
       	$D(\ell^{k_r+1}A')\in\mathrm{GL}(n,\mathbb{Z}_\ell)$
       	and
       	$\Lambda+\ell^{k_r+1}A'=
       	D(\ell^{k_r+1}A')\Lambda
       	D(\ell^{k_r+1}A')^t$
       	and therefore
       	$\Lambda+\ell^{k_r+1}A'\sim
       	\Lambda$,
       	which concludes the induction step.
    \end{proof}

    Now for any invertible
    $\Lambda\in
    \mathrm{Sym}_{4\times 4}
    (\mathbb{Z}_\ell)$ as above,
    we write
    $\mathcal{E}(\Lambda)$
    for the subset of
    $\mathrm{Sym}_{4\times 4}
    (\mathbb{Z}_\ell)$
    consisting of matrices
    $\mathbb{Z}_\ell$-equivalent to
    $\Lambda$.
    By the above lemma, we see that
    each $\mathcal{E}(\Lambda)$ is an open compact subset of
    $\mathrm{Sym}_{4\times 4}
    (\mathbb{Z}_\ell)$.
    We also write
    $\mathcal{Q}$
    for the set of all such
    $\Lambda\in\mathrm{Sym}_{4\times 4}
    (\mathbb{Z}_\ell)$.    
    We define
    \[
    I(\Lambda)
    =\int_{B\in\mathcal{E}(\Lambda)}
    (\frac{-1}{\ell})^{v_\ell(\mathrm{det}(B))}
    =
    (\frac{-1}{\ell})^{v_\ell(\mathrm{det}(\Lambda))}
    \int_{B\in\mathcal{E}(\Lambda)}dB.
    \]
    Then we have
    \[
    \int_{\mathrm{Sym}_{4\times 4}
    	(\mathbb{Z}_\ell)}
    (\frac{-1}{\ell})^{v_\ell(\mathrm{det}(B))}dB
    =\sum_{\Lambda\in\mathcal{Q}}I(\Lambda).
    \]

    Before going on, we need some results on the order of
    orthogonal groups over certain finite rings.
    Let $\Lambda_1\in\mathrm{GL}(n_1,\mathbb{Z}/\ell)$
    be a symmetric matrix.
    The orthogonal group we consider is
    $\mathrm{O}(\Lambda_1,
    \mathbb{Z}/\ell^{s+1})$
    for any integer $s\geq0$.
    We have a canonical projection
    \[
    \mathrm{pr}_s\colon
    \mathrm{O}(\Lambda_1,
    \mathbb{Z}/\ell^{s+1})
    \rightarrow
    \mathrm{O}(\Lambda_1,
    \mathbb{Z}/\ell^s),
    \]
    whose kernel
    $\mathrm{pr}_s$ can be identified with
    the subset of
    $\mathrm{M}_{n_1\times n_1}
    (\mathbb{Z}/\ell)$
    consisting of matrices
    $D$ such that
    $D\Lambda_1+\Lambda_1D^t=0$.
    So we see that
    $\#(\mathrm{pr}_s)=\ell^{n_1(n_1-1)/2}$.
    Therefore by induction on $s$,
    \[
    \#\mathrm{O}(\Lambda_1,\mathbb{Z}/\ell^{s+1})
    =\ell^{sn_1(n_1-1)/2}
    \#\mathrm{O}(\Lambda_1,\mathbb{Z}/\ell).
    \]

    As for the order of the orthogonal group
    $\mathrm{O}(\Lambda_1,\mathbb{Z}/\ell)$,
    we record results from
    \cite[Section 3.7]{Wilson2009}.
    \begin{enumerate}
    	\item assume $\Lambda_1$ is of odd dimension
    	$n_1=2m+1$.
    	Then we know that
    	\[
    	\#\mathrm{O}(\Lambda_1,\mathbb{Z}/\ell)
    	=2\ell^{m^2}
    	(\ell^2-1)(\ell^4-1)\cdots(\ell^{2m}-1).
    	\]
    	
    	\item assume that
    	$n_1=2m$ is even and
    	$\ell\equiv1(\mathrm{mod}\,4)$
    	or
    	$n_1=2m$ with $m\equiv0(\mathrm{mod}\,2)$
    	and
    	$\ell\equiv3(\mathrm{mod}\,4)$,
    	then
    	\begin{align*}
    	\#\mathrm{O}(1_{2m},\mathbb{Z}/\ell)
    	&
    	=2\ell^{m(m-1)}
    	(\ell^2-1)(\ell^4-1)\cdots(\ell^{2m-2}-1)(\ell^m-1),
    	\\
    	\#\mathrm{O}(\mathrm{diag}(1_{2m-1},z),
    	\mathbb{Z}/\ell)
    	&
    	=
    	2\ell^{m(m-1)}
    	(\ell^2-1)\cdots(\ell^{2m-2}-1)(\ell^m+1).
    	\end{align*}
    	
    	\item assume that 
    	$n_1=2m$ is even with
    	$m\equiv1(\mathrm{mod}\,2)$
    	and
    	$\ell\equiv3(\mathrm{mod}\,4)$,
    	then
    	\begin{align*}
    	\#\mathrm{O}(1_{2m},\mathbb{Z}/\ell)
    	&
    	=
    	2\ell^{m(m-1)}
    	(\ell^2-1)\cdots(\ell^{2m-2}-1)(\ell^m+1),
    	\\
    	\#\mathrm{O}(\mathrm{diag}(1_{2m-1},z),
    	\mathbb{Z}/\ell)
    	&
    	=
    	2\ell^{m(m-1)}
    	(\ell^2-1)(\ell^4-1)\cdots(\ell^{2m-2})(\ell^m-1).
    	\end{align*}
    \end{enumerate}
    Now we can evaluate each $I(\Lambda)$.
    We have
    \begin{lemma}\label{Gauss sum for k_1>=0}
    	Assume $k_1\geq0$
    	in
    	$\Lambda=(\ell^{k_1}\Lambda_1,\cdots,
    	\ell^{k_r}\Lambda_r)$,
    	then the integral is
    	\[
    	I(\Lambda)=
    	(\frac{-1}{\ell})^{\sum_ik_in_i}
    	\frac{\ell^{6k_r-4}
    	(\ell-1)\cdots(\ell^4-1)}
        {\ell^{\sum_{j<i}n_in_j(k_r+1+k_j)+
        	\sum_ik_in_i(n_i+1)/2+k_rn_i(n_i-1)/2}
        	\prod_i\#\mathrm{O}(\Lambda_i,\mathbb{Z}/
        	\ell)}.
    	\]
    \end{lemma}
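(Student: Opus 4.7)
Plan: The argument is an orbit--stabilizer count in $\mathrm{GL}_4(\mathbb{Z}/\ell^{k_r+1})$ acting on symmetric matrices by $D\cdot A=DAD^t$. First, by Lemma \ref{Z_l-equivalence vs. Z/l-equivalence}, the set $\mathcal{E}(\Lambda)$ is the preimage in $\mathrm{Sym}_{4\times 4}(\mathbb{Z}_\ell)$ of the $\mathrm{GL}_4(\mathbb{Z}/\ell^{k_r+1})$-orbit of $\Lambda$, and each fibre of the reduction map has $\mathbb{Z}_\ell$-volume $\ell^{-10(k_r+1)}$ where $10=\dim\mathrm{Sym}_{4\times 4}$. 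Orbit--stabilizer then gives
\[
\mathrm{vol}(\mathcal{E}(\Lambda))=\frac{|\mathrm{GL}_4(\mathbb{Z}/\ell^{k_r+1})|}{|\mathrm{Stab}(\Lambda)|}\cdot\ell^{-10(k_r+1)}.
\]
Using the standard formula $|\mathrm{GL}_4(\mathbb{Z}/\ell^{k_r+1})|=\ell^{16k_r+6}\prod_{i=1}^4(\ell^i-1)$ together with the identity $v_\ell(\det\Lambda)=\sum_i k_in_i$ (which produces the sign $(\tfrac{-1}{\ell})^{\sum_ik_in_i}$ in $I(\Lambda)$), the desired formula is equivalent to
\[
|\mathrm{Stab}(\Lambda)|=\ell^{\sum_{j<i}n_in_j(k_r+1+k_j)+\sum_i[k_in_i(n_i+1)/2+k_rn_i(n_i-1)/2]}\cdot\prod_i\#\mathrm{O}(\Lambda_i,\mathbb{Z}/\ell).
\]

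To evaluate the stabilizer I would exploit the parabolic structure of $\mathrm{Stab}(\Lambda)$ induced by the filtration on $\Lambda$ by $\ell$-divisibility. Writing $D=(D_{ij})_{1\le i,j\le r}$ in $r\times r$ block form with $D_{ij}$ of size $n_i\times n_j$, successive reductions of the defining equation $D\Lambda D^t\equiv\Lambda\pmod{\ell^{k_r+1}}$ modulo $\ell^{k_i+1},\ell^{k_i+2},\ldots$ show, by a triangular Hensel-type argument, that (i) each diagonal block $D_{ii}$ reduces modulo $\ell$ to an element of $\mathrm{O}(\Lambda_i,\mathbb{Z}/\ell)$, producing the factor $\prod_i\#\mathrm{O}(\Lambda_i,\mathbb{Z}/\ell)$; (ii) the further $\ell$-adic lifts of $D_{ii}$ consist of a choice in $\mathrm{O}(\Lambda_i,\mathbb{Z}/\ell^{k_r+1-k_i})$ refined by $\ell^{k_in_i^2}$ free lifts to $\mathbb{Z}/\ell^{k_r+1}$, which by the recorded formula $|\mathrm{O}(\Lambda_i,\mathbb{Z}/\ell^N)|=\ell^{(N-1)n_i(n_i-1)/2}\#\mathrm{O}(\Lambda_i,\mathbb{Z}/\ell)$ and the identity $k_in_i^2+(k_r-k_i)n_i(n_i-1)/2=k_in_i(n_i+1)/2+k_rn_i(n_i-1)/2$ give the diagonal part of the target exponent; and (iii) each symmetric off-diagonal pair $(D_{ij},D_{ji})$ with $i<j$ is cut out of $2n_in_j$ coefficients in $\mathbb{Z}/\ell^{k_r+1}$ by the single $n_i\times n_j$ matrix equation at position $(i,j)$ which, after dividing by the dominant scaling $\ell^{k_i}$, becomes an equation modulo $\ell^{k_r+1-k_i}$, yielding $\ell^{2n_in_j(k_r+1)-n_in_j(k_r+1-k_i)}=\ell^{n_in_j(k_r+1+k_i)}$ solutions per pair, so that summing over $i<j$ recovers $\sum_{j<i}n_in_j(k_r+1+k_j)$ after relabeling.

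The principal obstacle will be the careful verification that the diagonal and off-diagonal counts genuinely combine as described: the $(i,i)$-equation involves the lower-scale off-diagonal blocks $D_{ik}$ for $k<i$ through terms of order $\ell^{k_k}<\ell^{k_i}$, and conversely the $(i,j)$-equations for $i\neq j$ mix contributions from all blocks of scaling at most $\max(k_i,k_j)$. The resolution is to organize the analysis by a strict filtration according to $k_1<k_2<\cdots<k_r$, solving the system column by column starting from the lowest-$k$ column, and checking at each stage that the already-chosen lower-column blocks produce corrections in the remaining equations which lie in precisely the predicted ideal power of $\ell$; this is what ensures that the free-lift dimensions claimed for the diagonal blocks $D_{ii}$ and the off-diagonal pairs $(D_{ij},D_{ji})$ are not further reduced by hidden constraints. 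The bookkeeping is elementary $\ell$-adic Hensel lifting but must be executed with attention to the scaling gaps $k_i-k_j$; once verified, substituting the resulting $|\mathrm{Stab}(\Lambda)|$ into the volume formula together with the sign $(\tfrac{-1}{\ell})^{\sum_ik_in_i}$ completes the proof.
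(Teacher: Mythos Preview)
Your proposal is correct and follows essentially the same approach as the paper: reduce $\mathrm{vol}(\mathcal{E}(\Lambda))$ via Lemma~\ref{Z_l-equivalence vs. Z/l-equivalence} to an orbit--stabilizer count in $\mathrm{GL}_4(\mathbb{Z}/\ell^{k_r+1})$, then evaluate $\#\mathrm{O}(\Lambda,\mathbb{Z}/\ell^{k_r+1})$ by block-decomposing $D=(D_{ij})$ and reading off the constraints from $D\Lambda D^t\equiv\Lambda$ block by block. Your organization differs only cosmetically: you treat each off-diagonal pair $(D_{ij},D_{ji})$ together (yielding $\ell^{n_in_j(k_r+1+k_{\min(i,j)})}$), whereas the paper counts the sub- and super-diagonal blocks separately as $\ell^{k_jn_in_j}$ and $\ell^{(k_r+1)n_in_j}$ for $i>j$; and your diagonal count proceeds via $\mathrm{O}(\Lambda_i,\mathbb{Z}/\ell^{k_r+1-k_i})$ followed by free lifts, while the paper invokes Lemma~\ref{Z_l-equivalence vs. Z/l-equivalence} directly to get $\ell^{k_in_i(n_i+1)/2}\#\mathrm{O}(\Lambda_i,\mathbb{Z}/\ell^{k_r+1})$---both routes produce the same exponent. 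If anything, your explicit acknowledgment of the cross-block interactions and the column-by-column filtration is more careful than the paper's somewhat terse treatment of the same issue.
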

    \begin{proof}    	
    	Note that by the above lemma,
    	\begin{align*}
    	\mathrm{vol}(\mathcal{E}(\Lambda))
    	&
    	=
    	\ell^{-10(k_r+1)}
    	\#
    	\{
    	B\in\mathrm{Sym}_{4\times 4}
    	(\mathbb{Z}/\ell^{k_r+1})|
    	B\sim_{k_r+1}\Lambda
    	\}
    	\\
    	&
    	=\ell^{-10(k_r+1)}
    	\frac{\#
    		\mathrm{GL}_4(\mathbb{Z}/\ell^{k_r+1})}
    	{\#\{
    		D\in\mathrm{GL}_4(\mathbb{Z}/\ell^{k_r+1})|
    		\Lambda=D\Lambda D^t
    		\}}
    	\\
    	&
    	=\ell^{-10(k_r+1)}
    	\frac{\#\mathrm{GL}_4(\mathbb{Z}/\ell^{k_r+1})}
    	{\#\mathrm{O}(\Lambda,\mathbb{Z}/\ell^{k_r+1})}.
    	\end{align*}
    	
    	The order of
    	$\mathrm{GL}_4(\mathbb{Z}/\ell^{k_r+1})$
    	is given by
    	$\ell^{16k_r}(\ell^4-\ell^3)\cdots(\ell^4-1)$
    	(see for example \cite[Corollary 2.8]{Han2006}).

    	Next we calculate the order of
    	$\mathrm{O}(\Lambda,\mathbb{Z}/\ell^{k_r+1})$.
    	For any element
    	$D=(D_{i,j})\in\mathrm{GL}_4(\mathbb{Z}/\ell^{k_r+1})$
    	with
    	$D_{i,j}\in\mathrm{M}_{n_i\times n_j}
    	(\mathbb{Z}/\ell^{k_r+1})$,
    	we see that
    	$\widetilde{\Lambda}=D\Lambda D^t
    	=(\Lambda_{i,j})$
    	with
    	$\Lambda_{i,j}
    	=\ell^{k_1}D_{i,1}\Lambda_1D_{j,1}^t
    	+\cdots+
    	\ell^{k_r}D_{i,r}\Lambda_rD_{j,r}^t$.
    	If we equate $\widetilde{\Lambda}$
    	with $\Lambda$,
    	then
    	$\widetilde{\Lambda}_{1,1}=\Lambda_{1,1}$
    	implies that
    	$\ell^{k_1}(D_{1,1}\Lambda_1D_{1,1}^t
    	+\ell(\ast)-\Lambda_1)=0
    	(\mathrm{mod}\, \ell^{k_r+1})$
    	with $(\ast)\in
    	\mathrm{M}_{n_1\times n_1}
    	(\mathbb{Z}/\ell^{k_r+1})$.
    	For any fixed $(\ast)$, the number of
    	$D_{1,1}$
    	satisfying the above equation is the same as
    	$\ell^{k_1n_1(n_1+1)/2}$ times
    	the number of $D_{1,1}$
    	such that
    	$D_{1,1}\Lambda_1D_{1,1}^t=\Lambda_1$
    	by the above lemma,
    	which is
    	$\#\mathrm{O}(\Lambda_1,\mathbb{Z}/\ell^{k_r+1})$,
    	as given preceding the lemma.
    	Next $\widetilde{\Lambda}_{2,1}=\Lambda_{2,1}=0$
    	implies that
    	$\ell^{k_1}D_{2,1}\Lambda_1D_{1,1}^t+
    	\ell^{k_2}(\ast)=0
    	(\mathrm{mod}\,\ell^{k_r+1})$
    	with $(\ast)\in
    	\mathrm{M}_{n_2\times n_1}
    	(\mathbb{Z}/\ell^{k_r+1})$.
    	Therefore,
    	$\ell^{k_1}D_{2,1}=0
    	(\mathrm{mod}\,\ell^{k_2})$
    	and
    	$D_{2,1}=-\ell^{k_2-k_1}(\ast)
    	D_{1,1}^{-t}\Lambda_1^{-1}
    	(\mathrm{mod}\,\ell^{k_r-k_1})$.
    	Therefore the number of $D_{2,1}$
    	satisfying the above equation is
    	$\ell^{k_1n_2n_1}$.
    	Similarly, 
    	for $i=2,\cdots,r$,
    	one shows that
    	$\widetilde{\Lambda}_{i,1}=\Lambda_{i,1}=0$
    	implies that
    	$\ell^{k_1}D_{i,1}=0
    	(\mathrm{mod}\,\ell^{k_2})$
    	and therefore
    	the number of $D_{i,1}$
    	satisfying the above equation is
    	$\ell^{k_1n_in_1}$.
    	We can continue the above argument for any
    	$D_{i,j}$ and $D_{j,i}$
    	with $i>j$,
    	and the number of such
    	$D_{i,j}$
    	satisfying the corresponding equation is
    	$\ell^{k_jn_in_j}$
    	while the number of
    	such $D_{j,i}$
    	is $\ell^{(k_r+1)n_jn_i}$.
    	For any $i$,
    	the number of such
    	$D_{i,i}$
    	satisfying the corresponding equation is
    	$\ell^{k_in_i(n_i+1)/2}
    	\#\mathrm{O}(\Lambda_i,
    	\mathbb{Z}/\ell^{k_r+1})
    	=\ell^{k_in_i(n_i+1)/2+k_rn_i(n_i-1)/2}
    	\#\mathrm{O}(\Lambda_i,\mathbb{Z}/\ell)$.
    	So finally we get that
    	\[
    	\#\mathrm{O}(\Lambda,
    	\mathbb{Z}/\ell^{k_r+1})
    	=\prod_{i>j}\ell^{(k_r+1+k_j)n_jn_i}
    	\times
    	\prod_i
    	\ell^{k_in_i(n_i+1)/2+k_rn_i(n_i-1)/2}
    	\#\mathrm{O}(\Lambda_i,
    	\mathbb{Z}/\ell).
    	\] 	
    	Combining the above results,
    	we get the formula in the lemma.

    \end{proof}

    Combining the results in
    Lemmas \ref{Gauss sum for k_1>=0}
    we get the following:
	\begin{lemma}
		\label{non-vanishing of local integral at l}
		\[
		\int_{\mathrm{Sym}_{4\times 4}
			(\mathbb{Z}_\ell)}
		(\frac{-1}{\ell})^{v_\ell(\mathrm{det}(B))}dB
		=
		\frac{((\frac{-1}{\ell})\ell^5-1)(\ell-1)}
		{(\ell^5-1)((\frac{-1}{\ell})\ell-1)}.
		\]
	\end{lemma}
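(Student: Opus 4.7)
The plan is to decompose the integration domain $\mathrm{Sym}_{4\times 4}(\mathbb{Z}_\ell)$ into the $\mathbb{Z}_\ell$-equivalence classes $\mathcal{E}(\Lambda)$ used in the paragraph preceding the statement, write the integral as $\sum_{\Lambda\in\mathcal{Q}} I(\Lambda)$, and then apply the closed formula for $I(\Lambda)$ given in Lemma \ref{Gauss sum for k_1>=0}. Singular $B$ form a set of measure zero, so we may assume $\Lambda = \mathrm{diag}(\ell^{k_1}\Lambda_1,\ldots,\ell^{k_r}\Lambda_r)$ is invertible with $0\leq k_1<k_2<\cdots<k_r$, each $\Lambda_i\in\{1_{n_i},\mathrm{diag}(1_{n_i-1},z)\}$, and $\sum n_i = 4$. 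Write $\epsilon=(\tfrac{-1}{\ell})$ throughout.

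First I would perform the sum over the anisotropic/split choice $\Lambda_i\in\{1_{n_i},\mathrm{diag}(1_{n_i-1},z)\}$ at each block. The factor $\epsilon^{\sum_i k_in_i}$ in $I(\Lambda)$ is insensitive to this choice, while the only $\Lambda_i$-dependent piece in the denominator of Lemma \ref{Gauss sum for k_1>=0} is the factor $\#\mathrm{O}(\Lambda_i,\mathbb{Z}/\ell)$. Using the three explicit formulas for orthogonal group orders recorded just before Lemma \ref{Gauss sum for k_1>=0}, the sum
\[
\frac{1}{\#\mathrm{O}(1_{n_i},\mathbb{Z}/\ell)} + \frac{1}{\#\mathrm{O}(\mathrm{diag}(1_{n_i-1},z),\mathbb{Z}/\ell)}
\]
collapses, for $n_i=2m$, to $\tfrac{2\ell^m}{2\ell^{m(m-1)}(\ell^2-1)\cdots(\ell^{2m-2}-1)(\ell^{2m}-1)}$ (the two choices merge via $\tfrac{1}{\ell^m-1}+\tfrac{1}{\ell^m+1}$), and for $n_i=2m+1$ it is simply twice the common value. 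In either case the resulting block contribution assembles into a single expression depending only on $n_i$ (not on the hyperbolic type).

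Next I would sum over the valuations $0\leq k_1<\cdots<k_r$. A change of variables $k_i=k_1+\sum_{j<i}(1+m_j)$ with $m_j\geq 0$ converts each inner sum into a product of geometric series. The crucial bookkeeping is that the exponent in the denominator of $I(\Lambda)$, namely $\sum_{j<i}n_in_j(k_r+1+k_j)+\sum_i k_in_i(n_i+1)/2 + k_rn_i(n_i-1)/2$, together with the numerator $\ell^{6k_r-4}$, yields an expression linear in each $k_i$, so the $k$-sum factors through the geometric identity $\sum_{k\geq 0}(\epsilon\ell^{-t})^k = (1-\epsilon\ell^{-t})^{-1}$ for various $t$. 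Finally I would sum the resulting closed expression over the finitely many ordered partitions $(n_1,\ldots,n_r)$ of $4$ (there are $8$ such: $(4)$, $(1,3)$, $(3,1)$, $(2,2)$, $(1,1,2)$, $(1,2,1)$, $(2,1,1)$, $(1,1,1,1)$) and check that the telescoping collapses to $\tfrac{(\epsilon\ell^5-1)(\ell-1)}{(\ell^5-1)(\epsilon\ell-1)}$.

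The principal obstacle is the combinatorial bookkeeping in the geometric-series step: tracking the contribution of each $k_i$ to the exponent of $\ell$ requires care, since $k_r$ enters both through the numerator $\ell^{6k_r-4}$ and through the off-diagonal piece $\sum_{j<i}n_in_j(k_r+1)$, which depends on the number of strictly smaller indices. As a sanity check one can verify the $2\times 2$ analogue, where the same method yields the known value $\tfrac{(\epsilon\ell^3-1)(\ell-1)}{(\ell^3-1)(\epsilon\ell-1)}$, and compare with Shimura's local Siegel series formulas in \cite[\S 13--14]{Shimura1997}; this both confirms the method and pins down the combinatorial identity one needs. Once the answer matches in rank $2$, the identical organization yields the rank $4$ identity claimed.
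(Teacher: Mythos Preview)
Your proposal is correct and follows essentially the same approach as the paper: decompose the integral as $\sum_{\Lambda\in\mathcal{Q}} I(\Lambda)$, apply Lemma \ref{Gauss sum for k_1>=0}, use the change of variables $k_i=\alpha_1+\cdots+\alpha_i$ (your $k_i=k_1+\sum_{j<i}(1+m_j)$) to reduce the $k$-sums to geometric series, and then sum the resulting closed forms over the eight ordered compositions of $4$. The paper carries this out by listing the eight partial sums explicitly and adding them, splitting into the two cases $(\tfrac{-1}{\ell})=\pm1$, rather than exhibiting a single telescoping identity; you may find that the final simplification is cleaner done case-by-case than as a formal telescope.
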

    \begin{proof}
    	For any $\Lambda=\mathrm{diag}
    	(\ell^{k_1}\Lambda_1,
    	\cdots,\ell^{k_r}\Lambda_r)\in\mathcal{Q}$,
    	we have seen that
    	\[
    	I(\Lambda)
    	=(\frac{-1}{\ell})^{\sum_ik_in_i}
    	\Pi(\Lambda)
    	\frac{\ell^{6k_r-4-16k_1}}
    	{\#\mathrm{O}(\Lambda,\mathbb{Z}/\ell^{k_r+1})}
    	(\ell-1)\cdots(\ell^4-1)
    	\]
    	We rewrite $k_1,\cdots,k_r$ as
    	$k_1=\alpha_1$,
    	$k_2=\alpha_1+\alpha_2$,
    	...,$k_r=\alpha_1+\cdots+\alpha_r$
    	with
    	$\alpha_1\geq0$,
    	$\alpha_i>0$ for $i>1$.
    	Now it suffices to take the sum of
    	$I(\Lambda)$
    	over all possible
    	$r=1,\cdots,4$, $\alpha_1\geq0,\cdots,
    	\alpha_r>0$ and $\Lambda_1,\cdots,\Lambda_r$.
    	We write
    	$\alpha=(\ell-1)\cdots(\ell^4-1)$.
    	The results are as follows:
    	   	
    	\begin{enumerate}
    		\item $r=1$, one has
    		\[
    		\sum_{r=1}I(\Lambda)
    		=\frac{\ell^6\alpha}
    		{(\ell^{10}-1)(\ell^2-1)(\ell^4-1)};
    		\]    		
    		\item $r=2$ and
    		$(n_1,n_2)=(1,3)$, one has
    		\[
    		\sum_{(r,n_1,n_2)=(2,1,3)}I(\Lambda)
    		=
    		\frac{\ell^2\alpha}
    		{(\ell^{10}-1)((\frac{-1}{\ell})\ell^6-1)(\ell^2-1)};
    		\]    		
    		\item $r=2$ and
    		$(n_1,n_2)=(2,2)$, one has
    		\[
    		\sum_{(r,n_1,n_2)=(2,2,2)}I(\Lambda)
    		=
    		\frac{\ell^4\alpha}
    		{(\ell^{10}-1)(\ell^3-1)(\ell^2-1)^2};
    		\]    		
    		\item $r=2$ and
    		$(n_1,n_2)=(3,1)$, one has
    		\[
    		\sum_{(r,n_1,n_2)=(2,3,1)}
    		I(\Lambda)
    		=
    		\frac{\ell^2\alpha}
    		{(\ell^{10}-1)((\frac{-1}{\ell})\ell-1)(\ell^2-1)};
    		\]    		
    		\item $r=3$ and
    		$(n_1,n_2,n_3)=(1,1,2)$, one has
    		\[
    		\sum_{(r,n_1,n_2,n_3)=(3,1,1,2)}
    		I(\Lambda)
    		=
    		\frac{\ell^2\alpha}
    		{(\ell^{10}-1)((\frac{-1}{\ell})\ell^6-1)(\ell^3-1)
    		(\ell^2-1)};
    		\]
    		\item $r=3$ and
    		$(n_1,n_2,n_3)=(1,2,1)$, one has
    		\[
    		\sum_{(r,n_1,n_2,n_3)=(3,1,2,1)}
    		I(\Lambda)
    		=
    		\frac{\ell^2\alpha}
    		{(\ell^{10}-1)((\frac{-1}{\ell})\ell^6-1)
    		((\frac{-1}{\ell})\ell-1)(\ell^2-1)};
    		\]    		
    		\item $r=3$ and
    		$(n_1,n_2,n_3)=(2,1,1)$, one has
    		\[
    		\sum_{(r,n_1,n_2,n_3)=(3,2,1,1)}I(\Lambda)
    		=
    		\frac{\ell^2\alpha}
    		{(\ell^{10}-1)(\ell^3-1)
    			((\frac{-1}{\ell})\ell-1)(\ell^2-1)};
    		\]    		
    		\item $r=4$, one has
    		\[
    		\sum_{r=4}I(\Lambda)
    		=
    		\frac{\alpha}
    		{(\ell^{10}-1)((\frac{-1}{\ell})\ell^6-1)
    		(\ell^3-1)((\frac{-1}{\ell})\ell-1)}.
    		\]
    	\end{enumerate}
    Taking the sum of all these numbers, one gets
    that if $(\frac{-1}{\ell})=1$, then
    $\sum_{\Lambda\in\mathcal{Q}}
    I(\Lambda)=1$,
    and if
    $(\frac{-1}{\ell})=-1$,
    then
    $\sum_{\Lambda\in\mathcal{Q}}
    I(\Lambda)
    =(\ell^5+1)(\ell-1)(\ell^5-1)^{-1}(\ell+1)^{-1}$,
    which is the desired formula in the lemma.
    \end{proof}

	\subsubsection{Local Fourier coefficient}
	We now turn to the Fourier coefficients
	\[
	E_{\beta,\ell}(1_8,f_{\phi^+_\ell})
	=
	\int_{\mathrm{Sym}_{4\times 4}
		(\mathbb{Q}_\ell)}
	f_{\phi^+_\ell}
	(J_8u(b))
	\mathbf{e}_\ell
	(
	-\mathrm{tr}\,\beta b
	)
	db.
	\]
	We need to show that
	$E_{\beta,\ell}(1_8,f_{\phi_\ell^+})$,
	as a function on $\beta$,
	is not identically zero
	and takes values in
	$\mathbb{Q}(\zeta_\ell)$.
	We write
	\[
	f(b)
	:=
	f_{\phi^+_\ell}
	(J_8u(b))
	=
	\int_{W_4^-(\mathbb{Q}_\ell)}
	\mathbf{e}_\ell
	(\mathrm{tr}(\frac{1}{2}wbw^t\eta_U))\phi^+_\ell(w)dw.
	\]
	Note that by Lemma \ref{support of Gauss sums},
	the support of $f(b)$ is contained in the subset of
	$b$ such that 
	$b^{-1}\in N_\ell^2
	\mathrm{Sym}_{4\times 4}
	(\mathbb{Z}_\ell)$
	and by the proof of
	Proposition
	\ref{non-vanishing of local zeta integral at l},
	\[
	f(b)
	=|\mathrm{det}(b)|^{-3}
	|\mathrm{det}(\eta_U)|^2
	(\frac{-1}{\ell})^{v_\ell(\mathrm{det}(b))}.
	\]
	We first show that
	\begin{lemma}
		$E_{\beta,\ell}(1_8,f_{\phi_\ell^+})$
		is continuous on 
		$\beta\in\mathrm{Sym}_{4\times 4}
		(\mathbb{Q}_\ell)$
		($\mathrm{Sym}_{4\times 4}
		(\mathbb{Q}_\ell)$
		is given the $l$-adic topology and
		$\mathbb{C}$ the Euclidean topology).
	\end{lemma}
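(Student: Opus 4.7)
The plan is to realize $E_{\beta,\ell}(1_8, f_{\phi_\ell^+})$ as the uniform limit of manifestly locally constant functions of $\beta$. Using the Weil-representation formula (\ref{general expression for Weil representation}) at $g = J_8 u(b)$, I first rewrite
\[
f_{\phi_\ell^+}(J_8 u(b)) \;=\; \gamma^{\mathrm{Weil}} \int_S \mathbf{e}_\ell\bigl(\tfrac{1}{2}\mathrm{tr}(w\,b\,w^t \eta_U)\bigr)\,\phi_\ell^+(w)\,dw,
\]
where $S = \mathrm{supp}(\phi_\ell^+)$ is compact.

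For each integer $n \ge 0$, introduce the truncation $E^{(n)}_{\beta,\ell} := \int_{\ell^{-n}\mathrm{Sym}_{4\times 4}(\mathbb{Z}_\ell)} f_{\phi_\ell^+}(J_8 u(b))\,\mathbf{e}_\ell(-\mathrm{tr}(\beta b))\,db$. Since $f_{\phi_\ell^+}(J_8u(b))$ is bounded on its support by $\ell^{-24}|\det\eta_U|^2$ (from the explicit formula used in the proof of Proposition \ref{non-vanishing of local zeta integral at l}) and the $b$-region is compact, Fubini applies. Exchanging $b$- and $w$-integrals and evaluating the inner one via the elementary identity (valid for $\ell$ odd)
\[
\int_{\ell^{-n}\mathrm{Sym}(\mathbb{Z}_\ell)} \mathbf{e}_\ell(\mathrm{tr}(bM))\,db \;=\; \ell^{10n}\,\mathbf{1}_{\ell^n\mathrm{Sym}(\mathbb{Z}_\ell)}(M),
\]
I obtain
\[
E^{(n)}_{\beta,\ell} \;=\; \gamma^{\mathrm{Weil}}\,\ell^{10n}\int_{\{w \in S\,:\,\tfrac12 w\eta_U w^t \equiv \beta\,(\mathrm{mod}\,\ell^n)\}}\phi_\ell^+(w)\,dw,
\]
which depends on $\beta$ only through $\beta\,\mathrm{mod}\,\ell^n\mathrm{Sym}(\mathbb{Z}_\ell)$ and is therefore locally constant---in particular continuous---on $\mathrm{Sym}_{4\times 4}(\mathbb{Q}_\ell)$.

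To conclude it suffices to show $E^{(n)}_{\beta,\ell} \to E_{\beta,\ell}(1_8, f_{\phi_\ell^+})$ uniformly on compact neighborhoods of $\beta$, since the uniform limit of continuous functions is continuous. After the substitution $c = b^{-1}$ (with Jacobian $|\det c|^{-5}$) and the explicit formula for $f_{\phi_\ell^+}(J_8u(b))$ on its support, the tail is bounded in absolute value by
$|\det\eta_U|^2 \int_{c\in\ell^2\mathrm{Sym}(\mathbb{Z}_\ell)\cap\mathrm{GL}_4,\,c^{-1}\notin \ell^{-n}\mathrm{Sym}(\mathbb{Z}_\ell)} |\det c|^{-2}\,dc$,
which is independent of $\beta$; the condition $c^{-1}\notin \ell^{-n}\mathrm{Sym}(\mathbb{Z}_\ell)$ forces $c$ into a neighborhood of the singular locus whose measure, estimated via the $\mathbb{Z}_\ell$-equivalence stratification and volume computations in Lemmas \ref{Z_l-equivalence vs. Z/l-equivalence}--\ref{Gauss sum for k_1>=0}, shrinks with $n$. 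The main obstacle will be rigorously establishing this uniform tail bound: the naive absolute estimate sits on the boundary of convergence of the $p$-adic Igusa integral $\int |\det c|^{-2}\,dc$, so one may additionally need to exploit Gauss-sum cancellation in the oscillatory phase $\mathbf{e}_\ell(-\mathrm{tr}(\beta c^{-1}))$ uniformly in $\beta$ via Lemma \ref{non-vanishing of Gauss sums-higher dim}.
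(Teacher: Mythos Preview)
Your route through truncations $E^{(n)}_{\beta,\ell}$ and uniform convergence reduces to exactly the same tail estimate that the paper uses; the paper simply bounds $|E_\beta - E_{\beta+\beta_0}|$ directly, observes that the factor $1-\mathbf{e}_\ell(-\mathrm{tr}(\beta_0 b))$ vanishes on $b\in\ell^{-m}S_0$ once $\beta_0\in\ell^m S_0$, and is left with the integral
\[
\int_{\{b:\ b^{-1}\in N_\ell^2 S_0,\ b\notin\ell^{-m}S_0\}} |\det(b)|_\ell^{-3}\,db
\]
to show tends to $0$. Introducing the $E^{(n)}$ is harmless but unnecessary: the paper's direct difference bound already does the work and yields uniform continuity immediately.

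Where you add real value is in flagging the tail estimate as non-obvious. You are right to worry: after $c=b^{-1}$ the absolute bound becomes $\int_{c\in\ell^2 S_0,\,c^{-1}\notin\ell^{-m}S_0}|\det c|_\ell^{-2}\,dc$, and the ambient Igusa integral $\int_{S_0}|\det c|_\ell^{-2}\,dc$ \emph{diverges} (already on the stratum of four distinct elementary divisors $\ell^{k_1}<\cdots<\ell^{k_4}$, using the volume formula in Lemma~\ref{Gauss sum for k_1>=0} one finds the contribution behaves like $\sum \ell^{-2k_1-k_2+k_4}$, which blows up in $k_4$). Hence the naive absolute bound is not enough, and the paper's ``it is easy to see this last integral has a limit $0$'' is at best an oversimplification of the same point you isolate. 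To make either argument rigorous one must exploit cancellation---e.g.\ interpret the Fourier coefficient as the stabilised local density $\ell^{-14n}\mathcal{D}_\beta(n)$ (the lemma following Corollary~\ref{support of Siegel section}'s corollary) for non-singular $\beta$ and then argue continuity from that stable finite-sum expression, rather than from an $L^1$ bound on $|f(b)|$. Your instinct to invoke oscillatory cancellation via Lemma~\ref{non-vanishing of Gauss sums-higher dim} is therefore on the right track; the paper's proof as written shares the same gap.
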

    \begin{proof}
    	We write
    	$E_\beta$ for
    	$E_{\beta,\ell}(1_8,f_{\phi_\ell^+})$
    	and
    	$S_0=\mathrm{Sym}_{4\times 4}
    	(\mathbb{Z}_\ell)$
    	in this proof.
    	For any $\beta,\beta+\beta_0\in S_0$,
    	\[
    	|E_\beta-E_{\beta+\beta_0}|_\infty
    	\leq
    	\int_{b^{-1}\in N_\ell^2 S_0}
    	|\mathrm{det}(b)|_\ell^{-3}
    	|1-\mathbf{e}_\ell(-\mathrm{tr}(\beta_0b))|_\infty
    	db.
    	\]
    	Here $|\cdot|_\infty$
    	is the usual absolute value in $\mathbb{C}$.
    	Suppose that $\beta_0\in\ell^mS_0$
    	for $m\gg0$,
    	then
    	$1-\mathbf{e}_\ell(-\mathrm{tr}(\beta_0b))=0$
    	for any $b\in\ell^{-m}S_0$.
    	Thus the RHS of the above inequality is
    	equal to
    	\[
    	\int_{b^{-1}\in N_\ell S_0,
    		b\notin \ell^{-m}S_0}
    	|\mathrm{det}(b)|_\ell^{-3}db.
    	\]
    	It is easy to see this last integral
    	has a limit $0$ when $m\rightarrow+\infty$,
    	which gives the continuity of
    	$E_\beta$ on $\beta$.
    \end{proof}
    \begin{corollary}
    	The Fourier coefficient
    	$E_{\beta,\ell}(1_8,
    	f_{\phi_\ell^+})$
    	is not identically zero.
    \end{corollary}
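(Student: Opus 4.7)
The plan is to argue by Fourier inversion on the self-dual $\ell$-adic vector space $V:=\mathrm{Sym}_{4\times 4}(\mathbb{Q}_\ell)$, equipped with the self-pairing $(\beta,b)\mapsto \mathbf{e}_\ell(\mathrm{tr}\,\beta b)$. The displayed formula
\[
E_{\beta,\ell}(1_8,f_{\phi_\ell^+})
=\int_{V}f(b)\,\mathbf{e}_\ell(-\mathrm{tr}\,\beta b)\,db
\]
identifies the map $\beta\mapsto E_{\beta,\ell}(1_8,f_{\phi_\ell^+})$, up to a change of sign on $\beta$, with the Fourier transform $\widehat{f}$. Since Fourier inversion on $V$ is injective on tempered distributions, it suffices to exhibit that $f$ is a non-zero locally constant function on a non-empty open subset of $V$.

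First I would recall the explicit evaluation of $f(b)=f_{\phi_\ell^+}(J_8 u(b))$ carried out in the proof of Proposition \ref{non-vanishing of local zeta integral at l}: on the (non-empty, open) locus $\{b\in V:b^{-1}\in N_\ell^{2}\,\mathrm{Sym}_{4\times 4}(\mathbb{Z}_\ell),\ \det b\ne 0\}$, the function $f$ is equal to
\[
|\det\eta_U|_\ell^{2}\,|\det b|_\ell^{-3}\,\bigl(\tfrac{-1}{\ell}\bigr)^{v_\ell(\det b)},
\]
which is visibly non-zero and locally constant there; in particular $f$ defines a non-zero distribution on $V$. Assuming for contradiction that $E_{\beta,\ell}(1_8,f_{\phi_\ell^+})=0$ for every $\beta\in V$, the continuity result from the preceding lemma (together with the obvious fact that $\widehat{f}$ equals the Fourier transform of $f$ in the distributional sense) forces $\widehat{f}\equiv 0$ as a continuous function on $V$; Fourier inversion then gives $f\equiv 0$, a contradiction.

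The main obstacle is the analytic justification: although $|\det b|_\ell^{-3}$ is bounded on the support of $f$ (by $N_\ell^{24}$, since $b^{-1}\in N_\ell^{2}\mathrm{Sym}_{4\times 4}(\mathbb{Z}_\ell)$ implies $|\det b|_\ell\ge N_\ell^{8}$), the support itself is unbounded in $V$, so $f$ is neither Schwartz--Bruhat nor a priori $L^{1}$. I would handle this via a truncation, setting $f_{m}:=f\cdot\mathbf{1}_{b\in \ell^{-m}\mathrm{Sym}_{4\times 4}(\mathbb{Z}_\ell)}\in\mathcal{S}(V)$, applying classical Fourier inversion to each $f_m$, and passing to the limit $m\to\infty$ using the continuity estimate in the preceding lemma to control $\widehat{f_m}(-\beta)\to E_{\beta,\ell}(1_8,f_{\phi_\ell^+})$. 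A fallback, if the truncation argument is cumbersome, is to by-pass Fourier inversion entirely and exhibit a single $\beta_0$ (for instance a suitable scalar multiple of the identity) for which $E_{\beta_0,\ell}(1_8,f_{\phi_\ell^+})$ reduces, via a change of variable $b\mapsto b^{-1}$, to an explicit generalized Gauss-sum integral of the type treated in Lemmas \ref{vanishing of Gauss integrals}--\ref{non-vanishing of local integral at l}, and verify directly that it is non-zero.
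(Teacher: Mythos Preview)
Your main route via Fourier inversion is genuinely different from the paper's, and the obstacle you flag---that $f$ is bounded but not compactly supported---is a real one that your sketch does not overcome. In fact the change of variables $c=b^{-1}$ (Jacobian $|\det c|^{-5}$ on $\mathrm{Sym}_{4\times4}$) turns $\int|f(b)|\,db$ into a multiple of $\int_{N_\ell^{2}\mathrm{Sym}_{4\times4}(\mathbb Z_\ell)}|\det c|^{-2}\,dc$, and the local zeta integral $\int_{\mathrm{Sym}_4(\mathbb Z_\ell)}|\det c|^{s}\,dc$ already diverges at $s=-1$; so $f\notin L^{1}$ and the na\"ive Fourier-uniqueness theorem is unavailable. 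Your truncation step then stalls: from $\widehat{f_m}(-\beta)\to E_{\beta,\ell}$ pointwise and $E_{\beta,\ell}\equiv 0$ you cannot conclude $f=0$ without an integrable majorant for the family $\widehat{f_m}$, and none is on offer. A distributional argument (pair both sides against $\psi\in\mathcal S(V)$, so that $\widehat\psi$ is compactly supported and localizes the $b$-integral) can in principle be made to work, but it needs more care than you indicate and still requires checking that $\beta\mapsto E_{\beta,\ell}$ really computes the distributional Fourier transform of $f$.

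What the paper does is exactly your fallback, with the specific choice $\beta_0=0$. With the oscillatory factor gone, the change of variables $b'=b^{-1}$ carries the integral onto the compact domain $N_\ell^{2}\mathrm{Sym}_{4\times4}(\mathbb Z_\ell)$, and the paper then evaluates it by the same decomposition into $\mathbb Z_\ell$-equivalence classes $\Lambda=\mathrm{diag}(\ell^{k_1}\Lambda_1,\dots,\ell^{k_r}\Lambda_r)$ used in Lemma~\ref{non-vanishing of local integral at l}, summing the contributions $I_0(\Lambda)$ case by case to an explicit nonzero closed form. So the paper trades your soft injectivity argument for a direct (if laborious) computation at a single well-chosen point; the continuity lemma immediately preceding the corollary is what makes $\beta_0=0$ a legitimate test point.
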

    \begin{proof}
    	It suffices to show that
    	$E_{\beta,\ell}(1_8,f_{\phi_\ell^+})\neq0$
    	for $\beta=0$ by the above lemma.
    	By change of variables from
    	$b$ to $b'=b^{-1}$ in
    	$f(b)$, we see that
    	\begin{align*}
    	E_{0,\ell}(1_8,f_{\phi_\ell^+})
    	&
    	=
    	|\mathrm{det}(\eta_U)|^2
    	\left.
    	\int_{N_\ell^2\mathrm{Sym}_{4\times 4}
    		(\mathbb{Z}_\ell)}
    	\mathbf{e}_\ell(-\mathrm{tr}(\beta(b')^{-1}))
    	|\mathrm{det}(b')|
    	(\frac{-1}{\ell})^{v_\ell(\mathrm{det}(b'))}
    	db'
    	\right\vert_{\beta=0}
    	\\
    	&
    	=
    	|\mathrm{det}(\eta_U)|^2
    	|N_\ell|^{20}
    	\int_{\mathrm{Sym}_{4\times 4}
    		(\mathbb{Z}_\ell)}
    	|\mathrm{det}(b')|
    	(\frac{-1}{\ell})^{v_\ell(\mathrm{det}(b'))}
    	db'.
    	\end{align*}
    	So it suffices to show that the above integral
    	does not vanish.
    	The calculation of this integral is the same as
    	the one in
    	Lemma \ref{non-vanishing of local integral at l}.
    	For any
    	$\Lambda=\mathrm{diag}(\ell^{k_1}\Lambda_1,
    	\cdots,\ell^{k_r}\Lambda_r)
    	\in\mathcal{Q}$,
    	we write
    	\[
    	I_0(\Lambda)
    	=
    	\int_{B\sim\Lambda}
    	(\frac{-1}{\ell})^{v_\ell(\mathrm{det}(B))}
    	|\mathrm{det}(B)|dB
    	=
    	(\frac{-1}{\ell})^{v_\ell(\mathrm{det}(B))}
    	|\mathrm{det}(B)|
    	I(\Lambda).
    	\]
    	Then
    	$E_{0,\ell}(1_8,f_{\phi_\ell^+})
    	=|\mathrm{det}(\eta_U)|^2
    	|N_\ell|^{20}
    	\sum_{\Lambda}
    	I_0(\Lambda)$.
    	Moreover we have
    	(using the same notation as
    	Lemma \ref{non-vanishing of local integral at l}):
    	\begin{align*}
    	\sum_{r=1}I(\Lambda)
    	&
    	=\frac{\ell^6\alpha}
    	{(\ell^{14}-1)(\ell^2-1)(\ell^4-1)};
    	\\
    	\sum_{(r,n_1,n_2)=(2,1,3)}I(\Lambda)
    	&
    	=
    	\frac{\ell^2\alpha}
    	{(\ell^{14}-1)((\frac{-1}{\ell})\ell^9-1)(\ell^2-1)};
    	\\
    	\sum_{(r,n_1,n_2)=(2,2,2)}I(\Lambda)
    	&
    	=
    	\frac{\ell^4\alpha}
    	{(\ell^{14}-1)(\ell^5-1)(\ell^2-1)^2};
    	\\
    	\sum_{(r,n_1,n_2)=(2,3,1)}
    	I(\Lambda)
    	&
    	=
    	\frac{\ell^2\alpha}
    	{(\ell^{14}-1)((\frac{-1}{\ell})\ell^2-1)(\ell^2-1)};
    	\\
    	\sum_{(r,n_1,n_2,n_3)=(3,1,1,2)}
    	I(\Lambda)
    	&
    	=
    	\frac{\ell^2\alpha}
    	{(\ell^{14}-1)((\frac{-1}{\ell})\ell^9-1)(\ell^5-1)
    		(\ell^2-1)};
    	\\
    	\sum_{(r,n_1,n_2,n_3)=(3,1,2,1)}
    	I(\Lambda)
    	&
    	=
    	\frac{\ell^2\alpha}
    	{(\ell^{14}-1)((\frac{-1}{\ell})\ell^9-1)
    		((\frac{-1}{\ell})\ell^2-1)(\ell^2-1)};
    	\\
    	\sum_{(r,n_1,n_2,n_3)=(3,2,1,1)}I(\Lambda)
    	&
    	=
    	\frac{\ell^2\alpha}
    	{(\ell^{14}-1)(\ell^5-1)
    		((\frac{-1}{\ell})\ell^2-1)(\ell^2-1)};
    	\\
    	\sum_{r=4}I(\Lambda)
    	&
    	=
    	\frac{\alpha}
    	{(\ell^{14}-1)((\frac{-1}{\ell})\ell^9-1)
    		(\ell^5-1)((\frac{-1}{\ell})\ell^2-1)}.
    	\end{align*}
        Taking the sum of all these numbers,
        one gets
        \[
        \sum_{\Lambda\in\mathcal{Q}}
        I_0(\Lambda)
        =
        \frac{((\frac{-1}{\ell}\ell^6-1))
        (\ell^3-1)(\ell-1)}
        {(\ell^7-1)(\ell^5-1)
        ((\frac{-1}{\ell}\ell^2-1))}
        \neq0,
        \]
    	which concludes the proof of the lemma.    	
    \end{proof}
	By definition, we have
	\begin{align*}
	E_{\beta,\ell}(1_8,f_{\phi_\ell^+})
	&
	=
	\lim\limits_{n\rightarrow+\infty}
	\int_{\ell^{-n}\mathrm{Sym}_{4\times 4}
		(\mathbb{Z}_\ell)}
	\mathbf{e}_\ell(\mathrm{tr}
	(\frac{1}{2}w^t\eta_Uw-\beta)b)db
	\int_{W_4^-(\mathbb{Q}_\ell)}
	\phi_\ell^+(w)dw
	\\
	&
	=\lim\limits_{n\rightarrow+\infty}
	\ell^{10n}
	\int_{W_4^-(\mathbb{Q}_\ell)}
	\phi_\ell^+(w)1_{\ell^n\mathrm{Sym}_{4\times 4}
		(\mathbb{Z}_\ell)}
	(\frac{1}{2}w^t\eta_Uw-\beta)dw
	\\
	&
	=\lim\limits_{n\rightarrow+\infty}
	\ell^{10n}
	\int_S\mathbf{e}_\ell(b^tw)
	1_{\ell^n\mathrm{Sym}_{4\times 4}
		(\mathbb{Z}_\ell)}
	(\frac{1}{2}w^t\eta_Uw-\beta)dw.
	\end{align*}
	It is easy to see that
	\begin{lemma}
		For any positive integer $n>0$, one has
		\[
		\int_S\mathbf{e}_\ell(b^tw)
		1_{\ell^n\mathrm{Sym}_{4\times 4}
			(\mathbb{Z}_\ell)}
		(\frac{1}{2}w^t\eta_Uw-\beta)dw
		=\ell^{-24n}
		\sum_{w\in S/\ell^nS}
		\mathbf{e}_\ell(b^tw)
		1_{\ell^n\mathrm{Sym}_{4\times 4}
			(\mathbb{Z}_\ell)}
		(\frac{1}{2}w^t\eta_Uw-\beta)
		\]
	\end{lemma}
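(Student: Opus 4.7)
The plan is to reduce the integral to a Riemann sum over the finite quotient $S/\ell^n S$ by showing that, for $n$ sufficiently large, the integrand is locally constant on $S$ with period $\ell^n S$. Once this is established, the lemma follows from the standard decomposition $S=\bigsqcup_{w\in S/\ell^n S}(w+\ell^n S)$ together with the evaluation $\mathrm{vol}(\ell^n S)=\ell^{-24n}$ in the Haar measure of Remark \ref{Haar measures} (using $\dim W_4^-=4\times 6=24$).

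First I would verify that the characteristic function factor $1_{\ell^n\mathrm{Sym}_{4\times 4}(\mathbb{Z}_\ell)}\bigl(\tfrac{1}{2}w^t\eta_U w-\beta\bigr)$ is invariant under $w\mapsto w+\ell^n w'$ for $w'\in S$. Expanding gives
\[
\tfrac{1}{2}(w+\ell^n w')^t\eta_U(w+\ell^n w')
=\tfrac{1}{2}w^t\eta_U w+\tfrac{\ell^n}{2}\bigl(w^t\eta_U w'+(w')^t\eta_U w\bigr)+\tfrac{\ell^{2n}}{2}(w')^t\eta_U w',
\]
so it suffices to check that the cross term and quadratic correction belong to $\ell^n\mathrm{Sym}_{4\times 4}(\mathbb{Z}_\ell)$. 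Since $w,w'\in S=\tfrac{1}{N_\ell}\mathrm{M}_{4\times 6}(\mathbb{Z}_\ell)$ and the entries of $\eta_U$ have $\ell$-adic valuation at most $2v_\ell(N_\ell)=2$, the factor $\tfrac{1}{N_\ell^2}\eta_U$ is integral at $\ell$ (using $\ell$ odd and $N$ square-free), giving the desired integrality.

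Second I would verify, under the mild assumption $n\geq 2v_\ell(N_\ell)$ (which loses no generality since one ultimately passes to the limit $n\to\infty$), that the phase factor $\mathbf{e}_\ell(\mathrm{tr}(b^t w))$ is likewise $\ell^n S$-invariant: the difference in phase under $w\mapsto w+\ell^n w'$ is $\mathbf{e}_\ell(\ell^n\mathrm{tr}(b^t w'))$, and since both $b$ and $w'$ have entries in $\tfrac{1}{N_\ell}\mathbb{Z}_\ell$, we get $\ell^n\mathrm{tr}(b^tw')\in\ell^{n-2v_\ell(N_\ell)}\mathbb{Z}_\ell\subset\mathbb{Z}_\ell$, on which $\mathbf{e}_\ell$ is trivial.

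With local constancy in hand, the partition of $S$ into $\ell^{24n}$ cosets of $\ell^n S$ yields
\[
\int_S\mathbf{e}_\ell(b^t w')1_{\ell^n\mathrm{Sym}_{4\times 4}(\mathbb{Z}_\ell)}\bigl(\tfrac{1}{2}(w')^t\eta_U w'-\beta\bigr)dw'
=\ell^{-24n}\sum_{w\in S/\ell^n S}\mathbf{e}_\ell(b^t w)1_{\ell^n\mathrm{Sym}_{4\times 4}(\mathbb{Z}_\ell)}\bigl(\tfrac{1}{2}w^t\eta_U w-\beta\bigr),
\]
which is precisely the identity claimed. The only real point is the bookkeeping of powers of $\ell$ and $N_\ell$ in checking both local-constancy statements; both reduce to the fact that $\tfrac{1}{N_\ell^2}\eta_U\in\mathrm{M}_{6\times 6}(\mathbb{Z}_\ell)$, which in turn uses that $\ell$ is odd, $N$ is square-free, and $N_1$ is prime to $\ell$.
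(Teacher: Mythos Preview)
The paper gives no proof of this lemma beyond the phrase ``It is easy to see that,'' so there is nothing to compare to directly. Your strategy---show the integrand is locally constant on cosets of $\ell^n S$ and convert the integral into a Riemann sum---is the natural one, but there is a genuine gap in the key integrality check.

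You assert that ``the entries of $\eta_U$ have $\ell$-adic valuation at most $2v_\ell(N_\ell)=2$, [so] the factor $\tfrac{1}{N_\ell^2}\eta_U$ is integral at $\ell$.'' The inequality is pointing the wrong way: to conclude that $\tfrac{1}{N_\ell^2}\eta_U\in\mathrm{M}_{6\times 6}(\mathbb{Z}_\ell)$ you would need the entries of $\eta_U$ to have valuation \emph{at least} $2$, not at most $2$. In fact, $\eta_U=\mathrm{diag}(N^2/2,N^2/2,N^2/2,N^2/2,N/N_1,N_1)$ with $N_1$ prime to $N$, so $v_\ell(N/N_1)=1$ and $v_\ell(N_1)=0$; the last entry of $\tfrac{1}{N_\ell^2}\eta_U$ therefore has valuation $-2$. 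Consequently, for $w,w''\in S=\tfrac{1}{N_\ell}\mathrm{M}_{4\times 6}(\mathbb{Z}_\ell)$ the cross term $\tfrac{\ell^n}{2}\bigl(w\eta_U(w'')^t+w''\eta_U w^t\bigr)$ lies only in $\ell^{n-2}\mathrm{Sym}_{4\times 4}(\mathbb{Z}_\ell)$, not in $\ell^n\mathrm{Sym}_{4\times 4}(\mathbb{Z}_\ell)$, so the characteristic function $1_{\ell^n\mathrm{Sym}_{4\times 4}(\mathbb{Z}_\ell)}\bigl(\tfrac{1}{2}w\eta_U w^t-\beta\bigr)$ is \emph{not} $\ell^n S$-periodic. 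As written, the right-hand side of the lemma then depends on the choice of representatives for $S/\ell^n S$, and your argument does not establish the stated identity.

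What your computation does show is that the integrand is constant on cosets of $\ell^{n+2}S$ (for $n\geq 0$), which suffices to express the integral as a finite sum with a shifted index. Since the paper only uses the lemma in the limit $n\to\infty$---to identify $E_{\beta,\ell}(1_8,f_{\phi_\ell^+})$ with the stabilized local density $\ell^{-14n}\mathcal{D}_\beta(n)$ for $n\gg 0$---this weaker periodicity is enough for the application, but the identity for all $n>0$ does not follow from your argument as it stands.
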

    We define the local density
    $\mathcal{D}_\beta(n)
    =
    \sum_{w\in S/\ell^nS}
    \mathbf{e}_\ell(b^tw)
    1_{\ell^n\mathrm{Sym}_{4\times 4}
    	(\mathbb{Z}_\ell)}
    (\frac{1}{2}w^t\eta_Uw-\beta)$.
    Then one has
    \begin{lemma}
    	Suppose
    	$\mathrm{det}(\beta)\neq0$
    	and
    	$\ell^h\beta^{-1}\in
    	\mathrm{Sym}_{4\times 4}
    	(\mathbb{Z}_\ell)$.
    	Then for any $n>h$,
    	\[
    	\ell^{-14n}
    	\mathcal{D}_\beta(n)
    	=\ell^{-14(n+1)}
    	\mathcal{D}_\beta(n+1).
    	\]
    \end{lemma}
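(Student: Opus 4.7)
The plan is to prove the sharper assertion $\mathcal{D}_\beta(n+1)=\ell^{14}\mathcal{D}_\beta(n)$ by a Hensel-type lifting argument. For each solution $\bar w\in S/\ell^n S$ of the quadratic congruence, I will count the lifts $w\in S/\ell^{n+1}S$ that remain solutions modulo $\ell^{n+1}$, weighted by the character $\mathbf{e}_\ell(b^t\,\cdot\,)$, and show that this weighted count equals $\ell^{14}\mathbf{e}_\ell(b^t\bar w)$. Summing over $\bar w$ then yields the claimed identity.

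First, parametrize. Any $w\in S/\ell^{n+1}S$ is uniquely of the form $w=\bar w+\ell^n u$, after fixing representatives in $S/\ell^{n+1}S$ of each $\bar w\in S/\ell^n S$, with $u$ running over $S/\ell S$ (of cardinality $\ell^{24}$). A direct expansion yields
\[
\tfrac{1}{2}(\bar w+\ell^n u)^t\eta_U(\bar w+\ell^n u)
=\tfrac{1}{2}\bar w^t\eta_U\bar w
+\tfrac{\ell^n}{2}\bigl(\bar w^t\eta_U u+u^t\eta_U\bar w\bigr)
+\tfrac{\ell^{2n}}{2}\,u^t\eta_U u.
\]
Since $n>h\geq 0$ forces $n\geq 2$ (this lower bound is already tacit in the well-definedness of $\mathcal{D}_\beta(n)$), a short check using the $\ell$-adic valuations of the diagonal entries of $\eta_U$ shows that the last term is divisible by $\ell^{n+1}$. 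Setting $C_{\bar w}:=\ell^{-n}\bigl(\tfrac{1}{2}\bar w^t\eta_U\bar w-\beta\bigr)\in\Sym_{4\times 4}(\mathbb{Z}_\ell)$, the congruence $\tfrac{1}{2}w^t\eta_U w\equiv \beta\pmod{\ell^{n+1}}$ thus reduces to the affine linear equation
\[
L_{\bar w}(u)\equiv -C_{\bar w}\pmod{\ell},
\qquad
L_{\bar w}(u):=\tfrac{1}{2}\bigl(\bar w^t\eta_U u+u^t\eta_U\bar w\bigr)
\]
over $\mathbb{F}_\ell$.

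The crux is to show that $L_{\bar w}\colon S/\ell S\to\Sym_{4\times 4}(\mathbb{F}_\ell)$ is surjective for every $\bar w$ appearing, so that each fiber has cardinality $\ell^{24-10}=\ell^{14}$. Given any $A\in\Sym_{4\times 4}(\mathbb{Z}_\ell)$, construct a candidate preimage $u_A$ proportional to $\bar w\,\beta^{-1}A$; the hypothesis $\ell^h\beta^{-1}\in\Sym_{4\times 4}(\mathbb{Z}_\ell)$ together with $n>h$ ensures that a suitable scalar multiple of $u_A$ lies in $S/\ell S$. Using $\bar w^t\eta_U\bar w\equiv 2\beta\pmod{\ell^n}$ and the fact that $\ell$ is odd (so $2\in\mathbb{F}_\ell^\times$), one computes $L_{\bar w}(u_A)\equiv A\pmod{\ell}$, proving surjectivity.

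The character factor handles itself: $\mathbf{e}_\ell(b^t w)=\mathbf{e}_\ell(b^t\bar w)\mathbf{e}_\ell(\ell^n b^t u)$, and since $b$ and a representative of $u$ both lie in $\tfrac{1}{N_\ell}\mathrm{M}_{4\times 6}(\mathbb{Z}_\ell)$, the quantity $\ell^n\,\mathrm{tr}(b^t u)$ lies in $\ell^{n-2}\mathbb{Z}_\ell\subseteq\mathbb{Z}_\ell$ for $n\geq 2$, so $\mathbf{e}_\ell(\ell^n b^t u)=1$ throughout the fiber. Therefore
\[
\sum_{u\in L_{\bar w}^{-1}(-C_{\bar w})}
\mathbf{e}_\ell\bigl(b^t(\bar w+\ell^n u)\bigr)
=\ell^{14}\mathbf{e}_\ell(b^t\bar w),
\]
and summing over $\bar w$ gives $\mathcal{D}_\beta(n+1)=\ell^{14}\mathcal{D}_\beta(n)$, which is the asserted identity. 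The main obstacle is the surjectivity of $L_{\bar w}$ modulo $\ell$: this is precisely where the non-degeneracy hypothesis on $\beta$ enters via $n>h$, since one needs enough $\ell$-adic rank of $\bar w$ to invert the approximate identity $\bar w^t\eta_U\bar w\approx 2\beta$ and produce preimages of every $A$ as $u_A\propto\bar w\beta^{-1}A$.
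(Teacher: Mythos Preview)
Your Hensel-lifting strategy is exactly the argument behind the paper's reference to Kitaoka, so the approach is the intended one. But the execution overlooks that this subsection treats primes $\ell\mid N$, so the lattice $S=\tfrac{1}{N_\ell}\mathrm{M}_{4\times6}(\mathbb{Z}_\ell)=\ell^{-1}\mathrm{M}_{4\times6}(\mathbb{Z}_\ell)$ is \emph{not} the standard integral lattice, and $\eta_U=\mathrm{diag}(N^2/2,\dots,N/N_1,N_1)$ is \emph{not} unimodular at $\ell$ (its diagonal entries have $\ell$-adic valuations $2,2,2,2,1,0$). This breaks several of your steps.

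First, your linearization $L_{\bar w}\colon S/\ell S\to\mathrm{Sym}_{4\times4}(\mathbb{F}_\ell)$ is not well defined as written: for $u\in S$ the $k=6$ column of $\eta_U$ produces terms of valuation $-2$, so one only has $L_{\bar w}(u)\in\ell^{-2}\mathrm{Sym}_4(\mathbb{Z}_\ell)$; moreover $L_{\bar w}(\ell S)\subset\ell^{-1}\mathrm{Sym}_4(\mathbb{Z}_\ell)$, which is not contained in $\ell\,\mathrm{Sym}_4(\mathbb{Z}_\ell)$, so the map does not descend. For the same reason the quadratic remainder $\tfrac{\ell^{2n}}{2}u\eta_Uu^t$ lies only in $\ell^{2n-2}\mathrm{Sym}_4(\mathbb{Z}_\ell)$, and dropping it modulo $\ell^{n+1}$ requires $n\ge3$, not $n\ge2$; in any case ``$n>h\ge0$'' yields only $n\ge1$, not $n\ge2$. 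Second, your surjectivity witness $u_A\propto A\beta^{-1}\bar w$ lies in $\ell^{-h}S$, not in $S$, whenever $h\ge1$; rescaling by $\ell^h$ to land in $S$ kills $L_{\bar w}(u_A)$ modulo $\ell$, so no preimage is produced. To make the argument go through you must adapt Kitaoka's lemma to this non-unimodular setting (for instance by passing to the integral variable $w'=\ell w$ representing $\ell^2\beta$ by $\eta_U$, and then tracking carefully how the congruence level and the bound on $n$ shift), rather than invoking the unimodular case verbatim.
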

    \begin{proof}
    	The proof is the same as
    	\cite[Lemma 5.6.1]{Kitaoka1983}.
    \end{proof}
	\begin{corollary}
		Suppose $\mathrm{det}(\beta)\neq0$,
		then
		\[
		E_{\beta,\ell}(1_8,f_{\phi_\ell^+})
		\in\mathbb{Z}_p[\zeta_\ell]
		\]
	\end{corollary}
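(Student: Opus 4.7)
The plan is to combine the two preceding lemmas to express $E_{\beta,\ell}(1_8,f_{\phi_\ell^+})$ as a finite sum with controlled denominators, and then read off integrality from the fact that $\ell \ne p$.

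First, I would observe that by the displayed formula preceding the first lemma of this subsection, we have
\[
E_{\beta,\ell}(1_8,f_{\phi_\ell^+})
= \lim_{n \to \infty} \ell^{10n-24n}\,\mathcal{D}_\beta(n)
= \lim_{n \to \infty} \ell^{-14n}\,\mathcal{D}_\beta(n).
\]
Since $\mathrm{det}(\beta) \neq 0$, there exists $h \geq 0$ such that $\ell^h \beta^{-1} \in \mathrm{Sym}_{4\times 4}(\mathbb{Z}_\ell)$. The stabilization lemma then guarantees that the sequence $\ell^{-14n}\mathcal{D}_\beta(n)$ is eventually constant, so in fact
\[
E_{\beta,\ell}(1_8,f_{\phi_\ell^+})
= \ell^{-14(h+1)}\,\mathcal{D}_\beta(h+1),
\]
reducing the problem to a finite sum.

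Next, I would analyse the arithmetic content of $\mathcal{D}_\beta(h+1)$. Unpacking its definition, it is a finite sum, indexed by $w \in S/\ell^{h+1}S$ satisfying the integrality condition $\tfrac{1}{2}w^t\eta_U w - \beta \in \ell^{h+1}\mathrm{Sym}_{4\times 4}(\mathbb{Z}_\ell)$, of the character values $\mathbf{e}_\ell(b^t w)$. Each such value is a root of unity of $\ell$-power order: indeed, since $b$ has entries in $\frac{1}{N_\ell}\mathbb{Z}_\ell = \frac{1}{\ell}\mathbb{Z}_\ell$ and $w$ ranges over (representatives of) $\frac{1}{\ell}\mathrm{M}_{4\times 6}(\mathbb{Z}_\ell)$ modulo $\ell^h\mathrm{M}_{4\times 6}(\mathbb{Z}_\ell)$, the product $b^t w$ lies in $\frac{1}{\ell^2}\mathbb{Z}_\ell$ modulo $\mathbb{Z}_\ell$, so $\mathbf{e}_\ell(b^t w) \in \mu_{\ell^\infty} \subset \mathbb{Z}_p[\zeta_\ell]$ (after including the necessary $\ell$-power roots of unity in our coefficient ring, which is ensured by the setup in Notations~13).

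Finally, I would note that $\ell \neq p$ implies $\ell \in \mathbb{Z}_p^\times$, so the overall scaling factor $\ell^{-14(h+1)}$ is a $p$-adic unit. Hence
\[
E_{\beta,\ell}(1_8,f_{\phi_\ell^+}) = \ell^{-14(h+1)}\sum_{w}\mathbf{e}_\ell(b^t w) \;\in\; \mathbb{Z}_p[\zeta_\ell],
\]
where the sum is over the finitely many $w \in S/\ell^{h+1}S$ satisfying the $\beta$-integrality condition. The only non-routine point is the uniform control on the denominators of $b^t w$, which is immediate from the explicit choice of $b$ and $S$; no further obstacles arise, as the hard analytic work was already done in the two preceding lemmas.
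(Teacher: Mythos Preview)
Your proposal is correct and follows essentially the same route as the paper: express the Fourier coefficient as $\lim_n \ell^{-14n}\mathcal{D}_\beta(n)$, invoke the stabilization lemma to reduce to a fixed $n>h$, observe that $\mathcal{D}_\beta(n)$ is a finite sum of $\ell$-power roots of unity, and conclude by noting that $\ell\in\mathbb{Z}_p^\times$. One small point: the inclusion $\mu_{\ell^\infty}\subset\mathbb{Z}_p[\zeta_\ell]$ is false as written, and in fact $\mathrm{tr}(b^tw)\in\ell^{-2}\mathbb{Z}_\ell$ forces values in $\mu_{\ell^2}$ rather than $\mu_\ell$; your parenthetical about Notations~13 (where $\mathcal{O}\supset\mu_{N^2}$) is the right fix, and the same imprecision is already present in the paper's statement of the corollary.
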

    \begin{proof}
    	Indeed,
    	the above lemmas show that
    	$E_{\beta,\ell}(1_8,
    	f_{\phi_\ell^+})$
    	is equal to
    	$\ell^{-14n}
    	\mathcal{D}_\beta(n)$
    	for $n\gg0$
    	(depending on $\beta$ though).
    	It follows from definition that
    	$\mathcal{D}_\beta(n)
    	\in\mathbb{Z}[\zeta_\ell]$.
    	Since $\ell$ is different from $p$,
    	we see that
    	$E_{\beta,\ell}
    	(1_8,f_{\phi_\ell^+})$
    	lies in $\mathbb{Z}_p[\zeta_\ell]$.
    \end{proof}

    \begin{remark}
    	By Lemma \ref{local Fourier coefficient at infinity},
    	we see that
    	the local Fourier coefficient at $\infty$
    	vanishes
    	${E_{\beta,\infty}(g_\mathbf{z},
    	f_{\phi_{\infty,I}^+})=0}$
    	for any $I\in\mathfrak{I}_{\underline{\kappa}}$
    	when $\beta$ is singular,
    	thus so does the global
    	Fourier coefficient.
    	So there is no need to consider
    	the range of values of
    	$E_{\beta,\ell}(1_8,f_{\phi_\ell^+})$
    	for singular $\beta$
    	for $\ell|N$.
    \end{remark}

	\subsection{Ramified place $p$}
	\label{ramified place p}
	Now consider the case $\ell=p$.
	
	Recall that we have fixed an admissible character
	\[
	\underline{\kappa}
	=
	\underline{\kappa}_\mathrm{alg}
	\times
	\underline{\kappa}_\mathrm{f}
	=
	(k_0,k_1,k_2)
	\times
	(\kappa_0,\kappa_1,\kappa_2)
	\colon
	T_{G_1}(\mathbb{Z}_p)
	\rightarrow
	\overline{\mathbb{Q}}_p^\times.
	\]

	For any finite order character
	$\chi$ of $\mathbb{Z}_p^\times$,
	we write
	$p^{c'(\chi)}$
	for its conductor
	and
	$C(\chi)=p^{c(\chi)}=
	p^{\max(1,c'(\chi))}$.
	We also write
	$c(\underline{\kappa})=
	\max(c(\kappa_1),c(\kappa_2))$.
	In the following we also write
	$c$ for $c(\underline{\kappa})$
	when no confusion is possible.

	\subsubsection{Choice of local sections}
	We first choose sections in
	$\mathcal{S}(W_i^-(\mathbb{Q}_p))$
	for $i=1,2$.

	We write
	$\widetilde{\kappa}_2
	=\kappa_2(\frac{\cdot}{p})^{c'(\kappa_2)}$.
	We define a 
	Schwartz-Bruhat function 
	$\alpha_{2,\underline{\kappa},p}$
	on
	$\mathrm{Sym}_{2\times 2}
	(\mathbb{Q}_p)$
	whose Fourier transform is given by
	\[
	\widehat{\alpha}_{2,\underline{\kappa},p}
	(\beta_2)
	=
	1_{\mathrm{GL}_2(\mathbb{Z})}(\beta_2)
	\times
	(\kappa_1\kappa_2^{-1})(\mathrm{det}_1(\beta_2))
	\times
	(\widetilde{\kappa}_2)
	(\mathrm{det}_2(\beta_2)).
	\]
	Similarly, we define also a 
	Schwartz-Bruhat function
	$\alpha_{\underline{\kappa},p}$ on
	$\mathrm{Sym}_{4\times 4}
	(\mathbb{Q}_p)$
	whose Fourier transform is given by
	\[
	\widehat{\alpha}_{\underline{\kappa},p}
	(\begin{pmatrix}
	\beta_1 & \beta_0 \\
	\beta_0^t & \beta_2
	\end{pmatrix})
	=\widehat{\alpha}_{2,\underline{\kappa},p}
	(\beta_2)
	1_{p^c\mathrm{Sym}_{2\times 2}
		(\mathbb{Z}_p)}(\beta_1)
	1_{p^c\mathrm{M}_{2\times 2}
		(\mathbb{Z}_p)}(\beta_0).
	\]

	Moreover, we fix
	$w_0\in
	W_1^-(\mathbb{Z}_p)$
	such that
	its image in
	$W_1^-(\mathbb{Z}/p)$
	is a matrix of full rank
	($=2$) and
	$w_0\eta_Uw_0^t=0$
	(it is easy to see that such
	$w_0$ exists).
	We fix also some integer
	$s\in\mathbb{Z}$
	such that
	$s+c(\underline{\kappa})\leq0$
	(this gives the freedom in choosing 
	the support of the sections defined below.
	For application we can take
	$s=-c(\underline{\kappa})$).
	We define $\phi_{i,\underline{\kappa},p}
	\in\mathcal{S}(W_i^-(\mathbb{Q}_p))$
	for $i=1,2$
	as follows:
	\begin{gather}
	\label{Schwartz section at p}
	\phi_{1,\underline{\kappa},p}(y_1)
	=
	1_{w_0+C(\chi)W_1^-(\mathbb{Z}_p)}(y_1/p^s),
	\nonumber
	\\
	\phi_{2,\underline{\kappa},p}
	(y_2)
	=1_{W_2^-(\mathbb{Z}_p)}(y_2/p^s)
	\widehat{\alpha}_{2,\underline{\kappa},p}
	(\frac{1}{2}y_2\eta_Uy_2^t/2p^{2s})
	1_{p^c\mathrm{M}_{2\times 2}
		(\mathbb{Z}_p)}
	(y_2\eta_Uw_0^t/p^{2s}),
	\\
	\phi_{\underline{\kappa},p}^+(y)
	=\phi_{1,\underline{\kappa},p}(y_1)
	\phi_{2,\underline{\kappa},p}(y_2).
	\nonumber
	\end{gather}

	To determine the Siegel section
	$f_{\phi_{\underline{\kappa},p}}$
	associated to $\phi_{\underline{\kappa},p}$,
	we need some results on
	local densities.
	For any
	$\beta_2\in
	\mathrm{Sym}_{2\times 2}
	(\mathbb{Z}_p)
	\cap\mathrm{GL}(2,\mathbb{Z}_p)$,
	we define the following integral
	($c=c(\underline{\kappa})$):
	\[
	\alpha_{2,\underline{\kappa},p}'(\beta_2)=
	\int_{W_1^-(\mathbb{Q}_p)}
	\int_{W_2^-(\mathbb{Q}_p)}
	1_{W_2^-(\mathbb{Z}_p)}(y_2)
	1_{p^c}
	(\frac{1}{2}y_2\eta_Uy_2^t-\beta_2)
	1_{p^c}(y_2\eta_Uw_0^t)
	1_{w_0+p^cW_1^-(\mathbb{Z}_p)}(y_1)
	dy_2dy_1.
	\]
	Then one can show that
	\begin{lemma}
		The integral
		$\alpha_{2,\underline{\kappa},p}'(\beta_2)
		\neq0$
		if and only if
		$\mathrm{det}(\beta_2)
		\mathrm{det}(\eta_U)$ is a square in 
		$\mathbb{Z}_p^\times$
		(i.e.,
		$(\frac{\mathrm{det}(\beta_2)}{p})=
		(\frac{\mathrm{det}(\eta_U)}{p})$).
		Moreover, if
		$(\frac{\mathrm{det}(\beta_2)}{p})=
		(\frac{\mathrm{det}(\eta_U)}{p})$,
		we have
		$\alpha_{2,\underline{\kappa},p}'(\beta_2)
		=p^{-23c}(1-(\frac{-1}{p})\frac{1}{p})$.
	\end{lemma}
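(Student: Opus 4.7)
The plan is to evaluate $\alpha'_{2,\underline{\kappa},p}(\beta_2)$ by separating the two integrations: the $y_1$-integral is trivial (a volume computation) because the only $y_1$-dependence in the integrand is the indicator $1_{w_0 + p^c W_1^-(\mathbb{Z}_p)}(y_1)$, contributing a fixed power of $p$ determined by $\dim W_1^- = 12$. The substance of the lemma lies entirely in the $y_2$-integral, which I will analyze by a change of variables on $U$ adapted to the isotropic subspace spanned by the rows of $w_0$.

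The key observation is that $w_0 \in W_1^-(\mathbb{Z}_p) = M_{2\times 6}(\mathbb{Z}_p)$ has rank $2$ modulo $p$ and satisfies $w_0 \eta_U w_0^t = 0$, so its row span $L_0 \subset U$ is a totally isotropic $2$-plane. Hence $L_0^\perp$ is $4$-dimensional and contains $L_0$, and we can choose a Witt basis giving $U(\mathbb{Z}_p) = L_0 \oplus W_U \oplus L_0^*$, where $W_U = L_0^\perp / L_0$ carries a non-degenerate $2$-dimensional quadratic form $Q$ with $\det(Q) \equiv \det(\eta_U) \pmod{(\mathbb{Z}_p^\times)^2}$. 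First I would show that the indicator $1_{p^c}(y_2 \eta_U w_0^t)$ forces the rows of $y_2$ to lie in $L_0^\perp$ modulo $p^c$; writing $y_2 = ae + bw$ in coordinates for the Witt decomposition (with $a, b \in M_{2\times 2}(\mathbb{Z}/p^c)$), the isotropy of $L_0$ makes the Gram matrix simplify to $y_2 \eta_U y_2^t = b Q b^t$, so the $a$-component becomes unconstrained and contributes a factor $p^{4c}$.

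It then remains to count $b \in M_{2\times 2}(\mathbb{Z}/p^c)$ satisfying $bQb^t \equiv 2\beta_2 \pmod{p^c}$. Since $\beta_2 \in \mathrm{GL}_2(\mathbb{Z}_p)$, any such $b$ must be invertible, and taking determinants gives $(\det b)^2 \det(Q) \equiv 4\det(\beta_2)$, so solutions exist iff $\det(\beta_2)/\det(\eta_U) \in (\mathbb{Z}_p^\times)^2$, which is the stated equivalence. Given one solution $b_0$, the full solution set is the orbit $b_0 \cdot O(Q, \mathbb{Z}/p^c)$, whose cardinality I would compute via the standard formula $\#O(Q, \mathbb{Z}/p^c) = 2p^{c-1}\bigl(p - \bigl(\tfrac{-\det Q}{p}\bigr)\bigr)$, combining the order of the $2$-dimensional orthogonal group over $\mathbb{F}_p$ (which depends on whether $Q$ is split or anisotropic, i.e., on the sign character $\bigl(\tfrac{-\det Q}{p}\bigr)$) with the smoothness of $O(Q)$ over $\mathbb{Z}_p$ that yields the factor $p^{c-1}$ in passing from $\mathbb{Z}/p$ to $\mathbb{Z}/p^c$.

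The final simplification uses that $\det(\eta_U) = N^{10}/16$ is a square in $\mathbb{Z}_p^\times$ (since $p \nmid 2N$), so $\bigl(\tfrac{-\det Q}{p}\bigr) = \bigl(\tfrac{-1}{p}\bigr)$ and the answer collapses to the clean expression $p^{-23c}\bigl(1 - \bigl(\tfrac{-1}{p}\bigr)/p\bigr)$. The main obstacle I anticipate is bookkeeping the volume/normalization factors: making sure the contributions of the $y_1$-integral, the $p^{4c}$ from the free $a$-component, the $\#O(Q,\mathbb{Z}/p^c)$ count, and the $p^{-12c}$ per lattice volume combine to give precisely the exponent $-23c$ (and the constant $1$ rather than $2$ or $4$); getting the half-factors from $\tfrac{1}{2}y_2\eta_U y_2^t$ and the $\det(\eta_U) = N^{10}/16$ into the $\mathbb{Z}_p^\times$-square class is the delicate arithmetic check, but it is routine once the geometric decomposition is set up.
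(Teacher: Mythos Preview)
Your approach is essentially identical to the paper's: both use a Witt-type decomposition of $U(\mathbb{Z}_p)$ adapted to the isotropic plane $L_0$ spanned by the rows of $w_0$, reduce the $y_2$-count to solving $bQb^t\equiv 2\beta_2\pmod{p^c}$ on the $2$-dimensional complement $W_U=L_0^\perp/L_0$, and evaluate via $\#\mathrm{O}(Q,\mathbb{Z}/p^c)$. One small arithmetic slip to fix in your final check: $\det(\eta_U)=(N^2/2)^4\cdot(N/N_1)\cdot N_1=N^9/16$, not $N^{10}/16$, so $\det(\eta_U)\equiv N\pmod{(\mathbb{Z}_p^\times)^2}$, and its squareness comes from the paper's standing hypothesis that $N$ is a square in $\mathbb{Z}_p$ rather than from $p\nmid 2N$ alone.
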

    \begin{proof}
    	We define a function
    	$1_0(y)$ on
    	$W_2^-(\mathbb{Z}/p^c)$
    	such that
    	$1_0(y)=1$ if
    	$y\equiv 0(\mathrm{mod}\,p^c)$
    	and $=0$ otherwise.
    	Clearly, we have
    	\[
    	\alpha_{2,\underline{\kappa},p}'(\beta_2)
    	=
    	p^{-24c}
    	\sum_{y_2\in W_2^-(\mathbb{Z}/p^c)}
    	1_0(y_2\eta_Uy_2^t-\beta_2)
    	1_{0}(y_2\eta_Uw_0^t).
    	\]

    	One can show that there exists an element
    	$\widetilde{w}_0\in
    	W_1^-(\mathbb{Z}_p)$
    	such that
    	$\widetilde{w}_0\eta_U
    	w_0^t$ has its image in
    	$\mathrm{GL}(2,\mathbb{Z}/p)$
    	in the projection 
    	$\mathrm{M}_{2\times 2}
    	(\mathbb{Z}_p)
    	\rightarrow
    	\mathrm{M}_{2\times 2}
    	(\mathbb{Z}/p)$.
    	Then the row vectors of
    	$w_0$ and $\widetilde{w}_0$
    	form a $\mathbb{Z}_p$-submodule
    	$U_1(\mathbb{Z}_p)$
    	of $U(\mathbb{Z}_p)$
    	on which the symmetric bilinear form $\eta_U$
    	is of the form
    	$\eta_{U_1}=\begin{pmatrix}
    	\ast & \ast \\
    	\ast & 0
    	\end{pmatrix}
    	\in\mathrm{GL}_4(\mathbb{Z}_p)$
    	(in $2\times2$-blocks)
    	under certain basis.
    	One can then show that this form
    	$\eta_{U_1}$
    	is $\mathbb{Z}_p$-equivalent to
    	the identity matrix $1_4$.
    	Now consider any
    	rank $2$-subspace
    	$U_2(\mathbb{Z}_p)$ of
    	$U(\mathbb{Z}_p)$
        whose vectors are orthogonal to
        the row vectors of
        $w_0$.
        Then the symmetric bilinear form
        $\eta_U$ on
        $U_2$ becomes
        $\eta_{U_2}\in\mathrm{GL}(2,\mathbb{Z}_p)$
        with
        $\mathrm{det}(\eta_{U_2})=
        \mathrm{det}(\eta_U)
        (\mathrm{mod}\,(\mathbb{Z}_p^\times)^2)$
        since $\mathrm{det}(\eta_{U_1})
        =1(\mathrm{mod}\,(\mathbb{Z}_p^\times)^2)$.
        Now the above sum is equal to
        \[
        \alpha_{2,\underline{\kappa},p}'(\beta_2)
        =
        p^{-24c}
        \sum_{y_1\in(U_2\otimes V_1^-)(\mathbb{Z}/p^c)}
        1_0(\frac{1}{2}y_2\eta_{U_2}y_2^t-\beta_2)
        =
        p^{-24c}
        \sum_{y_1\in\mathrm{GL}(2,\mathbb{Z}/p^c)}
        1_0(\frac{1}{2}y_2\eta_{U_2}y_2^t-\beta_2).
        \]
        
        We define local density
        for any $t\in\mathbb{Z}_{>0}$ as
        $A_t(\beta_2)
        =
        \#
        \{
        y_2\in\mathrm{GL}(2,\mathbb{Z}/p^t)|
        y_2\eta_{U_2}y_2^t=2\beta_2(\mathrm{mod}\,
        p^t)
        \}$.
        So $\alpha_{2,\underline{\kappa},p}'(\beta_2)
        =p^{-24c}A_c(\beta_2)$.
        It is clear that if
        $\mathrm{det}(\beta_2)\neq
        \mathrm{det}(\eta_{U_2})
        (\mathrm{mod}\,(\mathbb{Z}_p^\times)^2)$,
        then
        $A_t(\beta_2)=0$.
        Otherwise,
        $A_t(\beta_2)
        =\#\mathrm{O}(\eta_{U_2},\mathbb{Z}/p^t)
        =p^{t-1}
        \#\mathrm{O}(\eta_{U_2},\mathbb{Z}/p)$.
        Note that $N$ is a square in $\mathbb{Z}_p$ 
        by assumption,
        thus $\mathrm{det}(\eta_U)=1
        (\mathrm{mod}\,(\mathbb{Z}_p^\times)^2)$,
        and therefore
        we have
        $\#\mathrm{O}(1_2,\mathbb{Z}/p)$
        which is equal to
        $p-(\frac{-1}{p})$
        by the formulas given preceding
        Lemma \ref{Gauss sum for k_1>=0}.
        This gives the desired formula in
        the lemma.
    \end{proof}

    As a result, we have
    \[
    \alpha_{2,\underline{\kappa},p}'(\beta_2)
    =
    \frac{p^{-23c}}{2}
    (1-(\frac{-1}{p})\frac{1}{p})
    (1+(\frac{\mathrm{det}(\beta_2)}{p})).
    \]

	We define auxiliary characters
	$\kappa_i'=\kappa_i(\frac{\cdot}{p})$:
	for $i=1,2$
	and put
	$\underline{\kappa}'=(\kappa_1',\kappa_2')$.
	\begin{proposition}\label{Siegel section at p}
		Write
		$g=\begin{pmatrix}
		A & B \\
		C & D
		\end{pmatrix}
		\in G_4(\mathbb{Q}_p)$.
		Then
		$f_{\phi^+_{\underline{\kappa},p}}(g)$
		satisfies:
		if $\mathrm{det}(C)\neq0$,
		\[
		f_{\phi^+_{\underline{\kappa},p}}
		(g)
		=
		\xi(\mathrm{det}(C))
		|\mathrm{det}(C)|_p^{-3}
		\frac{p^{-24s-13c}}{2}
		(1-(\frac{-1}{p})\frac{1}{p})
		(\alpha_{\underline{\kappa},p}(p^{2s}C^{-1}D)+
		\alpha_{\underline{\kappa}',p}(p^{2s}C^{-1}D));
		\]
		otherwise,
		it is zero.
	\end{proposition}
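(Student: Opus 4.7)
The plan is to evaluate $f_{\phi^+_{\underline{\kappa},p}}(g) = (\omega_{W_4^-}(g)\phi^+_{\underline{\kappa},p})(0)$ directly via the formulas in equation (\ref{general expression for Weil representation}). When $\det(C)=0$, the image $C\cdot V_4^+$ is a proper subspace of $V_4^-$, so $CW_4^+ = (CV_4^+)\otimes U$ misses the full-rank element $w_0\otimes(\text{anything})$ to first order modulo $p^{c+s}$; the support condition $y_1/p^s \in w_0 + p^c W_1^-(\mathbb{Z}_p)$ in $\phi_{1,\underline{\kappa},p}$, which forces the ``first half'' $y_1$ of $y=Cx$ to be of maximal rank modulo $p$, then forces the Weil integral to vanish. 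For $\det(C)\neq 0$, I would use the Iwasawa-type decomposition $g = u(b_1)\,m(a)\,J_8\,u(b_2)$ with $a=-C^{-t}$ and $b_2 = C^{-1}D$ (symmetric because $g$ preserves the symplectic form), and apply (\ref{expression for Weil representation}) piece by piece: $\omega(u(b_1))$ acts trivially on the value at $0$; $\omega(m(a))$ brings out the factor $\xi(\det a)|\det a|_p^3 = \xi(-1)\xi(\det C)|\det C|_p^{-3}$; $\omega(J_8)$ is a Fourier transform; and $\omega(u(b_2))$ multiplies by the quadratic exponential $\mathbf{e}_p(\tfrac12(C^{-1}Dy,y))$, giving
\[
f_{\phi^+_{\underline{\kappa},p}}(g) = \xi(\det C)|\det C|_p^{-3}\,\gamma^{\mathrm{Weil}}\int_{W_4^-(\mathbb{Q}_p)} \mathbf{e}_p\!\left(\tfrac{1}{2}\mathrm{tr}\bigl(y^t(C^{-1}D)y\,\eta_U\bigr)\right)\phi^+_{\underline{\kappa},p}(y)\,dy.
\]

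Next, I would decompose $y=(y_1,y_2)$ and write $C^{-1}D = \begin{pmatrix}\beta_1 & \beta_0\\\beta_0^t & \beta_2\end{pmatrix}$ in $2\times 2$-blocks. Rescaling $y_i \mapsto p^s y_i'$ yields an overall $p^{24s}$, and the pairing splits as $\mathrm{tr}(\beta_1 y_1'\eta_U y_1'^t) + 2\mathrm{tr}(\beta_0 y_2'\eta_U y_1'^t) + \mathrm{tr}(\beta_2 y_2'\eta_U y_2'^t)$ rescaled by $p^{2s}$. The $y_1$-integral runs over the ball $w_0 + p^c W_1^-(\mathbb{Z}_p)$: writing $y_1' = w_0 + p^c z_1$ and integrating $z_1$ over $W_1^-(\mathbb{Z}_p)$ produces the characteristic function of a $p^c$-divisibility condition on $p^{2s}\beta_1 w_0\eta_U + \cdots$, with combinatorial factor matching the $1_{p^c\mathrm{M}_{2\times 2}(\mathbb{Z}_p)}$ in $\widehat{\alpha}_{\underline{\kappa},p}$. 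Combined with the factor $1_{p^c\mathrm{M}_{2\times 2}(\mathbb{Z}_p)}(y_2\eta_U w_0^t/p^{2s})$ already present in $\phi_{2,\underline{\kappa},p}$, this reproduces exactly the ``off-diagonal'' and ``upper-left block'' factors in $\widehat{\alpha}_{\underline{\kappa},p}(p^{2s}C^{-1}D)$.

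For the remaining $y_2'$-integral, the integrand carries the Gaussian $\mathbf{e}_p(\tfrac12 p^{2s}\mathrm{tr}(\beta_2 y_2'\eta_U y_2'^t))$ convolved with $\widehat{\alpha}_{2,\underline{\kappa},p}\bigl(\tfrac12 y_2'\eta_U y_2'^t\bigr)$. The key identity is Fourier inversion: integrating against the quadratic character in $y_2'$ inverts the Fourier transform and produces $\alpha_{2,\underline{\kappa},p}$ evaluated at $p^{2s}\beta_2$, but only after fibering over the map $y_2'\mapsto \tfrac12 y_2'\eta_U y_2'^t$, whose fibers have constant volume $\alpha'_{2,\underline{\kappa},p}(\beta_2') = \tfrac{1}{2}p^{-23c}(1-(\tfrac{-1}{p})\tfrac{1}{p})(1+(\tfrac{\det\beta_2'}{p}))$ by the preceding lemma. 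The Legendre-symbol factor $\tfrac12(1+(\tfrac{\det\beta_2'}{p}))$ is precisely the one that, applied to the character $\kappa_1\kappa_2^{-1}\cdot\widetilde{\kappa}_2$ in the definition of $\widehat\alpha_{2,\underline{\kappa},p}$, interpolates between $\alpha_{\underline{\kappa},p}$ and its twist $\alpha_{\underline{\kappa}',p}$ since $\kappa_i' = \kappa_i\cdot(\tfrac{\cdot}{p})$; summing gives the advertised $\alpha_{\underline{\kappa},p}(p^{2s}C^{-1}D)+\alpha_{\underline{\kappa}',p}(p^{2s}C^{-1}D)$.

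The main obstacle is the bookkeeping: the Fourier inversion step must absorb the ``cross term'' $\beta_0$ correctly (after the linear $z_1$-integration, the variable $y_2'$ acquires a linear phase in $\beta_0$ that must be recognized as a translation matching the argument of $\widehat{\alpha}_{\underline{\kappa},p}$), and one must track carefully the Weil index $\gamma^{\mathrm{Weil}}$ together with $\xi(-1)$ to verify that only $\xi(\det C)|\det C|_p^{-3}$ survives outside the integral. The normalization $p^{-24s-13c}$ emerges by collecting: $p^{24s}$ from rescaling $y\to p^s y'$ (cancelled by $p^{-48s}$ from $|\det C|$-independent factors elsewhere), $p^{12c}$ from the $z_1$-integral, and $p^{-23c}$ from the density formula $\alpha'_{2,\underline{\kappa},p}$. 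The cleanest way to keep track is to work first with characters that are trivial mod $p^c$ (so $\underline{\kappa}_{\mathrm f}$ is ignored), then insert the character via its Fourier decomposition.
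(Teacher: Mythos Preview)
Your overall strategy is sound and shares the same skeleton as the paper: reduce via the Bruhat decomposition to computing $f(D):=\omega_{W_4^-}(J_8u(D))\phi^+_{\underline{\kappa},p}(0)$, then relate this to the local density $\alpha'_{2,\underline{\kappa},p}$ via the moment map $y\mapsto \tfrac12 y\eta_U y^t$. The key difference is \emph{where} you insert the Fourier transform. You try to evaluate $f(D)$ directly by doing the $y$-integral with $D$ fixed, fibering over the moment map in the $y_2$-variable, and then recognizing a Fourier inversion in the fiber variable. The paper instead computes the Fourier transform $\widehat{f}(\beta)=\int f(D)\,\mathbf{e}_p(-\mathrm{tr}(\beta D))\,dD$ \emph{first}: integrating the quadratic Gaussian $\mathbf{e}_p(\tfrac12\mathrm{tr}(y^tDy\eta_U))$ against $\mathbf{e}_p(-\mathrm{tr}(\beta D))$ over $D$ in the known support $p^{-(2s+c)}\mathrm{Sym}_{4\times4}(\mathbb{Z}_p)$ collapses to the characteristic function $1_{p^{2s+c}\mathrm{Sym}_{4\times4}(\mathbb{Z}_p)}(\tfrac12 y\eta_U y^t-\beta)$, so $\widehat{f}(\beta)$ is immediately a representation-number integral to which the lemma on $\alpha'_{2,\underline{\kappa},p}$ applies. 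One then reads off $\widehat f(\beta)$ as a constant times $\widehat\alpha_{\underline\kappa,p}(p^{-2s}\beta)+\widehat\alpha_{\underline\kappa',p}(p^{-2s}\beta)$ and takes the inverse Fourier transform at the very end. This ordering makes the cross-term $\beta_0$ and the $y_1$-integral completely transparent (no linear phase to track), and the power of $p$ falls out as $p^{10(2s+c)}\cdot p^{-24s}\cdot p^{-23c}=p^{-4s-13c}$ in $\widehat f$, becoming $p^{-24s-13c}$ after inverse Fourier transform and rescaling.

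Your direct approach is not wrong, but the bookkeeping you sketch at the end is off: the claim ``$p^{12c}$ from the $z_1$-integral'' is not correct (the $z_1$-integral over $W_1^-(\mathbb{Z}_p)$ has volume $1$; the factor $p^{-12c}$ comes from the \emph{Jacobian} of $y_1=p^s(w_0+p^c z_1)$), and there is no ``$p^{-48s}$'' anywhere. Also, your argument for $\det(C)=0$ is incomplete as stated: even if $C$ has rank $<4$, its image may still surject onto the $V_1^-$-factor, so $(Cx)_1$ can have full rank; the vanishing really comes from a Gauss-sum argument (write $\phi^+_{\underline\kappa,p}$ as a finite sum of characteristic functions of balls around full-rank points and invoke the same mechanism as Lemma~\ref{support of Gauss sums} and Corollary~\ref{support of Siegel section}). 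I would recommend adopting the paper's ``Fourier in $D$ first'' trick; it eliminates both of these difficulties at once.
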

    \begin{proof}
    	We can write
    	$\phi^+_{\underline{\kappa},p}(y)$
    	as a finite
    	linear combination of characteristic functions
    	$\sum_{i}a_i
    	1_{p^sr_i+p^{s+c}W_4^-(\mathbb{Z}_p)}(y)$
    	with $a_i\neq0$ and
    	$r_i\neq r_j$ for $i\neq j$.
    	Moreover,
    	the image of each $r_i$ in
    	$W_4^-(\mathbb{Z}/p)$ is
    	of rank $4$.
    	Then by the same reasoning as
    	in Corollary 
    	\ref{support of Siegel section},
    	we see that the function
    	$\omega_{W_4^-}(g)
    	\phi^+_{\underline{\kappa},p}(0)$
    	vanishes unless
    	$\mathrm{det}(C^tD)\neq0$ and
    	$\omega_{W_4^-}(J_8u(D))
    	\phi^+_{\underline{\kappa},p}(0)$
    	vanishes unless
    	$D\in p^{-(2s+c)}\mathrm{Sym}_{4\times 4}
    	(\mathbb{Z}_p)$.
    	By definition of
    	$f_{\phi^+_{\underline{\kappa},p}}$,
    	if $\mathrm{det}(C)\neq0$,
    	we have
    	\[
    	f_{\phi^+_{\underline{\kappa},p}}
    	(g)
    	=\xi(\mathrm{det}(C))
    	|\mathrm{det}(C)|_p^{-3}
    	\omega_{W_4^-}
    	(u(C^{-1}D))
    	\phi_{\underline{\kappa},p}^+(0).
    	\]
    	
    	We write
    	$f(D)=\omega_{W_4^-}
    	(J_8u(D))
    	\phi^+_{\underline{\kappa},p}(0)$.
    	To evaluate $f(D)$,
    	we first look at its Fourier transform
    	$\widehat{f}(\beta)$.
    	We have
    	\begin{align*}
    	\widehat{f}(\beta)
    	&
    	=\int_{W_4^-(\mathbb{Q}_p)}
    	\phi^+_{\underline{\kappa},p}(y)
    	\mathbf{e}_p(\frac{1}{2}
    	\mathrm{tr}\,y^tDy\eta_U)dy
    	\int_{p^{-(2s+c)}
    		\mathrm{Sym}_{4\times 4}
    		(\mathbb{Z}_p)}
    	\mathbf{e}_p(-\mathrm{tr}(\beta D))dD
    	\\
    	&
    	=
    	p^{10(2s+c)}
    	\int_{W_4^-(\mathbb{Q}_p)}
    	\phi^+_{\underline{\kappa},p}(y)
    	1_{p^{2s+c}\mathrm{Sym}_{4\times 4}
    		(\mathbb{Z}_p)}
    	(\frac{1}{2}y\eta_Uy^t-\beta)dy
    	\\
    	&
    	=
    	p^{10(2s+c)-24s}
    	\widehat{\alpha}_{2,\underline{\kappa},p}
    	(p^{-2s}\beta_2)
    	1_{p^c\mathrm{M}_{2\times 2}
    		(\mathbb{Z}_p)}
    	(p^{-2s}\beta_0)
    	1_{p^c\mathrm{Sym}_{2\times 2}
    		(\mathbb{Z}_p)}
    	(p^{-2s}\beta_1)
    	\\
    	&
    	\quad
    	\times
    	\int_{W_2^-(\mathbb{Z}_p)}
    	\int_{w_0+p^cW_1^-(\mathbb{Z}_p)}
    	1_{p^c\mathrm{Sym}_{2\times 2}
    		(\mathbb{Z}_p)}
    	(\frac{1}{2}y_2\eta_Uy_2^t-p^{-2s}\beta_2)
    	1_{p^c\mathrm{M}_{2\times 2}
    		(\mathbb{Z}_p)}
    	(y_2\eta_Uw_0^t)
    	dy_1dy_2
    	\\
    	&
    	=
    	p^{10(2s+c)-24s}
    	\widehat{\alpha}_{2,\underline{\kappa},p}
    	(p^{-2s}\beta_2)
    	1_{p^c\mathrm{M}_{2\times 2}
    		(\mathbb{Z}_p)}
    	(p^{-2s}\beta_0)
    	1_{p^c\mathrm{Sym}_{2\times 2}
    		(\mathbb{Z}_p)}
    	(p^{-2s}\beta_1)
    	\alpha_{2,\underline{\kappa},p}'(p^{-2s}\beta_2)
    	\\
    	&
    	=
    	\frac{p^{10(2s+c)-24s-23c}}{2}
    	(1-(\frac{-1}{p})\frac{1}{p})
    	(\widehat{\alpha}_{\underline{\kappa},p}(p^{-2s}\beta)+
    	\widehat{\alpha}_{\underline{\kappa}',p}(p^{-2s}\beta)).
    	\end{align*}
    	Therefore, taking the inverse Fourier transform,
    	we get
    	\[
    	f(D)=
    	\frac{p^{-24s-13c}}{2}
    	(1-(\frac{-1}{p})\frac{1}{p})
    	(\alpha_{\underline{\kappa},p}(p^{2s}D)+
    	\alpha_{\underline{\kappa}',p}(p^{2s}D)).
    	\]
    	This finishes the proof of the lemma.   	
    \end{proof}

    As for $f_{\phi_{\underline{\kappa},p}^d}
    \in\mathrm{Ind}^{G_4(\mathbb{Q}_p)}_{P^d(\mathbb{Q}_p)}(\xi)$,
    we have
    \begin{corollary}
    	\label{support of f^d at p}
    	For any
    	$g_1\in G_1^1(\mathbb{Q}_p)$,
    	if $g_1+1_4\notin
    	p^{-(2s+c)}
    	\mathrm{M}_{4\times 4}
    	(\mathbb{Z}_p)$,
    	$f_{\phi_{\underline{\kappa},p}^d}(\iota(g_1,1))
    	=0$.
    	Otherwise,
    	\[
    	f_{\phi_{\underline{\kappa},p}^d}(\iota(g_1,1))
    	=\frac{p^{-24s-13c}}{2}
    	(1-(\frac{-1}{p})\frac{1}{p})
    	(\alpha_{\underline{\kappa},p}(-p^{2s}
    	\frac{g_1+1}{2}\Delta)
    	+\alpha_{\underline{\kappa}',p}(-p^{2s}
    	\frac{g_1+1}{2}\Delta)).
    	\]
    	Recall that
    	$\Delta=\begin{pmatrix}
    	0 & 1_2 \\
    	1_2 & 0
    	\end{pmatrix}$.
    \end{corollary}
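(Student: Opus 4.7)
The plan is to deduce the formula by the same strategy used in Proposition \ref{Siegel section at p}, after transferring from $P^d$ to $P^+$ via the Cayley element $\mathcal{S}$. By Lemma \ref{transfer of Siegel sections} and the relation between $f_{\phi^d}$ and $\widetilde{f}_{\phi^d}$, the value $f_{\phi^d_{\underline{\kappa},p}}(\iota(g_1,1))$ is computed by evaluating $f_{\phi^+_{\underline{\kappa},p}}$ at the Cayley conjugate $\mathcal{S}^{-1}\iota(g_1,1)\mathcal{S}$, whose block decomposition
\[
\mathcal{S}^{-1}(g_1,1)\mathcal{S}=\begin{pmatrix}\tfrac{1}{2}(g_1+1) & \tfrac{1}{4}(g_1-1)\Delta \\ \Delta(g_1-1) & \tfrac{1}{2}\Delta(g_1+1)\Delta\end{pmatrix}
\]
was already derived in the proof of Corollary \ref{support of Siegel section}. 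Equivalently, one checks that in the polarization $V_4=V_4^d\oplus V_{4,d}$, the element $\iota(g_1,1)$ is represented by precisely this matrix, and the relevant "$D$"-entry governing the support of $\omega_{W_{4,d}}(\cdot)\phi^d_{\underline{\kappa},p}(0)$ is $-\tfrac{1}{2}(g_1+1)\Delta$ (the sign arising from the antidiagonal structure of $\Delta$ and the conventions in (\ref{general expression for Weil representation})).

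Next, I would feed this block data into Proposition \ref{Siegel section at p}: the prefactor $\tfrac{p^{-24s-13c}}{2}\bigl(1-(\tfrac{-1}{p})\tfrac{1}{p}\bigr)$ is identical, and the character $\xi(\det C)$ together with $|\det C|_p^{-3}$ cancels against the normalization of the transfer (both contributions are trivial since the Cayley element has determinant a unit in $\mathbb{Z}_p$ for $p$ odd). After this cancellation, the only remaining ingredient is the value of $\alpha_{\underline{\kappa},p}$ and $\alpha_{\underline{\kappa}',p}$ at the argument $p^{2s}$ times the "$D$"-block, which yields the stated formula with $-p^{2s}\tfrac{g_1+1}{2}\Delta$ as the argument.

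For the vanishing criterion, I would reuse the support computation from the proof of Proposition \ref{Siegel section at p}: the Fourier transform $\widehat{\alpha}_{\underline{\kappa},p}$ is supported in $p^{-c}\mathrm{Sym}_{2\times 2}(\mathbb{Z}_p)\times p^{-c}\mathrm{M}_{2\times 2}(\mathbb{Z}_p)\times p^{-c}\mathrm{Sym}_{2\times 2}(\mathbb{Z}_p)$, so by Fourier inversion $\alpha_{\underline{\kappa},p}(p^{2s}X)$ vanishes unless $X\in p^{-(2s+c)}\mathrm{M}_{4\times 4}(\mathbb{Z}_p)$; substituting $X=-\tfrac{g_1+1}{2}\Delta$ and using $\Delta\in\mathrm{GL}_4(\mathbb{Z}_p)$, $2\in\mathbb{Z}_p^\times$ gives precisely $g_1+1_4\in p^{-(2s+c)}\mathrm{M}_{4\times 4}(\mathbb{Z}_p)$. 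The main bookkeeping obstacle is tracking signs and normalizations through the Cayley transform; one also needs to verify that the edge case $\det(g_1-1)=0$ (where Proposition \ref{Siegel section at p} does not apply directly) still gives the stated formula, which can be handled by $p$-adic continuity of $f_{\phi^d}$ in $g_1$ since both sides of the claimed identity are continuous functions of $g_1$.
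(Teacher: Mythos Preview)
Your proposal correctly identifies the Cayley conjugate and the route through Proposition \ref{Siegel section at p}, but it misses the main computational step of the paper's proof. Proposition \ref{Siegel section at p} gives
\[
f_{\phi^+_{\underline{\kappa},p}}(g)=\xi(\det C)\,|\det C|_p^{-3}\cdot\tfrac{p^{-24s-13c}}{2}\bigl(1-(\tfrac{-1}{p})\tfrac{1}{p}\bigr)\bigl(\alpha_{\underline{\kappa},p}(p^{2s}C^{-1}D)+\alpha_{\underline{\kappa}',p}(p^{2s}C^{-1}D)\bigr),
\]
so with $C=\Delta(g_1-1)$ and $D=\tfrac{1}{2}\Delta(g_1+1)\Delta$ the argument of $\alpha_{\underline{\kappa},p}$ is $p^{2s}\cdot\tfrac{1}{2}(g_1-1)^{-1}(g_1+1)\Delta$, \emph{not} $-p^{2s}\tfrac{g_1+1}{2}\Delta$. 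Your sentence ``the relevant $D$-entry is $-\tfrac12(g_1+1)\Delta$'' is where the argument breaks: the factor $(g_1-1)^{-1}$ does not disappear by normalization, and neither the Cayley element nor the conventions of (\ref{general expression for Weil representation}) account for it.

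The paper's actual argument is that once $g_1+1\in p^{-(2s+c)}\mathrm{M}_{4\times 4}(\mathbb{Z}_p)$ (so $\tfrac{g_1+1}{2}\in p\,\mathrm{M}_{4\times4}(\mathbb{Z}_p)$ since $-2s-c>0$), one expands
\[
\tfrac{1}{2}(g_1-1)^{-1}(g_1+1)\Delta=-\bigl(1-\tfrac{g_1+1}{2}\bigr)^{-1}\tfrac{g_1+1}{2}\Delta=-\Bigl(\tfrac{g_1+1}{2}+\bigl(\tfrac{g_1+1}{2}\bigr)^{2}+\cdots\Bigr)\Delta
\]
as a convergent Neumann series. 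Then $p^{2s}\bigl(\tfrac{g_1+1}{2}\bigr)^n\in p^{2s-n(2s+c)}\mathrm{M}_{4\times 4}(\mathbb{Z}_p)\subset\mathrm{M}_{4\times 4}(\mathbb{Z}_p)$ for $n\ge 2$ (using $-2s-2c\ge 0$), and one invokes the translation invariance $\alpha_{\underline{\kappa},p}(D)=\alpha_{\underline{\kappa},p}(D+D')$ for $D'\in\mathrm{Sym}_{4\times4}(\mathbb{Z}_p)$ to kill the higher-order terms. This Neumann-series reduction is the heart of the corollary and is absent from your outline. Your justification for dropping $\xi(\det C)|\det C|_p^{-3}$ is also incorrect as stated: this factor equals $1$ not because of the Cayley element but because $g_1+1\equiv 0\pmod{p}$ forces $g_1-1\in\mathrm{GL}_4(\mathbb{Z}_p)$, so $\det C\in\mathbb{Z}_p^\times$ and $\xi_p$ is unramified.
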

    \begin{proof}
    	By definition of
    	$f_{\phi_{\underline{\kappa},p}^d}$,
    	if $\mathrm{det}(\Delta(g_1-1))=0$,
    	then
    	$f_{\phi_{\underline{\kappa},p}^d}(\iota(g_1,1))=0$.
    	Otherwise,
    	\[
    	f_{\phi_{\underline{\kappa},p}^d}(\iota(g_1,1))
    	=
    	f_{\phi_{\underline{\kappa},p}}
    	(\begin{pmatrix}
    	\ast & \ast \\
    	\Delta(g_1-1) & \frac{1}{2}\Delta(g_1+1)\Delta
    	\end{pmatrix}).
    	\]
    	By the proof of the above lemma,
    	the non-vanishing of
    	$f_{\phi_{\underline{\kappa},p}^d}(\iota(g_1,1))$
    	implies that
    	$\frac{1}{2}(g_1-1)^{-1}(g_1+1)\Delta
    	\in p^{-(2s+c)}\mathrm{M}_{4\times 4}
    	(\mathbb{Z}_p)$.
    	This give the first part of the corollary.
    	Now assume that
    	$g_1+1\in p^{-(2s+c)}\mathrm{M}_{4\times 4}
    	(\mathbb{Z}_p)$,
    	recall also that
    	$-2s-c>0,-2s-2c\geq0$,
    	\[
    	\frac{1}{2}(g_1-1)^{-1}(g_1+1)\Delta
    	=
    	\frac{1}{2}\Delta
    	-\frac{1}{2}(1-\frac{g_1+1}{2})^{-1}\Delta
    	=-(\frac{g_1+1}{2}
    	+(\frac{g_1+1}{2})^2+\cdots)\Delta.
    	\]

    	It is easy to see that
    	$\alpha_{\underline{\kappa},p}(D)
    	=\alpha_{\underline{\kappa},p}(D+D')$,
    	$\alpha_{\underline{\kappa}',p}(D)
    	=\alpha_{\underline{\kappa}',p}(D+D')$
    	for any 
    	$D'\in\mathrm{Sym}_{4\times 4}
    	(\mathbb{Z}_p)$.
    	Now that
    	$p^{2s}(g_1+1)^n\in
    	p^{2s-n(2s+c)}\mathrm{M}_{4\times 4}
    	(\mathbb{Z}_p)$
    	and
    	$2s-n(2s+c)\geq0$
    	for $n\geq2$.
    	This show that
    	\[
    	\alpha_{\underline{\kappa},p}
    	(p^{2s}\frac{1}{2}(g_1-1)^{-1}(g_1+1)\Delta)
    	=
    	\alpha_{\underline{\kappa},p}
    	(-p^{2s}\frac{g_1+1}{2}\Delta)
    	\]
    	and similarly for
    	$\alpha_{\underline{\kappa}',p}$,
    	which proves the lemma.   	
    \end{proof}

	\subsubsection{Local zeta integral}
	Next we compute the local zeta integral
	$Z_p(\varphi_p,\varphi_p^\vee,
	f_{\phi^d_{\underline{\kappa},p}})$.
	The computation follows closely that of
	\cite[Section 5]{Liu2015}.
	Let's first 
	recall the notions of $\mathbb{U}_p$ operators
	and Jacquet modules.
	For any $\underline{t}=(t_1,t_2)\in
	\mathbb{Z}^2$, we set
	$p^{\underline{t}}=\mathrm{diag}
	(p^{t_1},p^{t_2},p^{-t_1},p^{-t_2})
	\in G_1(\mathbb{Q})$.
	Let $C^+$ be the set
	of $\underline{t}\in\mathbb{Z}^2$ such that
	$t_1\geq t_2\geq0$.
	Then for any $\underline{t}\in C^+$,
	we can define the 
	$\mathbb{U}_p$-operator
	$U_{p,\underline{t}}$
	acting on
	$f\in\mathcal{A}(
	G_1(\mathbb{Q})\backslash
	G_1(\mathbb{A})/\widehat{\Gamma})_{\underline{t}}$
	as follows:
	\[
	U_{p,\underline{t}}f(g)
	=p^{\langle
		\underline{t}'+2\rho_{\mathrm{Sp}(V),c},
		\underline{t}
		\rangle}
	\int_{N_{G_1}(\mathbb{Z}_p)}
	f(gup^{\underline{t}})du
	\]
	where
	$\rho_{\mathrm{Sp}(V),c}$
	is half the sum of the positive compact roots of
	$\mathfrak{g}_1$.
	We then define
	$U_{p,1}=U_{p,(1,0)}$,
	$U_{p,2}=U_{p,(1,1)}$,
	$U_p=U_{p,1}U_{p,2}$.
	Now for any nearly holomorphic form $\phi$,
	the limit
	$\lim\limits_{n\rightarrow+\infty}U_p^{n!}\phi$
	exists and is denoted by $e\phi$.
	This operator $e$ is in fact a polynomial in $U_p$.

    Let $\pi\simeq\otimes_v\pi_v$
	be an antiholomorphic
	irreducible cuspidal automorphic representation
	of $[G_1]$.
	Suppose that $\overline{\pi}_\infty
	\simeq\mathcal{D}_{\underline{k}}$
	is a holomorphic discrete series.
	We write $e\phi$ for the ordinary projection
	of any $\phi$ in
	$\pi$ or $\overline{\pi}$.
	It can be shown 
	\cite[Corollary 3.10.3]{Liu2015NearlyOverconvergent} that
	$e\pi$
	is inside the subspace of antiholomorphic forms in 
	$\pi$.
	Moreover,
	if $e\pi$ is non-zero,
	$\pi_p$ is isomorphic to a 
	composition factor of certain principal series,
	the projection of $e\pi$
	to $\pi_p$
	is of dimension $1$,
	the action of the $\mathbb{U}_p$ operators
	on $\bigcap_{\underline{c}\in C^+}
	U_{p,\underline{c}}(\pi_p)$
	is semi-simple
	(\textit{cf}. \cite[Section 5.5]{Liu2015}).
	When $e\pi$ is non-zero,
	there exist
	$\mathfrak{a}_1,\mathfrak{a}_2\in
	\mathcal{O}_{\overline{\mathbb{Q}}_p}^\times$
	such that
	$U_{p,\underline{t}}$
	acts on
	$e\pi$
	by the scalar 
	$\mathfrak{a}_1^{t_1}\mathfrak{a}_2^{t_2}$
	for all
	$\underline{t}=(t_1,t_2)\in C^+$.
	Now we set
	$\alpha_1=p^{-(k_1-1)}\mathfrak{a}_1$,
	$\alpha_2=p^{-(k_2-2)}\mathfrak{a}_2$
	and define
	characters
	$\theta_1$ and $\theta_2$
	of $\mathbb{Q}_p^\times$
	characterized by
	\[
	\theta_i(x)=
	\begin{cases*}
	\kappa_i(x) & if $x\in\mathbb{Z}_p^\times$ \\
	\alpha_i & if $x=p$.
	\end{cases*}
	\]
	Then by \cite[Section 5.5]{Liu2015},
	$\pi_p$ embeds in
	the normalized induction
	$\mathrm{Ind}^{G_1(\mathbb{Q}_p)}_{B_{G_1}(\mathbb{Q}_p)}
	(\theta)$
	where $\theta=(\theta_0,
	\theta_1,\theta_2)$
	is a character of
	$T_{G_1}(\mathbb{Q}_p)$
	(we do not precise $\theta_0$).
    Moreover their ordinary subspaces coincide,
	both being of dimension $1$.
	Moreover,
	$(\overline{\pi})_p=(\pi^\vee)_p$
	embeds in
	$\mathrm{Ind}^{G_1(\mathbb{Q}_p)}_{B_{G_1}(\mathbb{Q}_p)}
	(\theta^{-1})$.	
	We now take
	a vector
	$\varphi\in\pi$
	such that
	its ordinary projection
	$e\varphi=U_{p,\underline{c}}\varphi\neq0$
	for $\underline{c}\gg0$.
	Then it is easy to verify that
	the vector
	$\widetilde{\varphi}_{\underline{c}}(g)
	=
	\langle
	\varphi_{\mathrm{Sp}(V)\times\mathrm{Sp}(V')}
	(\mathcal{E}_{\kappa,\underline{c}},
	\mathfrak{e}_\mathrm{can})
	(\cdot,g),
	e\varphi
	\rangle$
	is also ordinary,
	therefore is a multiple of
	$e\varphi$:
	\[
	\widetilde{\varphi}_{\underline{c}}
	=C_{\kappa,\pi}e\varphi.
	\]
	We next evaluate this constant
	$C_{\kappa,\pi}
	\in\mathbb{C}$.
	The strategy is to
	pair the above two automorphic forms
	$\widetilde{\varphi}_{\underline{c}}$
	and $\varphi$ against some other form
	$\varphi'\in\overline{\pi}$
	and then use some local models to
	choose some special vectors
	$\varphi$ and $\varphi'$ to
	get the value of
	$C_{\kappa,\pi}$.
	By definition, we have
	\[
	\langle
	\widetilde{\varphi}_{\underline{c}},
	\varphi'
	\rangle
	=
	\int_{N_{G_1}(\mathbb{Z}_p)}
	\int_{G_1(\mathbb{A})}
	\int_{[G_1]}
	f_{\phi_{\underline{\kappa}}^d}
	(\iota(g',1))
	\varphi(gg'up^{\underline{c}})
	\varphi'(g)
	dgdg'du.
	\]
	We write
	the local zeta integrals 
	of the above global integral for places
	$v\nmid p$ as
	$\widetilde{L}_v(1,\mathrm{St}(\pi)\otimes\xi)
	\langle
	\varphi_v,\varphi'_v
	\rangle$ and
	the product of the local $L$-factors
	is denoted by
	$\widetilde{L}^p
	(1,\mathrm{St}(\pi)\otimes\xi)=
	\prod_{v\nmid p}\widetilde{L}_v
	(1,\mathrm{St}(\pi)\otimes\xi)$.
	Then we have
	\[
	\langle
	\widetilde{\varphi}_{\underline{c}},
	\varphi'
	\rangle
	=\widetilde{L}^p(1,\mathrm{St}(\pi)\otimes\xi)
	\int_{N_{G_1}(\mathbb{Z}_p)}
	\int_{G_1(\mathbb{Q}_p)}
	\int_{[G_1]}
	f_{\phi_{\underline{\kappa},p}^d}
	(\iota(g',1))
	\varphi(gg'up^{\underline{c}})
	\varphi'(g)
	dgdg'du.
	\]
	From this we see that to evaluate
	$C_{\kappa,\pi}$,
	it suffices to evaluate the quotient
	\[
	C_{\kappa,\pi}
	=
	\frac{
	\int_{N_{G_1}(\mathbb{Z}_p)}
	\int_{G_1^1(\mathbb{Q}_p)}
	\int_{[G_1]}
	f_{\phi_{\underline{\kappa},p}^d}
	(\iota(g',1))
	\varphi(gg'up^{\underline{c}})
	\varphi'(g)
	dgdg'du
    }{
    \int_{N_{G_1}(\mathbb{Z}_p)}
    \int_{[G_1]}
    \varphi(gup^{\underline{c}})
    \varphi'(g)
    dgdu
    }.
	\]
	Now we reduce the above integrals to local situations
	and use certain local model of
	$\pi_p$ to do the computation.
	We take
	$\varphi'_p\in
	\mathrm{Ind}^{G_1(\mathbb{Q}_p)}_{B_{G_1}(\mathbb{Q}_p)}
	(\theta^{-1})$
	supported on
	$B_{G_1}(\mathbb{Q}_p)
	J_4N_{G_1}(\mathbb{Q}_p)$.
	We then choose
	some sufficiently small open compact
	subgroup
	$K$ of $G_1(\mathbb{Q}_p)$
	such that
	the vectors
	$\int_{N_{G_1}(\mathbb{Z}_p)}
	\varphi'(gp^{-\underline{c}}u^{-1})du$
	and
	$\int_{G_1(\mathbb{Q}_p)}
	\int_{N_{G_1}(\mathbb{Z}_p)}
	f_{\phi_{\underline{\kappa},p}}(\iota(g',1))
	\varphi'(gp^{-\underline{c}}u^{-1}(g')^{-1})
	dudg'$
	for all
	$\underline{c}\gg0$
	are all fixed by translation of
	$K$.
	Then we take
	$\varphi\in
	\mathrm{Ind}^{G_1(\mathbb{Q}_p)}_{B_{G_1}(\mathbb{Q}_p)}
	(\theta)$
	supported on
	$B_{G_1}(\mathbb{Q}_p)K$
	and
	taking value $1$ on $K$.
	The local pairing between
	$\varphi$ and
	$\varphi'$
	is given by
	$\langle
	\varphi,\varphi'
	\rangle=
	\int_{G_1(\mathbb{Z}_p)}
	\varphi(g)\varphi'(g)dg$.
	So we see that
	\begin{align*}
	C_{\kappa,\pi}
	&
	=
	\frac{
		\int_{N_{G_1}(\mathbb{Z}_p)}
		\int_{G_1^1(\mathbb{Q}_p)}
		\int_{G_1(\mathbb{Z}_p)}
		f_{\phi_{\underline{\kappa},p}^d}
		(\iota(g',1))
		\varphi(gg'up^{\underline{c}})
		\varphi'(g)
		dgdg'du
	}{
		\int_{N_{G_1}(\mathbb{Z}_p)}
		\int_{G_1(\mathbb{Z}_p)}
		\varphi(gup^{\underline{c}})
		\varphi'(g)
		dgdu
	}
	\\
	&
	=
	\frac{
		\int_{N_{G_1}(\mathbb{Z}_p)}
		\int_{G_1^1(\mathbb{Q}_p)}
		\int_K
		f_{\phi_{\underline{\kappa},p}^d}
		(\iota(g',1))
		\varphi(g)
		\varphi'(gp^{-\underline{c}}u^{-1}(g')^{-1})
		dgdg'du
	}{
		\int_{N_{G_1}(\mathbb{Z}_p)}
		\int_K
		\varphi(g)
		\varphi'(gp^{-\underline{c}}u^{-1})
		dgdu
	}
	\\
	&
	=
	\frac{
		\int_{N_{G_1}(\mathbb{Z}_p)}
		\int_{G_1^1(\mathbb{Q}_p)}
		\int_K
		f_{\phi_{\underline{\kappa},p}^d}
		(\iota(g',1))
		\varphi'(p^{-\underline{c}}u^{-1}(g')^{-1})
		dg'du
	}{
		\int_{N_{G_1}(\mathbb{Z}_p)}
		\varphi'(p^{-\underline{c}}u^{-1})
		du
	}
	\\
	&
	=
	\int_{G_1^1(\mathbb{Q}_p)}
	f_{\phi_{\underline{\kappa},p}^d}
	(\iota(g',1))
	\varphi'((g')^{-1})
	dg'.
	\end{align*}
	
	By Corollary
	\ref{support of f^d at p},
	we can evaluate the last integral as follows.
	We denote
	\[
	I(\underline{\kappa})
	=\int_{G_1^1(\mathbb{Q}_p)}
	\alpha_{\underline{\kappa},p}
	(-p^{-2s}\frac{g+1}{2}\Delta)
	\varphi'(g^{-1})dg,
	\quad
	I(\underline{\kappa}')
	=\int_{G_1^1(\mathbb{Q}_p)}
	\alpha_{\underline{\kappa}',p}
	(-p^{-2s}\frac{g+1}{2}\Delta)
	\varphi'(g^{-1})dg
	\]
	Note that the support of $\varphi'$
	is $B_{G_1}(\mathbb{Q}_p)w_4
	N_{G_1}(\mathbb{Q}_p)$,
	we can write an element 
	$g\in G_1^1(\mathbb{Q}_p)$ 
	in this set as
	\[
	g=\begin{pmatrix}
	a & b \\
	c & d
	\end{pmatrix}
	=m(A)u(B)J_4u(B').
	\]
	As such,
	$\varphi'(g^{-1})
	=\theta^{-1}(-A^{-t})
	=\theta(c)$.
	Therefore, we have
	\begin{align*}
	I(\underline{\kappa})
	&
	=
	\int_{g+1\in p^{-2s}\mathrm{M}_{4\times 4}
		(\mathbb{Z}_p)}
	\alpha_{\underline{\kappa},p}(-\frac{p^{2s}}{2}
	\begin{pmatrix}
	b & a+1 \\
	d+1 & c
	\end{pmatrix})
	\theta(c)
	dg
	\\
	&
	=p^{-6c}
	\int_{g+1\in p^{-2s}\mathrm{M}_{4\times 4}
		(\mathbb{Z}_p)}
	1_{p^{-c}\mathrm{M}_{2\times 2}
		(\mathbb{Z}_p)}
	(-p^{2s}b)
	1_{p^{-c}\mathrm{M}_{2\times 2}
		(\mathbb{Z}_p)}
	(-p^{2s}(a+1))
	\alpha_{1,\underline{\kappa},p}(-\frac{p^{2s}}{2}c)
	\theta(c)dg
	\\
	&
	=
	p^{16s}\theta^{-1}(-2p^{-2s})
	\int_{\mathrm{M}_{2\times 2}
		(\mathbb{Q}_p)}
	\alpha_{2,\underline{\kappa},p}(c)\theta(c)
	dc.
	\end{align*}
	(similar formula for
	$I(\underline{\kappa}')$)
	The evaluation of
	this last integral is given in the next lemma,
	whose proof follows quite closely
	\cite[Section 5.7]{Liu2015}.
	For any finite order character
	$\chi$ of $\mathbb{Z}_p^\times$,
	we define
	$\chi^\circ=1$ if
	$\chi$ is the trivial character
	and
	$\chi^\circ=0$ otherwise.
	$G(\chi)$ is the Gauss sum of
	$\chi$.
	Recall that
	$\theta_i(p)=\alpha_i$,
	$\theta_i|_{\mathbb{Z}_p^\times}
	=\kappa_i$
	and
	$\widetilde{\kappa}_2
	=\kappa_2(\frac{\cdot}{p})^{c'(\kappa_2)}$.
	\begin{lemma}
		We have
		\begin{enumerate}
			\item 
			\begin{align*}
			\int_{\mathrm{M}_{2\times 2}
				(\mathbb{Q}_p)}
			\alpha_{2,\underline{\kappa},p}(c)\theta(c)
			dc
			&
			=
			\bigg(
			(1-\frac{1}{p})
			(-1)^{\kappa_1^\circ}
			\frac{1-\kappa_1^\circ\alpha_1^{-1}}
			{1-\kappa_1^\circ\alpha_1p^{-1}}
			G(\kappa_1)
			\alpha_1^{-c'(\kappa_1)}
			\bigg)
			\\
			&
			\quad
			\times
			\bigg(
			(1-\frac{1}{p})
			(-1)^{\kappa_2^\circ}
			\frac{1-\kappa_2^\circ p^{-1}}
			{1-\kappa_2^\circ\alpha_2p^{-1}}
			G(\widetilde{\kappa}_2)
			\alpha_2^{-c'(\kappa_2)}
			\varepsilon(p^{c'(\kappa_2)})
			p^{-c'(\widetilde{\kappa}_2)}
			\bigg);
			\end{align*}
			
			\item 
			\[
			\int_{\mathrm{M}_{2\times 2}
				(\mathbb{Q}_p)}
			\alpha_{2,\underline{\kappa}',p}(c)\theta(c)
			dc=0.
			\]
		\end{enumerate}		
	\end{lemma}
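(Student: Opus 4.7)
The plan is to reduce each integral to a product of one-dimensional Tate-style local integrals by first Fourier-inverting $\alpha_{2,\underline{\kappa},p}$ and then applying an Iwasawa decomposition on $\mathrm{GL}_2(\mathbb{Q}_p)$. Recall that $\widehat{\alpha}_{2,\underline{\kappa},p}$ is supported on $\mathrm{GL}_2(\mathbb{Z}_p) \cap \mathrm{Sym}_{2\times 2}$, with density $(\kappa_1\kappa_2^{-1})(\mathrm{det}_1\beta) \cdot \widetilde{\kappa}_2(\det\beta)$. Substituting $\alpha_{2,\underline{\kappa},p}(c) = \int \widehat{\alpha}_{2,\underline{\kappa},p}(\beta) \mathbf{e}_p(-\mathrm{tr}(\beta c))\, d\beta$ and interchanging the order of integration (justified by the compact support in $\beta$), the main integral becomes a weighted integral over $\beta$ of the inner expression $\int_{\mathrm{M}_{2\times 2}(\mathbb{Q}_p)} \theta(c)\, \mathbf{e}_p(-\mathrm{tr}(\beta c))\, dc$.

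Next, I would parametrize $c \in \mathrm{GL}_2(\mathbb{Q}_p)$ via an Iwasawa-type decomposition $c = \mathrm{diag}(t_1,t_2) \cdot u(x) \cdot k$ with $k \in \mathrm{GL}_2(\mathbb{Z}_p)$, so that $\theta(c) = \theta_1(t_1)\theta_2(t_2)$ (and $c$ on the non-invertible locus contributes nothing since $\theta$ vanishes there). The compact and unipotent parts can be integrated out, producing support conditions on $\beta$ together with an explicit exponential factor. What remains is the product of two Tate-type integrals
\[
\int_{\mathbb{Q}_p^\times} \theta_1(t_1)\, \mathbf{e}_p(-t_1 \beta_{11})\, |t_1|^{?}\, d^\times t_1
\quad\text{and}\quad
\int_{\mathbb{Q}_p^\times} \theta_2(t_2)\, \mathbf{e}_p(-t_2 \widetilde{\beta})\, |t_2|^{?}\, d^\times t_2,
\]
where $\widetilde{\beta}$ is determined by $\det\beta$. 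Each of these is evaluated by the standard local calculation: a sum over $v_p(t_i)$ gives a geometric series in $\alpha_i$ and $p^{-1}$ of the form $(1-\kappa_i^\circ \alpha_i^{-1})/(1-\kappa_i^\circ \alpha_i p^{-1})$ multiplied by the local volume factor $(1-p^{-1})$, and the integration over units produces the Gauss sum $G(\kappa_i)$ (respectively $G(\widetilde{\kappa}_2)$ for the second factor, together with the $\varepsilon(p^{c'(\kappa_2)})$ sign and a Legendre symbol coming from the quadratic Gauss sum in the $\widetilde{\kappa}_2$-twist). Tracking the shift $\alpha_i^{-c'(\kappa_i)}$ in the $p$-adic valuation of the support of $\beta$ and assembling everything yields the stated product.

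For part (2) with $\underline{\kappa}'=(\kappa_1',\kappa_2')$, $\kappa_i'=\kappa_i(\tfrac{\cdot}{p})$, note first that $\kappa_1'(\kappa_2')^{-1}=\kappa_1\kappa_2^{-1}$, so the $\beta_{11}$-factor is unchanged; the difference lies entirely in the second slot, namely in $\widetilde{\kappa}_2' = \kappa_2'(\tfrac{\cdot}{p})^{c'(\kappa_2')}$. Because $c'(\kappa_2')$ differs from $c'(\kappa_2)$ by a mandatory shift coming from the conductor $p$ of the quadratic character $(\tfrac{\cdot}{p})$, the second Tate integral picks up an extra twist that forces the relevant one-dimensional integral to vanish: concretely, the oscillating integral $\int_{\mathbb{Z}_p^\times} \kappa_2'(u)(\tfrac{u}{p})^{c'(\kappa_2')}\, \mathbf{e}_p(u\cdot \pi^?)\, du$ cancels over a full residue disk due to the mismatch between the conductor of $\widetilde{\kappa}_2'$ and the valuation at which the exponential becomes trivial. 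Assembling the product, the $\widetilde{\kappa}_2'$-factor is zero, yielding the vanishing.

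The main obstacle is bookkeeping: matching the exact normalizations (Haar measure conventions, the quadratic Gauss sum $\varepsilon$-sign, Legendre symbols, and the $\alpha_i^{-c'(\kappa_i)}$ shift) to recover the displayed product on the nose. A secondary subtlety is rigorously justifying the Iwasawa unfolding for $\theta$ extended from the torus to $\mathrm{M}_{2\times 2}(\mathbb{Q}_p)$, especially ensuring that boundary contributions from the non-invertible locus vanish and the support truncations from $1_{\mathrm{GL}_2(\mathbb{Z}_p)\cap\mathrm{Sym}}$ are propagated correctly through the change of variables.
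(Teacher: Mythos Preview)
Your plan is different from the paper's and has a real gap at the interchange step. For fixed $\beta$, the inner integral $\int_{\mathrm{M}_{2\times 2}(\mathbb{Q}_p)} \theta(c)\,\mathbf{e}_p(-\mathrm{tr}(\beta c))\,dc$ does not converge absolutely: $\theta$ is essentially a torus quasi-character extended through the big-cell model of $\varphi'$, and $\mathbf{e}_p$ has modulus one, so there is no decay. Compact support of $\widehat{\alpha}_{2,\underline{\kappa},p}$ in $\beta$ only tells you the outer integral is a finite combination of such divergent expressions; it does not justify the swap. Your Tate-integral picture would need analytic continuation or a distributional interpretation, neither of which you supply, and the Iwasawa decomposition of $c$ is not the right tool here in any case.

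The paper works in the opposite order. First it reduces to diagonal $c=\mathrm{diag}(c_1,c_2)$ --- not by Iwasawa, but by two structural facts: the support of the test vector $\varphi'$ (in the big-cell model of $\mathrm{Ind}_B^{G_1}\theta^{-1}$) forces only upper-triangular $c$ to contribute, and $\alpha_{2,\underline{\kappa},p}$ is invariant under translation by $\mathrm{Sym}_{2\times 2}(\mathbb{Z}_p)$. Then it computes $\alpha_{2,\underline{\kappa},p}(\mathrm{diag}(c_1,c_2))$ directly from the definition as a convergent integral over $\beta=\begin{pmatrix}x&y\\y&z\end{pmatrix}\in\mathrm{Sym}_{2\times 2}(\mathbb{Z}_p)$. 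The substitution $z\mapsto z+y^2/x$ separates off the factor $\int_{\mathbb{Z}_p}\widetilde{\kappa}_2(z)\mathbf{e}_p(-c_2z)\,dz$; what remains in $y$ is the quadratic Gauss sum $\int_{\mathbb{Z}_p}\mathbf{e}_p(-c_2 y^2/x)\,dy$, and this is precisely what produces both the sign $\varepsilon(p^{c'(\kappa_2)})$ and the Legendre twist $(\tfrac{x}{p})^{c'(\widetilde{\kappa}_2)}$ on the $x$-integrand. This coupling of the two variables through the off-diagonal entry is the structural heart of the calculation and is exactly what your ``product of two Tate-type integrals'' claim skips over. After this, integrating in $c_2$ (case-splitting on whether $\widetilde{\kappa}_2$ is trivial) and then in $c_1$ gives the stated formula; part (2) follows by the identical computation for $\underline{\kappa}'$, where the extra factor $(\tfrac{\cdot}{p})$ shifts the Legendre-twist parity on the $x$-integral so that the resulting one-dimensional integral vanishes.
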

    \begin{proof}
    	By the support of $\varphi'$,
    	only those upper triangular matrices
    	$c$ contribute to the integral in the lemma.
    	Moreover,
    	$\alpha_{2,\underline{\kappa},p}(c)
    	=\alpha_{2,\underline{\kappa},p}(c')$
    	where $c-c'\in\mathrm{Sym}_{2\times 2}
    	(\mathbb{Z}_p)$.
    	Thus we can evaluate the integral
    	by considering only those diagonal
    	matrices
    	$c=\mathrm{diag}(c_1,c_2)$.
    	First we compute 
    	$\alpha_{2,\underline{\kappa},p}
    	(c)$.
    	Write a matrix
    	$\begin{pmatrix}
    	x & y \\
    	y & z
    	\end{pmatrix}\in
    	\mathrm{Sym}_{2\times 2}
    	(\mathbb{Z}_p)$: 
    	\begin{align*}
    	\alpha_{2,\underline{\kappa},p}(c)
    	&
    	=\int_{\mathrm{Sym}_{2\times 2}
    		(\mathbb{Z}_p)}
    	(\kappa_1\kappa_2^{-1})(x)
    	\widetilde{\kappa}_2(xz-y^2)
    	\mathbf{e}_p(-(c_1x+c_2z))
    	dxdydz
    	\\
    	&
    	=
    	\int_{\mathbb{Z}_p}
    	\int_{\mathbb{Z}_p}
    	\kappa_1(x)\mathbf{e}_p(-c_1x-c_2y^2x^{-1})dxdy
    	\int_{\mathbb{Z}_p}
    	\widetilde{\kappa}_2(z)\mathbf{e}_p(-c_2z)dz.
    	\end{align*}
    	First assume that
    	$\widetilde{\kappa}_2$ is a non-trivial character,
    	then
    	$\int_{\mathbb{Z}_p}
    	\widetilde{\kappa}_2(z)
    	\mathbf{e}_p(-c_2z)dz
    	=p^{-c'(\widetilde{\kappa}_2)}
    	G(\widetilde{\kappa}_2)
    	\widetilde{\kappa}_2^{-1}
    	(p^{c'(\widetilde{\kappa}_2)}c_2)$.
    	Now we have
    	\[
    	\alpha_{2,\underline{\kappa},p}(c)
    	=
    	p^{-c'(\widetilde{\kappa}_2)}
    	G(\widetilde{\kappa}_2)
    	\widetilde{\kappa}_2^{-1}
    	(p^{c'(\widetilde{\kappa}_2)}c_2)
    	p^{-c'(\widetilde{\kappa}_2/2)}
    	\varepsilon(p^{c'(\widetilde{\kappa}_2)})
    	(\frac{p^{c'(\widetilde{\kappa}_2)}c_2}
    	{p})^{c'(\widetilde{\kappa}_2)}
    	\int_{\mathbb{Z}_p}
    	\kappa_1(x)
    	(\frac{x}{p})^{c'(\widetilde{\kappa}_2)}
    	\mathbf{e}_p(-c_1x)dx.
    	\]
    	We then integrate on the variable $c_2$:
    	\[
    	\int_{\mathbb{Q}_p}
    	\alpha_{2,\underline{\kappa},p}
    	(\mathrm{diag}(c_1,c_2))
    	\theta_2(c_2)
    	dc_2
    	=
    	p^{-c'(\widetilde{\kappa}_2)/2}
    	G(\widetilde{\kappa}_2)
    	\varepsilon(p^{c'(\kappa_2)})
    	\alpha_2^{-c'(\kappa_2)}
    	(1-\frac{1}{p})
    	\int_{\mathbb{Z}_p}
    	\kappa_1(x)
    	(\frac{x}{p})^{c'(\widetilde{\kappa}_2)}
    	\mathbf{e}_p(-c_1x)dx.
    	\]
    	Note that the product before the above
    	integral is exactly the
    	product in the second big bracket in the lemma
    	for the case
    	$\widetilde{\kappa}_2$ non-trivial.

    	Next we assume
    	$\widetilde{\kappa}_2=1$,
    	then
    	$\int_{\mathbb{Z}_p}
    	\widetilde{\kappa}_2(z)
    	\mathbf{e}_p(-c_2z)dz
    	=-p^{-1}1_{p^{-1}\mathbb{Z}_p}(c_2)
    	+(1-p^{-1})1_{\mathbb{Z}_p}(c_2)$.
    	Then the integration on $c_2$ gives
    	\begin{align*}
    	\int_{\mathbb{Q}_p}
    	\alpha_{2,\underline{\kappa},p}
    	(\mathrm{diag}(c_1,c_2))
    	\theta_2(c_2)dc_2
    	&
    	=
    	\bigg(
    	(-p^{-1/2})
    	\varepsilon(p)\theta_2^{-1}(p)
    	(1-\frac{1}{p})
    	1_{c'(\kappa_2(\frac{\cdot}{p}))=0}
    	\int_{\mathbb{Z}_p}\theta_1(x)
    	\mathbf{e}_p(-c_1x)dx
    	\bigg)
    	\\
    	&
    	\quad+
    	\bigg(
    	(1-\frac{1}{p})^2
    	\frac{1}{1-\alpha_2p^{-1}}
    	1_{c'(\kappa_2)=0}
    	\int_{\mathbb{Z}_p}
    	\theta_1(x)
    	\mathbf{e}_p(-c_1x)dx
    	\bigg).
    	\end{align*}
    	The term in the first big bracket corresponds to
    	the case
    	$\kappa_2=(\frac{\cdot}{p})$
    	whiles
    	the term in the second bracket corresponds to
    	the case 
    	$\kappa_2=1$.
    	Moreover, the product before each integral
    	in each bracket
    	is exactly the formula given
    	in the lemma.

    	The integration on the variable
    	$c_1$ is exactly the same as
    	in the case of
    	$c_2$
    	(in fact simpler
    	than $c_2$).
    	The case of
    	$\underline{\kappa}'$
    	is similar to
    	$\underline{\kappa}$.
    	We thus omit the calculation.   	
    \end{proof}

    In summary we get the following
    \begin{corollary}
    	\label{non-vanishing of local zeta integral at p}
    	We set $s=-c=-c(\underline{\kappa})$,
    	then
    	for any
    	$\varphi\in\pi$
    	and $\varphi'\in\overline{\pi}$
    	such that
    	$\langle
    	e\varphi,\varphi'
    	\rangle\neq0$,
    	we have
    	\begin{align*}
    	\frac{Z_p((e\varphi)_p,\varphi'_p,
    		f_{\phi^d_{\underline{\kappa},p}})}
    	{\langle
    		(e\varphi)_p,
    		\varphi'_p
    		\rangle}
    	&
    	=
    	(1-(\frac{-1}{p})\frac{1}{p})
    	p^{-5c}
    	\alpha_1^{-2c}\alpha_2^{-2c}
    	\kappa_1(\frac{1}{2})\kappa_2(\frac{1}{2})
    	\\
    	&
    	\quad
    	\times
    	\bigg(
    	(1-\frac{1}{p})
    	(-1)^{\kappa_1^\circ}
    	\frac{1-\kappa_1^\circ\alpha_1^{-1}}
    	{1-\kappa_1^\circ\alpha_1p^{-1}}
    	G(\kappa_1)
    	\alpha_1^{-c'(\kappa_1)}
    	\bigg)
    	\\
    	&
    	\quad
    	\times
    	\bigg(
    	(1-\frac{1}{p})
    	(-1)^{\kappa_2^\circ}
    	\frac{1-\kappa_2^\circ p^{-1}}
    	{1-\kappa_2^\circ\alpha_2p^{-1}}
    	G(\widetilde{\kappa}_2)
    	\alpha_2^{-c'(\kappa_2)}
    	\varepsilon(p^{c'(\kappa_2)})
    	p^{-c'(\widetilde{\kappa}_2)}
    	\bigg).
    	\end{align*}
    \end{corollary}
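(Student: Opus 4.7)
The plan is to combine the explicit formula for the Siegel section at $p$ given in Corollary~\ref{support of f^d at p} with the principal series realization of $\pi_p$ recalled in the preceding discussion, reducing the corollary to the matrix integral computation already carried out in the lemma immediately above. Concretely, since $(e\varphi)_p$ spans the one-dimensional ordinary line in $\pi_p$ and since the local pairing is $G_1(\mathbb{Q}_p)$-invariant, both sides of the claimed identity depend on the test vectors only through $\langle(e\varphi)_p,\varphi'_p\rangle$; thus it suffices to compute the ratio for one convenient pair of vectors inside $\mathrm{Ind}^{G_1(\mathbb{Q}_p)}_{B_{G_1}(\mathbb{Q}_p)}(\theta)$ and $\mathrm{Ind}^{G_1(\mathbb{Q}_p)}_{B_{G_1}(\mathbb{Q}_p)}(\theta^{-1})$, which is exactly the manipulation that yielded the constant $C_{\kappa,\pi}$ in the body of the section.

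First, I would substitute the explicit formula from Corollary~\ref{support of f^d at p}, with the choice $s=-c$, so that the prefactor becomes $\frac{p^{11c}}{2}\bigl(1-(\tfrac{-1}{p})\tfrac{1}{p}\bigr)$ and the two remaining pieces of the integrand are $\alpha_{\underline{\kappa},p}\bigl(-p^{-2c}\tfrac{g+1}{2}\Delta\bigr)$ and $\alpha_{\underline{\kappa}',p}\bigl(-p^{-2c}\tfrac{g+1}{2}\Delta\bigr)$. Next, I would take $\varphi'_p$ supported on the big Bruhat cell $B_{G_1}(\mathbb{Q}_p)J_4N_{G_1}(\mathbb{Q}_p)$, write $g=m(A)u(B)J_4u(B')$ there, and use $\varphi'_p(g^{-1})=\theta(c)$; the support condition on $g+1$ forces $b$ and $a+1$ to lie in $p^{-c}\mathrm{M}_{2\times 2}(\mathbb{Z}_p)$, so the integral over those variables contributes a factor $p^{-6c}$ (as already isolated in the computation of $I(\underline{\kappa})$), and the remaining integral reduces to $p^{16s}\theta^{-1}(-2p^{-2s})\int_{\mathrm{M}_{2\times 2}(\mathbb{Q}_p)}\alpha_{2,?,p}(c)\theta(c)\,dc$, which is evaluated by the preceding lemma.

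The final step is to invoke that lemma: the $\underline{\kappa}'$-contribution vanishes identically, and the $\underline{\kappa}$-contribution is the product of the two bracketed Gauss-sum factors appearing in the statement. Multiplying together the prefactor $\frac{p^{11c}}{2}\bigl(1-(\tfrac{-1}{p})\tfrac{1}{p}\bigr)$, the shape factor $p^{16s}=p^{-16c}$, the bookkeeping factor $p^{-6c}$ from the Bruhat-cell integration, and the character value $\theta^{-1}(-2p^{-2s})=\theta^{-1}(-2p^{2c})$, and then splitting $\theta^{-1}(-2p^{2c})$ into $\alpha_1^{-2c}\alpha_2^{-2c}\kappa_1(1/2)\kappa_2(1/2)$ times a sign absorbed into the $(-1)^{\kappa_i^\circ}$ terms, yields exactly the expression in the corollary.

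The main obstacle is the bookkeeping: one must justify carefully that the ratio $Z_p/\langle\cdot,\cdot\rangle$ computed inside the principal series model agrees with the ratio of the corresponding objects attached to $(e\varphi)_p$ and $\varphi'_p$ (this is where the ordinariness and the resulting one-dimensionality of the relevant $\mathbb{U}_p$-fixed subspace are essential), and one must track the powers of $p$ and the character values through the substitution $s=-c$ to confirm that $p^{11c-16c-6c}=p^{-11c}$ combined with the factor $p^{16s}=p^{-16c}$... actually, matching these against the claimed $p^{-5c}$ is the most delicate arithmetic point and will require keeping track of the measure normalizations of Remark~\ref{Haar measures} together with the splitting of $\theta$ into algebraic and finite parts.
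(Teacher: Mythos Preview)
Your approach is exactly the paper's approach: the corollary is introduced with ``In summary we get the following'' and is nothing more than the assembly of the computation of $C_{\kappa,\pi}$ via the principal-series model, Corollary~\ref{support of f^d at p}, the Bruhat-cell manipulation yielding $I(\underline{\kappa})$, and the lemma evaluating the matrix integral. Your reduction to the one-dimensional ordinary line and your choice of test vectors are precisely what the paper does in the paragraphs leading up to the corollary.

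However, your final paragraph contains a genuine double-count. In the paper the factor $p^{-6c}$ arises as an \emph{intermediate} step in the computation of $I(\underline{\kappa})$: it comes from expanding $\alpha_{\underline{\kappa},p}$ as a product of $p^{-6c}$ times characteristic functions on the $(a+1)$- and $b$-blocks times $\alpha_{2,\underline{\kappa},p}$ on the $c$-block. That $p^{-6c}$ is then absorbed, together with the volume of the integration over $a,b,d$, into the final expression $I(\underline{\kappa})=p^{16s}\theta^{-1}(-2p^{-2s})\int_{\mathrm{M}_{2\times2}(\mathbb{Q}_p)}\alpha_{2,\underline{\kappa},p}(c)\theta(c)\,dc$. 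So the only $p$-powers to combine are the prefactor $p^{-24s-13c}$ from Corollary~\ref{support of f^d at p} and the $p^{16s}$ from $I(\underline{\kappa})$; with $s=-c$ this gives $p^{24c-13c-16c}=p^{-5c}$ directly, matching the statement. Your attempted combination $p^{11c}\cdot p^{-16c}\cdot p^{-6c}=p^{-11c}$ is off precisely because you have listed $p^{-6c}$ as a separate factor when it is already contained in $p^{16s}$. Once you drop that extra $p^{-6c}$, the remaining bookkeeping with $\theta^{-1}(-2p^{2c})=\kappa_1^{-1}(-2)\kappa_2^{-1}(-2)\alpha_1^{-2c}\alpha_2^{-2c}$ goes through as you indicate.
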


	\subsubsection{Local Fourier coefficients}
	Next we compute the Fourier coefficients
	$E_{\beta,p}(1,f_{\phi^+_{\underline{\kappa},p}})$.
	It is easy to see that by
	Proposition \ref{Siegel section at p}:
	\begin{lemma}
		The local Fourier coefficient is
		\[
		E_{\beta,p}(1,f_{\phi^+_{\underline{\kappa},p}})
		=
		\frac{p^{-4s-13c}}{2}
		(1-(\frac{-1}{p})\frac{1}{p})
		(\widehat{\alpha}_{\underline{\kappa},p}
		(p^{-2s}\beta)+
		\widehat{\alpha}_{\underline{\kappa}',p}
		(p^{-2s}\beta)),
		\forall
		\beta\in\mathrm{Sym}_{4\times 4}
		(\mathbb{Q}).
		\]
	\end{lemma}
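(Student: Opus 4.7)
The lemma should follow from a direct unfolding of the defining integral, together with the explicit formula for the Siegel section provided by Proposition \ref{Siegel section at p}. My plan has three short steps.

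First, I would start from the definition of the local Fourier coefficient at $v=p$, namely
\[
E_{\beta,p}(1, f_{\phi^+_{\underline{\kappa},p}}) = \int_{\mathrm{Sym}_{4\times 4}(\mathbb{Q}_p)} f_{\phi^+_{\underline{\kappa},p}}(J_8 u(b))\, \mathbf{e}_p(-\mathrm{tr}(\beta b))\, db,
\]
and observe that $J_8 u(b) = \left(\begin{smallmatrix} 0 & 1_4 \\ -1_4 & -b \end{smallmatrix}\right)$ has bottom-left block $C=-1_4$, which is invertible. Thus Proposition \ref{Siegel section at p} applies with $\xi(\det(-1_4))|\det(-1_4)|_p^{-3}=1$ and $C^{-1}D=b$, giving
\[
f_{\phi^+_{\underline{\kappa},p}}(J_8 u(b)) = \frac{p^{-24s-13c}}{2}\!\left(1-\left(\tfrac{-1}{p}\right)\tfrac{1}{p}\right)\!\bigl(\alpha_{\underline{\kappa},p}(p^{2s}b) + \alpha_{\underline{\kappa}',p}(p^{2s}b)\bigr).
\]

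Second, I would substitute this into the integral. Because $\alpha_{\underline{\kappa},p}$ and $\alpha_{\underline{\kappa}',p}$ are Schwartz-Bruhat (they are inverse Fourier transforms of the compactly supported functions $\widehat{\alpha}_{\underline{\kappa},p}$ and $\widehat{\alpha}_{\underline{\kappa}',p}$), the integral is absolutely convergent, and it suffices to evaluate, for $\star \in \{\underline{\kappa},\underline{\kappa}'\}$,
\[
I_\star(\beta) := \int_{\mathrm{Sym}_{4\times 4}(\mathbb{Q}_p)} \alpha_{\star,p}(p^{2s}b)\,\mathbf{e}_p(-\mathrm{tr}(\beta b))\,db.
\]

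Third, I would perform the change of variables $b' = p^{2s}b$ on $\mathrm{Sym}_{4\times 4}(\mathbb{Q}_p)$, which is a $10$-dimensional $\mathbb{Q}_p$-vector space; the Jacobian produces a factor of $p^{20s}$, and the argument of $\mathbf{e}_p$ becomes $-\mathrm{tr}(p^{-2s}\beta \cdot b')$. This identifies
\[
I_\star(\beta) = p^{20s}\,\widehat{\alpha}_{\star,p}(p^{-2s}\beta).
\]
Combining with the prefactor $p^{-24s-13c}/2$ from Proposition \ref{Siegel section at p} yields the net exponent $-4s-13c$ and the stated formula. No step here appears to be a genuine obstacle; the only thing to keep track of is the normalization of Haar measure on $\mathrm{Sym}_{4\times 4}(\mathbb{Q}_p)$ (as fixed in Remark \ref{Haar measures}) and the sign conventions in the definitions of $\mathbf{e}_p$ and $\widehat{\alpha}$, which must be compatible so that the Jacobian computation produces $p^{20s}$ rather than $p^{-20s}$. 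Everything else is bookkeeping.
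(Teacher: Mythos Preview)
Your proposal is correct and follows exactly the route the paper intends: the paper simply asserts that the lemma is ``easy to see'' from Proposition~\ref{Siegel section at p}, and your three steps (evaluate the section at $J_8u(b)$ using $C=-1_4$, $C^{-1}D=b$; substitute; change variables $b'=p^{2s}b$ on the $10$-dimensional space $\mathrm{Sym}_{4\times4}(\mathbb{Q}_p)$ to pick up $p^{20s}$ and recognize the Fourier transform) are precisely the computation the paper leaves implicit.
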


	\subsection{Summary}
	\label{summary}
	Recall that we fix an arithmetic point
	$\underline{\kappa}=(\underline{\kappa}_\mathrm{f},
	\underline{k})
	\in\mathrm{Hom}_\mathrm{cont}
	(T_{G_1^1}(\mathbb{Z}_p),\overline{\mathbb{Q}}_p^\times)$
	($k_1\geq k_2\geq3$).
	Let $\pi$ be an anti-holomorphic cuspidal automorphic representation
	of $G_1(\mathbb{A})$
	of type
	$(\underline{k},\widehat{\Gamma})$.
	Moreover, we fix $\varphi=\otimes_v\varphi_v\in\pi$
	to be a factorisable automorphic form such that
	$\varphi_\infty$ lies in the highest
	$K_{G_1^1,\infty}$-type 
	of $\pi_\infty$.
	We fix also a vector
	$\varphi^\vee\in\pi^\vee=\overline{\pi}$
	such that
	$\langle\varphi,\varphi^\vee\rangle\neq0$.
	For each place $v$ of $\mathbb{Q}$,
	we have chosen theta sections
	$\phi_{i,\underline{\kappa},v}\in\mathcal{S}(W_i^-(\mathbb{Q}_v))$
	($i=1,2$)
	as in the following
	\begin{enumerate}
		\item 
		if $v=\infty$,
		we fix one
		$I\in\mathfrak{I}_{\underline{\kappa}}$,
		then for
		$X_{k;j,l}=w_{k;j,l}
		\sqrt{\eta_{l,l}}$
		(\textit{cf}. (\ref{Schwartz section at infinity for +})),
		\[
		\phi_{i,\infty;I}(w_i^-)
		=\sqrt{a_I}
		X^I_i
		\phi_{i,\infty}(w_i^-);
		\]
		
		\item 
		if	$\ell\nmid Np$, then
		(\textit{cf}. 
		(\ref{Schwartz section at unramified finite places})):
		\[
		\phi_{i,\underline{\kappa},\ell}(w_i^-)
		=
		1_{W_i^-(\mathbb{Z}_\ell)}(w_i^-);
		\]
		
		\item 
		if	$\ell|N$, then
		(\textit{cf}.
		(\ref{Schwartz section at ramified place dividing N})):
		\[
		\phi_{i,\underline{\kappa},\ell}(w_i^-)
		=
		1_{S_{[i]}}(w_i^-)
		\mathbf{e}_\ell
		(
		\mathrm{tr}
		(
		b^t_{[i]}w_i^-
		)
		);
		\]
		
		\item 
		if $\ell=p$, then
		(\textit{cf}.
		(\ref{Schwartz section at p})):
		\[
		\phi_{1,\underline{\kappa},p}(y_1)
		=
		1_{w_0+C(\chi)W_1^-(\mathbb{Z}_p)}(y_1/p^s),
		\]
		\[
		\phi_{2,\underline{\kappa},p}
		(y_2)
		=1_{W_2^-(\mathbb{Z}_p)}(y_2/p^s)
		\widehat{\alpha}_{2,\underline{\kappa},p}
		(\frac{1}{2}y_2\eta_Uy_2^t/2p^{2s})
		1_{p^c\mathrm{M}_{2\times 2}
			(\mathbb{Z}_p)}
		(y_2\eta_Uw_0^t/p^{2s}).
		\]
	\end{enumerate}
    For each place $v$,
    we form the tensor product
	$\phi^+_{\underline{\kappa},v}
	=\phi_{1,\underline{\kappa},v}
	\otimes
	\phi_{2,\underline{\kappa},v}
	\in
	\mathcal{S}(W_4^-(\mathbb{Q}_v))$
	and its image under the intertwining operator $\delta$
	is denoted by
	$\phi^d_{\underline{\kappa},v}\in
	\mathcal{S}(W_4^d(\mathbb{Q}_v))$.
	These sections give rise to Siegel sections
	$f_{\phi^?_{\underline{\kappa},v}}
	\in\mathrm{Ind}_{P^?(\mathbb{Q}_v)}^{G_4(\mathbb{Q}_v)}
	(\xi_{1/2})$
	($?=+,d$).
	Their restricted tensor products 
	over all $v$ are
	denoted by
	$\phi^?_{\underline{\kappa}}$,
	$f_{\phi^?_{\underline{\kappa}}}$.
	The latter then define Siegel Eisenstein series
	$E(\cdot,f_{\phi^?_{\underline{\kappa}}})$.
	We have also studied their local zeta integrals
	and local Fourier coefficients and 
	showed that they are not identically zero
	and gave explicit expressions for
	Fourier coefficient at $\infty$,
	zeta integral and Fourier coefficient at
	$\ell\nmid Np$
	and zeta integral and Fourier coefficient at $p$.
	We put the modified local Euler factors
	as follows
	(\textit{cf}.
	Corollary
	\ref{non-vanishing of zeta integral at infinity},
	Proposition
	\ref{non-vanishing of local zeta integral at l},
	Lemma 
	\ref{non-vanishing of local integral at l}
	and
	Corollary
	\ref{non-vanishing of local zeta integral at p}):
	\begin{align*}
	\label{explicit formula for local L-factors}
	L_\infty^\ast(1,\mathrm{St}(\pi)\otimes\xi)
	&
	=
	L_{\infty,I}^\ast(1,\mathrm{St}(\pi)\otimes\xi)
	=
	\frac{Z_\infty(\varphi_{\underline{k},\infty},
		\varphi_{\underline{k},\infty}^\vee,
		f_{\phi_{\infty,I}^d})}
	{\langle
		\varphi_{\underline{k},\infty},
		\varphi_{\underline{k},\infty}^\vee
		\rangle};
	\\
	L_\ell^\ast(1,\mathrm{St}(\pi)\otimes\xi)
	&
	=\ell^{-20}
	|\mathrm{det}(\eta_U)|_\ell^{-2}
	\frac{((\frac{-1}{\ell})\ell^5-1)(\ell-1)}
	{(\ell^5-1)((\frac{-1}{\ell})\ell-1)},\quad
	\text{for}\quad\ell|N;
	\\
	L_p^\ast(1,\mathrm{St}(\pi)\otimes\xi)
	&
	=
	(1-(\frac{-1}{p})\frac{1}{p})
	p^{-5c}
	\alpha_1^{-2c}\alpha_2^{-2c}
	\kappa_1(-1)\kappa_2(-1)
	\times
	\bigg(
	(1-\frac{1}{p})
	(-1)^{\kappa_1^\circ}
	\frac{1-\kappa_1^\circ\alpha_1^{-1}}
	{1-\kappa_1^\circ\alpha_1p^{-1}}
	G(\kappa_1)
	\alpha_1^{-c'(\kappa_1)}
	\bigg)
	\\
	&
	\quad
	\times
	\bigg(
	(1-\frac{1}{p})
	(-1)^{\kappa_2^\circ}
	\frac{1-\kappa_2^\circ p^{-1}}
	{1-\kappa_2^\circ\alpha_2p^{-1}}
	G(\widetilde{\kappa}_2)
	\alpha_2^{-c'(\kappa_2)}
	\varepsilon(p^{c'(\kappa_2)})
	p^{-c'(\widetilde{\kappa}_2)}
	\bigg).
	\end{align*}
	
	Then we write their product
	as
	$L_{Np\infty}^\ast(1,\mathrm{St}(\pi)\otimes\xi)
	=\prod_{v|Np\infty}
	L_v^\ast(1,\mathrm{St}(\pi)\otimes\xi)$.
	Recall that we have defined periods
	$\widehat{P}[\pi]$
	and $P_\pi$ for
	$\pi$ in Section \ref{periods}.
	We take
	$p$-integral modular forms
	$\varphi,\overline{\varphi^\vee}
	\in
	H^{3,\mathrm{ord}}_{\underline{\kappa}^D}
	(\widehat{\Gamma},\mathcal{O}_\pi)[\pi]$
	(\textit{cf}. Section \ref{periods}).
	Then by definition
	$\langle
	\varphi,\varphi^\vee
	\rangle/\widehat{P}[\pi]
	\in\mathcal{O}_\pi$.
	We put these results into the following theorem
	\begin{theorem}\label{arithmetic Rallis inner product}
		Let the notations be as above.
		Assume Hypothesis
		\ref{Gorenstein hypothesis},
		then we have
		the Rallis inner product formula
		\[
		\langle
		\Theta_{\phi_{1,\underline{\kappa}}}
		(\varphi),
		\Theta_{\phi_{2,\underline{\kappa}}}
		(\varphi^\vee)
		\rangle_{\mathrm{O}(U)}
		=
		\mathfrak{c}^\mathrm{coh}(\pi)
		\frac{\langle \varphi,\varphi^\vee\rangle}
		{\widehat{P}[\pi]}
		\frac{L^{Np\infty}(1,\mathrm{St}
			(\pi)\otimes\xi)
		L_{Np\infty}^\ast(1,\mathrm{St}
		(\pi)\otimes\xi)}
	    {P_\pi}.
		\]		
	\end{theorem}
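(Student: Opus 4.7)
The plan is to assemble the identity in three stages, combining the abstract Rallis inner product formula with the explicit local computations of Section \ref{choice of local sections} and the period/congruence comparisons of Section 2.2.

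First, I would apply the Rallis inner product formula from the preceding subsection, which is itself the synthesis of the Siegel--Weil formula (Theorem \ref{Siegel-Weil formula}) and the doubling method. Taking the finite set of ramified/critical places to be $S = \{v \mid Np\infty\}$ (valid since $\pi$ is unramified outside $Np\infty$), this directly yields
\[
\langle \Theta_{\phi_{1,\underline{\kappa}}}(\varphi),\Theta_{\phi_{2,\underline{\kappa}}}(\varphi^\vee)\rangle_{\mathrm{O}(U)} = \prod_{v\in S} Z_v(\varphi_v,\varphi_v^\vee,f_{\phi^d_{\underline{\kappa},v}}) \cdot L^{Np\infty}(1,\mathrm{St}(\pi)\otimes\xi) \cdot \langle\varphi,\varphi^\vee\rangle.
\]

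Second, I would substitute the explicit local zeta integral evaluations carried out in Section \ref{choice of local sections}. By the very definition of the modified local factors,
\[
L_v^\ast(1,\mathrm{St}(\pi)\otimes\xi) := Z_v(\varphi_v,\varphi_v^\vee,f_{\phi^d_{\underline{\kappa},v}})/\langle\varphi_v,\varphi_v^\vee\rangle_v,
\]
and these are nonzero for every bad place: Corollary \ref{non-vanishing of zeta integral at infinity} at $\infty$, Proposition \ref{non-vanishing of local zeta integral at l} and Lemma \ref{non-vanishing of local integral at l} at primes $\ell\mid N$, and Corollary \ref{non-vanishing of local zeta integral at p} at $p$. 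Combined with the factorization $\langle\varphi,\varphi^\vee\rangle = \prod_v \langle\varphi_v,\varphi_v^\vee\rangle_v$ and the normalization $\langle\varphi_v,\varphi_v^\vee\rangle_v = 1$ for $v\notin S$, this gives
\[
\prod_{v\in S} Z_v = L_{Np\infty}^\ast(1,\mathrm{St}(\pi)\otimes\xi)\cdot \langle\varphi,\varphi^\vee\rangle,
\]
so that the Petersson inner product of the theta lifts equals $L^{Np\infty}(1,\mathrm{St}(\pi)\otimes\xi)\, L_{Np\infty}^\ast(1,\mathrm{St}(\pi)\otimes\xi) \cdot \langle\varphi,\varphi^\vee\rangle$.

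Third, to reach the form claimed in the theorem, I would eliminate the raw pairing $\langle\varphi,\varphi^\vee\rangle$ in favor of the canonical periods. Under Hypothesis \ref{Gorenstein hypothesis}, the period identities in (\ref{period relation}) give $\mathfrak{c}^\mathrm{coh}(\pi)\,\widehat{P}_{\pi^\vee} = \widehat{P}[\pi^\vee]$ and $\widehat{P}_{\pi^\vee}\, P_\pi = 1$, and (applying the Remark at the end of Section \ref{periods} to swap roles of $\pi$ and $\pi^\vee$ in the antiholomorphic setting) combining these yields $\widehat{P}[\pi]\cdot P_\pi = \mathfrak{c}^\mathrm{coh}(\pi)$ up to units of $\mathcal{O}$. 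Writing $\langle\varphi,\varphi^\vee\rangle = \mathfrak{c}^\mathrm{coh}(\pi)\,\langle\varphi,\varphi^\vee\rangle/(\widehat{P}[\pi]\,P_\pi)$ in the previous display produces precisely the stated identity up to units.

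The main obstacle lies in Step Two, specifically at the prime $p$: the explicit formula for $L_p^\ast$ couples the Gauss sums $G(\kappa_1), G(\widetilde\kappa_2)$ with the $\mathbb{U}_p$-eigenvalues $\alpha_1,\alpha_2$ and requires reconciling the Weil-representation normalization of $f_{\phi^d_{\underline{\kappa},p}}$ with the ordinary projector. The matching is achieved by realizing $\overline{\pi}_p$ as a subquotient of a principal series and computing $C_{\kappa,\pi}$ on a test vector supported in the big Bruhat cell, as sketched in the proof of Corollary \ref{non-vanishing of local zeta integral at p}; verifying that the resulting local ratio is a $p$-adic unit (up to the prescribed Euler-like factor) is the most delicate bookkeeping. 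A secondary point is that the period identifications above rely on the Gorenstein hypothesis, without which the equality of congruence ideals in Lemma \ref{idetification of various congruence ideals} — and hence the clean form of the stated answer — would fail.
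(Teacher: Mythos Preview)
Your three-step assembly—abstract Rallis formula, substitution of the local zeta computations, then the period/congruence relations of (\ref{period relation})—is exactly how the paper arrives at this theorem, which it states without a separate proof as a direct summary of the preceding work in Section \ref{choice of local sections}. One minor bookkeeping wobble: in Step 2 you invoke a factorization $\langle\varphi,\varphi^\vee\rangle = \prod_v\langle\varphi_v,\varphi_v^\vee\rangle_v$, but the paper's convention normalizes $\langle\varphi_v,\varphi_v^\vee\rangle_v=1$ at \emph{all} places (not just the unramified ones), so $\prod_{v\in S}Z_v = L_{Np\infty}^\ast$ directly; as written your displayed equation introduces a spurious extra $\langle\varphi,\varphi^\vee\rangle$ that you then (correctly) omit in the next sentence.
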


	\subsection{Theta series}
	We have defined sections
	$\phi_{i,\underline{\kappa}}\in
	\mathcal{S}(W_i^-(\mathbb{A}))$
	as the restricted tensor product
	$\phi_{i,\underline{\kappa}}
	=\otimes_v
	\phi_{i,\underline{\kappa},v}$.
	In this subsection we study the 
	algebraicity of the Fourier coefficients of
	the theta series
	$\Theta_{\phi_{i,\underline{\kappa}}}$
	for $i=1,2$
	and
	$\Theta_{\phi^+_{\underline{\kappa}}}$:
	\begin{align*}
	\Theta_{\phi_{i,\underline{\kappa}}}(g_i,h)
	&
	=
	\sum_{w_i^-\in W_i^-(\mathbb{Q})}
	\big(
	\omega(g_i,h)\phi_{i,\underline{\kappa}}
	\big)
	(w_i^-),
	\quad
	(g_i,h)\in G_i(\mathbb{A})\times \mathrm{O}(U)(\mathbb{A}),
	\\
	\Theta_{\phi^+_{\underline{\kappa}}}(g,h)
	&
	=
	\sum_{w_4^-\in W_4^-(\mathbb{Q})}
	\big(
	\omega(g,h)\phi^+_{\underline{\kappa}}
	\big)
	(w_4^-),
	\quad
	(g,h)\in G_4(\mathbb{A})\times \mathrm{O}(U)(\mathbb{A}).
	\end{align*}
	
	We fix $h\in\mathrm{O}(U)(\mathbb{A})$.
	Recall that the Fourier coefficients 
	$\Theta_{\phi^+_{\underline{\kappa}},\beta}(g,h)$
	of the theta series
	$\Theta_{\phi^+_{\underline{\kappa}}}$ is defined as
	\begin{align*}
	\Theta_{\phi^+_{\underline{\kappa}},\beta}(g,h)
	&
	=
	\int_{[\mathrm{Sym}_{4\times 4}]}
	\Theta_{\phi^+_{\underline{\kappa}}}
	(
	u(x)g,h
	)
	\mathbf{e}(-\mathrm{tr}\,\beta x)
	dx
	\\
	&
	=
	\sum_{w^-\in W^-(\mathbb{Q})}
	\int_{[\mathrm{Sym}_{4\times 4}]}
	\omega
	(
	u(x)g,h
	)\phi^+_{\underline{\kappa}}(w^-)
	\mathbf{e}(-\mathrm{tr}\,\beta x)dx.
	\end{align*}
	The second equality comes from the fact that
	$\omega(\cdot,h)\phi^+_{\underline{\kappa}}(w^-)$
	for $w^-\in
	W^-(\mathbb{Q})$
	is invariant under the action of the rational unipotent
	matrices
	$u(x)$.
	
	Recall that for any element in 
	the Siegel upper half plane
	$\mathbf{z}=\mathbf{x}+i\mathbf{y}
	\in\mathbb{H}_4$,
	we have an adelic element
	$
	g_\mathbf{z}
	=u(\mathbf{x})
	m(\sqrt{\mathbf{y}})
	\times 1_\mathrm{f}
	\in
	G_4(\mathbb{A})$.
	We then compute
	$\Theta_{\phi^+_{\underline{\kappa}},\beta}(g_\mathbf{z},h)$.
	We have
	(viewing $\sqrt{\mathbf{y}},\mathbf{x}$ as adelic elements)
	\begin{align*}
	\omega
	(u(x)g_\mathbf{z},h
	)
	\phi^+_{\underline{\kappa}}(w^-)
	&
	=
	\mathbf{e}
	(
	\mathrm{tr}
	\Big((\sqrt{\mathbf{y}}^{-1}w^-)^t(\mathbf{x}+x)
	\sqrt{\mathbf{y}}^{-1}w^-\eta_U\Big)
	)
	\omega(1,h)
	\phi^+_{\underline{\kappa}}(\sqrt{\mathbf{y}}^{-1}w^-)
	\\
	&
	=
	\mathbf{e}
	(
	\mathrm{tr}
	\Big((\sqrt{\mathbf{y}}^{-1}w^-)^t\mathbf{x}
	\sqrt{\mathbf{y}}^{-1}w^-\eta_U\Big)
	)
	\mathbf{e}
	(
	\mathrm{tr}\Big((\sqrt{\mathbf{y}}^{-1}w^-)^tx
	\sqrt{\mathbf{y}}^{-1}w^-\Big)
	)
	\omega(1,h)\phi^+_{\underline{\kappa}}(\sqrt{\mathbf{y}}^{-1}w^-).
	\end{align*}
	Therefore we get
	(we put $1_\beta(x)=1$
	if $x=\beta$ and
	$=0$ otherwise.)
	\begin{align*}
	\int_{[\mathrm{Sym}_{4\times 4}]}
	\omega
	(
	u(x)
	g_\mathbf{z},h
	)
	\phi^+_{\underline{\kappa}}(w^-)
	\mathbf{e}(-\mathrm{tr}\,\beta x)dx
	&
	=
	\mathbf{e}(\mathrm{tr}
	\Big((\sqrt{\mathbf{y}}^{-1}w^-)^t\mathbf{x}
	\sqrt{\mathbf{y}}^{-1}w^-\eta_U\Big)
	)
	\omega(1,h)\phi^+_{\underline{\kappa}}
	(\sqrt{\mathbf{y}}^{-1}w^-)
	\\
	&
	\quad
	\times
	\int_{[\mathrm{Sym}_{4\times 4}]}
	\mathbf{e}
	(
	\mathrm{tr}
	\Big(
	(\sqrt{\mathbf{y}}^{-1}w^-)^t
	x\sqrt{\mathbf{y}}^{-1}w^-\eta_U-\beta x
	\Big)
	)dx
	\\
	&
	=
	\mathbf{e}(\mathrm{tr}(\beta\mathbf{x}))
	\omega(1,h)\phi^+_{
		\underline{\kappa}}(\sqrt{\mathbf{y}}^{-1}w^-)
	1_{\beta}
	(
	\sqrt{\mathbf{y}}^{-1}w^-\eta_U(\sqrt{\mathbf{y}}^{-1}w^-)^t
	).
	\end{align*}
	From this, we have
	\[
	\Theta_{\phi^+_{\underline{\kappa}},\beta}(g_\mathbf{z},h)
	=
	\mathbf{e}
	(
	\mathrm{tr}(\beta\mathbf{x})
	)
	\sum_{w^-\in W_4^-(\mathbb{Q})}	
	\omega(1,h)\phi^+_{\underline{\kappa}}
	(\sqrt{\mathbf{y}}^{-1}w^-)
	1_{\beta}
	(
	\sqrt{\mathbf{y}}^{-1}w^-\eta_U(\sqrt{\mathbf{y}}^{-1}w^-)^t
	).
	\]
	The condition
	$\sqrt{\mathbf{y}}^{-1}w^-\eta_U(\sqrt{\mathbf{y}}^{-1}w^-)^t
	=
	\Big(
	(\sqrt{\mathbf{y}}^{-1}\times1_\mathrm{f})
	w^-
	\Big)\eta_U
	\Big(
	(\sqrt{\mathbf{y}}^{-1}\times1_\mathrm{f})
	w^-
	\Big)^t
	=
	\beta$
	means that
	(at the archimedean place)
	$\sqrt{\mathbf{y}}^{-1}w^-\eta_U
	(\sqrt{\mathbf{y}}^{-1}w^-)^t=\beta$
	and
	(at the non-archimedean place)
	$w^-\eta_U(w^-)^t=\beta$.
	This shows that
	$\sqrt{\mathbf{y}}\beta\sqrt{\mathbf{y}}^t=\beta$
	and thus
	$\mathrm{tr}(\mathbf{y}\beta)=\mathrm{tr}(\beta)$.
	We then compute each factor
	$\omega_v(1,h)\phi^+_{\underline{\kappa},v}
	(\sqrt{\mathbf{y}}^{-1}w^-)$
	of
	$\omega(1,h)\phi_{\underline{\kappa}}^+
	(\sqrt{\mathbf{y}}^{-1}w^-)$
	for all places $v$ of $\mathbb{Q}$.
	
	\begin{enumerate}
		\item 
		For $v=\infty$
		(\textit{cf}.
		\ref{Schwartz section at infinity for +}),
		\begin{align*}
		\omega_\infty(1,h_\infty)
		\phi^+_{\underline{\kappa},\infty}
		(\sqrt{\mathbf{y}}^{-1}w^-)
		&
		=
		a_I
		(X_1h_\infty)^I
		(X_2h_\infty)^I
		\mathbf{e}_\infty
		(i
		\langle
		\sqrt{\mathbf{y}}^{-1}w^-,
		J_4\sqrt{\mathbf{y}}^{-1}w^-
		\rangle
		)
		\\
		&
		=
		a_I
		(X_1h_\infty)^I
		(X_2h_\infty)^I
		\mathbf{e}_\infty(i\,\mathrm{tr}(\beta\mathbf{y}));
		\end{align*}

		\item 
		For $\ell\nmid Np\infty$,
		\[
		\omega_\ell(1,h_\ell)
		\phi^+_{\underline{\kappa},\ell}(w^-)
		=
		1_{W_4^-(\mathbb{Z}_\ell)}(w^-h_\ell).
		\]
		
		\item 
		For $\ell|N$
		(\textit{cf}.
		(\ref{Schwartz section at ramified place dividing N})),
		\begin{align*}
		\omega_\ell(1,h_\ell)
		\phi^+_{\underline{\kappa},\ell}(w^-)
		&
		=
		\phi^+_{1,\underline{\kappa},\ell}(w_1^-h_\ell)
		\phi^+_{2,\underline{\kappa},\ell}(w_2^-h_\ell)
		\\
		&
		=
		1_{S_{[1]}}
		(w_1^-h_\ell)
		\mathbf{e}_\ell(\mathrm{tr}(b_{[1]}^tw_1^-h_\ell))
		\times
		1_{S_{[2]}}
		(w_2^-h_\ell)
		\mathbf{e}_\ell(\mathrm{tr}(b_{[2]}^tw_2^-h_\ell))
		\in
		\mathbb{Z}[\mu_{N_\ell^2}].
		\end{align*}
		
		\item 
		For $\ell=p$
		(\textit{cf}.
		\ref{Schwartz section at p}),
		\[
		\omega_p(1,h_p)\phi^+_{\underline{\kappa},p}(w^-)
		=
		\phi_{\underline{\kappa},p}^+(w^-h_p)
		\in
		\mathbb{Z}[\mu_{C(\underline{\kappa})}].
		\]
	\end{enumerate}
	Note that for each fixed $h$,
	the summation in
	$\Theta_{\phi^+_{\underline{\kappa}},\beta}
	(g_\mathbf{z},h)$ is always
	a finite sum.
	Thus
	\begin{proposition}
		\label{algebraicity of Fourier coefficients of theta series}
		The Fourier coefficients
		$\Theta_{\phi^+_{\underline{\kappa}},\beta}
		(g_\mathbf{z},h)
		\mathbf{e}(-\mathrm{tr}\,\beta\mathbf{z})$
		and
		$\Theta_{\phi^+_{i,\underline{\kappa}},\beta_i}(g_{\mathbf{z}_i},h)
		\mathbf{e}(-\mathrm{tr}\,\beta_i\mathbf{z}_i)$
		($i=1,2$)
		are in
		the cyclotomic ring
		$\mathbb{Z}_p[\mu_{N^2C(\underline{\kappa})}]$.
	\end{proposition}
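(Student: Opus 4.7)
The plan is to assemble the four place-by-place computations of $\omega_v(1, h_v)\phi^+_{\underline{\kappa},v}$ just carried out in items (1)--(4) preceding the statement, substitute them into the explicit formula for $\Theta_{\phi^+_{\underline{\kappa}},\beta}(g_{\mathbf{z}}, h)$ already derived, and check algebraicity term by term. First I would observe that in that formula the only transcendental piece coming from the archimedean contribution is $\mathbf{e}_\infty(i\,\mathrm{tr}(\beta\mathbf{y}))$, which combines with the prefactor $\mathbf{e}(\mathrm{tr}(\beta\mathbf{x}))$ to give precisely $\mathbf{e}(\mathrm{tr}(\beta\mathbf{z}))$, and this is then cancelled by the normalizing factor $\mathbf{e}(-\mathrm{tr}\,\beta\mathbf{z})$ appearing in the statement of the proposition. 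Hence the quantity to be shown algebraic is simply the (finite) sum over $w^-\in W_4^-(\mathbb{Q})$ satisfying $\sqrt{\mathbf{y}}^{-1}w^-\eta_U(\sqrt{\mathbf{y}}^{-1}w^-)^t=\beta$ of a product of local factors, where finiteness of the sum for each fixed $h$ follows from the compact support conditions on $\phi^+_{\underline{\kappa},\ell}$ for $\ell\mid Np$ together with the archimedean indicator coming from $\mathbf{1}_\beta$.

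Next I would examine the local factors in this finite sum one at a time. At unramified $\ell\nmid Np\infty$ the factor is a characteristic function taking values in $\{0,1\}\subset\mathbb{Z}$. At $\ell\mid N$, the explicit formula displayed in item (3) shows values in $\mathbb{Z}[\mu_{N_\ell^2}]$; taking the product over primes dividing $N$ (using that $N$ is squarefree) gives values in $\mathbb{Z}[\mu_{N^2}]$. At $\ell=p$, the definition of $\phi^+_{\underline{\kappa},p}$ together with the formula for $\widehat{\alpha}_{2,\underline{\kappa},p}$ (whose values are products of the finite-order characters $\kappa_1,\kappa_2$) gives values in $\mathbb{Z}[\mu_{C(\underline{\kappa})}]$. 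Combining the finite places produces values in $\mathbb{Z}[\mu_{N^2 C(\underline{\kappa})}]$, which embeds in $\mathbb{Z}_p[\mu_{N^2 C(\underline{\kappa})}]$ because $p\nmid NC(\underline{\kappa})/p$ and $p$ is odd.

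For the archimedean factor $a_I(X_1h_\infty)^I(X_2h_\infty)^I$ evaluated at the argument $\sqrt{\mathbf{y}}^{-1}w^-$, the point to notice is that $X^I=X_1^IX_2^I$ involves the variables $X_{k;j,l}=w_{k;j,l}\sqrt{\eta_{l,l}}$ only through symmetric products of the form $X_{1;j,l}X_{2;j',l}=w_{1;j,l}w_{2;j',l}\,\eta_{l,l}$, so that the apparent half-integer powers $\sqrt{\eta_{l,l}}$ pair up and yield coefficients in $\mathbb{Z}[\tfrac12,N]\subset\mathbb{Z}_p$. Thus, upon summing, the expression lies in $\mathbb{Z}_p[\mu_{N^2C(\underline{\kappa})}]$. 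The case of $\Theta_{\phi^+_{i,\underline{\kappa}},\beta_i}$ on $G_i$ is strictly analogous, with $W_i^-$ in place of $W_4^-$ and $X_i^I$ in place of $X_1^IX_2^I$.

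The part of this argument requiring the most care is the archimedean one: one must track how the $K_{G_4,\infty}$-action and the operator $h_\infty\in \mathrm{O}(U)(\mathbb{R})$ intertwine with the polynomial $X^I$, so that $(X_kh_\infty)^I$ remains expressible polynomially in the entries of $\sqrt{\mathbf{y}}^{-1}w^-$ with the square roots $\sqrt{\eta_{l,l}}$ occurring only in even total degree. This follows from the explicit description of the Weil representation on polynomial Schwartz sections recalled in item (1) and the block structure of $I\in\mathfrak{I}_{\underline{\kappa}}$, but it is the one step where the bookkeeping is not automatic and where one could lose either $p$-integrality or cyclotomic rationality if done carelessly.
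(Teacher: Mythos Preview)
Your plan matches the paper's own argument, which is minimal: the paper simply notes that for each fixed $h$ the sum over $w^-\in W_4^-(\mathbb{Q})$ is finite and then asserts the proposition from the place-by-place computations (1)--(4). Your expansion of this is correct in outline, and you are right that the archimedean contribution is the only place where anything nontrivial happens.

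There are, however, two points where your write-up (and, to be fair, the paper) is genuinely loose. First, the dependence on $h_\infty\in\mathrm{O}(U)(\mathbb{R})$: the archimedean factor $a_I(X_1h_\infty)^I(X_2h_\infty)^I$ involves the real entries of $h_\infty$, and for a generic $h_\infty$ these are transcendental, so no amount of tracking the $K_{G_4,\infty}$-action will force the result into $\mathbb{Z}_p[\mu_{N^2C(\underline{\kappa})}]$. The statement is only meaningful once one reduces to $h_\infty=1$, which is legitimate in the application because $\mathrm{U}_4$ is compact at infinity and automorphic forms there are determined by their values on $\mathrm{U}_4(\mathbb{A}_\mathrm{f})$; you should say this rather than leave $h_\infty$ general. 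Second, your claim that the $i=1,2$ case is ``strictly analogous'' is not correct as written: the archimedean section $\phi_{i,\infty;I}$ carries a factor $\sqrt{a_I}$ (not $a_I$) and the polynomial $X_i^I$ alone, and your pairing argument for $\sqrt{\eta_{l,l}}$ relies precisely on the product structure $X_1^IX_2^I$ that is absent here. So the parity argument you give for the $G_4$ theta series does not transfer to the individual $G_i$ theta series without further justification; the paper does not supply this either, and it is worth flagging rather than asserting.
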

    \begin{definition}
    	\label{algebraic theta series}
    	For $i=1,2$,
    	we write
    	$\Theta_{
    		\phi_{i,\underline{k}}^+,
    		\beta_i}^\mathrm{alg}(h)$
    	for the preimage of
    	$\Theta_{\phi^+_{i,\underline{\kappa}},\beta_i}
    	(g_{\mathbf{z}_i},h)$
    	in
    	$H^0(\widetilde{\mathbf{A}}_{
    		G_i,\widehat{\Gamma}},
    	\mathcal{E}(W_i))$
    	for some algebraic representation
    	$W_i$ of
    	$\mathrm{GL}_2$ by
    	the map
    	$\Phi(\cdot,\mathfrak{e})$
    	(\textit{cf}.
    	(\ref{correspondence between geometric and adelic forms})).
    \end{definition}
    \begin{remark}
    	Though we do not explicate
    	the representation
    	$W_i$,
    	we know at least that it contains a copy of
    	$W_{\underline{k}}$
    	by the non-vanishing of the
    	archimedean local zeta integral
    	(\textit{cf}.
    	Corollary
    	\ref{non-vanishing of zeta integral at infinity}).
    \end{remark}

	\section{Transfer from $\mathrm{GSp}_4$
	to $\mathrm{U}_4$}\label{section: Langlands functoriality}

	In this section, we use results of Atobe and Gan
	\cite{AtobeGan16}
	to deduce the functoriality from the symplectic group
	$\mathrm{GSp}_4$ to the unitary group $\mathrm{U}_4$.
	
	\subsection{Isogeny from $\mathrm{GSO}_6$ to $\mathrm{U}_4$}
	\label{isogeny from GSO(6) to U(4)}
	Suppose that $E/\mathbb{Q}$
	is a quadratic imaginary extension
	with
	$E=\mathbb{Q}(\eta)$
	where $\eta^2=-N$,
	$H\colon E^4\times E^4\rightarrow E$
	is a Hermitian form, positive definite at $\infty$.
	Suppose also that 
	$H$ is $E$-linear on the first variable
	and $E$-semilinear on th second variable.
	Suppose an $E$-basis of $E^4$ is $e_1,e_2,e_3,e_4$.
	We write $\mathrm{GSU}_4, \mathrm{SU}_4,\mathrm{U}_4$
	for the (similitude special) unitary algebraic group defined
	over $\mathbb{Q}$ by the Hermitian form $H$.
	
	\subsubsection{Exceptional morphism}
	Here we establish an exceptional isogeny from 
	a similitude special unitary groups 
	$\mathrm{GSU}_4$ to 
	a similitude special orthogonal group
	$\mathrm{GSO}_6$.
	The construction is taken from \cite{Garrett15}.
	We will define the similitude special orthogonal group
	$\mathrm{GSO}_6$ later on, 
	which depends on the Hermitian form $H$.
	
	We define a $\mathrm{SL}_4(E)$-invariant $E$-valued 
	symmetric form
	$\langle\cdot,\cdot\rangle$ 
	on $\wedge^2E^4$ as follows
	\[
	\langle x\wedge y,z\wedge w\rangle 
	e_1\wedge e_2\wedge e_3\wedge e_4
	=x\wedge y\wedge z \wedge w,
	\quad
	(\forall x,y,z,w\in E^4)
	\]
	We define a $E$-semilinear isomorphism
	\[
	E^4\rightarrow (E^4)^*=\mathrm{Hom}_E(E^4,E),
	\quad
	x\mapsto(y\mapsto H(y,x)).
	\]
	This induces a map 
	$\wedge^2E^4\rightarrow\wedge^2(E^4)^*\simeq(\wedge^2E^4)^*$.
	We also have an $E$-linear isomorphism
	\[
	\wedge^2E^4\rightarrow(\wedge^2E^4)^*,
	\quad
	u\mapsto(v\mapsto\langle v,u\rangle).
	\]	
	Combining these, we get the following 
	$E$-semilinear isomorphism
	\[
	J\colon
	\wedge^2E^4\xrightarrow{\langle\cdot,\cdot\rangle}
	(\wedge^2E^4)^*
	\rightarrow\wedge^2(E^4)^*\xleftarrow{\wedge^2H}
	\wedge^2E^4.
	\]	
	Note that $\mathrm{GSU}_4(\mathbb{Q})$,
	as a subgroup of $\mathrm{SL}_4(E)$,
	respects $\langle\cdot,\cdot\rangle$. 
	By definition, it respects also $\wedge^2H$.
	Thus $J$ commutes with $\mathrm{GSU}_4(\mathbb{Q})$.
	
	Suppose that under the basis $e_1,e_2,e_3,e_4$ of $E^4$,
	$H$ is of the form
	$H(e_i,e_j)=a_i^{-1}\delta_{i,j}$ with 
	$0<a_i\in\mathbb{Q}$
	such that
	$a_1a_2a_3a_4=1$.
	By examining $J(e_i\wedge e_j)$
	(evaluating at $e_k\wedge e_l$),
	we see that
	$J^2=1$.
	We then pick out the eigenspace 
	$U_J$ of $J$ of 
	eigenvalue $+1$.
	One can take the following $\mathbb{Q}$-basis
	of $U_J$:
	\[
	e_1\wedge e_2+a_3a_4e_3\wedge e_4,
	\quad
	\eta e_1\wedge e_2-\eta a_3a_4e_3\wedge e_4,
	\quad
	e_1\wedge e_3-a_2a_4e_2\wedge e_4,
	\]
	\[
	\eta e_1\wedge e_3+\eta a_2a_4e_2\wedge e_4,
	\quad
	e_1\wedge e_4+a_2a_3e_2\wedge e_3,
	\quad
	\eta e_1\wedge e_4-\eta a_2a_3e_2\wedge e_3.
	\]
	Then the symmetric form
	$\langle\cdot,\cdot\rangle|_{U_J}$
	is of the form
	${\mathrm{diag}(2a_3a_4,
	-2\eta^2a_3a_4,
	2a_2a_4,-2\eta^2a_2a_4,
	2a_2a_3,-2\eta^2a_2a_3)}$.
	For application in this article,
	we can take
	$a_1=a_2=a_3=a_4=1$.
	Moreover, it is easy to see that
	$\langle\cdot,\cdot\rangle|_{U_J}$ is
	$\mathbb{Q}$-equivalent to
	the quadratic form
	${\eta_U=\mathrm{diag}(N^2/2,
	N^2/2,N^2/2,N^2/2,N/N_1,N_1)}$
	with $0\neq N_1\nmid Np$
	(using simple properties of
	Hilbert symbols and density of
	$\mathbb{Q}$ in $\mathbb{A}$).
	Therefore,
	the orthogonal groups
	$\mathrm{O}(\langle\cdot,\cdot\rangle)$
	and
	$\mathrm{O}(\eta_U)$
	defined by these quadratic forms
	are isomorphic over $\mathbb{Q}$.
	We will thus identify these two groups and
	also their automorphic forms and
	Hecke algebras
	without further comment.

	\subsection{$L$-groups and $L$-parameters}
	In this subsection we review some facts on the $L$-groups
	and $L$-parameters of groups
	that will be used in the sequel.

	We define the orthogonal group
	$\mathrm{O}_k(\mathbb{C})$
	to be
	$\{
	g\in\mathrm{GL}_k(\mathbb{C})|
	g^t\mathfrak{w}_kg=\mathfrak{w}_k
	\}$
	where $\mathfrak{w}_k$ is the anti-diagonal matrix
	with $1$ on the anti-diagonal.
	One defines also
	the special orthogonal group
	$\mathrm{SO}_k(\mathbb{C})$,
	the similitude group
	$\mathrm{GO}_k(\mathbb{C})
	=\{
	g\in\mathrm{GL}_k(\mathbb{C})|
	g^t\mathfrak{w}_kg=\nu(g)
	\mathfrak{w}_k
	\}$ and
	the similitude special group
	$\mathrm{GSO}_k(\mathbb{C})
	=\{
	g\in\mathrm{GO}_k(\mathbb{C})|
	\mathrm{det}(g)=\nu(g)^{k/2}
	\}$
	if $k$ is even and
	$=\mathrm{GO}_k(\mathbb{C})$
	if $k$ is odd.

	The $L$-group of the similitude symplectic group
	$\mathrm{GSp}_4(F)$
	is $\mathrm{GSp}_4(\mathbb{C})$.
	Recall that the standard representation of
	$\mathrm{GSp}_4(\mathbb{C})$
	is given as follows
	(\cite[Appendix 7, pp.286-287]{RobertsSchmidt2007}):
	write
	$\mathbf{V}=(\mathbb{C}^4,
	\langle\cdot,\cdot\rangle)$
	for the symplectic vector space
	such that under the standard basis
	$\{e_1,e_2,e_3,e_4\}$
	the symplectic form is
	$J_4$.
	This gives rise to
	the similitude isometry group
	$\mathrm{GSp_4(\mathbb{C})}$.
	Consider the exterior square
	$\wedge^2\mathbf{V}$ of $\mathbf{V}$
	on which $\mathrm{GSp}_4(\mathbb{C})$
	by
	\[
	\rho(g)(v\wedge u)
	:=\nu(g)^{-1}(gv)\wedge(gu)
	\]
	for $g\in\mathrm{GSp}_4(\mathbb{C})$
	and $v\wedge u\in\wedge^2\mathbf{V}$.
	We define a bilinear form on
	$\wedge^2\mathbf{V}$
	as follows:
	for any $v\wedge u, v'\wedge u'\in\wedge^2\mathbf{V}$,
	we set
	$(v\wedge u,v'\wedge u')
	:=\langle v,v'\rangle\langle u,u'\rangle-
	\langle v,u'\rangle\langle v',u\rangle$
	and then extend to the whole $\wedge^2\mathbf{V}$
	by bilinearity.
	It is clear that this is a symmetric form
	and
	\[
	(v\wedge u)\wedge(v'\wedge u')
	=(v\wedge u,v'\wedge u')\cdot
	\,e_1\wedge e_2\wedge e_3\wedge e_4
	\in\wedge^4\mathbf{V}.
	\]
	Therefore an element
	$g\in\mathrm{GSp}_4(\mathbb{C})$
	is sent to an element in
	$\mathrm{SO}_6(\mathbb{C})$,
	the isometry group of
	$(\wedge^2\mathbf{V},(\cdot,\cdot))$.
	Consider the basis
	of $\wedge^2\mathbf{V}$:
	\[
	\{E_1=e_1\wedge e_2,
	E_2=e_1\wedge e_4,
	E_3=e_1\wedge e_3,
	E_4=-e_2\wedge e_4,
	E_5=e_2\wedge e_3,
	E_6=e_3\wedge e_4\}.
	\]
	It is easy to verify that
	the symmetric bilinear form of
	$\wedge^2\mathbf{V}$
	under this basis is
	the anti-diagonal matrix
	$\mathfrak{w}_6$.
	Note that $\mathrm{GSp}_4(\mathbb{C})$
	is generated by
	$J_4$,
	$m(a)\mathrm{diag}(1_2,\nu\cdot1_2)$
	for $a\in\mathrm{GL}_2(\mathbb{C})$
	and $\nu\in\mathbb{C}^\times$,
	and $u(b)$
	for $b$ a symmetric matrix.
	Then one can verify case by case that
	the subspace of $\wedge^2\mathbf{V}$
	generated by the vector
	$E_3-E_4
	=e_1\wedge e_3+e_2\wedge e_4$ 
	is invariant under the action of
	these elements,
	thus
	this subspace is invariant under the action of
	$\mathrm{GSp}_4(\mathbb{C})$.
	Now consider its orthogonal complementary
	subspace
	$\mathbf{X}$ of
	$\wedge^2\mathbf{V}$
	generated by the basis
	\[
	\{
	E_1'=E_1,
	E_2'=E_2,
	E_3'=(E_3+E_4)/\sqrt{2},
	E_4'=E_5,
	E_5'=E_6
	\}.
	\]
	Then
	the symmetric bilinear form
	$(\cdot,\cdot)|_{\mathbf{X}}$ is
	$\mathfrak{w}_5$ under this basis.
	Therefore, one obtains the standard representation of
	$\mathrm{GSp}_4(\mathbb{C})$:
	\[
	\rho_\mathrm{st}
	\colon
	\mathrm{GSp}_4(\mathbb{C})
	\rightarrow
	\mathrm{SO}_5(\mathbb{C}).
	\]
	By the definition of the action
	$\rho$,
	$\rho_\mathrm{st}$
	factors through
	$\mathrm{PGSp}_4(\mathbb{C})=
	\mathrm{PSp}_4(\mathbb{C})$.
	As in \cite[p.287]{RobertsSchmidt2007},
	one verifies that
	$\rho_\mathrm{st}$ is surjective.
	This representation is used in the definition of the
	standard $L$-functions of
	Siegel modular forms of genus $2$.
	Note that the similitude factor does not play a role in
	the standard representation
	of $\mathrm{GSp}_4(\mathbb{C})$.

	We can modify the
	action
	$\rho$ of
	$\mathrm{GSp}_4(\mathbb{C})$
	on $\wedge^2\mathbf{V}$
	without dividing out the similitude factor:
	\[
	\rho'(g)(v\wedge u)
	:=
	gv\wedge gu.
	\]
	In the same manner
	$\mathrm{GSp}_4(\mathbb{C})$
	acts on $\mathbf{X}$
	via $\rho'$.
	Thus we get a surjective morphism
	\[
	\rho'_\mathrm{st}
	\colon
	\mathrm{GSp}_4(\mathbb{C})
	\rightarrow
	\mathrm{GSO}_5(\mathbb{C})
	\]
	whose kernel is
	$\{\pm1_4\}$.
	Therefore one identifies
	$\mathrm{GSp}_4(\mathbb{C})$
	with the similitude spin group
	$\mathrm{GSpin}_5(\mathbb{C})$.
	Moreover the images of
	elements in the standard torus of
	$\mathrm{GSp}_4(\mathbb{C})$:
	\[
	\rho'_\mathrm{st}
	(\mathrm{diag}(t_1,t_2,t/t_1,t/t_2))
	=
	\mathrm{diag}(t_1t_2,tt_1/t_2,t,tt_2/t_1,t^2/(t_1t_2)).
	\]
	This representation will be used in expressing theta lift of
	unramified representations in terms of
	Langlands parameters.

	The dual group of the similitude special
	orthogonal group
	$\mathrm{GSO}_6(F)$
	is the similitude spin group
	$\mathrm{GSpin}_6(\mathbb{C})$
	(\cite[p.81]{Xu2018}).
	One can use the above construction to show
	that $\mathrm{GL}_4(\mathbb{C})$
	is isomorphic to
	$\mathrm{GSpin}_6(\mathbb{C})$.
	Indeed, the representation
	$\rho'$ of $\mathrm{GSp}_4(\mathbb{C})$
	on $\wedge^2\mathbf{V}$
	extends to
	$\mathrm{GL_4}(\mathbb{C})$.
	This time we get a surjective morphism
	$\rho'_\mathrm{st}
	\colon
	\mathrm{GL}_4(\mathbb{C})
	\rightarrow
	\mathrm{GSO}_6(\mathbb{C})$,
	whose kernel is $\{\pm1_4\}$
	and
	$\rho'_\mathrm{st}(g)=\mathrm{det}(g)$.

	Similarly,
	under the basis
	$\{E_1,\cdots,E_6\}$ of $\wedge^2\mathbf{V}$,
	we have:
	\[
	\rho'_\mathrm{st}
	(\mathrm{diag}(\lambda_1,\lambda_2,\lambda_3,\lambda_4))
	=
	\mathrm{diag}(\lambda_1\lambda_2,\lambda_1\lambda_4,
	\lambda_1\lambda_3,\lambda_2\lambda_4,
	\lambda_2\lambda_3,\lambda_3\lambda_4)
	\]
	and under the basis
	$\{E_1',\cdots,E_5',i(E_3-E_4)/\sqrt{2}\}$ of
	$\wedge^2\mathbf{V}$
	(note that under this basis,
	the symmetric bilinear form $(\cdot,\cdot)$
	becomes
	$\mathrm{diag}(J_5,1)$):	
	\[
	\rho'_\mathrm{st}
	(\mathrm{diag}(t_1,t_2,t/t_1,t/t_2))
	=
	\mathrm{diag}(t_1t_2,tt_1/t_2,t,tt_2/t_1,t^2/(t_1t_2),t).
	\]

	The $L$-group of $\mathrm{GSO}_6(F)=\mathrm{GSO}(U)$
	depends on whether $V$ is split over $F$ or not.
	If $U$ is split over $F$,
	i.e., $\mathrm{GSO}(U)\simeq\mathrm{GSO}_{3,3}(F)$,
	then
	$\,^L\mathrm{GSO}(U)
	=\mathrm{GSpin}_6(\mathbb{C})
	\times\Gamma_{E/F}$.
	If $U$ is not split over $F$,
	i.e., $\mathrm{GSO}(U)\simeq\mathrm{GSO}_{4,2}(F)$,
	then
	$^L\mathrm{GSO}_6(F)
	=\mathrm{GSO}_6(\mathbb{C})\rtimes
	\Gamma_{E/F}$
	where the Galois group 
	$\Gamma_{E/F}=\{1,c\}$ acts on
	$\mathrm{GSO}_6(\mathbb{C})$ by
	$c(g)c:=J_4g^{-t}J_4^{-1}$.
	Note that
	$c$ fixes each element of
	$\mathrm{GSp}_4(\mathbb{C})$
	up to a scalar
	(the similitude factor).
	The action of
	$\Gamma_{E/F}$ on
	$\mathrm{GSpin}_6(\mathbb{C})
	\simeq
	\mathrm{GL}_4(\mathbb{C})$
	descends to $\mathrm{GSO}_6(\mathbb{C})$ as follows.
	We write
	\[
	\epsilon
	=-\mathrm{diag}(1_2,\mathfrak{m}_2,1_2)\in
	\mathrm{O}_6(\mathbb{C})\backslash
	\mathrm{SO}_6(\mathbb{C}).
	\]
	Then we have
	$\rho'_\mathrm{st}(J_4)
	=-\epsilon\mathfrak{w}_6
	=-\mathfrak{w}_6\epsilon$.
	Moreover,
	for any $g\in\mathrm{GL}_4(\mathbb{C})$,
	$\rho'_\mathrm{st}(g^{-t})
	=\rho'_\mathrm{st}(g)^{-t}$.
	Thus
	\[
	\rho'_\mathrm{st}(c(g))
	=(\epsilon \mathfrak{w}_6)
	\rho'_\mathrm{st}(g)^{-t}
	(\epsilon \mathfrak{w}_6)^{-1}
	=\nu(\rho'_\mathrm{st}(g))
	\epsilon \rho'_\mathrm{st}(g)\epsilon^{-1}
	=\mathrm{det}(g)
	\epsilon \rho'_\mathrm{st}(g)\epsilon.
	\]
	In other words,
	\begin{lemma}\label{action of c and epsilon}
		The element 
		$c\in\Gamma_{E/F}$ acts on 
		$\mathrm{GSO}_6(\mathbb{C})$
		by conjugation by $\epsilon$
		up to the similitude factor.
		The conjugate action of
		$\epsilon$ on
		$\mathrm{GSO}_6(\mathbb{C})$
		lifts to the action $c$ on
		$\mathrm{GL}_4(\mathbb{C})$
		up to the determinant factor.
	\end{lemma}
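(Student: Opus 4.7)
The plan is to exploit the explicit description of the exceptional isogeny $\rho'_{\mathrm{st}}\colon \mathrm{GL}_4(\mathbb{C})\twoheadrightarrow \mathrm{GSO}_6(\mathbb{C})$ constructed above and to reduce the verification of both assertions to a pair of concrete matrix identities on the level of $\mathrm{GL}_4(\mathbb{C})$. Recall that $c\in\Gamma_{E/F}$ acts on $\mathrm{GL}_4(\mathbb{C})$ (viewed as the dual group of $\mathrm{U}_4$, or equivalently as $\mathrm{GSpin}_6(\mathbb{C})$) by $g\mapsto J_4 g^{-t} J_4^{-1}$, and to prove the lemma it suffices to check that this action descends under $\rho'_{\mathrm{st}}$ to conjugation by $\epsilon$ twisted by a scalar factor.

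First I would compute $\rho'_{\mathrm{st}}(J_4)$ directly. Since $\rho'$ is defined by wedging, $J_4$ permutes and signs the basis vectors $e_1,\ldots,e_4$ of $\mathbf{V}$ in a transparent way, and one reads off the action on the basis $\{E_1,\ldots,E_6\}$ of $\wedge^2\mathbf{V}$; after transporting to the basis $\{E_1',\ldots,E_5',i(E_3-E_4)/\sqrt 2\}$ (in which the symmetric form becomes $\mathrm{diag}(\mathfrak{w}_5,1)$), the matrix of $\rho'_{\mathrm{st}}(J_4)$ is easily checked to equal $-\epsilon\mathfrak{w}_6=-\mathfrak{w}_6\epsilon$. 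Second, I would verify the functoriality relation $\rho'_{\mathrm{st}}(g^{-t})=\rho'_{\mathrm{st}}(g)^{-t}$. This follows from two observations: $\rho'$ is a morphism of algebraic groups so it intertwines inverses, and the bilinear form $(\cdot,\cdot)$ on $\wedge^2\mathbf{V}$ is preserved by $\rho'_{\mathrm{st}}(g)$ up to the similitude factor $\det(g)$, so $g^{-t}$ on $\mathbf{V}$ induces the transpose-inverse of $\rho'_{\mathrm{st}}(g)$ on $\wedge^2\mathbf{V}$ with respect to this form. Together, these give
\[
\rho'_{\mathrm{st}}(c(g))=\rho'_{\mathrm{st}}(J_4)\,\rho'_{\mathrm{st}}(g)^{-t}\,\rho'_{\mathrm{st}}(J_4)^{-1}=(\epsilon\mathfrak{w}_6)\rho'_{\mathrm{st}}(g)^{-t}(\epsilon\mathfrak{w}_6)^{-1},
\]
and then I would use the identity $\mathfrak{w}_6 A^{-t}\mathfrak{w}_6^{-1}=\nu(A)A$, valid for any $A\in \mathrm{GSO}_6(\mathbb{C})$ by definition of $\mathrm{GSO}_6$, applied to $A=\rho'_{\mathrm{st}}(g)$ whose similitude factor is $\det(g)$. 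This yields the first assertion: the action of $c$ on the image agrees with conjugation by $\epsilon$ up to the scalar $\det(g)=\nu(\rho'_{\mathrm{st}}(g))$.

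The second assertion is the converse lifting statement and follows formally from the first together with the computation of the kernel of $\rho'_{\mathrm{st}}$. Given $\epsilon$-conjugation on $\mathrm{GSO}_6(\mathbb{C})$, one lifts it to an automorphism of $\mathrm{GL}_4(\mathbb{C})$; any two lifts differ by an automorphism that is trivial modulo the central kernel $\{\pm 1_4\}$, and the computation above shows that $c$ is the unique such lift once we allow the determinant twist. The main (very mild) obstacle is purely notational: keeping track of the various basis changes of $\wedge^2\mathbf{V}$ and the signs introduced by the normalisation $E_4=-e_2\wedge e_4$ and by the factor $i/\sqrt 2$ in $i(E_3-E_4)/\sqrt 2$. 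Apart from this careful bookkeeping, the proof is a short direct matrix calculation.
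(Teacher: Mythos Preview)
Your proposal is correct and follows exactly the paper's own approach: the paper establishes the identities $\rho'_{\mathrm{st}}(J_4)=-\epsilon\mathfrak{w}_6=-\mathfrak{w}_6\epsilon$ and $\rho'_{\mathrm{st}}(g^{-t})=\rho'_{\mathrm{st}}(g)^{-t}$ in the discussion immediately preceding the lemma, then combines them with the defining relation of $\mathrm{GSO}_6$ to obtain $\rho'_{\mathrm{st}}(c(g))=\det(g)\,\epsilon\,\rho'_{\mathrm{st}}(g)\,\epsilon$, which is precisely your argument. One small correction: the identity $\rho'_{\mathrm{st}}(J_4)=-\epsilon\mathfrak{w}_6$ is stated by the paper in the basis $\{E_1,\ldots,E_6\}$ (where the form is $\mathfrak{w}_6$), not in the primed basis you mention, so the bookkeeping is slightly simpler than you anticipate.
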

	To have a uniform expression for the cases
	split and non-split, we
	write in both cases the $L$-group of
	$\mathrm{GSO}_6(F)$
	as
	\[
	\,^L\mathrm{GSO}_6(F)
	=
	\mathrm{GSpin}_6(\mathbb{C})
	\rtimes\Gamma_{E/F}.
	\]
	It should be kept in mind that
	when $U$ is split over $F$,
	this is a direct product.

	\begin{remark}
		\label{orthogonal and special orthogonal, adjoint representation}
		\begin{enumerate}
			\item 
			We discuss the relation between
			$\mathrm{O}_6(\mathbb{C})$ and
			$\mathrm{SO}_6(\mathbb{C})$
			and the related similitude groups
			as well as (similitude) spin/pin groups,
			which will be useful later on when
			we express the theta lift of unramified
			representations in terms of
			Langlands parameters.
			We define a morphism
			$\mu
			\colon
			\mathrm{O}_6(\mathbb{C})
			\rightarrow
			\{0,1\}$
			where
			$\mu(g)=0$ if $\mathrm{det}(g)=1$
			and
			$\mu(g)=1$ otherwise.
			Then we can define an isomorphism
			\[
			\mathrm{O}_6(\mathbb{C})
			\xrightarrow{\sim}
			\mathrm{SO}_6(\mathbb{C})\rtimes\{1,\epsilon\},
			\quad
			g\mapsto
			(g\epsilon^{\mu(g)},\epsilon^{\mu(g)}).
			\]
			Here $\epsilon$ acts on
			$\mathrm{SO}_6(\mathbb{C})$
			by conjugation.
			One can extend $\mu$ to
			$\mathrm{GO}_6(\mathbb{C})$,
			the Pin group
			$\mathrm{Pin}^+_6(\mathbb{C})$
			and the similitude Pin group
			$\mathrm{GPin}^+_6(\mathbb{C})$
			as follows:
			if $\mathrm{det}(g)=\nu(g)^3$,
			then
			$\mu(g)=0$,
			otherwise
			$\mu(g)=1$.
			Using this $\mu$, one obtains similar isomorphisms
			\[
			\mathrm{GO}_6(\mathbb{C})
			\simeq
			\mathrm{GSO}_6(\mathbb{C})
			\rtimes\{1,\epsilon\},
			\quad
			\mathrm{Pin}^+_6(\mathbb{C})
			\simeq
			\mathrm{Spin}_6(\mathbb{C})\rtimes\{1,\widetilde{\epsilon}\},
			\]
			\[
			\mathrm{GPin}^+_6(\mathbb{C})
			\simeq
			\mathrm{GSpin}_6(\mathbb{C})
			\rtimes\{1,\widetilde{\epsilon}\}
			\simeq
			\mathrm{GL}_4(\mathbb{C})\rtimes\Gamma_{E/F}
			\]
			Here 
			$\widetilde{\epsilon}
			\in\mathrm{Pin}^+_6(\mathbb{C})$
			is any element that is mapped to
			$\epsilon$ under the natural projection.

			\item 
			We define the adjoint representations 
			$\rho_\mathrm{ad}$ of
			$\mathrm{GSp}_4(\mathbb{C})\times
			\Gamma_{E/F}$
			and
			$\mathrm{GL}_4(\mathbb{C})\rtimes
			\Gamma_{E/F}$
			as follows.
			We define a symmetric bilinear form
			on $\mathrm{M}_{4\times4}(\mathbb{C})$
			by
			$(X,Y):=\mathrm{tr}(XY)$.
			The action of
			$\mathrm{GL}_4(\mathbb{C})$ on 
			$\mathrm{M}_{4\times4}(\mathbb{C})$
			is defined to be
			$\rho_\mathrm{ad}(g)(X)
			=g\cdot X:=
			gXg^{-1}$.
			The action of
			$c\in\Gamma_{E/F}$ is
			$\rho_\mathrm{ad}(c)(X)
			=-J_4X^tJ_4^{-1}$.
			It is clear that the action of
			$\mathrm{GL}_4(\mathbb{C})\rtimes
			\Gamma_{E/F}$ preserves the bilinear form.
			There is a subspace of
			$\mathrm{M}_{4\times4}(\mathbb{C})$
			of dimension $5$
			defined to be
			$\widetilde{\mathbf{V}}
			=\{
			X\in\mathrm{M}_{4\times4}(\mathbb{C})|
			J_4X^tJ_4^{-1}=X,
			\mathrm{tr}(X)=0\}$.
			It is easy to verify that
			$\widetilde{\mathbf{V}}$
			is invariant under
			$\mathrm{GSp}_4(\mathbb{C})
			\times\Gamma_{E/F}$.
			Moreover, this representation
			$\rho_\mathrm{ad}$ of
			$\mathrm{GSp}_4(\mathbb{C})$
			is isomorphic to the 
			standard representation
			$\rho_\mathrm{st}
			\colon
			\mathrm{GSp}_4(\mathbb{C})
			\rightarrow
			\mathrm{SO}_5(\mathbb{C})$.
			One sees by definition that
			$\rho_\mathrm{ad}(c)(X)=-X$
			for $X\in\widetilde{\mathbf{V}}$.
			This is compatible with the 
			conjugate action of
			$\epsilon$ on the subspace
			$\widetilde{\mathbf{V}}'$ of
			the Lie algebra $\mathfrak{so}_6(\mathbb{C})
			=\{
			X\in\mathrm{M}_{6\times6}(\mathbb{C})|
			X^t\mathfrak{w}_6=-\mathfrak{w}_6X
			\}$
			defined by
			$\widetilde{\mathbf{V}}'
			=\{
			X|\rho_\mathrm{st}'(J_4)
			X^t\rho'_\mathrm{st}(J_4)^{-1}=X
			\}$.
			Indeed, 
			let's write $w=\rho'_\mathrm{st}
			(J_4)=w^t$.
			Then for any
			$X\in\widetilde{\mathbf{V}}'$,
			$\epsilon X
			=\mathfrak{w}_6wX
			=\mathfrak{w}_6(wX)^t
			=\mathfrak{w}_6X^tw
			=-X\mathfrak{w}_6w
			=-X\epsilon$.
			Note that
			$c$, resp., $\epsilon$ acts on
			the $1$-dimensional subspace
			$\mathbb{C}\cdot1_4\subset
			\mathrm{M}_{4\times4}(\mathbb{C})$,
			resp.,
			$\mathbb{C}\cdot 1_6\subset
			\mathfrak{gso}_6(\mathbb{C})$
			as
			$\rho_\mathrm{ad}(c)(1_4)=-1_4$,
			$\rho_\mathrm{st}(\epsilon)(1_6)=1_6$.
		\end{enumerate}
	\end{remark}

	The dual group of $\mathrm{U}_4(F)$
	is
	$\mathrm{GL}_4(\mathbb{C})
	\simeq\mathrm{GSpin}_6(\mathbb{C})$.
	The $L$-group of $\mathrm{U}_4(F)$
	depends also on whether
	$V$ is split over $F$ or not.
	If $U$ is split over $F$,then
	$E\simeq F\oplus F$.
	In this case,
	$\mathrm{U}_4(F)\simeq\mathrm{GL}_4(F)$
	(also an isomorphism for both groups as
	algebraic groups),
	thus the $L$-group of
	$\mathrm{U}_4(F)$
	is just
	$\,^L\mathrm{U}_4(F)
	=\mathrm{GL}_4(\mathbb{C})\times
	\Gamma_{E/F}$.
	If $U$ is not split over $F$,
	then $E/F$ is a quadratic field extension.
	In this case,
	$^L\mathrm{U}(F)
	=\mathrm{GL}_4(\mathbb{C})
	\rtimes\Gamma_{E/F}$
	where
	$c$ acts on $\mathrm{GL}_4(\mathbb{C})$
	by
	$c(g)c:=J_4g^{-t}J_4^{-1}$
	(the same as in the case of
	$\mathrm{GSO}_6(F)$).
	As in the case of $\mathrm{GSO}_6(F)$,
	we write the $L$-group of
	$\mathrm{U}_4(F)$ as
	$\mathrm{GL}_4(\mathbb{C})
	\rtimes\Gamma_{E/F}$
	for both cases where the semi-direct product is understood
	to be direct product when $U$ is split over $F$.

	From the above discussion,
	we have the following identification of $L$-groups:
	\[
	\,^L\mathrm{U}_4(F)
	\simeq
	\,^L\mathrm{GSO}_6(F)
	\simeq
	\mathrm{GSpin}_6(\mathbb{C})
	\rtimes\Gamma_{E/F}
	\simeq
	\mathrm{GL}_4(\mathbb{C})
	\rtimes\Gamma_{E/F}.
	\]

    \subsection{Local theta lift from $\mathrm{GSp}^+(V)$
    to $\mathrm{GSO}(U)$}
    Let $F$ be the local field $\mathbb{Q}_v$.
    Now we consider the subgroup
    $\mathrm{GSp}^+(V)$ of $\mathrm{GSp}(V)$ over $F$
    consisting of elements $g$ such that
    $\nu(g)\in\nu(\mathrm{GSO}(U))$.
    
    One has the following
    (\cite[Proposition 2.3]{GanTakeda11})
    \begin{proposition}
    	Suppose that $\pi$ is a supercuspidal representation of
    	$\mathrm{GSp}^+(V)$, then
    	$\Theta(\pi)$ is either zero or is an irreducible representation
    	of $\mathrm{GSO}(U)$.
    	Moreover, if $\pi'$ is another supercuspidal
    	representation of $\mathrm{GSp}^+(V)$ such that
    	$\Theta(\pi)=\Theta(\pi')\neq0$,
    	then $\pi=\pi'$.
    \end{proposition}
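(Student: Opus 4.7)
The plan is to deduce the statement from Howe duality for the \emph{isometry} dual pair $(\mathrm{Sp}(V),\mathrm{O}(U))$, which is known (Waldspurger in odd residue characteristic, with the remaining cases handled by Gan--Takeda and Gan--Sun), and then to bootstrap from isometry groups to similitude groups by a careful Mackey/Schur-type analysis of the restriction $\pi|_{\mathrm{Sp}(V)}$ and of the $\mathrm{GSO}(U)$-action on $\widetilde{\Theta}(\pi)$.

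First, I would restrict $\pi$ to $\mathrm{Sp}(V)\cdot Z_{\mathrm{GSp}^+(V)}$. Since $\mathrm{GSp}^+(V)/\mathrm{Sp}(V)Z$ is a compact (indeed finite) abelian group once one fixes the central character, and $\pi$ is admissible, $\pi|_{\mathrm{Sp}(V)}$ decomposes as a finite direct sum $\pi|_{\mathrm{Sp}(V)}=\bigoplus_{i=1}^m \pi^0_i$ of irreducible supercuspidal representations, permuted transitively by $\mathrm{GSp}^+(V)/\mathrm{Sp}(V)Z$. By Howe duality for the isometry pair applied to each $\pi^0_i$, each theta lift $\theta(\pi^0_i)$ is either zero or an irreducible (supercuspidal) representation of $\mathrm{O}(U)$, and the $\pi^0_i\mapsto \theta(\pi^0_i)$ map is injective on its image. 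Since the similitude group conjugates the $\pi^0_i$ into each other, either all $\theta(\pi^0_i)$ vanish (in which case $\widetilde{\Theta}(\pi)=0$ and we are done) or none of them do.

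Assume we are in the non-vanishing case. The next step is to assemble the $\theta(\pi^0_i)$ into an irreducible representation of $\mathrm{GSO}(U)$. By the explicit construction of the similitude Weil representation recalled in Section~\ref{Weil representations} (compactly inducing from the subgroup $R_0$ of pairs with matching similitude factors), the space $S(\pi)$ is naturally a $\mathrm{GSp}^+(V)\times\mathrm{GSO}(U)$-module, and its restriction to $\mathrm{Sp}(V)\times\mathrm{O}(U)$ decomposes via the $\pi^0_i\boxtimes\theta(\pi^0_i)$. A Frobenius reciprocity / Clifford-theoretic argument, combined with the matching of central characters through the relation $\nu(g)\nu(h)=1$, then shows that $\widetilde{\Theta}(\pi)\simeq \pi\otimes \Theta(\pi)$ with $\Theta(\pi)$ the irreducible $\mathrm{GSO}(U)$-representation obtained by extending $\theta(\pi^0_1)$ to the stabilizer of its isomorphism class in $\mathrm{GSO}(U)$ and then inducing up; its irreducibility follows from Schur's lemma together with the Howe-duality uniqueness on the isometry side.

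For the injectivity statement, suppose $\Theta(\pi)\simeq \Theta(\pi')\neq 0$. Restricting to $\mathrm{O}(U)$ and applying isometry-level Howe duality to each irreducible summand recovers (up to relabeling) the sets $\{\pi^0_i\}$ and $\{(\pi')^0_j\}$, so $\pi|_{\mathrm{Sp}(V)}\simeq \pi'|_{\mathrm{Sp}(V)}$; comparing central characters (which are determined by $\Theta(\pi)$ through the similitude relation) then forces $\pi\simeq\pi'$ on all of $\mathrm{GSp}^+(V)$. The main obstacle I expect is the bookkeeping in the extension step: one must verify that the similitude characters on the two sides match up precisely so that the induction to $\mathrm{GSO}(U)$ of an extension of $\theta(\pi^0_1)$ is irreducible and does not split further, which requires a careful Clifford-theoretic analysis of the stabilizer $\mathrm{Stab}_{\mathrm{GSO}(U)}(\theta(\pi^0_1))$ inside $\mathrm{GSO}(U)/\mathrm{O}(U)$; the supercuspidality assumption is what keeps this finite-dimensional and under control.
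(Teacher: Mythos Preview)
The paper does not give a proof of this proposition: it is simply quoted from \cite[Proposition~2.3]{GanTakeda11}. Your sketch is essentially the argument one finds there (which in turn follows Kudla for the supercuspidal theorem and Roberts \cite{Roberts96} for the passage to similitudes), so in spirit you have reconstructed the cited proof.

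Two comments on the details. First, for supercuspidal $\pi^0_i$ the irreducibility of the isometry big theta lift $\widetilde{\Theta}(\pi^0_i)$ is Kudla's supercuspidal theorem (see \cite[Chapitre~3,~IV]{MoeglinVignerasWaldspurger}), which is more elementary than full Howe duality; you only need the injectivity half of Howe duality in the final step. Second, your injectivity argument has a genuine gap: knowing $\pi|_{\mathrm{Sp}(V)}\simeq\pi'|_{\mathrm{Sp}(V)}$ together with equality of central characters does \emph{not} force $\pi\simeq\pi'$ on $\mathrm{GSp}^+(V)$. Two extensions can differ by a twist $\chi=\chi_0\circ\nu$ with $\chi_0$ trivial on squares but not on all of $\nu(\mathrm{GSO}(U))$, and such a twist is invisible on both $\mathrm{Sp}(V)$ and the center. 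What actually pins $\pi$ down is that $\Theta(\pi)$ carries the full $\mathrm{GSO}(U)$-action, and through the construction $\mathrm{Ind}_{R_0}^R\omega$ this action is rigidly tied to the $\mathrm{GSp}^+(V)$-action on $\pi$ via the constraint $\nu(g)\nu(h)=1$; concretely, twisting $\pi$ by $\chi_0\circ\nu$ forces a corresponding nontrivial twist of $\Theta(\pi)$. You correctly identify this Clifford-theoretic matching of similitude characters as the crux, but the sentence ``comparing central characters \ldots\ forces $\pi\simeq\pi'$'' understates what is required.
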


    \subsubsection{Unramified local theta lift from 
    	$\mathrm{GSp}^+_4$
        to $\mathrm{GSO}_{4,2}$,
        non-archimedean case}
    Suppose $F$ is non-archimedean and
    that $-N$ 
    is a non-square in $F$,
    then $U$ is $F$-equivalent to the quadratic space
    $F(\sqrt{-N})\oplus\mathbb{H}^2$
    where $\mathbb{H}$ is the split hyperbolic plane.
    Thus
    $\mathrm{GSO}(U)\simeq\mathrm{GSO}_{4,2}$.

    The results for the theta lift can be found in
    \cite[Theorem 6.21]{Morimoto14}.
    Concerning the theta lift of unramified representations,
    we can proceed as follows.
    We define a map
    $\widetilde{\mu}
    \colon
    \mathrm{W}_F
    \rightarrow
    \{0,1\}$
    by
    $\widetilde{\mu}(g)=0$ if
    $\xi(g)=1$
    and
    $\widetilde{\mu}(g)=1$ otherwise.
    We then define a morphism of $L$-groups
    \[
    \iota
    \colon
    \,^L\mathrm{GSp}_4(F)
    =\mathrm{GSpin}_5(\mathbb{C})\times\Gamma_{E/F}
    \rightarrow
    \,^L\mathrm{GSO}_6(F)
    =\mathrm{GSpin}_6(\mathbb{C})\rtimes\Gamma_{E/F},
    \]
    \[
    (g,\sigma)
    \mapsto
    (\mathrm{diag}(g,\nu(g))_{\mathfrak{B}'},
    \sigma)
    \]
    where the subscript $\mathfrak{B}'$
    means that we consider
    the element
    $\mathrm{diag}(g\xi(\sigma),1)$ under the basis
    $\mathfrak{B}'=
    \{E_1',\cdots,E_5',(E_3-E_4)/\sqrt{2}\}$
    of $\wedge^2\mathbf{V}$.
    For $\xi(\sigma)=-1$,
    it is easy to see that
    under the basis
    $\mathfrak{B}=\{E_1,\cdots,E_6\}$ of
    $\wedge^2\mathbf{V}$,
    the element
    $\mathrm{diag}(\xi(\sigma)1_5,1)_{\mathfrak{B}'}$
    becomes
    $\mathrm{diag}(-1_2,-J_2,-1_2)_\mathfrak{B}
    =\epsilon$.

    One verifies that:
    \begin{lemma}
    	The map
    	$\iota$ is a morphism of groups.
    \end{lemma}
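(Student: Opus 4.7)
The plan is to verify directly that $\iota$ respects the multiplication laws on source and target. On the source ${}^L\mathrm{GSp}_4(F) = \mathrm{GSpin}_5(\mathbb{C}) \times \Gamma_{E/F}$ the product is $(g_1,\sigma_1)(g_2,\sigma_2) = (g_1 g_2, \sigma_1\sigma_2)$, while on the target ${}^L\mathrm{GSO}_6(F) = \mathrm{GSpin}_6(\mathbb{C}) \rtimes \Gamma_{E/F}$ the product is $(h_1,\sigma_1)(h_2,\sigma_2) = (h_1 \sigma_1(h_2), \sigma_1\sigma_2)$. Writing $\iota(g,\sigma) = (h(g,\sigma), \sigma)$, the claim is thus equivalent to the single identity
\[
h(g_1 g_2, \sigma_1\sigma_2) \;=\; h(g_1,\sigma_1)\,\sigma_1\!\bigl(h(g_2,\sigma_2)\bigr).
\]

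The key structural observation is that $h(g,\sigma)$ factors as $h(g,\sigma) = \iota_0(g)\cdot E^{\widetilde\mu(\sigma)}$, where $\iota_0\colon \mathrm{GSpin}_5(\mathbb{C}) \hookrightarrow \mathrm{GSpin}_6(\mathbb{C})$ is the standard embedding (given in the basis $\mathfrak{B}'$ by $g \mapsto \mathrm{diag}(\rho'_\mathrm{st}(g),\nu(g))$) and $E \in \mathrm{GSpin}_6(\mathbb{C})$ is a fixed lift of $\epsilon$ as in Remark \ref{orthogonal and special orthogonal, adjoint representation}. Two facts make the verification essentially mechanical: $(\mathrm{i})$ $\iota_0$ is itself a group homomorphism, by standard compatibility of exterior powers with products; $(\mathrm{ii})$ because $\iota_0(g)$ is block-diagonal in the basis $\mathfrak{B}'$ while $E$ is the scalar matrix $\mathrm{diag}(-1_5,1)_{\mathfrak{B}'}$, the element $\iota_0(g)$ commutes with $E$.

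The verification then splits according to $\xi(\sigma_1) \in \{\pm 1\}$. If $\xi(\sigma_1)=1$, then $\widetilde\mu(\sigma_1)=0$ and $\sigma_1$ acts trivially on $\mathrm{GSpin}_6(\mathbb{C})$, so both sides of the required identity reduce to $\iota_0(g_1)\iota_0(g_2)\,E^{\widetilde\mu(\sigma_2)}$ by $(\mathrm{i})$. If $\xi(\sigma_1) = -1$, then by Lemma \ref{action of c and epsilon} the action of $\sigma_1$ on $\mathrm{GSO}_6(\mathbb{C})$ is conjugation by $\epsilon$ (up to the similitude factor); combined with $(\mathrm{ii})$, we find $\sigma_1(\iota_0(g_2)) = \iota_0(g_2)$ and $\sigma_1(E) = E$. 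Hence the right-hand side equals $\iota_0(g_1)\iota_0(g_2)\,E^{\widetilde\mu(\sigma_1)+\widetilde\mu(\sigma_2)}$, which matches the left-hand side once one invokes multiplicativity of $\xi$ together with $E^2 = 1$ (in $\mathrm{GSpin}_6$, modulo the central kernel of $\mathrm{GSpin}_6 \to \mathrm{GSO}_6$, which is inert in the computation).

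The main obstacle is bookkeeping the scalar factor hidden in ``up to the similitude factor'' of Lemma \ref{action of c and epsilon}: one must confirm that the scalar ambiguity introduced by lifting the $c$-action from $\mathrm{GSO}_6$ to $\mathrm{GSpin}_6 \simeq \mathrm{GL}_4$ is absorbed consistently by the explicit formulas $\rho'_\mathrm{st}(c(g)) = \det(g)\,\epsilon\,\rho'_\mathrm{st}(g)\,\epsilon$ recalled before Lemma \ref{action of c and epsilon}, and that its interplay with the $\xi(\sigma)$-twist in $h(g,\sigma)$ leaves the identity unchanged. Once this scalar compatibility is checked (it amounts to noting that the twist lives in the center and that $\xi\colon \Gamma_{E/F} \to \{\pm 1\}$ is itself a character), the case analysis above closes the argument.
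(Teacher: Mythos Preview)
Your approach is essentially the same as the paper's: both reduce the homomorphism property to the single fact that the $\Gamma_{E/F}$-action on $\mathrm{GSpin}_6(\mathbb{C})=\mathrm{GL}_4(\mathbb{C})$ fixes the image of $\mathrm{GSpin}_5(\mathbb{C})=\mathrm{GSp}_4(\mathbb{C})$. The paper packages this as the single statement ``$\widetilde\epsilon$ commutes with the image of $\mathrm{GSpin}_5(\mathbb{C})$ in $\mathrm{GPin}_6^+(\mathbb{C})$'' and then invokes Lemma~\ref{action of c and epsilon} together with $c(g)=J_4g^{-t}J_4^{-1}=g$ for $g\in\mathrm{GSp}_4(\mathbb{C})$; you instead unfold this into an explicit case analysis on $\xi(\sigma_1)$.

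There is, however, a confusion in your identification of $E$. You write that $E$ is ``the scalar matrix $\mathrm{diag}(-1_5,1)_{\mathfrak{B}'}$'', but this is not a scalar matrix, and moreover $\mathrm{diag}(-1_5,1)_{\mathfrak{B}'}=\epsilon$ lies in $\mathrm{O}_6(\mathbb{C})\setminus\mathrm{SO}_6(\mathbb{C})$, so it has no lift to $\mathrm{GSpin}_6(\mathbb{C})$ and your $h(g,\sigma)=\iota_0(g)E^{\widetilde\mu(\sigma)}$ would not land in the correct target. The element you actually want (and the one the paper uses in its ``classical'' reformulation just after the lemma) is $A=i\cdot 1_4\in\mathrm{GL}_4(\mathbb{C})$, which \emph{is} a scalar matrix and whose image under $\rho'_\mathrm{st}$ is $-1_6$, not $\epsilon$. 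With $E=A$ your fact~(ii) is trivially true since $A$ is central, and your case analysis goes through; but conflating $A$ with $\epsilon$ muddles the argument, because the commutation of $\iota_0(g)$ with $\epsilon$ (in $\mathrm{GO}_6$) is exactly the content of the paper's reduction, not a separate ingredient.
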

    \begin{proof}
    	It suffices to show that
    	$\widetilde{\epsilon}$ commutes with
    	each element in
    	the image of
    	$\mathrm{GSpin}_5(\mathbb{C})$
    	in $\mathrm{GPin}^+_6(\mathbb{C})$.
    	By Lemma \ref{action of c and epsilon},
    	$\widetilde{\epsilon}$ 
    	acts by conjugation on
    	$\mathrm{GSpin}_6(\mathbb{C})
    	=\mathrm{GL}_4(\mathbb{C})$
    	via the action of $c$.
    	Now for any
    	$g\in\mathrm{GSp}_4(\mathbb{C})$,
    	we have
    	$c(g)=J_4g^{-t}J_4
    	=g$.
    	We conclude that
    	$\widetilde{\epsilon}$
    	commutes with each element of
    	$\mathrm{GSpin}_5(\mathbb{C})$
    	viewed as subgroup of
    	$\mathrm{GPin}^+_6(\mathbb{C})$.
    \end{proof}

    We can interpret the above morphism in terms of
    classical groups:
    for
    $\widetilde{\mu}(\sigma)=1$,
    i.e., $\xi(\sigma)=-1$,
    we have
    (be careful about the definition of the
    $E_i$'s):
    \[
    (\rho'_\mathrm{st})^{-1}
    (\mathrm{diag}(-1_2,-J_2,-1_2)_\mathfrak{B}
    \widetilde{\epsilon}^{\widetilde{\mu}(\sigma)})
    =
    (\rho'_\mathrm{st})^{-1}(-1_6)
    =
    i\cdot1_4
    \in
    \mathrm{GL}_4(\mathbb{C}).
    \]
    We write $A$ for this element.
    Then the above morphism becomes
    \[
    \iota
    \colon
    \,^L\mathrm{GSp}_4(F)
    =
    \mathrm{GSp}_4(\mathbb{C})
    \times\Gamma_{E/F}
    \rightarrow
    \,^L\mathrm{GSO}_6(F)
    =
    \mathrm{GL}_4(\mathbb{C})\rtimes\Gamma_{E/F},
    \quad
     (h,\sigma)
    \mapsto
    (hA^{\widetilde{\mu}(\sigma)},\sigma).
    \]
    Note that the morphism
    $\iota$ is independent of whether 
    $U$ is split over $F$ or not.

    Now we have
    (\cite[Corollary 6.23]{Morimoto14}):
    \begin{theorem}\label{Langlands parameters for GSp^+4 and GSO6}
    	Let $\pi=\pi(s)$ be an unramified,
    	resp., twisted Steinberg
    	representation of $\mathrm{GSp}_4(F)$
    	corresponding to the semi-simple class
    	$s\in\mathrm{GSp}_4(\mathbb{C})\times
    	\Gamma_{E/F}$
        and $\pi_+(s)$ an irreducible constituent of
        $\pi|_G$.
        Then $\Theta^*(\pi(s)):=\Theta(\pi_+(s))$
        is the unramified,
        resp., twisted Steinberg
        representation
        $\mathrm{GSO}_6(F)$
        corresponding to the semi-simple class
        $\iota(s)\in
        \mathrm{GL}_4(\mathbb{C})
        \rtimes\Gamma_{E/F}$.
        More precisely,
        suppose that
        $\pi(s)$ is the unique irreducible unramified submodule of
        $\mathrm{Ind}_{B}(\chi_1,\chi_2,\chi)$
        and set
        $t_1=\chi_1(\omega_F)$,
        $t_2=\chi_2(\omega_F)$,
        $t=\chi(\omega_F)$,
        the Satake parameter 
        of $\pi$ is
        \[
        s=(\mathrm{diag}(t,t_1t,t_1t_2t,t_2t),\sigma)
        \in
        \mathrm{GSp}_4(\mathbb{C})\times
        \Gamma_{E/F},
        \]
        then the Satake parameter of
        $\Theta^*(\pi(s))$ is
        \[
        \iota(s)
        =
        s\cdot(A^{\widetilde{\mu}(\sigma)},1)
        =
        \begin{cases}
        (i\cdot\mathrm{diag}
        (t,t_1t,t_1t_2t,t_2t),\sigma) 
        & \mathrm{if}\, \xi(\sigma)=-1, \\
        (\mathrm{diag}(t,t_1t,t_1t_2t,t_2t),\sigma) 
        & \mathrm{otherwise.}
        \end{cases}
        \]

        In terms of $L$-factors, if $\pi$
        is unramified, we have
        \[
        L(s,\Theta^*(\pi),\mathrm{st})
        =
        L(s,\mathrm{St}(\pi)\otimes\xi)
        (1-\ell^{-s})^{-1}.
        \]
    \end{theorem}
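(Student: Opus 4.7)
The plan is to deduce the statement by reducing to the unramified theta correspondence for the isometry dual pair $(\mathrm{Sp}_4,\mathrm{O}(U))$ and then carefully tracking what happens when one enlarges both groups to their similitude versions. First I would recall that for a spherical (unramified) irreducible $\pi(s)$ of $\mathrm{GSp}_4(F)$ with Satake class $s=(\mathrm{diag}(t,t_1t,t_1t_2t,t_2t),\sigma)$, the restriction to $\mathrm{Sp}_4(F)$ contains an unramified irreducible constituent whose Satake class is the image of $s$ under the projection $\mathrm{GSp}_4(\mathbb{C})\twoheadrightarrow\mathrm{GSp}_4(\mathbb{C})/\mathbb{C}^\times=\mathrm{SO}_5(\mathbb{C})$, computed explicitly using $\rho'_{\mathrm{st}}$ as in Section~\ref{section: Langlands functoriality}. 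The classical unramified theta correspondence for the dual pair $(\mathrm{Sp}_4,\mathrm{O}(U))$, in the ``stable range'' sense (the Witt index of $U$ is at least $2$), then produces an unramified representation of $\mathrm{O}(U)(F)$ whose Satake class is obtained by embedding the $5$-dimensional standard parameter of $\mathrm{SO}_5(\mathbb{C})$ into $\mathrm{O}_6(\mathbb{C})$ by adjoining an eigenvalue $1$ in the extra direction $i(E_3-E_4)/\sqrt{2}$.

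Next I would lift this computation from isometry groups to similitude groups using the compactly-induced description $\mathrm{Ind}_{R_0}^R\omega_{W_4^-}$ recalled in Section~\ref{Weil representations}. The key point is that the similitude theta lift of $\pi_+(s)$ is the unique irreducible constituent of the induced representation whose restriction to $\mathrm{SO}(U)$ contains the isometry theta lift of $\pi_+^1$; its central character is determined by that of $\pi$ via the scalar $\xi$ because $\omega_{W_4^-}(\lambda,\lambda^{-1})$ acts by $\xi(\lambda)^{-4}$. Combined with the identification $\mathrm{GSO}_6(\mathbb{C})\simeq\mathrm{GL}_4(\mathbb{C})/\{\pm1\}$ via $\rho'_{\mathrm{st}}$ and the description of the Galois action on $^L\mathrm{GSO}_6(F)$ via conjugation by $\epsilon$ (Lemma~\ref{action of c and epsilon}), the $\mathrm{GL}_4(\mathbb{C})$-part of the Satake parameter of $\Theta^*(\pi(s))$ is forced to be $(\rho'_{\mathrm{st}})^{-1}(\iota_{\mathrm{iso}}(\mathrm{image\ of}\ s))$, which is exactly $\mathrm{diag}(t,t_1t,t_1t_2t,t_2t)$ up to the scalar needed for compatibility with the Galois action.

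The main obstacle, which I would address with care, is pinpointing the scalar twist by $A=i\cdot 1_4$ that appears precisely when $\xi(\sigma)=-1$, i.e.\ when $U$ is non-split at $F$ and we track the nontrivial Galois action on $^L\mathrm{GSO}_6(F)$. Concretely, on the orthogonal side, the Frobenius acts via $\epsilon$-conjugation on the non-split orthogonal space, and $(\rho'_{\mathrm{st}})^{-1}(-1_6)=i\cdot 1_4$, which is where the ``$i$'' enters. Verifying that this twist is indeed forced (and not absorbed into the Satake parameter) requires comparing $L$-packets under the local Langlands correspondence for $\mathrm{GSp}_4$ and $\mathrm{GSO}_6$ of Gan--Takeda and Morimoto; the necessary compatibility is precisely \cite[Corollary~6.23]{Morimoto14}. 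The twisted Steinberg case is handled in parallel: the Steinberg representation is the unique irreducible quotient of the relevant generalized principal series, and the theta lift preserves this distinguished subquotient because it is characterized by its Jacquet module, which transfers predictably under the unramified computation.

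Finally, the $L$-factor identity follows by comparing characteristic polynomials: the standard $L$-factor of $\Theta^*(\pi)$ is the characteristic polynomial of $\iota(s)$ acting on the $4$-dimensional standard representation of $\mathrm{GL}_4(\mathbb{C})$, while the $5$-dimensional standard representation of $\mathrm{GSpin}_5(\mathbb{C})$ used to define $\mathrm{St}(\pi)\otimes\xi$ contains the former (with the twist by $\xi$) plus one extra eigenvalue equal to $1$, whence the factor $(1-\ell^{-s})^{-1}$.
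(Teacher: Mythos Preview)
The paper does not give a proof of this theorem at all: it simply cites \cite[Corollary~6.23]{Morimoto14} and records the explicit Satake parameters and $L$-factor identity as consequences. Your sketch is therefore more detailed than what the paper offers, and the overall strategy you outline (reduce to the unramified isometry correspondence, lift to similitudes via $\mathrm{Ind}_{R_0}^R\omega_{W_4^-}$, track the $\epsilon$-conjugation in the non-split case to pin down the scalar $A=i\cdot1_4$) is a reasonable reconstruction of how such a result is established, and you correctly identify that the compatibility with local Langlands for $\mathrm{GSO}_{4,2}$ is exactly Morimoto's Corollary~6.23.

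There is, however, a genuine error in your final paragraph. The standard $L$-function $L(s,\Theta^*(\pi),\mathrm{st})$ is the degree-$6$ $L$-function attached to the $6$-dimensional vector representation of $\mathrm{O}_6(\mathbb{C})$ (equivalently, via $\rho'_{\mathrm{st}}$, to $\wedge^2$ of the $4$-dimensional standard of $\mathrm{GL}_4(\mathbb{C})$ twisted by $\det^{-1}$), not the $4$-dimensional standard of $\mathrm{GL}_4(\mathbb{C})$. Under the basis $\{E_1',\dots,E_5',i(E_3-E_4)/\sqrt{2}\}$ of $\wedge^2\mathbf{V}$ used in the paper, this $6$-dimensional representation restricted along $\iota$ decomposes as the $5$-dimensional $\rho_{\mathrm{st}}$ (on which the $\Gamma_{E/F}$-factor acts through $\xi$) plus a trivial $1$-dimensional summand. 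That is the origin of the identity
\[
L(s,\Theta^*(\pi),\mathrm{st})=L(s,\mathrm{St}(\pi)\otimes\xi)\cdot(1-\ell^{-s})^{-1},
\]
with the extra Euler factor on the right, not the left. Your description (``the $5$-dimensional contains the $4$-dimensional plus one extra eigenvalue'') has both the dimensions and the direction of containment wrong, and as stated would yield the inverse relation.
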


    \subsubsection{Local theta lift from $\mathrm{GSp}^+_4$
    to $\mathrm{GO}_{3,3}$, non-archimedean case}
    Suppose that $F$ is non-archimedean and
    $-N$ is a square in
    $F$, then $U$ is $F$-equivalent to the quadratic space
    $\mathbb{H}^3$ and thus
    $\mathrm{GO}(U)\simeq\mathrm{GO}_{3,3}$.

    The results for the theta lift can be found in
    \cite[Theorem 8.3]{GanTakeda11}.
        Concerning the theta lifts of unramified representations,
    we have
    (\cite[Corollary 12.3]{GanTakeda11b}):
    \begin{theorem}\label{Langlands parameters for GSp4 and GSO6}
    	Let $\pi=\pi(s)$ be an unramified representation of
    	$\mathrm{GSp}_4$
    	corresponding to the semi-simple class
    	$s\in\mathrm{GSp}_4(\mathbb{C})
    	=\,^L\mathrm{GSp}_4$,
    	then
    	$\Theta(\pi(s))$ is the unramified representation of
    	$\mathrm{GSO}_{3,3}$
    	corresponding to the semi-simple class
    	$\iota(s)\in\mathrm{GL}_4(\mathbb{C})
    	\times\mathrm{GL}_1(\mathbb{C})
    	=\,^L\mathrm{GSO}_{3,3}$.
    	In terms of $L$-factors, we have
    	\[
    	L(s,\Theta(\pi),\mathrm{st})
    	=
    	L(s,\mathrm{St}(\pi)\otimes\xi)\zeta_F(s)
    	\]
    	where we note that the character $\xi$ is in fact a trivial
    	character
    	and $\zeta_F(s)=(1-\ell^{-s})^{-1}$ is the usual Euler factor.
    \end{theorem}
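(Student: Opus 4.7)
The plan is to deduce this result as the split analog of the preceding Theorem \ref{Langlands parameters for GSp^+4 and GSO6}, by invoking the explicit unramified theta correspondence for similitude dual pairs established by Gan--Takeda. Since $-N$ is a square in $F$, the character $\xi$ is trivial on $F^\times$, so $\widetilde\mu \equiv 0$ and the $L$-group morphism $\iota$ constructed above collapses to the honest direct product embedding
\[
\iota\colon \mathrm{GSp}_4(\mathbb{C})\times \Gamma_{E/F} \longrightarrow \mathrm{GL}_4(\mathbb{C})\times \Gamma_{E/F},\qquad (s,\sigma)\mapsto (s,\sigma),
\]
with no twist by $A$; in other words, the Satake parameter of $\Theta(\pi(s))$ is literally the image of $s$ under the natural inclusion $\mathrm{GSp}_4(\mathbb{C})\hookrightarrow \mathrm{GL}_4(\mathbb{C})$ together with the similitude factor.

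First I would verify the statement on Satake parameters. Writing $\pi(s)$ as the unramified constituent of $\mathrm{Ind}_B(\chi_1,\chi_2,\chi)$ with Satake class
\[
s=\mathrm{diag}(t,t_1t,t_1t_2t,t_2t)\in \mathrm{GSp}_4(\mathbb{C}),
\]
the restriction $\pi(s)|_{\mathrm{GSp}^+(V)}$ contains an irreducible unramified $\pi_+(s)$, and the big theta lift $\widetilde\Theta(\pi_+(s))$ on $\mathrm{GSO}(U)$ is nonzero in the stable range. One then invokes \cite[Corollary 12.3]{GanTakeda11b}, which computes the unramified Satake parameter of $\Theta(\pi_+(s))$ under the exceptional isomorphism $\mathrm{GSO}_{3,3}\simeq(\mathrm{GL}_4\times \mathrm{GL}_1)/\Delta \mathrm{GL}_1$; the computation there exhibits it as $\iota(s)$. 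The extension from the isometry Howe correspondence to the similitude version uses the compatibility of the Weil representation with the similitude factor recorded in Section 3.2 (cf.\ also \cite{Roberts96} and \cite[Section 2]{GanTakeda11}), which is direct in the split case because the entire similitude group appears as $\mathrm{GSp}^+(V)=\mathrm{GSp}(V)$.

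Next I would derive the $L$-factor identity. The standard $L$-function on $\mathrm{GSO}_{3,3}$ is defined via the standard $6$-dimensional representation of ${}^L\mathrm{GSO}_{3,3}$, which under $\rho'_{\mathrm{st}}\colon \mathrm{GL}_4(\mathbb{C})\twoheadrightarrow \mathrm{GSO}_6(\mathbb{C})$ pulls back to the representation $\wedge^2\mathrm{std}$ (of dimension $6$) computed explicitly in Section 5.2. Plugging in the diagonal Satake class $s$ and reading off eigenvalues from the formula
\[
\rho'_{\mathrm{st}}(\mathrm{diag}(t,t_1t,t_1t_2t,t_2t))=\mathrm{diag}(t_1t^2,t_2t^2,t_1t_2t^2,t_2t^2,t_1t^2,t)
\]
(once one passes to the basis $\{E_1,\dots,E_6\}$ and takes into account the similitude factor), one identifies the six Euler factors with the five factors making up $L_v(s,\mathrm{St}(\pi)\otimes\xi)$ (with $\xi=1$) together with the extra factor $(1-\ell^{-s})^{-1}=\zeta_F(s)$ coming from the $t$-eigenline fixed by $\mathrm{GSp}_4(\mathbb{C})$ inside $\wedge^2\mathrm{std}$. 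This yields
\[
L(s,\Theta(\pi),\mathrm{st})=L(s,\mathrm{St}(\pi)\otimes\xi)\,\zeta_F(s).
\]

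The main potential obstacle is bookkeeping rather than substance: one must ensure that the bases, cocycles and similitude factors used in the dual pair construction here agree with the normalizations in \cite{GanTakeda11,GanTakeda11b}, in particular that the extension of the $\mathrm{Sp}(V)\times \mathrm{O}(U)$-theta correspondence to the similitude setting (dividing out by the anti-diagonal torus $\{(\lambda,\lambda^{-1})\}$ in $R_0$) matches their conventions. Once this normalization check is done, both the Satake parameter statement and the $L$-factor identity follow immediately from their result combined with the eigenvalue computation of $\rho'_{\mathrm{st}}$ recorded in Section 5.2.
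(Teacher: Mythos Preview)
Your approach is essentially the same as the paper's: in the paper this theorem is simply stated with a citation to \cite[Corollary 12.3]{GanTakeda11b} (and \cite[Theorem 8.3]{GanTakeda11} for the general theta lift), with no further argument given, so your invocation of that corollary together with the explicit unpacking of $\rho'_{\mathrm{st}}$ is exactly the intended route and in fact supplies more detail than the paper does.

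One minor correction: your displayed formula for $\rho'_{\mathrm{st}}(\mathrm{diag}(t,t_1t,t_1t_2t,t_2t))$ is garbled (the entries are pairwise products $\lambda_i\lambda_j$, so none of them can be just ``$t$''). If you use the paper's parameterization $\mathrm{diag}(t_1,t_2,t/t_1,t/t_2)$ with similitude $t$, the six eigenvalues in the basis $\{E_1',\dots,E_5',i(E_3-E_4)/\sqrt{2}\}$ are $t_1t_2,\,tt_1/t_2,\,t,\,tt_2/t_1,\,t^2/(t_1t_2),\,t$; dividing by the similitude $t$ gives the five standard eigenvalues together with an extra $1$, which is precisely the factorization $L(s,\mathrm{St}(\pi))\zeta_F(s)$ you want.
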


    \subsubsection{Archimedean theta lift from $\mathrm{GSp}^+_4$
    to $\mathrm{GO}_6$}
    For the case $F=\mathbb{R}$,
    the group $\mathrm{GO}(U)$ is just
    $\mathrm{GO}_6=\mathrm{GO}_6(\mathbb{R})$.
    Moreover, it is easy to see
    $\nu(\mathrm{GO}_6(\mathbb{R}))
    =\mathbb{R}^\times$
    and thus
    $\mathrm{GSp}_4^+=\mathrm{GSp}_4(\mathbb{R})$.
    The results for this theta lift can bu found in
    \cite{Paul05}.
    For the completeness of the article and 
    the convenience of the reader,
    we reproduce the relevant results of \cite{Paul05}.
    See also \cite[Section 7]{Morimoto14}.
    
    We first recall the classification of discrete series representations of
    $\mathrm{Sp}_4(\mathbb{R})$.
    We have the characters
    $e_1,e_2$ of $T_{\mathrm{Sp}_4}$.
    Then the set of roots of $\mathrm{Sp}_4$
    is $\Sigma=\{\pm e_1\pm e_2,\pm2e_1,\pm2e_2\}$.
    A set of positive compact roots is
    $\Sigma_\mathrm{c}^+
    =\{e_1-e_2\}$.
    Then positive root systems containing
    $\Sigma_\mathrm{c}^+$ are the following ones
    \[
    \Sigma_1^+
    =
    \{
    e_1-e_2, 2e_1, e_1+e_2, 2e_2
    \},
    \quad
    \Sigma_2^+
    =
    \{
    e_1-e_2, -2e_1, -e_1-e_2, -2e_2
    \},
    \]
    \[
    \Sigma_3^+
    =
    \{
    e_1-e_2, 2e_1, e_1+e_2, -2e_2
    \},
    \quad
    \Sigma_4^+
    =
    \{
    e_1-e_2, 2e_1, -e_1-e_2, -2e_2
    \}.
    \]
    The corresponding sets of dominant weights are
    ($i=1,2,3,4$)
    (recall that $\langle e_j,e_k\rangle=2\delta_{j,k}$)
    \[
    X_i
    =
    \{
    (\lambda_1,\lambda_2)
    \in\mathbb{Z}^2|
    \langle
    \lambda_1e_1+\lambda_2e_2,\alpha
    \rangle>0,
    \forall \alpha\in\Sigma_i^+
    \}.
    \]
    The Harish-Chandra parameter of the discrete series representations of
    $\mathrm{Sp}_4(\mathbb{R})$
    is then the union $\bigcup_iX_i$.
    Moreover,
    $X_1$ parameterizes
    holomorphic discrete series representations,
    $X_2$ parameterizes
    anti-holomorphic discrete series representations
    while
    $X_3\bigcup X_4$ parameterizes
    generic discrete series representations.
    
    We then recall the classification of discrete series representations
    of $\mathrm{SO}_6(\mathbb{R})$.
    We have the characters
    $e_1',e_2',e_3'$ of $T_{\mathrm{SO}_6}$.
    The set of roots of $\mathrm{SO}_6$
    is $\Sigma'=\{\pm e_i\pm e_j|1\leq i<j\leq3\}$.
    A set of positive compact roots is
    $\Sigma_\mathrm{c}'^{+}=\{e_i'\pm e_j'|1\leq i<j\leq 3\}$.
    Then the positive root system compatible containing
    $\Sigma_\mathrm{c}'^+$ is
    $\Sigma'^+=\Sigma_\mathrm{c}'^+$.
    The corresponding set of dominant weights are
    \[
    X'
    =
    \big\{
    (\lambda_1,\lambda_2,\lambda_3)\in\mathbb{Z}^3|
    \langle
    \lambda_1e_1'+\lambda_2e_2'+\lambda_3e_3',
    \alpha
    \rangle
    >0,
    \forall \alpha\in\Sigma'^+
    \big\}
    =
    \big\{
    (\lambda_1,\lambda_2,\lambda_3)\in\mathbb{Z}^3|
    \lambda_1>\lambda_2>|\lambda_3|
    \big\}.
    \]
    
    If $\pi$ is a discrete series of $\mathrm{Sp}_4(\mathbb{R})$
    in
    $\Sigma_1^+$, resp. $\Sigma_2^+$ of weight
    $(\lambda_1,\lambda_2)$
    (such that $\lambda_1>\lambda_2>0$,
    resp. $0>\lambda_1>\lambda_2$),
    then by \cite[Theorem 15]{Paul05},
    the theta lift
    $\Theta(\pi)$ to
    $\mathrm{SO}_6(\mathbb{R})$ is non-zero
    of weight $(\lambda_1,\lambda_2,0)$.
    If $\pi$ is in
    $\Sigma_3^+$
    of weight
    $(\lambda_1,\lambda_2)$
    (such that
    $\lambda_1>|\lambda_2|$ and $\lambda_2<0$),
    then again by
    \cite[Theorem 15]{Paul05},
    the theta lift $\Theta(\pi)$
    to $\mathrm{SO}_6(\mathbb{R})$
    vanishes.
    Similarly, if $\pi$ is in $\Sigma_4^+$,
    the theta lift
    $\Theta(\pi)$ vanishes too.
    From this we conclude that
    the theta lift of a (anti-)holomorphic discrete 
    series of
    $\mathrm{Sp}_4(\mathbb{R})$
    to $\mathrm{SO}_6(\mathbb{R})$
    is non-zero
    while the theta lift of generic discrete series vanishes.
    The theta lift for the corresponding similitude groups
    can be similarly discussed and the result is the same:
    the theta lift of (anti-)holomorphic discrete series of
    $\mathrm{GSp}_4(\mathbb{R})$
    to $\mathrm{GSO}_6(\mathbb{R})$
    is non-zero while
    theta lift of generic discrete series vanishes.

   \subsection{Transfer between $\mathrm{GSp}_4(F)$ and
   	$\mathrm{Sp}_4(F)$,
   	$\mathrm{U}_4(F)$ and $\mathrm{SU}_4(F)$}
   
   In this subsection, we review some results on the
   transfer of representations between
   $\mathrm{GSp}_4(F)$ and $\mathrm{Sp}_4(F)$,
   $\mathrm{U}_4(F)$ and $\mathrm{SU}_4(F)$,
   globally and locally
   (\cite[Section 3]{LabesseSchwermer86}).
   In order to have a uniform treatment, we write
   $G$ to be
   $\mathrm{Sp}_4(F)$
   (for $F$ local field),
   $\mathrm{Sp}_4(\mathbb{A}_F)$
   (for $F$ global field),
   $\mathrm{SU}_4(F)$
   or $\mathrm{SU}_4(\mathbb{A}_F)$,
   resp.,
   $\widetilde{G}$ to be
   $\mathrm{GSp}_4(F)$,
   $\mathrm{GSp}_4(\mathbb{A}_F)$,
   $\mathrm{U}_4(F)$
   or $\mathrm{U}_4(\mathbb{A}_F)$.

   \subsubsection{$L$-packets}
   
   Note that in any case,
   $G$ is a normal subgroup of
   $\widetilde{G}$.
   For any irreducible admissible representation
   $\pi$ of $G$ and any element $g\in\widetilde{G}$,
   we define a new representation
   $\pi^g$ of $G$ as follows:
   $\pi^g(h):=\pi(ghg^{-1})$ for any $h\in G$.
   We say that two irreducible admissible representations
   $\pi$ and $\pi'$ of $G$ are
   equivalent if
   $\pi'\simeq\pi^g$ for some $g\in\widetilde{G}$.
   We denote by
   $\mathscr{L}(G)$ the quotient set of the set of
   irreducible admissible representation of $G$ by 
   this equivalence relation
   and each element in $\mathscr{L}(G)$ is an
   $L$-packet of $G$.
   The usual equivalence relation on representations of
   $\widetilde{G}$ defines the
   $L$-packets of $\widetilde{G}$.
   We write the set of $L$-packets of $\widetilde{G}$
   as $\mathscr{L}(\widetilde{G})$.
   We also write
   $\mathscr{E}=(\widetilde{G}/G)^\vee$ for the 
   set of characters of the quotient
   $\widetilde{G}/G$.
   Moreover, we say two
   irreducible admissible representations
   $\pi$ and $\pi'$ of $\widetilde{G}$
   $\mathscr{E}$-equivalent if
   $\pi'\simeq\pi\otimes\chi$ 
   for some $\chi\in\mathscr{E}$.
   We then write
   $\mathscr{E}(\widetilde{G})$
   the set of $\mathscr{E}$-equivalent classes of 
   irreducible admissible representations of
   $\widetilde{G}$.
   Thus there is a canonical projection
   $L(\widetilde{G})
   \twoheadrightarrow
   \mathscr{E}(\widetilde{G})$.
   
   One can show the following
   \begin{lemma}
   	For an irreducible admissible representation 
   	$\widetilde{\pi}$ of
   	$\widetilde{G}$,
   	its restriction to $G$ depends only on its class in
   	$\mathscr{E}(\widetilde{G})$.
   	Moreover,
   	$\widetilde{\pi}|_G$
   	is a direct sum of irreducible admissible 
   	representations of $G$.
   \end{lemma}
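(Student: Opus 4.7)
For the first assertion, suppose $\widetilde{\pi}'$ and $\widetilde{\pi}$ lie in the same $\mathscr{E}$-equivalence class, so that $\widetilde{\pi}'\simeq\widetilde{\pi}\otimes\chi$ for some character $\chi\in\mathscr{E}=(\widetilde{G}/G)^\vee$. By definition $\chi$ is trivial on $G$, hence
\[
\widetilde{\pi}'|_G\simeq(\widetilde{\pi}\otimes\chi)|_G=\widetilde{\pi}|_G\otimes(\chi|_G)=\widetilde{\pi}|_G,
\]
and conversely any twist by $\chi\in\mathscr{E}$ leaves the restriction unchanged. This part is immediate and requires no further structural input.

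For the second assertion, the plan is to apply Clifford theory to the normal inclusion $G\subset\widetilde{G}$, using the crucial fact that in every one of our four cases the quotient $\widetilde{G}/G$ is abelian (it is the image of the similitude, resp.\ determinant, character). First, I would choose an irreducible admissible $G$-subrepresentation $\sigma\subset\widetilde{\pi}|_G$ (existence by Zorn's lemma together with admissibility of $\widetilde{\pi}$, which passes to $G$ since $[\widetilde{G}:G\cdot Z(\widetilde{G})]$ is small and $K_G=K\cap G$ is of finite index in $K\cap\widetilde{G}$ for suitable compact open $K$). Next, for each $g\in\widetilde{G}$ the translated subspace $\widetilde{\pi}(g)\sigma\subset\widetilde{\pi}|_G$ is again $G$-stable and $G$-isomorphic to $\sigma^{g^{-1}}$, since $G$ is normal. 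The sum $\sum_{g\in\widetilde{G}}\widetilde{\pi}(g)\sigma$ is then a $\widetilde{G}$-invariant subspace of $\widetilde{\pi}$, non-zero by construction, hence equal to all of $\widetilde{\pi}$ by irreducibility. Therefore $\widetilde{\pi}|_G$ is generated as a $G$-module by translates of irreducibles, so it is semisimple, i.e., a direct sum of irreducible admissible $G$-representations (all in a single $\widetilde{G}$-conjugacy class, which also yields the $L$-packet picture used in the sequel).

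The routine calculations are minor; the only substantive point is verifying that the sum of $\widetilde{G}$-conjugates is actually a \emph{direct} sum of finitely many isotypic pieces, or at least that the decomposition into irreducibles is discrete. The \emph{main obstacle} is therefore the finite-length/admissibility bookkeeping: in the local non-archimedean case this follows from the fact that $\widetilde{G}/G\cdot Z(\widetilde{G})$ is compact modulo centre so Mackey/Clifford applies directly; in the archimedean and adelic settings one must additionally appeal to the fact that the central character of $\widetilde{\pi}$ is fixed on restriction and that $\widetilde{G}/G$ acts through a finite quotient on the set of irreducible $G$-constituents of $\widetilde{\pi}|_G$ (Gelbart--Knapp type finiteness). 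Once this finiteness is in hand, semisimplicity and the direct sum decomposition follow from the argument above, completing the proof.
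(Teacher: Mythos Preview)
Your proof is correct in outline, but the paper takes a somewhat different and more economical route for the second assertion. Rather than running Clifford theory by hand, the paper observes that $\widetilde{Z}G\backslash\widetilde{G}$ is compact (and abelian) and then invokes Silberger's theorem directly: for $F$ local, this gives that $\widetilde{\pi}|_{\widetilde{Z}G}$ is already a finite direct sum of irreducible admissible $\widetilde{Z}G$-representations, and since $\widetilde{Z}$ acts by a scalar on each summand, the same decomposition works for $G$. The global case is then reduced to the local one via factorisation into local components.

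The difference is largely one of packaging. Your Clifford-theory argument is what underlies Silberger's result anyway, so both approaches are sound; the paper's version simply offloads the ``finite-length/admissibility bookkeeping'' you flag as the main obstacle to a single citation. In particular, the delicate point you identify---that admissibility of $\widetilde{\pi}$ over $\widetilde{G}$ does not \emph{a priori} guarantee admissibility of $\widetilde{\pi}|_G$ over $G$, and that one must therefore justify the existence of an irreducible $G$-subrepresentation---is exactly what Silberger's theorem handles. Your sketch gestures at this via the compactness of $\widetilde{G}/G\widetilde{Z}$, which is the right idea, but if you were to fill in the details you would essentially be reproving Silberger's result. The paper's approach also makes the global reduction cleaner, whereas your adelic case (invoking Gelbart--Knapp type finiteness) is correct but heavier than needed once the local statement is in hand.
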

   The proof is the same as for
   \cite[Lemma 3.2]{LabesseSchwermer86}.
   \begin{proof}
   	The first point follows from the definition of
   	$\mathscr{E}(\widetilde{G})$.
   	
   	For the second point, first assume that
   	$F$ is local. Let $\widetilde{Z}$
   	be the center of $\widetilde{G}$.
   	Note that $\widetilde{Z}G\backslash\widetilde{G}$
   	is compact and abelian
   	(as connected reductive algebraic groups,
   	we have an $F$-isogeny
   	$ZG\rightarrow\widetilde{G}\rightarrow1$).
   	By \cite[Theorem]{Silberger79},
   	one knows that
   	$\widetilde{\pi}|_{\widetilde{Z}G}$ is a direct sum of irreducible admissible
   	representations of $\widetilde{Z}G$.
   	Since $\widetilde{Z}$ acts on each irreducible factor in
   	$\widetilde{\pi}|_{\widetilde{Z}G}$ by scalars, we see that
   	$\widetilde{\pi}|_G$ is also a direct sum of
   	irreducible admissible representations of $G$.
   	For $F$ global, we can decompose
   	$\widetilde{\pi}$ into local components and apply
   	the above result.
   \end{proof}
   \begin{remark}
   	For $F$ local, if $\widetilde{\pi}$ is an unramified representation of
   	$\widetilde{G}$,
   	then its restriction $\widetilde{\pi}|_G$
   	has only one irreducible admissible submodule which is
   	unramified.	
   \end{remark}

   \subsubsection{The transfer}
   First we assume $F$ local.
   For an irreducible admissible representation
   $\widetilde{\pi}$ of $\widetilde{G}$,
   fix $\pi$ an irreducible submodule of the restriction
   $\widetilde{\pi}|_G$.
   Then $\widetilde{\pi}|_G$ is a direct sum of representations
   of the form $\pi^g$ for some $g\in\widetilde{G}$.
   Now we can define the following map
   \[
   R\colon
   \mathscr{E}(\widetilde{G})
   \rightarrow
   \mathscr{L}(G),
   \widetilde{\pi}
   \mapsto
   \pi.
   \]
   By the above discussion,
   $R$ is well-defined and one can show by
   Frobenius reciprocity that $R$ is bijective.

   Now assume $F$ global.
   We say an $L$-packet of $G$ is
   cuspidal if some element in this $L$-packet
   is a cuspidal representation.
   We write $\mathscr{L}_0(G)$ for the
   subset of $\mathscr{L}(G)$
   consisting of cuspidal $L$-packets.
   Similarly we define cuspidal equivalence classes of
   in $\mathscr{E}(\widetilde{G})$
   and denote by
   $\mathscr{E}_0(\widetilde{G})$
   the subset of
   $\mathscr{E}(\widetilde{G})$
   consisting of cuspidal classes.
   Then one can show that the restriction of
   $R$ to $\mathscr{E}_0(\widetilde{G})$
   is again bijective
   (\cite[Lemma 1]{Silberger79}):
   \[
   R
   \colon
   \mathscr{E}_0(\widetilde{G})
   \rightarrow
   \mathscr{L}_0(G).
   \]
   As above, we write
   $\widetilde{Z}$ for the center of $\widetilde{G}$ and
   $Z=G\cap\widetilde{Z}$.
   Fix a unitary Hecke character $\widetilde{\chi}$ of
   $\widetilde{Z}$
   and $\chi=\widetilde{\chi}|_Z$.
   We write $\rho_\chi$ for the representation of
   $G$ by right translation on the space of
   $\mathbb{C}$-valued cuspidal square-integrable functions
   $L_0^2([G],\chi)$
   on which $Z$ acts by the character $\chi$.
   One can extend $\rho_\chi$ to a representation
   $\rho_\chi'$ on the same space of the group
   $G':=Z\widetilde{G}(F)G$
   as follows:
   for any
   $z\gamma g\in Z\widetilde{G}(F)G$
   and $f\in L_0^2([G],\chi)$,
   we set
   $\rho_\chi'(z\gamma g)f(x)
   :=\widetilde{\chi}(z)f(\gamma^{-1}x\gamma g)$.
   In the same manner as $\rho_\chi$, one can define
   the representation
   $\rho_{\widetilde{\chi}}$ of $\widetilde{G}$
   on the space
   $L_0^2([\widetilde{G}],\widetilde{\chi})$
   of square-integrable functions by right translation
   such that $\widetilde{Z}$ acts on by the character
   $\widetilde{\chi}$.
   Then one has
   \[
   \rho_{\widetilde{\chi}}
   =
   \mathrm{Ind}_{G'}^{\widetilde{G}}(\rho'_\chi).
   \]
   Moreover, $\rho_\chi'$ is a direct sum of
   irreducible admissible representations of
   $G'$ (with multiplicity one) and each irreducible
   cuspidal representation $\widetilde{\pi}$ of $\widetilde{G}$
   of central character $\widetilde{\chi}$
   occurs (with multiplicity one) in
   $\mathrm{Ind}_{G'}^{\widetilde{G}}(\pi')$
   for some submodule $\pi'$ of $\rho'_\chi$.
   As above, for any such representation $\pi'$ of
   $G'$, let $\pi$ be an irreducible submodule
   of $\pi'|_G$. 
   Then $\pi'|_G$ is a direct sum of
   $\pi^g$ for some $g\in\widetilde{G}$.
   Then we see that
   $\pi$ and $R(\widetilde{\pi})$ are in the same
   $L$-packet of $G$.

   Now we restrict ourselves to automorphic representations.
   Denote by
   $\mathscr{EA}(\widetilde{G})$,
   resp., $\mathscr{EA}_0(\widetilde{G})$,
   the subset of
   $\mathscr{E}(\widetilde{G})$,
   resp., $\mathscr{E}_0(\widetilde{G})$,
   consisting of irreducible, resp., irreducible cuspidal,
   automorphic representations of $\widetilde{G}$.
   We denote also by
   $\mathscr{LA}(G)$,
   resp., $\mathscr{LA}_0(G)$,
   the subset of
   $\mathscr{L}(G)$ consisting of
   irreducible, resp., irreducible cuspidal
   automorphic representations of $G$.
   Using \cite[Propisition 2]{Langlands79},
   one has
   \begin{lemma}\label{bijection of L-packets}
   	The map $R$ defined above induces bijections
   	\[
   	R
   	\colon
   	\mathscr{EA}(\widetilde{G})
   	\rightarrow
   	\mathscr{LA}(G),
   	\mathscr{EA}_0(\widetilde{G})
   	\rightarrow
   	\mathscr{LA}_0(G).
   	\]
   \end{lemma}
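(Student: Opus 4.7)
\bigskip

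The plan is to upgrade the local bijectivity of $R \colon \mathscr{E}(\widetilde{G}) \to \mathscr{L}(G)$ already established in the local discussion to the automorphic setting, by combining the decomposition $\rho_{\widetilde{\chi}} = \mathrm{Ind}_{G'}^{\widetilde{G}}(\rho'_\chi)$ displayed above with \cite[Proposition 2]{Langlands79} on automorphic representations of a normal subgroup.

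First I would check that $R$ sends $\mathscr{EA}(\widetilde{G})$ into $\mathscr{LA}(G)$. Given an irreducible automorphic representation $\widetilde{\pi}$ of $\widetilde{G}$ with central character $\widetilde{\chi}$, the multiplicity-one statement preceding the lemma exhibits $\widetilde{\pi}$ inside $\mathrm{Ind}_{G'}^{\widetilde{G}}(\pi')$ for some irreducible submodule $\pi'$ of $\rho'_\chi$. Restricting to $G$, one gets an irreducible summand $\pi$ of $\rho_\chi = \rho'_\chi|_G$; since $\rho_\chi$ is realized on $L_0^2([G],\chi)$ (or its square-integrable analogue), $\pi$ is automorphic and lies in $R(\widetilde{\pi})$. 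So $R$ is well-defined on $\mathscr{EA}(\widetilde{G})$.

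Next I would establish surjectivity onto $\mathscr{LA}(G)$. Given $\pi \in \mathscr{LA}(G)$ with central character $\chi$, \cite[Proposition 2]{Langlands79} provides an irreducible automorphic representation $\widetilde{\pi}$ of $\widetilde{G}$ such that $\pi$ appears in $\widetilde{\pi}|_G$; then by construction $R(\widetilde{\pi})$ is the $L$-packet of $\pi$. For injectivity, suppose $R(\widetilde{\pi}_1) = R(\widetilde{\pi}_2)$; then at every place $v$, the local bijection $R \colon \mathscr{E}(\widetilde{G}_v) \xrightarrow{\sim} \mathscr{L}(G_v)$ (established just above the lemma) forces $\widetilde{\pi}_{1,v}$ and $\widetilde{\pi}_{2,v}$ to be $\mathscr{E}$-equivalent, i.e.\ $\widetilde{\pi}_2 \simeq \widetilde{\pi}_1 \otimes \chi$ for some idele class character $\chi$ of $\widetilde{G}/G$, hence the two represent the same class in $\mathscr{EA}(\widetilde{G})$.

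Finally, for the cuspidal case I would verify that cuspidality is preserved by $R$ in both directions. If $\widetilde{\pi}$ is cuspidal, the identification $\rho_{\widetilde{\chi}} = \mathrm{Ind}_{G'}^{\widetilde{G}}(\rho'_\chi)$ together with the compactness of $\widetilde{Z}G\backslash\widetilde{G}$ implies that the corresponding submodule $\pi' \subset \rho'_\chi$ realizes $\pi$ inside $L_0^2([G],\chi)$, so $R(\widetilde{\pi}) \in \mathscr{LA}_0(G)$. Conversely, starting from $\pi \in \mathscr{LA}_0(G)$, the automorphic lift provided by \cite[Proposition 2]{Langlands79} can be chosen inside the cuspidal spectrum of $\widetilde{G}$ by the same induction argument. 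The main obstacle I anticipate is precisely this two-way preservation of cuspidality: one must check that the constant terms along all standard parabolics of $\widetilde{G}$ vanish if and only if the corresponding constant terms along parabolics of $G$ (which are obtained by intersecting with $G$) vanish, an observation which reduces to the compatibility of Levi decompositions in $G$ and $\widetilde{G}$ together with the fact that $\widetilde{G} = \widetilde{Z}G\widetilde{G}(F)$-twists do not alter the cuspidal support.
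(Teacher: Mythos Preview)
Your approach is the same as the paper's: the paper simply writes ``Using \cite[Proposition~2]{Langlands79}, one has'' and states the lemma without further detail, so you are correctly identifying Langlands' result as the key input for transferring automorphy in both directions between $G$ and $\widetilde{G}$. Your discussion of well-definedness, surjectivity, and the cuspidal case via the induction $\rho_{\widetilde{\chi}}=\mathrm{Ind}_{G'}^{\widetilde{G}}(\rho'_\chi)$ and compatibility of parabolics is a reasonable expansion of that citation.

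There is, however, a small gap in your injectivity argument. From local $\mathscr{E}$-equivalence at every place you obtain local characters $\chi_v$ of $\widetilde{G}_v/G_v$ with $\widetilde{\pi}_{2,v}\simeq\widetilde{\pi}_{1,v}\otimes\chi_v$, but these $\chi_v$ are not unique (they are only determined up to the self-twist group of $\widetilde{\pi}_{1,v}$), and it is not automatic that they can be chosen so that $\chi=\otimes_v\chi_v$ is a well-defined continuous character, let alone an \emph{id\`ele class} (i.e.\ Hecke) character as you claim. In fact the global $\mathscr{E}$-equivalence only requires $\chi$ to be a character of $\widetilde{G}(\mathbb{A})/G(\mathbb{A})$, not a Hecke character, so the phrase ``id\`ele class character'' is misleading. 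The cleaner fix is to observe that the abstract global bijection $R\colon\mathscr{E}(\widetilde{G})\to\mathscr{L}(G)$ already holds by exactly the same Frobenius reciprocity argument given in the local case; since $\mathscr{EA}(\widetilde{G})\subset\mathscr{E}(\widetilde{G})$ and $\mathscr{LA}(G)\subset\mathscr{L}(G)$, injectivity on automorphic classes is then immediate and no place-by-place argument is needed.
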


   In summary, let
   $\widetilde{\chi}$ be a Hecke character of $\widetilde{Z}$
   and $\chi=\widetilde{\chi}|_Z$,
   then one has
   \begin{corollary}\label{extension of automorphic representations}
   	Any irreducible admissible 
   	(resp., cuspidal) automorphic representation
   	$\pi$ of $G$ of central character $\chi$ can be extended to
   	an irreducible admissible (resp., cuspidal)
   	automorphic representation
   	$\widetilde{\pi}$ of
   	$\widetilde{G}$ of central character $\widetilde{\chi}$.
   \end{corollary}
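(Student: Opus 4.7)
The plan is to combine the bijection of $L$-packets in Lemma~\ref{bijection of L-packets} with a central-character adjustment obtained by twisting by a suitable Hecke character of $\widetilde{G}/G$. I will only write the argument for the cuspidal case; the admissible case is parallel.

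First, by the discussion preceding Lemma~\ref{bijection of L-packets}, the cuspidal $L$-packet $[\pi] \in \mathscr{LA}_0(G)$ corresponds under the bijection $R$ to an $\mathscr{E}$-class $[\widetilde{\pi}_0] \in \mathscr{EA}_0(\widetilde{G})$. Pick any representative $\widetilde{\pi}_0$ in this class, and let $\widetilde{\chi}_0$ denote its central character. By construction, the restriction $\widetilde{\pi}_0|_G$ decomposes as a direct sum of irreducible $G$-subrepresentations that all lie in the $L$-packet $[\pi]$, i.e., they have the form $\pi^g$ for various $g \in \widetilde{G}(F)$. In particular $\widetilde{\chi}_0$ and $\widetilde{\chi}$ agree on $Z = \widetilde{Z}\cap G$, both restricting to $\chi$. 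Therefore
\[
\eta_0 := \widetilde{\chi}\cdot\widetilde{\chi}_0^{-1}
\]
is a Hecke character of $\widetilde{Z}$ that is trivial on $Z$, hence a character of the abelian quotient $\widetilde{Z}/Z$.

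The next step is to extend $\eta_0$ to a Hecke character $\eta$ of the quotient $\widetilde{G}/G$, viewed as a character of $\widetilde{G}$ trivial on $G$. The quotient $\widetilde{G}/G$ is abelian: for $(\widetilde{G},G)=(\mathrm{GSp}_4,\mathrm{Sp}_4)$ it is identified via the similitude character $\nu$ with (a quotient of) the idele class group of $\mathbb{Q}$, and similarly for $(\widetilde{G},G)=(\mathrm{U}_4,\mathrm{SU}_4)$ via $\det$ with (a quotient of) the idele class group of $E$. Under the natural map $\widetilde{Z}/Z \hookrightarrow \widetilde{G}/G$ the image is a closed subgroup of finite index, and since the Pontryagin dual of the idele class group is divisible, any continuous character of $\widetilde{Z}/Z$ extends to a continuous character $\eta$ of $\widetilde{G}/G$.

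Finally, set $\widetilde{\pi} := \widetilde{\pi}_0 \otimes \eta$. Since $\eta$ is trivial on $G$, the representation $\widetilde{\pi}$ is $\mathscr{E}$-equivalent to $\widetilde{\pi}_0$, hence remains irreducible, admissible, automorphic, and cuspidal; moreover $\widetilde{\pi}|_G = \widetilde{\pi}_0|_G$ still contains a member of the $L$-packet $[\pi]$, so (after replacing $\widetilde{\pi}$ by its conjugate under a suitable $g \in \widetilde{G}(F)$ if necessary, which preserves all of the desired properties) it extends $\pi$ in the required sense. Its central character equals $\widetilde{\chi}_0\cdot \eta|_{\widetilde{Z}} = \widetilde{\chi}_0\cdot\eta_0 = \widetilde{\chi}$, as required.

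The main obstacle is the character-extension step: one must verify explicitly that the inclusion $\widetilde{Z}/Z \hookrightarrow \widetilde{G}/G$ has image of finite index in an abelian group whose Pontryagin dual is divisible, so that any Hecke character of $\widetilde{Z}/Z$ extends. Once this structural fact about $\widetilde{G}/G$ is in hand, everything else follows formally from Lemma~\ref{bijection of L-packets} and the Frobenius-type decomposition $\rho_{\widetilde{\chi}} = \mathrm{Ind}_{G'}^{\widetilde{G}}(\rho'_\chi)$ recalled in the preceding discussion.
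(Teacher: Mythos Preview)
Your overall strategy is sound and does lead to a correct proof, but the justification you give for the key character-extension step is flawed. The claim that the image of $\widetilde{Z}/Z$ in $\widetilde{G}/G$ has finite index is false at the adelic level: for $(\widetilde{G},G)=(\mathrm{GSp}_4,\mathrm{Sp}_4)$ the map on adelic points is essentially $z\mapsto z^2$ on id\`ele-class-type groups, whose image $(\mathbb{A}^\times)^2$ has infinite index in $\mathbb{A}^\times$ (there are infinitely many quadratic Hecke characters); the same issue arises with fourth powers for $(\mathrm{U}_4,\mathrm{SU}_4)$. Your second claim, that the Pontryagin dual of the id\`ele class group is divisible, is also false: the finite-order part of the dual contains cyclic groups $\mathbb{Z}/(p-1)$ for each prime $p$, which are not divisible. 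The correct justification is simply that characters of \emph{closed} subgroups of locally compact abelian groups always extend (Pontryagin duality), together with the observation that the relevant image is closed (it is open, being a product of open local subgroups) and that the extension can be taken trivial on rational points by a two-step argument using $\mathbb{Q}^\times\cap(\mathbb{A}^\times)^2=(\mathbb{Q}^\times)^2$.

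By contrast, the paper's (implicit) argument avoids the twist step altogether. It fixes $\widetilde{\chi}$ from the outset and uses the induction formula $\rho_{\widetilde{\chi}}=\mathrm{Ind}_{G'}^{\widetilde{G}}(\rho'_\chi)$ recalled just before the corollary: given $\pi\subset\rho_\chi$, one extends it to $\pi'\subset\rho'_\chi$ by letting $\widetilde{Z}$ act via $\widetilde{\chi}$, and then any irreducible constituent $\widetilde{\pi}$ of $\mathrm{Ind}_{G'}^{\widetilde{G}}(\pi')\subset\rho_{\widetilde{\chi}}$ automatically has central character $\widetilde{\chi}$. This is more direct, whereas your route gives a slightly more modular argument at the cost of the extra character-extension lemma.
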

   \begin{remark}\label{always possible to extend representations}
   	Since one can always extend a Hecke character $\chi$
   	of $Z$ to a character $\widetilde{\chi}$
   	of $\widetilde{Z}$,
   	the above corollary shows that
   	we can always extend an irreducible admissible
   	(resp., cuspidal) automorphic representation of
   	$G$ of central character $\chi$
   	to an irreducible admissible (resp., cuspidal)
   	automorphic representation 
   	$\widetilde{\pi}$ of $\widetilde{G}$.
   	Moreover, for any irreducible cuspidal automorphic
   	representation $\widetilde{\pi}$ of $\widetilde{G}$,
   	all the irreducible submodules of
   	$\widetilde{\pi}|_G$ have the same
   	unramified local components and thus the same
   	partial $L$-function.
   \end{remark}

   For an automorphic form
   $f$ on $G(\mathbb{A}_F)$, we can extend it
   to an automorphic form $\widetilde{f}$
   on $\widetilde{G}(\mathbb{A}_F)$
   as follows:
   for
   $g=g_1g_2$
   with $g_1\in\widetilde{G}(F)$
   and $g_2\in G(\mathbb{A}_F)$,
   we set
   $\widetilde{f}(g)=f(g_2)$,
   otherwise,
   $\widetilde{f}(g)=0$.

    \subsection{Theta lift from $\mathrm{GSp}_4$
    	to $\mathrm{U}_4$}
    \label{lift from GSp4 to U4}
    In this subsection, we use the previous results
    on the theta lift of the reductive dual pair
    $(\mathrm{GSp}^+(V),\mathrm{GSO}(U))$
    to deduce the transfer/theta lift
    from $\mathrm{GSp}_4$ to $\mathrm{U}_4$.

    Let $\pi=\otimes'_v\pi_v$ be a 
    cohomological cuspidal
    irreducible automorphic representation of
    $\mathrm{GSp}_4(\mathbb{A})$
    of Iwahori level
    $Np^m$ and of trivial central character.
    Let $f\in\pi$ be an automorphic form
    which is $p$-integral.
    Write $f_+$ its restriction to
    $\mathrm{Sp}_4(\mathbb{A})$.
    The theta lift
    $\Theta_\phi(f_+)$ to
    $\mathrm{SO}_6(\mathbb{A})$
    can be viewed as an automorphic form on
    $\mathrm{SU}_4(\mathbb{A})$
    and then extended by zero to an automorphic form
    on $\mathrm{U}_4(\mathbb{A})$
    (see below Remark 
    \ref{always possible to extend representations}),
    which we denote by
    $\Theta^*_\phi(f)$.
    Then write
    $\Theta^*(\pi)
    =\{\Theta^*_\phi(f)|f\in\pi,
    \phi\in\mathcal{S}(W_4^-(\mathbb{A}))\}$
    for the transfer of the representation
    $\pi$ on
    $\mathrm{GSp}_4(\mathbb{A})$
    to $\mathrm{U}_4(\mathbb{A})$.
    In terms of automorphic representations,
    write $\pi^+$ for an irreducible submodule of
    $\pi_+$.
    Then
    $\pi_+=\oplus_{i\in I}(\pi^+)^{g_i}$
    and all these
    $(\pi^+)^{g_i}$ are in the same $L$-packet of
    $\mathrm{Sp}_4(\mathbb{A})$.
    Then $\Theta^*(\pi)=\sum_i\Theta((\pi^+)^{g_i})$
    with $g_i\in\mathrm{GSp}_4(\mathbb{A})$
    as representations of $\mathrm{U}_4(\mathbb{A})$
    (Lemma \ref{bijection of L-packets}
    and Corollary 
    \ref{extension of automorphic representations}).

    To see the relation of the Langlands parameters of
    unramified components of
    $\pi$ and $\Theta^*(\pi)$,
    we proceed as follows:
    for any unramified local component
    $\pi_v$ on
    $\mathrm{GSp}_4(\mathbb{Q}_v)$ of $\pi$,
    let $\pi_v^+$ be the unique
    irreducible admissible unramified submodule of
    $(\pi_v)_+$, the restriction of
    $\pi_v$ to
    $\mathrm{GSp}^+_4(\mathbb{Q}_v)$.
    Suppose that
    $\pi_v^+$ corresponds to the semi-simple class
    $s(\pi_v)\in\mathrm{GSp}_4(\mathbb{C})
    \rtimes\Gamma_{E_v/\mathbb{Q}_v}$,
    then
    for any irreducible submodule $\Theta^*(\pi)^+$ of
    $\Theta^*(\pi)$,
    its unramified local component
    $(\Theta^*(\pi)^+)_v$
    corresponds to the semi-simple class
    $\iota(s(\pi_v))\in
    \mathrm{GL}_4(\mathbb{C})
    \rtimes\Gamma_{E_v/\mathbb{Q}_v}$
    (Theorems 
    \ref{Langlands parameters for GSp^+4 and GSO6}
    and
    \ref{Langlands parameters for GSp4 and GSO6}).
    The same result holds for the Steinberg
    components of $\pi$ and
    $\Theta(\pi)$
    by the same theorems.

    We summarize the discussion into
    the following:
    \begin{theorem}\label{Satake parameter relation of theta lift}
    	Let $\pi$ be a 
    	cohomological cuspidal irreducible automorphic representation
    	of $\mathrm{GSp}_4(\mathbb{A})$
    	of Iwahori level $Np^m$.
    	If the automorphic representation $\Theta^*(\pi)$ of
    	$\mathrm{U}_4(\mathbb{A})$ is not zero,
    	let the representations 
    	$\rho_\pi
    	\colon
    	\Gamma_\mathbb{Q}
    	\rightarrow
    	\mathrm{GSp}_4(\mathbb{C})$,
    	resp.,
    	$\rho_{\Theta^*(\pi)}
    	\colon
    	\Gamma_\mathbb{Q}
    	\rightarrow
    	\mathrm{GL}_4(\mathbb{C})
    	\rtimes\Gamma_{E/\mathbb{Q}}$,
    	be
    	the Galois representations associated to
    	$\pi$, resp.,
    	$\Theta^*(\pi)$.
    	Then
    	$\rho_{\Theta^*(\pi)}(\sigma)
    	=
    	(\rho_\pi(\sigma),\overline{\sigma})$,
    	where $\overline{\sigma}$ is the image of
    	$\sigma$ under the projection
    	$\Gamma_{\mathbb{Q}}
    	\rightarrow
    	\Gamma_{E/\mathbb{Q}}$.
    	The adjoint representations of
    	$\rho_\pi$ and
    	$\rho_{\Theta^*(\pi)}$ are related by
    	\[
    	\mathrm{ad}(\rho_{\Theta^*(\pi)})|_{\widetilde{\mathbf{V}}}
    	=\mathrm{ad}(\rho_\pi)|_{\widetilde{\mathbf{V}}}
    	\times\xi.
    	\]
    	Here $\widetilde{\mathbf{V}}$
    	is the subspace of
    	$\mathrm{M}_{4\times4}(\mathbb{C})$
    	defined in Remark 
    	\ref{orthogonal and special orthogonal, adjoint representation}.
    \end{theorem}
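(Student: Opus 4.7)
The plan is to establish both assertions at the level of unramified Frobenius classes and invoke Chebotarev density to conclude. First, I would observe that both $\rho_\pi$ and $\rho_{\Theta^*(\pi)}$ are continuous representations of $\Gamma_\mathbb{Q}$ unramified outside a finite set of places (those dividing $Np$), so by the Chebotarev density theorem it suffices to verify the identity $\rho_{\Theta^*(\pi)}(\mathrm{Frob}_\ell) = (\rho_\pi(\mathrm{Frob}_\ell), \overline{\mathrm{Frob}_\ell})$ on the conjugacy class of each Frobenius $\mathrm{Frob}_\ell$ for $\ell\nmid Np$. The Galois representations in question being, by their very construction, determined by the Satake parameters of the unramified local components of $\pi$ and $\Theta^*(\pi)$ respectively (through the local Langlands correspondence for unramified representations), the problem reduces to a comparison of Satake parameters.

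Next, I would fix $\ell \nmid Np$ and consider two subcases according to whether $\ell$ splits in $E/\mathbb{Q}$. In the non-split case, by Theorem \ref{Langlands parameters for GSp^+4 and GSO6}, applied to the unique unramified irreducible constituent $\pi_\ell^+$ of $(\pi_\ell)_+$, the local theta lift $\Theta^*(\pi_\ell^+)$ is the unramified representation of $\mathrm{GSO}_6(\mathbb{Q}_\ell)$ whose Satake parameter in ${}^L\mathrm{GSO}_6(\mathbb{Q}_\ell) = \mathrm{GL}_4(\mathbb{C})\rtimes\Gamma_{E/\mathbb{Q}}$ is $\iota(s(\pi_\ell)) = (s(\pi_\ell)\cdot A^{\widetilde{\mu}(\sigma_\ell)}, \sigma_\ell)$, with $A = i\cdot 1_4$ central. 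Since $A$ is a scalar, it does not affect the image of this class under the identification ${}^L\mathrm{GSO}_6 \simeq {}^L\mathrm{U}_4$ modulo the central torus of $\mathrm{GL}_4(\mathbb{C})$, so the Frobenius class of $\rho_{\Theta^*(\pi)}(\mathrm{Frob}_\ell)$ agrees with the image of $\rho_\pi(\mathrm{Frob}_\ell)$ under the natural embedding $\mathrm{GSp}_4(\mathbb{C}) \hookrightarrow \mathrm{GL}_4(\mathbb{C})$ paired with $\overline{\sigma}_\ell$. The split case is handled identically using Theorem \ref{Langlands parameters for GSp4 and GSO6}. The passage from the restricted theta lift on $\mathrm{GSp}_4^+$ to the genuine automorphic representation $\Theta^*(\pi)$ on $\mathrm{U}_4$ is justified by Lemma \ref{bijection of L-packets} and Corollary \ref{extension of automorphic representations}: the $L$-packets on $\mathrm{SU}_4$ obtained from the local theta lifts assemble into an automorphic representation on $\mathrm{U}_4$ whose unramified components have the same Satake parameters as those computed above.

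For the second statement, I would exploit the explicit description of the adjoint action recorded in Remark \ref{orthogonal and special orthogonal, adjoint representation}. The five-dimensional subspace $\widetilde{\mathbf{V}} \subset \mathrm{M}_{4\times 4}(\mathbb{C})$ defined by $J_4 X^t J_4^{-1} = X$ and $\mathrm{tr}(X) = 0$ is stable under $\mathrm{GL}_4(\mathbb{C})\rtimes \Gamma_{E/\mathbb{Q}}$, and upon restriction the $\mathrm{GL}_4$-action factors through $\mathrm{GSp}_4(\mathbb{C})$ and is isomorphic to the standard representation $\rho_{\mathrm{st}}$, while the nontrivial element $c\in\Gamma_{E/\mathbb{Q}}$ acts on $\widetilde{\mathbf{V}}$ by $X\mapsto -J_4 X^t J_4^{-1} = -X$, i.e.\ by the scalar $\xi(c) = -1$. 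Combining this with the first part then gives $\mathrm{ad}(\rho_{\Theta^*(\pi)})|_{\widetilde{\mathbf{V}}}(\sigma) = \rho_{\mathrm{st}}(\rho_\pi(\sigma))\cdot \xi(\sigma)$, which is the desired identity.

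The main obstacle I anticipate lies not in any single step but in tracking precisely the normalizations: the factor $A^{\widetilde{\mu}(\sigma)}$ arising from $\iota$, the passage between similitude group representations and the induced representations on the isometry subgroups, and the identification of ${}^L\mathrm{GSO}_6$ with ${}^L\mathrm{U}_4$ all introduce central twist ambiguities that must be shown to cancel in order to obtain the clean statement of the theorem. The adjoint identity, by contrast, is insensitive to central twists and follows directly once the semisimple class comparison at unramified primes is in hand.
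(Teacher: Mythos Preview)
Your proposal is correct and follows essentially the same route as the paper: the theorem is stated in the paper as a summary of the discussion in \S\ref{lift from GSp4 to U4}, which reduces the comparison of $\rho_\pi$ and $\rho_{\Theta^*(\pi)}$ to the local Satake-parameter relations of Theorems \ref{Langlands parameters for GSp^+4 and GSO6} and \ref{Langlands parameters for GSp4 and GSO6} together with the passage between similitude and isometry groups via Lemma \ref{bijection of L-packets} and Corollary \ref{extension of automorphic representations}, and then reads off the adjoint identity from Remark \ref{orthogonal and special orthogonal, adjoint representation}. Your explicit invocation of Chebotarev and your remark on the central scalar $A=i\cdot1_4$ being invisible in the adjoint are useful clarifications but do not change the structure of the argument.
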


    \section{Selmer groups, congruence ideals and $L$-values}
    In this section,
    we first establish morphisms between
    the Hecke algebras and universal deformation rings
    on $\mathrm{GSp}_4$ and $\mathrm{U}_4$.
    This permits us to relate
    Selmer groups to congruence ideals
    and finally using theta correspondence
    we get an identity of the characteristic element of
    the Selmer group and the special
    $L$-value. 
    
    \subsection{Hecke algebras and Galois representations}
    In this subsection we will establish homomorphisms
    of Hecke algebras and Galois representations
    on the groups
    $\mathrm{GSp}_4$ and $\mathrm{U}_4$
    using the theta lift in the preceding section.

    \subsubsection{Case: $\mathrm{GSp}_4$}
    Recall that we have
    defined Hecke operators on
    $G=\mathrm{GSp}_4$:
    for any $\mathbb{Z}$-algebra $R$,
    the spherical Hecke operator
    $
    \mathcal{H}(G(\mathbb{Q}_\ell),G(\mathbb{Z}_\ell),
    R)$,
    the dilating Iwahori operator
    $\mathcal{H}^-(G(\mathbb{Q}_\ell),
    I_{G,p},R)$.
    Then the abstract global Hecke algebra of
    $G(\mathbb{Q})$
    is the restricted tensor product:
    \[
    H^\mathrm{s}(R)
    =
    (\bigotimes_{\ell\nmid Np}\,'
    \mathcal{H}(G(\mathbb{Q}_\ell),G(\mathbb{Z}_\ell),
    R)
    \bigotimes
    \mathcal{H}^-(G(\mathbb{Q}_\ell),
    I_{G,p},R).
    \]
    Moreover, we define the Hecke polynomial as
    \[
    P_\ell(X)
    =
    X^4-T^{(2)}_{\ell,0}(\ell)X^3
    +(T^{(2)}_{\ell,0}(\ell)-T_{\ell,1}^{(2)}(\ell^2)-
    \ell^2T_{\ell,2}^{(2)}(\ell^2))X^2
    -\ell^3T_{\ell,0}^{(2)}(\ell)
    T_{\ell,1}^{(2)}(\ell^2)X
    +\ell^6T_{\ell,2}^{(2)}(\ell^2)^2.
    \]

    We have also defined the space of
    cuspidal automorphic forms
    $S_{\underline{k}}(\widehat{\Gamma},R)$
    on
    $\mathrm{GSp}_4(\mathbb{A})$
    of weight $\underline{k}$,
    with coefficients in $R$
    and invariant under $\widehat{\Gamma}$.
    To avoid possible confusion of notations
    with the case of $\mathrm{U}_4$,
    we add a superscript
    `s' to this space:
    $S^\mathrm{s}_{\underline{k}}(\widehat{\Gamma},
    R)$.
    Then $H^{\mathrm{s}}(R)$
    acts on the space
    $S_{\underline{k}}^\mathrm{s}(\widehat{\Gamma},
    R)$
    and 
    the image of
    $H^\mathrm{s}(R)$
    in
    $\mathrm{End}_R
    (S_{\underline{k}}^\mathrm{s}(\widehat{\Gamma},
    R))$
    is denoted by $\mathbb{T}^\mathrm{s}_{\underline{k}}
    (\widehat{\Gamma},R)$.
    Suppose moreover that $R$ is $p$-adically complete,
    then
    as in Section 2, we have the idempotent element
    $e\in \mathbb{T}^\mathrm{s}_{\underline{k}}
    (\widehat{\Gamma},R)$
    and also the ordinary parts
    $\mathbb{T}_{\underline{k}}^\mathrm{s,ord}
    (\widehat{\Gamma},R)
    :=e\mathbb{T}_{\underline{k}}^\mathrm{s}
    (\widehat{\Gamma},R)$
    and
    $S_{\underline{k}}^\mathrm{s,ord}(\widehat{\Gamma},R)
    :=eS_{\underline{k}}^\mathrm{s}(\widehat{\Gamma},R)$.   
    Finally we write
    $S^\mathrm{s,ord}(\widehat{\Gamma}^\infty,R)$
    for the space of ordinary $p$-adic modular forms
    on $\mathrm{GSp}_4(\mathbb{Q})$,
    which is the projective limit of
    $S^\mathrm{s,ord}(\widehat{\Gamma}(N,p^m),R)$
    for $m\in\mathbb{Z}_{>0}$
    (in \cite[Th\'{e}or\`{e}me 2.1(2)]{Pilloni2012},
    it is denoted by $\mathcal{V}^{ord,*}_{cusp}$).
    This is a finite free module over
    $\Lambda^\mathrm{s}$
    where
    $\Lambda^\mathrm{s}$ is the Iwasawa weight algebra
    given by
    \begin{align*}
    \Lambda^\mathrm{s}
    :=
    \mathcal{O}[[T(1+p\mathbb{Z}_p)]]
    &
    \simeq
    \mathcal{O}[[X_1,X_2,X_3]],
    \\
    \mathrm{diag}(1_{2-i},(1+p)
    \cdot1_i,(1+p)^2\cdot1_{2-i},(1+p)\cdot1_i)
    &
    \mapsto
    1+X_{i+1}
    \quad
    (i=0,1,2)
    \end{align*}
    We then define the big ordinary Hecke algebra
    $\mathbb{T}^\mathrm{s,ord}
    (\widehat{\Gamma}^\infty,\mathcal{O})$
    as the image of
    $H^{\mathrm{s}}
    \otimes_\mathbb{Z}\Lambda^\mathrm{s}$
    in the endomorphism algebra
    $\mathrm{End}_{\mathcal{O}}
    (S^{\mathrm{s,ord}}(\widehat{\Gamma}^\infty,\mathcal{O}))$
    (in \cite{Pilloni2012} below Th\'{e}or\`{e}me 6.1, 
    it is denoted by
    $\widetilde{\mathbb{T}}$).

    Next we recall several properties of the Galois
    representation associated to 
    an automorphic representation of
    $\mathrm{GSp}_4(\mathbb{A})$.
    Let $\pi=\otimes_v\pi_v$ be a cohomological cuspidal 
    irreducible automorphic representation
    of $\mathrm{GSp}_4(\mathbb{A})$
    of weight
    $\underline{k}=(k_0,k_1,k_2)$
    with $k_1\geq k_2$.
    Moreover, assume that
    $\pi$ is not CAP,
    $\pi_\infty$ is a discrete series
    holomorphic representations of
    $\mathrm{GSp}_4(\mathbb{R})$
    and $\pi_\ell$ is unramified outside $Np$ and
    Steinberg representation at $p$
    (by \cite[Tables 1 and 15]{RobertsSchmidt2007},
    $\pi_p$ has a unique vector fixed by the Iwahori subgroup
    $\mathrm{Iw}_p^{0,1}$ of 
    $\mathrm{GSp}_4(\mathbb{Z}_p)$).
    Then there is a $p$-adic Galois representation
    $\rho_\pi
    \colon
    \Gamma_\mathbb{Q}
    \rightarrow
    \mathrm{GSp}_4(\mathcal{O})$
    associated to
    $\pi$
    (\cite{Taylor1993,GenestierTilouine2005,
    	Jorza2012,Weissauer2005}).
    Then for any local component 
    $\pi_\ell$,
    we have
    an isomorphism of semi-simplified
    Weil-Deligne representations
    \begin{equation}\label{Weil-Deligne vs. Galois, GSp(4)}
    \mathrm{WD}(\rho_\pi|_{D_\ell})^\mathrm{ss}
    \simeq
    \iota(\mathrm{rec}(\pi_\ell\otimes|\cdot|^{-3/2})^\mathrm{ss}).
    \end{equation}
    Here $\mathrm{rec}$ is the local Langlands correspondence for
    $\mathrm{GSp}_4$,
    $D_\ell$ is the decomposition group at $\ell$,
    and $\iota\colon\mathbb{C}\simeq\overline{\mathbb{Q}}_p$
    is the isomorphism fixed since the beginning
    (\cite[Theorem A]{Jorza2012}).
    On the other hand,
    we can relate the Frobenius to the Hecke polynomial
    as follows
    (\cite[Theorem I]{Weissauer2005}): 
    let $X$ be a variable and $\mathrm{Fr}_\ell$
    the Frobenius element at $\ell$,
    if $\ell\nmid Np$, then
    \[
    \mathrm{det}(X-\mathrm{Fr}_\ell,\rho_\pi|_{D_\ell})
    =
    P_\ell(X).
    \]
    Moreover, $\rho_{\pi,p}:=\rho_\pi|_{D_p}$ is crystalline
    (\cite{Faltings1989}).
    We write
    $\rho_{\pi,p}^\mathrm{crys}
    =(\rho_{\pi,p}\otimes\mathrm{B}_\mathrm{crys})^{D_p}$
    where $\mathrm{B}_\mathrm{crys}$
    is Fontaine's $p$-adic period ring
    and the absolute Frobenius element
    $\mathrm{Fr}_p'$ acts on
    $\rho_{\pi,p}^\mathrm{crys}$.
    Then we have
    (\cite[Th\'{e}or\`{e}me 1]{Urban2005})
    \[
    \mathrm{det}(X-\mathrm{Fr}_p',\rho_{\pi,p}^\mathrm{crys})
    =
    P_p(X).
    \]
    Suppose that $\pi$ is ordinary at $p$
    (\textit{cf}. \cite{Pilloni2012} below
    Th\'{e}or\`{e}me 3.3,
    i.e., $\pi_p$ is spherical and 
    the entries in $\lambda_\pi(\mathrm{diag}
    (v_1u_1,v_2u_1,u_1u_1,v_1u_2))
    \in(\overline{\mathbb{Q}}^\times)^4/W_G$ 
    have $p$-adic valuations
    $\{0, k_2+1, k_1+2, k_1+k_2+3\}$).
    We write these numbers as
    $\alpha_i$ ($i=1,\cdots,4$)
    with $p$-adic valuations
    $0,k_2+1,k_1+2,k_1+k_2+3$.
    Write
    $\chi_p
    \colon
    D_p\rightarrow
    \mathbb{Z}_p^\times$
    for the $p$-adic cyclotomic character
    and
    for any $a\in\overline{\mathbb{Z}}_p^\times$,
    $\mathrm{ur}(a)$ denotes the unramified character of
    $D_p$ such that
    $\mathrm{ur}(a)(\mathrm{Fr}_p)=a$.
    If we assume that
    $\rho_\pi$ is irreducible, then
    $\rho_{\pi,p}$
    is conjugate to an upper triangular representation
    \begin{equation}\label{crystalline at p for GSp(4)}
    \rho_{\pi,p}
    \simeq
    \begin{pmatrix}
    \chi_1 & * & * & * \\
    0 & \chi_2 & * & * \\
    0 & 0 & \chi_3 & * \\
    0 & 0 & 0 & \chi_4
    \end{pmatrix}
    \end{equation}
    whose diagonal entries are
    characters defined as
    $\chi_1:=\mathrm{ur}(\alpha_1)$,
    $\chi_2:=\mathrm{ur}(\alpha_2p^{-k_2-1})\chi_p^{-k_2-1}$,
    $\chi_3:=\mathrm{ur}(\alpha_3p^{-k_1-2})\chi_p^{-k_1-2}$
    and
    $\chi_4:=\mathrm{ur}(\alpha_4p^{-k_1-k_2-3})\chi_p^{-k_1-k_2-3}$
    (\cite[Corollaire 1]{Urban2005}).   
    We write
    $\overline{\rho}_\pi
    \colon
    \Gamma_\mathbb{Q}
    \rightarrow
    \mathrm{GSp}_4(\mathbb{F})$
    to be
    the residual Galois representation of
    $\rho_\pi$.
    Then there is a filtration
    $(\overline{\mathrm{Fil}}_i)_{i=0}^4$ on
    $\overline{\rho}_{\pi,p}$
    such that
    $\overline{\mathrm{Fil}}_i/
    \overline{\mathrm{Fil}}_{i-1}=
    \overline{\chi}_i$.
    We define a unipotent matrix
    (corresponding to the matrix
    $\epsilon$ in \cite[Section 3.4.1]{Pilloni2012})
    \[
    \varepsilon
    =
    \begin{pmatrix}
    0 & 1 & 0 & 0 \\
    0 & 0 & 0 & 1\\
    0 & 0 & 0 & 0 \\
    0 & 0 & -1 & 0
    \end{pmatrix}.
    \]
    Then for $\ell|N$,
    the restriction $\overline{\rho}_\pi|_{I_\ell}$
    to the inertia subgroup $I_\ell$
    of $\overline{\rho}_\pi$
    has image in the unipotent subgroup
    $\mathrm{exp}(\mathbb{F}\varepsilon)
    \subset\mathrm{GSp}_4(\mathbb{F})$
    by the local-global compatibility
    of Galois representations associated to 
    Siegel automorphic forms
    (\cite[Theorems A and B]{Sorensen2010},
     \cite[Theorem A]{Jorza2012})
    and the classification of Iwahori-spherical representations
    (\cite[Table 15]{RobertsSchmidt2007}).

    We define deformation functors and
    their universal deformation rings for
    $\overline{\rho}_{\pi,p}$.
    We write
    $\mathrm{CNL}_\mathcal{O}$
    for the category of complete noetherian local
    $\mathcal{O}$-algebras
    $A$ with maximal ideal $\mathfrak{m}_A$
    such that $A/\mathfrak{m}_A=\mathbb{F}$.
    Similarly we write
    $\mathrm{AL}_\mathcal{O}$
    for the subcategory of
    $\mathrm{CNL}_\mathcal{O}$
    of (complete) artinian local $\mathcal{O}$-algebras.
    We define
    deformation functors as follows
    \[
    \mathcal{D}_\pi,
    \mathcal{D}_{\underline{k}''}
    \colon
    \mathrm{AL}_\mathcal{O}
    \rightarrow
    \mathrm{Sets}
    \]
    where
    $\mathcal{D}_\pi(A)$
    is the set of equivalence classes of
    liftings $\rho_A
    \colon
    \Gamma_\mathbb{Q}
    \rightarrow
    \mathrm{GSp}_4(A)$
    of $\overline{\rho}_{\pi,p}$
    such that
    \begin{enumerate}
    	\item 
    	$\rho_A$ is unramified outside $Np$;
    	
    	\item 
    	for each $\ell|N$, up to a conjugation,
    	the image
    	$\rho_A(I_\ell)$
    	of the inertia subgroup
    	is contained in the unipotent subgroup
    	$\mathrm{exp}(A\varepsilon)$;
    	
    	\item 
    	there is a filtration
    	$(\mathrm{Fil}^A)_i$ on $\rho_A$
    	lifting $(\overline{\mathrm{Fil}})_i$
    	and stable under $D_p$;
    	
    	\item 
    	the character
    	of $D_p$ on
    	$\mathrm{Fil}^A_0/\mathrm{Fil}^A_{-1}$ is unramified.
    \end{enumerate}
    For $\mathcal{D}_{\underline{k}''}(A)$,
    we require moreover
    as in \cite[Section 5.6]{Pilloni2012}
    that in (3) above the character $\chi_i'$ of
    $D_p$ via $\rho_A$ on
    $\mathrm{Fil}^A_i/\mathrm{Fil}^A_{i-1}$
    is of the form
    $\chi'_i(\sigma)=\mathrm{Art}(\sigma)^{k''_i}$
    lifting $\overline{\chi}_i$
    for $\sigma\in I_p$
    and $\mathrm{Art}(\sigma)
    \in\mathbb{Z}_p^\times$
    by the local Artin map
    (clearly $\mathcal{D}_{\underline{k}''}(A)$
    is nonempty only if
    $\underline{k}''
    \equiv(0,-k_2-1,k_1-2,-k_1-k_2-3)
    (\mathrm{mod}\, p-1)$,
    which we will assume in the following).
    Two liftings $\rho_A$ and $\rho_A'$
    are equivalent if there is a matrix
    $g\in1_4+\mathfrak{m}_A\cdot\mathfrak{gsp}_4(A)$
    such that $\rho_A=g\rho_A'g^{-1}$.   
    We have the following hypotheses
    \begin{hypothesis}
    	\label{hypotheses for R=T theorems}
    	\begin{enumerate}
    		\item 
    		($N$-Min)
    		For each
    		$\ell|N$,
    		the restriction to inertia group
    		$\overline{\rho}_\pi|_{I_\ell}$
    		contains a regular unipotent element;
    		
    		\item 
    		($\mathrm{RFR}^{(2)}$)
    		The images of
    		$\alpha_1$,
    		$\alpha_2p^{-k_2-1}$,
    		$\alpha_3p^{-k_1-2}$
    		and
    		$\alpha_4p^{-k_1-k_2-3}$
    		in $\mathcal{O}/\varpi$
    		are mutually distinct;
    		
    		\item 
    		($BIG^{(2)}$)
    		The image of the
    		residual Galois representation
    		$\overline{\rho}_\pi(\Gamma_{\mathbb{Q}})$
    		contains
    		$\mathrm{Sym}^3(\mathrm{SL}_2(\mathbb{F}_p))$.
    	\end{enumerate}	
    	
    \end{hypothesis}

    Then
    by \cite[Proposition 2.1]{Tilouine2006},
    The functors $\mathcal{D}_\pi$, resp.,
    $\mathcal{D}_{\underline{k}''}$
    is pro-representable,
    say by the couple
    $(\rho^\mathrm{s}_\pi, R^\mathrm{s}_\pi)$,
    resp.,
    $(\rho_{\underline{k}''}^\mathrm{s},
    R_{\underline{k}''}^\mathrm{s})$,
    where $R^\mathrm{s}_\pi$,
    resp., $R_{\underline{k}''}^\mathrm{s}$ ,
    is an object in
    $\mathrm{CNL}_\mathcal{O}$
    and $\rho^\mathrm{s}$, resp.,
    $\rho_{\underline{k}''}^\mathrm{s}$,
    is a Galois representation
    $\Gamma_\mathbb{Q}
    \rightarrow
    \mathrm{GSp}_4(R^\mathrm{s}_\pi)$,
    resp.,
    $\Gamma_{\mathbb{Q}}
    \rightarrow
    \mathrm{GSp}_4(R_{\underline{k}''}^\mathrm{s})$.
    We write $\mathfrak{m}_\pi$
    for the maximal ideal of
    $\mathbb{T}^\mathrm{s,ord}
    (\widehat{\Gamma}^\infty,\mathcal{O})$
    associated to the residual representation
    $\overline{\rho_\pi}$
    and set
    \[
    \mathbb{T}^\mathrm{s}_\pi
    :=\mathbb{T}^\mathrm{s,ord}
    (\widehat{\Gamma}^\infty,
    \mathcal{O})_{\mathfrak{m}_\pi},
    \quad
    S_\pi^\mathrm{s}
    :=S^\mathrm{s,ord}(\widehat{\Gamma}^\infty,
    \mathbb{Q}_p/\mathbb{Z}_p)_{\mathfrak{m}_\pi},
    \]
    \[
    \mathbb{T}_{\underline{k}''}^\mathrm{s}
    :=
    \mathbb{T}_{\underline{k}''}^\mathrm{s,ord}
    (\widehat{\Gamma},
    \mathcal{O}),
    \quad
    S_{\underline{k}''}^\mathrm{s}
    :=
    S_{\underline{k}''}^\mathrm{s,ord}(\widehat{\Gamma},\mathbb{Q}_p/\mathbb{Z}_p).
    \]

    Using pseudo-representations, one can construct a
    Galois representation
    $\rho_{\mathbb{T}_\pi^\mathrm{s}}
    \colon
    \Gamma_\mathbb{Q}
    \rightarrow
    \mathrm{GSp}(\mathbb{T}_\pi^\mathrm{s})$
    such that it is a lift of
    $\overline{\rho_\pi}$
    (\cite[Section 6.6]{Pilloni2012}).
    Thus by the definition of
    $R^\mathrm{s}_\pi$, we have a morphism
    $R^\mathrm{s}
    \rightarrow
    \mathbb{T}^\mathrm{s}_\pi$
    of $\Lambda^\mathrm{s}$-algebras
    (the $\Lambda^\mathrm{s}$-algebra structure on
    $R^\mathrm{s}$ is given as in \cite[Section 5.5]{Pilloni2012}).
    The method of Taylor-Wiles gives
    (\cite[Th\'{e}or\`{e}me 7.1]{Pilloni2012}):
    \begin{theorem}
    	Assume
    	Hypothesis
    	\ref{hypotheses for R=T theorems},
    	then the morphism
    	$R_\pi^\mathrm{s}
    	\rightarrow
    	\mathbb{T}_\pi^\mathrm{s}$
    	is an isomorphism
    	and $\mathbb{T}_\pi^\mathrm{s}$
    	is finite flat complete intersection
    	over $\Lambda^\mathrm{s}$.
    	The Hecke module
    	$S_\pi^\mathrm{s}$ is a finite free
    	$\mathbb{T}_\pi^\mathrm{s}$-module.
    	Moreover, the specialization map
    	${\mathbb{T}_\pi^\mathrm{s}
    	\otimes_{\Lambda^\mathrm{s},\underline{k}''}\mathcal{O}
    	\rightarrow
    	\mathbb{T}_{\underline{k}''}^\mathrm{s}}$
    	is an isomorphism.
    	In particular, we have an isomorphism
    	$R_{\underline{k}''}^\mathrm{s}
    	\simeq
    	\mathbb{T}_{\underline{k}''}^\mathrm{s}$.
    \end{theorem}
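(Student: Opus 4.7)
The plan is to apply directly the Taylor--Wiles--Kisin patching argument as developed in \cite{Pilloni2012} (\textit{cf}. Th\'{e}or\`{e}me 7.1 \textit{loc. cit.}), whose three running hypotheses ($N$-Min), ($RFR^{(2)}$) and ($BIG^{(2)}$) are precisely the ones listed in Hypothesis \ref{hypotheses for R=T theorems}. First I would verify that the Galois representation $\rho_{\mathbb{T}_\pi^\mathrm{s}}$ into $\mathrm{GSp}_4(\mathbb{T}_\pi^\mathrm{s})$ constructed via pseudo-representations satisfies the four local conditions cut out by $\mathcal{D}_\pi$: unramifiedness outside $Np$ is built into the Hecke algebra; the unipotent local monodromy at $\ell\mid N$ follows from the Iwahori-spherical classification of \cite[Table 15]{RobertsSchmidt2007} combined with local-global compatibility \cite{Sorensen2010, Jorza2012}; and the existence of the ordinary filtration at $p$ with unramified top graded piece comes from the $p$-ordinarity hypothesis together with the triangulation (\ref{crystalline at p for GSp(4)}) of \cite{Urban2005}. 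This yields the surjective homomorphism $R_\pi^\mathrm{s} \twoheadrightarrow \mathbb{T}_\pi^\mathrm{s}$ of $\Lambda^\mathrm{s}$-algebras.

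The heart of the proof is then the patching argument. The key steps, in order, are: (i) use ($BIG^{(2)}$) to produce, for every integer $Q\geq 1$, a finite set of Taylor--Wiles primes $\{q_1,\dots,q_r\}$ whose residue characteristics are $\equiv 1 \pmod{p^Q}$, at which $\overline{\rho}_\pi(\mathrm{Frob}_{q_i})$ has distinct eigenvalues lying in $\mathbb{F}$, and at which the tangent space of the local deformation functor has the correct dimension so that the global deformation ring auxiliary at $\{q_i\}$ is generated by the right number of variables over $\Lambda^\mathrm{s}$; (ii) thicken the level at each $q_i$ to introduce the corresponding diamond operators on the Hecke side, using ($RFR^{(2)}$) to select a single eigensystem per Taylor--Wiles prime and hence realize the corresponding $R^{Q}_\pi \twoheadrightarrow \mathbb{T}^{Q}_\pi$; (iii) pass to the inverse limit via the Taylor--Wiles--Kisin--Diamond patching machine to obtain a surjection between power series rings over $\Lambda^\mathrm{s}$ of equal Krull dimensions whose target is finite free, which forces the surjection to be an isomorphism, $\mathbb{T}_\pi^\mathrm{s}$ to be a finite flat complete intersection over $\Lambda^\mathrm{s}$, and $S_\pi^\mathrm{s}$ to be a finite free $\mathbb{T}_\pi^\mathrm{s}$-module; (iv) descend this $R=T$ result in weight to the fixed weight $\underline{k}''$ by specializing along the arithmetic point of $\Lambda^\mathrm{s}$ determined by $\underline{k}''$, observing that the specialization of the universal ordinary deformation is the universal deformation of prescribed Hodge--Tate weights $\underline{k}''$, which gives $R^\mathrm{s}_{\underline{k}''}\simeq \mathbb{T}^\mathrm{s}_\pi\otimes_{\Lambda^\mathrm{s},\underline{k}''}\mathcal{O}\simeq \mathbb{T}^\mathrm{s}_{\underline{k}''}$.

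The main obstacle, as always in the symplectic setting, will be step (i): constructing Taylor--Wiles primes for $\mathrm{GSp}_4$ requires controlling simultaneously the local Galois deformation problem at $q_i$ (to ensure unobstructedness and the correct tangent dimension) and the adjoint Galois cohomology estimate $h^1_\emptyset(\mathbb{Q},\mathrm{ad}^0\overline{\rho}_\pi)\leq r+\text{const}$, which in turn depends crucially on the ``bigness'' assumption ($BIG^{(2)}$) guaranteeing the vanishing of $H^1(\mathrm{Gal}(\mathbb{Q}(\overline{\rho}_\pi)/\mathbb{Q}),\mathrm{ad}^0\overline{\rho}_\pi)$ by invoking the representation theory of $\mathrm{Sym}^3\mathrm{SL}_2(\mathbb{F}_p)$. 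The hypothesis ($N$-Min) serves to eliminate extra local deformations at ramified primes dividing $N$ so that the minimal deformation ring coincides with $R^\mathrm{s}_\pi$. All of these technical inputs are established in \cite[\S 7]{Pilloni2012}, so the proof reduces to citing that reference once the above compatibility of hypotheses has been verified.
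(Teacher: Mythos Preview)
Your proposal is essentially correct and follows the same approach as the paper, which simply cites \cite[Th\'{e}or\`{e}me 7.1]{Pilloni2012} for almost everything. Your sketch of the Taylor--Wiles--Kisin mechanism is more detailed than what the paper provides, but it is accurate and the identification of where each of the three hypotheses enters is on point.

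There is one small gap worth flagging. In your step (iii) you assert that patching directly forces $S_\pi^\mathrm{s}$ to be finite free over $\mathbb{T}_\pi^\mathrm{s}$. The paper is careful to point out that this is \emph{not} quite what Pilloni's argument gives: what is patched in \cite{Pilloni2012} is the $\mathcal{O}$-linear dual $\mathrm{Hom}_\mathcal{O}(S_\pi^\mathrm{s},\mathcal{O})$, and it is this dual that comes out finite free over $\mathbb{T}_\pi^\mathrm{s}$. To conclude freeness of $S_\pi^\mathrm{s}$ itself one needs the extra observation that $\mathbb{T}_\pi^\mathrm{s}$, being a complete intersection, is Gorenstein, so dualizing a free module yields a free module. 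You should insert this step explicitly, since the Gorenstein property is precisely what is invoked downstream (Hypothesis \ref{Gorenstein hypothesis} and Remark \ref{hypothesis on Gorenstein is satisfied}).
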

    \begin{proof}
    	All the parts in the theorem are proved
    	except the part that
    	$S_\pi^\mathrm{s}$
    	is finite free over $\mathbb{T}_\pi^\mathrm{s}$.
    	In \cite[Th\'{e}or\`{e}me 7.1]{Pilloni2012},
    	it is shown that the linear dual
    	$\mathrm{Hom}_\mathcal{O}(
    	S_\pi^\mathrm{s},\mathcal{O})$
    	is finite free over $\mathbb{T}_\pi^\mathrm{s}$.
    	Now that $\mathbb{T}_\pi^\mathrm{s}$
    	is complete intersection, thus is Gorenstein,
    	which implies that $S_\pi^\mathrm{s}$
    	is finite free over $\mathbb{T}_\pi^\mathrm{s}$.
    \end{proof}
    \begin{remark}\label{hypothesis on Gorenstein is satisfied}
    	From this theorem, we see that
    	Hypothesis
    	\ref{Gorenstein hypothesis}
    	is satisfied.
    \end{remark}

    \subsubsection{Case $\mathrm{U}_4$}
    \label{Hecke algebra of U(4)}
    In the case of unitary groups, we follow
    closely \cite[Section 2]{Geraghty2010}.
    As in \cite{Geraghty2010},
    we are only interested in defining Hecke operators
    at finite places $\ell$ 
    of $\mathbb{Q}$
    that split in $E/\mathbb{Q}$.
    Since in this case
    $U_4(\mathbb{Q}_\ell)$
    is isomorphic to
    $\mathrm{GL}_4(\mathbb{Q}_\ell)$,
    thus we are essentially defining Hecke operators
    of
    $\mathrm{GL}_4(\mathbb{Q}_\ell)$,
    as we will do in the following.
    Suppose that
    $\ell$ splits as
    $\mathfrak{l}\mathfrak{l}^c$ in $E$.
    Then we fix an isomorphism
    \[
    \iota_\mathfrak{l}
    \colon
    \mathrm{U}_4(\mathbb{Z}_\ell)
    \xrightarrow{\sim}
    \mathrm{GL}_4(\mathcal{O}_{E,\mathfrak{l}})
    \simeq
    \mathrm{GL}_4(\mathbb{Z}_\ell).
    \]
    We can use this isomorphism
    $\iota_\mathfrak{l}$ to carry over the
    Hecke theory of
    $\mathrm{GL}_4$ to
    $\mathrm{U}_4$.
    we write $G'
    =\mathrm{GL}_4(\mathbb{Q}_\ell)
    $
    and the maximal compact open subgroup
    $K'=\mathrm{GL}_4(\mathbb{Z}_\ell)$.
    We define the spherical Hecke algebra
    $H_\ell^\mathrm{u}$
    of $G'$ as the
    $\mathbb{Q}$-algebra of
    finite $\mathbb{Q}$-linear combinations of
    double cosets
    $K'\gamma K'$
    for $\gamma\in G'$.
    We have several distinguished Hecke operator
    in $H_\ell^\mathrm{u}$
    as follows
    (\cite[p.10]{Geraghty2010})
    \[
    T^\mathrm{u}_i(\ell)=K'\mathrm{diag}(\ell\cdot1_i,1_{4-i})K'
    \quad
    (i=1,2,3,4).
    \]
    As $\mathbb{Q}$-algebras,
    $H_\ell^\mathrm{u}
    \simeq
    \mathbb{Q}[T^\mathrm{u}_1(\ell),\cdots,
    T^\mathrm{u}_4(\ell),
    T^\mathrm{u}_4(\ell)^{-1}]$.
    Let $T_{G'}$ be the standard torus of
    $G'$
    and define
    $H_\ell^\mathrm{u}(T_{G'})$
    to be the $\mathbb{Q}$-algebra of
    finite $\mathbb{Q}$-linear combinations of
    (double) cosets of
    $T_{G'}(\mathbb{Z}_\ell)\gamma$
    for $\gamma\in T_{G'}$.
    Then the Satake isomorphism identifies
    $H_\ell^\mathrm{u}$
    with the subalgebra
    of $H_\ell^\mathrm{u}(T_{G'})$
    consisting of elements invariant under the
    Weyl group
    $W_{G'}$ of $T_{G'}$ in
    $G'$.
    We have an isomorphism of
    $H_\ell^\mathrm{u}(T_{G'})$
    with the polynomial algebra
    $\mathbb{Q}[w_1^{\pm1},\cdots,w_4^{\pm1}]$
    by sending
    $T_{G'}(\mathbb{Z}_\ell)
    \mathrm{diag}(\ell^{a_1},\cdots, \ell^{a_4})$
    to
    $w_1^{a_1}\cdots w_4^{a_4}$.
    One can identify each morphism of
    $\mathbb{C}$-algebras
    $\lambda'
    \colon
    H_\ell^\mathrm{u}\otimes_\mathbb{Q}\mathbb{C}
    \rightarrow
    \mathbb{C}$
    to the element 
    $\lambda'(w_1\cdots w_4)$
    in
    $(\mathbb{C}^\times)^4/W_{G'}$.
    This is the Satake parameter of
    $\lambda'$.
    Since each unramified representation
    $\pi'$ of $G'$ corresponds to a
    $\lambda'$,
    such a representation $\pi'$ has its Satake parameter
    $\lambda'(w_1\cdots w_4)$.
    Moreover, $\pi'$ can be embedded into a
    principal series representation
    $\mathrm{Ind}_{B_{G'}}^{G'}(\xi'_1,\cdots,\xi'_4)$
    where $\xi'_1,\cdots, \xi'_4$ are unramified characters of
    $\mathbb{Q}_\ell^\times$.
    Then we have
    $\lambda'(w_1\cdots w_4)
    =
    (\xi'_1,\cdots,\xi'_4)(\ell\cdot1_4)
    \in(\mathbb{C}^\times)^4/W_{G'}$.
    The Hecke polynomial $P^\mathrm{u}_\ell(X)$
    at $\ell$ is defined to be
    \[
    P_\ell^\mathrm{u}(X)
    :=
    X^4-T_1^\mathrm{u}(\ell)X^3
    +\ell T_2^\mathrm{u}(\ell)X^2
    -\ell^3 T_3^\mathrm{u}(\ell)X
    +\ell^6 T_4^\mathrm{u}(\ell).
    \]

    We define the Iwahori Hecke algebra
    $H_p^\mathrm{u,Iw}$
    of $G'=\mathrm{GL}_4(\mathbb{Q}_p)$
    for $\ell=p$
    as follows.
    For any integers $c\geq b\geq0$,
    we write
    $\mathrm{Iw}_p^{b,c}$ to be the subgroup of
    $K'$
    such that
    \[
    \mathrm{Iw}_p^{b,c}
    \equiv
    \begin{pmatrix}
    * & * & * & * \\
    0 & * & * & * \\
    0 & 0 & * & * \\
    0 & 0 & 0 & *
    \end{pmatrix}
    (\mathrm{mod}\, p^b),
    \quad
    \mathrm{Iw}_p^{b,c}
    \equiv
    \begin{pmatrix}
    1 & * & * & * \\
    0 & 1 & * & * \\
    0 & 0 & 1 & * \\
    0 & 0 & 0 & 1
    \end{pmatrix}
    (\mathrm{mod}\, p^c).
    \]
    We fix a dominant algebraic character
    $\underline{k}=(k_1,\cdots,k_4)$
    of $T_{G'}$ such that
    $k_1\geq k_2\geq k_3\geq k_4$.
    Let $w_{G'}$ be the longest element in
    $W_{G'}$.
    Then we define the following Hecke operators
    (\cite[p.10]{Geraghty2010})
    \[
    U_{\underline{k},i}^{b,c}
    =
    (w_{G'}\underline{k})
    (\mathrm{diag}(p\cdot1_i,1_{4-i}))^{-1}
    \mathrm{Iw}_p^{b,c}\mathrm{diag}(p\cdot1_i,1_{4-i})
    \mathrm{Iw}_p^{b,c},
    \quad
    (i=1,2,3,4).
    \]
    We define also the diamond operators:
    for each $u\in T'(\mathbb{Z}_p)$,
    we write
    $\langle u\rangle
    =\mathrm{Iw}_p^{b,c}u\mathrm{Iw}_p^{b,c}$.
    Then $H_{p,\underline{k}}^{\mathrm{u},b,c}$
    is the
    $\mathbb{Z}$-algebra generated by these
    $U_{\underline{k},i}^{b,c}$ and
    $\langle u\rangle$.

    We then define the global Hecke algebra
    $H^{\mathrm{u},p}$
    of $\mathrm{U}_4(\mathbb{Q})$
    Iwahori at $p$ of
    character $\underline{k}$
    to be the restricted tensor product
    \[
    H^{\mathrm{u},b,c}_{\underline{k}}
    =
    (\bigotimes_{\ell\neq p,\text{ split in } E}\,'
    H_\ell^\mathrm{u})
    \bigotimes
    H_{p,\underline{k}}^{\mathrm{u},b,c}.
    \]

    We briefly recall the notion of automorphic forms on
    $\mathrm{U}_4(\mathbb{A})$
    as in \cite[Section 2.2]{Geraghty2010}.
    As above, let
    $\underline{k}$
    be a dominant character of $T_{G'}$
    and we write
    $\widetilde{M}_{\underline{k}}$
    for the algebraic representation
    $\mathrm{Ind}_{B_{G'}}^{G'}
    (w_{G'}\underline{k})_{/\mathbb{Z}_p}$
    and
    $M_{\underline{k}}:=
    \widetilde{M}_{\underline{k}}(\mathbb{Z}_p)$.
    For any $\mathbb{Z}_p$-module $R$,
    we then write
    $S^\mathrm{u}_{\underline{k}}(R)$
    for the space of functions
    (\cite[Definition 2.2.4]{Geraghty2010})
    \[
    f\colon
    \mathrm{U}_4(\mathbb{Q})
    \backslash
    \mathrm{U}_4(\mathbb{A}^\infty)
    \rightarrow
    M_{\underline{k}}\otimes_{\mathbb{Z}_p}R
    \]
    such that there is a compact open subgroup
    $\widetilde{K}$ of
    $\mathrm{U}_4(\mathbb{A}^{\infty,p})\times
    \mathrm{U}_4(\mathbb{Z}_p)$
    and for any
    $u\in \widetilde{K}$ and 
    $g\in\mathrm{U}_4(\mathbb{A}^\infty)$,
    we have
    $u_p(f(gu))=f(g)$.
    The group
    $\mathrm{U}_4(\mathbb{A}^{\infty,p})\times
    \mathrm{U}_4(\mathbb{Z}_p)$
    acts on
    $S^\mathrm{u}_{\underline{k}}(R)$
    by
    $(g\cdot f)(g'):=g_p(f(g'g))$.
    For any subgroup
    $\widetilde{K}$ of
    $\mathrm{U}_4(\mathbb{A}^{\infty,p})
    \times\mathrm{U}_4(\mathbb{Z}_p)$,
    we then write
    $S^\mathrm{u}_{\underline{k}}(U,R)$
    for the
    $\widetilde{K}$-invariant submodule of
    $S^\mathrm{u}_{\underline{k}}(R)$.
    For any compact open subgroup
    $\widetilde{K}$ of
    $\mathrm{U}_4(\mathbb{A}^\infty)$
    such that
    $U_\ell=\mathrm{U}_4(\mathbb{Z}_\ell)$
    for any
    $\ell$ split in $E$
    (including the the case $p$),
    we set
    $\widetilde{K}^{b,c}
    =\widetilde{K}^p
    \times\mathrm{Iw}_p^{b,c}$.
    If $R$ is a $\mathbb{Z}_p$-algebra,
    we let the above defined global Hecke algebra 
    $H_{\underline{k}}^{\mathrm{u},b,c}$
    act on
    $S^\mathrm{u}_{\underline{k}}
    (\widetilde{K}^{b,c},R)$
    and write the $R$-subalgebra
    of
    $\mathrm{End}_R
    (S^\mathrm{u}_{\underline{k}}
    (\widetilde{K}^{b,c},R))$
    generated by the image of
    $H_{\underline{k}}^{\mathrm{u},b,c}$
    as
    $\mathbb{T}_{\underline{k}}^\mathrm{u}
    (\widetilde{K}^{b,c},R)$.
    If $R$ is only a $\mathbb{Z}_p$-module,
    we replace $R$-subalgebra by
    $\mathbb{Z}_p$-subalgebra.
    In the Hecke algebra
    $\mathbb{T}_{\underline{k}}^\mathrm{u}
    (\widetilde{K}^{b,c},R)$,
    we consider the
    idempotent element
    $e:=\lim\limits_{n\rightarrow\infty}
    (\prod_{i=1}^4U_{\underline{k},i}^{b,c})^{n!}$
    and we define the ordinary Hecke algebra
    and
    ordinary automorphic forms
    as
    (\cite[Definition 2.4.1]{Geraghty2010}):
    \[
    \mathbb{T}^\mathrm{u,ord}_{\underline{k}}
    (\widetilde{K}^{b,c},R)
    =
    e\mathbb{T}^\mathrm{u}_{\underline{k}}
    (\widetilde{K}^{b,c},R),
    \quad
    S_{\underline{k}}^\mathrm{u,ord}
    (\widetilde{K}^{b,c},R)
    =
    eS^\mathrm{u}_{\underline{k}}
    (\widetilde{K}^{b,c},R).
    \]   
    Finally we define the big ordinary Hecke algebra
    (\cite[Definition 2.4.5]{Geraghty2010}):
    \[
    \mathbb{T}^\mathrm{u,ord}_{\underline{k}}
    (\widetilde{K}^\infty,\mathcal{O})
    =
    \lim\limits_{\overleftarrow{c>0}}
    \mathbb{T}_{\underline{k}}^\mathrm{u,ord}
    (\widetilde{K}^{c,c},\mathcal{O}),
    \quad
    S_{\underline{k}}^{\mathrm{u,ord}}
    (\widetilde{K}^\infty,\mathbb{Q}_p/\mathbb{Z}_p)
    =
    \lim\limits_{\overrightarrow{c>0}}
    S_{\underline{k}}^\mathrm{u,ord}
    (\widetilde{K}^{c,c},\mathbb{Q}_p/\mathbb{Z}_p)
    \]
    Note that
    $\mathbb{T}^\mathrm{u,ord}_{\underline{k}}
    (\widetilde{K}^\infty,\mathcal{O})$
    is a finite faithful algebra over
    $\Lambda^\mathrm{u}$
    where
    \begin{align*}
    \Lambda^\mathrm{u}
    =
    \mathcal{O}[[T'(1+p\mathbb{Z}_p)]]
    &
    \simeq
    \mathcal{O}[[Y_1,Y_2,Y_3,Y_4]],
    \\
    \mathrm{diag}(1_{i-1},1+p,1_{4-i})
    &
    \mapsto
    1+Y_i,
    \quad
    (i=1,2,3,4).
    \end{align*}
    is the Iwasawa weight algebra
    (\cite[Corollary 2.5.4]{Geraghty2010}).

    We next recall some results on the 
    $p$-adic Galois representations
    associated to automorphic representations
    of $\mathrm{U}_4(\mathbb{A})$.
    Let $\pi'=\otimes_v'\pi'_v$ be a (cuspidal) irreducible automorphic
    representation of
    $\mathrm{U}_4(\mathbb{A})$
    of dominant weight
    $\underline{k}'=(k'_1,k'_2,k'_3,k'_4)$,
    unramified outside $Np$,
    ordinary at $p$
    (i.e., $\pi'_p$ is $\mathrm{Iw}^{0,1}$-spherical
    and $\pi'\subset
    S_{\underline{k}'}^\mathrm{u,ord}
    (\widetilde{K}^{0,1},\mathcal{O})$).
    We define a group scheme over $\mathbb{Z}$:
    \[
    \mathcal{G}_4
    =(\mathrm{GL}_4\times\mathrm{GL}_1)\rtimes\{1,j\}
    \]
    where
    $j(g,\mu)j^{-1}:=(\mu g^{-t},\mu)$
    for $(g,\mu)\in\mathrm{GL}_4\times\mathrm{GL}_1$.
    Then there exists a Galois representation
    $\rho_{\pi'}
    \colon
    \Gamma_\mathbb{Q}
    \rightarrow
    \mathcal{G}_4(\mathcal{O})$
    associate to $\pi'$
    (\cite[Section 2.7]{Geraghty2010})
    with certain properties introduced below.
    For any finite place $\ell\nmid Np$ of $\mathbb{Q}$
    split as $\mathfrak{l}\mathfrak{l}^c$ in $E$,
    we have an isomorphism of semi-simplified
    Weil-Deligne representations
    (\cite[Proposition 2.7.2(1)]{Geraghty2010})
    \begin{equation}\label{Weil-Deligne vs Galois, unitary group}
    \mathrm{WD}(\rho_{\pi'}|_{D_\mathfrak{l}})^\mathrm{ss}
    \simeq
    \iota(\mathrm{rec}(\pi'_\ell\otimes|\cdot|^{-3/2})^\mathrm{ss}).
    \end{equation}
    Here $\mathrm{rec}$ is the local 
    Langlands correspondence for
    $\mathrm{GL}_4$
    and $\iota\colon\mathbb{C}\simeq\overline{\mathbb{Q}}_p$.
    Moreover,
    suppose $p$ splits as $\mathfrak{p}\mathfrak{p}^c$
    in $E$
    and the Hecke operators
    $U_{\underline{k}',i}^{0,1}$
    acts on the space 
    $\pi'$ by scalars
    $u_{\underline{k}',i}\in\mathcal{O}^\times$
    ($i=1,2,3,4$),
    then by
    \cite[Corollary 2.7.8]{Geraghty2010},
    if $\underline{k}'$ is regular
    (i.e., $k_1>k_4$),
    we have that
    $\rho_{\pi',\mathfrak{p}}:=\rho_\pi|_{D_\mathfrak{p}}$
    is crystalline and is conjugate to an upper triangular 
    representation
    \begin{equation}\label{crystalline at p for U(4)}
    \rho_{\pi',\mathfrak{p}}
    \simeq
    \begin{pmatrix}
    \chi_1' & * & * & * \\
    0 & \chi_2' & * & * \\
    0 & 0 & \chi_3' & * \\
    0 & 0 & 0 & \chi_4'
    \end{pmatrix}
    \end{equation}
    whose diagonal entries are characters defined as
    $\chi_i'=\mathrm{ur}
    (u_{\underline{k}',i}/u_{\underline{k}',i-1})
    \chi_p^{-k'_{5-i}-i}$
    (we set
    $u_{\underline{k}',0}=1$).
    On the side of automorphic representations,
    $\pi_p$ is unramified
    and therefore
    (\cite[Lemma 2.7.5]{Geraghty2010}):
    \[
    \mathrm{det}(X-\mathrm{Fr}_p',
    \rho_{\pi',p}^\mathrm{crys})
    =
    \prod_{i=1}^4
    (X-p^{i-1+k'_{5-i}}
    \frac{u_{\underline{k}',i}}{u_{\underline{k}',i-1}}).
    \]
    Here $\mathrm{Fr}_p'$ and
    $\rho_{\pi',p}^\mathrm{cryst}$
    are defined as in the case of
    $\mathrm{GSp}_4$.

    Next we define deformation functors and
    universal deformation rings of the residual
    representation
    $\overline{\rho}_{\pi'}$ as follows
    (\cite[Section 2.3]{HidaTilouine16}):
    \[
    \mathcal{D}_{\pi'},
    \mathcal{D}_{\pi',\underline{k}''}
    \colon
    \mathrm{AL}_\mathcal{O}
    \rightarrow
    \mathrm{Sets}
    \]
    where
    $\mathcal{D}_{\pi'}(A)$
    is the set of equivalence classes of liftings
    $\rho_A
    \colon
    \Gamma_\mathbb{Q}
    \rightarrow
    \mathcal{G}_4(A)$
    such that
    \begin{enumerate}
    	\item 
    	$\rho_A$ is unramified outside $Np$;
    	
    	\item 
    	the projection of $\rho_A|_{D_p}$
    	to the factor
    	$\mathrm{GL}_4(A)$
    	is conjugate by an element
    	$g\in1+\mathfrak{m}_A\mathrm{M}_{4\times4}(A)$
    	to an upper triangular representation
    	whose diagonal entries are characters
    	$\widetilde{\chi}_i\chi_p^{-i}$
    	such that $\widetilde{\chi}_i$
    	lifts the character
    	$\mathrm{ur}(\overline{u_{\underline{k}',i}/u_{\underline{k}',i-1}})
    	\overline{\chi}_p^{k'_{5-i}}$.
    \end{enumerate}
    For $\mathcal{D}_{\pi',\underline{k}''}(A)$,
    we require moreover that in (2) above
    $\widetilde{\chi}_i$
    is of the form
    $\widetilde{\chi}_i(\sigma)=\mathrm{Art}(\sigma)^{k_{5-i}''}$
    for $\sigma\in I_p$
    and $\mathrm{Art}(\sigma)\in\mathbb{Z}_p^\times$
    by the local Artin map
    (of course
    $\mathcal{D}_{\pi',\underline{k}''}(A)$
    is non-empty only if
    $\underline{k}''\equiv\underline{k}'(\mathrm{mod}\, p-1)$).
    Two liftings 
    $\rho_A$ and $\rho_A'$
    are equivalent if there is an element
    $g\in\mathrm{Ker}(\mathrm{GL}_4(A)
    \rightarrow
    \mathrm{GL}_4(\mathbb{F}))$
    such that
    $\rho_A'\simeq g\rho_Ag^{-1}$.
    If we assume that
    the diagonal entries
    $\overline{\chi}_i'$ in
    $\overline{\rho}_{\pi',p}$
    are mutually distinct
    either on the Frobenius element
    $\mathrm{Fr}_p$ or the inertia subgroup
    $I_p$ of $D_p$,
    then by
    \cite[Lemma 2.6]{HidaTilouine16},
    $\mathcal{D}_{\pi'}$
    is pro-representable by
    $(\rho^\mathrm{u}_{\pi'},R^\mathrm{u}_{\pi'})$
    where $R^\mathrm{u}_{\pi'}$
    is in $\mathrm{CNL}_\mathcal{O}$
    and
    $\rho_{\pi'}^\mathrm{u}
    \colon
    \Gamma_\mathbb{Q}
    \rightarrow
    \mathcal{G}_4(R_{\pi'}^\mathrm{u})$.
    We then write
    $\mathfrak{m}_{\pi'}$
    for the maximal ideal of
    $\mathbb{T}_{\underline{k}'}^\mathrm{u,ord}
    (\widetilde{K}^\infty,\mathcal{O})$
    associated to the residual representation
    $\overline{\rho}_{\pi'}$
    and then set
    \[
    \mathbb{T}^\mathrm{u}_{\pi'}
    :=
    \mathbb{T}_{\underline{k}'}^\mathrm{u,ord}
    (\widetilde{K}^\infty,\mathcal{O})
    _{\mathfrak{m}_{\pi'}},
    \quad
    S^\mathrm{u}_{\pi'}
    :=
    S_{\underline{k}'}^{\mathrm{u,ord}}
    (\widetilde{K}^\infty,
    \mathbb{Q}_p/\mathbb{Z}_p)
    _{\mathfrak{m}_{\pi'}}.
    \]
    \[
    \mathbb{T}_{\underline{k}''}^\mathrm{u}
    :=
    \mathbb{T}_{\underline{k}''}^\mathrm{u,ord}
    (\widetilde{K}^{0,1},\mathcal{O}),
    \quad
    S_{\underline{k}''}^\mathrm{u}
    :=
    S_{\underline{k}''}^\mathrm{u,ord}
    (\widetilde{K}^{0,1},\mathbb{Q}_p/\mathbb{Z}_p).
    \]

    Then there is a Galois representation
    $\rho_{\mathbb{T}_{\pi'}^\mathrm{u}}
    \colon
    \Gamma_\mathbb{Q}
    \rightarrow
    \mathcal{G}_4(\mathbb{T}_{\pi'}^\mathrm{u})$
    lifting $\overline{\rho}_{\pi'}$
    and thus
    we have
    a morphism 
    $R_{\pi'}^\mathrm{u}
    \rightarrow
    \mathbb{T}_{\pi'}^\mathrm{u}$
    of $\Lambda^\mathrm{u}$-algebras.
    The method of Taylor-Wiles gives
    (\cite[Section 2.7]{HidaTilouine16}):
    \begin{theorem}
    	Assume Hypothesis
    	\ref{hypotheses for R=T theorems},
    	then the morphism
    	$R_{\pi'}^\mathrm{u}
    	\rightarrow
    	\mathbb{T}_{\pi'}^\mathrm{u}$
    	is an isomorphism and
    	$\mathbb{T}_{\pi'}^\mathrm{u}$
    	is a finite flat complete intersection over
    	$\Lambda^\mathrm{u}$.
    	The Pontryagin dual $(S^\mathrm{u}_{\pi'})^\vee$
    	is a finite free
    	$\mathbb{T}_{\pi'}^\mathrm{u}$-module.
    	Moreover, the specialization map
    	$\mathbb{T}_{\pi'}^\mathrm{u}
    	\otimes_{\Lambda^\mathrm{u},\underline{k}''}
    	\mathcal{O}
    	\rightarrow
    	\mathbb{T}_{\underline{k}''}^\mathrm{u}$
    	is an isomorphism for any
    	dominant weight
    	$\underline{k}''
    	\equiv
    	\underline{k}'(\mathrm{mod}\, p-1)$.
    	In particular, we have an isomorphism
    	$R_{\underline{k}''}^\mathrm{u}
    	\simeq
    	\mathbb{T}_{\underline{k}''}^\mathrm{u}$.
    \end{theorem}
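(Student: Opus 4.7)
The plan is to prove this $R=T$ theorem by Taylor--Wiles patching, following closely the argument sketched in \cite[Section 2.7]{HidaTilouine16} for the unitary case and paralleling the symplectic theorem proved just above. The surjectivity of $R_{\pi'}^\mathrm{u}\to \mathbb{T}_{\pi'}^\mathrm{u}$ is immediate from Chebotarev: by $(\ref{Weil-Deligne vs Galois, unitary group})$ the traces of $\rho_{\mathbb{T}^\mathrm{u}_{\pi'}}(\mathrm{Frob}_\mathfrak{l})$ at unramified split primes recover the Hecke operators $T_i^\mathrm{u}(\ell)$, while the ordinary-at-$p$ condition together with $(\ref{crystalline at p for U(4)})$ recovers the $U_{\underline{k}',i}^{b,c}$ via the characters $\widetilde{\chi}_i$. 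Combined with the diamond operators, these topologically generate $\mathbb{T}_{\pi'}^\mathrm{u}$ over $\Lambda^\mathrm{u}$.

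Next I would carry out the patching. The key input is a supply of Taylor--Wiles primes: primes $q$ split in $E/\mathbb{Q}$ with $q\equiv 1\pmod{p^n}$ such that $\overline{\rho}_{\pi'}(\mathrm{Frob}_q)$ has distinct eigenvalues in $\mathbb{F}$, with a prescribed eigenvalue selected at each prime in the set $Q$. The hypothesis $(BIG^{(2)})$, transferred to $\overline{\rho}_{\pi'}$ via Theorem \ref{Satake parameter relation of theta lift}, supplies such primes in the required density (Chebotarev applied to the adjoint representation restricted to the subspace $\widetilde{\mathbf{V}}$ of Remark \ref{orthogonal and special orthogonal, adjoint representation}). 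Allowing auxiliary ramification at $Q$ produces level structures $\widetilde{K}_Q$, Hecke algebras $\mathbb{T}^\mathrm{u}_{\pi',Q}$, ordinary Hecke modules $(S^\mathrm{u}_{\pi',Q})^\vee$, and deformation rings $R^\mathrm{u}_{\pi',Q}$. The hypothesis $(N\text{-Min})$ forces minimality at primes dividing $N$ so that no unwanted ramification occurs there, while $(RFR^{(2)})$ guarantees that the four ordinary characters at $p$ remain distinct modulo $\varpi$, which makes the local ordinary deformation ring at $p$ formally smooth of the expected relative dimension. A Greenberg--Wiles/Poitou--Tate count then bounds the tangent space of $R^\mathrm{u}_{\pi',Q}$ uniformly in $\#Q$; patching the tower indexed by shrinking $n$ yields rings $R_\infty \twoheadrightarrow T_\infty$ and a patched module $M_\infty$ over a power series ring $\mathcal{O}[[X_1,\dots,X_h]]$, from which the Diamond--Fujiwara criterion produces an isomorphism $R_\infty\simeq T_\infty$, freeness of $M_\infty$, and the complete intersection property. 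Descending to $Q=\emptyset$ gives the first three assertions of the theorem.

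The specialization assertion is then obtained by comparing the universal properties. By construction, $R_{\underline{k}''}^\mathrm{u}$ represents the deformation functor with fixed Hodge--Tate--style characters $\widetilde{\chi}_i(\sigma)=\mathrm{Art}(\sigma)^{k''_{5-i}}$ on $I_p$, which is precisely the quotient of $R_{\pi'}^\mathrm{u}$ cut out by the kernel of $\Lambda^\mathrm{u}\xrightarrow{\underline{k}''}\mathcal{O}$; combined with $R_{\pi'}^\mathrm{u}\simeq \mathbb{T}_{\pi'}^\mathrm{u}$ and the fact that the patched module has the ``right'' specialization to $S_{\underline{k}''}^{\mathrm{u,ord}}(\widetilde{K}^{0,1},\mathcal{O})$ (a classicality statement for ordinary $p$-adic forms on $\mathrm{U}_4$, proved as in \cite[Corollary 2.5.4]{Geraghty2010}), this yields $\mathbb{T}_{\pi'}^\mathrm{u}\otimes_{\Lambda^\mathrm{u},\underline{k}''}\mathcal{O}\simeq \mathbb{T}_{\underline{k}''}^\mathrm{u}$ and hence $R_{\underline{k}''}^\mathrm{u}\simeq \mathbb{T}_{\underline{k}''}^\mathrm{u}$. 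The main technical obstacle I expect is bookkeeping for the group scheme $\mathcal{G}_4=(\mathrm{GL}_4\times\mathrm{GL}_1)\rtimes\{1,j\}$: the local Euler characteristic formulas at Taylor--Wiles primes and the cohomological dimension count at $p$ need to be carried out with the correct adjoint representation (the five-dimensional $\widetilde{\mathbf{V}}$ together with the similitude line), and one must verify that the patching is compatible with the $j$-action so that the resulting ring really parametrizes $\mathcal{G}_4$-valued deformations rather than just $\mathrm{GL}_4$-valued ones; this is exactly the point where the transfer of $(BIG^{(2)})$ from $\overline{\rho}_\pi$ to $\overline{\rho}_{\pi'}$ via the adjoint relation of Theorem \ref{Satake parameter relation of theta lift} is essential.
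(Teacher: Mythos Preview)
The paper does not give its own proof of this theorem: it is simply stated with a citation to \cite[Section 2.7]{HidaTilouine16}, and the Taylor--Wiles patching is entirely outsourced to that reference. Your outline is precisely the shape of the argument carried out there, so in that sense you are aligned with the paper's approach.

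Two small points are worth flagging. First, at the place where this theorem appears, $\pi'$ is introduced as a general ordinary cuspidal representation of $\mathrm{U}_4(\mathbb{A})$, not yet assumed to be $\Theta^*(\pi)$; the identification $\pi'=\Theta^*(\pi)$ and the use of Theorem~\ref{Satake parameter relation of theta lift} come only in the subsequent subsection. So your appeal to that theorem to ``transfer $(BIG^{(2)})$'' is organizationally premature: the $R=\mathbb{T}$ result is meant to hold under intrinsic hypotheses on $\overline{\rho}_{\pi'}$ (adequacy/bigness of its image, distinctness of the ordinary characters), which later happen to be verified for the theta lift. Second, your remark that the relevant adjoint is ``the five-dimensional $\widetilde{\mathbf{V}}$ together with the similitude line'' undercounts: the adjoint controlling $\mathcal{G}_4$-valued deformations is $\mathfrak{sl}_4$ (or $\mathfrak{gl}_4$ with fixed multiplier), of dimension $15$, and the decomposition $\mathfrak{sl}_4\simeq\mathfrak{sp}_4\oplus\mathbf{V}_2$ from Lemma~\ref{comparison of two adjoint representations} has pieces of dimensions $10$ and $5$. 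This matters for the Galois cohomology/Euler characteristic count in the patching, though it does not change the overall structure of your argument.
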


    \subsection{Morphism of the transfer}
    Let $\pi$ be an irreducible automorphic representation
    of $\mathrm{GSp}_4(\mathbb{A})$
    of type $(\underline{k},\widehat{\Gamma})$
    (holomorphic or antiholomorphic).
    We write $\pi'=\Theta(\pi)$ for the theta lift of
    $\pi$ to
    $\mathrm{U}_4(\mathbb{A})$.
    Suppose that $\Theta(\pi)\neq0$.
    In this subsection we define morphisms
    $\Lambda^\mathrm{u}
    \rightarrow
    \Lambda^\mathrm{s}$,
    $\mathbb{T}_{\pi'}^\mathrm{u}
    \rightarrow
    \mathbb{T}_\pi^\mathrm{s}$
    and
    $R_{\pi'}^\mathrm{u}
    \rightarrow
    R_\pi^\mathrm{s}$.

    \subsubsection{Morphism of Iwasawa weight algebras}
    We define a map of tori
    $T^\mathrm{s}$ of
    $\mathrm{GSp}_4(\mathbb{Q}_p)$
    and
    $T^\mathrm{u}$ of
    $\mathrm{GL}_4(\mathbb{Q}_p)$
    as follows
    (natural inclusion):
    \[
    T^\mathrm{s}
    \rightarrow
    T^\mathrm{u},
    \quad
    \mathrm{diag}(t_0,t_0t_1,t_0t_2,t_0t_1t_2)
    \mapsto
    \mathrm{diag}(t_0,t_0t_1,t_0t_2,t_0t_1t_2).
    \]
    This induces a morphism of Iwasawa algebras
    \begin{equation}\label{morphism of Iwasawa weight algebras}
    \Lambda^\mathrm{u}
    =\mathcal{O}[[T^\mathrm{u}(1+p\mathbb{Z}_p)]]
    \rightarrow
    \Lambda^\mathrm{s}
    =\mathcal{O}[[T^\mathrm{s}(1+p\mathbb{Z}_p)]].
    \end{equation}

    \subsubsection{Morphism of Hecke algebras}
    We define a map of
    Hecke algebras 
    $\mathbb{T}_{\pi'}^\mathrm{u}
    \rightarrow
    \mathbb{T}_\pi^\mathrm{s}$
    as follows:
    for any prime $\ell\nmid N$ split as
    $\mathfrak{l}\mathfrak{l}^c$ in $E$,
    the morphism of local Hecke algebras
    $(\mathbb{T}_{\pi'}^\mathrm{u})_\mathfrak{l}
    \rightarrow
    (\mathbb{T}_\pi^\mathrm{s})_\mathfrak{l}$
    is given by
    \cite[Remark 4.4(A)]{Rallis1982}
    (in the notation of \textit{loc.cit.},
    for the case $i=3,n=2$).
    Note that in \cite{Rallis1982},
    only the case $\pi_\ell$
    unramified is treated.
    For the case of $\pi_p$ Steinberg,
    the same result follows using
    Theorem 
    \ref{Langlands parameters for GSp^+4 and GSO6}
    and the Satake isomorphism of Iwahori
    Hecke algebras.
    Moreover,
    for the diamond operators
    $\langle u\rangle$,
    we send the image 
    $\mathrm{Im}(\mathcal{O}[[T^\mathrm{u}
    (\mathbb{Z}_p)]]
    \xrightarrow{\langle\cdot\rangle}
    \mathbb{T}_{\pi'}^\mathrm{u})$
    to the image
    $\mathrm{Im}(\mathcal{O}[[T^\mathrm{s}
    (\mathbb{Z}_p)]]
    \xrightarrow{\langle\cdot\rangle}
    \mathbb{T}_\pi^\mathrm{s})$
    as induced by the map
    $T^\mathrm{s}(\mathbb{Z}_p)
    \rightarrow
    T^\mathrm{u}(\mathbb{Z}_p)$
    defined above.
    This finishes the definition of the map
    $\mathbb{T}_{\pi'}^\mathrm{u}
    \rightarrow
    \mathbb{T}_\pi^\mathrm{s}$.
    Similarly we define
    $\mathbb{T}_{\underline{k}''}^\mathrm{u}
    \rightarrow
    \mathbb{T}_{\underline{k}''}^\mathrm{s}$.

    \subsubsection{Morphism of Galois representations}
    Write
    $\Gamma_{E/\mathbb{Q}}=\{1,c\}$.
    Define a group scheme
    $\mathcal{G}_4'$ over $\mathbb{Z}$ by
    $(\mathrm{GL}_4\times\mathrm{GL}_1)\rtimes\{1,c\}$
    where 
    $c(g,\mu)c^{-1}
    :=(\mu J_4g^{-t}J_4^{-1},\mu)$.
    It is easy to verify that the following map
    is an isomorphism
    \[
    \mathcal{G}_4'
    \rightarrow
    \mathcal{G}_4,
    \quad
    (g,\mu,1)
    \mapsto
    (g,\mu,1),
    \quad
    (g,\mu,c)
    \mapsto
    (gJ_4,-\mu,j).
    \]
    Moreover, the following map is 
    a morphism of groups:
    \[
    \mathrm{GSp}_4\times\Gamma_{E/\mathbb{Q}}
    \rightarrow
    \mathcal{G}_4',
    \quad
    (g,x)
    \mapsto
    (g,\nu(g),x).
    \]
    Let $R$ be a topological ring.
    Then for any continuous Galois representation
    $\rho
    \colon
    \Gamma_\mathbb{Q}
    \rightarrow
    \mathrm{GSp}_4(R)$,
    we define
    \[
    \Theta'(\rho)
    \colon
    \Gamma_\mathbb{Q}
    \rightarrow
    \mathcal{G}_4'(R),
    \quad
    \sigma
    \mapsto
    (\rho(\sigma),\nu(\rho(\sigma)),\overline{\sigma})
    \]
    where $\overline{\sigma}$ is the projection 
    image of $\sigma$
    in $\Gamma_{E/\mathbb{Q}}$.
    Composing with the isomorphism
    $\mathcal{G}_4'\simeq\mathcal{G}_4$,
    we define the following
    (\textit{cf}. \cite[Lemma 2.1.2]{ClozelHarrisTaylor})
    \[
    \Theta(\rho)
    \colon
    \Gamma_\mathbb{Q}
    \xrightarrow{\Theta'(\rho)}
    \mathcal{G}_4'(R)
    \xrightarrow{\sim}
    \mathcal{G}_4(R).
    \]
    Thus $\Theta(\rho)$ and
    $\Theta'(\rho)$ are isomorphic as representations of
    $\Gamma_{E/\mathbb{Q}}$.

    We can relate the adjoint representations of
    $\rho$ and $\Theta(\rho)$
    as follows
    \begin{lemma}
    	\label{comparison of two adjoint representations}
    	Suppose that $\sqrt{2}\in R^\times$.
    	We have the following isomorphisms 
    	of representations of
    	$\Gamma_{\mathbb{Q}}$:
    	\[
    	\mathrm{ad}(\Theta(\rho))
    	\simeq
    	\mathrm{ad}(\Theta'(\rho))
    	\simeq
    	\mathrm{ad}(\rho)
    	\oplus
    	(\rho_\mathrm{st}\circ\rho\otimes\xi)
    	\]
    \end{lemma}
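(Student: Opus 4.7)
The plan is to handle the two isomorphisms separately. The first, $\mathrm{ad}(\Theta(\rho))\simeq\mathrm{ad}(\Theta'(\rho))$, is formal: by construction $\Theta(\rho)=\phi\circ\Theta'(\rho)$, where $\phi\colon\mathcal{G}_4'\xrightarrow{\sim}\mathcal{G}_4$ is the explicit isomorphism $(g,\mu,1)\mapsto(g,\mu,1)$, $(g,\mu,c)\mapsto(gJ_4,-\mu,j)$ introduced just after the definition of $\mathcal{G}_4'$. Since $\phi$ is an isomorphism of algebraic groups, its differential supplies a $\Gamma_{\mathbb{Q}}$-equivariant isomorphism of adjoint representations.

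For the main isomorphism, the structural input is Remark~\ref{orthogonal and special orthogonal, adjoint representation}: the adjoint action of $\mathcal{G}_4'$ on $\mathrm{M}_{4\times 4}(R)$ sends $(g,\mu,1)$ to conjugation by $g$ and the generator $(1,1,c)$ to the involution $X\mapsto -J_4X^tJ_4^{-1}$. I would introduce $\tau(X):=J_4X^tJ_4^{-1}$, so that the action of $c$ is $-\tau$, and, using $\tfrac12\in R$ (guaranteed by $\sqrt{2}\in R^\times$), form the projectors $\tfrac12(\mathrm{id}\pm\tau)$ to decompose $\mathrm{M}_{4\times4}(R)=V_+\oplus V_-$ into the $\pm 1$-eigenspaces of $\tau$. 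A short calculation using the defining relation $g^tJ_4g=\nu(g)J_4$ together with $J_4^{-1}=-J_4$ shows that $\tau$ commutes with $\mathrm{GSp}_4$-conjugation, so the decomposition is $\mathrm{GSp}_4$-stable and hence $\Gamma_{\mathbb{Q}}$-stable through $\rho$.

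Direct inspection then identifies $V_-=\mathfrak{sp}_4$ with the usual adjoint $\mathrm{GSp}_4$-representation — on which $c$ acts trivially, so $V_-\simeq\mathrm{ad}(\rho)$ as a $\Gamma_{\mathbb{Q}}$-module — and $V_+=\widetilde{\mathbf{V}}\oplus R\cdot 1_4$, where $\widetilde{\mathbf{V}}$ is the trace-free summand of Remark~\ref{orthogonal and special orthogonal, adjoint representation}, identified there with the standard representation $\rho_{\mathrm{st}}$. On $V_+$ the element $c$ acts by $-1$, so the $\widetilde{\mathbf{V}}$-summand becomes $(\rho_{\mathrm{st}}\circ\rho)\otimes\xi$ as a $\Gamma_{\mathbb{Q}}$-representation. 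Passing to the trace-free part $\mathfrak{sl}_4=V_-\oplus\widetilde{\mathbf{V}}$ (the convention implicit in the statement, as it is the one for which the dimensions on both sides match cleanly) then yields the asserted isomorphism $\mathrm{ad}(\rho)\oplus(\rho_{\mathrm{st}}\circ\rho)\otimes\xi$.

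The hypothesis $\sqrt{2}\in R^\times$ plays two roles: the $\tfrac12\in R$ part is needed to construct the eigenspace projectors for $\tau$, while the square root itself enters through the basis normalisation $E_3'=(E_3+E_4)/\sqrt{2}$ used in the Remark to realise $\rho_{\mathrm{st}}$ as a morphism into $\mathrm{SO}_5(R)$ with the standard split quadratic form. The only delicate point — and the one to verify most carefully — is the bookkeeping of the scalar line $R\cdot 1_4\subset V_+$: $c$ acts there by $-1$ as well, so if one worked with all of $\mathfrak{gl}_4$ instead of $\mathfrak{sl}_4$ one would find an extra $\xi$-line against a trivial line in $\mathfrak{gsp}_4$; cutting by the derived/trace-free convention removes this discrepancy. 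This convention check is the main (and essentially only) obstacle, since the rest of the argument reduces to the elementary linear-algebraic decomposition of $\mathrm{M}_{4\times 4}$ under the involution $\tau$.
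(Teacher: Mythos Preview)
Your proposal is correct and follows essentially the same route as the paper's proof: both treat the first isomorphism as formal, then decompose $\mathfrak{sl}_4(R)$ into the $\pm1$-eigenspaces of the involution $\tau(X)=J_4X^tJ_4^{-1}$ (so that $c$ acts as $-\tau$), identify the $-1$-eigenspace with $\mathfrak{sp}_4(R)\simeq\mathrm{ad}(\rho)$ on which $c$ acts trivially, and identify the $+1$-eigenspace with the standard five-dimensional representation on which $c$ acts by $-1$, giving the $\xi$-twist. Your handling of the scalar line $R\cdot 1_4$ and the two roles of the hypothesis $\sqrt{2}\in R^\times$ is in fact slightly more explicit than the paper's version.
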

    \begin{proof}
    	The first isomorphism is clear.
    	For the second,
    	let $\mathcal{G}_4(R)$
    	act by adjoint action on $\mathfrak{sl}_4(R)$.
    	Note that
    	$c$ acts on $\mathfrak{sl}_4(R)$ by
    	$c(X):=-J_4X^tJ_4^{-1}$
    	while $\mathrm{GL}_1(R)$
    	acts trivially on $\mathfrak{sl}_4(R)$.
    	As in Remark 
    	\ref{orthogonal and special orthogonal, adjoint representation}(2),
    	we have the following decomposition
    	$\mathfrak{sl}_4(R)
    	=\mathbf{V}_1\oplus
    	\mathbf{V}_2$
    	as representations of
    	$\Gamma_{\mathbb{Q}}$,
    	where
    	\[
    	\mathbf{V}_1
    	=
    	\{X\in\mathfrak{sl}_4(R)|
    	J_4XJ_4^{-1}=-X
    	\},
    	\quad
    	\mathbf{V}_2
    	=
    	\{
    	X\in\mathfrak{sl}_4(R)|
    	J_4XJ_4^{-1}=X
    	\}.
    	\]
    	Note that $\mathbf{V}_1=\mathfrak{sp}_4(R)$,
    	thus $\mathbf{V}_1$ is isomorphic to
    	$\mathrm{ad}(\rho)$
    	as representations of $\Gamma_{\mathbb{Q}}$.
    	On the other hand,
    	by definition,
    	$c$ acts on $\mathbf{V}_2$
    	by $-1$,
    	so we see that
    	the action of
    	$\Gamma_{\mathbb{Q}}$
    	on $\mathbf{V}_2$
    	via the adjoint action of
    	$\Theta'(\rho)$
    	is isomorphic to
    	the action via
    	$\rho_\mathrm{st}\circ\rho\otimes\xi$
    	(in the definition of $\rho_\mathrm{st}$,
    	the element $\sqrt{2}$ is used,
    	that is why we suppose
    	$\sqrt{2}\in R^\times$).
    \end{proof}
    \begin{remark}
    	Note the difference of our decomposition
    	given in the lemma and the one given in
    	\cite[Before Theorem 7.3]{HidaTilouine16}.
    	This difference comes from the
    	fact that we use the theta correspondence
    	between $\mathrm{GSp}_4$ and $\mathrm{GSO}_6$
    	while
    	\cite{HidaTilouine16} uses the theta correspondence
    	between $\mathrm{GSp}_4$ and
    	$\mathrm{GSO}_{3,3}$.
    	More explicitly, in \cite{HidaTilouine16},
    	a Galois representation $\rho
    	\colon
    	\Gamma_{\mathbb{Q}}\rightarrow
    	\mathrm{GSp}_4(R)$
    	is taken to a Galois representation
    	$\Gamma_{\mathbb{Q}}\rightarrow\mathcal{G}'_4(R)$
    	by sending $g\in\mathrm{GSp}_4(R)$
    	to $(g,\nu(g),1)\in\mathcal{G}'_4(R)$.
    \end{remark}

    \subsubsection{Morphism of universal deformation rings}
    We apply the map $\Theta$ to
    the continuous $p$-adic Galois representation
    $\rho_\pi
    \colon
    \Gamma_{\mathbb{Q}}
    \rightarrow
    \mathrm{GSp}_4(\mathcal{O})$.
    \begin{lemma}
    	The representation
    	$\Theta(\rho_\pi)$ is a lift of
    	$\overline{\rho}_{\pi'}$.
    \end{lemma}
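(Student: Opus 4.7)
\medskip

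The plan is to prove the lemma by showing that $\Theta(\rho_\pi)$ and $\rho_{\pi'}$ are isomorphic as continuous Galois representations valued in $\mathcal{G}_4(\mathcal{O})$; the statement about residual representations then follows by reduction modulo $\varpi$. The strategy is the standard one: compare Frobenius traces at a set of primes of density one and invoke Chebotarev density.

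First I would verify the ramification properties. Since $\rho_\pi$ is unramified outside $Np$, and $\Theta$ only post-composes with algebraic operations (projection to $\nu$ and the isomorphism $\mathcal{G}_4' \xrightarrow{\sim} \mathcal{G}_4$), the representation $\Theta(\rho_\pi)$ is also unramified outside $Np$. The representation $\rho_{\pi'}$ is likewise unramified outside $Np$ (the ramified places of $\pi'$ are contained in those of $\pi$ together with primes dividing the discriminant of $E/\mathbb{Q}$, both of which are supported in $Np$). Next I would restrict attention to rational primes $\ell \nmid Np$ that split in $E$ as $\mathfrak{l}\mathfrak{l}^c$; such primes have Dirichlet density $1/2$, which is sufficient for Chebotarev. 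For such $\ell$, the geometric Frobenius $\mathrm{Fr}_\mathfrak{l}$ projects trivially to $\Gamma_{E/\mathbb{Q}}$, so both $\Theta(\rho_\pi)(\mathrm{Fr}_\mathfrak{l})$ and $\rho_{\pi'}(\mathrm{Fr}_\mathfrak{l})$ lie in the identity component $\mathrm{GL}_4(\mathcal{O}) \times \mathrm{GL}_1(\mathcal{O})$ of $\mathcal{G}_4(\mathcal{O})$.

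The core computation is the comparison of characteristic polynomials on the $\mathrm{GL}_4$-factor. On the one hand, the characteristic polynomial of $\rho_\pi(\mathrm{Fr}_\ell)$ is the Hecke polynomial $P_\ell(X)$, so by the definition of $\Theta$, the characteristic polynomial of the $\mathrm{GL}_4$-component of $\Theta(\rho_\pi)(\mathrm{Fr}_\mathfrak{l})$ is also $P_\ell(X)$ (when $\ell$ splits, the identification $\mathcal{G}_4' \simeq \mathcal{G}_4$ is the identity on the $\mathrm{GL}_4$-factor at split primes). On the other hand, by the local-global compatibility \eqref{Weil-Deligne vs Galois, unitary group} applied to $\pi'$ at the unramified place $\mathfrak{l}$, the characteristic polynomial of $\rho_{\pi'}(\mathrm{Fr}_\mathfrak{l})$ on the $\mathrm{GL}_4$-factor is the Hecke polynomial $P_\ell^{\mathrm{u}}(X)$ of $\pi'_\ell$. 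Theorem \ref{Satake parameter relation of theta lift} (via Theorems \ref{Langlands parameters for GSp^+4 and GSO6} and \ref{Langlands parameters for GSp4 and GSO6}) tells us that the Satake parameter of $\pi'_\ell$ is the image of that of $\pi_\ell$ under $\iota$, which when written out in terms of Hecke polynomials gives exactly $P_\ell^{\mathrm{u}}(X) = P_\ell(X)$. The similitude factor ($\nu$) matches because $\pi'$ has the central character induced from that of $\pi$ via the theta lift.

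Having agreement of characteristic polynomials of $\mathrm{Fr}_\mathfrak{l}$ on a set of density one, Chebotarev density together with the Brauer--Nesbitt theorem yields $\Theta(\rho_\pi)^{\mathrm{ss}} \simeq \rho_{\pi'}^{\mathrm{ss}}$ as representations into $\mathcal{G}_4(\overline{\mathbb{Q}}_p)$ (applied factor by factor: on the identity component and the determinant factor one uses traces, and the fact that both representations induce the same map to $\Gamma_{E/\mathbb{Q}}$ pins down the non-identity component up to inner automorphism). Reducing modulo $\varpi$ gives the equality of residual semisimplifications; since the hypothesis ($\mathrm{BIG}^{(2)}$) forces $\overline{\rho}_\pi$ to be absolutely irreducible (hence so is $\overline{\rho}_{\pi'} = \Theta(\overline{\rho}_\pi)$ on its identity component), the reductions are genuinely isomorphic, not merely up to semisimplification. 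The main delicate point will be the book-keeping on the non-identity component of $\mathcal{G}_4$: one must verify that the isomorphism $\mathcal{G}_4' \xrightarrow{\sim} \mathcal{G}_4$ chosen in the construction of $\Theta(\rho)$ is compatible with the convention used in the construction of $\rho_{\pi'}$ (in particular, the sign conventions in $c(g,\mu)c^{-1} = (\mu J_4 g^{-t}J_4^{-1},\mu)$ versus $j(g,\mu)j^{-1}=(\mu g^{-t},\mu)$), so that the comparison at inert primes is consistent with the split-prime comparison already carried out.
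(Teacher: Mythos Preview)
Your proposal is correct and takes essentially the same approach as the paper: both arguments use the Satake parameter correspondence for the theta lift (Theorems \ref{Langlands parameters for GSp^+4 and GSO6} and \ref{Langlands parameters for GSp4 and GSO6}) together with local-global compatibility (equations \eqref{Weil-Deligne vs. Galois, GSp(4)} and \eqref{Weil-Deligne vs Galois, unitary group}) to match the local restrictions $\Theta(\rho_\pi)|_{D_\ell}$ and $\rho_{\pi'}|_{D_\ell}$ at unramified (or Steinberg) places. The paper's proof is terser and simply asserts the local equality; you make the Chebotarev/Brauer--Nesbitt step explicit and carefully handle the passage from semisimplification to genuine isomorphism via irreducibility, which is a helpful elaboration rather than a different method.
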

    \begin{proof}
    	For any finite place 
    	$\ell\nmid N$,
    	$\pi_\ell$ is unramified or
    	Steinberg.
    	By the correspondence of Satake parameters of
    	local theta correspondence 
    	(Theorem \ref{Satake parameter relation of theta lift}),
    	the relations of
    	Weil-Deligne representations and 
    	Galois representations
    	(\ref{Weil-Deligne vs. Galois, GSp(4)})
    	and
    	(\ref{Weil-Deligne vs Galois, unitary group}),
    	we see that
    	$\Theta(\rho_\pi)|_{D_l}$
    	is equal to
    	$\rho_{\pi'}|_{D_l}$
    	and thus lifts
    	$\overline{\rho}_{\pi'}|_{D_l}$.
    	Moreover, it is clear that
    	$\Theta(\rho_\pi)$ is unramified outside
    	$Np$.
    \end{proof}

    The same argument shows that
    $\Theta(\rho_\pi^\mathrm{s})$
    lifts
    $\overline{\rho}_{\pi'}$,
    thus by the universal property of
    $R_{\pi'}^\mathrm{u}$,
    there is a natural map
    of $\mathcal{O}$-algebras:
    \begin{equation}\label{morphism of universal deformation rings}
    R_{\pi'}^\mathrm{u}
    \rightarrow
    R_\pi^\mathrm{s},
    \text{ similarly, }
    R_{\underline{k}''}^\mathrm{u}
    \rightarrow
    R_{\underline{k}''}^\mathrm{s}.
    \end{equation}

    We have the following
    \begin{lemma}
    	The morphisms defined above are all surjective and
    	compatible with
    	each other:
    	\[
    	\begin{tikzcd}
    	\Lambda^\mathrm{u}
    	\ar[r]
    	\ar[d,twoheadrightarrow]
    	&
    	\mathbb{T}_{\pi'}^\mathrm{u}
    	\ar[r,"\sim"]
    	\ar[d,twoheadrightarrow]
    	&
    	R_{\pi'}^\mathrm{u}
    	\ar[d,twoheadrightarrow]
    	\ar[r]
    	&
    	R_{\underline{k}''}^\mathrm{u}
    	\ar[d,twoheadrightarrow]
    	\\
    	\Lambda^\mathrm{s}
    	\ar[r]
    	&
    	\mathbb{T}_\pi^\mathrm{s}
    	\ar[r,"\sim"]
    	&
    	R_\pi^\mathrm{s}
    	\ar[r]
    	&
    	R_{\underline{k}''}^\mathrm{s}
    	\end{tikzcd}
    	\]    	
    \end{lemma}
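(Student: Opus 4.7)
The plan is to prove the four surjectivities separately and then check commutativity of the three squares, leveraging the two $R=T$ isomorphisms in the middle of the diagram to transport statements freely between the Hecke and Galois sides. The conceptually cleanest path is to prove surjectivity of $R^\mathrm{u}_{\pi'}\to R^\mathrm{s}_\pi$ directly on the Galois side, then pull the result back to $\mathbb{T}^\mathrm{u}_{\pi'}\to\mathbb{T}^\mathrm{s}_\pi$, and finally descend to the specialization $R^\mathrm{u}_{\underline{k}''}\to R^\mathrm{s}_{\underline{k}''}$ by naturality.

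For surjectivity of $\Lambda^\mathrm{u}\to\Lambda^\mathrm{s}$, I would compute on generators: under the torus map $T^\mathrm{s}\to T^\mathrm{u}$ sending $\mathrm{diag}(t_0,t_0t_1,t_0t_2,t_0t_1t_2)$ to itself, the four generators $\mathrm{diag}(1_{i-1},1{+}p,1_{4-i})$ of $T^\mathrm{u}(1{+}p\mathbb{Z}_p)$ correspond to monomials in $1{+}X_1,1{+}X_2,1{+}X_3$ whose images topologically generate the augmentation ideal of $\Lambda^\mathrm{s}$ over $\mathcal{O}$. For surjectivity of $R^\mathrm{u}_{\pi'}\to R^\mathrm{s}_\pi$, I would use that $R^\mathrm{s}_\pi\simeq\mathbb{T}^\mathrm{s}_\pi$ is topologically generated over $\mathcal{O}$ by the traces $\mathrm{tr}\,\rho^\mathrm{s}_\pi(\mathrm{Fr}_\ell)$ for $\ell\nmid Np$ split in $E/\mathbb{Q}$ (Chebotarev density plus Brauer--Nesbitt for pseudo-characters). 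For each such $\ell$, $\overline{\mathrm{Fr}}_\ell=1\in\Gamma_{E/\mathbb{Q}}$, so by the construction of $\Theta'$ and the isomorphism $\mathcal{G}'_4\simeq\mathcal{G}_4$ the element $\Theta(\rho^\mathrm{s}_\pi)(\mathrm{Fr}_\ell)$ lies in the $\mathrm{GL}_4\times\mathrm{GL}_1$-component of $\mathcal{G}_4(R^\mathrm{s}_\pi)$ with $\mathrm{GL}_4$-projection equal to $\rho^\mathrm{s}_\pi(\mathrm{Fr}_\ell)$; hence $\mathrm{tr}\,\rho^\mathrm{s}_\pi(\mathrm{Fr}_\ell)$ lies in the image. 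Surjectivity of $\mathbb{T}^\mathrm{u}_{\pi'}\to\mathbb{T}^\mathrm{s}_\pi$ is then immediate from the two $R=T$ isomorphisms, and surjectivity of $R^\mathrm{u}_{\underline{k}''}\to R^\mathrm{s}_{\underline{k}''}$ follows because the vertical specializations $R^?_{\pi^?}\twoheadrightarrow R^?_{\underline{k}''}$ are themselves surjective, being quotients that impose explicit weight conditions on the diagonal characters of the restriction to $D_p$.

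Commutativity of the leftmost square is built into the definition of the Hecke-algebra map on diamond operators, which was arranged to factor through the same torus inclusion $T^\mathrm{s}\hookrightarrow T^\mathrm{u}$. The middle square commutes because under the $R=T$ isomorphisms each Hecke operator is identified with the appropriate symmetric function of the universal Frobenius at $\ell$, and Theorems \ref{Langlands parameters for GSp^+4 and GSO6}--\ref{Langlands parameters for GSp4 and GSO6} show that the Satake matching $\iota$ defining the Hecke map is exactly $\rho(\mathrm{Fr}_\ell)\mapsto\Theta(\rho)(\mathrm{Fr}_\ell)$ at split primes. The right square commutes by naturality of the specialization at $p$: both quotient functors $\mathcal{D}_{\underline{k}''}$ and $\mathcal{D}_{\pi',\underline{k}''}$ impose the \emph{same} weight condition on the diagonal characters, and $\Theta$ preserves this datum because it acts as the identity on the $\mathrm{GL}_4$-component. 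The main obstacle I anticipate is the verification of the middle square at Iwahori level at $p$: one must check that the Satake match in the ordinary Steinberg case translates the $\mathbb{U}_p$-operators $U^{(n)}_{p,i}$ on $\mathrm{GSp}_4$ into the $U^{b,c}_{\underline{k}',i}$ on $\mathrm{U}_4$ with compatible normalizations, so that the two ordinary filtrations on $\rho|_{D_p}$ described in (\ref{crystalline at p for GSp(4)}) and (\ref{crystalline at p for U(4)}) correspond under $\Theta$. This is a delicate but explicit local calculation, closely parallel to the one carried out for the local zeta integral in Section \ref{ramified place p}.
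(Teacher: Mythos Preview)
Your handling of commutativity and of the surjectivities for $\Lambda^\mathrm{u}\to\Lambda^\mathrm{s}$, $\mathbb{T}^\mathrm{u}_{\pi'}\to\mathbb{T}^\mathrm{s}_\pi$, and $R^\mathrm{u}_{\underline{k}''}\to R^\mathrm{s}_{\underline{k}''}$ matches the paper. The substantive divergence, and the gap, is in the surjectivity of $R^\mathrm{u}_{\pi'}\to R^\mathrm{s}_\pi$.

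You claim $R^\mathrm{s}_\pi$ is topologically generated over $\mathcal{O}$ by $\mathrm{tr}\,\rho^\mathrm{s}_\pi(\mathrm{Fr}_\ell)$ for $\ell\nmid Np$ \emph{split} in $E$, citing ``Chebotarev density plus Brauer--Nesbitt.'' But Chebotarev only makes split Frobenii dense in $\Gamma_E$, not in $\Gamma_\mathbb{Q}$, so what you obtain is the closed $\mathcal{O}$-subalgebra generated by $\{\mathrm{tr}\,\rho^\mathrm{s}_\pi(\sigma):\sigma\in\Gamma_E\}$. Carayol's lemma says traces on $\Gamma_\mathbb{Q}$ generate $R^\mathrm{s}_\pi$ (under absolute irreducibility of $\overline{\rho}_\pi$); it does not immediately give the same for traces on the index-two subgroup $\Gamma_E$. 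One can repair your argument by checking that ($BIG^{(2)}$) forces $\overline{\rho}_\pi|_{\Gamma_E}$ absolutely irreducible and $\overline{\rho}_\pi\not\simeq\overline{\rho}_\pi\otimes\overline{\xi}$, and then running a Schur-plus-descent step (two $\Gamma_\mathbb{Q}$-deformations with equal $\Gamma_E$-trace differ by a character of $\Gamma_\mathbb{Q}/\Gamma_E$, which must be trivial since $p$ is odd and the residual admits no self-twist); but this is a genuine missing piece, not covered by your parenthetical.

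The paper takes a different route: it shows directly that $\Theta\colon\mathcal{D}_\pi(A)\to\mathcal{D}_{\pi'}(A)$ is injective for every Artinian $A$. Given $g\in\mathrm{GL}_4^1(A)$ conjugating $\Theta(\rho_A)$ to $\Theta(\rho_A')$, one uses that $p$ splits in $E$ to get $g\rho_A|_{D_p}g^{-1}=\rho_A'|_{D_p}$; the ordinary filtrations together with ($RFR^{(2)}$) (distinct diagonal characters) then pin down $g$ as a product $h'h^{-1}$ with $h,h'\in\mathrm{GSp}_4(A)$, so $g\in\mathrm{GSp}_4^1(A)$. Thus the paper leans on the local structure at $p$ and hypothesis ($RFR^{(2)}$), whereas your completed route would lean on the global big-image hypothesis.

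Finally, the obstacle at $p$ for the middle square that you flag is milder than you suggest: the paper handles it via the diamond operators alone, using that $D_p\subset\Gamma_E$ so that $\Theta$ is the identity on the $\mathrm{GL}_4$-component there; no separate matching of $\mathbb{U}_p$-normalizations is needed.
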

    \begin{proof}
    	The commutativity of the left square
    	follows from the definition of
    	the map
    	$\mathbb{T}_{\pi'}^\mathrm{u}
    	\rightarrow
    	\mathbb{T}_\pi^\mathrm{s}$.
    	The commutativity of the right square is clear.   	
    	For the middle square, it suffices to look at
    	their respective Galois representations:
    	for each $\ell\nmid Np$
    	split in $E$,
    	under the morphism
    	$\mathbb{T}_{\pi'}^\mathrm{u}
    	\rightarrow
    	\mathbb{T}_\pi^\mathrm{s}$,
    	the unramified Galois representation
    	$\rho_{\mathbb{T}_{\pi'}^\mathrm{u}}|_{D_\ell}$
    	is mapped to
    	$\Theta(\rho_{\mathbb{T}_\pi^\mathrm{s}})|_{D_\ell}$
    	by \cite[Section 7 ]{Rallis1982}.
    	Similarly, by the definition of diamond operators
    	$\langle u\rangle$,
    	the ordinary Galois representation
    	$\rho_{\mathbb{T}_{\pi'}^\mathrm{u}}|_{D_p}$
    	is mapped to
    	$\Theta(\rho_{\mathbb{T}_\pi^\mathrm{s}})|_{D_p}$.
    	This shows the compatibility of
    	$\mathbb{T}_{\pi'}^\mathrm{u}
    	\rightarrow
    	\mathbb{T}_\pi^\mathrm{s}$
    	with
    	$R_{\pi'}^\mathrm{u}
    	\rightarrow
    	R_\pi^\mathrm{s}$.

    	We next show that
    	$\Lambda^\mathrm{u}
    	\rightarrow
    	\Lambda^\mathrm{s}$
    	is surjective.
    	Since
    	$\mathcal{O}[[1+p\mathbb{Z}_p]]$
    	is the projective limit of the group rings
    	$\{
    	\mathcal{O}
    	[(1+p\mathbb{Z}_p)/(1+p^{n+1}\mathbb{Z}_p)]
    	\simeq\mathcal{O}[\mathbb{Z}/p^n\mathbb{Z}]
    	\}_n$,
    	it suffices to show that
    	the inclusion
    	of the finite group
    	$T^\mathrm{s}_n:=
    	{T^\mathrm{s}(1+p\mathbb{Z}_p)
    		/T^\mathrm{s}(1+p^{n+1}\mathbb{Z}_p)}$
    	in
    	$T^\mathrm{u}_n:=
    	{T^\mathrm{u}(1+p\mathbb{Z}_p)
    		/T^\mathrm{u}(1+p^{n+1}\mathbb{Z}_p)}$
    	induces
    	a surjective morphism of group rings
    	$\mathcal{O}[T^\mathrm{u}_n]
    	\rightarrow
    	\mathcal{O}[T^\mathrm{s}_n]$.
    	This is clear since $T^\mathrm{s}_n$
    	is a direct summand of
    	$T^\mathrm{u}_n$.

    	Finally we show that
    	$R_{\pi'}^\mathrm{u}
    	\rightarrow
    	R_\pi^\mathrm{s}$
    	is surjective,
    	which also gives the surjectivity of
    	the other two vertical arrows.
    	The argument is similar to
    	\cite[Lemma 4.10]{Zhang2018}.
    	For this it suffices to show that for any object
    	$A$ in $\mathrm{AL}_\mathcal{O}$,
    	for any two lifts $\rho_A$ and $\rho_A'$
    	of $\overline{\rho}_\pi$,
    	if $\Theta(\rho_A)$
    	and $\Theta(\rho_A')$
    	are equivalent
    	(conjugate by an element in
    	$\mathrm{GL}^1_4(A)
    	:=\mathrm{Ker}(\mathrm{GL}_4(A)
    	\rightarrow
    	\mathrm{GL}_4(\mathbb{F}))$), 
    	then
    	$\rho_A$ and $\rho_A'$
    	are equivalent
    	(conjugate by an element in
    	$\mathrm{GSp}^1_4(A)
    	:=\mathrm{Ker}(\mathrm{GSp}_4(A)
    	\rightarrow
    	\mathrm{GSp}_4(\mathbb{F}))$).
    	Suppose that there is some
    	$g\in\mathrm{GL}^1_4(A)$
    	such that
    	$\Theta'(\rho_A')=g\Theta'(\rho_A)g^{-1}$.
    	We write
    	$\rho_{A,p}=\rho_A|_{D_p}$
    	and
    	$\rho_{A,p}'=\rho_A'|_{D_p}$.
    	By definition there are elements
    	$h,h'\in\mathrm{GSp}_4(A)$
    	such that the representations of $D_p$,
    	$\widetilde{\rho}:=h^{-1}\rho_{A,p}h$
    	and 
    	$\widetilde{\rho}':=(h')^{-1}\rho_{A,p}'h'$
    	are upper triangular.
    	Recall that the diagonal entries of
    	$\widetilde{\rho}$, resp., $\widetilde{\rho}'$,
    	are mutually distinct characters,
    	so there is some element
    	$\sigma_0\in D_p$
    	such that the diagonal elements of
    	$\widetilde{\rho}(\sigma_0)$,
    	resp., $\widetilde{\rho}'(\sigma_0)$,
    	are mutually distinct.
    	We can choose $h$, resp., $h'$
    	such that
    	$\widetilde{\rho}(\sigma_0)$,
    	resp., $\widetilde{\rho}'(\sigma_0)$,
    	are diagonal matrices with mutually distinct diagonal 
    	entries and are
    	equal to each other.
    	Since
    	$\rho_{A,p}=g^{-1}\rho_{A,p}'g$
    	(note that $p$ splits in $E/\mathbb{Q}$),
    	we conclude that
    	$h'=gh$.
    	This gives
    	$g=h'h^{-1}\in\mathrm{GSp}_4(A)
    	\cap\mathrm{GL}^1_4(A)
    	=
    	\mathrm{GSp}^1_4(A)$.    	
    \end{proof}

    \subsection{Selmer groups and congruence ideals}
    In this subsection we define Selmer groups for the Galois
    representations
    $\rho_{\underline{k}''}^\mathrm{?}$
    with $?=\mathrm{s},\mathrm{u}$
    following the treatment in
    \cite[Section 1]{HidaTilouine16}.

    Let $R$ be an object in $\mathrm{CNL}_\mathcal{O}$
    and
    $\rho
    \colon
    \Gamma_{\mathbb{Q}}
    \rightarrow
    G(R)$
    a continuous Galois representation,
    unramified outside
    $Np$,
    ordinary at $p$.
    Here $G=\mathrm{GSp}_4,\mathcal{G}_4$
    or a classical subgroup of
    $\mathrm{GL}_n$.
    We say that $\Gamma_{\mathbb{Q}}$
    acts on the free $R$-module $M$
    with $M=R^4$, $R^n$
    according to $G=\mathrm{GSp}_4,\mathcal{G}_4$
    or a subgroup of $\mathrm{GL}_n$.
    By the ordinariness assumption,
    there is a filtration
    $(\mathrm{Fil}_i(M))_i$
    on $M$
    stable under
    $\rho_{D_p}$
    (or $\rho_{D_\mathfrak{p}}$
    if $G=\mathcal{G}_4$).
    Assume moreover that
    $\rho_{D_p}$
    acts by $\mathrm{ur}(a_i)\chi_p^{-i}$
    on $\mathrm{Fil}_i(M)/\mathrm{Fil}_{i-1}(M)$.
    We write
    $M^\vee
    =\mathrm{Hom}_\mathcal{O}(M,\mathcal{O}[1/p]/\mathcal{O})$
    for the Pontryagin dual of $M$.
    We define a subspace
    $L_p$ of
    $H^1(D_p,
    M\otimes_RR^\vee)$
    as the image of
    \[
    \mathrm{Ker}
    (
    H^1(D_p,\mathrm{Fil}_0(M)\otimes_RR^\vee)
    \rightarrow
    H^1(I_p,\mathrm{Fil}_0(M)/\mathrm{Fil}_{-1}
    \otimes_RR^\vee)
    ).
    \]
    Then we define the
    discrete Selmer group
    of $\rho$ as
    \[
    \mathrm{Sel}(\rho)
    =
    \mathrm{Ker}
    \bigg(
    H^1(\mathbb{Q},M\otimes_RR^\vee)
    \rightarrow
    \prod_{\ell\neq p}
    H^1(I_\ell,M\otimes_RR^\vee)
    \times
    H^1(D_p,
    M/\mathrm{Fil}_0(M)\otimes_RR^\vee)/L_p
    \bigg).
    \]

    Now let $\rho
    \colon
    \Gamma_{\mathbb{Q}}
    \rightarrow
    \mathrm{GSp}_4(R)$
    be a continuous Galois representation as above.
    Then the following Galois representations
    also satisfies the above assumptions,
    ${\mathrm{ad}\,\rho
    \colon
    \Gamma_{\mathbb{Q}}
    \rightarrow
    \mathrm{GL}_R(\mathfrak{sp}_4(R))}$,
    ${\Theta(\rho)
    \colon
    \Gamma_{\mathbb{Q}}
    \rightarrow
    \mathcal{G}_4(R)}$,
    ${\mathrm{ad}\,\Theta(\rho)
    \colon
    \Gamma_{\mathbb{Q}}
    \rightarrow
    \mathrm{GL}_R(\mathfrak{sl}_4(R))}$,
    ${\rho_\mathrm{st}
    \circ\rho
    \colon
    \Gamma_{\mathbb{Q}}
    \rightarrow
    \mathrm{SO}_5(R)}$.
    Moreover, since
    $\mathrm{ad}\,\Theta(\rho)
    \simeq\mathrm{ad}\,\rho
    \oplus(\rho_\mathrm{st}\circ\rho
    \otimes\xi)$,
    we have the following decomposition:
    \[
    \mathrm{Sel}(\mathrm{ad}\,\Theta(\rho))
    \simeq
    \mathrm{Sel}(\mathrm{ad}\,\rho)
    \bigoplus
    \mathrm{Sel}(\rho_\mathrm{st}\circ\rho
    \otimes\xi).
    \]

    Now we consider the morphism defined above
    $\vartheta
    \colon
    \mathbb{T}_{\underline{k}}^\mathrm{u}
    \twoheadrightarrow
    \mathbb{T}_{\underline{k}}^\mathrm{s}$
    where $\underline{k}$ is the weight of $\pi$.
    The Hida family of $\pi$ in
    $\mathbb{T}_{\underline{k}}^\mathrm{s}$
    also gives a surjective map
    of $\mathcal{O}$-algebras
    $\vartheta_\pi
    \colon
    \mathbb{T}_{\underline{k}}^\mathrm{s}
    \twoheadrightarrow
    \mathcal{O}$.
    Note that both
    $\mathbb{T}_{\underline{k}}^\mathrm{s}$
    and $\mathbb{T}_{\underline{k}}^\mathrm{u}$
    are complete intersections over $\mathcal{O}$
    by the $R=\mathbb{T}$ theorems,
    so we can apply
    \cite[Corollary 8.6]{HidaTilouine16}
    to conclude that
    \[
    \mathfrak{c}(\vartheta_\pi\circ\vartheta)
    =
    \mathfrak{c}(\vartheta_\pi)
    \vartheta_\pi(\mathfrak{c}(\vartheta)).
    \]

    Recall that for any $\mathcal{O}$-module
    of finite type $M$,
    $M$ is isomorphic to a unique
    $\mathcal{O}$-module of the form
    $\mathcal{O}^r\times
    \prod_{i=1}^j\mathcal{O}/\varpi^{r_i}$
    for some $r,j,r_i\geq0$.
    Then the Fitting ideal $\mathrm{Fit}(M)$
    of $M$ is $0$ if $r>0$
    and $\prod_{i=1}^j\varpi^{r_i}\mathcal{O}$
    otherwise.
    By \cite[Proposition 3.4]{HidaTilouine16},
    the Fitting ideal
    $\mathrm{Fit}
    \big(
    \mathrm{Sel}(\mathrm{ad}\,\Theta'(\rho_\pi)
    \big)$,
    resp.,
    $\mathrm{Fit}
    (\mathrm{Sel}(\mathrm{ad}\,\rho_\pi))$
    is equal to the congruence ideal
    $\mathfrak{c}(\vartheta_\pi\circ\vartheta)$,
    resp.,
    $\mathfrak{c}(\vartheta_\pi)$.
    Therefore we conclude
    \begin{proposition}
    	\label{congruence ideals and Selmer groups}
    	Assume
    	Hypothesis
    	\ref{hypotheses for R=T theorems},
    	then the Fitting ideal
    	$\mathrm{Fit}
    	(\mathrm{Sel}
    		(\xi\otimes
    		\rho_\mathrm{st}\circ\rho_\pi))$
    	is equal to
    	the congruence ideal
    	$\vartheta_\pi(\mathfrak{c}(\vartheta))$.
    \end{proposition}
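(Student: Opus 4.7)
The plan is to combine the decomposition of adjoint representations established in Lemma~\ref{comparison of two adjoint representations} with the Hida--Tilouine formalism relating congruence ideals under composition of surjections, and to compare the resulting identity with the known Galois-theoretic interpretation of these congruence ideals as Fitting ideals of Selmer groups.

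First, I would record the consequence of Lemma~\ref{comparison of two adjoint representations}: since $\mathrm{ad}\,\Theta(\rho_\pi) \simeq \mathrm{ad}\,\rho_\pi \oplus (\rho_\mathrm{st}\circ\rho_\pi \otimes \xi)$ as $\Gamma_\mathbb{Q}$-representations, and since the formation of Greenberg Selmer groups and of Fitting ideals of finitely generated $\mathcal{O}$-modules is compatible with direct sums (the ordinary filtration on $\mathrm{ad}\,\Theta(\rho_\pi)$ splits as a direct sum of the ordinary filtrations on the two summands, because both come from $\rho_\pi$ itself), one obtains
\[
\mathrm{Fit}\bigl(\mathrm{Sel}(\mathrm{ad}\,\Theta(\rho_\pi))\bigr)
= \mathrm{Fit}\bigl(\mathrm{Sel}(\mathrm{ad}\,\rho_\pi)\bigr)
\cdot
\mathrm{Fit}\bigl(\mathrm{Sel}(\xi\otimes\rho_\mathrm{st}\circ\rho_\pi)\bigr).
\]
At this point I would invoke the two identifications already recalled from \cite[Proposition~3.4]{HidaTilouine16}, namely $\mathrm{Fit}(\mathrm{Sel}(\mathrm{ad}\,\Theta(\rho_\pi))) = \mathfrak{c}(\vartheta_\pi\circ\vartheta)$ and $\mathrm{Fit}(\mathrm{Sel}(\mathrm{ad}\,\rho_\pi)) = \mathfrak{c}(\vartheta_\pi)$; the hypotheses \ref{hypotheses for R=T theorems} are precisely what guarantees the Gorenstein/complete intersection properties of $\mathbb{T}_\pi^\mathrm{s}$ and $\mathbb{T}_{\pi'}^\mathrm{u}$ required for these identifications.

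Next I would apply the transfer identity \cite[Corollary~8.6]{HidaTilouine16} recalled in the excerpt,
\[
\mathfrak{c}(\vartheta_\pi\circ\vartheta)
= \mathfrak{c}(\vartheta_\pi)\,\vartheta_\pi(\mathfrak{c}(\vartheta)),
\]
whose applicability rests on the fact, established in the $R=\mathbb{T}$ theorems of the previous subsection, that both $\mathbb{T}_{\underline{k}}^\mathrm{s}$ and $\mathbb{T}_{\underline{k}}^\mathrm{u}$ are reduced finite flat complete intersections over $\mathcal{O}$ (so in particular over $\Lambda^\mathrm{s}$ and $\Lambda^\mathrm{u}$). Substituting the two Fitting-ideal equalities into the product decomposition above yields
\[
\mathfrak{c}(\vartheta_\pi)\,\vartheta_\pi(\mathfrak{c}(\vartheta))
= \mathfrak{c}(\vartheta_\pi)\cdot
\mathrm{Fit}\bigl(\mathrm{Sel}(\xi\otimes\rho_\mathrm{st}\circ\rho_\pi)\bigr),
\]
and dividing by the common nonzero factor $\mathfrak{c}(\vartheta_\pi)$ in the principal ideal domain $\mathcal{O}$ gives the desired equality $\mathrm{Fit}(\mathrm{Sel}(\xi\otimes\rho_\mathrm{st}\circ\rho_\pi)) = \vartheta_\pi(\mathfrak{c}(\vartheta))$.

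The main obstacle I anticipate is verifying that the ordinary filtration on $\mathrm{ad}\,\Theta(\rho_\pi)$ truly respects the direct sum decomposition of Lemma~\ref{comparison of two adjoint representations}, so that the Greenberg Selmer group decomposes as asserted; this amounts to checking that the two subspaces $\mathbf{V}_1=\mathfrak{sp}_4$ and $\mathbf{V}_2$ of $\mathfrak{sl}_4$ are each stable under the $D_p$-action and inherit compatible ordinary filtrations from the upper-triangular form (\ref{crystalline at p for GSp(4)}) of $\rho_{\pi,p}$. Once that is verified, together with the non-vanishing of $\mathfrak{c}(\vartheta_\pi)$ (which follows from $\mathbb{T}_\pi^\mathrm{s}$ being finite flat over $\Lambda^\mathrm{s}$ and the Hida family through $\pi$ being an isolated classical specialization), the rest of the argument is purely formal.
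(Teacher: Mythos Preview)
Your proposal is correct and follows essentially the same route as the paper: the paper combines the Selmer-group decomposition $\mathrm{Sel}(\mathrm{ad}\,\Theta(\rho_\pi))\simeq\mathrm{Sel}(\mathrm{ad}\,\rho_\pi)\oplus\mathrm{Sel}(\xi\otimes\rho_\mathrm{st}\circ\rho_\pi)$ (from Lemma~\ref{comparison of two adjoint representations}) with the identifications $\mathrm{Fit}(\mathrm{Sel}(\mathrm{ad}\,\Theta(\rho_\pi)))=\mathfrak{c}(\vartheta_\pi\circ\vartheta)$ and $\mathrm{Fit}(\mathrm{Sel}(\mathrm{ad}\,\rho_\pi))=\mathfrak{c}(\vartheta_\pi)$ from \cite[Proposition~3.4]{HidaTilouine16} and the multiplicativity $\mathfrak{c}(\vartheta_\pi\circ\vartheta)=\mathfrak{c}(\vartheta_\pi)\,\vartheta_\pi(\mathfrak{c}(\vartheta))$ from \cite[Corollary~8.6]{HidaTilouine16}. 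Your explicit attention to the compatibility of the ordinary filtration with the direct-sum decomposition and to the non-vanishing of $\mathfrak{c}(\vartheta_\pi)$ is more careful than the paper, which simply asserts the Selmer decomposition and concludes.
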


    \subsection{Conclusion}
    Recall that we fix a
    $p$-ordinary antiholomorphic cohomological
    cuspidal irreducible automorphic representation
    $\pi$ of
    $[G_1]$,
    of trivial central character,
    of type $(\underline{k},\widehat{\Gamma})$
    with $k_1\geq k_2\geq3$
    ($\pi$ is anti-holomorphic on $G_1$
    while it is holomorphic on $G_2$).
    We assume that the Hida family
    $\mathbb{T}^\mathrm{s}_\pi$ in
    $\mathbb{T}^\mathrm{s}$
    passing through $\pi$
    is of weight
    $\underline{\kappa}=(\underline{\kappa}_\mathrm{f},
    \underline{k})\in
    \mathrm{Hom}(T_{G_1^1}(\mathbb{Z}_p),
    \overline{\mathbb{Q}}_p^\times)$.
    The $\mathbb{U}_p$-operators
    $U_{p,\underline{c}}$
    act on the ordinary subspace
    $e\pi$
    by the scalar
    $\mathfrak{a}_1^{c_1}\mathfrak{a}_2^{c_2}$
    for all $\underline{c}\in C^+$.   
    Moreover, in Section 4,
    we have constructed sections
    $\phi_{i,\underline{k}}\in
    \mathcal{S}(W_i(\mathbb{A}))$
    for $i=1,2$.

    We
    show the theta lift
    $\Theta_{\phi_{1,\underline{\kappa}}}(\varphi)$
    is a $p$-integral primitive modular form on 
    $\mathrm{U}_4(\mathbb{A})$
    up to a certain power of $p$
    if $\varphi\in\pi$ is 
    a $p$-integral primitive form.
    In
    Proposition 
    \ref{algebraicity of Fourier coefficients of theta series},
    we have shown that the Fourier coefficients of
    the theta series
    $\Theta_{\phi^+_{i,\underline{\kappa}}}$
    are all in
    $\mathcal{O}$,
    thus
    for each
    $h\in\mathrm{U}_4(\mathbb{A})$,
    $\Theta_{\phi^+_{i,\underline{\kappa}}}
    (\cdot,h)|\nu(\cdot)|^3$
    corresponds to
    $\Theta_{\phi^+_{i,\underline{\kappa}}}^\mathrm{alg}
    (\cdot,h)
    |\nu(\cdot)|^3\in
    H^0(\widetilde{\mathbf{A}}_{G_i,\widehat{\Gamma}/
    	\mathcal{O}},
    \mathcal{E}(V))$
    for some algebraic representation $W_i$ of
    $\mathrm{GL}_{2/\mathbb{Q}}$
    by the map
    $\Phi(\cdot,\mathfrak{e})$
    (\textit{cf}. Definition
    \ref{algebraic theta series},
    the similitude factor does not affect the
    $p$-integrality of the theta series).
    Thus for any
    anti-holomorphic
    $p$-integral cuspidal Siegel modular form
    $f_i\in
    H^3
    (G_i,\widehat{\Gamma},\mathcal{O})$,
    the Serre duality shows that
    for each fixed $h\in\mathrm{U}_4(\mathbb{A})$,
    we have
    \[
    \Theta^\ast_{\phi_{i,\underline{\kappa}}}(f_i)(h)
    =
    \langle
    \Theta_{\phi_{i,\underline{k}}}(\cdot,h)|\nu(\cdot)|^3,
    f_i(\cdot)
    \rangle^\mathrm{Ser}
    =
    \int_{[G_i]}
    f_i(g_i)\Theta_{\phi_{i,\underline{\kappa}}}
    (g_i,h)dg_i
    \in \mathcal{O}
    \]
    (we extend $\Theta_{\phi_{i,\underline{k}}}(f_i)$
    by zero from $\mathrm{SU}_4(\mathbb{A})$
    to $\mathrm{U}_4(\mathbb{A})$,
    \textit{cf.} Section
    \ref{lift from GSp4 to U4}).
    Therefore, we can choose some
    $\nu(f_i)\in\mathbb{Z}_{\geq0}$
    (which is clearly unique)
    such that
    $p^{-\nu(f_i)}
    \Theta^\ast_{\phi_{i,\underline{k}}}(f_i)$
    is primitive.
    We define new Schwartz functions
    $\widetilde{\phi}_{i,\underline{k}}
    =\otimes'_v\widetilde{\phi}_{i,\underline{k},v}$
    whose local components are given by
    \[
    \widetilde{\phi}_{i,\underline{k},\infty}
    =p^{-\nu(f_i)}
    \phi_{i,\underline{k},\infty},
    \quad
    \widetilde{\phi}_{i,\underline{k},\ell}
    =\phi_{i,\underline{k},\ell},
    \,
    \forall
    \ell.
    \]
    Thus
    $\Theta^\ast_{\widetilde{\phi}_{i,\underline{\kappa}}}
    (f_i)
    =
    p^{-\nu(f_i)}
    \Theta^\ast_{\phi_{i,\underline{k}}}(f_i)$
    is a $p$-integral primitive
    modular form on
    $\mathrm{U}_4(\mathbb{A})$.
    Now we take $f_1$ and $f_2$ to be
    cuspidal
    ordinary
    $p$-integral Siegel modular forms
    $\varphi_1\in
    H^{3,\mathrm{ord}}_{\underline{k}^D}
    (G_1,\widehat{\Gamma},\mathcal{O})[\pi]$
    and
    $\varphi_2\in
    H^{3,\mathrm{ord}}_{\underline{k}^D}
    (G_2,\widehat{\Gamma},\mathcal{O})
    [\overline{\pi}]$
    such that
    $\langle
    \varphi_1,\varphi_2
    \rangle$
    is equal to the period
    $\widehat{P}[\pi]$
    (\textit{cf.} Lemma \ref{anti-periods}).
    We have integers
    $\nu(\varphi_1)$ and $\nu(\varphi_2)$
    as above and also
    $\widetilde{\phi}_{i,\underline{\kappa}}
    =p^{-\nu(\varphi_i)}
    \phi_{i,\underline{\kappa}}$
    whose local components are given as above.
    As such we have two
    $p$-integral primitive modular forms
    $\Theta^\ast_{\widetilde{\phi}_{i,\underline{\kappa}}}
    (\varphi_i)$
    on $\mathrm{U}_4(\mathbb{A})$.
    Recall from Theorem 
    \ref{arithmetic Rallis inner product},
    we have defined the modified local Euler factor
    $L^\ast_v(1,\mathrm{St}(\pi)\otimes\xi)$
    for $v|Np\infty$
    which depends on
    $\phi_{i,\underline{k},v}$.
    Now we put
    \[
    \widetilde{L}_\infty(1,\mathrm{St}(\pi)\otimes\xi)
    =p^{-\nu(\varphi_1)-\nu(\varphi_2)}
    L^\ast(1,\mathrm{St}(\pi)\otimes\xi),
    \quad
    \widetilde{L}_\ell(1,\mathrm{St}(\pi)\otimes\xi)
    =L^\ast(1,\mathrm{St}(\pi)\otimes\xi),
    \,
    \forall \ell|Np.
    \]
    These are the local Euler factors given by the
    new Schwartz functions
    $\widetilde{\phi}_{i,\underline{k},v}$
    for $v|Np\infty$.
    Taking into account Remark
    \ref{hypothesis on Gorenstein is satisfied}
    (which implies $\mathfrak{c}(\vartheta_\pi)
    =\mathfrak{c}^\mathrm{coh}(\pi)$),
    the Rallis inner product formula now reads
    \begin{theorem}
    	Let the notations be as above,
    	then we have the following identity
    	up to a unit in $\mathcal{O}$:
    	\[
    	\langle
    	\Theta^\ast_{\widetilde{\phi}_{1,\underline{\kappa}}}
    	(\varphi_1),
    	\Theta^\ast_{\widetilde{\phi}_{2,\underline{\kappa}}}
    	(\varphi_2)
    	\rangle_{\mathrm{U}_4}
    	=
    	\mathfrak{c}(\vartheta_\pi)
    	\frac{L^{Np\infty}(1,\mathrm{St}(\pi)\otimes\xi)
    		\widetilde{L}_{Np\infty}(1,\mathrm{St}(\pi)\otimes\xi)}
    	{P_{\pi^\vee}}.
    	\]
    \end{theorem}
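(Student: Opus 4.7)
My plan is to deduce the stated identity directly from the arithmetic Rallis inner product formula (Theorem \ref{arithmetic Rallis inner product}) by combining three ingredients: the exceptional isogeny $\mathrm{GSO}(U)\simeq \mathrm{U}_4$ from Section \ref{isogeny from GSO(6) to U(4)}, the explicit $p$-integral normalizations of the theta series introduced just before the statement, and the identification of $\mathfrak{c}^\mathrm{coh}(\pi)$ with $\mathfrak{c}(\vartheta_\pi)$ under the Gorenstein hypothesis that is now available from the $R=T$ theorem cited in Remark \ref{hypothesis on Gorenstein is satisfied}.

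First I would apply Theorem \ref{arithmetic Rallis inner product} with $\varphi=\varphi_1\in\pi$ and $\varphi^\vee=\varphi_2\in\pi^\vee$, and the Schwartz sections $\phi_{1,\underline{\kappa}}$, $\phi_{2,\underline{\kappa}}$. This gives an expression for $\langle \Theta_{\phi_{1,\underline{\kappa}}}(\varphi_1),\Theta_{\phi_{2,\underline{\kappa}}}(\varphi_2)\rangle_{\mathrm{O}(U)}$ in terms of $\mathfrak{c}^\mathrm{coh}(\pi)$, $\langle\varphi_1,\varphi_2\rangle/\widehat{P}[\pi]$ and the ratio $L^{Np\infty}L_{Np\infty}^\ast / P_\pi$. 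Next, I would transfer from $\mathrm{O}(U)$ to $\mathrm{U}_4$ using the exceptional isogeny of Section \ref{isogeny from GSO(6) to U(4)}: the pairing on $\mathrm{O}(U)$ differs from the pairing on $\mathrm{U}_4$ only by the volumes of the relevant compact factors and by the extension-by-zero of $\Theta^\ast$ from $\mathrm{SU}_4$ to $\mathrm{U}_4$, both of which contribute a factor in $\mathcal{O}^\times$; this lets me replace $\langle \Theta_{\phi_i}\rangle_{\mathrm{O}(U)}$ by $\langle \Theta^\ast_{\phi_i}\rangle_{\mathrm{U}_4}$ up to units. Then I would substitute $\widetilde{\phi}_{i,\underline{\kappa}}=p^{-\nu(\varphi_i)}\phi_{i,\underline{\kappa}}$: this scales the left-hand side by $p^{-\nu(\varphi_1)-\nu(\varphi_2)}$ and, by the very definition of $\widetilde{L}_\infty(1,\mathrm{St}(\pi)\otimes\xi)$ just given, exactly matches the modified factor on the right. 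Finally, by the choice $\langle\varphi_1,\varphi_2\rangle=\widehat{P}[\pi]$ the period ratio collapses; the convention $P_\pi=P_{\pi^\vee}$ for antiholomorphic $\pi$ spelled out in the Remark after Section \ref{periods} replaces $P_\pi$ by $P_{\pi^\vee}$; and Remark \ref{hypothesis on Gorenstein is satisfied} together with the identity $\mathfrak{c}^\mathrm{coh}(\pi)=\mathfrak{c}(\vartheta_\pi)$ discussed after (\ref{notation for congruence ideal}) upgrades $\mathfrak{c}^\mathrm{coh}(\pi)$ to $\mathfrak{c}(\vartheta_\pi)$.

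The routine parts are bookkeeping; the genuine obstacle is to be certain that each transition above is correct \emph{up to a $p$-adic unit rather than a mere rational factor}. In particular, one must check: (i) that the Haar measures fixed in Remark \ref{Haar measures} on $\mathrm{O}(U)(\mathbb{A})$ versus $\mathrm{U}_4(\mathbb{A})$ are compatible with the isogeny up to units in $\mathcal{O}^\times$; (ii) that the extension-by-zero from $\mathrm{SU}_4(\mathbb{A})$ to $\mathrm{U}_4(\mathbb{A})$ together with the restriction from $\mathrm{GSp}_4$ to $\mathrm{Sp}_4$ discussed in Section \ref{lift from GSp4 to U4} preserves Petersson pairings up to units; and (iii) that the Gorenstein-type identification $\mathfrak{c}(\vartheta_\pi)=\mathfrak{c}^\mathrm{coh}(\pi)$ is valid for our precise choice of Hida family and arithmetic weight $\underline{\kappa}$, which requires invoking the freeness of $S^\mathrm{s,ord}_{\pi}$ over $\mathbb{T}^\mathrm{s}_\pi$ established after the Taylor--Wiles theorem. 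Once these three compatibilities are verified, the rest is a direct substitution and the stated formula follows.
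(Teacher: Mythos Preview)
Your proposal is correct and follows essentially the same approach as the paper: the theorem is stated in the paper without a separate proof, as it is a direct consequence of the preceding setup (the choice of $\varphi_1,\varphi_2$ with $\langle\varphi_1,\varphi_2\rangle=\widehat{P}[\pi]$, the rescaled sections $\widetilde{\phi}_{i,\underline{\kappa}}$, the definition of $\widetilde{L}_\infty$) together with Theorem~\ref{arithmetic Rallis inner product} and the identification $\mathfrak{c}^{\mathrm{coh}}(\pi)=\mathfrak{c}(\vartheta_\pi)$ from Remark~\ref{hypothesis on Gorenstein is satisfied}. Your flagged compatibilities (i)--(iii) are precisely the bookkeeping the paper leaves implicit; none of them introduces a non-unit factor given the measure conventions of Remark~\ref{Haar measures} and the extension-by-zero procedure of Section~\ref{lift from GSp4 to U4}.
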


    By the same argument as in
    \cite[Lemma 23]{Berger2014},
    one can show that the theta series
    $\Theta^\ast_{\widetilde{\phi}_{i,\underline{\kappa}}}$
    are both eigenforms for the
    Hecke algebra
    $\mathbb{T}_{\underline{\kappa}}^\mathrm{u}$
    (we first show that they are eigenforms
    for the Hecke algebra of the orthogonal
    group $\mathrm{O}(U)$ and
    then use the exceptional isogeny
    between the orthogonal group and unitary group
    to identify their respective Hecke algebras).
    Therefore we see that the Siegel modular forms
    $\varphi_i$
    give rise to
    $\mathbb{T}_{\underline{\kappa}}^\mathrm{u}$-eigenforms
    $\Theta^\ast_{\widetilde{\phi}_{i,\underline{\kappa}}}
    (\varphi_i)$.
    By Lemma
    \ref{congruence ideals and Petersson products},
    the Petersson product
    $\langle
    \Theta^\ast_{\widetilde{\phi}_{1,\underline{\kappa}}}
    (\varphi_1),
    \Theta^\ast_{\widetilde{\phi}_{2,\underline{\kappa}}}
    (\varphi_2)
    \rangle_{\mathrm{U}_4}$
    generates the congruence ideal
    $\mathfrak{c}(\vartheta_\pi\circ\vartheta)$.
    Write
    $\chi(\mathrm{Sel}(\xi\otimes
    \rho_\mathrm{st}\circ\rho_\pi))$
    for a generator of the 
    Fitting ideal
    $\mathrm{Fit}(\mathrm{Sel}(\xi\otimes
    \rho_\mathrm{st}\circ\rho_\pi))$.
    Combining the above theorem with
    Proposition
    \ref{congruence ideals and Selmer groups},
    we get the following
    (for the local factors
    $L_{Np\infty}^\ast(1,\mathrm{St}(\pi)
    \otimes\xi)$,
    see Theorem
    \ref{arithmetic Rallis inner product}):

    \begin{theorem}\label{Selmer group and L-value}
    	Let the notations be as in the last theorem.
    	We make the assumptions 
    	(1) Hypothesis
    	\ref{hypotheses for R=T theorems};
    	(2)
    	the pairing 
    	$\langle\varphi_1,\varphi_2\rangle
    	=\widehat{P}[\pi]\neq0$
    	and $\varphi_1,\varphi_2$
    	are $p$-ordinary.  	
    	Then we have the following identity up to units in
    	$\mathcal{O}$:
    	\[
    	\frac{L^{Np\infty}(1,\mathrm{St}(\pi)\otimes\xi)
    	\widetilde{L}_{Np\infty}
    	(1,\mathrm{St}(\pi)\otimes\xi)}
        {P_{\pi^\vee}}
    	=
    	\chi(\mathrm{Sel}(\xi\otimes
    	\rho_\mathrm{st}\circ\rho_\pi)).
    	\]
    \end{theorem}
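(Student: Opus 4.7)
The plan is to chain together three results already established in the excerpt: the refined arithmetic Rallis inner product formula stated immediately before Theorem~\ref{Selmer group and L-value}, the identification of Petersson products of Hecke eigenforms with congruence ideals (Lemma~\ref{congruence ideals and Petersson products}), and the identification of congruence ideals of the transfer map with Fitting ideals of Selmer groups (Proposition~\ref{congruence ideals and Selmer groups}). All three ingredients are already in place, so the proof will essentially be bookkeeping.

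First I would apply the refined Rallis inner product formula to the fixed pair $(\varphi_1,\varphi_2)$. By hypothesis $\langle\varphi_1,\varphi_2\rangle=\widehat{P}[\pi]\ne 0$, the forms are $p$-ordinary, and the modified local Euler factors $\widetilde{L}_v$ for $v\mid Np\infty$ absorb the $p$-powers $p^{-\nu(\varphi_1)-\nu(\varphi_2)}$ introduced in the construction of $\widetilde{\phi}_{i,\underline{\kappa}}$. Together with the identification $\mathfrak{c}^{\mathrm{coh}}(\pi)=\mathfrak{c}(\vartheta_\pi)$ noted in Remark~\ref{hypothesis on Gorenstein is satisfied} (valid because Hypothesis~\ref{Gorenstein hypothesis} follows from the $R=T$ theorem under Hypothesis~\ref{hypotheses for R=T theorems}), this yields, up to units in $\mathcal{O}$,
\[
\langle
\Theta^\ast_{\widetilde{\phi}_{1,\underline{\kappa}}}(\varphi_1),
\Theta^\ast_{\widetilde{\phi}_{2,\underline{\kappa}}}(\varphi_2)
\rangle_{\mathrm{U}_4}
=
\mathfrak{c}(\vartheta_\pi)\,
\frac{L^{Np\infty}(1,\mathrm{St}(\pi)\otimes\xi)\,
\widetilde{L}_{Np\infty}(1,\mathrm{St}(\pi)\otimes\xi)}{P_{\pi^\vee}}.
\]

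Next I would verify that the two theta lifts $\Theta^\ast_{\widetilde{\phi}_{i,\underline{\kappa}}}(\varphi_i)$ are $p$-integral primitive Hecke eigenforms on $\mathrm{U}_4(\mathbb{A})$ for the Hecke algebra $\mathbb{T}^{\mathrm{u}}_{\pi'}$. Primitivity and $p$-integrality follow from the very definition of $\widetilde{\phi}_{i,\underline{\kappa}}$ (i.e., the choice of $\nu(\varphi_i)$), while the eigenform property follows by first showing these are eigenforms for the Hecke algebra of the orthogonal group $\mathrm{O}(U)$ and then transferring via the exceptional isogeny of Section~\ref{isogeny from GSO(6) to U(4)} to $\mathrm{U}_4$. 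Since $\mathrm{U}_4$ is compact at $\infty$ (the Hermitian form is $\mathrm{diag}(1,1,1,1)$), Lemma~\ref{congruence ideals and Petersson products} then applies to identify the Petersson product on the left-hand side above with a generator of the cohomological congruence ideal $\mathfrak{c}(\vartheta_\pi\circ\vartheta)$ of the composite morphism $\mathbb{T}^{\mathrm{u}}_{\pi'}\xrightarrow{\vartheta}\mathbb{T}^{\mathrm{s}}_\pi\xrightarrow{\vartheta_\pi}\mathcal{O}$. Combining with the previous display and invoking the transfer formula $\mathfrak{c}(\vartheta_\pi\circ\vartheta)=\mathfrak{c}(\vartheta_\pi)\,\vartheta_\pi(\mathfrak{c}(\vartheta))$ from \cite[Corollary~8.6]{HidaTilouine16}, and cancelling the factor $\mathfrak{c}(\vartheta_\pi)$, gives
\[
\vartheta_\pi(\mathfrak{c}(\vartheta))
=
\frac{L^{Np\infty}(1,\mathrm{St}(\pi)\otimes\xi)\,
\widetilde{L}_{Np\infty}(1,\mathrm{St}(\pi)\otimes\xi)}{P_{\pi^\vee}}
\]
up to units. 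Finally, Proposition~\ref{congruence ideals and Selmer groups} identifies the left-hand side with $\mathrm{Fit}(\mathrm{Sel}(\xi\otimes\rho_{\mathrm{st}}\circ\rho_\pi))$, whose generator is by definition $\chi(\mathrm{Sel}(\xi\otimes\rho_{\mathrm{st}}\circ\rho_\pi))$, proving the theorem.

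The hardest step, conceptually, is the second one: making sure that the theta lifts really do land in the $\pi'$-isotypic component and are primitive eigenforms, so that Lemma~\ref{congruence ideals and Petersson products} applies to give a \emph{generator} (and not merely an element) of the congruence ideal. This rests on the injectivity of the theta correspondence from $\pi$ to $\pi'$ (ensured by the non-vanishing of the local zeta integrals proved in Section~\ref{choice of local sections}) and on the compatibility of Hecke eigenvalues under the exceptional isogeny $\mathrm{GSO}_6\to\mathrm{U}_4$. A secondary technical point is that cancelling $\mathfrak{c}(\vartheta_\pi)$ in the identity $\mathfrak{c}(\vartheta_\pi\circ\vartheta)=\mathfrak{c}(\vartheta_\pi)\,\vartheta_\pi(\mathfrak{c}(\vartheta))$ requires $\mathfrak{c}(\vartheta_\pi)$ to be nonzero, which is precisely the content of the assumption $\widehat{P}[\pi]\ne 0$ together with the period relations of Lemma~\ref{period relation}.
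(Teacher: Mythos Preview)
Your proposal is correct and follows essentially the same route as the paper: apply the refined Rallis inner product formula, identify the Petersson product of the primitive theta lifts with a generator of $\mathfrak{c}(\vartheta_\pi\circ\vartheta)$ via Lemma~\ref{congruence ideals and Petersson products} (after checking the lifts are Hecke eigenforms on $\mathrm{U}_4$ through the orthogonal group and the exceptional isogeny, as in \cite[Lemma~23]{Berger2014}), use the factorization $\mathfrak{c}(\vartheta_\pi\circ\vartheta)=\mathfrak{c}(\vartheta_\pi)\,\vartheta_\pi(\mathfrak{c}(\vartheta))$, and then invoke Proposition~\ref{congruence ideals and Selmer groups}. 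Your closing remarks about primitivity and the need for $\mathfrak{c}(\vartheta_\pi)\neq 0$ are apt and make explicit points the paper leaves implicit.
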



\begin{thebibliography}{widestlabel}
		
		
		\bibitem[AG16]{AtobeGan16}
		H.Atobe and W.T.Gan, Local theta correspondence
		of tempered representations and Langlands parameters,
		preprint available at
		\url{https://arxiv.org/abs/1602.01299}.
		
		
		\bibitem[AG17]{AtobeGan2017}
		H.Atobe and W.T.Gan,
		On the local Langlands correspondence and Arthur conjecture for
		even orthogonal groups,
		Represent. Theory, 21 (2017),
		pp.354-415.
		
		
		
		
		
		\bibitem[Art13]{Arthur2013}
		J.Arthur,
		\textit{The endoscopic classification of
		representations: orthogonal and
	    symplectic groups},
        Colloquium Publications,
        vol.61,
        American Mathematical Soc., 2013.
		
		
		
		
		\bibitem[AS]{AsgariSchmidt2001}
		M.Asgari and R.Schmidt,
		Siegel modular forms and representations,
		Manuscripta Math. 104 (2001),
		pp.173-200.
		
		
		
		
		\bibitem[Ato17]{Atobe17}
		H.Atobe, The local theta correspondence and the local
		Gan-Gross-Prasad conjecture for the
		symplectic-metaplectic case,
		preprint available at
		\url{https://arxiv.org/abs/1502.03528}.
		
		
		
		
		
		
		
		\bibitem[BC09]{BellaicheChenevier2009}
		J.Bella\"{i}che and G.Chenevier,
		\textit{Families of Galois representations and Selmer groups},
		Ast\'{e}risque 324,
		Soc. Math. France, Paris (2009).
		
		
		
		
		
		\bibitem[BDS12]{BochererDummiganSchulzePillot2012}
		S.B\"{o}cherer, N.Dummigan and
		R.Schulze-Pillot,
		Yoshida lifts and Selmer groups,
		J.Math.Soc.Jpn. 64(4), 2012,
		pp.1353-1405.
		
		
		
		
		
		\bibitem[Ber14]{Berger2014}
		T.Berger,
		Arithmetic properties of similitude theta lifts
		from orthogonal to symplectic groups,
		manuscripta math. 143 (2014),
		pp.389-417.
		
		
		
		
		\bibitem[Ber15]{Berger2015}
		T.Berger,
		On the Bloch-Kato conjecture for
		the Asai $L$-function,
		preprint, 2015.
		
		
		
		
		
		
		
		
		\bibitem[BK90]{BlochKato1990}
		S.Bloch and K.Kato,
		Tamagawa numbers of motives,
		\textit{Grothendieck Festschrift},
		vol.1,
		Progress in Math. 89,
		Birkh\"{a}user (1990),
		pp.333-400.
		
		
		
		
		
		
		
		\bibitem[Bro07]{Brown2007}
		J.Brown,
		Saito-Kurokawa lifts and applications
		to the Bloch-Kato conjecture.
		Compositio Math., 143(2), 2007,
		pp.290-322.
		
		
		
		
		
		\bibitem[BS00]{BoechererSchmidt2000}
		S.B\"{o}cherer and C-G.Schmidt,
		$p$-adic measures attached to Siegel modular forms,
		Annales de l'Institut Fourier 50 (2000), no.5,
		pp.1375-1443.
		
		
		
		\bibitem[Cas82]{Cassels82}
		J.Cassels, Rational quadratic forms,
		Moscow (1982).
		
		
		
		
		\bibitem[Cas95]{Casselman1995}
		W.Casselman,
		Introduction to the theory of admissible representations
		of $p$-adic reductive groups,
		draft (1995),
		available at
		\url{https://www.math.ubc.ca/~cass/research/pdf/p-adic-book.pdf}.
		
		
		
		
		
		
		\bibitem[CGH16]{CalegariGeraghtyHarris2016}
		F.Calegari, D.Geraghty and M.Harris,
		Bloch-Kato conjectures for automorphic motives,
		preprint, 2016.
		
		
		
		
		
		\bibitem[CHT08]{ClozelHarrisTaylor}
		L.Clozel, M.Harris and R.Taylor, 
		Automorphy for some $l$-adic lifts of 
		automorphic mod $l$ Galois representations, 
		Publ.Math.Inst.Hautes \'{E}tudes Sci. (2008), no.108, 
		pp.1-181.
		
		
		
		
		
		
		
		\bibitem[CM09]{CollinsMatsumoto2009}
		B.Collins and S.Matsumoto,
		On some properties of orthogonal
		Weingarten functions,
		Journal of Mathematical Physics 50,
		113516 (2009),
		doi:10.1063/1.3251304.
		
		
		
		
		
		
		
		\bibitem[EHLS16]{EischenHarrisLiSkinner16}
		E.Eischen, M.Harris, J-S.Li and C.Skinner,
		$p$-adic $L$-functions for unitary groups,
		\url{https://arxiv.org/abs/1602.01776}.
		
		
		
		
		
		
		\bibitem[Fal89]{Faltings1989}
		G.Faltings,
		Crystalline cohomology and Galois representations,
		Algebraic Analysis, Geometry and number theory,
		Proceedings of JAMI Inaugural Conference,
		John Hopkins Univ. Press (1989).
		
		
		
		
		
		
		
		\bibitem[FC90]{FaltingsChai}
		G.Faltings and C.L.Chai,
		\textit{Degenerations of abelian varieties},
		Ergebnisse der Mathematik und ihrer Grenzbgbiete (3),
		vol.22, Springer-Verlag, 1990.
		
		
		
		
		
		
		\bibitem[Fl90]{Flach1990}
		M.Flach,
		A generalization of Cassels-Tate pairing,
		Jour.reine angew.Math. 412, 1990,
		pp.113-127.
		
		
		
		
		
		
		
		
		\bibitem[FP94]{FontainePerrinRiou1994}
		J.-M.Fontaine and B.Perrin-Riou,
		Autour des conjectures de Bloch et Kato:
		cohomologie galoisienne et valeurs de fonctions $L$,
		in \textit{Motives (Seattle WA, 1991)},
		Proc.Sympos.Pure.Math.,
		vol 55,
		Amer.Math.Soc.,
		Providence, RI, 1994,
		pp.599-706.
		
		
		
		
		
		
		
		\bibitem[Gan08]{Gan08}
		W.T.Gan, The Saito-Kurokawa space of $\mathrm{PGSp_4}$
		and its transfer to inner forms,
		\textit{Eisenstein series and applications},
		Progr.Math., vol.258,
		Birkh\"{a}user Boston, Boston, MA, 2008,
		pp.87-123.
		
		
		
		
		
		
		\bibitem[Gar15]{Garrett15}
		P.Garrett, Sporadic isogenies to orthogonal groups,
		available at
		\url{http://www-users.math.umn.edu/~garrett/m/v/sporadic_isogenies.pdf}.
		
		
		
		
		
		
		\bibitem[Ge10]{Geraghty2010}
		D.Geraghty, 
		Modularity lifting theorems for ordinary {G}alois representations, 
		Ph.D. thesis, Harvard University (2010),
		available at
		\url{https://www2.bc.edu/david-geraghty/files/oml.pdf}.
		
		
		
		
		
		\bibitem[GGP12]{GanGrossPrasad15}
		W.T.Gan,B.H.Gross and D.Prasad, Symplectic local root numbers,
		central critical $L$-values and restriction problems in the
		representation theory of classical groups,
		Ast\'{e}risque. no.346 (2012),
		pp.1-109.
		
		
		
		
		
		
		\bibitem[GH11]{GoldfeldHundley11}
		D.Goldfeld and J.Hundley,
		\textit{Automorphic representations
		and $L$-functions fo the general linear group},
	    vol.2 (2011), Cambridge Studies in Advanced Mathematics.
	    
	    
	    
	    
	    
	    
	    \bibitem[GI14]{GanIchino14}
	    W.T.Gan and A.Ichino,
	    Formal degrees and local theta correspondences,
	    Invent.Math. 195 (2014),
	    no.3, pp.509-672.
		
		
		
		
		
		
		\bibitem[GPSR87]{GelbartPSRallis87}
		S.Gelbart, I.Piatetski-Shapiro and S.Rallis,
		\textit{Explicit construction of automorphic $L$-functions},
		Lecture Notes in Mathematics, vol. 1254,
		Springer-Verlag, Berlin, 1987.
		
		
		
		
		
		
		
		
		
		
		
		\bibitem[GT05]{GenestierTilouine2005}
		A.Genestier and J.Tilouine,
		Syst\`{e}me de Taylor-Wiles pour $\mathrm{GSp}_4$,
		\textit{Formes Automorphes (II),
			le cas du groupe $\mathrm{GSp}(4)$},
		Ast\'{e}risque 302 (2005),
		SMF.
		pp.177-287.
		
		
		
		
		\bibitem[GT11]{GanTakeda11}
		W.T.Gan and S.Takeda,
		The local Langlands conjecture for 
		$\mathrm{GSp}(4)$,
		Ann.of Math. 173 (2011),
		pp.1841-1882.
		
		
		
		
		
		
		
		\bibitem[GT11b]{GanTakeda11b}
		W.T.Gan and S.Takeda,
		Theta correspondence for $\mathrm{GSp}(4)$,
		Representation Theory, 15 (2011),
		pp.670-718.
		
		
		
		
		
		
		
		
		
		
		\bibitem[Han06]{Han2006}
		J.Han,
		The general linear group over a ring,
		Bull. Korean Math. Soc. 43 (2006),
		no.3, pp.619-626.
		
		
		
		
		\bibitem[Har93]{Harris93}
		M.Harris, $L$-functions of
		$2\times2$ unitary groups and factorization of periods 
		of Hilbert modular forms,
		J.Amer.Math.Soc.6 (1993),
		pp.637-719.
		
		
		
		
		
		
		\bibitem[Har08]{Harris08}
		M.Harris, A simple proof of rationality of Siegel-Weil
		Eisenstein series,
		\textit{Eisenstein series and Applications},
		Progr.Math., vol.258,
		Birkhauser Boston, Boston, MA, 2008,
		pp.149-185.
		
		
		
		
		
		
		\bibitem[Har13]{Harris2013}
		M.Harris, Beilinson-Bernstein localizations over $\mathbb{Q}$
		and periods of automorphic forms,
		Int. Math. Res. Not. IMRN (2013),
		no.9, pp.2000-2053.
		
		
		
		
		
		
		
		\bibitem[Hid99]{Hida99}
		H.Hida, Control theorems of coherent sheaves
		on Shimura varieties of PEL-type,
		J.Inst.Math.Jussieu 1 (2002), no.1, pp.1-76.
		
		
		
		
		
		
		\bibitem[Hi16]{HidaArithmetic}
		H.Hida, Arithmetic of adjoint $L$-values,
		in \textit{$p$-adic aspects of modular forms},
		World Scientific Publ., 2016.
		
		
		
		
		
		
		\bibitem[HK92]{HarrisKudla1992}
		M.Harris and S.S.Kudla,
		Arithmetic automorphic forms
		for the non-holomorphic discrete series of
		$\mathrm{GSp}(2)$,
		Duke Math.J. 66(1), 1992,
		pp.59-121.
		
		
		
		
		
		
		
		\bibitem[HLS05]{HarrisLiSkinner2005}
		M.Harris, J.Li and C.Skinner,
		The Rallis inner product formula and
		$p$-adic $L$-functions,
		in \textit{Automorphic representations,
		$L$-functions and applications:
	    progress and prospects},
        Ohio State Univ.Math.Res.Inst.Publ.,
        vol.11,
        de Gruyter, Berlin, 2005,
        pp.225-255.
	
		
		
		
		
		
		
		\bibitem[HT16]{HidaTilouine16}
		H.Hida and J.Tilouine, 
		Symmetric power congruence ideals and Selmer groups, 
		to appear in Journal Inst. Math. Jussieu.
		\url{https://hal.archives-ouvertes.fr/hal-01398905/document}.
		
		
		
		
		
		\bibitem[Jan03]{Jantzen03}
		J.C.Jantzen, \textit{Representations of algebraic groups},
		2nd edition, Mathematical Surveys and Monographs,
		vol.107, American Mathematical Society,
		Providence, RI, 2003.
		
		
		
		\bibitem[Jor12]{Jorza2012}
		A.Jorza,
		$p$-adic families and Galois representations for
		$\mathrm{GSp}(4)$ and $\mathrm{GL}(2)$,
		Math.Res.Lett. 19 (2012),
		no.5,
		pp.987-996.
		
		
		
		
		\bibitem[Kit83]{Kitaoka1983}
		Y.Kitaoka,
		\textit{Arithmetic of quadratic forms},
		Cambridge Tracts in Mathematics (106),
		Cambridge Univ. Press, Cambridge (1983).
		
		
		
		
		
		\bibitem[Klo09]{Klosin2009}
		K.Klosin,
		Congruences among modular forms on
		$U(2,2)$ and the Bloch-Kato conjecture,
		Ann.Inst.Fourier, Grenoble, Tome 59,
		1 (2009),
		pp.81-166.
		
		
		
		
		\bibitem[KM90]{KudlaMillson1990}
		S.S.Kudla and J.J.Millson,
		Intersection numbers of cycles on
		locally symmetric spaces and Fourier
		coefficients of holomorphic modular forms
		in several complex variables.
		Inst.Hautes.\'{E}tudes Sci.Publ.Math.
		71, 1990,
		pp.121-172.
		
		
		
		
		
		\bibitem[KR88]{KudlaRallis88}
		S.Kudla and S.Rallis,
		On the Weil-Seigel formula,
		Journal Reine Angew.Math.387 (1988), pp.1-68.
		
		
		
		\bibitem[KR90]{KudlaRallis1990}
		S.Kudla and S.Rallis,
		Poles of Eisenstein series and $L$-functions,
		Festschrift in honor of I.I.Piatetski-Shapiro on the occasion of
		his 60th birthday, Part II,
		Weizmann, Jerusalem,
		Israel of Mathematical Conference Proceedings,
		vol.3 (1990), pp.81-110.
		
		
		
		\bibitem[Kud94]{Kudla94}
		S.Kudla, Splitting metaplectic covers of
		dual reductive pairs,
		Israel J.Math. 87 (1994),
		no.1-3, pp.361-401.
		
		
		
		\bibitem[Kud96]{Kudla96}
		S.Kudla, notes on the local theta correspondence,
		available at \url{http://www.math.toronto.edu/skudla/castle.pdf}.
		
		
		
		
		\bibitem[KV78]{KashiwaraVergne78}
		M.Kashiwara and M.Vergne,
		On the Segal-Shale-Weil representations and
		harmonic polynomials,
		Inventiones Mathematicae \textbf{44} (1978),
		no. 1, pp.1-47.
		
		
		
		
		\bibitem[Lan12]{Lan2012}
		K.W.Lan, Toroidal compactifications of PEL-type 
		Kuga families,
		Algebra Number Theory 6 (2012),
		no. 5, pp.885-966.
		
		
		
		
		\bibitem[Lan13]{Lan13}
		K.W.Lan, \textit{Arithmetic compactification of
			PEL-type Shimura varieties},
		London Mathematical Society Monographs,
		vol.36, Princeton University Press, 2013.
		
		
		
		
		
		\bibitem[Lang79]{Langlands79}
		R.P.Langlands, On the notion of an automorphic
		representation.
		Proc.Symp.Pure.Maths.,
		vol.33, part 1, Providence AMS (1979),
		pp.203-207.
		
		
		
		
		
		
		
		\bibitem[Lau05]{Laumon2005}
		G.Laumon,
		Fonctions z\^{e}tas des vari\'{e}t\'{e}s
		de Siegel de dimension trois,
		\textit{Formes automorphes (II), le cas du groupe
		$\mathrm{GSp}(4)$},
	    Ast\'{e}risque 302, SMF (2005),
	    pp.1-67.
		
		
		
		
		
		
		
		\bibitem[Li90]{Li90}
		J.S.Li, 
		Theta lifting for unitary representations with non-zero cohomology,
		Duke Mathematical Journal, \textbf{61} (1990),
		no. 3, pp.913-937.
		
		
		
		
		\bibitem[Li92]{JianshuLi1992}
		J.S.Li,
		Non-vanishing theorems for the cohomology of
		certain arithmetic quotients,
		J. reine. angew. Math. 428 (1992),
		pp.177-217.
		
		
		
		
		
		\bibitem[Li08]{WenWeiLi}
		W.-W.Li, the Weil representation and its character,
		PhD thesis (2008).
		
		
		
		
		\bibitem[Liu15a]{Liu2015NearlyOverconvergent}
		Z.Liu, Nearly overconvergent Siegel modular forms, Preprint (2015).
		
		
		
		
		
		\bibitem[Liu15b]{Liu2015}
		Z.Liu, $p$-adic $L$-functions for ordinary families on symplectic groups,
		preprint at \url{http://www.math.ias.edu/~zliu/SLF%20IV.pdf}.
		
		
		
		
		
		
		\bibitem[LS86]{LabesseSchwermer86}
		J.P.Labesse and J.Schwermer, On liftings and
		cusp cohomology of arithemtic groups,
		Invent.Math. no.83 (1986),
		pp.383-401.
		
		
		
		
		
			
		\bibitem[Mor14]{Morimoto14}
		K.Morimoto, on the theta correspondence for
		$(\mathrm{GSp}(4),\mathrm{GSO}(4,2))$
		and Shalika periods,
		Representation theory, vol.18 (2014),
		pp.28-87.
		
		
		
		
			
		\bibitem[MVW87]{MoeglinVignerasWaldspurger}
		C.Moeglin,M.F.Vigneras and J.L.Waldspurger,
		\textit{Correspondance de Howe sur un corps $p$-adique},
		Lecture Notes in Math., vol.1291,
		Springer-Verlag, New York, 1987.
		
		
		
		
		
		
		\bibitem[Pau05]{Paul05}
		A.Paul, On the Howe correspondence for symplectic-orthogonal
		dual pairs,
		J.Funct.Anal. 228 (2005), no.2,
		pp.270-310.
		
		
		
		
		\bibitem[Pil12]{Pilloni2012}
		V.Pilloni,
		Modularit\'{e}, formes de Siegel et surfaces ab\'{e}liennes,
		J. reine. angew. Math. 666 (2012),
		pp.35-82.
		
		
		
		
		
		
		\bibitem[Pra93]{Prasad1993}
		D.Prasad,
		On the local Howe duality correspondence,
		Internat. Math. Res. Notices (1993), no. 11,
		pp.279-287.
	
		
		
		
		\bibitem[Pra06]{Prasanna2006}
		K.Prasanna,
		Integrality of a ratio of Petersson norms 
		and level-raising congruences,
		Ann.Math. 163(3), 2006,
		pp.901-967.
		
		
		
		
		
		
		
		
		\bibitem[Ral82]{Rallis1982}
		S.Rallis,
		Langlands' functoriality and the Weil representation,
		Am.J.Math., 104(3), 1982,
		pp.469-515.
		
		
		
		
		
		
		\bibitem[Ral84]{Rallis1984}
		S.Rallis,
		On the local Howe duality correspondence,
		Compositio Math. 51 (1984), no.3,
		pp.333-399.
		
		
		
		
		
		\bibitem[Rib76]{Ribet1976}
		K.A.Ribet,
		A modular construction of
		unramified $p$-extension of
		$\mathbb{Q}(\mu_p)$,
		Invent.Math. 43 (1976),
		no.3,
		pp.151-162.
		
		
		
		
		
		\bibitem[Rob96]{Roberts96}
		B.Roberts, the theta correspondence for similitudes,
		Israel J.Math.94 (1996), pp.285-317.
			
	    
	    
	    
	    
	    \bibitem[RS07]{RobertsSchmidt2007}
	    B.Roberts and R.Schmidt,
	    \textit{Local newforms for $\mathrm{GSp}(4)$},
	    Springer Lecture Notes in Mathematics,
	    vol. 1918 (2007).
	    
	    
	    
	    
	    
	    \bibitem[Sai91]{Saito1991}
	    H.Saito,
	    A generalization of Gauss sums and its application to
	    Siegel modular forms and $L$-functions
	    associated with the vector space of quadratic forms,
	    J.reine angew. Math. (1991),
	    pp.91-142.
	    
	    
	    
	    
	    
	    \bibitem[Ser73]{Serre1973}
	    J.P.Serre,
	    \textit{A course in arithmetic},
	    Springer GTM007 (1973).
	    
	    
	    
	    
	    
	    
		\bibitem[Shi71]{Shimura1971}
		G.Shimura, 
		\textit{Introduction to the arithmetic theory of automorphic forms},
		Princeton University Press (1971).
		
		
		
		
		
		
		\bibitem[Shi82]{Shimura1982}
		G.Shimura,
		Confluent hypergeometric functions on tube domains,
		Math. Ann. 260 (1982), no.3,
		pp.269-302.
		
		
		
		
		
		
		\bibitem[Shi84]{Shimura1984}
		G.Shimura, On differential operators attached to certain representations
		of classical groups, Invent. Math. \textbf{77}(1984), no.3, pp.463-488.
		
		
		
		
		
		\bibitem[Shi97]{Shimura1997}
		G.Shimura, \textit{Euler products and Eisenstein series}, vol.97, 
		American Mathematical Society, Providence, RI, 1997.
		
		
		
		
		\bibitem[Shi00]{Shimura00}
		G.Shimura, \textit{Arithmeticity in the theory of automorphic forms},
		Mathematical Surveys and Monographs,
		vol.82, American Mathematical Society, Providence, RI.2000.
		
		
		
		
		
		\bibitem[Sil79]{Silberger79}
		A.J.Silberger, Isogeny restrictions of irreducible representations
		are finite direct sums of irreducible admissible representations.
		Proc.AMS 73 (1979),
		pp.263-264.
		
		
		
		
		
		\bibitem[Sor10]{Sorensen2010}
		C.M.Sorensen,
		Galois representations attached to Hilbert-Siegel modular forms,
		Documenta Mathematica. 15 (2010),
		pp.623-670.
		
		
		
		
		
		\bibitem[Tay93]{Taylor1993}
		R.Taylor,
		On the cohomology of the Siegel threefolds,
		Inv. Math. 114 (1993),
		pp.289-310.
		
		
		
		\bibitem[Til06]{Tilouine2006}
		J.Tilouine,
		Nearly ordinary rank four Galois representations and
		$p$-adic Siegel modular forms,
		With an appendix by D.Blasius,
		Compos. Math.,
		142 (2006),
		no.5,
		pp.1122-1156.
		
		
		
		
		\bibitem[Til16]{Tilouine2016}
		J.Tilouine, $p$-adic families of ordinary Siegel cusp forms,
		in \textit{$p$-adic aspects of modular forms}, 
		World Scientific Publ., 2016.
		
		
		
		
		\bibitem[TU18]{TilouineUrban2018}
		J.Tilouine and E.Urban,
		Integral period relations and congruences,
		preprint, 2018.
		
		
		
		
		
		
		\bibitem[Urb05]{Urban2005}
		E.Urban,
		Sur les repr\'{e}sentations $p$-adiques associ\'{e}es
		aux repr\'{e}sentations cuspidales de 
		$\mathrm{GSP}_{4/\mathbb{Q}}$,
		\textit{Formes Automorphes (II),
		le cas du groupe $\mathrm{GSp}(4)$},
	    Ast\'{e}risque 302 (2005),
	    SMF.
	    pp.151-176.
		
		
		
		
		
		\bibitem[Wei64]{Weil1964}
		A.Weil, Sur certains groupes d'op\'{e}rateurs unitaires,
		Acta.Math. 111 (1964), pp.143-211.
		
		
		
		
		
		\bibitem[Wei05]{Weissauer2005}
		R.Weissauer, Four dimensional Galois representations,
		\textit{Formes Automorphes (II),
			le cas du groupe $\mathrm{GSp}(4)$},
		Ast\'{e}risque 302 (2005),
		SMF.
		pp.67-151.
		
		
		
		
		
		
		\bibitem[Wil09]{Wilson2009}
		R.Wilson,
		\textit{The finite simple groups},
		Graduate Texts in Mathematics, vol. 251 (2009).
		
		
		
		
		
		\bibitem[Xu18]{Xu2018}
		B.Xu, $L$-packets of quasisplit $\mathrm{GSp}(2n)$ and
		$\mathrm{GO}(2n)$,
		Math. Ann., vol 370 (2018),
		pp.71-189.
		
		
		
		
		
		\bibitem[Zha18]{Zhang2018}
		X.Zhang, Selmer groups of symmetric powers of
		ordinary modular Galois representations,
		preprint, available at \url{https://arxiv.org/abs/1802.08329}.
		
		
		
		
		
		
		
		\end{thebibliography}
	\end{document}